\DeclareSymbolFontAlphabet{\mathbb}{AMSb}
\DeclareSymbolFontAlphabet{\mathbbl}{bbold}
\mathchardef\mhyphen="2D
\def\blfootnote{\xdef\@thefnmark{}\@footnotetext}
\title{A version of scale calculus and the associated Fredholm theory\blfootnote{This work is supported by DFG Projekt LA 2448/2-1}}
\author{Andreas Gerstenberger}
\date{\today}
\begin{document}

\renewcommand{\H}{\mathbb{H}}
\newcommand{\D}{\mathbb{D}}
\newcommand{\field}{\mathds{k}}
\newcommand{\R}{\mathds{R}}
\newcommand{\C}{\mathds{C}}
\newcommand{\Z}{\mathds{Z}}
\newcommand{\N}{\mathds{N}}
\newcommand{\Q}{\mathds{Q}}
\newcommand{\Ztwo}{\Z_2}
\def\set#1{\{\s@t#1&\}}
\def\s@t#1&#2&{#1\;|\;#2}
\def\infset#1{{\inf\{\s@t#1&\}}}
\def\s@t#1&#2&{#1\;|\;#2}
\def\supset#1{{\sup\{\s@t#1&\}}}
\def\s@t#1&#2&{#1\;|\;#2}
\def\minset#1{{\min\{\s@t#1&\}}}
\def\s@t#1&#2&{#1\;|\;#2}
\def\maxset#1{{\max\{\s@t#1&\}}}
\def\s@t#1&#2&{#1\;|\;#2}
\newcommand{\definedas}{\mathrel{\mathop:}=}
\newcommand{\defines}{=\mathrel{\mathop:}}
\newcommand{\definedequiv}{\mathrel{\mathop:}\Leftrightarrow}
\newcommand{\Hol}{\operatorname{Hol}}

\newcommand{\tensoralg}{\mathrm{T}}
\newcommand{\symmalg}{\mathrm{S}}
\newcommand{\extalg}{\Lambda}
\newcommand{\Pol}{\operatorname{Pol}}
\newcommand{\extmult}{\mathrm{e}}
\newcommand{\insertion}{\iota}
\newcommand{\Sym}{\operatorname{Sym}}

\newcommand{\category}[1]{\text{\textnormal{\textbf{#1}}}}
\newcommand{\catC}{\category{C}}
\newcommand{\kAlg}{\text{\textnormal{$\field$-\textbf{Alg}}}}
\newcommand{\abGrp}{\text{\textnormal{\textbf{ab-Grp}}}}
\newcommand{\Top}{\text{\textnormal{\textbf{Top}}}}
\newcommand{\pchTop}{\text{\textnormal{\textbf{pch-Top}}}}
\newcommand{\mor}[3]{\operatorname{Mor}_{#1}({#2},{#3})}
\newcommand{\Mor}{\operatorname{Mor}}
\renewcommand{\hom}{\operatorname{Hom}}
\newcommand{\Hom}{\operatorname{Hom}}
\newcommand{\Iso}{\operatorname{Iso}}
\newcommand{\Aut}{\operatorname{Aut}}
\newcommand{\End}{\operatorname{End}}
\newcommand{\equ}{\operatorname{Equ}}

\newcommand{\open}{\text{\textnormal{\textbf{Open}}}}
\newcommand{\sheaf}{\mathscr}
\renewcommand{\O}{\mathcal{O}}
\newcommand{\Prshf}[2]{\text{\textnormal{\textbf{Prshf}}}({#1},{#2})}
\newcommand{\Shf}[2]{\text{\textnormal{\textbf{Shf}}}({#1},{#2})}
\newcommand{\homol}[3]{H^{#1}({#2};{#3})}
\newcommand{\stack}{\mathfrak}

\newcommand{\ie}{i.\,e.~}
\newcommand{\st}{s.\,t.~}
\newcommand{\wrt}{w.\,r.\,t.~}
\newcommand{\Wlog}{W.\,l.\,o.\,g.~}
\newcommand{\smallwlog}{w.\,l.\,o.\,g.~}
\newcommand{\eg}{e.\,g.~}
\newcommand{\cf}{cf.~}
\renewcommand{\iff}{iff~}
\newcommand{\etc}{etc.~}
\newcommand{\iid}{i.\,i.\,d.~}
\newcommand{\fs}{f.\,s.~}
\newcommand{\fa}{f.\,a.~}
\newcommand{\zB}{z.\,B.~}
\renewcommand{\dh}{d.\,h.~}
\renewcommand{\ae}{a.\,e.~}

\newcommand{\matrices}{\mathrm{Mat}}
\newcommand{\trace}{\operatorname{Tr}}
\newcommand{\supertrace}{\operatorname{Str}}
\newcommand{\superdet}{\operatorname{Sdet}}
\newcommand{\transpose}[1]{^\mathrm{t}{#1}}
\newcommand{\supertranspose}[1]{^\mathrm{st}{#1}}
\newcommand{\parity}{\mathrm{p}}
\newcommand{\commutator}[2]{[#1,#2]}
\newcommand{\supercommutator}[2]{\commutator{#1}{#2}_\mathrm{s}}
\newcommand{\im}{\operatorname{im}}
\newcommand{\envelopingalg}[1]{\mathcal{U}(#1)}

\newcommand{\Cf}{\mathcal{C}}
\newcommand{\Cffinord}{\mathcal{C}_{\mathrm{fo}}}
\newcommand{\Gammafinord}{\Gamma_{\mathrm{fo}}}
\newcommand{\Omegafinord}{\Omega_{\mathrm{fo}}}
\newcommand{\Xfinord}{\mathfrak{X}_{\mathrm{fo}}}
\newcommand{\DiffOpfinord}{\mathrm{DO}_{\mathrm{fo}}}
\newcommand{\DiffOp}{\mathrm{DO}}
\newcommand{\TDO}{\mathrm{TDO}}
\newcommand{\TDOfinord}{\mathrm{TDO}_{\mathrm{fo}}}
\newcommand{\functionalforms}{\mathcal{F}}
\newcommand{\euler}{\mathfrak{E}}
\newcommand{\eulerlagrange}{\mathfrak{E}}
\newcommand{\interioreuler}{\mathfrak{I}}
\newcommand{\functional}[1]{\mathfrak{#1}}
\newcommand{\functionals}{\mathfrak{Func}}
\newcommand{\contactideal}{\mathcal{C}}
\newcommand{\pdr}{\mathcal{R}}
\renewcommand{\d}{\mathrm{d}}
\newcommand{\Lie}{\mathcal{L}}
\newcommand{\DiffeoLoc}{\mathrm{Diff}_{\mathrm{loc}}}
\newcommand{\Diff}{\mathrm{Diff}}
\newcommand{\laplace}{\triangle}
\newcommand{\dvol}{\mathrm{dvol}}
\newcommand{\vol}{\mathrm{vol}}
\newcommand{\Lagrangian}{\mathcal{L}}
\newcommand{\dbar}{\overline{\partial}}
\newcommand{\ind}{\operatorname{ind}}
\newcommand{\inj}{\mathrm{inj}}
\newcommand{\dist}{\mathrm{dist}}
\newcommand{\coker}{\operatorname{coker}}
\newcommand{\corank}{\operatorname{corank}}

\newcommand{\permutations}{\mathcal{S}}
\newcommand{\sign}{\operatorname{sign}}
\newcommand{\GL}{\mathrm{GL}}
\newcommand{\U}{\mathrm{U}}
\newcommand{\SU}{\mathrm{SU}}
\newcommand{\SO}{\mathrm{SO}}

\newcommand{\projRinfty}{\mathrm{p}}
\newcommand{\hor}{\mathrm{Hor}}
\newcommand{\conn}{\mathrm{Conn}}
\newcommand{\der}{\mathrm{Der}}
\newcommand{\graph}{\mathrm{Graph}}
\newcommand{\prolongation}{\operatorname{pr}}
\newcommand{\totalvf}[1]{\operatorname{tot}({#1})}
\newcommand{\evolutionaryvf}[1]{{#1}_{\mathrm{ev}}}
\newcommand{\evol}{\mathrm{Evol}}
\newcommand{\diag}{\mathrm{diag}}
\newcommand{\framebundle}[1]{\mathcal{F}(#1)}
\newcommand{\orthogonalframebundle}[1]{\mathcal{O}(#1)}
\newcommand{\Dh}{D^{\mathrm{h}}}
\newcommand{\Dv}{D^{\mathrm{v}}}
\newcommand{\Dastv}{D^{\ast\mathrm{v}}}

\newcommand{\inv}{^{-1}}
\newcommand{\id}{\mathrm{id}}
\newcommand{\eval}{\operatorname{ev}}
\newcommand{\ev}{\eval}
\newcommand{\supp}{\operatorname{supp}}
\newenvironment{notyetdone}{{\large\textbf{Not yet done: }}}{}
\newcommand{\interior}[1]{\operatorname{int}(#1)}
\newcommand{\compact}{\mathrm{c}}
\newcommand{\cpct}{\compact}
\newcommand{\bounded}{\mathrm{b}}
\newcommand{\bdd}{\bounded}
\newcommand{\loc}{\mathrm{loc}}
\newcommand{\double}{\mathrm{d}}
\newcommand{\Max}{\mathrm{max}}
\renewcommand{\i}{\mathbf{i}}
\newcommand{\connectedsum}{\operatorname{\#}}
\newcommand{\imaginaryPart}{\operatorname{Im}}
\newcommand{\realPart}{\operatorname{Re}}
\newcommand{\onto}{\twoheadrightarrow}
\newcommand{\into}{\hookrightarrow}
\newcommand{\pr}{\mathrm{pr}}
\newcommand{\Cpr}{\mathrm{Pr}}
\newcommand{\codim}{\operatorname{codim}}
\newcommand{\closure}{\operatorname{cl}}

\newcommand{\Sphere}[1]{\mathrm{S}^{#1}}
\newcommand{\RiemCurv}{\mathrm{Rm}}
\newcommand{\SecCurv}{\mathrm{Sec}}
\newcommand{\RicCurv}{\mathrm{Ric}}
\newcommand{\ScalCurv}{\mathrm{Scal}}
\newcommand{\Exp}{\operatorname{Exp}}

\newcommand{\norml}{\|}
\newcommand{\normr}{\|}

\newcommand{\cl}{\operatorname{cl}}

\newcommand{\ocirc}[1]{\overset{\mathclap{\;\tiny{\circ}}}{#1}}


\theoremstyle{plain}
	\newtheorem{theorem}{Theorem}[section]
	\newtheorem{proposition}{Proposition}[section]
	\newtheorem{lemma}{Lemma}[section]
	\newtheorem{corollary}{Corollary}[section]
	\newtheorem{conjecture}{Conjecture}[section]
\theoremstyle{definition}
	\newtheorem{definition}{Definition}[section]
	\newtheorem{construction}{Construction}[section]
	\newtheorem{example}{Example}[section]
	\newtheorem{exercise}{Exercise}[section]
	\newtheorem{convention}{Convention}[section]
	\newtheorem{remark}{Remark}[section]
\theoremstyle{remark}
	\newtheorem{comment}{Commentary}[section]
	\newtheorem*{claim}{Claim}

\AtBeginEnvironment{theorem}{\Needspace{5\baselineskip}}
\AtBeginEnvironment{proposition}{\Needspace{5\baselineskip}}
\AtBeginEnvironment{lemma}{\Needspace{5\baselineskip}}
\AtBeginEnvironment{corollary}{\Needspace{5\baselineskip}}
\AtBeginEnvironment{definition}{\Needspace{5\baselineskip}}

\maketitle

\selectlanguage{english}

\thispagestyle{empty}

\begin{abstract}
This article provides a version of scale calculus geared towards a notion of (nonlinear) Fredholm maps between certain types of Fr{\'e}chet spaces, retaining as many as possible of the properties Fredholm maps between Banach spaces enjoy, and the existence of a constant rank theorem for such maps.
It does so by extending the notion of linear Fredholm maps from \cite{1407.3185} and \cite{1209.4040} to a setting where the Nash-Moser inverse function theorem can be applied and which also encompasses the necessary examples such as the reparametrisation action and (nonlinear) elliptic partial differential operators.
\end{abstract}

\clearpage

\pagenumbering{roman}
\tableofcontents

\clearpage

\pagenumbering{arabic}

\setcounter{section}{-1}
\section{Introduction}

One of the more notorious problems in fields of geometric analysis such as the study of holomorphic curves in symplectic geometry is the fact that while the maps between spaces of functions (or rather their completions to Banach manifolds of maps of, say, some Sobolev class) defined by the partial differential operator under question, such as a nonlinear Cauchy-Riemann operator, are usually Fr{\'e}chet differentiable, the same cannot be said about reparametrisation actions.
As a standard example one can even take the action $S^1\times W^{k,p}(S^1, \R) \to W^{k,p}(S^1,\R)$, $(g,f) \mapsto (h \mapsto f(g\cdot h))$, which is continuous but not Fr{\'e}chet differentiable.
$W^{k,p}(S^1,\R)$ here denotes the Banach space of maps $S^1 \to \R$ of Sobolev class $(k,p)$ for some $k\in\N$ and $1 < p < \infty$.
In recent years, one very promising approach has been the sc-calculus that is part of the theory of polyfolds developed by H.~Hofer, K.~Wysocki and E.~Zehnder, see the most recent \cite{1407.3185} and the references therein.
There the authors develop a new structure on a Banach space, called an sc-structure, and differentiability for maps between subsets of such spaces, called sc-differentiability.
The notion of sc-structure builds on the older notion of a scale of Banach spaces, \ie a sequence $(E_i)_{i \in \N_0}$ of Banach spaces together with continuous inclusions $\iota_i : E_{i+1} \to E_i$, each of which has dense image.
Asking in addition for each $\iota_i$ to be compact results in an sc-scale.
Examples of such sc-scales would be $E_i \definedas \mathcal{C}^i(S^1,\R)$ or $F_i \definedas W^{1+i,p}(S^1,\R)$ for some $p > 1$. \\
Under this new notion of differentiability, reparametrisation actions such as the one above become smooth.
This sc-calculus also comes with a notion of linear and nonlinear Fredholm map, see \cite{1209.4040} for a closer examination of these. \\
Unfortunately, the notions of sc-Banach spaces and of sc-Fredholm operators and maps have a few not so desirable features.
For example, the sc-scales $(E_i)_{i\in\N_0}$ and $(F_i)_{i\in\N_0}$ above satisfy $\bigcap_{i\in\N_0} E_i = \mathcal{C}^\infty(S^1,\R) = \bigcap_{i\in\N_0} F_i$ and both these equalities turn $\mathcal{C}^\infty(S^1, \R)$ into the same Fr{\'e}chet space.
On the other hand, these sc-scales are not equivalent under the standard notion of an isomorphism between sc-Banach spaces. \\
Also, the notion of a nonlinear Fredholm map is fairly complicated, making questions such as whether or not the composition of two Fredholm maps is Fredholm quite intransparent. \\
In contrast in the setting of Banach spaces, one possible definition is that a continuous linear operator is Fredholm \iff it is invertible modulo compact operators.
This definition has the advantage that isomorphisms are obviously Fredholm and since compact operators form an ideal in the space of all bounded operators, it is very easy to see that the composition of two Fredholm operators is Fredholm again. \\
The most straightforward generalisation of this to Fr{\'e}chet differentiable nonlinear maps between Banach spaces is that such a map is Fredholm \iff its differential at every point is a Fredholm operator.
It then is obvious that every diffeomorphism is Fredholm and by the chain rule the composition of two Fredholm maps is Fredholm again, resulting in a naturally coordinate invariant notion of a Fredholm map. \\
Because for maps between Banach spaces there exists a very strong inverse function theorem, there consequently also exists a constant rank theorem for such Fredholm maps. \\
Unfortunately, the situation drastically changes when moving from Banach to Fr{\'e}chet spaces, where the invertibility of the differential of a map at a point does no longer imply the existence of a local inverse for the map itself.
And even invertibility of the differential at all points does not suffice.
So while the theory of Fredholm operators between Fr{\'e}chet spaces largely mirrors the theory of Fredholm operators between Banach spaces, defining a Fredholm map to be a map whose differential at every point is Fredholm results in a class of maps that does not allow a constant rank theorem. \\
Fortunately, the situation is not completely hopeless, for at least for a certain class of Fr{\'e}chet spaces and a class of maps between them that satisfy a certain boundedness condition, there does exist the famous Nash-Moser inverse function theorem (see the excellent article \cite{MR656198} for the theorem, its proof, and plenty of (counter-) examples), guaranteeing the existence of a local inverse, provided the differential is invertible as a family of operators, within this given class of maps.

In this article I will join these two theories, of sc-calculus and the setting where the Nash-Moser inverse function theorem applies, and develop a theory of nonlinear Fredholm maps that follows the one in the settting of Banach spaces as closely as possible.
This also involves a natural equivalence relation (basically a more formalised version of tame equivalence from \cite{MR656198}) on sc-scales that makes the above two examples equivalent, removing a lot of ambiguity in the choice of a concrete sc-scale.

More concretely, after a brief excursion in \cref{Section_Differentiation} into differentiation in locally convex topological vector spaces, where the main definitions and results used in the remainder of this text are collected, first the linear theory of Fredholm operators is developed in \cref{Section_Linear_theory}.
After defining the types of spaces, called $\overline{\text{sc}}$-Fr{\'e}chet spaces, and morphisms between them that are the basic elements of the theory, this theory of Fredholm operators starts by defining what a compact operator is in this setting.
In short, the definition of these operators is as a strengthening of the definition of an $\text{sc}^+$-operator from \cite{1407.3185} that behaves well under equivalence, called a strongly smoothing operator, and which subsequently are shown to form an ideal.
The theory then mirrors the Banach space setting by defining a Fredholm operator as an operator that is invertible modulo strongly smoothing operators, making the Fredholm property evidently stable under composition and under perturbation by strongly smoothing operators.
It culminates in the main structure theorem on Fredholm operators, characterising them as operators with the property that the kernel and cokernel are finite dimensional, split the domain and target space, respectively, and \st the restriction of the operator to a complement of the kernel defines an isomorphism onto the image. \\
In \cref{Section_Nonlinear_maps}, which is fairly technical and consists mainly of results on well definedness and independence of choices for the definitions, the notions of differentiability from \cref{Section_Differentiation} are applied to a scale setting. \\
In \cref{Section_Nash_Moser}, in preparation for the nonlinear theory, augmented versions of an $\overline{\text{sc}}$-Fr{\'e}chet space are defined, which join the previous notion with the tameness conditions from \cite{MR656198} and \cite{MR546504}.
Subsequently a version of the Nash-Moser inverse function theorem (extending the theorem from \cite{MR656198} by results from \cite{MR546504}) that is adapted to these notions is stated and proved. \\
In \cref{Section_Fredholm_maps}, the nonlinear Fredholm theory is then built by following the same scheme as in \cref{Section_Linear_theory}, but in a family version: First a notion of strongly smoothing family of operators is defined and shown to be an ideal under composition of families of operators.
Then a Fredholm family of operators is defined to be one that is invertible modulo families of strongly smoothing operators and shown to have the standard properties: Compositions of Fredholm families are Fredholm, strongly smoothing perturbations of Fredholm families are Fredholm, and the Fredholm index is locally constant, behaves additively under composition and is invariant under strongly smoothing perturbations.
And finally a Fredholm map is a map whose differential is a Fredholm family of operators. \\
Last but not least it is then shown that the Nash-Moser inverse function theorem implies the main theorems on Fredholm maps by virtually identical proofs as for maps between open subsets of Banach spaces: The constant rank theorem, finite dimensional reduction and the Sard-Smale theorem. \\
In the final \cref{Section_Summary}, the results from the previous parts are summarised and collected into an application friendly framework.
To get a more detailed overview of the main definitions, results and examples of this article it might actually be advisable to check out this part of the article first.

\clearpage
\section{Notions of differentiability}\label{Section_Differentiation}

In this section I will give a quick overview over some of the notions and results about differentiation in locally convex vector spaces.
This is not meant as an exhaustive treatment and will cover only the elements of the theory that are used in later parts.
For a more complete picture of the topic and its history the reader is referred \eg to the textbooks \cite{MR0440592} or \cite{MR0488118}.

One of the first things any student attending a calculus course hears when it comes to differentiability is that a function is differentiable if near every point (after a shift) it can be approximated by a linear function (equivalently, if it can be approximated by an affine function).
Formalising this, let $X$ and $Y$ be Hausdorff locally convex topological vector spaces, let $U \subseteq X$ be an open subset and let $f : U \to Y$ by a continuous function.
For a point $x \in U$ consider the shifted function
\begin{align*}
\tilde{f} : U &\to Y \\
x' &\mapsto f(x') - f(x)\text{.}
\end{align*}
The goal is to ``approximate'' this function by a linear function $L \in L_{\mathrm{c}}(X,Y)$,
which only becomes meaningful after one has given a precise definition of ``approximate''.
So let $V \subseteq X$ be a convex balanced neighbourhood of $0$ \st $x + V \subseteq U$.
For every $t \in (0,1]$, one can consider the rescaling map
\begin{align*}
\phi_t : V\times Y &\to U\times Y \\
(u, y) &\mapsto \left(x + tu, ty\right)
\end{align*}
and pull back both $\tilde{f}$ and $L$ by $\phi_t$:
\begin{align*}
\phi_t^\ast \tilde{f} : V &\to Y \\
u &\mapsto \frac{1}{t}\left( f(x + tu) - f(x) \right)\text{,}
\end{align*}
whereas $\phi_t^\ast L = L|_V$.
``Approximate'' then means that the difference
\begin{align*}
r^f_x(\cdot, t) \definedas \phi_t^\ast \tilde{f} - \phi_t^\ast L : V &\to Y \\
u &\mapsto \frac{1}{t}\left( f(x + tu) - f(x) \right) - Lu
\end{align*}
``goes to zero as $t \to 0$''. \\
One can now follow several strategies to give precise meaning to this phrase. \\
As a first possibility, one can consider this as a one parameter family of continuous functions $(r^f_x(\cdot, t))_{t\in (0,1]} \subseteq \mathcal{C}(V, Y)$ and after equipping the space $\mathcal{C}(V, Y)$ with a concrete topology one arrives at a precise definition of what ``approximate'' is supposed to mean:
Setting $r^f_x(\cdot, 0) \equiv 0$, $r^f_x(\cdot, t) \to r^f_x(\cdot, 0)$ as $t \to 0$ in the given topology on $\mathcal{C}(V, Y)$. \\
Alternatively, one can consider this one parameter family as a function $r^f_x : V\times [0,1] \to Y$ and ask for this to be either a continuous or a uniformly continuous function, eliminating the need for a choice of topology on $\mathcal{C}(V, Y)$. \\
As a third possibility one can ask for $r^f_x$ to be either continuous or uniformly continuous when restricted to subsets of $V\times [0,1]$ of the form $A\times [0,1]$, where $A$ comes from a chosen class of subsets such as finite, compact or bounded subsets. \\
Some of these choices will now be explored over the next few sections.

\Needspace{25\baselineskip}
\subsection{Definitions and various characterisations}

\Needspace{15\baselineskip}
\subsubsection{Nonlinear maps between topological vector spaces}

I will use the terminology from \cite{MR3154940}: \\
Let $X$ and $Y$ be Hausdorff locally convex topological vector spaces over a ground field $\mathds{k}$, which is $\R$ or $\C$.
$L_{\mathrm{c}}(X, Y)$ denotes the space of continuous linear maps and will be equipped with the topology of bounded convergence (or bounded-open topology).
It is generated (\ie these sets form a neighbourhood basis of $0$) by the subsets
\[
N(A,U) \definedas \{L \in L_{\mathrm{c}}(X,Y) \;|\; L(A) \subseteq U\}\text{,}
\]
where $A \subseteq X$ is a bounded subset (\ie for every neighbourhood $V \subseteq X$ of $0$ there exists $c \in \mathds{k}$ \st $A \subseteq cV$) and $U \subseteq Y$ is a convex balanced ($\forall\, c\in\mathds{k}$ with $|c| \leq 1$, $cU \subseteq U$) neighbourhood of $0$.
If $(X, \|\cdot\|_X)$ and $(Y, \|\cdot\|_Y)$ are normed spaces (so in particular for Banach spaces), this coincides with the probably more familiar operator norm topology.
I will denote the associated operator norm by
\begin{align*}
\|\cdot\|_{L_{\mathrm{c}}(X,Y)} : L_{\mathrm{c}}(X, Y) &\to [0,\infty) \\
L &\mapsto \sup \bigl\{\|Lx\|_Y \,\bigl|\; x \in X, \|x\|_X = 1\bigr\}\text{.}
\end{align*}

I will also repeatedly use the following properties for subsets of a locally convex topological vector space and (not necessarily linear) maps between such subsets:
\begin{definition}\label{Definition_Boundedness}
Let $X$ and $Y$ be locally convex topological vector spaces, let $U \subseteq X$ be a subset and let $f : U \to Y$ be a map.
\begin{enumerate}[label=\arabic*.,ref=\arabic*.]
  \item A subset $A \subseteq X$ is called \emph{bounded} \iff for every neighbourhood $V \subseteq X$ of $0$ there exists $c \in \mathds{k}$ \st $A \subseteq cV$. \\
A subset $A \subseteq U$ is called bounded if it is bounded as a subset of $X$.
  \item $X$ is called \emph{locally bounded} \iff there exists a bounded neighbourhood of $0$ in $X$.
  \item $f$ is called \emph{bounded} if it maps bounded subsets of $U \subseteq X$ to bounded subsets of $Y$.
  \item $f$ is called \emph{locally bounded} if for every $x \in U$ there exists a neighbourhood $V \subseteq U$ of $x$ in $U$ \st $f|_{V} : V \to Y$ is bounded.
  \item $f$ is said to have \emph{bounded image} if $f(U)\subseteq Y$ is a bounded subset.
  \item $f$ is said to have \emph{locally bounded image} if for every $x \in U$ there exists a neighbourhood $V \subseteq U$ of $x$ in $U$ \st $f|_{V} : V \to Y$ has bounded image.
\end{enumerate}
\end{definition}

\begin{remark}\label{Remark_Bounded_maps}
\begin{enumerate}[label=\arabic*.,ref=\arabic*.]
  \item In finite dimensions, a subset $A \subseteq X$ is compact \iff it is bounded and closed.
  \item $X$ is locally bounded \iff it is normable. \\
This follows by combining Propositions 2.8 and 3.33 in \cite{MR3154940}.
  \item\label{Remark_Bounded_maps_3} The space of smooth sections of a vector bundle with its Fr{\'e}chet topology (\ie the inverse limit of the topologies defined by the $\mathcal{C}^k$-norms, \cf \cref{Subsection_Reparametrisation_action}) is not locally bounded. \\
This follows immediately from the existence, for any $k \in \N_0$, of a sequence $(u^k_n)_{n\in\N_0}$ of smooth sections whose $\mathcal{C}^k$-norms are bounded (independently of $n\in\N_0$), but whose $\mathcal{C}^{k+1}$-norms go to infinity when $n$ goes to infinity.
The construction is completely analogous to the construction of the section $u^k_n$ from \cref{Example_Counterexample_reparametrisation_action}.
  \item In finite dimensions, if $U$ is closed, then any continuous map $f : U \to Y$ is bounded. \\
For a bounded subset of $U$ then has compact closure in $U$, hence its image is contained in a compact, hence bounded (\cf \cite{MR3154940}, Theorem 4.28), subset of $Y$, so is bounded itself.
  \item In finite dimensions, any continuous map $f : U \to Y$ is locally bounded, but obviously not every continuous map has bounded image.
  \item Any continuous linear map $L : X \to Y$ is bounded (\cf \cite{MR3154940}, Proposition 2.18).
  \item If $f$ has (locally) bounded image, then $f$ is (locally) bounded.
  \item If $X$ is locally bounded, then $f$ has locally bounded image \iff $f$ is locally bounded.
\end{enumerate}
\end{remark}

\begin{definition}[and Lemma]\label{Definition_Function_space_topologies}
Let $X$ and $Y$ be locally convex topological vector spaces and let $U \subseteq X$ be an open subset.
\begin{enumerate}[label=\arabic*.,ref=\arabic*.]
  \item $\mathcal{C}(U, Y)$ is the vector space of all continuous functions from $U$ to $Y$.
  \item $\mathcal{C}^{\mathrm{b}}(U, Y)$ is the vector space of all bounded continuous functions from $U$ to $Y$.
  \item $\mathcal{C}^{\mathrm{bi}}(U, Y)$ is the vector space of all continuous functions from $U$ to $Y$ with bounded image.
  \item The subsets of the form
\[
N(C,W) \definedas \{g \in \mathcal{C}(U,Y) \;|\; g(C) \subseteq W\}\text{,}
\]
where $W \subseteq Y$ is a convex balanced neighbourhood of $0$ in $Y$ and $C \subseteq U$ is a finite subset, form a neighbourhood base at $0$ of a topology on $\mathcal{C}(U,Y)$ called the \emph{finite-open} topology. \\
It turns $\mathcal{C}(U,Y)$ into a Hausdorff locally convex topological vector space denoted $\mathcal{C}_{\mathrm{fo}}(U,Y)$.
  \item The subsets of the form
\[
N(K,W) \definedas \{g \in \mathcal{C}(U,Y) \;|\; g(K) \subseteq W\}\text{,}
\]
where $W \subseteq Y$ is a convex balanced neighbourhood of $0$ in $Y$ and $K \subseteq U$ is a compact subset, form a neighbourhood base at $0$ of a topology on $\mathcal{C}(U,Y)$ called the \emph{compact-open} topology. \\
It turns $\mathcal{C}(U,Y)$ into a Hausdorff locally convex topological vector space denoted $\mathcal{C}_{\mathrm{co}}(U,Y)$.
  \item The subsets of the form
\[
N(B,W) \definedas \{g \in \mathcal{C}^{\mathrm{b}}(U,Y) \;|\; g(B) \subseteq W\}\text{,}
\]
where $W \subseteq Y$ is a convex balanced neighbourhood of $0$ in $Y$ and $B \subseteq U$ is a bounded subset, form a neighbourhood base at $0$ of a topology on $\mathcal{C}^{\mathrm{b}}(U,Y)$ called the \emph{bounded-open} topology. \\
It turns $\mathcal{C}^{\mathrm{b}}(U,Y)$ into a Hausdorff locally convex topological vector space denoted $\mathcal{C}^{\mathrm{b}}_{\mathrm{bo}}(U,Y)$.
  \item The subsets of the form
\[
N(A,W) \definedas \{g \in \mathcal{C}^{\mathrm{bi}}(U,Y) \;|\; g(A) \subseteq W\}\text{,}
\]
where $W \subseteq Y$ is a convex balanced neighbourhood of $0$ in $Y$ and $A \subseteq U$ is a bounded subset, form a neighbourhood base at $0$ of a topology on $\mathcal{C}^{\mathrm{bi}}(U,Y)$ called the \emph{arbitrary-open} topology. \\
It turns $\mathcal{C}^{\mathrm{bi}}(U,Y)$ into a Hausdorff locally convex topological vector space denoted $\mathcal{C}^{\mathrm{bi}}_{\mathrm{ao}}(U,Y)$.
\end{enumerate}
\end{definition}
\begin{proof}
The proofs that the above topologies are well defined with the stated properties all use \cite{MR3154940}, Theorem 3.2. \\
For let $A \subseteq U$ be a (finite, compact, bounded) subset and let $W \subseteq Y$ be a convex balanced neighbourhood of $0$.
Then $\tilde{N}(A,W)$ is convex and balanced because $W$ is.
It is also absorbent: If $g \in \mathcal{C}(U,Y)$ ($g \in \mathcal{C}^{\mathrm{b}}(U,Y)$, $g \in \mathcal{C}^{\mathrm{bi}}(U,Y)$), then $g(A) \subseteq Y$ is bounded, because it is finite or compact (\cf \cite{MR3154940}, Theorem 4.28), or by assumption.
So there exists $c \in \mathds{k}$ \st $g(A) \subseteq cW$.
Hence $g \in \tilde{N}(A,cW) = c\tilde{N}(A,W)$ and thus $\tilde{N}(A,W)$ is absorbent.
Moreover, $\frac{1}{2}\tilde{N}(A,W) = \tilde{N}(A, \frac{1}{2}W)$ and $\tilde{N}(A,W)\cap \tilde{N}(A',W') \supseteq \tilde{N}(A\cup A', W\cap W')$.
Finally, $\bigcap \mathcal{B}_0 = \{0\}$, for let $0 \neq g \in \mathcal{C}(U,Y)$ ($g \in \mathcal{C}^{\mathrm{b}}(U,Y)$, $g \in \mathcal{C}^{\mathrm{bi}}(U,Y)$).
Choose $x \in U$ \st $g(x)\neq 0$ and let $A \definedas \{x\}$.
$W' \definedas Y \setminus \{g(x)\}$ is an open neighbourhood of $0$ so by \cite{MR3154940}, Proposition 3.1, there exists a convex balanced neighbourhood $U \subseteq Y$ of $0$ \st $W \subseteq W'$.
Then $g \not\in \tilde{N}(A, W)$.
\end{proof}

\begin{definition}
Let $X$ and $Y$ be locally convex topological vector spaces, let $U \subseteq X$ be a subset and let $f : U \to Y$ be a map.
\begin{enumerate}[label=\arabic*.,ref=\arabic*.]
  \item $f : U \to Y$ is called \emph{uniformly continuous} if for every neighbourhood $W$ of $0$ in $Y$ there exists a neighbourhood $V$ of $0$ in $U$ \st $f(x) - f(y) \in W$ for all $x,y \in U$ with $x-y \in V$.
  \item $f$ is called \emph{locally uniformly continuous} if for every $x \in U$ there exists a neighbourhood $V \subseteq U$ of $x$ in $U$ \st $f|_{V} : V \to Y$ is uniformly continuous.
  \item $\mathcal{C}^{\mathrm{uc}}(U,Y)$, $\mathcal{C}^{\mathrm{uc},\mathrm{b}}(U,Y)$ and $\mathcal{C}^{\mathrm{uc},\mathrm{bi}}(U,Y)$ are the vector spaces of all function from $U$ to $Y$ that are uniformly continuous, bounded uniformly continuous and uniformly continuous with bounded image, respectively. \\
If equipped with the corresponding finite-open, compact-open, bounded-open and arbitrary-open topologies, they are Hausdorff locally convex topological vector spaces denoted $\mathcal{C}^{\mathrm{uc}}_{\mathrm{fo}}(U,Y)$, $\mathcal{C}^{\mathrm{uc}}_{\mathrm{co}}(U,Y)$, $\mathcal{C}^{\mathrm{uc},\mathrm{b}}_{\mathrm{bo}}(U,Y)$ and $\mathcal{C}^{\mathrm{uc},\mathrm{bi}}_{\mathrm{ao}}(U,Y)$, respectively.
\end{enumerate}
\end{definition}

\begin{remark}
Obviously, every (locally) uniformly continuous map is continuous.
\end{remark}

An important result about uniform continuity is the following rather well known (at least for metric spaces) statement relating continuity and uniform continuity on compact subsets.
\begin{proposition}\label{Lemma_Continuity_implies_uniform_continuity}
Let $X$ and $Y$ be Hausdorff locally convex topological vector spaces, let $K\subseteq X$ be a compact subset and let $g : K \to Y$ be a continuous function.
Then $g$ is uniformly continuous.
\end{proposition}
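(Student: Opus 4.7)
The plan is to mimic the classical $\varepsilon$-$\delta$ argument from metric spaces, using finite subcovers of $K$ in place of the Lebesgue number lemma, and using balanced convex neighbourhoods of $0$ in $Y$ in place of $\varepsilon$-balls.

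More concretely, fix a neighbourhood $W$ of $0$ in $Y$. By the standard fact that $Y$ is locally convex and Hausdorff, I can choose a convex balanced neighbourhood $W_0$ of $0$ in $Y$ with $W_0 + W_0 \subseteq W$. For each $x \in K$, continuity of $g$ at $x$ yields a neighbourhood $U_x$ of $0$ in $X$ with $g\bigl((x+U_x)\cap K\bigr) \subseteq g(x) + W_0$, and I then pick a balanced open neighbourhood $V_x$ of $0$ with $V_x + V_x \subseteq U_x$. The open sets $x + V_x$ cover $K$, so by compactness there exist finitely many $x_1, \dots, x_n \in K$ with $K \subseteq \bigcup_{i=1}^n (x_i + V_{x_i})$. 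Setting $V \definedas \bigcap_{i=1}^n V_{x_i}$ produces a neighbourhood of $0$ in $X$ (a finite intersection of neighbourhoods of $0$).

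Given any $x,y \in K$ with $x - y \in V$, pick $i$ with $x \in x_i + V_{x_i}$. Then $y = x - (x-y) \in x_i + V_{x_i} + V \subseteq x_i + V_{x_i} + V_{x_i} \subseteq x_i + U_{x_i}$, so both $x$ and $y$ lie in $(x_i + U_{x_i})\cap K$. Consequently $g(x), g(y) \in g(x_i) + W_0$, and since $W_0$ is balanced,
\[
g(x) - g(y) \in W_0 - W_0 = W_0 + W_0 \subseteq W\text{,}
\]
which is precisely the uniform continuity condition.

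The argument is essentially routine; the only point that needs minor care is the reduction from an arbitrary neighbourhood $W$ of $0$ to a balanced convex $W_0$ with $W_0 + W_0 \subseteq W$, which uses local convexity (and is the topological-vector-space substitute for halving $\varepsilon$). The genuine input is the compactness of $K$, providing the finite subcover and thereby the single neighbourhood $V$ that works uniformly in $x,y$; no completeness or metrizability assumption on $X$ or $Y$ is required.
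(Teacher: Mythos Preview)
Your proof is correct and follows the classical direct route: halve the target neighbourhood, cover $K$ by translates of small balanced neighbourhoods coming from pointwise continuity, extract a finite subcover, and intersect. The paper's proof rests on the same compactness input but packages it differently: it rewrites the problem via the change of variables $\Phi(x,y)=(x,x-y)$ on $K\times K$, so that uniform continuity becomes the statement that the continuous function $\tilde G(x,v)=g(x)-g(x-v)$ maps a tube $K\times V$ into $W$, and then invokes the tube-lemma style argument of \cref{Lemma_Topological_Lemma}. Your version is more self-contained and avoids the coordinate change; the paper's version has the advantage of reusing a lemma it needs elsewhere anyway. Both are standard and equally valid.
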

\begin{proof}
Consider the function
\begin{align*}
G : K\times K &\to Y \\
(x,y) &\mapsto g(x) - g(y)
\intertext{and the embedding}
\Phi : K\times K &\to K\times X \\
(x,y) &\mapsto (x,x-y)
\end{align*}
and define $\tilde{K} \definedas \Phi(K\times K)$ and $\tilde{G} \definedas G\circ \Phi\inv|_{\tilde{K}} : \tilde{K} \to Y$.
Let $W \subseteq Y$ be an arbitrary neighbourhood of $0$.
$\tilde{G}(K\times \{0\}) = \{0\} \subseteq W$, so by continuity of $g$ and hence $\tilde{G}$, for every $x \in K$ there exists a neighbourhood (which can be assumed to be of the form) $V'_x\times V_x \subseteq K\times X$ of $(x,0)$ \st $\tilde{G}(\tilde{K}\cap V'_x\times V_x) \subseteq W$.
Now as in the proof of \cref{Lemma_Topological_Lemma}, there exist finitely many $x_1, \dots, x_r \in K$ \st $K \subseteq \bigcup_{i=1}^rV'_{x_i}$.
Define $V \definedas \bigcap_{i=1}^r V_{x_i}$.
Then $\tilde{G}(\tilde{K}\cap K\times V) \subseteq \tilde{G}\left(\tilde{K}\cap \bigcup_{i=1}^r V'_{x_i} \times V\right) = \bigcup_{i=1}^r\tilde{G}\left(\tilde{K}\cap V'_{x_i} \times V\right) \subseteq W$.
On the other hand, $\tilde{G}(\tilde{K}\cap K\times V) = G(\Phi\inv(\tilde{K}\cap K\times V)) = G(\{(x,y) \in K\times K \;|\; x \in K, x-y \in V\}) = \{g(x) - g(y) \;|\; (x,y)\in K\times K, x-y\in V\}$.
Or in other words, $g(x) - g(y) \in W$ for all $x,y\in K$ with $x-y\in V$, so $g$ is uniformly continuous.
\end{proof}

\begin{corollary}
Let $X$ and $Y$ be Hausdorff locally convex topological vector spaces, let $U \subseteq X$ be an open subset and let $f : U \to Y$ be a map. \\
If $X$ is finite dimensional, then $f$ is locally uniformly continuous \iff $f$ is continuous.
\end{corollary}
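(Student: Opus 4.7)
The forward direction is immediate from the preceding remark, which notes that every locally uniformly continuous map is continuous, so the content lies entirely in the converse. For that direction, the plan is to exploit two standard facts about finite dimensional Hausdorff locally convex topological vector spaces: they are normable (hence locally bounded, by \cref{Remark_Bounded_maps}) and they satisfy the Heine-Borel property, so that closed bounded subsets are compact. Combined with \cref{Lemma_Continuity_implies_uniform_continuity}, which upgrades continuity to uniform continuity on compact subsets, these facts should give the result almost mechanically.

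Concretely, suppose $f : U \to Y$ is continuous and fix a point $x \in U$. Since $X$ is finite dimensional and Hausdorff, its topology is induced by a norm $\|\cdot\|_X$, and closed balls in this norm are compact. Because $U$ is open, one can choose $r > 0$ so small that the closed ball $\overline{B}_r(x) \definedas \set{x' \in X & \|x'-x\|_X \leq r&}$ is contained in $U$. Set $K \definedas \overline{B}_r(x)$ and $V \definedas \set{x' \in X & \|x'-x\|_X < r&}$; then $V \subseteq K \subseteq U$, $V$ is a neighbourhood of $x$ in $U$, and $K$ is compact.

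The restriction $f|_K : K \to Y$ is then continuous on a compact set, so by \cref{Lemma_Continuity_implies_uniform_continuity} it is uniformly continuous. Restricting further to the open subset $V \subseteq K$ preserves uniform continuity (the same neighbourhood of $0$ in $X$ witnessing uniform continuity on $K$ witnesses it on $V$), so $f|_V : V \to Y$ is uniformly continuous. Since this can be done at every $x \in U$, the map $f$ is locally uniformly continuous.

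No real obstacle appears here; the only thing one has to be a little careful about is that the statement asks for a neighbourhood in $U$ rather than merely a compact set, which is why one passes from the compact ball $K$ to its interior $V$ at the end. The entire argument is really just a packaging of \cref{Lemma_Continuity_implies_uniform_continuity} together with local compactness of finite dimensional spaces.
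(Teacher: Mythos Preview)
Your proof is correct and follows essentially the same approach as the paper: use local compactness of finite-dimensional Hausdorff locally convex spaces to find a compact neighbourhood of each point, then apply \cref{Lemma_Continuity_implies_uniform_continuity}. The paper's version is terser (it simply takes a compact neighbourhood $K$ and stops there, since a neighbourhood need not be open), whereas you additionally pass to the open interior $V \subseteq K$; this extra step is harmless but not needed.
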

\begin{proof}
One direction is trivial.
And in the other direction, because $X$ is finite dimensional, every point in $U$ has a compact neighbourhood $K$.
So if $f$ is continuous, by \cref{Lemma_Continuity_implies_uniform_continuity} $f|_K$ is uniformly continuous.
\end{proof}

\Needspace{15\baselineskip}
\subsubsection{Weak Fr{\'e}chet differentiability}

\begin{definition}\label{Definition_Differentiability_I}
Let $X$ and $Y$ be Hausdorff locally convex topological vector spaces, let $U \subseteq X$ be an open subset and let $f : U \to Y$ be a continuous map. \\
Given a point $x \in U$, a continuous linear map $L \in L_{\mathrm{c}}(X,Y)$ and a convex balanced neighbourhood $V \subseteq X$ of $0$ \st $x + V \subseteq U$, define a function
\begin{align*}
r^f_x : V \times [0,1] &\to Y \\
(u,t) &\mapsto \begin{cases} \frac{1}{t} \left(f(x + tu) - f(x)\right) - Lu & t > 0 \\ 0 & t = 0 \end{cases}\text{.}
\end{align*}
$f$ is called
\begin{enumerate}[label=\arabic*.,ref=\arabic*.]
  \item\label{Definition_Differentiability_I_1} \emph{G{\^a}teaux, or pointwise weakly Fr{\'e}chet, differentiable at $x$ with derivative} $Df(x) \definedas L$ \iff there exist $L \in L_{\mathrm{c}}(X,Y)$ and $V \subseteq X$ as above \st for every finite subset $C \subseteq V$ the map
\[
r^f_x|_{C\times [0,1]} : C\times [0,1] \to Y
\]
is continuous.
  \item\label{Definition_Differentiability_I_2} \emph{compactly weakly Fr{\'e}chet differentiable at $x$ with derivative} $Df(x) \definedas L$ \iff there exist $L \in L_{\mathrm{c}}(X,Y)$ and $V \subseteq X$ as above \st for every compact subset $K \subseteq V$ the map
\[
r^f_x|_{K\times [0,1]} : K\times [0,1] \to Y
\]
is continuous.
  \item\label{Definition_Differentiability_I_3} \emph{weakly Fr{\'e}chet differentiable at $x$ with derivative} $Df(x) \definedas L$ \iff there exist $L \in L_{\mathrm{c}}(X,Y)$ and $V \subseteq X$ as above \st the map
\[
r^f_x : V\times [0,1] \to Y
\]
is continuous.
\end{enumerate}
If one of the above holds for some $V \subseteq X$, then it also holds for all $V' \subseteq V$. \\
If an $L$ exists \st \labelcref{Definition_Differentiability_I_1}, \labelcref{Definition_Differentiability_I_2}~or \labelcref{Definition_Differentiability_I_3}~above holds, then it is unique and $f$ is simply called \emph{pointwise, compactly or just weakly Fr{\'e}chet differentiable at $x$}, respectively.
\end{definition}

\begin{remark}
In the definition of weak Fr{\'e}chet differentiability, asking just for continuity of $r^f_x$ along $V\times \{0\}$, this is also called \emph{Leslie differentiability}, \cf \cite{DodsonGalanisVassiliou}.
\end{remark}

\begin{remark}
Another natural class of subsets, along with the finite, compact or arbitrary ones used in the definition above, is the class of bounded subsets.
So the obvious question is why not repeat the above definition for this class of subsets?
This will not be examined in this text any further because of the following reasons:
First, it would not be used in this text anyway, outside of this section. \\
Second, for locally bounded (\ie normable) Hausdorff locally convex topological vector spaces it coincides with weak Fr{\'e}chet differentiability. \\
And third, for not locally bounded spaces it seems that further distinctions/assumptions need to be made for the analogous results to those in this and the subsequent subsections to hold, which tipped the scale in favour of excluding it.
\end{remark}

\begin{remark}\label{Remark_Rest_continuity_versus_continuity_at_0_1}
Note that the continuity of $r^f_x|_{V\times (0,1]} : V\times (0,1] \to Y$ is automatic by virtue of the continuity assumption on $f$.
One could also drop this assumption and instead ask for continuity of $r^f_x$ at the points of $A\times \{0\}$, where $A \subseteq V$ is finite, compact, or all of $V$, respectively.
But for the purposes of this article this just presents an unnecessary complication (the question of whether differentiability in various forms implies continuity will not be addressed here).
\end{remark}

\begin{remark}
By \cite{MR3154940}, Proposition 3.1, the existence of a convex balanced neighbourhood $V$ of $0$ in the previous definition is not a restriction and one can furthermore \smallwlog assume that $V$ is open or that $V$ is closed.
\end{remark}

\begin{example}\label{Example_Gateaux_but_not_weakly_Frechet}
This is a standard example that can be found in many calculus textbooks. \\
Consider the function
\begin{align*}
f : \R^2 &\to \R \\
(x,y) &\mapsto \begin{cases} \frac{x^3y}{x^4 + y^2} & (x,y) \neq (0,0) \\ 0 & (x,y) = (0,0) \end{cases}\text{.}
\end{align*}
$f$ is continuous and G{\^a}teaux differentiable at $(0,0)$ but not weakly Fr{\'e}chet and hence by \cref{Theorem_Relations_between_notions_of_differentiability} also not compactly weakly Fr{\'e}chet or Fr{\'e}chet differentiable.

We have $f(0,y) = 0$ and for $x \neq 0$, $|f(x,y)| = |x| \frac{|y/x^2|}{1 + |y/x^2|^2} \leq |x|$, so $f$ is continuous. \\
Let $L : \R^2 \to \R$ be the zero map.
Then
\[
r^f_{(0,0)}((x,y), t) = t\frac{x^3y}{t^2x^4 + y^2}
\]
$r^f_{(0,0)}((x,0), t) \equiv 0$ and for $y \neq 0$, $|r^f_{(0,0)}((x,y), t)| \leq t|x^3/y| \underset{t \to 0}{\longrightarrow} 0$, so $f$ is G{\^a}teaux differentiable at $(0,0)$ with derivative $Df(0,0) = 0$.
On the other hand, $r^f_{(0,0)}((1,t), t) = t\frac{t}{t^2 + t^2} = \frac{1}{2}$ whereas $r^f_{(0,0)}((1,0), t) \equiv 0$.
So $r^f_{(0,0)}$ is not continuous and $f$ is not weakly Fr{\'e}chet differentiable at $(0,0)$.
\end{example}

\begin{definition}\label{Definition_Weakly_strongly_continuous_bounded_derivative}
Let $X$ and $Y$ be Hausdorff locally convex topological vector spaces, let $U \subseteq X$ be an open subset and let $f : U \to Y$ be a continuous map.
For a subset $A \subseteq U$, if $f$ is (pointwise, compactly) weakly Fr{\'e}chet differentiable at $x$ for all $x \in A$, then $f$ is called \emph{(pointwise, compactly) weakly Fr{\'e}chet differentiable on $A$}, respectively. \\
Let a topological space $\tilde{U}$ together with a continuous map $\iota : \tilde{U} \to U$ be given.
If $f$ is (pointwise, compactly) Fr{\'e}chet differentiable on $\iota(\tilde{U})$ and if furthermore the map
\begin{enumerate}[label=\arabic*.,ref=\arabic*.]
  \item 
\begin{align*}
\tilde{U} &\to L_{\mathrm{c}}(X,Y) \\
x &\mapsto Df(\iota(x))
\end{align*}
is continuous, then $f$ is called \emph{strongly continuously} (pointwise, compactly) weakly Fr{\'e}chet differentiable \emph{along $\iota$}, respectively. \\
In case $\iota = \id_U$, $f$ is just called strongly continuously (pointwise, compactly) weakly Fr{\'e}chet differentiable.
  \item 
\begin{align*}
\tilde{U}\times X &\to Y \\
(x, u) &\mapsto Df(\iota(x))u
\end{align*}
is continuous, then $f$ is called \emph{weakly continuously} (pointwise, compactly) weakly Fr{\'e}chet differentiable \emph{along $\iota$}, respectively. \\
In case $\iota = \id_U$, $f$ is just called weakly continuously (pointwise, compactly) weakly Fr{\'e}chet differentiable.
  \item If
\begin{align*}
\tilde{U} &\to L_{\mathrm{c}}(X,Y) \\
x &\mapsto Df(\iota(x))
\end{align*}
is locally bounded, then $f$ is said to have \emph{locally bounded derivative along $\iota$}. \\
In case $\iota = \id_U$, $f$ is just said to have locally bounded derivative.
\end{enumerate}
\end{definition}

\begin{theorem}[Chain rule]\label{Theorem_Chain_rule_I}
Let $X$, $Y$ and $Z$ be Hausdorff locally convex topological vector spaces and let $U\subseteq X$ and $V \subseteq Y$ be open subsets.
Let furthermore $f : U \to Y$ and $g : V \to Z$ be continuous functions with $f(U) \subseteq V$.
Given $x \in U$ and a topological space $\tilde{U}$ together with a continuous function $\iota : \tilde{U} \to U$, the following hold:
\begin{enumerate}[label=\arabic*.,ref=\arabic*.]
  \item If $f$ is (compactly) weakly Fr{\'e}chet differentiable at $x$ and $g$ is (compactly) weakly Fr{\'e}chet differentiable at $f(x)$, then the composition $g\circ f : U \to Z$ is (compactly) weakly Fr{\'e}chet differentiable at $x$ with
\[
D(g\circ f)(x) = Dg(f(x))\circ Df(x)\text{.}
\]
  \item If $f$ is weakly continuously (compactly) weakly Fr{\'e}chet differentiable along $\iota$ and $g$ is weakly continuously (compactly) weakly Fr{\'e}chet differentiable along $f\circ \iota$, then $g \circ f$ is weakly continuously (compactly) weakly Fr{\'e}chet differentiable along $\iota$. \\
If $Y$ is locally bounded, then the same holds for ``strongly continuously'' in place of ``weakly continuously''.
\end{enumerate}
\end{theorem}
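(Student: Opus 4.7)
My plan is to reduce continuity of $r^{g\circ f}_x$ to continuity of $r^f_x$ and $r^g_{f(x)}$ via an explicit algebraic identity. Setting $\tilde f(u, t) \definedas Df(x) u + r^f_x(u, t)$, one has $f(x + tu) = f(x) + t \tilde f(u, t)$ for $t \in (0, 1]$ and $\tilde f(u, 0) = Df(x) u$, from which a direct substitution gives
\begin{align*}
r^{g \circ f}_x(u, t) \;=\; Dg(f(x))\, r^f_x(u, t) \;+\; r^g_{f(x)}\bigl(\tilde f(u, t),\, t\bigr),
\end{align*}
valid wherever $\tilde f(u, t)$ lies in the neighborhood $V^g \subseteq Y$ appearing in $g$'s differentiability at $f(x)$ (and trivially at $t = 0$, both sides being zero). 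As a first reduction I pick a convex balanced open $V^{g \circ f} \subseteq V^f$ with $x + V^{g \circ f} \subseteq U$ and $Df(x)(V^{g \circ f}) \subseteq \tfrac{1}{2} V^g$, possible by continuity of $Df(x)$. The first summand is then continuous on $V^{g\circ f} \times [0, 1]$ (resp.\ on $K \times [0, 1]$ for every compact $K \subseteq V^{g \circ f}$) as a composition with the continuous linear $Dg(f(x))$, so everything reduces to the second summand.

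In the weakly Fr\'echet case, $r^f_x$ is continuous on all of $V^f \times [0, 1]$ and vanishes on $\{0\} \times [0, 1]$. A tube argument -- for each $t_0 \in [0, 1]$ pick a product-shaped continuity neighborhood of $(0, t_0)$ mapped into $\tfrac{1}{2}V^g$, cover the compact $[0, 1]$ by finitely many such $t_0$'s and intersect the $X$-factors -- lets me further shrink $V^{g \circ f}$ so that $r^f_x(V^{g \circ f} \times [0, 1]) \subseteq \tfrac{1}{2} V^g$. Combined with convex-balancedness of $V^g$ and the choice of $V^{g \circ f}$ above, this yields $\tilde f(V^{g\circ f} \times [0, 1]) \subseteq V^g$, and then continuity of $(u, t) \mapsto r^g_{f(x)}(\tilde f(u, t), t)$ on $V^{g\circ f} \times [0, 1]$ is immediate from composing continuous maps.

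The compactly weakly Fr\'echet case is the main obstacle, because the tube argument is unavailable: $r^f_x$ is only known to be continuous on $K \times [0, 1]$ for each compact $K \subseteq V^f$, not jointly on $V^f \times [0, 1]$. I argue pointwise instead. Fix a compact $K \subseteq V^{g \circ f}$ and a point $(u_0, t_0) \in K \times [0, 1]$. For $t_0 > 0$ continuity of $r^{g\circ f}_x$ is immediate from continuity of $f$ and $g$. For $t_0 = 0$, $\tilde f(u_0, 0) = Df(x) u_0 \in V^g$ by the choice of $V^{g \circ f}$, so I pick an open $W \subseteq Y$ with $Df(x) u_0 \in W$ and $\overline W \subseteq V^g$. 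Continuity of $\tilde f|_{K \times [0, 1]}$ produces an open $N' \subseteq K \times [0, 1]$ with $(u_0, 0) \in N'$ and $\tilde f(N') \subseteq W$, and regularity of the compact Hausdorff space $K \times [0, 1]$ yields an open $N$ with $(u_0, 0) \in N \subseteq \overline N \subseteq N'$. Then $\tilde f(\overline N)$ is a compact subset of $\overline W \subseteq V^g$, so $r^g_{f(x)}$ is continuous on $\tilde f(\overline N) \times [0, 1]$, and composing with the continuous map $(u, t) \mapsto (\tilde f(u, t), t)$ on $\overline N$ gives continuity of the second summand at $(u_0, 0)$ relative to $K \times [0, 1]$.

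For part 2, the weakly continuous case is formal: the map $\tilde U \times X \ni (x, u) \mapsto D(g \circ f)(\iota(x)) u = Dg(f(\iota(x)))\bigl(Df(\iota(x)) u\bigr)$ factors as $(x, u) \mapsto (x, Df(\iota(x)) u) \mapsto Dg(f(\iota(x)))(Df(\iota(x)) u)$, a composition of maps continuous by the two hypotheses. For the strongly continuous case, continuity of $x \mapsto Dg(f(\iota(x))) \circ Df(\iota(x))$ into $L_{\mathrm{c}}(X, Z)$ reduces to joint continuity of the composition pairing $L_{\mathrm{c}}(Y, Z) \times L_{\mathrm{c}}(X, Y) \to L_{\mathrm{c}}(X, Z)$; this joint continuity holds when $Y$ is locally bounded -- the bounded-open topology on $L_{\mathrm{c}}(Y, Z)$ then being the operator-norm topology for a norm generating the topology of $Y$ -- which is exactly the extra hypothesis used here.
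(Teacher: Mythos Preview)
Your proof is correct and follows essentially the same approach as the paper: the same algebraic identity for $r^{g\circ f}_x$, the same tube argument in the weakly Fr\'echet case, and identical treatment of part 2. The only variation is in the compactly weak case, where the paper applies the tube lemma to $K \times \{0\}$ to obtain a uniform $\delta > 0$ with $\tilde f(K \times [0,\delta])$ a compact subset of $V^g$, whereas you argue pointwise at each $(u_0,0)$ via regularity of $K \times [0,1]$; both devices accomplish the same thing.
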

\begin{proof}
\begin{enumerate}[label=\arabic*.,ref=\arabic*.]
  \item Note that for $t > 0$, $r^f_x(u,t)$ (and analogously for $r^g$ and $r^{g\circ f}$) is uniquely determined by the equation
\[
f(x + tu) = f(x) + t(Df(x)u + r^f_x(u,t))\text{.}
\]
One hence calculates for $t > 0$.
\begin{align*}
g\circ f(x + tu) &= g(f(x) + t(Df(x)u + r^f_x(u,t))) \\
&= g(f(x)) + t\bigl[ Dg(f(x))\left(Df(x)u + r^f_x(u,t)\right) \;+ \\
&\quad\; \qquad\qquad\quad\; +\; r^g_{f(x)}(Df(x)u + r^f_x(u,t), t) \bigr] \\
&= g(f(x)) + t\bigl[ Dg(f(x))\circ Df(x)u \;+ \\
&\quad\; \qquad\qquad\quad\; +\; Dg(f(x))r^f_x(u,t) \;+ \\
&\quad\; \qquad\qquad\quad\; +\; r^g_{f(x)}(Df(x)u + r^f_x(u, t), t) \bigr]\text{.}
\end{align*}
It follows that
\begin{equation}\label{Equation_Chain_rule_I}
r^{g\circ f}_{x}(u, t) = Dg(f(x))r^f_x(u,t) + r^g_{f(x)}(Df(x)u + r^f_x(u, t), t)\text{,}
\end{equation}
whenever both sides of the equation are defined.
\begin{enumerate}[label=\arabic*.,ref=\arabic*.]
  \item Let $f$ and $g$ be weakly Fr{\'e}chet differentiable at $x$ and $f(x)$, respectively.
For the right hand side in the above formula to be well defined, if $r^g_{f(x)}$ is defined on $V'\times [0,1]$ and $r^f_x$ is defined on $V\times [0,1]$, one has to find a convex balanced neighbourhood $V'' \subseteq V \subseteq X$ of $0$ \st $Df(x)u + r^f_x(u,t) \in V'$ for all $(u,t) \in V'' \times [0,1]$.
$r^{g\circ f}_x$ is then defined on $V''\times [0,1]$ and satisfies the above formula. \\
Now the function
\begin{align*}
\phi : V\times [0,1] &\to Y \\
(u,t) &\mapsto Df(x)u + r^f_x(u,t)
\end{align*}
is continuous and satisfies $\{0\}\times [0,1] \subseteq \phi\inv(0) \subseteq \phi\inv(V')$.
By \cref{Lemma_Topological_Lemma} below there hence exists a neighbourhood $V'' \subseteq V \subseteq X$ of $0$ \st $V''\times [0,1] \subseteq \phi\inv(V')$ and by \cite{MR3154940}, Proposition 3.1, one can assume that $V''$ is convex and balanced. \\
If $f$ and $g$ are weakly Fr{\'e}chet differentiable at $x$ and $f(x)$, respectively, then the right hand side of the above formula for $r^{g\circ f}_x$ extends continuously by $0$ to $t=0$.
So $r^{g\circ f}_x$ is continuous as a composition of continuous functions.
  \item Let $f$ and $g$ be compactly weakly Fr{\'e}chet differentiable at $x$ and $f(x)$, respectively, and let $V$ and $V'$ be convex balanced neighbourhoods of $0$ in $X$ and $Y$, respectively, as in \cref{Definition_Differentiability_I}.
After possibly making $V$ smaller (and again using \cite{MR3154940}, Proposition 3.1), one can also assume that $f(x + V) \subseteq f(x) + V'$ and $Df(x)(V) \subseteq V'$, using continuity of $f$ and $Df(x)$, respectively.
Let $K \subseteq V$ be compact.
The goal is to show that $r^{g\circ f}_x|_{K\times [0,1]}$ is continuous.
Since $r^{g\circ f}_x|_{K\times (0,1]}$ is already continuous by definition, it suffices to show that
$r^{g\circ f}_x|_{K\times [0,\delta)}$ is continuous for an arbitrarily small $\delta = \delta(K) > 0$.
By assumption, the map
\begin{align*}
\phi : K\times [0,1] &\to Y \\
(u,t) &\mapsto Df(x)u + r^f_x(u,t)
\end{align*}
is continuous and satisfies $\phi(K \times \{0\}) = Df(x)(K) \subseteq Df(x)(V) \subseteq V'$, hence $K\times \{0\} \subseteq \phi\inv(V')$.
By \cref{Lemma_Topological_Lemma} there exists $\delta = \delta(K) > 0$ \st $K\times [0,2\delta) \subseteq \phi\inv(V')$.
This implies that in the above formula for $r^{g\circ f}_x$ the right hand side is well defined for $(u,t) \in K\times [0,2\delta)$.
The map
\begin{align*}
K\times [0,\delta] &\to Y \\
(u,t) &\mapsto Df(x)u + r^f_x(u,t)
\end{align*}
is continuous because $Df(x)$ is and because $r^f_x : K\times [0,1] \to Y$ is by assumption.
It hence has compact image and one can conclude as before that $r^{g\circ f}_x|_{K\times [0,\delta)}$ is continuous as a composition of continuous functions.
\end{enumerate}
  \item For weak continuity, the map
\begin{align*}
\tilde{U} \times X &\to Y \\
(x,u) &\mapsto D(g\circ f)(\iota(x))u
\end{align*}
is given by the composition
\[
(x,u) \mapsto (f(\iota(x)), Df(\iota(x))u) \mapsto Dg(f(\iota(x)))Df(\iota(x))u\text{,}
\]
hence continuous. \\
If $Y$ is locally bounded, then the composition $L_{\mathrm{c}}(X,Y)\times L_{\mathrm{c}}(Y, Z) \to L_{\mathrm{c}}(X,Z)$ is continuous, which shows the last statement.
\end{enumerate}
\end{proof}

\begin{lemma}\label{Lemma_Topological_Lemma}
Let $A$, $B$ and $K$ be topological spaces with $K$ compact.
\begin{enumerate}[label=\arabic*.,ref=\arabic*.]
  \item\label{Lemma_Topological_Lemma_1} Let $\phi : A\times K \to B$ be a continuous function, let $a_0\in A$ and let $V \subseteq B$ be an open subset with $\{a_0\}\times K \subseteq \phi\inv(V)$.
Then there exists a neighbourhood $U \subseteq A$ of $a_0$ \st $U\times K \subseteq \phi\inv(V)$.
  \item\label{Lemma_Topological_Lemma_2} If $\phi : A \times K \to \R$ is a continuous function, then so is the function
\begin{align*}
\max_K \phi : A &\to \R \\
a &\mapsto \max \{\phi(a,k) \;|\; k\in K\}\text{.}
\end{align*}
\end{enumerate}
\end{lemma}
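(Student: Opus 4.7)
The plan is to prove part \labelcref{Lemma_Topological_Lemma_1} directly (this is the classical tube lemma) and then deduce part \labelcref{Lemma_Topological_Lemma_2} from it by applying it to suitable open subsets of $\R$.

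For \labelcref{Lemma_Topological_Lemma_1}, I would argue as follows. For every $k \in K$ the point $(a_0, k)$ lies in the open set $\phi\inv(V)$, so there exist open neighbourhoods $U_k \subseteq A$ of $a_0$ and $W_k \subseteq K$ of $k$ with $U_k \times W_k \subseteq \phi\inv(V)$. The family $\{W_k\}_{k\in K}$ is an open cover of $K$; by compactness extract a finite subcover $W_{k_1}, \dots, W_{k_r}$ and set $U \definedas \bigcap_{i=1}^r U_{k_i}$, which is a neighbourhood of $a_0$. For any $(a, k) \in U \times K$, pick $i$ with $k \in W_{k_i}$; then $(a,k) \in U_{k_i} \times W_{k_i} \subseteq \phi\inv(V)$, so $U \times K \subseteq \phi\inv(V)$, as desired.

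For \labelcref{Lemma_Topological_Lemma_2}, fix $a_0 \in A$ and observe that $\phi(a_0, \cdot) : K \to \R$ is continuous on a compact space, so the supremum $M \definedas \max_K \phi(a_0, \cdot)$ is attained at some $k_0 \in K$; in particular $\max_K \phi$ is well-defined. To prove continuity at $a_0$, fix $\varepsilon > 0$. For the upper estimate, apply \labelcref{Lemma_Topological_Lemma_1} to the open set $V \definedas (-\infty, M + \varepsilon) \subseteq \R$: since $\phi(\{a_0\}\times K) \subseteq [\min, M] \subseteq V$, there is a neighbourhood $U_+ \subseteq A$ of $a_0$ with $\phi(U_+ \times K) \subseteq V$, which means $\max_K \phi(a,\cdot) < M + \varepsilon$ for all $a \in U_+$. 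For the lower estimate, use continuity of $\phi$ at the single point $(a_0, k_0)$ to obtain a neighbourhood $U_- \subseteq A$ of $a_0$ with $\phi(a, k_0) > M - \varepsilon$ for $a \in U_-$; then $\max_K \phi(a, \cdot) \geq \phi(a, k_0) > M - \varepsilon$. Setting $U \definedas U_- \cap U_+$ yields $|\max_K \phi(a,\cdot) - M| < \varepsilon$ for $a \in U$.

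I do not expect any real obstacle: the only genuine content is the finite-subcover argument in \labelcref{Lemma_Topological_Lemma_1}, and \labelcref{Lemma_Topological_Lemma_2} reduces to two applications of \labelcref{Lemma_Topological_Lemma_1} (one to the open half-line, one implicit in the lower bound through pointwise continuity of $\phi$). The mild subtlety is that for the lower bound one should not try to use \labelcref{Lemma_Topological_Lemma_1} on $(M - \varepsilon, \infty)$, since $\phi(\{a_0\}\times K)$ generally is not contained there; instead one works at the maximiser $k_0$ alone, which suffices because the maximum only needs to be bounded from below by the value at one point.
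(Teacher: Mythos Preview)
Your proof is correct and follows essentially the same approach as the paper: part \labelcref{Lemma_Topological_Lemma_1} is the standard tube-lemma argument, and part \labelcref{Lemma_Topological_Lemma_2} is handled by applying \labelcref{Lemma_Topological_Lemma_1} to $(-\infty, M+\varepsilon)$ for the upper bound and using pointwise continuity at a maximiser $k_0$ for the lower bound. Your closing remark about why \labelcref{Lemma_Topological_Lemma_1} cannot be used directly for the lower bound is a nice clarification, but otherwise the arguments coincide.
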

\begin{proof}
\begin{enumerate}[label=\arabic*.,ref=\arabic*.]
  \item Let $\phi$, $a_0$ and $V$ be as in the statement of the lemma.
Because $\phi$ is continuous, for every $k \in K$ there exist neighbourhoods $U_k \subseteq A$ of $a_0$ and $W_k \subseteq K$ of $k$ with $U_k\times W_k \subseteq \phi\inv(V)$.
Because $K$ is compact, one can choose finitely many $k_1, \dots k_d \in K$ \st $W_{k_1}, \dots, W_{k_d}$ cover $K$.
Let $U \definedas \bigcap_{j=1}^d U_{k_j}$.
Then $U\times K \subseteq U\times \bigcup_{j=1}^d W_{k_j} \subseteq \bigcup_{j=1}^d U_{k_j}\times W_{k_j} \subseteq \phi\inv(V)$.
  \item Let $a_0 \in A$, $b_0 \definedas \max_K\phi(a_0)$ and let $\varepsilon > 0$.
By definition $\{a_0\}\times K \subseteq \phi\inv((-\infty, b_0 + \varepsilon))$ and hence by \labelcref{Lemma_Topological_Lemma_1}~there exists a neighbourhood $U'\subseteq A$ of $a_0$ \st $U'\times K \subseteq \phi\inv((-\infty, b_0 + \varepsilon))$.
Hence $\max_K(a) < b_0 + \varepsilon$ for all $a \in U'$. \\
Now pick $k_0 \in K$ \st $b_0 = \phi(a_0,k_0)$.
By continuity of $\phi$ there exists a neighbourhood $U'' \subseteq A$ of $a_0$ \st $|\phi(a,k_0) - \phi(a_0,k_0)| < \varepsilon$ for all $a \in U''$.
Hence $\max_K\phi(a) \geq \phi(a,k_0) > \phi(a_0,k_0) - \varepsilon = b_0 - \varepsilon$ for all $a \in U''$.
$U \definedas U' \cap U''$ then satisfies $|\max_K(a) - \max_K(a_0)| < \varepsilon$ for all $a \in U$.
\end{enumerate}
\end{proof}

\Needspace{15\baselineskip}
\subsubsection{Fr{\'e}chet differentiability}

\begin{definition}\label{Definition_Differentiability_II}
Let $X$ and $Y$ be Hausdorff locally convex topological vector spaces, let $U \subseteq X$ be an open subset and let $f : U \to Y$ be a continuous map. \\
Given a point $x \in U$, a continuous linear map $L \in L_{\mathrm{c}}(X,Y)$ and a convex balanced neighbourhood $V \subseteq X$ of $0$ \st $x + V \subseteq U$, define a function
\begin{align*}
r^f_x : V \times [0,1] &\to Y \\
(u,t) &\mapsto \begin{cases} \frac{1}{t} \left(f(x + tu) - f(x)\right) - Lu & t > 0 \\ 0 & t = 0 \end{cases}\text{.}
\end{align*}
$f$ is called
\begin{enumerate}[label=\arabic*.,ref=\arabic*.]
  \item\label{Definition_Differentiability_II_1} \emph{G{\^a}teaux, or pointwise Fr{\'e}chet, differentiable at $x$ with derivative} $Df(x) \definedas L$ \iff there exist $L \in L_{\mathrm{c}}(X,Y)$ and $V \subseteq X$ as above \st the map
\begin{align*}
[0,1] &\to \mathcal{C}_{\mathrm{fo}}(V, Y) \\
t &\mapsto r^f_x(\cdot, t)
\end{align*}
is continuous.
  \item\label{Definition_Differentiability_II_2} \emph{compactly Fr{\'e}chet differentiable at $x$ with derivative} $Df(x) \definedas L$ \iff there exist $L \in L_{\mathrm{c}}(X,Y)$ and $V \subseteq X$ as above \st the map
\begin{align*}
[0,1] &\to \mathcal{C}_{\mathrm{co}}(V, Y) \\
t &\mapsto r^f_x(\cdot, t)
\end{align*}
is continuous.
  \item\label{Definition_Differentiability_II_3} If $f$ has locally bounded image, then $f$ is called \emph{Fr{\'e}chet differentiable at $x$ with derivative} $Df(x) \definedas L$ \iff there exist $L \in L_{\mathrm{c}}(X,Y)$ and $V \subseteq X$ as above \st the map
\begin{align*}
[0,1] &\to \mathcal{C}^{\mathrm{bi}}_{\mathrm{ao}}(V, Y) \\
t &\mapsto r^f_x(\cdot, t)
\end{align*}
is well defined and continuous.
\end{enumerate}
If one of the above holds for some $V \subseteq X$, then it also holds for all $V' \subseteq V$. \\
If an $L$ exists \st \labelcref{Definition_Differentiability_II_1}, \labelcref{Definition_Differentiability_II_2}~or \labelcref{Definition_Differentiability_II_3}~above holds, then it is unique and $f$ is simply called \emph{pointwise, compactly or just Fr{\'e}chet differentiable at $x$}, respectively.
The notions of \emph{weakly} and \emph{strongly continuously (pointwise, compactly) Fr{\'e}chet differentiable}, as well as of \emph{locally bounded derivative}, are defined in complete analogy to \cref{Definition_Weakly_strongly_continuous_bounded_derivative}.
\end{definition}

\begin{remark}\label{Remark_Rest_continuity_versus_continuity_at_0_2}
Note that, similar to \cref{Remark_Rest_continuity_versus_continuity_at_0_1}, one could also ask for $t \mapsto r^f_x(\cdot, t)$ to be continuous just at $t = 0$, instead of continuity for all $t \in [0,1]$.
Indeed, at least for Fr{\'e}chet differentiability, this is equivalent only under the assumption that $f$ is locally uniformly continuous (\cf also \cref{Remark_Assumption_uniform_continuity}).
But for the purposes of this article, the distinction is immaterial since Fr{\'e}chet differentiability won't be used anyway.
\end{remark}

At first glance, the notions of weak Fr{\'e}chet differentiability and Fr{\'e}chet differentiability might seem quite different, but the following proposition shows that in suitable reformulations these notions are quite similar.

\begin{proposition}\label{Proposition_Characterisation_Differentiability_I}
In the notation of \cref{Definition_Differentiability_II},
\begin{enumerate}[label=\arabic*.,ref=\arabic*.]
  \item\label{Proposition_Characterisation_Differentiability_I_1} The following are equivalent:
\begin{enumerate}[label=(\alph*),ref=(\alph*)]
  \item $f$ is pointwise Fr{\'e}chet differentiable at $x$.
  \item There exists a convex balanced neighbourhood $V \subseteq X$ of $0$ \st for all finite subsets $C \subseteq V$ the map
\[
r^f_x|_{C\times [0,1]} : C\times [0,1] \to Y
\]
is uniformly continuous.
\end{enumerate}
  \item\label{Proposition_Characterisation_Differentiability_I_2} The following are equivalent:
\begin{enumerate}[label=(\alph*),ref=(\alph*)]
  \item $f$ is compactly Fr{\'e}chet differentiable at $x$.
  \item There exists a convex balanced neighbourhood $V \subseteq X$ of $0$ \st for all compact subsets $K \subseteq V$ the map
\[
r^f_x|_{K \times [0,1]} : K\times [0,1] \to Y
\]
is uniformly continuous.
\end{enumerate}
  \item\label{Proposition_Characterisation_Differentiability_I_3} If $f$ has locally bounded image and is locally uniformly continuous, then the following are equivalent:
\begin{enumerate}[label=(\alph*),ref=(\alph*)]
  \item $f$ is Fr{\'e}chet differentiable at $x$.
  \item There exists a convex balanced neighbourhood $V \subseteq X$ of $0$ \st the map
\[
r^f_x : V\times [0,1] \to Y
\]
is uniformly continuous.
\end{enumerate}
\end{enumerate}
\end{proposition}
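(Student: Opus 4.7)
The plan is to treat all three equivalences by a common template: the ``$t$-direction'' of the uniform continuity demanded in (b) is, on unwinding definitions, exactly continuity in the relevant function space topology from (a), and the ``$c$-direction'' either comes for free by compactness (parts 1 and 2) or must be extracted from the standing hypotheses (part 3).

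For (b) $\Rightarrow$ (a) in each case, a subbasic neighbourhood of $r^f_x(\cdot, t_0)$ in $\mathcal{C}_{\mathrm{fo}}(V,Y)$, $\mathcal{C}_{\mathrm{co}}(V,Y)$ or $\mathcal{C}^{\mathrm{bi}}_{\mathrm{ao}}(V,Y)$ has the shape $r^f_x(\cdot, t_0) + N(A, W)$ for $A \subseteq V$ finite, compact, or bounded respectively. So continuity of $t \mapsto r^f_x(\cdot, t)$ at $t_0$ unwinds to the assertion that for every such $A$ and every convex balanced neighbourhood $W$ of $0$ in $Y$ there exists $\delta > 0$ with $r^f_x(c, t) - r^f_x(c, t_0) \in W$ for all $c \in A$ and $|t - t_0| < \delta$. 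This is exactly what specialising the uniform continuity of $r^f_x|_{A \times [0,1]}$ to $c_1 = c_2 = c$ yields, so (b) $\Rightarrow$ (a) is essentially tautological in all three parts.

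For (a) $\Rightarrow$ (b) in parts 1 and 2, the sets $A \times [0,1]$ (with $A$ finite or compact) are compact, so by \cref{Lemma_Continuity_implies_uniform_continuity} it suffices to prove joint continuity of $r^f_x|_{A \times [0,1]}$. On $A \times (0,1]$ this is immediate from continuity of $f$ and $L$, and at a point $(c_0, 0)$ the unwound form of (a) at $t_0 = 0$ gives $r^f_x(c, t) \in W$ uniformly in $c \in A$ for $t$ near $0$, which is precisely joint continuity at $(c_0, 0)$. \cref{Lemma_Continuity_implies_uniform_continuity} then upgrades this to uniform continuity.

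Part 3 is the serious case, because $V \times [0,1]$ fails to be compact as soon as $V$ is not bounded in $X$ --- which happens whenever $X$ is not locally bounded --- so \cref{Lemma_Continuity_implies_uniform_continuity} is unavailable and the two standing hypotheses must carry the weight. I would decompose
\[
r^f_x(c_1, t_1) - r^f_x(c_2, t_2) = \bigl[r^f_x(c_1, t_1) - r^f_x(c_1, t_2)\bigr] + \bigl[r^f_x(c_1, t_2) - r^f_x(c_2, t_2)\bigr]
\]
and split into the regimes $t_2 \geq \delta_0$ and $t_1, t_2 < \delta_0$ for a suitable threshold $\delta_0 > 0$. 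For $t_2$ bounded away from zero, the second bracket equals $\tfrac{1}{t_2}[f(x + t_2 c_1) - f(x + t_2 c_2)] - L(c_1 - c_2)$; local uniform continuity of $f$ on $x + V$, combined with the balancedness of $V$ (which gives $t_2 V \subseteq V$ for $t_2 \in [0,1]$) and continuity of $L$, yields the desired control in $c_1 - c_2$ uniformly over $c_2 \in V$, and the first bracket is handled by (a) applied on the bounded $A = \{c_1\}$ at $t_0 = t_2$. For $t_1, t_2 < \delta_0$ one invokes (a) at $t_0 = 0$, which in the ao-topology says $r^f_x(\cdot, t) \to 0$ uniformly on every bounded subset of $V$. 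The trickiest point, which I expect to be the main obstacle, is upgrading this small-$t$ estimate from uniformity on bounded subsets of $V$ to uniformity on all of $V$: here the locally bounded image of $f$ must be used in tandem with local uniform continuity and the balancedness of $V$ to bootstrap the estimate from a bounded neighbourhood of $0$ outwards, and then to patch the two regimes cleanly across the threshold $\delta_0$.
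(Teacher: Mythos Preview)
Your argument for (b) $\Rightarrow$ (a) in all three parts is correct, and your treatment of parts 1 and 2 via joint continuity on the compact set $A\times[0,1]$ followed by \cref{Lemma_Continuity_implies_uniform_continuity} is valid and in fact a bit cleaner than what the paper does there.

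For part 3, however, you have misidentified the obstruction. The ``arbitrary-open'' topology on $\mathcal{C}^{\mathrm{bi}}(V,Y)$ uses \emph{arbitrary} subsets $A\subseteq V$, not merely bounded ones (the word ``bounded'' in item 7 of \cref{Definition_Function_space_topologies} is a slip; the name and the proof of absorbency --- which relies on $g$ having bounded image, not on $A$ being bounded --- make this clear). Thus (a) at $t_0=0$ already yields $r^f_x(c,t)\in W'$ for all $c\in V$ and $t<\delta_0$, and no ``bootstrapping from bounded neighbourhoods outwards'' is required. With this correction your regime split goes through: in the small-$t$ regime both values lie in $W'$, and in the regime $t_2\geq\delta_0$ your formula for the second bracket is handled by local uniform continuity of $f$ and continuity of $L$. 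One further point you glossed over: handling the first bracket ``by (a) applied on $A=\{c_1\}$ at $t_0=t_2$'' gives a $\delta$ that a priori depends on both $c_1$ and $t_2$; you must instead take $A=V$ in the arbitrary-open topology and then invoke \cref{Lemma_Continuity_implies_uniform_continuity} on the compact domain $[0,1]$ to make $\delta$ uniform in $t_2$.

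The paper organises part 3 differently. Rather than decomposing $r^f_x(c_1,t_1)-r^f_x(c_2,t_2)$ additively and invoking (a) for the $t$-direction, it writes out the explicit identity
\[
r^f_x(u,t)-r^f_x(u',t')=\frac{t'-t}{tt'}\bigl(f(x+t'u')-f(x)\bigr)+L(u'-u)+\frac{1}{t}\bigl(f(x+tu)-f(x+t'u')\bigr)
\]
and uses it to prove uniform continuity of $r^f_x$ on $A\times[\delta_0,1]$ \emph{directly from the hypotheses on $f$}, without appealing to (a) at all; this single computation covers all three parts at once. The patching across $t=\delta_0$ then uses (a) only at $t_0=0$. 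The advantage of the paper's route is that the same explicit identity simultaneously establishes continuity of $t\mapsto r^f_x(\cdot,t)$ on $(0,1]$ (Claim~2), which is not automatic from continuity of $f$ alone in the Fr{\'e}chet case; your approach takes this for granted as part of (a), which is legitimate but hides where local uniform continuity of $f$ actually enters.
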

\begin{proof}
The proofs in all three cases are actually almost identical, so will be treated all at once: \\
Let $V \subseteq X$ be a convex balanced neighbourhood of $0$ either as in the definition of Fr{\'e}chet differentiability or as in the statement of the proposition and let $A \subseteq V$ be a (finite, compact) subset.
Furthermore, assume that $f|_A$ is uniformly continuous and has bounded image, either by choosing $V$ appropriately (possible since it was assumed that $f$ has locally bounded image and is locally uniformly continuous) or by using \cref{Lemma_Continuity_implies_uniform_continuity} in case $A$ is (finite or) compact. \\
\begin{claim}
$r^f_x|_{A\times [\delta_0,1]} : A\times [\delta_0,1] \to Y$ is uniformly continuous for every $\delta_0 \in (0,1]$.
\end{claim}
\begin{proof}
One computes for $(u,t),(u',t')\in (0,1]\times V$ that
{\allowdisplaybreaks
\begin{align}
r^f_x(u, t) - r^f_x(u', t') &= \frac{1}{t}(f(x+tu) - f(x)) - Lu \;-\nonumber \\
&\quad\; -\; \frac{1}{t'}(f(x+t'u') - f(x)) + Lu'\nonumber \\
&= \frac{t-t'}{tt'}f(x) + L(u'-u) \;+\nonumber \\
&\quad\; +\; \frac{1}{tt'}(t'f(x+tu) - tf(x+t'u'))\nonumber \\
&= \frac{t-t'}{tt'}f(x) + L(u'-u) \;+\nonumber \\
&\quad\; +\; \frac{1}{tt'}(t'f(x+tu) - t'f(x+t'u') \;+\nonumber \\
&\quad\; +\; t'f(x+t'u') - tf(x+t'u'))\text{,}\nonumber
\intertext{so}
\begin{split}\label{Proposition_Characterisation_Differentiability_I_Eq1}
r^f_x(u, t) - r^f_x(u', t') &= \frac{t'-t}{tt'}(f(x+t'u') - f(x)) + L(u'-u) \;+ \\
&\quad\; +\; \frac{1}{t}(f(x+tu) - f(x+t'u'))\text{.}
\end{split}
\end{align}
}
Given any convex balanced neighbourhood $W \subseteq Y$ of $0$ one can find convex balanced neighbourhoods $W_1,W_2,W_3 \subseteq Y$ of $0$ \st $x_1 + x_2 + x_3 \in W$ for all $(x_1,x_2,x_3) \in W_1\times W_2\times W_3$ (just by continuity of addition).
Now if $t, t' \in [\delta_0,1]$, then $\frac{1}{tt'}$ and $\frac{1}{t}$ are bounded below away from $0$.
Also, $\{f(x+t'u') - f(x) \;|\; (u',t') \in A\times [0,1]\} \subseteq Y$ is a bounded subset by the assumption that $f|_A$ has bounded image.
Consequently, $M_1 \definedas \{\frac{1}{tt'}(f(x+t'u') - f(x)) \;|\; t,t' \in [\delta_0,1], u' \in A\} \subseteq Y$ is a bounded subset and there exists $\delta_1 > 0$ \st $(t-t')M_1 \subseteq W_1$ for all $t,t' \in [0,1]$ with $|t-t'| < \delta_1$. \\
$L$ is a continuous linear and hence uniformly continuous map, so there exists a (convex balanced) neighbourhood $V_1 \subseteq X$ of $0$ \st $L(u'-u) \in W_2$ for all $u,u' \in X$ with $u-u' \in V_1$.
Since it was assumed that $f|_A$ is uniformly continuous, so is the function $A\times [0,1] \to Y$, $(u,t) \mapsto f(x + tu)$.
Hence there exists $\delta_2 > 0$ and a (convex balanced) neighbourhood $V_2 \subseteq X$ of $0$ \st $f(x + tu) - f(x+t'u') \in \delta_0 W_3$ for all $(u,t), (u',t') \in A\times [0,1]$ with $|t-t'| < \delta_2$ and $u-u' \in V_2$.
Consequently, $\frac{1}{t}(f(x+tu) - f(x+t'u')) \in W_3$ for all $(u,t), (u',t') \in A\times [\delta_0,1]$ with $|t-t'| < \delta_2$ and $u-u' \in V_2$. \\
Let $\delta \definedas \min\{\delta_1,\delta_2\}$ and $\tilde{V} \definedas V_1 \cap V_2$.
Then $r^f_x(u, t) - r^f_x(u', t') \in W_1 + W_2 + W_3 \subseteq W$ for all $(u,t), (u',t') \in A\times [\delta_0,1]$ with $(u,t) - (u',t') = (u-u',t-t') \in \tilde{V}\times (-\delta,\delta)$.
\end{proof}
\begin{claim}
$(0,1] \to \mathcal{C}^{\mathrm{bi}}_{\mathrm{ao}}(V, Y)$ ($\mathcal{C}_{\mathrm{co}}(V, Y)$, $\mathcal{C}_{\mathrm{fo}}(V, Y)$), $t \mapsto r^f_x(\cdot, t)$, is continuous.
\end{claim}
\begin{proof}
Let $t_0 \in (0,1]$.
By definition of the topology on $\mathcal{C}^{\mathrm{bi}}_{\mathrm{ao}}(V, Y)$ ($\mathcal{C}_{\mathrm{co}}(V, Y)$, $\mathcal{C}_{\mathrm{fo}}(V, Y)$), to show continuity at $t = t_0$, for every (compact, finite) subset $A \subseteq V$ and any convex balanced neighbourhood $W \subseteq Y$ of $0$, one needs to find $\delta > 0$ \st $r^f_x(\cdot, t) - r^f_x(\cdot, t_0) \in N(A, W)$ for all $t \in (0,1]$ with $|t - t_0| < \delta$ ($N(A,W)$ is as in \cref{Definition_Function_space_topologies}).
In other words, that $r^f_x(u, t) - r^f_x(u, t_0) \in W$ for all $u \in A$ and $t \in (0,1]$ with $|t - t_0| < \delta$.
But this follows immediately from the previous claim, which shows uniform continuity of $r^f_x|_{A\times (t_0/2,1]}$ and because $(u,t) - (u,t_0) = (0,t-t_0)$.
\end{proof}
``Fr{\'e}chet differentiability $\Rightarrow$ uniform continuity'': \\
It remains to show that the first claim above also holds for $\delta_0 = 0$.
So let again $W \subseteq Y$ be a convex balanced neighbourhood of $0$ and choose a convex balanced neighbourhood $W' \subseteq Y$ of $0$ \st $W' + W' \subseteq W$.
By the assumtion of (pointwise, compact) Fr{\'e}chet differentiability there exists $\delta' \in (0,1]$ \st $r^f_x(\cdot, t) \in N(A,W')$ for all $t \in [0,\delta']$ or in other words that $r^f_x(u, t) \subseteq W'$ for all $(u,t) \in A\times [0,\delta']$.
Observe that for all $t,t' \in [0,1]$ with $|t-t'| < \delta'/2$, either $t,t' \in [0,\delta']$ and/or $t,t' \in [\delta'/2,1]$.
Also, by the claim, $r^f_x|_{A\times [\delta'/2,1]} : A\times [\delta'/2,1] \to Y$ is uniformly continuous, so there exists $\delta'' > 0$ and a convex balanced neighbourhood $\tilde{V} \subseteq X$ of $0$ \st $r^f_x(u, t) - r^f_x(u', t') \in W$ for all $(u,t), (u',t') \in A\times [\delta'/2,1]$ with $(u-u',t-t') \in \tilde{V}\times (-\delta'',\delta'')$.
Let $\delta\definedas \min\{\delta'/2,\delta''\}$.
It follows that if $(u,t), (u',t') \in A\times [0,\delta']$, then $r^f_x(u, t) - r^f_x(u', t') \in W' + W' \subseteq W$.
And if $(u,t), (u',t') \in A\times [\delta'/2, 1]$ with $(u-u',t-t') \in \tilde{V}\times (-\delta,\delta)$, then $r^f_x(u, t) - r^f_x(u', t') \in W$.
This shows that $r^f_x|_{A\times [0,1]}$ is uniformly continuous.

``Uniform continuity $\Rightarrow$ Fr{\'e}chet differentiability'': \\
This part of the proof is identical to the proof of the second claim, where now one can use that $t \mapsto r^f_x(\cdot, t)$ is uniformly continuous for all $t\in [0,1]$ (and not just for $t \in [\delta_0,1]$, for some $\delta_0 \in (0,1]$).
\end{proof}

\begin{remark}\label{Remark_Assumption_uniform_continuity}
The condition that $f$ be locally uniformly continuous in \cref{Proposition_Characterisation_Differentiability_I}, \labelcref{Proposition_Characterisation_Differentiability_I_3}~might seem somewhat strong and arbitrary, considering that the definition of Fr{\'e}chet differentiability did not assume it.
But note (\eg by taking $u=u'$ in \cref{Proposition_Characterisation_Differentiability_I_Eq1}) that the proof of the first and hence also of the second claim in the proof of \cref{Proposition_Characterisation_Differentiability_I} does make use of this assumption.
So for maps that are not locally uniformly continuous, continuity of $[0,1] \to \mathcal{C}^{\mathrm{bi}}_{\mathrm{ao}}(V, Y)$, $t \mapsto r^f_x(\cdot,t)$, for all $t\in [0,1]$ is a priori stronger than just continuity at $t=0$.
\end{remark}

\begin{theorem}[Chain rule]\label{Theorem_Chain_rule_II}
Let $X$, $Y$ and $Z$ be Hausdorff locally convex topological vector spaces and let $U\subseteq X$ and $V \subseteq Y$ be open subsets.
Let furthermore $f : U \to Y$ and $g : V \to Z$ be continuous functions with $f(U) \subseteq V$.
In the following, where Fr{\'e}chet differentiability is concerned, assume furthermore that $f$ has locally bounded image and is locally uniformly continuous. \\
Given $x \in U$ and a topological space $\tilde{U}$ together with a continuous function $\iota : \tilde{U} \to U$, the following hold:
\begin{enumerate}[label=\arabic*.,ref=\arabic*.]
  \item If $f$ is (compactly) Fr{\'e}chet differentiable at $x$ and $g$ is (compactly) Fr{\'e}chet differentiable at $f(x)$, then the composition $g\circ f : U \to Z$ is (compactly) Fr{\'e}chet differentiable at $x$ with
\[
D(g\circ f)(x) = Dg(f(x))\circ Df(x)\text{.}
\]
  \item If $f$ is weakly continuously (compactly) Fr{\'e}chet differentiable along $\iota$ and $g$ is weakly continuously (compactly) Fr{\'e}chet differentiable along $f\circ \iota$, then $g \circ f$ is weakly continuously (compactly) Fr{\'e}chet differentiable along $\iota$. \\
If $Y$ is locally bounded, then the same holds for ``strongly continuously'' in place of ``weakly continuously''.
\end{enumerate}
\end{theorem}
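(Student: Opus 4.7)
The plan is to follow the scheme of the proof of \cref{Theorem_Chain_rule_I} and use \cref{Proposition_Characterisation_Differentiability_I} to translate each of the differentiability hypotheses into uniform continuity of the remainder $r^f_x$ (respectively $r^g_{f(x)}$) on the appropriate class of subsets. The starting point is again the algebraic identity
\[
r^{g\circ f}_x(u,t) \;=\; Dg(f(x))\,r^f_x(u,t) \;+\; r^g_{f(x)}\bigl(Df(x)u + r^f_x(u,t),\,t\bigr),
\]
valid whenever both sides are defined (\cf \cref{Equation_Chain_rule_I}). What remains is (a) to shrink the convex balanced neighbourhood $V\subseteq X$ of $0$ until $\phi(u,t) \definedas Df(x)u + r^f_x(u,t)$ takes values inside the neighbourhood $V'\subseteq Y$ of $0$ on which $r^g_{f(x)}$ is given, and (b) to deduce uniform continuity of the right hand side on the subsets demanded by \cref{Proposition_Characterisation_Differentiability_I}.

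For the pointwise and compact Fr\'echet cases the shrinking step (a) proceeds exactly as in the proof of \cref{Theorem_Chain_rule_I}, via \cref{Lemma_Topological_Lemma}, yielding a smaller $V''\subseteq V$ such that $\phi(C\times[0,1])\subseteq V'$ for every finite (respectively compact) $C\subseteq V''$. For the full Fr\'echet case one needs $\phi(V''\times[0,1])\subseteq V'$ on an \emph{entire} smaller neighbourhood $V''$, and the key observation is that $r^f_x(0,t)\equiv 0$. Uniform continuity of $r^f_x$ on $V\times[0,1]$, granted by \cref{Proposition_Characterisation_Differentiability_I}, \labelcref{Proposition_Characterisation_Differentiability_I_3}, using the extra hypothesis that $f$ is locally uniformly continuous with locally bounded image, thus forces, for any prescribed neighbourhood $W\subseteq Y$ of $0$, the existence of a neighbourhood $V^*\subseteq V$ of $0$ with $r^f_x(V^*\times[0,1])\subseteq W$; combining this with continuity of $Df(x)$ produces the desired $V''$. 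This is the one step that genuinely uses the additional assumptions on $f$.

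Given $V''$, uniform continuity of $r^{g\circ f}_x|_{A\times[0,1]}$ (for $A\subseteq V''$ finite, compact or all of $V''$, according to the case) splits into the two summands. The first summand $Dg(f(x))\circ r^f_x|_{A\times[0,1]}$ is uniformly continuous as the composition of the continuous linear (hence uniformly continuous) map $Dg(f(x))$ with $r^f_x|_{A\times[0,1]}$, which is uniformly continuous by \cref{Proposition_Characterisation_Differentiability_I}. For the second summand, $\phi|_{A\times[0,1]}$ is uniformly continuous for the same reason, and its image lies in a subset $A'\subseteq V'$ of a compatible type (finite; compact by continuous image of compact; or all of $V'$ in the full case). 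Hence $r^g_{f(x)}\circ(\phi,\pr_{[0,1]})|_{A\times[0,1]}$ is a composition of uniformly continuous maps and therefore uniformly continuous. Applying the opposite direction of \cref{Proposition_Characterisation_Differentiability_I} then yields the claimed Fr\'echet differentiability of $g\circ f$ at $x$ with derivative $Dg(f(x))\circ Df(x)$.

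The statements on weak and strong continuous Fr\'echet differentiability along $\iota$ then transfer verbatim from the proof of \cref{Theorem_Chain_rule_I}: weak continuity of $(x,u)\mapsto Dg(f(\iota(x)))\,Df(\iota(x))\,u$ follows from its description as an obvious composition of continuous maps, and the strong continuity statement reduces to continuity of the composition $L_{\mathrm{c}}(X,Y)\times L_{\mathrm{c}}(Y,Z)\to L_{\mathrm{c}}(X,Z)$, which holds under the local boundedness (\ie normability) hypothesis on $Y$. I expect the main technical obstacle to be the shrinking argument in the full Fr\'echet case outlined above; once $V''$ is in place, the rest of the proof is essentially formal propagation of uniform continuity through compositions.
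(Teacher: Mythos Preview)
Your proposal is correct and follows essentially the same approach as the paper, which simply says ``Using \cref{Proposition_Characterisation_Differentiability_I}, the proof is identical to the proof of \cref{Theorem_Chain_rule_I}, just replacing the word `continuous' by the words `uniformly continuous'.'' You have spelled this out in detail, with the only minor variation being that for the full Fr\'echet case you obtain the smaller neighbourhood $V''$ via uniform continuity of $r^f_x$ together with $r^f_x(0,t)\equiv 0$, whereas following the paper literally one would reuse \cref{Lemma_Topological_Lemma} exactly as in \cref{Theorem_Chain_rule_I} (since $\phi$ is in particular continuous and $[0,1]$ is compact); both arguments are valid.
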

\begin{proof}
Using \cref{Proposition_Characterisation_Differentiability_I}, the proof is identical to the proof of \cref{Theorem_Chain_rule_I}, just replacing the word ``continuous'' by the words ``uniformly continuous''.
\end{proof}

\Needspace{25\baselineskip}
\subsection{Interrelations}

\begin{proposition}\label{Proposition_Characterisation_Differentiability_II}
In the notation of \cref{Definition_Differentiability_I,Definition_Differentiability_II}, assume that the topology on $Y$ is given by a chosen translation invariant metric $d$.
\begin{enumerate}[label=\arabic*.,ref=\arabic*.]
  \item The following are equivalent:
\begin{enumerate}[label=(\alph*),ref=(\alph*)]
  \item $f$ is pointwise Fr{\'e}chet differentiable at $x$.
  \item There exists a convex balanced neighbourhood $V \subseteq X$ of $0$ \st for all $u \in V$
\[
\lim_{t \to 0} d\left(0,r^f_x(u, t)\right) = 0\text{.}
\]
  \item There exists a convex balanced neighbourhood $V \subseteq X$ of $0$ \st
\begin{multline*}
\forall\, u_0 \in V, \varepsilon > 0 \;\;\exists\, \delta = \delta(u_0,\varepsilon) > 0 \;: \\
d\left(0, r^f_x(u_0,t)\right) < \varepsilon \quad\forall\, t < \delta\text{.}
\end{multline*}
\end{enumerate}
  \item The following are equivalent:
\begin{enumerate}[label=(\alph*),ref=(\alph*)]
  \item $f$ is compactly Fr{\'e}chet differentiable at $x$.
  \item There exists a convex balanced neighbourhood $V \subseteq X$ of $0$ \st for all compact subsets $K \subseteq V$
\[
\lim_{t \to 0} \sup\left\{ \left.d\left(0, r^f_x(u, t)\right) \;\right|\; u \in K \right\} = 0\text{.}
\]
  \item There exists a convex balanced neighbourhood $V \subseteq X$ of $0$ \st
\begin{multline*}
\forall\, K \subseteq V \text{ compact}, \varepsilon > 0 \;\;\exists\, \delta = \delta(K,\varepsilon) > 0 \;: \\
d\left(0,r^f_x(u,t)\right) < \varepsilon \quad\forall\, u \in K, t < \delta\text{.}
\end{multline*}
\end{enumerate}
  \item The following are equivalent:
\begin{enumerate}[label=(\alph*),ref=(\alph*)]
  \item $f$ is weakly Fr{\'e}chet differentiable at $x$
  \item There exists a convex balanced neighbourhood $V \subseteq X$ of $0$ \st
\begin{multline*}
\forall\, u_0 \in V, \varepsilon > 0 \;\;\exists\, \delta = \delta(u_0,\varepsilon) > 0, V'\subseteq V \text{ an open neighbourhood of $u_0$} \;: \\
d\left(0,r^f_x(u,t)\right) < \varepsilon \quad\forall\, u \in V', t < \delta\text{.}
\end{multline*}
\end{enumerate}
  \item Assume that $f$ has locally bounded image and is locally uniformly continuous.
Then the following are equivalent:
\begin{enumerate}[label=(\alph*),ref=(\alph*)]
  \item $f$ is Fr{\'e}chet differentiable at $x$.
  \item There exists a convex balanced neighbourhood $V \subseteq X$ of $0$ \st
\[
\lim_{t \to 0} \sup\left\{ \left.d\left(0,r^f_x(u, t)\right) \;\right|\; u \in V \right\}= 0\text{.}
\]
  \item There exists a convex balanced neighbourhood $V \subseteq X$ of $0$ \st
\begin{multline*}
\forall\, \varepsilon > 0 \;\;\exists\, \delta = \delta(\varepsilon) > 0 \;:\;
d\left(0,r^f_x(u,t)\right) < \varepsilon \;\forall\, u \in V, t < \delta\text{.}
\end{multline*}
\end{enumerate}
\end{enumerate}
\end{proposition}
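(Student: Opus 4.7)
The plan is to translate each topological continuity condition on $t \mapsto r^f_x(\cdot, t)$ into a metric condition by using that $Y$ is metrizable. Since $d$ is translation invariant and generates the topology on $Y$, the balls $B_\varepsilon(0) \definedas \{y \in Y \mid d(0,y) < \varepsilon\}$ form a neighbourhood basis of $0$. Consequently, each basic neighbourhood $N(A,W)$ of $0$ in $\mathcal{C}_{\mathrm{fo}}(V,Y)$, $\mathcal{C}_{\mathrm{co}}(V,Y)$ or $\mathcal{C}^{\mathrm{bi}}_{\mathrm{ao}}(V,Y)$ from \cref{Definition_Function_space_topologies} may be replaced by one of the form $\{g \mid \sup_{u \in A} d(0, g(u)) < \varepsilon\}$, where $A \subseteq V$ ranges over finite, compact, or bounded subsets respectively. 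Convergence $g_t \to 0$ in the relevant function space as $t \to 0$ is therefore equivalent to $\sup_{u \in A} d(0, g_t(u)) \to 0$ for every such $A$. Combined with the fact (\cf \cref{Remark_Rest_continuity_versus_continuity_at_0_2}) that continuity of $t \mapsto r^f_x(\cdot, t)$ on $(0,1]$ is automatic from continuity of $f$, the remaining substance in each differentiability condition reduces to a metric condition at $t = 0$, where $r^f_x(\cdot, 0) \equiv 0$.

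For parts 1 and 2 this immediately yields (a) $\Leftrightarrow$ (b), while (b) $\Leftrightarrow$ (c) is just an $\varepsilon$-$\delta$ rephrasing: in part 1 the supremum over finite sets collapses to a pointwise statement, and in part 2 one reads off uniform convergence on compact subsets directly. For part 3, I unravel the definition of weak Fr\'echet differentiability (\cref{Definition_Differentiability_I}, \labelcref{Definition_Differentiability_I_3}) as joint continuity of $r^f_x : V \times [0,1] \to Y$; at each $(u_0, 0)$ this is precisely the existence, for every $\varepsilon > 0$, of an open neighbourhood $V' \subseteq V$ of $u_0$ and a $\delta > 0$ such that $d(0, r^f_x(u,t)) < \varepsilon$ throughout $V' \times [0, \delta)$, which is exactly condition (b). The converse direction is immediate from the same observation.

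Part 4 is the only place where more than a direct translation is needed, and there I plan to invoke \cref{Proposition_Characterisation_Differentiability_I}, \labelcref{Proposition_Characterisation_Differentiability_I_3}, whose hypotheses (locally bounded image and local uniform continuity of $f$) coincide with the standing assumption of part 4. It rephrases Fr\'echet differentiability at $x$ as uniform continuity of $r^f_x$ on $V \times [0,1]$. Applying uniform continuity to the pairs $(u,t)$ and $(u,0)$, together with $r^f_x(\cdot, 0) \equiv 0$ and translation invariance of $d$, yields (c): for every $\varepsilon > 0$ there is a $\delta > 0$ such that $d(0, r^f_x(u,t)) < \varepsilon$ for all $u \in V$ and $t < \delta$, and this is manifestly equivalent to (b). For the converse direction I reprise the ``Fr\'echet differentiability $\Rightarrow$ uniform continuity'' half of the proof of \cref{Proposition_Characterisation_Differentiability_I}: uniform continuity on each $V \times [\delta_0, 1]$ follows from the identity \eqref{Proposition_Characterisation_Differentiability_I_Eq1}, while uniform smallness near $t = 0$ is precisely the content of (b), and the two pieces glue as in that proof. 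I do not foresee a significant obstacle; the only point requiring care is that translation-invariant metrics on non-normable topological vector spaces are typically bounded, so the supremum appearing in (b) must be read as an element of $[0,\infty]$ whose vanishing as $t \to 0$ is well-defined regardless of finiteness for individual $t$.
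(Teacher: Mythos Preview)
Your proposal is correct and takes essentially the same approach as the paper: the paper's proof is the one-liner ``All of the statements in the proposition are obtained by simply writing out the definitions in terms of the metric on $Y$'', and your argument is a faithful expansion of exactly that. You correctly identify that part 4 needs the extra hypotheses to bridge between continuity at $t=0$ and continuity on all of $[0,1]$ (via \cref{Proposition_Characterisation_Differentiability_I}), a point the paper leaves implicit; one small quibble is that \cref{Remark_Rest_continuity_versus_continuity_at_0_2} does not assert that continuity on $(0,1]$ is automatic in the Fr\'echet case---it says the opposite---but since you handle part 4 separately through \cref{Proposition_Characterisation_Differentiability_I} this does not affect the argument.
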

\begin{proof}
All of the statements in the proposition are obtained by simply writing out the definitions in terms of the metric on $Y$.
\end{proof}

\begin{remark}
For the following, remember that an embedding of topological vector spaces is an injective continuous linear map.
It does not need to be an embedding of topological spaces or have closed image. \\
For the definition of a compact linear operator, see \cref{Definition_Compact_operator}.
\end{remark}

\begin{proposition}\label{Lemma_Strong_Gateaux_implies_Frechet}
Let $X$ and $Y$ be Hausdorff locally convex topological vector spaces, let $A \subseteq U\subseteq X$ be subsets with $U$ open and let $f : U \to Y$ be a continuous map.
Let furthermore $X_1$ be another Hausdorff locally convex topological vector space that comes with a compact embedding of topological vector spaces $\kappa : X_1 \hookrightarrow X$ and define $U_1 \definedas \kappa\inv(U)$, $A_1 \definedas \kappa\inv(A)$. \\
If $f$ is compactly Fr{\'e}chet differentiable on $A$, then $f_1 \definedas f \circ \kappa|_{U_1} : U_1 \to Y$ is Fr{\'e}chet differentiable on $A_1$ with
\[
Df_1(x) = Df(\kappa(x))\circ \kappa\quad \forall\, x \in A_1\text{.}
\]
\end{proposition}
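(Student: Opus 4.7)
The natural candidate for the derivative is $L_1 \definedas Df(\kappa(x)) \circ \kappa \in L_{\mathrm{c}}(X_1, Y)$. Substituting $f_1(x' + tu) = f(\kappa(x') + t\kappa(u))$ into the definition of $r^{f_1}_x$ yields the key identity
\[
r^{f_1}_x(u, t) \;=\; r^f_{\kappa(x)}\bigl(\kappa(u), t\bigr)
\]
for all $(u,t) \in V_1 \times [0,1]$ whenever $V_1 \subseteq X_1$ is a convex balanced neighbourhood of $0$ with $\kappa(V_1) \subseteq V$, where $V$ is a convex balanced open neighbourhood of $0$ in $X$ witnessing compact Fr{\'e}chet differentiability of $f$ at $y \definedas \kappa(x)$. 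The plan is to read off Fr{\'e}chet differentiability of $f_1$ at $x$ from this identity by applying \cref{Proposition_Characterisation_Differentiability_I} in both directions.

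First, I would use compactness of $\kappa$ to pick a convex balanced neighbourhood $N_1 \subseteq X_1$ of $0$ whose image $\overline{\kappa(N_1)}$ is compact in $X$, and then set $V_1 \definedas \alpha N_1$ for $\alpha > 0$ small enough that $K \definedas \overline{\kappa(V_1)} = \alpha\,\overline{\kappa(N_1)}$ satisfies $K \subseteq V$ and $x + V_1 \subseteq U_1$. Existence of such an $\alpha$ follows from \cref{Lemma_Topological_Lemma} applied to the continuous scalar multiplication $[0,1] \times \overline{\kappa(N_1)} \to X$ at the slice $\{0\} \times \overline{\kappa(N_1)}$, together with openness of $V$ and of $U - y$.

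Before invoking \cref{Proposition_Characterisation_Differentiability_I}, \labelcref{Proposition_Characterisation_Differentiability_I_3}, I would verify its two auxiliary hypotheses: that $f_1$ has locally bounded image and is locally uniformly continuous near $x$. Both are immediate from the factorisation of $f_1|_{x + V_1}$ through $f|_{y + K}$. The set $y + K \subseteq U$ is compact, so its continuous image $f(y + K)$ is compact and in particular bounded, and $f|_{y + K}$ is uniformly continuous by \cref{Lemma_Continuity_implies_uniform_continuity}; composing with the continuous linear (hence uniformly continuous) map $\kappa$ preserves both properties on $x + V_1$.

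Finally, \cref{Proposition_Characterisation_Differentiability_I}, \labelcref{Proposition_Characterisation_Differentiability_I_2} translates compact Fr{\'e}chet differentiability of $f$ at $y$ into uniform continuity of $r^f_y|_{K \times [0,1]}$. Composing with the uniformly continuous map $\kappa \times \id : V_1 \times [0,1] \to K \times [0,1]$ and invoking the key identity yields uniform continuity of $r^{f_1}_x$ on $V_1 \times [0,1]$; \cref{Proposition_Characterisation_Differentiability_I}, \labelcref{Proposition_Characterisation_Differentiability_I_3} then delivers Fr{\'e}chet differentiability of $f_1$ at $x$ with $Df_1(x) = L_1 = Df(\kappa(x)) \circ \kappa$, and since $x \in A_1$ was arbitrary the conclusion on all of $A_1$ follows. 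The only genuine obstacle in the argument is conceptual rather than computational: it is the trade-off enabled by compactness of $\kappa$, whereby the ``arbitrary-open'' uniformity required for Fr{\'e}chet differentiability of $f_1$ on the full neighbourhood $V_1$ is purchased from the weaker ``compact-open'' uniformity of $f$, because $\kappa(V_1)$ is swallowed by the fixed compact set $K \subseteq V$.
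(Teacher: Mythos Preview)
Your proof is correct and follows essentially the same route as the paper: establish the identity $r^{f_1}_x(u,t) = r^f_{\kappa(x)}(\kappa(u),t)$, use compactness of $\kappa$ to trap $\kappa(V_1)$ inside a compact $K \subseteq V$, verify the local uniform continuity and locally bounded image hypotheses needed for \cref{Proposition_Characterisation_Differentiability_I}, \labelcref{Proposition_Characterisation_Differentiability_I_3}, and then pass from the compact-set uniform continuity of $r^f_{\kappa(x)}$ to the full uniform continuity of $r^{f_1}_x$. The only cosmetic difference is that the paper obtains uniform continuity of $r^f_{\kappa(x)}|_{K\times[0,1]}$ by first noting continuity (from the equivalent compactly weakly Fr{\'e}chet formulation) and then invoking \cref{Lemma_Continuity_implies_uniform_continuity} on the compact domain $K\times[0,1]$, whereas you go directly through \cref{Proposition_Characterisation_Differentiability_I}, \labelcref{Proposition_Characterisation_Differentiability_I_2}; these are equivalent.
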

\begin{proof}
For simplicity I will drop $\kappa$ from the notation and consider $A_1, U_1, X_1$ as subsets of $X$, although not with the induced topologies.
For $x \in U_1$, by compactness of $\kappa$ there exists a neighbourhood $U'_1 \subseteq U_1$ of $x$ \st $K \definedas \overline{U}'_1 \subseteq U \subseteq X$ is compact.
By \cref{Lemma_Continuity_implies_uniform_continuity}, $f|_K$ is uniformly continuous and it has compact, hence bounded, image.
Since $\kappa$, as a continuous linear map, is uniformly continuous, this shows that $f_1 = f\circ \kappa|_{U_1}$ is locally uniformly continuous and has locally bounded image. \\
It is clear from the definition that for all $x \in A_1$ and some convex balanced neighbourhood $V_1 \subseteq X_1$ of $0$
\[
r^{f_1}_x = r^f_x|_{V_1\times [0,1]}\text{.}
\]
Denote by $K$ the closure of $V_1$ in $X$, which can be assumed to be compact by compactness of $\kappa$, and $K \subseteq V$ for $V_1$ small enough.
By assumption, the function $r^f_x|_{K\times [0,1]} : K \times [0,1] \to Y$ is continuous and hence by \cref{Lemma_Continuity_implies_uniform_continuity} it is uniformly continuous.
It follows that $r^{f_1}_x$ is uniformly continuous and so by \cref{Proposition_Characterisation_Differentiability_I}, $f_1$ is Fr{\'e}chet differentiable.
\end{proof}

\begin{theorem}\label{Theorem_Relations_between_notions_of_differentiability}
Let $X$ and $Y$ be locally convex topological vector spaces, let $U \subseteq X$ be an open subset, $x \in U$, and let $f : U \to Y$ be a continuous map.
\begin{enumerate}[label=\arabic*.,ref=\arabic*.]
  \item 
\begin{align*}
\text{$f$ weakly Fr{\'e}chet differentiable at $x$} &\Rightarrow \\
&\hspace{-1.2cm}\Rightarrow \text{$f$ compactly weakly Fr{\'e}chet differentiable at $x$} \\
&\hspace{-1.2cm}\Rightarrow \text{$f$ pointwise weakly Fr{\'e}chet differentiable at $x$.}
\end{align*}
  \item If the topology on $X$ is compactly generated, in particular if $X$ is first countable (equivalently, if $X$ is metrisable), then
\begin{multline*}
\text{$f$ compactly weakly Fr{\'e}chet differentiable at $x$} \Leftrightarrow \\
\Leftrightarrow \text{$f$ weakly Fr{\'e}chet differentiable at $x$.}
\end{multline*}
  \item Each of the other implications ``$f$ pointwise weakly Fr{\'e}chet differentiable at $x$ $\Rightarrow$ $f$ (compactly) weakly Fr{\'e}chet differentiable at $x$'' in general is false.
  \item $f$ pointwise weakly Fr{\'e}chet differentiable at $x$ $\Leftrightarrow$ $f$ pointwise Fr{\'e}chet differentiable at $x$.
  \item $f$ compactly weakly Fr{\'e}chet differentiable at $x$ $\Leftrightarrow$ $f$ compactly Fr{\'e}chet differentiable at $x$.
  \item Assume that $f$ has locally bounded image.
Then \\
$f$ Fr{\'e}chet differentiable at $x$ $\Rightarrow$ $f$ weakly Fr{\'e}chet differentiable at $x$, but the converse implication in general is false.
  \item If $X$ is finite dimensional, then $f$ Fr{\'e}chet differentiable at $x$ $\Leftrightarrow$ $f$ weakly Fr{\'e}chet differentiable at $x$.
\end{enumerate}
\end{theorem}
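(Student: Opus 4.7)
\emph{Proof plan.}
The strategy is to dispatch the seven assertions by combining the uniform-continuity characterisations of \cref{Proposition_Characterisation_Differentiability_I} with \cref{Lemma_Continuity_implies_uniform_continuity}, which upgrades continuity on a compact domain to uniform continuity.

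Parts~1, 4 and~5 are essentially formal. Part~1 follows by restricting $r^f_x : V\times [0,1]\to Y$ to the smaller domains $K\times[0,1]$ (compact $K\subseteq V$) and $C\times[0,1]$ (finite $C\subseteq V$). For parts~4 and~5 one observes that $C\times[0,1]$ and $K\times[0,1]$ are compact (finite subsets of Hausdorff spaces are discrete), so \cref{Lemma_Continuity_implies_uniform_continuity} yields that continuity on these domains equals uniform continuity, and \cref{Proposition_Characterisation_Differentiability_I} (parts~1 and~2) then identifies this uniform continuity with pointwise respectively compactly Fr{\'e}chet differentiability.

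For part~2, I would use that a first countable Hausdorff locally convex topological vector space is metrisable, making $V\times[0,1]$ metrisable and hence a $k$-space. Consequently continuity of $r^f_x$ on $V\times[0,1]$ can be tested on compact subsets, and since every compact subset of $V\times[0,1]$ lies in some $K\times[0,1]$ (project onto $V$), this reduces to compactly weak Fr{\'e}chet differentiability; the reverse implication is part~1. Part~3 is then furnished by \cref{Example_Gateaux_but_not_weakly_Frechet}, which gives on $\R^2$ (metrisable) a map that is pointwise weakly but not weakly Fr{\'e}chet differentiable, hence by part~2 not compactly weakly Fr{\'e}chet differentiable either. For part~7, in finite dimensions one may take $V$ to be a closed (hence compact) ball, so continuity of $r^f_x$ on $V\times[0,1]$ coincides with uniform continuity by \cref{Lemma_Continuity_implies_uniform_continuity}; since in finite dimensions every continuous map is locally uniformly continuous and has locally bounded image, \cref{Proposition_Characterisation_Differentiability_I} (part~3) identifies this with Fr{\'e}chet differentiability.

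The most delicate case is part~6. That the converse implication fails is witnessed by standard examples in infinite dimensional Banach spaces of maps whose remainder has continuous graph but does not converge to zero uniformly on the unit ball. For the forward direction, continuity of $r^f_x$ on $V\times(0,1]$ is automatic from continuity of $f$, so only joint continuity at points of $V\times\{0\}$ is at issue. Fr{\'e}chet differentiability supplies uniform smallness of $r^f_x(\cdot,t)$ on every bounded $A\subseteq V$ for small $t$; when $X$ is locally bounded one may take $V$ itself bounded and conclude at once. In general I would argue sequentially: for any sequence $(u_n,t_n)\to(u_0,0)$ the set $\{u_0,u_1,u_2,\ldots\}$ is bounded (convergent sequences in a topological vector space are bounded), so applying Fr{\'e}chet differentiability to this bounded subset yields $r^f_x(u_n,t_n)\to 0$, giving sequential continuity at $(u_0,0)$ and hence continuity in the (first countable) settings relevant for the rest of the article. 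The main technical obstacle is precisely this passage from sequential to full continuity in the generality of arbitrary locally convex topological vector spaces.
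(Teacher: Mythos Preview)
Your treatment of parts~1--5 and~7 is essentially in line with the paper's (for part~7 the paper invokes \cref{Lemma_Strong_Gateaux_implies_Frechet} with $\kappa=\id_X$, while you argue directly by taking $V$ compact; both are fine).

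The genuine gap is in part~6, forward direction. You write that ``Fr{\'e}chet differentiability supplies uniform smallness of $r^f_x(\cdot,t)$ on every bounded $A\subseteq V$ for small $t$,'' and then struggle with a sequential argument in the non-locally-bounded case, ending with an acknowledged obstacle. But this undersells the definition: Fr{\'e}chet differentiability asks for continuity of $t\mapsto r^f_x(\cdot,t)$ in the \emph{arbitrary}-open topology $\mathcal{C}^{\mathrm{bi}}_{\mathrm{ao}}(V,Y)$, so the neighbourhood $N(V,W)$ with $A=V$ itself is available. Thus for every convex balanced neighbourhood $W\subseteq Y$ of $0$ there is $\delta>0$ with $r^f_x(u',t')\in W$ for \emph{all} $u'\in V$ and $t'<\delta$. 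Since $r^f_x(u,0)=0$, this gives continuity of $r^f_x$ at every point $(u,0)$ directly, without any sequential detour or local-boundedness hypothesis on $X$. Your ``technical obstacle'' is an artefact of reading the topology as bounded-open rather than arbitrary-open.

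A minor point: for the failure of the converse in part~6, the paper does not rely on abstract Banach-space folklore but points to the reparametrisation action $\Psi$ of \cref{Subsection_Reparametrisation_action}, which is weakly Fr{\'e}chet differentiable (by \cref{Proposition_Reparametrisation_action_diffble}) yet not Fr{\'e}chet differentiable (since $\Psi$ is not locally uniformly continuous, \cref{Proposition_Reparametrisation_action_cts}).
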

\begin{proof}
\begin{enumerate}[label=\arabic*.,ref=\arabic*.]
  \item This is completely obvious.
  \item $X$ metrisable and $X$ first countable are equivalent by \cite{MR3154940}, Theorem 3.35, and first countable spaces are compactly generated, \cf \cite{MR0464128}, Lemma 46.3.
By \cite{MR0464128}, Lemma 46.4, it remains to show that if, in the notation of \cref{Definition_Differentiability_I}, $\tilde{K} \subseteq V\times [0,1]$ is compact, then $r^f_x|_{\tilde{K}} : \tilde{K} \to Y$ is continuous.
Denoting $K \definedas \pr_1(\tilde{K}) \subseteq V$, which is compact, by definition, $r^f_x|_{K\times [0,1]} : K\times [0,1] \to Y$ is continuous.
Since $\tilde{K} \subseteq K\times [0,1]$, the claim follows.
  \item This was already shown by way of \cref{Example_Gateaux_but_not_weakly_Frechet}.
  \item This is immediate from \cref{Proposition_Characterisation_Differentiability_I} and \cref{Lemma_Continuity_implies_uniform_continuity}.
  \item Ditto.
  \item Let $V \subseteq X$ be as in the definition of $f$ Fr{\'e}chet differentiable.
Then continuity of $r^f_x : V\times [0,1] \to Y$ at points $(u,t) \in V\times (0,1]$ is automatic by virtue of its definition (and because $f$ is assumed to be continuous).
And to show continuity at $(u,0)$ for some $u \in V$, let $W \subseteq Y$ be a convex balanced neighbourhood of $0$.
By definition of the topology on $\mathcal{C}^{\mathrm{bi}}(V, Y)$, there exists $\delta > 0$ \st $r^f_x(\cdot, t') - r^f_x(\cdot, 0) = r^f_x(\cdot, t') \in N(V,W)$ for all $t'\in[0,1]$ with $|t' - 0| = |t'| < \delta$.
But this implies in particular that for all $(u',t')\in V\times [0,1]$ with $(u',t') - (u,0) = (u'-u, t') \in X\times [0,\delta]$, $r^f_x(u', t') - r^f_x(u, 0) = r^f_{x'}(u', t') \in W$.
So $r^f_x$ is continuous at $(u,t)$. \\
That the converse implication in general is false will be shown by virtue of an example in \cref{Subsection_Reparametrisation_action}.
  \item Apply \cref{Lemma_Strong_Gateaux_implies_Frechet} with $X_1 \definedas X$, $\kappa \definedas \id_X$, which is compact precisely in finite dimensions to show ``compactly weakly Fr{\'e}chet'' $\Rightarrow$ ``Fr{\'e}chet''.
\end{enumerate}
\end{proof}

\begin{proposition}\label{Lemma_Weakly_continuous_implies_strongly_continuous}
Let $X$, $Y_0$, $Y_1$ and $Z$ be Hausdorff locally convex topological vector spaces, let $\kappa : Y_1 \hookrightarrow Y_0$ be a compact embedding of topological vector spaces, let $U \subseteq X$ be an open subset and let $\Phi : U\times Y_0 \to Z$ be a continuous map that is linear in the second factor.
I.\,e.~for all $(x,v)\in U\times Y_0$, $\Phi(x,v) = \Phi_x(v)$, where $\Phi_x \in L_{\mathrm{c}}(Y_0, Z)$. \\
Then the map
\begin{align*}
U &\to L_{\mathrm{c}}(Y_1, Z) \\
x &\mapsto \Phi_x\circ \kappa
\end{align*}
is continuous.
\end{proposition}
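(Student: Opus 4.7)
The plan is to unwind the definition of continuity into $L_{\mathrm{c}}(Y_1, Z)$ with its bounded-open topology and then reduce the problem to a direct application of \cref{Lemma_Topological_Lemma}, using the defining property of the compact embedding $\kappa$ to produce a suitable compact set in $Y_0$.

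Fix $x_0 \in U$. By definition of the bounded-open topology, a neighborhood basis of $0$ in $L_{\mathrm{c}}(Y_1, Z)$ is given by sets of the form $N(B, W) = \{L \;|\; L(B) \subseteq W\}$, where $B \subseteq Y_1$ is bounded and $W \subseteq Z$ is a convex balanced neighborhood of $0$ (which I may and will take to be open). So to prove continuity of $x \mapsto \Phi_x \circ \kappa$ at $x_0$, I need to exhibit, for each such $B$ and $W$, a neighborhood $V \subseteq U$ of $x_0$ such that for all $x \in V$ and all $b \in B$ one has $(\Phi_x - \Phi_{x_0})(\kappa(b)) \in W$.

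Since $\kappa$ is a compact operator, the image $\kappa(B) \subseteq Y_0$ is relatively compact; set $K \definedas \overline{\kappa(B)} \subseteq Y_0$, which is compact. Define
\begin{align*}
\Psi : U \times Y_0 &\to Z \\
(x, v) &\mapsto \Phi(x, v) - \Phi(x_0, v)\text{,}
\end{align*}
which is continuous (as the difference of $\Phi$ and $\Phi \circ (x_0, \id_{Y_0})$) and satisfies $\Psi(\{x_0\} \times K) = \{0\} \subseteq W$, i.e.\ $\{x_0\} \times K \subseteq \Psi^{-1}(W)$. Since $K$ is compact and $W$ is open, \cref{Lemma_Topological_Lemma}, \labelcref{Lemma_Topological_Lemma_1}~applies and yields a neighborhood $V \subseteq U$ of $x_0$ with $V \times K \subseteq \Psi^{-1}(W)$.

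In particular, for every $x \in V$ and every $b \in B$, $\kappa(b) \in K$ and hence $(\Phi_x - \Phi_{x_0})(\kappa(b)) = \Psi(x, \kappa(b)) \in W$, which is exactly what was needed. There is no serious obstacle here; the only nontrivial ingredient is the compactness of $\kappa$, which is used precisely once to pass from the bounded subset $B \subseteq Y_1$ to a compact subset $K \subseteq Y_0$ so that the topological lemma is applicable.
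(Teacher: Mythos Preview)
Your proof is correct and follows essentially the same approach as the paper's: both reduce to the topological lemma by taking the closure $K$ of $\kappa(B)$ in $Y_0$, which is compact since $\kappa$ is a compact operator, and then apply \cref{Lemma_Topological_Lemma} to the continuous difference map $(x,v)\mapsto \Phi_x(v)-\Phi_{x_0}(v)$. The only differences are notational (you swap the roles of $V$ and $W$ relative to the paper) and that you explicitly remark on taking $W$ open, which is needed to invoke the lemma.
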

\begin{proof}
Let $A$ be a bounded subset of $Y_1$ and let $V$ be a convex balanced neighbourhood of $0$ in $Z$.
Also let $N(A,V) = \{L \in L_{\mathrm{c}}(Y_1,Z) \;|\; L(A) \subseteq V\} \subseteq L_{\mathrm{c}}(Y_1,Z)$ be the corresponding neighbourhood of $0 \in L_{\mathrm{c}}(Y_1, Z)$.
It has to be shown that there exists a neighbourhood $W \subseteq U$ of $x_0$ \st $\Phi_x\circ \kappa - \Phi_{x_0}\circ \kappa \in N(A,V)$, \ie $(\Phi_x - \Phi_{x_0})(\kappa(A)) \subseteq V$, for all $x \in W$. \\
Let $K$ be the closure of $\kappa(A)$ in $Y_0$, which by assumption is compact and let $x_0 \in U$.
By assumption, the map $\Phi : U\times Y_0 \to Z$ is continuous and hence so is the map
\begin{align*}
\phi : U\times K &\to Z \\
(x,v) &\mapsto \Phi_x(v) - \Phi_{x_0}(v)\text{.}
\end{align*}
It then obviously suffices to find a neighbourhood $W \subseteq U$ of $x_0$ \st $W\times K \subseteq \phi\inv(V)$.
But $\phi(\{x_0\}\times K) = \{0\} \subseteq V$, so by \cref{Lemma_Topological_Lemma} the existence of such a neighbourhood $W \subseteq U$ of $x_0$ follows.
\end{proof}

\begin{proposition}\label{Lemma_Strong_Gateaux_implies_Frechet_II}
Let $X$ and $Y$ be Hausdorff locally convex topological vector spaces. \\
Let furthermore $X_1$ be another Hausdorff locally convex topological vector space that comes with a compact embedding of topological vector spaces $\kappa : X_1 \hookrightarrow X$ and define $U_1 \definedas \kappa\inv(U)$. \\
If $\tilde{U}$ is a topological space together with a continuous map $\iota_1 : \tilde{U} \to U_1$ \st $f$ is weakly continuously compactly weakly Fr{\'e}chet differentiable along $\iota \definedas \kappa\circ\iota_1$, then $f_1 \circ \kappa|_{U_1} : U_1 \to Y$ is strongly continuously Fr{\'e}chet differentiable along $\iota_1$.
\end{proposition}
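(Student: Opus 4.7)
The strategy is to combine the two preceding results: \cref{Lemma_Strong_Gateaux_implies_Frechet} upgrades compact weak Fréchet differentiability of $f$ to genuine Fréchet differentiability of $f_1 \definedas f\circ\kappa|_{U_1}$, while \cref{Lemma_Weakly_continuous_implies_strongly_continuous} upgrades weak to strong continuity of the derivative once it has been precomposed with the compact embedding $\kappa$. Together these two statements handle the two ingredients (pointwise Fréchet differentiability and strong continuity of the derivative) of the notion defined in \cref{Definition_Weakly_strongly_continuous_bounded_derivative}.

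Unpacking the hypothesis via \cref{Definition_Weakly_strongly_continuous_bounded_derivative}, weakly continuous compactly weakly Fréchet differentiability of $f$ along $\iota = \kappa\circ\iota_1$ means that $f$ is compactly weakly Fréchet differentiable at every point of $\iota(\tilde U) \subseteq U$ and that the map
\[
\Phi : \tilde U\times X \to Y\text{,}\quad (x,u) \mapsto Df(\iota(x))u
\]
is continuous and linear in $u$. Applying \cref{Lemma_Strong_Gateaux_implies_Frechet} with $A \definedas \iota(\tilde U)$ then yields that $f_1$ is Fréchet differentiable on $\iota_1(\tilde U) \subseteq \kappa\inv(A)$ with
\[
Df_1(\iota_1(x)) = Df(\iota(x))\circ\kappa = \Phi_{\iota(x)}\circ\kappa
\]
for every $x \in \tilde U$. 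This handles the first requirement in the definition of strong continuous Fréchet differentiability along $\iota_1$.

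For the second requirement it remains to prove that the map $\tilde U \to L_{\mathrm{c}}(X_1, Y)$, $x \mapsto Df_1(\iota_1(x)) = \Phi_{\iota(x)}\circ\kappa$, is continuous. The natural tool is \cref{Lemma_Weakly_continuous_implies_strongly_continuous}, applied to the continuous map $(x,u) \mapsto \Phi_{\iota(x)}u$ (continuous as the composition of the continuous $\iota$ in the first slot with the continuous $\Phi$) with $Y_0 \definedas X$, $Y_1 \definedas X_1$, $Z \definedas Y$ and the same $\kappa$. The only formal obstacle is that \cref{Lemma_Weakly_continuous_implies_strongly_continuous} is stated for a parameter space that is an open subset of a locally convex vector space, whereas here $\tilde U$ is merely a topological space; however, its proof uses only continuity of the parameterised map and \cref{Lemma_Topological_Lemma}, neither of which involves any vector space structure on the parameter space, so the argument carries over verbatim.

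Beyond this mild bookkeeping issue the proof is essentially immediate: no additional analytic work is required once \cref{Lemma_Strong_Gateaux_implies_Frechet} and \cref{Lemma_Weakly_continuous_implies_strongly_continuous} are in hand, and the one subtle point to mention (the need to apply the latter with an arbitrary topological parameter space) is harmless.
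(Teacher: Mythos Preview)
Your proposal is correct and follows essentially the same route as the paper: first invoke \cref{Lemma_Strong_Gateaux_implies_Frechet} for pointwise Fr{\'e}chet differentiability of $f_1$ with $Df_1(\iota_1(x)) = Df(\iota(x))\circ\kappa$, then invoke \cref{Lemma_Weakly_continuous_implies_strongly_continuous} for strong continuity of the derivative. A small notational slip: since you defined $\Phi$ on $\tilde U\times X$ with $\Phi_x = Df(\iota(x))$, the formula should read $Df_1(\iota_1(x)) = \Phi_x\circ\kappa$, not $\Phi_{\iota(x)}\circ\kappa$; your observation that \cref{Lemma_Weakly_continuous_implies_strongly_continuous} applies verbatim to an arbitrary topological parameter space is a point the paper simply glosses over.
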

\begin{proof}
For simplicity I will drop $\kappa$ from the notation and consider $U_1, X_1$ as subsets of $X$, although not with the induced topologies. \\
By \cref{Lemma_Strong_Gateaux_implies_Frechet}, $f_1$ is Fr{\'e}chet differentiable on $\iota_1(\tilde{U})$ and $Df_1(x) = Df(\kappa(x))\circ \kappa$. \\
By assumption, the map
\begin{align*}
\phi : \tilde{U}\times X &\to Y \\
(x,u) &\mapsto Df(\iota(x))u
\end{align*}
is continuous.
\cref{Lemma_Weakly_continuous_implies_strongly_continuous} then implies that the map
\begin{align*}
\tilde{U} &\to L_{\mathrm{c}}(X_1, Y) \\
x &\mapsto Df(\iota(x))\circ \kappa = Df_1(\iota_1(x))
\end{align*}
is continuous, \ie $f_1$ is strongly continuously Fr{\'e}chet differentiable along $\iota_1$.
\end{proof}

\Needspace{25\baselineskip}
\subsection{Main example: The reparametrisation action}\label{Subsection_Reparametrisation_action}

Let $(\Sigma,g)$ be a closed $n$-dimensional Riemannian manifold and let $\pi : F \to \Sigma$, $\pi_1 : F_1 \to \Sigma$ and $\pi_2 : F_2 \to \Sigma$ be real (or complex) vector bundles equipped with euclidean (or hermitian) metrics and metric connections.
Let furthermore $B \subseteq \R^r$ (for some $r\in\N_0$) be an open subset and let
\begin{align*}
\phi : B \times \Sigma &\to B\times \Sigma \\
(b, z) &\mapsto (b, \phi_b(z))
\intertext{and}
\Phi : B \times F_1 &\to B\times F_2 \\
(b, \xi) &\mapsto (b, \Phi_b(\xi))
\end{align*}
be smooth families of maps, where $\phi$ is a family of diffeomorphisms and $\Phi$ covers $\phi\inv$.
I.\,e.~$\phi$ and $\Phi$ are smooth, $\phi_b \in \operatorname{Diff}(\Sigma)$ and $\Phi_b \in \mathcal{C}^\infty(F_1, F_2)$ for all $b \in B$, and
\[
\xymatrix{
B\times F_2 \ar@{}[rd]|-{\circlearrowleft} \ar[d]_-{\id_B\times \pi} & B\times F_1 \ar[d]^-{\id_B\times \pi} \ar[l]_-{\Phi} \\
B\times \Sigma \ar[r]^-{\phi} & B\times \Sigma
}
\]
commutes.
Furthermore, assume that $\Phi$ is linear in the fibres of $F_1$ and $F_2$, \ie for every $b\in B$, $\Phi_b : F_1 \to F_2$ defines a vector bundle morphism covering $\phi\inv_b : \Sigma \to \Sigma$.

For $k\in \N_0$, let $\Gamma^k(F)$ be the space of $k$-times continuously differentiable sections of $F$ equipped with the $\mathcal{C}^k$-topology.
It can be defined for example via the (complete) norm
\begin{align*}
\|\cdot\|_k : \Gamma^k(F) &\to [0,\infty) \\
u &\mapsto \sum_{j=0}^k \sup\{|\nabla^j u(z)| \;|\; z \in \Sigma\}\text{.}
\end{align*}
Also, denote by $\Gamma(F)$ the (Fr{\'e}chet) space of smooth sections equipped with the inverse limit of the $\mathcal{C}^k$-topologies, for $k\in\N_0$. \\
Finally, for $k\in\N_0$ and $1 < p < \infty$, let $W^{k,p}(F)$ be the Sobolev space of sections of $F$ of class $(k,p)$.
For concreteness' sake, let it be defined as the completion of $\Gamma(E)$ \wrt the norm
\begin{align*}
\|\cdot\|_{k,p} : \Gamma(F) &\to [0,\infty) \\
u &\mapsto \sum_{j=0}^k \left(\int_\Sigma |\nabla^j u|^p\, \dvol_g\right)^{1/p}\text{.}
\end{align*}
Also assume that $kp > \dim_\R \Sigma$, so that the Sobolev embedding and multiplication theorems hold, in particular one can identify $W^{k,p}(F)$ with a subset of $\Gamma^0(F)$ and consider the elements of $W^{k,p}(F)$ as continuous sections of $F$. \\
Define
\begin{align*}
\Gamma_B(F) &\definedas B\times \Gamma(F)\text{,} & \Gamma^k_B(F) &\definedas B\times \Gamma^k(F) \quad\text{and} & W^{k,p}_B(F) &\definedas B\times W^{k,p}(F)
\intertext{and set for $b\in B$}
\Gamma_b(F) &\definedas \{b\}\times \Gamma(F)\text{,} & \Gamma^k_b(F) &\definedas \{b\}\times \Gamma^k(F) \quad\text{and} & W^{k,p}_b(F) &\definedas \{b\}\times W^{k,p}(F)\text{.}
\end{align*}
All of these are equipped with the product topologies and one has vector bundles (via projection onto the first factor)
\begin{align*}
\rho : \Gamma_B(F) &\to B\text{,} & \rho^k : \Gamma^k_B(F) &\to B\quad\text{and} & \rho^{k,p} : W^{k,p}_B(F) &\to B
\end{align*}
Note that $\Gamma_B(F)$, $\Gamma^k_B(F)$ and $W^{k,p}_B(F)$ are open subsets of the Hausdorff locally convex topological vector spaces $\Gamma_{\R^r}(F)$, $\Gamma^k_{\R^r}(F)$ and $W^{k,p}_{\R^r}(F)$, respectively.

There then are vector bundle morphisms
\begin{align*}
\Psi : \Gamma_B(F_1) &\to \Gamma_B(F_2)\text{,} & \Psi^k : \Gamma^k_B(F_1) &\to \Gamma^k_B(F_2)\quad\text{and} & \Psi^{k,p} : W^{k,p}_B(F_1) &\to W^{k,p}_B(F_2)\text{,}
\intertext{defined in all three cases by}
& & (b,u) &\mapsto (b,\Phi_b^\ast u)\text{,} & &
\intertext{where}
& & \Phi_b^\ast u &\definedas \Phi_b\circ u\circ \phi_b\text{.}\hspace{-4cm} & &
\end{align*}

\begin{proposition}\label{Proposition_Reparametrisation_action_cts}
In the above notation,
\begin{enumerate}[label=\arabic*.,ref=\arabic*.]
  \item\label{Proposition_Reparametrisation_action_cts_1} $\Psi^k : \Gamma^k_B(F_1) \to \Gamma^k_B(F_2)$ is continuous and has locally bounded image, but in general is not locally uniformly continuous.
  \item\label{Proposition_Reparametrisation_action_cts_2} $\Psi^{k,p} : W^{k,p}_B(F_1) \to W^{k,p}_B(F_2)$ is continuous and has locally bounded image, but in general is not locally uniformly continuous.
  \item\label{Proposition_Reparametrisation_action_cts_3} $\Psi : \Gamma_B(F_1) \to \Gamma_B(F_2)$ is continuous and locally bounded, but in general has neither locally bounded image nor is it locally uniformly continuous.
\end{enumerate}
\end{proposition}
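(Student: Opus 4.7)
The plan is to derive all three positive assertions from a single chain-rule estimate for the pullback, and to handle the negative assertions by explicit steepening sequences. For every $k\in\N_0$ a Faà di Bruno computation yields
\[
\|\Phi_b^\ast u\|_k \;\leq\; P_k\bigl(\|\phi_b\|_{\mathcal{C}^k},\|\Phi_b\|_{\mathcal{C}^k}\bigr)\cdot \|u\|_k,
\]
with $P_k$ a polynomial and $\|\Phi_b\|_{\mathcal{C}^k}$ denoting uniform bounds on the coefficients of the bundle morphism and their covariant derivatives up to order $k$. Smoothness of $b\mapsto(\phi_b,\Phi_b)$ makes both factors inside $P_k$ locally bounded in $b$, so $\Psi^k$ has locally bounded image. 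For $\Psi^{k,p}$ the same strategy applies after $P_k$ is replaced by the standard Sobolev composition and multiplication estimates, which are available thanks to the assumption $kp>\dim_\R\Sigma$.

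For continuity of $\Psi^k$ at $(b_0,u_0)$ I split
\[
\Phi_b^\ast u - \Phi_{b_0}^\ast u_0 \;=\; \Phi_b^\ast(u-u_0) \;+\; (\Phi_b^\ast-\Phi_{b_0}^\ast)u_0.
\]
The first summand is controlled by the estimate above applied to $u-u_0$, uniformly in $b$ near $b_0$. The second summand is more delicate because $u_0$ is only $\mathcal{C}^k$, so $b\mapsto \Phi_b^\ast u_0$ need not be differentiable in $b$; approximating $u_0$ in $\Gamma^k(F_1)$ by a smooth $\tilde u$, however, reduces matters to continuity of $b\mapsto\Phi_b^\ast\tilde u$ in $\Gamma^k(F_2)$, which is direct since $\tilde u$ is smooth and $b\mapsto(\phi_b,\Phi_b)$ is smooth in all derivatives. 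A standard $3\varepsilon$-argument closes the estimate. The Sobolev case is handled verbatim using density of $\Gamma(F_1)$ in $W^{k,p}(F_1)$.

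Continuity and local boundedness of $\Psi$ in the smooth case follow formally from the preceding, since $\Gamma_B(F)$ carries the inverse-limit topology and a subset of an inverse limit is bounded iff it is bounded at every finite level. That $\Psi$ does not in general have locally bounded image is witnessed already by the trivial case $\phi=\id$, $\Phi=\id$, where $\Psi$ becomes the identity on $\Gamma_B(F)$ and the latter is not locally bounded by \cref{Remark_Bounded_maps}, \labelcref{Remark_Bounded_maps_3}.

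The failure of local uniform continuity in all three cases rests on the construction of \cref{Example_Counterexample_reparametrisation_action}: on $\Sigma=S^1$ with translation $\phi_b(z)=z+b$ and trivial $\Phi$, for each $k$ one exhibits a sequence $u_n$ whose $\mathcal{C}^k$-norm remains bounded while its $\mathcal{C}^{k+1}$-norm diverges, so that $(b_n,u_n),(b_n',u_n')$ can be chosen with $b_n-b_n'\to 0$ but $\|u_n\circ\phi_{b_n}-u_n\circ\phi_{b_n'}\|_{\mathcal{C}^k}$ bounded away from $0$; the same sequences (or obvious smoothings) serve in the $W^{k,p}$ and $\Gamma$ cases. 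I expect the principal technical obstacle to be the Sobolev version of the chain-rule estimate, where composition interacts with weak derivatives, but the hypothesis $kp>\dim_\R\Sigma$ places this firmly inside the reach of the standard Sobolev multiplication and composition theorems.
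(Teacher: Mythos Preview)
Your proposal is correct and follows essentially the same route as the paper: the level-wise operator bound $\|\Phi_b^\ast u\|_k\leq C^k(b)\|u\|_k$, the $3\varepsilon$-argument via smooth approximation for continuity, the inverse-limit characterisation for $\Psi$, and the deferral of the negative statements to \cref{Example_Counterexample_reparametrisation_action}. The only noteworthy difference is your counterexample for $\Psi$ not having locally bounded image: you invoke the trivial case $\phi=\id$, $\Phi=\id$ (where $\Psi$ is the identity on a non-normable space), whereas the paper treats the general situation where each $\Phi_b$ is an isomorphism and uses the two-sided estimate $c^k(b)\|u\|_k\leq\|\Phi_b^\ast u\|_k\leq C^k(b)\|u\|_k$ to show the image contains a $\|\cdot\|_k$-ball; your argument is simpler and perfectly adequate for an ``in general'' claim.
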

\begin{proof}
\begin{claim}
There exist continuous functions $C^k, C^{k,p} : B \to (0,\infty)$ \st
\begin{align*}
\|\Phi_b^\ast u\|_k &\leq C^k(b)\|u\|_k &&\forall\, u \in \Gamma^k(F_1)
\intertext{and}
\|\Phi_b^\ast u\|_{k,p} &\leq C^{k,p}(b)\|u\|_{k,p} &&\forall\, u \in W^{k,p}(F_1)\text{.}
\end{align*}
If for every $b \in B$, $\Phi_b$ is a diffeomorphism (\ie a vector bundle isomorphism covering $\phi_b\inv$), then there exist continuous functions $c^k, c^{k,p}, C^k, C^{k,p} : B \to (0,\infty)$ \st
\begin{align*}
c^k(b)\|u\|_k &\leq \|\Phi_b^\ast u\|_k \leq C^k(b)\|u\|_k &&\forall\, u \in \Gamma^k(F_1)
\intertext{and}
c^{k,p}(b)\|u\|_{k,p} &\leq \|\Phi_b^\ast u\|_{k,p} \leq C^{k,p}(b)\|u\|_{k,p} &&\forall\, u \in W^{k,p}(F_1)\text{.}
\end{align*}
\end{claim}
\begin{proof}
For a fixed $b \in B$ and $u\in\Gamma^k(F_1)$, a simple calculation gives $\|\Phi_b^\ast u\|_k \leq C^k(b)\|u\|_k$, where $C^k(b)$ is a universal expression in the terms $\max\{|\nabla^j \Phi_b(z)| \;|\; z \in \Sigma\}$, for $j = 0, \dots, k$.
By \cref{Lemma_Topological_Lemma}, this depends continuously on $b\in B$. \\
For the other inequality, apply the same reasoning to $\Phi_b\inv$. \\
The case $b \in B$ and $u \in W^{k,p}(F_1)$ is identical.
\end{proof}
\begin{enumerate}[label=\arabic*.,ref=\arabic*.]
  \item\label{Proposition_Reparametrisation_action_cts_Proof_1} The claim that $\Psi^k$ is in general not locally uniformly continuous will be shown by way of \cref{Example_Counterexample_reparametrisation_action} below. \\
If $K \subseteq B$ is a compact subset, let $C^k(K) \definedas \max\{C^k(b) \;|\; b \in K\} \in (0,\infty)$.
By the above claim, for any $\rho > 0$,
\[
\Psi^k(K\times B^{\Gamma^k(F_1)}(0,\rho)) \subseteq K \times B^{\Gamma^k(F_2)}(0,C^k(K)\rho)\text{,}
\]
which is bounded.
Since every point in $\Gamma^k_B(F_1)$ has a neighbourhood of the form $K\times B^{\Gamma^k(F_1)}(0,\rho)$ for some compact subset $K\subseteq B$ and some $\rho > 0$, this shows that $\Psi^k$ has locally bounded image. \\
For the proof of continuity of $\Psi^k$ I will follow \cite{MR2644764}, esp.~Section 2.2. \\
So let $(b_0,u_0) \in \Gamma^k_B(F_1)$ and let $\varepsilon > 0$.
For any $(b,u) \in \Gamma^k_B(F_1)$,
\begin{align*}
\|\Psi^k(b,u) - \Psi^k(b_0,u_0)\| &= \|(b-b_0, \Phi_b^\ast u - \Phi_{b_0}^\ast u_0)\| \\
&= |b-b_0| + \|\Phi_b^\ast u - \Phi_{b_0}^\ast u_0\|_k
\end{align*}
and for any $u'_0 \in \Gamma(F)$,
\begin{align*}
\|\Phi_b^\ast u - \Phi_{b_0}^\ast u_0\|_k &\leq \|\Phi_b^\ast u - \Phi_b^\ast u'_0\|_k + \|\Phi_b^\ast u'_0 - \Phi_{b_0}^\ast u'_0\|_k + \|\Phi_{b_0}^\ast u'_0 - \Phi_{b_0}^\ast u_0\|_k \\
&= \|\Phi_b^\ast (u - u'_0)\|_k + \|\Phi_b^\ast u'_0 - \Phi_{b_0}^\ast u'_0\|_k + \|\Phi_{b_0}^\ast (u'_0 - u_0)\|_k \\
&\leq C^k(b)\underbrace{\|u - u'_0\|_k}_{\mathclap{\leq\; \|u - u_0\|_k + \|u_0 - u'_0\|_k}} + C^k(b_0)\|u'_0 - u_0\|_k + \|\Phi_b^\ast u'_0 - \Phi_{b_0}^\ast u'_0\|_k \\
&\leq (C^k(b) + C^k(b_0))(\|u - u_0\|_k + \|u_0 - u'_0\|_k) + \|\Phi_b^\ast u'_0 - \Phi_{b_0}^\ast u'_0\|_k\text{.}
\end{align*}
Now let any $\varepsilon > 0$ be given.
Let $K \subseteq B$ be any compact convex neighbourhood of $b_0$ and let $\delta' \definedas \frac{\varepsilon}{12C^k(K)}$.
Pick $u'_0 \in \Gamma(F)$ \st $\|u_0 - u'_0\|_k < \delta'$.
Then for any $u \in \Gamma^k(F)$ with $\|u - u_0\|_k < \delta'$ and any $b\in K$,
\begin{align*}
\|\Phi_b^\ast u - \Phi_{b_0}^\ast u_0\|_k &\leq 2C^k(K)(2\delta') + \|\Phi_b^\ast u'_0 - \Phi_{b_0}^\ast u'_0\|_k \\
&\leq \varepsilon/3 + \|\Phi_b^\ast u'_0 - \Phi_{b_0}^\ast u'_0\|_k\text{.}
\end{align*}
Now
\begin{align*}
\|\Phi_b^\ast u'_0 - \Phi_{b_0}^\ast u'_0\|_k &= \left\|\int_0^1 \frac{\d}{\d t} \Phi_{b_0 + t(b-b_0)}^\ast u'_0\,\d t\right\|_k \\
&\leq \int_0^1 \left\|\frac{\d}{\d t} \Phi_{b_0 + t(b-b_0)}^\ast u'_0\right\|_k \,\d t \\
&\leq D^k\|u'_0\|_{k+1}(b-b_0)\text{,}
\end{align*}
where $D^k > 0$ is a universal expression in the supremum over the $\mathcal{C}^{k+1}$-norms of $\Phi_b$, for $b \in K$. \\
Choose $\delta'' > 0$ so small that $B^B(b_0, \delta'') \subseteq K$ and set
\[
\delta \definedas \min\left\{\delta'', \frac{\varepsilon}{6 D^k\|u'_0\|_{k+1}}, \frac{\varepsilon}{2}, \delta'\right\}\text{.}
\]
The above then shows that for all $(b,u) \in \Gamma^k_B(F_1)$ with $\|(b,u) - (b_0,u_0)\| < \delta$, $\|\Psi^k(b,u) - \Psi^k(b_0,u_0)\| < \varepsilon$.
  \item\label{Proposition_Reparametrisation_action_cts_Proof_2} Completely analogous to \labelcref{Proposition_Reparametrisation_action_cts_Proof_1}.
  \item\label{Proposition_Reparametrisation_action_cts_Proof_3} That $\Psi$ is continuous follows immediately from \labelcref{Proposition_Reparametrisation_action_cts_1}~because for $i = 1,2$, $\Gamma_B(F_i)$ is the inverse limit of the spaces $\Gamma^k_B(F_i)$ and $\Psi$ is the inverse limit of the maps $\Psi^k$. \\
That $\Psi$ in general is not locally uniformly continuous is shown in \cref{Example_Counterexample_reparametrisation_action} below. \\
To show that $\Psi$ in general does not have locally bounded image, assume that $F_1 = F_2 = F$ and that $\Psi_b$ is a diffeomorphism for all $b \in B$, \eg as in \cref{Example_Counterexample_reparametrisation_action} below.
Let $U \subseteq \Gamma_B(F)$ be any neighbourhood of $(b_0,0)$ for some $b_0 \in B$.
Then $U$ contains a subset of the form $K\times (B^{\Gamma^k(F)}(0,\rho)\cap \Gamma(F))$ for a compact neighbourhood $K \subseteq B$ of $b_0$ and some $\rho > 0$.
The claim above then shows that $\Psi(U)$ contains a subset of the form $K\times (B^{\Gamma^k(F)}(0,c^k(K)\rho)\cap \Gamma(F))$, which is not bounded in $\Gamma_B(F)$, \cf \cref{Remark_Bounded_maps}, \labelcref{Remark_Bounded_maps_3}. \\
To show that $\Psi$ is locally bounded, let $K \subseteq B$ be any compact subset.
Then $\Psi|_{\Gamma_K(F)} : \Gamma_K(F) \to \Gamma_K(F)$ is bounded: \\
For if $C \subseteq \Gamma(F)$ is a bounded subset, then $K\times C \subseteq \Gamma_K(F)$ is bounded and if $A \subseteq \Gamma_K(F)$ is a bounded subset, then $C \definedas \pr_2(A) \subseteq \Gamma(F)$ is bounded (for $\pr_2 : \R^r\times \Gamma(F) \to \Gamma(F)$ is a continuous linear map) and $A \subseteq K\times C$.
It hence suffices to show that for any bounded subset $C \subseteq \Gamma(F)$, $\Psi(K\times C) \subseteq \R^r\times \Gamma(F)$ is bounded.
Now $\Psi(K\times C) = \bigcup_{b \in K} \bigcup_{u\in C} (b, \Phi_b^\ast u) \subseteq K\times \bigcup_{b\in K} \Phi_b^\ast C$.
It hence suffices to show that $\bigcup_{b\in K} \Phi_b^\ast C \subseteq \Gamma(F)$ is bounded for any bounded subset $C \subseteq \Gamma(F)$, \ie it has to be shown that for any neighbourhood $U\subseteq \Gamma(F)$ of $0$ there exists $c \in \mathds{k}$ \st $\bigcup_{b\in K} \Phi_b^\ast C \subseteq cU$.
So let such a neighbourhood $U\subseteq \Gamma(F)$ be given.
As has been used several times before, there then exists a $k \in \N_0$ and a $\rho > 0$ \st $B^{\Gamma^k(F)}(0, \rho) \cap \Gamma(F) \subseteq U$.
Because $C$ is bounded, there exists $c' \in \mathds{k}$ \st $C \subseteq c'(B^{\Gamma^k(F)}(0, \rho) \cap \Gamma(F)) = B^{\Gamma^k(F)}(0, |c'|\rho) \cap \Gamma(F)$.
By the claim above, for $b \in B$, then $\Phi_b^\ast C \subseteq B^{\Gamma^k(F)}(0, C^k(b)|c'|\rho) \cap \Gamma(F)$.
Set $c \definedas C^k(K)|c'|$, where $C^k(K)$ is as in \labelcref{Proposition_Reparametrisation_action_cts_Proof_1}~above.
Then $\bigcup_{b\in K} \Phi_b^\ast C \subseteq B^{\Gamma^k(F)}(0, C^k(K)|c'|\rho) \cap \Gamma(F) = c B^{\Gamma^k(F)}(0, \rho) \cap \Gamma(F) \subseteq cU$.
\end{enumerate}
\end{proof}

\begin{example}\label{Example_Counterexample_reparametrisation_action}
Let $B \definedas \R$, $\Sigma \definedas S^1$ with the standard metric, $F \definedas \Sigma\times \R$ with the trivial connection and standard fibre metric and let
\begin{align*}
\phi : B\times \Sigma &\to B\times \Sigma \\
(s, e^{it}) &\mapsto (s, e^{i(t+s)}) \\
\Phi : B\times F &\to B\times F \\
(s, (e^{it}, \xi)) &\mapsto (s, (e^{i(t-s)}, \xi))\text{.}
\end{align*}
It will now be shown that the corresponding maps $\Psi$, $\Psi^k$ and $\Psi^{k,p}$ as above are not locally uniformly continuous.
To do so, in the following I will construct, for any $k\in\N_0$ and $1 < p < \infty$, sequences of smooth sections $(u^k_n)_{n\in\N} \subseteq \Gamma^k(F)\cap \Gamma(F)$ and $(u^{k,p}_n)_{n\in\N} \subseteq W^{k,p}(F)\cap \Gamma(F)$ \st the following hold:
\begin{enumerate}[label=\arabic*.,ref=\arabic*.]
  \item There exist constants $c_k > 0$ \st $\|u^k_n\|_k \leq c_k$ and $\|u^{k,p}_n\|_{k,p} \leq c_k$ for all $n\in\N$.
  \item For any $t > 0$,
\begin{align*}
\liminf_{n\to \infty} \|\Phi_t^\ast u^k_n - u^k_n\|_k &\geq 2
\intertext{and}
\liminf_{n\to \infty} \|\Phi_t^\ast u^{k,p}_n - u^{k,p}_n\|_{k,p} &\geq 2\text{.}
\end{align*}
\end{enumerate}
From this it follows immediately that $\Psi$, $\Psi^k$ and $\Psi^{k,p}$ are not uniformly continuous in any neighbourhood of $(0,0) \in \Gamma_B(F)$: \\
First, restrict to $\Psi^k$, for the proof for $\Psi^{k,p}$ is the same almost ad verbatim, just replacing $u^k_n$ by $u^{k,p}_n$, etc. \\
Second, given any neighbourhood $U\subseteq \Gamma^k_B(F)$ of $(0,0)$, there exists $\delta > 0$ \st $(-\delta,\delta)\times B^{\Gamma^k(F)}(0, \delta) \subseteq U$, where $B^{\Gamma^k(F)}(0,\delta)$ is the ball of radius $\delta$ around $0$ in $\Gamma^k(F)$.
Then the sequence of sections $(\tilde{u}^k_n)_{n\in\N} \subseteq \Gamma^k(F)$, $\tilde{u}^k_n \definedas \frac{\delta}{2c_k}u^k_n$ is contained in $B^{\Gamma^k(F)}(0,\delta)$ any hence $(t,\tilde{u}^k_n) \in U$ for all $t \in (-\delta, \delta)$.
By the above, for any $t \in (-\delta, \delta)$ with $t \neq 0$, there exists $n_0(t) \in \N$ \st
\begin{align*}
\|\Psi^k(t, \tilde{u}^k_n) - \Psi^k(0, \tilde{u}^k_n)\| &= |t| + \|\Phi_t^\ast \tilde{u}^k_n - \tilde{u}^k_n\|_k \\
&= |t| + \frac{\delta}{2c_k}\|\Phi_t^\ast u^k_n - u^k_n\|_k \\
&\geq \frac{\delta}{c_k}
\end{align*}
for all $n \geq n_0(t)$.
On the other hand, for any neighbourhood $V \subseteq \Gamma^k_B(F)$ of $0$, $(t,\tilde{u}^k_n) - (0,\tilde{u}^k_n) = (t,0) \in V$ for all $t \neq 0$ small enough, independent of $\tilde{u}^k_n$.
Now let $W \definedas B^{\Gamma^k_B(F)}(0,\frac{\delta}{2c_k}) \subseteq \Gamma^k_B(F)$.
Then by the above, for any neighbourhood $V \subseteq \Gamma^k_B(F)$ of $0$ we can find points $x,y \in U$ of the form $x = (t, \tilde{u}^k_n)$ and $y = (0, \tilde{u}^k_n)$ for some $t\neq 0$ small enough and some $n\in \N$ large enough with $x-y \in V$, but $\Psi^k(x) - \Psi^k(y) \not\in W$, which contradicts uniform continuity. \\
Third, to show that $\Psi$ is not uniformly continuous in any neighbourhood of $(0,0)$, observe that if $U \subseteq \Gamma_B(F)$ is any neighbourhood of $(0,0)$, because $\Gamma_B(F)$ as a topological space is the inverse limit of the spaces $\Gamma^k_B(F)$, there exists $k \in \N_0$ and $\delta > 0$ \st $(-\delta, \delta)\times (B^{\Gamma^k(F)}(0,\delta) \cap \Gamma(F)) \subseteq U$.
Then the points $(t, \tilde{u}^k_n)$, where $\tilde{u}^k_n$ is defined exactly as before, are contained in $U$.
Define $W \definedas B^{\Gamma^k_B(F)}(0, \frac{\delta}{2c_k}) \cap \Gamma(F)$.
Then by the same calculation as before, for any neighbourhood $V \subseteq \Gamma_B(F)$, $(t, \tilde{u}^k_n) - (0,\tilde{u}^k_n) = (t,0) \in V$, for $t \neq 0$ small enough, but $\Psi(t, \tilde{u}^k_n) - \Psi(0, \tilde{u}^k_n) \not\in W$ for $n \in \N$ large enough.

It remains to show the existence of the sections $u^k_n, u^{k,p}_n$ as above. \\
It follows from a standard exercise done in Calculus I courses that for every $1 < p < \infty$ and $n \in \N$ there exist smooth functions $f^{0}_n, f^{0,p}_n : \R \to \R$ \st
\begin{align*}
\chi_{[1/2 - 1/4n, 1/2 + 1/4n]}(s) &\leq f^{0}_n(s) \leq 2 \chi_{[1/2 - 1/2n, 1/2 + 1/2n]}(s)\text{,} \\
(2n)^{1/p} \chi_{[1/2 - 1/4n, 1/2 + 1/4n]}(s) &\leq f^{0,p}_n(s) \leq 2n^{1/p} \chi_{[1/2 - 1/2n, 1/2 + 1/2n]}(s)\text{,}
\end{align*}
where $\chi_A : \R \to \R$ denotes the characteristic function of a subset $A \subseteq \R$.
Define inductively for $k \in \N_0$, $f^{k+1}_n(s) \definedas \int_{0}^s f^{k}_n(r)\,\d r$ and $f^{k+1,p}_n(s) \definedas \int_{0}^s f^{k,p}_n(r)\,\d r$.
\begin{claim}
The functions $f^k_n, f^{k,p}_n : \R \to \R$, for all $k\in\N_0$, $n\in\N$ and $1 \leq p < \infty$, have the following properties:
\begin{enumerate}[label=\arabic*.,ref=\arabic*.]
  \item\label{Example_Counterexample_reparametrisation_action_Claim_1} $1 \leq \|f^0_n\|_{\mathcal{C}^0}, \|f^{0,p}_n\|_{W^{0,p}} \leq 2$.
  \item\label{Example_Counterexample_reparametrisation_action_Claim_2} For all $k \in \N$, $n\in\N$ and $1 \leq p < \infty$, $f^k_n$ and $f^{k,p}_n$ are nonnegative and monotone increasing with $f^k_n(s) = f^{k,p}_n(s) = 0$ for all $s \leq 0$.
  \item\label{Example_Counterexample_reparametrisation_action_Claim_3} $\frac{\d^\ell}{\d s^\ell}f^{k}_n = f^{k-\ell}_n$ and $\frac{\d^\ell}{\d s^\ell}f^{k,p}_n = f^{k-\ell,p}_n$ for all $0 \leq \ell \leq k$.
  \item\label{Example_Counterexample_reparametrisation_action_Claim_4} For all $k\in\N_0$ there exist polynomials $P^k \in \R[s]$ of degree $k-1$ \st
\[
f^k_n(1 + s), f^k_n(1+s) \leq P^k(s)
\] for all $s \geq 0$, $n\in\N$ and $1 < p < \infty$.
\end{enumerate}
\end{claim}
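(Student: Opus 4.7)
The four statements will be proved in order by induction on $k$, with only \labelcref{Example_Counterexample_reparametrisation_action_Claim_4} requiring any substantive work; the others are bookkeeping on the recursive definition.

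Statement \labelcref{Example_Counterexample_reparametrisation_action_Claim_1} is immediate from the pointwise sandwiches defining $f^0_n$ and $f^{0,p}_n$: the upper bounds yield $\|f^0_n\|_{\mathcal{C}^0}\leq 2$ and $\|f^{0,p}_n\|_{W^{0,p}}^p \leq (2n^{1/p})^p\cdot \tfrac{1}{n} = 2^p$, while the lower bounds (of height $1$ or $(2n)^{1/p}$ on an interval of length $\tfrac{1}{2n}$) give the matching inequalities from below. Statements \labelcref{Example_Counterexample_reparametrisation_action_Claim_2} and \labelcref{Example_Counterexample_reparametrisation_action_Claim_3} follow by a direct induction on $k$ applied to $f^{k+1}_n(s) = \int_0^s f^k_n(r)\,\d r$: since $f^0_n, f^{0,p}_n \geq 0$ are supported in $[1/2-1/2n,1/2+1/2n]\subseteq [0,1]$, the primitive $s\mapsto \int_0^s(\cdot)$ preserves nonnegativity and vanishing on $(-\infty,0]$, monotonicity is then the nonnegativity of the derivative, and \labelcref{Example_Counterexample_reparametrisation_action_Claim_3} is the fundamental theorem of calculus iterated.

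The substance is \labelcref{Example_Counterexample_reparametrisation_action_Claim_4}, and the key auxiliary observation is a uniform bound $\|f^k_n\|_{L^\infty([0,1])}\leq 2$ and $\|f^{k,p}_n\|_{L^\infty([0,1])}\leq 2$, valid for all $k\geq 1$, $n\in\N$ and $1\leq p<\infty$. The base $k=1$ comes from the total-mass estimates
\[
\int_\R f^0_n(r)\,\d r \leq 2\cdot \tfrac{1}{n} \leq 2, \qquad \int_\R f^{0,p}_n(r)\,\d r \leq 2n^{1/p}\cdot \tfrac{1}{n} = 2n^{1/p-1}\leq 2,
\]
since $p\geq 1$; combined with \labelcref{Example_Counterexample_reparametrisation_action_Claim_2}, the monotone functions $f^1_n$ and $f^{1,p}_n$ are globally bounded by these integrals. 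The inductive step uses that $f^{k+1}_n(1) = \int_0^1 f^k_n(r)\,\d r \leq \|f^k_n\|_{L^\infty([0,1])}$ and then again invokes monotonicity to propagate the constant $2$ from $k$ to $k+1$. It is in this step that the precise exponent $1/p$ in the definition of $f^{0,p}_n$ matters: it is exactly what compensates the width $1/n$ of the support so that the mass remains bounded independently of $n$.

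With the uniform $L^\infty([0,1])$ bound in hand, \labelcref{Example_Counterexample_reparametrisation_action_Claim_4} is proved by induction starting at $k=1$ via the decomposition
\[
f^{k+1}_n(1+s) = f^{k+1}_n(1) + \int_0^s f^k_n(1+u)\,\d u \leq 2 + \int_0^s P^k(u)\,\d u,
\]
so that $P^{k+1}(s) \definedas 2 + \int_0^s P^k(u)\,\d u$ is a polynomial of degree one greater than $P^k$, valid uniformly in $n$; the base case is the constant polynomial $P^1(s) \equiv 2$, of degree $0 = 1-1$, and the argument for $f^{k,p}_n$ is identical. The only potential obstruction is hidden $n$-dependence in the constant term $f^{k+1}_n(1)$, and this is precisely what the uniform bound from the previous paragraph rules out.
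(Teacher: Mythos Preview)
Your proof is correct and follows essentially the same route as the paper. The only differences are cosmetic: you isolate the uniform bound $\|f^k_n\|_{L^\infty([0,1])}\leq 2$ as a separate auxiliary induction and then use the constant $2$ in the recursion $P^{k+1}(s)=2+\int_0^s P^k$, whereas the paper absorbs this into the main induction by bounding $\int_0^1 f^{k,p}_n \leq P^k(0)$ via monotonicity and the inductive hypothesis; and you omit the trivial base case $k=0$ of \labelcref{Example_Counterexample_reparametrisation_action_Claim_4}, which the paper dispatches with $P^0\definedas 0$ (the zero polynomial, of degree $-1$) since $f^0_n$ and $f^{0,p}_n$ are supported in $[0,1]$.
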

\begin{proof}
\labelcref{Example_Counterexample_reparametrisation_action_Claim_1,Example_Counterexample_reparametrisation_action_Claim_2,Example_Counterexample_reparametrisation_action_Claim_3}~are immediate from the definition. \\
For \labelcref{Example_Counterexample_reparametrisation_action_Claim_4}~note that for $k = 0$ the statement obviously holds with $P^0 \definedas 0$.
For $k = 1$ and $s \geq 0$,
\begin{align*}
f^{1,p}_n(1+s) &= \int_0^1 f^{0,p}_n(r)\, \d r + \int_1^{1+s} \underbrace{f^{0,p}_n(r)}_{=\;0}\,\d r \\
&= \int_0^1 f^{0,p}_n(r)\,\d r \leq 2n^{1/p}\cdot 1/n = 2/n^{1-1/p} \\
&\leq 2 \defines P^{1}
\end{align*}
and similarly for $f^1_n(1+s) \leq 2 = P^1$.
Now for $k \geq 1$ and $s \geq 0$,
\begin{align*}
f^{k+1,p}_n(1+s) &= \int_0^1 f^{k,p}_n(r)\, \d r + \int_1^{1+s} f^{k,p}_n(r)\, \d r \\
&\leq \int_0^1 P^{k}(0)\, \d r + \int_1^{1+s} P^{k}(r)\, \d r \\
&\defines P^{k+1}(s)\text{.}
\end{align*}
\end{proof}
Pick a smooth cutoff function $\rho : \R \to \R$ with $\supp \rho \subseteq [-1,2]$ and $\rho|_{[0,1]} \equiv 1$.
Define $g^k_n, g^{k,p}_n : \R \to \R$, for $k\in\N_0$, $1 < p < \infty$ and $n\in\N$ by $g^k_n \definedas \rho f^k_n$ and $g^{k,p}_n \definedas \rho f^{k,p}_n$.
It follows from the above claim that there are constants $c_k > 0$ \st $\|g^k_n\|_{\mathcal{C}^k}, \|g^{k,p}_n\|_{W^{k,p}} \leq c_k$. \\
Furthermore, for any $t \in \R$, $g^0_n(t + \cdot)$ and $g^0_n$ have disjoint supports for all $n \geq 1/t$ and likewise for $g^{0,p}_n(t + \cdot)$ and $g^{0,p}_n$.
It follows easily that for all $n\in\N$ with $n \geq 1/t$,
\begin{align*}
\|g^k_n(t+\cdot) - g^k_n\|_{\mathcal{C}^k} &\geq \|g^0_n(t + \cdot) - g^0_n\|_{\mathcal{C}^0} \\
&\geq \|g^0_n(t + \cdot)\|_{\mathcal{C}^0} + \|g^0_n\|_{\mathcal{C}^0} \\
&\geq 2\|g^0_n\|_{\mathcal{C}^0} \\
&\geq 2
\intertext{and likewise}
\|g^{k,p}_n(t+\cdot) - g^{k,p}_n\|_{W^{k,p}} &\geq 2\|g^{0,p}_n\|_{W^{0,p}} \\
&\geq 2\text{.}
\end{align*}
Now define
\begin{align*}
u^k_n : \Sigma &\to F \\
e^{it} &\mapsto \begin{cases} (e^{it}, g^k(t)) & t \in [-1, 2] \\ (e^{it}, 0) & \text{else} \end{cases}
\intertext{and}
u^{k,p}_n : \Sigma &\to F \\
e^{it} &\mapsto \begin{cases} (e^{it}, g^{k,p}(t)) & t \in [-1, 2] \\ (e^{it}, 0) & \text{else} \end{cases}\text{.}
\end{align*}
This finishes the counterexample.
\end{example}

In the following proposition, since $\Psi$ is not uniformly continuous and does not have locally bounded image, asking for Fr{\'e}chet differentiability of $\Psi$ does not make much sense anyway, at least as defined in \cref{Definition_Differentiability_II}.
But the proof easily shows that also the weaker definition of Fr{\'e}chet differentiability from \cref{Remark_Rest_continuity_versus_continuity_at_0_2} is not satisfied either.
The assumption of locally bounded image is used to show that $\mathcal{C}^{\mathrm{bi}}_{\mathrm{ao}}(U,Y)$ is a Hausdorff locally convex topological vector space, \cf \cref{Definition_Function_space_topologies}.
Without it one still gets a topological space $\mathcal{C}_{\mathrm{ao}}(U,Y)$.

In preparation for stating the next proposition about differentiability of $\Psi$, $\Psi^k$ and $\Psi^{k,p}$, it is helpful for expressing the differential to introduce a further bit of notation: \\
For $(b,u) \in \Gamma_B(F_1)$ and $(e,v) \in \Gamma_{\R^r}(F_1)$ let
\begin{align*}
&\nabla_e\phi_b : \Sigma \to T\Sigma & &\text{and} & &\nabla_e\Phi_b : F_1 \to F_2 \\
&\nabla_e\phi_b(z) \definedas \left.\frac{\d}{\d t}\right|_{t=0} \phi_{b + te}(z) & & & &\nabla_e\Phi_b \definedas \left.\frac{\d}{\d t}\right|_{t=0} \left(\Phi_{b + te}\circ P_{\gamma^t}\right)\text{.}
\end{align*}
Here, $\gamma^t$ is the family of paths
\begin{align*}
\gamma^t : [0,1]\times \Sigma &\to \Sigma \\
(s, z) &\mapsto \gamma^t_z \definedas \phi_{b + ste}(z)
\end{align*}
and $P_{\gamma^t}$ denotes the family of parallel transports along $\gamma^t$, \ie for $z \in \Sigma$, one has parallel transport $P_{\gamma^t_z} : (F_1)_{\phi_b(z)} \to (F_1)_{\phi_{b+te}(z)}$ along $\gamma^t_z$. \\
Define
\begin{align*}
\tilde{B} &\definedas TB & \overline{\Phi} : \tilde{B}\times \tilde{F} &\to \tilde{B}\times F_2 \\
\tilde{F} &\definedas \Hom(T\Sigma, F_1) & ((b,e), h) &\mapsto ((b,e), \Phi_b\circ h \circ \nabla_e\phi_b) \\
\tilde{\phi} : \tilde{B}\times \Sigma &\to \tilde{B}\times \Sigma & \tilde{\Phi} : \tilde{B}\times F_1 &\to \tilde{B}\times F_2 \\
\tilde{\phi}_{(b,e)} &\definedas \phi_b & ((b,e), u) &\mapsto \left((b,e), \left(\nabla_e\Phi_b\right)(u)\right)\text{.}
\end{align*}
$\overline{\Phi}_{(b,e)}$ and $\tilde{\Phi}_{(b,e)}$ cover $\tilde{\phi}_{(b,e)}\inv$.

\begin{proposition}\label{Proposition_Reparametrisation_action_diffble}
In the above notation,
\begin{enumerate}[label=\arabic*.,ref=\arabic*.]
  \item\label{Proposition_Reparametrisation_action_diffble_1} $\Psi : \Gamma_B(F_1) \to \Gamma_B(F_2)$ is weakly continuously weakly Fr{\'e}chet differentiable with locally bounded derivative given by
\begin{align*}
D\Psi(b,u)(e,v) &= (e, D\Psi_2(b,u)(e,v)) \\
D\Psi_2(b,u)(e,v) &= \Phi_b\circ \nabla_{\nabla_e\phi_b}u + (\nabla_e\Phi_b)\circ u\circ \phi_b + \Phi_b^\ast v \\
&= \overline{\Phi}_{(b,e)}^\ast \nabla u + \tilde{\Phi}_{(b,e)}^\ast u + \Phi_b^\ast v\text{,}
\end{align*}
for $(b,u) \in \Gamma_B(F_1)$ and $(e,v) \in \Gamma_{\R^r}(F_1)$.
  \item\label{Proposition_Reparametrisation_action_diffble_2} $\Psi : \Gamma_B(F_1) \to \Gamma_B(F_2)$ is in general neither strongly continuously weakly Fr{\'e}chet differentiable nor Fr{\'e}chet differentiable.
  \item\label{Proposition_Reparametrisation_action_diffble_3} $\Psi^k : \Gamma^k_B(F_1) \to \Gamma^k_B(F_2)$ is weakly continuously weakly Fr{\'e}chet differentiable along the canonical inclusion
\[
\iota_k : \Gamma^{k+1}(F_1) \hookrightarrow \Gamma^k(F_1)\text{,}
\]
with locally bounded derivative along $\iota_k$, given by the same formula as for $\Psi$.
  \item\label{Proposition_Reparametrisation_action_diffble_4} $\Psi^k : \Gamma^k_B(F_1) \to \Gamma^k_B(F_2)$ is in general neither weakly Fr{\'e}chet differentiable everywhere, nor strongly continuously weakly Fr{\'e}chet differentiable along $\iota_k$, nor Fr{\'e}chet differentiable along $\iota_k$.
  \item\label{Proposition_Reparametrisation_action_diffble_5} $\Psi^{k,p} : W^{k,p}_B(F_1) \to W^{k,p}_B(F_2)$ is weakly continuously weakly Fr{\'e}chet differentiable along the canonical inclusion
\[
\iota_{k,p} : W^{k+1,p}(F_1) \hookrightarrow W^{k,p}(F_1)\text{,}
\]
with locally bounded derivative along $\iota_{k,p}$, given by the same formula as for $\Psi$.
  \item\label{Proposition_Reparametrisation_action_diffble_6} $\Psi^{k,p} : W^{k,p}_B(F_1) \to W^{k,p}_B(F_2)$ is in general neither weakly Fr{\'e}chet differentiable everywhere, nor strongly continuously weakly Fr{\'e}chet differentiable along $\iota_{k,p}$, nor Fr{\'e}chet differentiable along $\iota_{k,p}$.
\end{enumerate}
\end{proposition}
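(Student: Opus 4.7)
The plan is to treat the three positive assertions \labelcref{Proposition_Reparametrisation_action_diffble_1,Proposition_Reparametrisation_action_diffble_3,Proposition_Reparametrisation_action_diffble_5} uniformly, then reduce the three negative assertions \labelcref{Proposition_Reparametrisation_action_diffble_2,Proposition_Reparametrisation_action_diffble_4,Proposition_Reparametrisation_action_diffble_6} to the counterexample of \cref{Example_Counterexample_reparametrisation_action}. For the derivative formula I would fix $(b,u)$ together with a direction $(e,v)$ and compute the pointwise $t$-derivative of $\Psi(b+te,u+tv)(z)$ at $t=0$ by the chain rule, using parallel transport in $F_1$ to compare nearby fibres. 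Writing the section-valued component as $\Phi_{b+te}\circ (u+tv)\circ \phi_{b+te}$, the result decomposes into three summands: $\Phi_b\circ \nabla_{\nabla_e\phi_b} u$ from varying $\phi$ in $b$, $(\nabla_e\Phi_b)\circ u\circ \phi_b$ from varying $\Phi$ in $b$, and $\Phi_b^\ast v$ from the linear term $tv$. Rewriting these in terms of $\overline{\Phi},\tilde{\Phi},\Phi$ matches the formula in the statement.

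To upgrade the pointwise derivative to weak Fr{\'e}chet differentiability I would use the Taylor identity with integral remainder
\[
r^\Psi_{(b,u)}((e,v),t) \;=\; \int_0^1 \bigl[D\Psi(b+ste, u+stv) - D\Psi(b,u)\bigr](e,v)\,\d s\text{.}
\]
Joint continuity of this expression on $V\times [0,1]$ follows from joint continuity of the integrand in $((e,v),s,t)$, which itself reduces to weak continuity of $((b,u),(e,v))\mapsto D\Psi(b,u)(e,v)$. Because the derivative formula is a sum of three reparametrisation actions of exactly the type handled in \cref{Proposition_Reparametrisation_action_cts}, weak continuity and local boundedness are inherited term by term from that proposition. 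For \labelcref{Proposition_Reparametrisation_action_diffble_3,Proposition_Reparametrisation_action_diffble_5} the term $\nabla_{\nabla_e\phi_b}u$ costs one derivative on $u$, which is precisely why the assertions are phrased along $\iota_k$ and $\iota_{k,p}$; parts \labelcref{Proposition_Reparametrisation_action_cts_1,Proposition_Reparametrisation_action_cts_2} of \cref{Proposition_Reparametrisation_action_cts} then provide the building blocks in those settings.

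For the negative assertions I would invoke the sequences $(u^{k+1}_n)$ and $(u^{k+1,p}_n)$ of \cref{Example_Counterexample_reparametrisation_action}. Strong continuous differentiability of $\Psi^k$ along $\iota_k$ would require $b\mapsto D\Psi^k(b,\cdot)$ to be continuous into $L_{\mathrm{c}}(\Gamma^{k+1}(F_1),\Gamma^k(F_2))$, i.e.\ uniformly on bounded subsets of $\Gamma^{k+1}(F_1)$; the family $(u^{k+1}_n)$, bounded in $\Gamma^{k+1}$ but with $\|\Phi_t^\ast u^{k+1}_n - u^{k+1}_n\|_k \geq 2$ for $n$ sufficiently large, directly obstructs this uniform control, and an analogous choice rules out Fr{\'e}chet differentiability along $\iota_k$. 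That $\Psi^k$ fails to be weakly Fr{\'e}chet differentiable at general $(b,u)$ with $u \in \Gamma^k(F_1)\setminus \Gamma^{k+1}(F_1)$ follows because if a weak Fr{\'e}chet derivative $L(e,v) \in \Gamma^k(F_2)$ existed, pointwise evaluation in $z$ would force $L(e,v)$ to coincide with $\Phi_b\circ \nabla_{\nabla_e\phi_b}u + (\nabla_e\Phi_b)\circ u\circ \phi_b + \Phi_b^\ast v$; but for generic $e$ with $\nabla_e\phi_b \neq 0$, the first summand is only of class $\mathcal{C}^{k-1}$ when $u$ lacks an extra derivative, so cannot lie in $\Gamma^k(F_2)$. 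The $W^{k,p}$ case is parallel, and for \labelcref{Proposition_Reparametrisation_action_diffble_2} the same reasoning applies---Fr{\'e}chet differentiability of $\Psi$ in the sense of \cref{Definition_Differentiability_II} is vacuous since $\Psi$ lacks locally bounded image by \cref{Proposition_Reparametrisation_action_cts}, \labelcref{Proposition_Reparametrisation_action_cts_3}, and the weaker variant from \cref{Remark_Rest_continuity_versus_continuity_at_0_2} fails by the same loss-of-derivative phenomenon.

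The main obstacle will be the negative half: rigorously showing that no continuous linear operator on the lower-regularity space can realise the weak Fr{\'e}chet derivative when $u$ is not a full degree smoother, and extracting uniform obstructions to strong continuity of the derivative from the sequences in \cref{Example_Counterexample_reparametrisation_action}. The positive half is essentially bookkeeping, leveraging \cref{Proposition_Reparametrisation_action_cts} in combination with the integral remainder identity and the chain rule for smooth families.
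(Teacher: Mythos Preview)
Your integral-remainder approach for the positive assertions \labelcref{Proposition_Reparametrisation_action_diffble_3,Proposition_Reparametrisation_action_diffble_5} has a genuine gap. The identity
\[
r^{\Psi^k}_{(b,u)}((e,v),t) \;=\; \int_0^1 \bigl[D\Psi^k(b+ste, u+stv) - D\Psi^k(b,u)\bigr](e,v)\,\d s
\]
requires $D\Psi^k$ to exist at the intermediate points $(b+ste,\,u+stv)$. But the statement only asserts differentiability along $\iota_k$, \ie at points with $u\in\Gamma^{k+1}(F_1)$, while the direction $v$ lives merely in $\Gamma^k(F_1)$; hence $u+stv$ typically has no extra derivative and $D\Psi^k(b+ste,u+stv)$ is not available. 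The same obstruction applies in the Sobolev case. The paper circumvents this by computing $r^f_{(b,u)}((e,v),t)$ directly from the definition and decomposing it into explicit pieces $\mathrm{(I_1)}\cdot\mathrm{(I_2)},\ \mathrm{(II_1)}\cdot\mathrm{(II_2)},\ \mathrm{(III_1)}\cdot\mathrm{(III_2)},\ \mathrm{(IV)}$, each of which is shown continuous via \cref{Proposition_Reparametrisation_action_cts} and \cref{Lemma_Topological_Lemma}; an integral representation does appear, but only for the single piece $\mathrm{(I_2)}$ where the integrand involves $\nabla u\in\Gamma^k$ and a reparametrisation action of the same type as $\Psi^k$, so no circularity arises.

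For the negative assertions your route is more laborious than needed. The paper observes that at the base point $(b,0)$ one has $D\Psi(b,0)(e,v)=(e,\Phi_b^\ast v)$ and $r^\Psi_{(b,0)}((e,v),t)=(0,\Phi_{b+te}^\ast v - \Phi_b^\ast v)$, so all the negative claims reduce immediately to the failure of (local uniform) continuity of $\Psi,\Psi^k,\Psi^{k,p}$ already established in \cref{Proposition_Reparametrisation_action_cts}; there is no need to track sequences or argue that a hypothetical derivative at a rough $u$ would force a $\mathcal C^{k-1}$ section into $\Gamma^k$. Also, saying Fr{\'e}chet differentiability of $\Psi$ is ``vacuous'' because $\Psi$ lacks locally bounded image does not establish the claimed failure---it just says the definition does not apply; the paper handles this by noting that even the weaker variant of \cref{Remark_Rest_continuity_versus_continuity_at_0_2} fails, again via the reduction to $(b,0)$.
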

\begin{proof}
The following calculation holds equally for all three cases, so let $f = \Psi_2$, $f = \Psi^k_2$ or $f = \Psi^{k,p}_2$, $(b,u) \in \Gamma_B(F_1)$, $(b,u) \in \Gamma^{k+1}_B(F_1)$ or $(b,u) \in W^{k+1,p}(F_1)$ and $(e,v) \in \Gamma_{V}(F_1)$, $(e,v) \in \Gamma^k_{V}(F_1)$ or $(e,v) \in W^{k,p}_{V}(F_1)$, respectively, where $V \subseteq \R^r$ is a convex balanced neighbourhood of $0$ \st $b + e \in B$ for all $e \in V$. \\
One then calculates for $t \in (0,1]$, that
\begin{align*}
f(b + te, u + tv) &= f(b,u) \;+ \\
&\quad\; +\; t\Bigl[ \Phi_b\circ \nabla_{\nabla_e\phi_b}u + \left(\nabla_e\Phi_b\right)\circ u\circ \phi_b + \Phi_b^\ast v \;+ \\
&\;\quad \qquad\; +\; \Phi_b\left(\frac{1}{t}\left(P_{\gamma^t}\inv\circ u\circ \phi_{b+te} - u\circ \phi_b\right) - \nabla_{\nabla_e\phi_b}u\right) \;+ \\
&\;\quad \qquad\; +\; \left(\frac{1}{t}\left(\Phi_{b+te}\inv\circ P_{\gamma^t} - \Phi_{b}\inv\right) - \left(\nabla_e\Phi_b\right)\right)\circ P_{\gamma^t}\inv \circ u\circ \phi_{b+te} \;+ \\
&\;\quad \qquad\; +\; \left(\nabla_e\Phi_b\right)\circ \left(P_{\gamma^t}\inv \circ u\circ \phi_{b+te} - u\circ \phi_b\right) \;+ \\
&\;\quad \qquad\; +\; \Phi_{b+te}^\ast v - \Phi_b^\ast v\Bigr]
\intertext{and hence}
r^{f}_{(b,u)}((e,v), t) &= \underbrace{\Phi_b}_{\mathclap{\defines\; \mathrm{(I_1)}}} \circ\underbrace{\left(\frac{1}{t}\left(P_{\gamma^t}\inv\circ u\circ \phi_{b+te} - u\circ \phi_b\right) - \nabla_{\nabla_e\phi_b}u\right)}_{\defines\; \mathrm{(I_2)}(t,e)} \;+ \\
&\quad\; +\; \underbrace{\left(\frac{1}{t}\left(\Phi_{b+te}\inv\circ P_{\gamma^t} - \Phi_{b}\inv\right) - \left(\nabla_e\Phi_b\right)\right)}_{\defines\; \mathrm{(II_1)}(t,e)}\circ \underbrace{P_{\gamma^t}\inv \circ u\circ \phi_{b+te}}_{\defines\; \mathrm{(II_2)}(t,e)} \;+ \\
&\quad\; +\; \underbrace{\left(\nabla_e\Phi_b\right)}_{\defines\; \mathrm{(III_1)}(e)}\circ \underbrace{\left(P_{\gamma^t}\inv \circ u\circ \phi_{b+te} - u\circ \phi_b\right)}_{\defines\; \mathrm{(III_2)}(t,e)} \\
&\quad\; +\; \underbrace{\Phi_{b+te}^\ast v - \Phi_b^\ast v}_{\defines\; \mathrm{(IV)}(t,e,v)}\text{.}
\end{align*}

Note that (at least once \labelcref{Proposition_Reparametrisation_action_diffble_1}, \labelcref{Proposition_Reparametrisation_action_diffble_3}~and \labelcref{Proposition_Reparametrisation_action_diffble_5}~have been shown) for $(b,u) = (b,0)$,
\begin{align*}
D\Psi(b,0)(e,v) &= (e, \Phi_b^\ast v)
\intertext{and}
r^\Psi_{(b,0)}((e,v), t) &= (0, \Phi_{b+te}^\ast v - \Phi_b^\ast v)
\end{align*}
and similarly for $\Psi^k$ and $\Psi^{k,p}$.
Using this and the above formula for $r^f$, \labelcref{Proposition_Reparametrisation_action_diffble_2}, \labelcref{Proposition_Reparametrisation_action_diffble_4}~and \labelcref{Proposition_Reparametrisation_action_diffble_6}~follow immediately from \cref{Proposition_Reparametrisation_action_cts}.

We now examine the terms $\mathrm{(I_1)}$--$\mathrm{(IV)}$ first for $\Psi^k$ and $\Psi^{k,p}$.
$\mathrm{(I_1)}$ is obviously independent of $e$, $v$ and $t$ so there is nothing to show about it. \\
For $\mathrm{(I_2)}$, define
\begin{align*}
\tilde{B} &\definedas (-1,1)\times V \\
\tilde{F} &\definedas \Hom(T\Sigma, F_1) \\
\tilde{\Phi} : \tilde{B}\times \tilde{F} &\to \tilde{B}\times \tilde{F} \\
((s,e), h) &\mapsto ((s,e), P_{\gamma^s}\inv\circ h\circ P^\Sigma_{\gamma^s}) \\
\tilde{\phi} : \tilde{B}\times \Sigma &\to \tilde{B}\times \Sigma \\
((s,e),z) &\mapsto ((s,e), \phi_{b + se}(z))\text{.}
\end{align*}
$P^\Sigma_{\gamma^s}$ here denotes parallel transport in $T\Sigma$.
Furthermore, define a smooth family $\left(X_{(s,e)}\right)_{(s,e)\in \tilde{B}}$ of vector fields $X_{(s,e)} \in \mathfrak{X}(\Sigma)$ by $X_{(s,e)}(z) \definedas \left(P^\Sigma_{\gamma^s}\right)\inv \frac{\d}{\d s}\phi_{b+se}(z)$.
Then
\[
\frac{\d}{\d s} P_{\gamma^s}\inv\circ u \circ \phi_{b + se} = \left(\tilde{\Phi}_{(s,e)}^\ast \nabla u\right)\left(X_{(s,e)}\right)
\]
and
\begin{align*}
\mathrm{(I_2)}(t,e) &= \frac{1}{t}\int_0^t \frac{\d}{\d s} P_{\gamma^s}\inv\circ u \circ \phi_{b + se}\, \d s - \left.\frac{\d}{\d s}\right|_{s=0} P_{\gamma^s}\inv\circ u \circ \phi_{b + se} \\
&= \frac{1}{t}\int_0^t \left[\left(\tilde{\Phi}_{(s,e)}^\ast \nabla u\right)\left(X_{(s,e)}\right) - \left(\tilde{\Phi}_{(0,e)}^\ast \nabla u\right)\left(X_{(0,e)}\right)\right]\d s \\
&= \frac{1}{t}\int_0^t \underbrace{\left(\tilde{\Phi}_{(s,e)}^\ast \nabla u - \tilde{\Phi}_{(0,e)}^\ast \nabla u\right)\left(X_{(0,e)}\right)}_{\defines\; \mathrm{(I_{21})}(s,e)}\, \d s \;+ \\
&\quad\; +\; \frac{1}{t}\int_0^t \underbrace{\left(\tilde{\Phi}_{(s,e)}^\ast \nabla u\right)\left(X_{(s,e)} - X_{(0,e)}\right)}_{\defines\; \mathrm{(I_{22})}(s,e)}\, \d s\text{.}
\end{align*}
This equality holds pointwise (over $\Sigma$) if $u \in \Gamma^{k+1}(F_1)$ or $u \in W^{k+1,p}(F_1)$.
It is used here that the maps
\begin{align*}
\Gamma^{k+1}(F_1) &\to \Gamma^k(\tilde{F}) & W^{k+1,p}(F_1) &\to W^{k,p}(\tilde{F}) \\
u &\mapsto \nabla u & u &\mapsto \nabla u
\end{align*}
are continuous and that $kp > \dim_\R\Sigma$, so the Sobolev embedding theorem applies.
Also using that the maps ($\mathfrak{X}^k(\Sigma) \definedas \Gamma^k(T\Sigma)$)
\begin{align*}
\Gamma^k(\tilde{F})\times \mathfrak{X}^k(\Sigma) &\to \Gamma^k(F_1) & W^{k,p}(\tilde{F})\times \mathfrak{X}^k(\Sigma) &\to W^{k,p}(F_1) \\
(h, X) &\mapsto h(X) & (h, X) &\mapsto h(X)
\end{align*}
are continuous, one sees that if $K\subseteq V$ is a compact subset and $\|\cdot\|$ denotes either $\|\cdot\|_k$ or $\|\cdot\|_{k,p}$ as appropriate, then by \cref{Proposition_Reparametrisation_action_cts} and \cref{Lemma_Topological_Lemma}:
\begin{itemize}
  \item There exist continuous functions (also depending on $(b,u)$, which are fixed) $c^k(\cdot, K), c^{k,p}(\cdot, K) : [0,1] \to [0,\infty)$ with $c^k(0, K) = c^{k,p}(0,K) = 0$ \st
\begin{align*}
\|\mathrm{(I_{21})}(s,e)\|_k &\leq \left\|\tilde{\Phi}_{(s,e)}^\ast \nabla u - \tilde{\Phi}_{(0,e)}^\ast \nabla u\right\|_k\left\|X_{(0,e)}\right\|_k \leq c^k(s,K) \\
\|\mathrm{(I_{21})}(s,e)\|_{k,p} &\leq \left\|\tilde{\Phi}_{(s,e)}^\ast \nabla u - \tilde{\Phi}_{(0,e)}^\ast \nabla u\right\|_{k,p}\left\|X_{(0,e)}\right\|_k \leq c^{k,p}(s,K)
\end{align*}
for all $e \in K$.
  \item There exist constants $C^k(K), C^{k,p}(K) \in \R$ and a continuous function $\tilde{c}^k(\cdot, K) : [0,1] \to [0,\infty)$ with $\tilde{c}^k(0,K) = 0$ \st
\begin{align*}
\|\mathrm{(I_{22})}(s,e)\|_k &\leq \left\|\tilde{\Phi}_{(s,e)}^\ast \nabla u\right\|_k\left\|X_{(s,e)} - X_{(0,e)}\right\|_k \leq C^{k}(K)\tilde{c}^k(s,K) \\
\|\mathrm{(I_{22})}(s,e)\|_{k,p} &\leq \left\|\tilde{\Phi}_{(s,e)}^\ast \nabla u\right\|_{k,p}\left\|X_{(s,e)} - X_{(0,e)}\right\|_k \leq C^{k,p}(K)\tilde{c}^{k}(s,K)
\end{align*}
\end{itemize}
In conclusion, for all $e \in K$,
\begin{align*}
\|\mathrm{(I_2)}(t,e)\|_k &\leq \frac{1}{t}\int_0^1 \|\mathrm{(I_{21})}(s,e)\|_k\, \d s + \frac{1}{t}\int_0^1 \|\mathrm{(I_{22})}(s,e)\|_k\, \d s \\
&\leq \frac{1}{t}\int_0^1 c^k(s,K)\, \d s + \frac{1}{t}\int_0^1 C^k(K)\tilde{c}^k(s,K)\, \d s \\
&\leq \max\{ c^k(s,K) + C^k(K)\tilde{c}^k(s,K) \;|\; s \in [0,t]\} \\
&\to 0 \quad\text{for $t \to 0$.}
\intertext{and similarly}
\|\mathrm{(I_2)}(t,e)\|_{k,p} &\leq \max\{ c^{k,p}(s,K) + C^{k,p}(K)\tilde{c}^k(s,K) \;|\; s \in [0,t]\} \\
&\to 0 \quad\text{for $t \to 0$.}
\end{align*}
Using that $\R^r$ is locally compact, the above shows that $\mathrm{(I_2)}: [0,1]\times V \to \Gamma^k(F_1)$ (or  $\mathrm{(I_2)}: [0,1]\times V \to W^{k,p}(F_1)$) is a continuous function. \\
Completely analogous (somewhat simpler, in fact) arguments using \cref{Proposition_Reparametrisation_action_cts} for the terms $\mathrm{(II_2)}(t,e)$ and $\mathrm{(III_2)}(t,e)$ (applied to a new reparametrisation action $\tilde{\Phi}_{(t,e)}(u) \definedas P_{\gamma^t}\inv\circ u \circ \phi_{b + te}$) show that the terms $\mathrm{(II_1)}(t,e)$--$\mathrm{(III_2)}(t,e)$ are continuous functions in $(t,e)$ as well. \\
Finally, $\mathrm{(IV)}(t,e,v)$ is continuous in $(t,e,v)$ directly by \cref{Proposition_Reparametrisation_action_cts}. \\
This shows that $\Psi^k$ and $\Psi^{k,p}$ are weakly Fr{\'e}chet differentiable on $\Gamma^{k+1}_B(F_1)$ and $W^{k+1,p}_B(F_1)$, respectively. \\
That they are also weakly continuously weakly Fr{\'e}chet differentiable along the inclusions of $\Gamma^{k+1}_B(F_1)$ and $W^{k+1,p}_B(F_1)$ with locally bounded derivative, respectively, follows immediately from the explicit formula for their differential and \cref{Proposition_Reparametrisation_action_cts}. \\
\labelcref{Proposition_Reparametrisation_action_diffble_1}~follows from the corresponding statements for $\Psi^k$ by virtue of the definition of the topologies on $\Gamma(F_1)$ and $\Gamma(F_2)$.
For example, for $(b,u) \in \Gamma_B(F_1)$, $r^{\Psi_2}_{(b,u)} : \Gamma_{V}(F_1)\times [0,1] \to \Gamma_{\R^r}(F_2)$ is continuous because it is the inverse limit of the maps $r^{\Psi^k_2}_{(b,u)} : \Gamma^k_{V}(F_1)\times [0,1] \to \Gamma^k_{\R^r}(F_2)$, etc.
\end{proof}

\clearpage
\section{The linear theory of $\overline{\text{sc}}$-Fr{\'e}chet spaces}\label{Section_Linear_theory}

\Needspace{25\baselineskip}
\subsection{Chains of Banach spaces}

There are various names for the following objects, here I will follow mainly \cite{MR1421572} and \cite{MR2341834}.
The main reason to not use ``sc-Banach space'' is to avoid putting emphasis on the fixed Banach space $E = E_0$.
In fact, most of the setup presented here is motivated by the goal of shifting focus from the Banach space $E_0$ in an sc-Banach space in the sense of \cite{MR2341834}, or the more recent \cite{MR2644764}, to the Fr{\'e}chet space $E_\infty$, which contains the objects one is usually more interested in, and treat the remaining structures as variable additional choices more akin to charts in a manifold. \\
The nonspecific ``Banach space'' also means that one has decided to either work with real or complex Banach spaces and real or complex linear maps, once and for all.
The chosen ground field will be denoted by $\mathds{k}$.

\begin{definition}[\cite{MR1421572}, Definition 1.1 and \cite{MR2341834}, Definition 2.1]\label{Definition_Chain}
\leavevmode
\begin{enumerate}[label=\arabic*.,ref=\arabic*.]
  \item\label{Definition_Chain_1} An \emph{ILB-chain} $\mathbb{E}$ is a sequence $((E_k, \|\cdot\|_k), \iota_k)_{k\in\N_0}$, where
\begin{enumerate}[label=(\alph*),ref=(\alph*)]
  \item\label{Definition_Chain_1a} $(E_k,\|\cdot\|_k)$ is a Banach space and
  \item\label{Definition_Chain_1b} $\iota_k : E_{k+1} \hookrightarrow E_k$ is a continuous embedding with dense image.
  \item\label{Definition_Chain_1c} $\|\iota_k\|_{L_{\mathrm{c}}(E_{k+1}, E_k)} \leq 1$.
\end{enumerate}
One regards the $E_k$ as a nested sequence of linear subspaces of $E_0$ and denotes the (dense) linear subspace $E_\infty \definedas \bigcap_{k\in\N_0} E_k \subseteq E_0$ as a topological vector space with the weakest topology \st all inclusions $E_\infty \hookrightarrow E_k$ are continuous. \\
For $k,\ell \in \N_0$ with $\ell > k$ one denotes
\[
\iota^\ell_k \definedas \iota_{\ell-1}\circ \iota_{\ell-2}\circ \cdots \circ \iota_{k+1}\circ \iota_k : E_\ell \hookrightarrow E_k
\]
and $\iota^k_k \definedas \id_{E_k}$.
$\iota^\infty_k : E_\infty \hookrightarrow E_k$ is defined as the canonical inclusion.
  \item\label{Definition_Chain_2} An \emph{sc-chain} $\mathbb{E}$ is an ILB-chain $\mathbb{E}$ \st all the inclusions $\iota_k$ are compact.
\end{enumerate}
\end{definition}

\begin{remark}
\begin{enumerate}[label=\arabic*.,ref=\arabic*.]
  \item ``Continuous embedding'' here means that $\iota_k : E_{k+1} \to E_k$ is an injective continuous linear map.
It does \emph{not} mean that $\iota_k$ has closed image or that $\iota_k$ is an embedding of topological spaces (\ie that $E_{k+1}$ carries the subspace topology inherited from $\iota_k(E_{k+1}) \subseteq E_k$).
  \item Condition \labelcref{Definition_Chain_1c} is not as restrictive as it might look.
For first of all in the main examples such as \cref{Example_Sobolev_chain_I} and \cref{Example_Ck_chain_I} it is automatically satisfied.
And second if condition \labelcref{Definition_Chain_1c} is dropped, then one can define new norms
\begin{align*}
\interleave\cdot\interleave_k : E_k &\to [0,\infty) \\
e &\mapsto \sum_{i=0}^k \|\iota^k_i(e)\|_i\text{,}
\end{align*}
\ie $\interleave\cdot\interleave_{k+1} = \interleave\iota_k(\cdot)\interleave_k + \|\cdot\|_{k+1}$.
From this it follows immediately that for $e \in E_{k+1}$
\begin{align*}
\interleave\iota_k(e)\interleave_k &\leq \interleave\iota_k(e)\interleave_k + \|e\|_{k+1} \\
&= \interleave e\interleave_{k+1}\text{,}
\intertext{so $\interleave\iota_k\interleave_{L_{\mathrm{c}}(E_{k+1},E_k)} \leq 1$, and}
\|e\|_{k} &\leq \|e\|_{k} + \interleave\iota_{k-1}(e)\interleave_{k-1} \\
&= \interleave e\interleave_k\text{.}
\end{align*}
On the other hand $\iota^k_i : L_{\mathrm{c}}(E_k, E_i)$ is continuous as a composition of continuous maps, hence $\|\iota^k_i\|_{L_{\mathrm{c}}(E_k, E_i)} \defines C^k_i \in [0,\infty)$.
So
\begin{align*}
\interleave e\interleave_k &= \sum_{i=0}^k \|\iota^k_i(e)\|_i \\
&\leq \underbrace{\sum_{i=0}^k C^k_i}_{\defines\; c_k} \|e\|_k
\end{align*}
and hence $\|\cdot\|_k \leq \interleave\cdot\interleave_k \leq c_k\|\cdot\|_k$, so $\interleave\cdot\interleave_k$ and $\|\cdot\|_k$ are equivalent.
  \item $E_\infty$ with the given topology is a Fr{\'e}chet space (in the sense of \cite{MR3154940}, Definition 3.37), with distance function given \eg by
\begin{align*}
d : E_\infty \times E_\infty &\to [0,\infty) \\
(x,y) &\mapsto \sum_{k=0}^\infty c_k \min\left\{r, \|\iota^\infty_k(x - y)\|_k\right\}\text{,}
\intertext{or alternatively}
d' : E_\infty \times E_\infty &\to [0,\infty) \\
(x,y) &\mapsto \sum_{k=0}^\infty c_k \frac{\|\iota^\infty_k(x-y)\|_k}{1 + \|\iota^\infty_k(x-y)\|_k}\text{,}
\end{align*}
where $r > 0$ and $(c_k)_{k\in\N_0} \subseteq (0,\infty)$ is any sequence \st $\sum_{k=0}^\infty c_k < \infty$.
\end{enumerate}
\end{remark}

\begin{example}\label{Example_Sobolev_chain_I}
Let $(\Sigma,g)$ be a closed $n$-dimensional Riemannian manifold and let $\pi : F \to \Sigma$ be a real (or complex) vector bundle equipped with a euclidean (or hermitian) metric and metric connection.
Also let $\mathbf{l} = (l_k)_{k \in \N_0} \subseteq \N_0$ be a strictly monotone increasing sequence and let $1 < p < \infty$ be \st $l_0 p > n$. \\
Define an ILB-chain
\[
\mathbb{W}^{\mathbf{l},p}(F) \definedas \left(\left(W^{l_k, p}(F), \|\cdot\|_{{l_k,p}}\right), \iota_k\right)_{k\in\N_0}\text{,}
\]
where $\left(W^{l_k,p}(F), \|\cdot\|_{{l_k,p}}\right)$ is the Sobolev space of sections of $F$ of class $W^{l_k, p}$, realised as the completion of $\Gamma(F)$ \wrt the norm $\|\cdot\|_{{l_k,p}}$, as in \cref{Subsection_Reparametrisation_action}.
By the Sobolev embedding theorem, $W^{l_k,p}(F)$ will be regarded as a subset of $\Gamma^0(F)$.
$\iota_{k} : W^{l_{k+1},p}(F) \hookrightarrow W^{l_k,p}(F)$ is the canonical inclusion. \\
By the Rellich-Kondrachov theorem this ILB-chain is an sc-chain called the \emph{Sobolev chain of class $(\mathbf{l},p)$}. \\
Furthermore, by the Sobolev embedding theorem, $W^{\mathbf{l},p}(F)_\infty = \Gamma(F)$ equipped with the $\mathcal{C}^\infty$-topology.
\end{example}

\begin{example}\label{Example_Ck_chain_I}
Let $(\Sigma,g)$ be a closed $n$-dimensional Riemannian manifold and let $\pi : F \to \Sigma$ be a real (or complex) vector bundle equipped with a euclidean (or hermitian) metric and metric connection.
Also let $\mathbf{l} = (l_k)_{k \in \N_0} \subseteq \N_0$ be a strictly monotone increasing sequence. \\
Define an ILB-chain
\[
\mathbbl{\Gamma}^{\mathbf{l}}(F) \definedas \left(\left(\Gamma^{l_k}(F), \|\cdot\|_{{l_k}}\right), \iota_k\right)_{k\in\N_0}\text{,}
\]
where $\left(\Gamma^{l_k}(F), \|\cdot\|_{{l_k}}\right)$ is the space of $l_k$-times continuously differentiable sections of $F$ equipped with the $\mathcal{C}^k$-norm $\|\cdot\|_{l_k}$, as in \cref{Subsection_Reparametrisation_action}.
$\iota_k : \Gamma^{l_{k+1}}(F) \hookrightarrow \Gamma^{l_k}(F)$ is the canonical inclusion. \\
By the Theorem of Arzela-Ascoli this ILB-chain is an sc-chain called the \emph{chain of continuously differentiable sections of class $\mathbf{l}$}. \\
By definition, $\Gamma^{\mathbf{l}}(F)_\infty = \Gamma(F)$ equipped with the $\mathcal{C}^\infty$-topology.
\end{example}

\begin{definition}[\cite{MR2341834}, Definition 2.6]
Let $\mathbb{E}$, $\mathbb{E}'$ and $\mathbb{E}''$ be ILB- or sc-chains.
\begin{enumerate}[label=\arabic*.,ref=\arabic*.]
  \item A \emph{continuous linear operator} $\mathbb{T} : \mathbb{E} \to \mathbb{E}'$ is a sequence
\[
\left(T_k : (E_k, \|\cdot\|_k) \to (E'_k, \|\cdot\|'_k)_{k \in \N_0}\right)_{k\in\N_0}
\]
of continuous linear operators \st
\[
T_{k}\circ \iota_{k} = \iota'_{k}\circ T_{k+1} : E_{k+1} \to E'_k \quad\forall\, k\in\N_0\text{.}
\]
It induces a well defined continuous map $T_\infty : E_\infty \to E'_\infty$ via $\iota'^\infty_k(T_\infty(u)) \definedas T_k(\iota^\infty_k(u))$ for any $k\in\N_0$.
  \item For continuous linear operators $\mathbb{T} : \mathbb{E} \to \mathbb{E}'$ and $\mathbb{T}' : \mathbb{E}' \to \mathbb{E}''$, their \emph{composition $\mathbb{T}'\circ \mathbb{T} : \mathbb{E} \to \mathbb{E}''$} is the continuous linear operator $\mathbb{T}''$ with $T''_k \definedas T'_k\circ T_k$.
  \item For continuous linear operators $\mathbb{T}, \mathbb{T}' : \mathbb{E} \to \mathbb{E}'$ and $\lambda,\mu \in \mathds{k}$, one defines $\lambda\mathbb{T} + \mu\mathbb{T}' : \mathbb{E} \to \mathbb{E}'$ as the continuous linear operator $\mathbb{T}''$ with $T''_k \definedas \lambda T_k + \mu T'_k$.
  \item A continuous linear operator $\mathbb{T} : \mathbb{E} \to \mathbb{E}'$ is called an \emph{embedding} if $T_k : E_k \to E'_k$ is a continuous embedding of Banach spaces for all $k \in \N_0$.
\end{enumerate}
\end{definition}

\begin{lemma}\label{Lemma_Relation_mathbb_T_to_T_infty}
Let $\mathbb{E}$, $\mathbb{E}'$ and $\mathbb{E}''$ be ILB- or sc-chains.
\begin{enumerate}[label=\arabic*.,ref=\arabic*.]
  \item For continuous linear operators $\mathbb{T} : \mathbb{E} \to \mathbb{E}'$ and $\mathbb{T}' : \mathbb{E}' \to \mathbb{E}''$, if $\mathbb{T}'' \definedas \mathbb{T}' \circ \mathbb{T}$, then $T''_\infty = T'_\infty \circ T_\infty : E_\infty \to E''_\infty$.
  \item There is a $1$---$1$ correspondence between continuous linear operators $\mathbb{T} : \mathbb{E} \to \mathbb{E}'$ and continuous linear operators $T : E_\infty \to E'_\infty$ \st $\iota'^\infty_k\circ T : E_\infty \to E'_k$ is a bounded operator on $(E_\infty, \|\cdot\|_k|_{E_\infty})$.
\end{enumerate}
\end{lemma}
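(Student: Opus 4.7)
The plan unfolds in two stages corresponding to the two claims. For part~1, I would verify the identity $T''_\infty = T'_\infty \circ T_\infty$ by applying the canonical inclusions into the components. Concretely, for $u \in E_\infty$ and any $k \in \N_0$, unfolding the defining relations of $T_\infty, T'_\infty, T''_\infty$ in turn gives
\[
{\iota''}^\infty_k (T''_\infty(u)) = T''_k(\iota^\infty_k(u)) = T'_k(T_k(\iota^\infty_k(u))) = T'_k({\iota'}^\infty_k(T_\infty(u))) = {\iota''}^\infty_k(T'_\infty(T_\infty(u))),
\]
where ${\iota''}^\infty_k$ denotes the inclusion $E''_\infty \hookrightarrow E''_k$ of the third chain. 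Since ${\iota''}^\infty_k$ is injective, this forces $T''_\infty(u) = T'_\infty(T_\infty(u))$ for every $u \in E_\infty$.

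For part~2, the correspondence is $\mathbb{T} \mapsto T_\infty$. I would first check that $T_\infty$ satisfies the boundedness hypothesis: since ${\iota'}^\infty_k \circ T_\infty = T_k \circ \iota^\infty_k$ by construction, one has $\|{\iota'}^\infty_k(T_\infty(u))\|'_k \leq \|T_k\|_{L_{\mathrm{c}}(E_k,E'_k)} \|u\|_k$ for all $u \in E_\infty$, and continuity of $T_\infty$ in the Fr\'echet topology on $E'_\infty$ follows because each component ${\iota'}^\infty_k \circ T_\infty$ is continuous. Uniqueness/injectivity of the correspondence is a direct density argument: if two sequences $\mathbb{T}, \widetilde{\mathbb{T}}$ induce the same $T_\infty$, then the components $T_k$ and $\widetilde{T}_k$ agree on the subspace $\iota^\infty_k(E_\infty) \subseteq E_k$, which is dense, and hence coincide by continuity.

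The substantive direction is surjectivity. Given $T : E_\infty \to E'_\infty$ continuous with each ${\iota'}^\infty_k \circ T$ bounded in the $\|\cdot\|_k$-norm, I would extend ${\iota'}^\infty_k \circ T : (E_\infty, \|\cdot\|_k|_{E_\infty}) \to E'_k$ uniquely by continuity to a bounded linear operator $T_k : E_k \to E'_k$, using density of $E_\infty$ in $E_k$. The compatibility relation $T_k \circ \iota_k = \iota'_k \circ T_{k+1}$ then holds on the dense subspace $E_\infty \subseteq E_{k+1}$ (both sides restrict to ${\iota'}^\infty_k \circ T$ there) and therefore on all of $E_{k+1}$ by continuity, yielding a continuous linear operator $\mathbb{T} = (T_k)_{k\in\N_0}$ with $(\mathbb{T})_\infty = T$ by construction on $E_\infty$. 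The only ingredient beyond mechanical unwinding is the density of $E_\infty$ in each $E_k$; this is the standard assumption built into the ILB/sc-chain framework used here and is evident in the main examples \cref{Example_Sobolev_chain_I,Example_Ck_chain_I}. Once it is in hand, both directions reduce to routine extension-by-continuity arguments, so I do not anticipate any serious obstacle.
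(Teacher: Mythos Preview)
Your proposal is correct and follows essentially the same route as the paper: part~1 is an immediate unwinding (the paper just writes ``Trivial''), and for part~2 both you and the paper set up the correspondence $\mathbb{T} \mapsto T_\infty$, then construct the inverse by extending each ${\iota'}^\infty_k \circ T$ to $T_k : E_k \to E'_k$ via density of $E_\infty$ in $E_k$, and finally verify the compatibility $T_k \circ \iota_k = \iota'_k \circ T_{k+1}$ on the dense subspace $E_\infty$. The only difference is that you spell out the boundedness check in the forward direction and the injectivity argument explicitly, whereas the paper leaves these implicit.
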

\begin{proof}
\begin{enumerate}[label=\arabic*.,ref=\arabic*.]
  \item Trivial.
  \item In one direction, one maps $\mathbb{T} : \mathbb{E} \to \mathbb{E}'$ to $T_\infty : E_\infty \to E'_\infty$. \\
In the other direction, to $T : E_\infty \to E'_\infty$ one assigns the continuous linear operator $\mathbb{T} : \mathbb{E} \to \mathbb{E}'$ with $T_k : E_k \to E'_k$ the unique completion of $\iota'^\infty_k\circ T : E_\infty \to E'_k$.
Because $E_\infty$ is dense in $E_k$ and because $\iota'^\infty_k \circ T$ by assumption is bounded \wrt $\|\cdot\|_k$, this completion exists and is unique. \\
To verify that with this definition $T_k\circ \iota_k = \iota'_{k+1}\circ T_{k+1}$, by density of $E_\infty$ in $E_{k+1}$ it suffices to verify this on $E_\infty$, where $T_k\circ\iota_k = \iota'^\infty_k\circ T = \iota'_{k+1}\circ \iota'^\infty_{k+1}\circ T = \iota'_{k+1}\circ T_{k+1}$.
\end{enumerate}
\end{proof}

\begin{example}\label{Example_PDO_I}
Let $(\Sigma,g)$ be a closed $n$-dimensional Riemannian manifold and let $\pi : F \to \Sigma$ and $\pi' : F' \to \Sigma$ be real (or complex) vector bundles equipped with euclidean (or hermitian) metrics and metric connections. \\
Let $k_0 \in \N_0$ and $1 < p < \infty$ be \st $k_0p > n$.
Given $m \in \N_0$, define $\mathbf{k} \definedas (k_0 + k)_{k\in\N_0}$ and $\mathbf{k}+m \definedas (k_0 + m + k)_{k\in\N_0}$. \\
Let $P : \Gamma(F) \to \Gamma(F')$ be a partial differential operator (with smooth coefficients) of class $m$.
Then for all $k\in\N_0$, the completion $P_{k_0+k,p} : W^{k_0 + k + m, p}(F) \to W^{k_0 + k, p}(F')$ of $P$ defines a continuous linear operator and hence
\[
\mathbb{P}^{\mathbf{k},p} \definedas (P_{k_0+k,p})_{k\in\N_0} : \mathbb{W}^{\mathbf{k} + m, p}(F) \to \mathbb{W}^{\mathbf{k}, p}(F')
\]
defines a continuous linear operator with $P_\infty = P : \Gamma(F) \to \Gamma(F')$ between Sobolev chains from \cref{Example_Sobolev_chain_I}. \\
Similarly, one can also look at the completions $P_{k} : \Gamma^{k+m}(F) \to \Gamma^{k}(F')$ of $P$ to give a continuous linear operator ($\mathbf{id} = (j)_{j\in\N_0}$)
\[
\mathbb{P}^{\mathbf{id}} \definedas (P_{k})_{k\in\N_0} : \mathbbl{\Gamma}^{\mathbf{id} + m}(F) \to \mathbbl{\Gamma}^{\mathbf{id}}(F')
\]
between chains of continuously differentiable sections from \cref{Example_Ck_chain_I}.
\end{example}

\begin{definition}
$\mathbf{scChains}^{\mathds{k}}$ is the (preadditive) category with objects the sc-chains of $\mathds{k}$-Banach spaces and morphisms the continuous linear operators. \\
It comes with faithful underlying functors ($\mathbf{Banach}^{\mathds{k}}$ and $\mathbf{Frechet}^{\mathds{k}}$ are the categories of Banach- and Fr{\'e}chet spaces over $\mathds{k}$, respectively)
\begin{align*}
\mathbf{U}_k : \mathbf{scChains}^{\mathds{k}} &\to \mathbf{Banach}^{\mathds{k}} \\
\mathbb{E} &\mapsto E_k \\
(\mathbb{T} : \mathbb{E} \to \mathbb{E}') &\mapsto T_k \\
\mathbf{U}_\infty : \mathbf{scChains}^{\mathds{k}} &\to \mathbf{Frechet}^{\mathds{k}} \\
\mathbb{E} &\mapsto E_\infty \\
(\mathbb{T} : \mathbb{E} \to \mathbb{E}') &\mapsto T_\infty\text{.}
\end{align*}
and the maps $\iota^k_\ell$ for $0\leq \ell \leq k \leq \infty$ provide natural transformations between these ($\mathbf{Banach}^{\mathds{k}}$ comes with a natural forgetful functor to $\mathbf{Frechet}^{\mathds{k}}$).
\end{definition}

\Needspace{25\baselineskip}
\subsection{Rescaling, weak morphisms and equivalence}

\begin{definition}
Let $\mathbb{E}$ and $\mathbb{E}'$ be ILB- or sc-chains and let $\mathbf{k} = (k_j)_{j\in\N_0} \subseteq \N_0$ be a strictly monotone increasing sequence.
\begin{enumerate}[label=\arabic*.,ref=\arabic*.]
  \item The ILB- or sc-chain
\[
\mathbb{E}^{\mathbf{k}} = \left((E^{\mathbf{k}}_j, \|\cdot\|^{\mathbf{k}}_j), \iota^{\mathbf{k}}_j\right)_{j\in\N_0} \definedas \left((E_{k_j}, \|\cdot\|_{k_j}), \iota^{k_{j+1}}_{k_j}\right)_{j\in\N_0}
\]
is called a \emph{rescaling of $\mathbb{E}$ (by $\mathbf{k}$)}. \\
It satisfies $E^{\mathbf{k}}_{\infty} = E_\infty$ as topological vector spaces.
  \item A strictly monotone increasing sequence of the form $\mathbf{k} = (k_0 + j)_{j\in\N_0}$ for some $k_0 \in \N_0$ is called a \emph{shift}. \\
The rescaling of $\mathbb{E}^{\mathbf{k}}$ of $\mathbb{E}$ by $\mathbf{k}$ is called a \emph{shifted} ILB- or sc-chain.
  \item Let $\mathbf{l} = (l_j)_{j\in\N_0} \subseteq \N_0$ be another strictly monotone increasing sequence with $\mathbf{k} \geq \mathbf{l}$, \ie $k_j \geq l_j \;\forall\, j\in\N_0$. \\
Then there is a canonical continuous linear operator $\mathbb{I}^{\mathbf{k}}_{\mathbf{l}} : \mathbb{E}^{\mathbf{k}}  \to \mathbb{E}^{\mathbf{l}}$ given by $\left(I^{\mathbf{k}}_{\mathbf{l}}\right)_j \definedas \iota^{k_j}_{l_j} : E^{\mathbf{k}}_j = E_{k_j} \to E_{l_j} = E^{\mathbf{l}}_j$. \\
One abbreviates $\mathbb{I}^{\mathbf{k}} \definedas \mathbb{I}^{\mathbf{k}}_{\mathbf{id}}$, where $\mathbf{id} \definedas (j)_{j\in\N_0}$. \\
The $\mathbb{I}^{\mathbf{k}}_{\mathbf{l}}$ satisfy $\left(I^{\mathbf{k}}_{\mathbf{l}}\right)_\infty = \id_{E_{\infty}} : E^{\mathbf{k}}_\infty = E_\infty \to E_\infty = E^{\mathbf{l}}_\infty$.
  \item If $\mathbb{T} : \mathbb{E} \to \mathbb{E}'$ is a continuous linear operator, then there is an induced continuous linear operator $\mathbb{T}^{\mathbf{k}} : \mathbb{E}^{\mathbf{k}} \to \mathbb{E}'^{\mathbf{k}}$ defined by
\[
T^{\mathbf{k}}_j \definedas T_{k_j} : E^{\mathbf{k}}_j = E_{k_j} \to E'_{k_j} = E^{\mathbf{k}}_j\text{.}
\]
It satisfies $T^{\mathbf{k}}_\infty = T_\infty : E^{\mathbf{k}}_\infty = E_\infty \to E'_\infty = E^{\mathbf{k}}_\infty$. \\
Conversely, $\mathbb{T}^{\mathbf{k}}$ is uniquely determined by $\mathbf{k}$ and $T_\infty$.
  \item A \emph{weak morphism} between $\mathbb{E}$ and $\mathbb{E}'$ is a continuous linear operator $T : E_\infty \to E'_\infty$ \st there exist strictly monotone increasing sequences $\mathbf{k},\mathbf{l} \subseteq \N_0$ and a continuous linear operator $\mathbb{T} : \mathbb{E}^{\mathbf{k}} \to \mathbb{E}'^{\mathbf{l}}$ with $T = T_\infty$. \\
In this case $\mathbb{T} : \mathbb{E}^{\mathbf{k}} \to \mathbb{E}'^{\mathbf{l}}$ is said to \emph{extend} (or be an \emph{extension of}) $T : E_\infty \to E'_\infty$.
  \item A weak morphism $T : E_\infty \to E'_\infty$ is called a \emph{weak embedding} if there exist strictly monotone increasing sequences $\mathbf{k},\mathbf{l} \subseteq \N_0$ and an embedding $\mathbb{T} : \mathbb{E}^{\mathbf{k}} \to \mathbb{E}'^{\mathbf{l}}$ \st $T = T_\infty$.
\end{enumerate}
\end{definition}

\begin{remark}
Note that by \cref{Lemma_Relation_mathbb_T_to_T_infty}, in the definition of ``weak morphism'' for given choices of $\mathbf{k},\mathbf{l} \subseteq \N_0$ the extension $\mathbb{T} : \mathbb{E}^{\mathbf{k}} \to \mathbb{E}'^{\mathbf{l}}$, if it exists, is uniquely determined by $T$.
\end{remark}

\begin{lemma}\label{Lemma_Rescaling_and_weak_morphisms}
Let $\mathbb{E}$, $\mathbb{E}'$ and $\mathbb{E}''$ be ILB- or sc-chains.
\begin{enumerate}[label=\arabic*.,ref=\arabic*.]
  \item\label{Lemma_Rescaling_and_weak_morphisms_1} Let $\mathbf{k},\mathbf{l},\mathbf{m} \subseteq \N_0$ be strictly monotone increasing sequences. \\
$\left(\mathbb{E}^{\mathbf{k}}\right)^{\mathbf{l}} = \mathbb{E}^{\mathbf{k}\circ \mathbf{l}}$ is a rescaling of $\mathbb{E}$ by $\mathbf{k}\circ \mathbf{l} \definedas (k_{l_j})_{j\in\N_0}$. \\
$\mathbb{I}^{\mathbf{k}\circ\mathbf{l}} = \mathbb{I}^{\mathbf{l}}\circ \left(\mathbb{I}^{\mathbf{k}}\right)^{\mathbf{l}}$. \\
If $\mathbf{k} \geq \mathbf{l} \geq \mathbf{m}$, then $\mathbb{I}^{\mathbf{k}}_{\mathbf{m}} = \mathbb{I}^{\mathbf{l}}_{\mathbf{m}}\circ \mathbb{I}^{\mathbf{k}}_{\mathbf{l}}$. \\
If $\mathbb{T} : \mathbb{E} \to \mathbb{E}'$ is a continuous linear operator, then
\[
\mathbb{T}^{\mathbf{l}}\circ \mathbb{I}^{\mathbf{k}}_{\mathbf{l}} = \mathbb{I}'^{\mathbf{k}}_{\mathbf{l}}\circ \mathbb{T}^{\mathbf{k}}\text{.}
\]
  \item\label{Lemma_Rescaling_and_weak_morphisms_2} Let $\mathbf{k},\mathbf{l} \subseteq \N_0$ be strictly monotone increasing sequences with $\mathbf{k} \geq \mathbf{l}$. \\
$\mathbb{I}^{\mathbf{k}}_{\mathbf{l}} : \mathbb{E}^{\mathbf{k}} \to \mathbb{E}^{\mathbf{l}}$ is an embedding. \\
If $\mathbb{T} : \mathbb{E} \to \mathbb{E}'$ is an embedding, then so is $\mathbb{T}^{\mathbf{k}} : \mathbb{E}^{\mathbf{k}} \to \mathbb{E}'^{\mathbf{k}}$.
  \item\label{Lemma_Rescaling_and_weak_morphisms_3} A continuous linear operator $T : E_\infty \to E'_\infty$ defines a weak morphism \iff there exists a strictly monotone increasing sequence $\mathbf{k} \subseteq \N_0$ and a continuous linear operator $\mathbb{T} : \mathbb{E}^{\mathbf{k}} \to \mathbb{E}'$ \st $T = T_\infty$.
  \item\label{Lemma_Rescaling_and_weak_morphisms_4} A weak morphism $T : E_\infty \to E'_\infty$ is an embedding $\mathbb{E} \to \mathbb{E}'$ \iff there exists a strictly monotone increasing sequence $\mathbf{k} \subseteq \N_0$ and an embedding $\mathbb{T} : \mathbb{E}^{\mathbf{k}} \to \mathbb{E}'$ \st $T = T_\infty$.
  \item\label{Lemma_Rescaling_and_weak_morphisms_5} Let $T : E_\infty \to E'_\infty$ be a continuous linear operator and let $\mathbb{T} : \mathbb{E}^{\mathbf{k}} \to \mathbb{E}'$ and $\mathbb{S} : \mathbb{E}^{\mathbf{l}} \to \mathbb{E}'$ be extensions of $T$, for strictly monotone increasing sequences $\mathbf{k}, \mathbf{l} \subseteq \N_0$.
Then there exists a strictly monotone increasing sequence $\mathbf{m} \subseteq \N_0$ \st $\mathbf{m}\geq \mathbf{k}, \mathbf{l}$ and for any such sequence $\mathbf{m}$,
\[
\mathbb{T}\circ \mathbb{I}^{\mathbf{m}}_{\mathbf{k}} = \mathbb{S}\circ \mathbb{I}^{\mathbf{m}}_{\mathbf{l}} : \mathbb{E}^{\mathbf{m}} \to \mathbb{E}'\text{.}
\]
  \item\label{Lemma_Rescaling_and_weak_morphisms_6} $\id_{E_\infty} : E_\infty \to E_\infty$ is a weak embedding.
  \item\label{Lemma_Rescaling_and_weak_morphisms_7} Let $T : E_\infty \to E'_\infty$ and $T' : E'_\infty \to E''_\infty$ be weak morphisms.
Then their composition $T'\circ T : E_\infty \to E''_\infty$ is a weak morphism as well. \\
If $T$ and $T'$ are weak embeddings then so is $T'\circ T$.
  \item\label{Lemma_Rescaling_and_weak_morphisms_8} Let $T,T' : E_\infty \to E'_\infty$ be weak morphisms and let $\lambda, \mu \in \mathds{k}$.
Then $\lambda T + \mu T' : E_\infty \to E'_\infty$ is a weak morphism as well.
\end{enumerate}
\end{lemma}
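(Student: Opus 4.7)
The plan is to prove the eight parts in a sequence that lets later parts recycle earlier ones, using throughout the functorial identity $(\mathbb{T}'\circ \mathbb{T})_\infty = T'_\infty \circ T_\infty$ and the bijection between continuous linear operators $\mathbb{T} : \mathbb{E} \to \mathbb{E}'$ and suitably bounded continuous linear maps $T : E_\infty \to E'_\infty$ given by \cref{Lemma_Relation_mathbb_T_to_T_infty}. Parts \labelcref{Lemma_Rescaling_and_weak_morphisms_1}, \labelcref{Lemma_Rescaling_and_weak_morphisms_2} and \labelcref{Lemma_Rescaling_and_weak_morphisms_6} are pure bookkeeping: part \labelcref{Lemma_Rescaling_and_weak_morphisms_1} follows by unfolding both sides on each level $j$, observing $(\mathbb{E}^{\mathbf{k}})^{\mathbf{l}}_j = E_{k_{l_j}}$ and checking that the various $\iota$'s and $T_k$'s really do coincide as maps of the relevant Banach spaces. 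Part \labelcref{Lemma_Rescaling_and_weak_morphisms_2} is immediate because $\iota^{k_j}_{l_j}$ is a finite composition of embeddings $\iota_i$ and because $T^{\mathbf{k}}_j = T_{k_j}$ is injective continuous whenever all $T_k$ are. Part \labelcref{Lemma_Rescaling_and_weak_morphisms_6} is settled by taking the extension $\mathbb{T} \definedas \id_{\mathbb{E}} : \mathbb{E} \to \mathbb{E}$, which is obviously an embedding.

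For part \labelcref{Lemma_Rescaling_and_weak_morphisms_3} I would start from an extension $\mathbb{T} : \mathbb{E}^{\mathbf{k}} \to \mathbb{E}'^{\mathbf{l}}$ of $T$ and compose with $\mathbb{I}'^{\mathbf{l}}_{\mathbf{id}} : \mathbb{E}'^{\mathbf{l}} \to \mathbb{E}'$, which exists because a strictly monotone increasing sequence $\mathbf{l} \subseteq \N_0$ automatically satisfies $l_j \geq j$. The composition $\mathbb{I}'^{\mathbf{l}}_{\mathbf{id}} \circ \mathbb{T} : \mathbb{E}^{\mathbf{k}} \to \mathbb{E}'$ is the desired extension by part \labelcref{Lemma_Rescaling_and_weak_morphisms_1} and the fact that its $\infty$-level is $T$. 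Part \labelcref{Lemma_Rescaling_and_weak_morphisms_4} is proved identically, noting that $\mathbb{I}'^{\mathbf{l}}_{\mathbf{id}}$ is an embedding by part \labelcref{Lemma_Rescaling_and_weak_morphisms_2} and that a composition of embeddings is an embedding.

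For part \labelcref{Lemma_Rescaling_and_weak_morphisms_5} I would first construct $\mathbf{m}$ by the recursive rule $m_j \definedas \max\{k_j, l_j, m_{j-1} + 1\}$, which gives a strictly monotone increasing sequence dominating both $\mathbf{k}$ and $\mathbf{l}$. For any such $\mathbf{m}$ the two composites $\mathbb{T}\circ \mathbb{I}^{\mathbf{m}}_{\mathbf{k}}$ and $\mathbb{S}\circ \mathbb{I}^{\mathbf{m}}_{\mathbf{l}}$ are continuous linear operators $\mathbb{E}^{\mathbf{m}} \to \mathbb{E}'$ whose $\infty$-levels are both $T$ (since $(\mathbb{I}^{\mathbf{m}}_{\mathbf{k}})_\infty$ and $(\mathbb{I}^{\mathbf{m}}_{\mathbf{l}})_\infty$ are the identity on $E_\infty$); the uniqueness half of \cref{Lemma_Relation_mathbb_T_to_T_infty} then forces equality. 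This uniqueness principle is the only nonformal ingredient in the whole lemma and is the step where a little care is required.

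The remaining parts \labelcref{Lemma_Rescaling_and_weak_morphisms_7} and \labelcref{Lemma_Rescaling_and_weak_morphisms_8} are direct combinations of what has been established. For \labelcref{Lemma_Rescaling_and_weak_morphisms_7}, use part \labelcref{Lemma_Rescaling_and_weak_morphisms_3} to reduce the given extensions to $\mathbb{T} : \mathbb{E}^{\mathbf{k}} \to \mathbb{E}'$ and $\mathbb{T}' : \mathbb{E}'^{\mathbf{k}'} \to \mathbb{E}''$; rescaling via part \labelcref{Lemma_Rescaling_and_weak_morphisms_1} produces $\mathbb{T}^{\mathbf{k}'} : \mathbb{E}^{\mathbf{k}\circ \mathbf{k}'} \to \mathbb{E}'^{\mathbf{k}'}$, and $\mathbb{T}'\circ \mathbb{T}^{\mathbf{k}'} : \mathbb{E}^{\mathbf{k}\circ\mathbf{k}'} \to \mathbb{E}''$ is then an extension of $T'\circ T$; the embedding case follows from part \labelcref{Lemma_Rescaling_and_weak_morphisms_2} and stability of embeddings under composition. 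For \labelcref{Lemma_Rescaling_and_weak_morphisms_8}, after reducing both extensions to target $\mathbb{E}'$ by part \labelcref{Lemma_Rescaling_and_weak_morphisms_3} and pulling them back to a common domain $\mathbb{E}^{\mathbf{m}}$ as in part \labelcref{Lemma_Rescaling_and_weak_morphisms_5}, the operator $\lambda \mathbb{T}\circ \mathbb{I}^{\mathbf{m}}_{\mathbf{k}} + \mu \mathbb{S}\circ \mathbb{I}^{\mathbf{m}}_{\mathbf{l}} : \mathbb{E}^{\mathbf{m}} \to \mathbb{E}'$ is a continuous linear operator extending $\lambda T + \mu T'$, completing the proof.
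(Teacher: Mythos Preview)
Your proposal is correct and follows essentially the same approach as the paper, which likewise treats parts \labelcref{Lemma_Rescaling_and_weak_morphisms_1}, \labelcref{Lemma_Rescaling_and_weak_morphisms_2}, \labelcref{Lemma_Rescaling_and_weak_morphisms_6} as obvious bookkeeping, reduces \labelcref{Lemma_Rescaling_and_weak_morphisms_3} and \labelcref{Lemma_Rescaling_and_weak_morphisms_4} by postcomposing with $\mathbb{I}'^{\mathbf{l}}$, invokes \cref{Lemma_Relation_mathbb_T_to_T_infty} for the uniqueness in \labelcref{Lemma_Rescaling_and_weak_morphisms_5}, and builds the extensions in \labelcref{Lemma_Rescaling_and_weak_morphisms_7} and \labelcref{Lemma_Rescaling_and_weak_morphisms_8} exactly as you do. The only cosmetic difference is that in \labelcref{Lemma_Rescaling_and_weak_morphisms_5} the paper takes $m_j \definedas \max\{k_j, l_j\}$ directly (which is already strictly increasing since both $\mathbf{k}$ and $\mathbf{l}$ are), whereas your recursive rule adds an unnecessary $m_{j-1}+1$ safeguard.
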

\begin{proof}
\begin{enumerate}[label=\arabic*.,ref=\arabic*.]
  \item Obvious.
  \item Obvious.
  \item One direction is trivial, taking $\mathbf{l} \definedas (j)_{j\in\N_0}$. \\
For the other direction, assume that $T : E_\infty \to E'_\infty$ is a weak morphism and let $\mathbb{T} : \mathbb{E}^{\mathbf{k}} \to \mathbb{E}'^{\mathbf{l}}$ be an extension of $T_\infty$, where $\mathbf{k},\mathbf{l} \subseteq \N_0$ are strictly monotone increasing sequences.
Then $\mathbb{T}' \definedas \mathbb{I}'^{\mathbf{l}}\circ \mathbb{T} : \mathbb{E}^{\mathbf{k}} \to \mathbb{E}'$ is a continuous linear operator with $T'_\infty = I'^{\mathbf{l}}_\infty \circ T_\infty = \id_{E'_\infty}\circ T = T$.
  \item This follows from the proof of \labelcref{Lemma_Rescaling_and_weak_morphisms_3}~together with \labelcref{Lemma_Rescaling_and_weak_morphisms_2}.
  \item For existence of such a sequence $\mathbf{m}$ set $\mathbf{m} \definedas \left(\max\{k_j,l_j\}\right)_{j\in\N_0}$.
Now if $\mathbb{T}' \definedas \mathbb{T}\circ \mathbb{I}^{\mathbf{m}}_{\mathbf{k}}$ and $\mathbb{S}' \definedas \mathbb{S}\circ \mathbb{I}^{\mathbf{m}}_{\mathbf{l}}$, then $T'_\infty = S'_\infty$ and the claim follows from \cref{Lemma_Relation_mathbb_T_to_T_infty}.
  \item Obvious.
  \item Using \labelcref{Lemma_Rescaling_and_weak_morphisms_1}, let $\mathbb{T} : \mathbb{E}^{\mathbf{k}} \to \mathbb{E}'$ and $\mathbb{T}' : \mathbb{E}'^{\mathbf{l}} \to \mathbb{E}''$ be extensions of $T_\infty$ and $T'_\infty$, respectively, for strictly monotone increasing sequences $\mathbf{k}, \mathbf{l} \subseteq \N_0$.
Then $\mathbb{T}'' \definedas \mathbb{T}' \circ \mathbb{T}^{\mathbf{l}} : \left(\mathbb{E}^{\mathbf{k}}\right)^{\mathbf{l}} \to \mathbb{E}''$ is a continuous linear operator with $T''_\infty = T'\circ T$ \ie an extension of $T'\circ T$. \\
This formula also shows the last statement about embeddings.
  \item Let $\mathbb{T} : \mathbb{E}^{\mathbf{k}} \to \mathbb{E}'$ and $\mathbb{T}' : \mathbb{E}^{\mathbf{l}} \to \mathbb{E}'$ be extensions of $T$ and $T'$, respectively, for strictly monotone increasing sequences $\mathbf{k},\mathbf{l}\subseteq \N_0$.
Define $\mathbf{m} \definedas \left(\max\{k_j,l_j\}\right)_{j\in\N_0}$.
Then $\lambda\mathbb{T}\circ \mathbb{I}^{\mathbf{m}}_{\mathbf{k}} + \mu\mathbb{T}'^{\mathbf{m}}_{\mathbf{l}} : E^{\mathbf{m}} \to \mathbb{E}'$ is an extension of $\lambda T + \mu T' : E_\infty \to E'_\infty$.
\end{enumerate}
\end{proof}

\begin{example}\label{Example_Sobolev_chain_II}
Let $(\Sigma,g)$ be a closed $n$-dimensional Riemannian manifold and let $\pi : F \to \Sigma$ be a real (or complex) vector bundle equipped with a euclidean (or hermitian) metric and metric connection. \\
Let $k_0 \in \N_0$ and $1 < p < \infty$ be \st $k_0p > n$.
Let $\mathbf{k} \definedas (k_0 + k)_{k\in\N_0}$ and let $\mathbf{l} \subseteq \N_0$ be a strictly monotone increasing sequence with $l_0 \geq k_0$.
Define $\mathbf{l}' \definedas (l_j - k_0)_{j\in\N_0}$
Then the Sobolev chains from \cref{Example_Sobolev_chain_I} satisfy $\mathbb{W}^{\mathbf{l},p}(F) = \left(\mathbb{W}^{\mathbf{k},p}(F)\right)^{\mathbf{l}'}$. \\
Given $p \leq q < \infty$, for every $j \in \N_0$, the Sobolev embedding theorem there is a canonical continuous embedding $J_j : W^{l_j,q}(F) \hookrightarrow W^{l_j,p}(F)$ that is the identity on $\Gamma(F)$.
These fit together to an embedding $\mathbb{J} : \mathbb{W}^{\mathbf{l},q}(F) \to \mathbb{W}^{\mathbf{l},p}(F)$ with $J_\infty = \id_{\Gamma(F)}$.
\end{example}

\begin{example}\label{Example_PDO_II}
Let $(\Sigma,g)$ be a closed $n$-dimensional Riemannian manifold and let $\pi : F \to \Sigma$ and $\pi' : F' \to \Sigma$ be real (or complex) vector bundles equipped with euclidean (or hermitian) metrics and metric connections. \\
Let $k_0 \in \N_0$ and $1 < p < \infty$ be \st $k_0p > n$ and define $\mathbf{k} \definedas (k_0 + k)_{k\in\N_0}$. \\
Let $P : \Gamma(F) \to \Gamma(F')$ be a partial differential operator (with smooth coefficients) of class $m$.
Then $P : W^{\mathbf{k},p}(F)_\infty = \Gamma(F) \to \Gamma(F') = W^{\mathbf{k},p}(F')_\infty$ defines a weak morphism between $\mathbb{W}^{\mathbf{k},p}(F)$ and $\mathbb{W}^{\mathbf{k},p}(F')$.
For putting $\mathbf{l} \definedas (m + j)_{j\in\N_0}$, the continuous linear operator $\mathbb{P}^{\mathbf{k},p} : \mathbb{W}^{\mathbf{k} + m, p}(F) = \left(\mathbb{W}^{\mathbf{k}, p}(F)\right)^{\mathbf{l}} \to \mathbb{W}^{\mathbf{k},p}(F')$ from \cref{Example_PDO_I}, extends $P$. \\
In just the same way, the operator $\mathbb{P}^{\mathbf{k}} : \mathbbl{\Gamma}^{\mathbf{id} + m}(F) = \left(\mathbbl{\Gamma}^{\mathbf{id}}(F)\right)^{\mathbf{l}} \to \mathbbl{\Gamma}^{\mathbf{id}}(F')$ extends $P : \Gamma^{\mathbf{id}}(F)_\infty = \Gamma(F) \to \Gamma(F') = \Gamma^{\mathbf{id}}(F')_\infty$, showing that $P$ defines a weak morphism between $\mathbbl{\Gamma}^{\mathbf{id}}(F)$ and $\mathbbl{\Gamma}^{\mathbf{id}}(F')$ as well.
\end{example}

\begin{definition}
Let $\mathbb{E}$ and $\tilde{\mathbb{E}}$ be ILB- or sc-chains. \\
A weak embedding $J : E_\infty \to \tilde{E}_\infty$ is called an \emph{equivalence} if there exists another weak embedding $K : \tilde{E}_\infty \to E_\infty$ \st $J\circ K = \id_{\tilde{E}_\infty}$ and $K\circ J = \id_{E_\infty}$. \\
$\mathbb{E}$ and $\tilde{\mathbb{E}}$ are called \emph{equivalent} if there exists an equivalence between them.
\end{definition}

\begin{remark}\label{Remark_Extensions_of_weak_morphisms}
Note that the above definition of equivalence of ILB- or sc-chains does \emph{not} mean that one can find continuous linear operators $\mathbb{J} : \mathbb{E}^{\mathbf{k}} \to \tilde{\mathbb{E}}^{\mathbf{l}}$ and $\mathbb{K} : \tilde{\mathbb{E}}^{\mathbf{l}} \to \mathbb{E}^{\mathbf{k}}$ between rescalings \st $\mathbb{J}\circ \mathbb{K} : \tilde{\mathbb{E}}^{\mathbf{l}} \to \tilde{\mathbb{E}}^{\mathbf{l}}$ and $\mathbb{K}\circ \mathbb{T} : \mathbb{E}^{\mathbf{k}} \to \mathbb{E}^{\mathbf{k}}$ are the identity. \\
Instead, at least for sc-chains, by \cref{Lemma_Operators_covering_id} below and (the proof of) \cref{Lemma_Rescaling_and_weak_morphisms}, one can find $\mathbb{J} : \mathbb{E}^{\mathbf{k}} \to \tilde{\mathbb{E}}$ and $\mathbb{K} : \tilde{\mathbb{E}}^{\mathbf{l}} \to \mathbb{E}$ \st
\begin{align*}
\mathbb{K} \circ \mathbb{J}^{\mathbf{l}} &= \mathbb{I}^{\mathbf{k}\circ\mathbf{l}} : \mathbb{E}^{\mathbf{k}\circ\mathbf{l}} \to \mathbb{E}
\intertext{and}
\mathbb{J} \circ \mathbb{K}^{\mathbf{k}} &= \tilde{\mathbb{I}}^{\mathbf{l}\circ\mathbf{k}} : \tilde{\mathbb{E}}^{\mathbf{l}\circ\mathbf{k}} \to \tilde{\mathbb{E}}\text{.}
\end{align*}
See \eg \cref{Example_Sobolev_chain_III} below.
\end{remark}

Because of the previous remark and the following lemma, in the future I will restrict almost exclusively to sc-chains and there is no such thing as an $\overline{\text{ILB}}$-Fr{\'e}chet space appearing in this text.

\begin{lemma}\label{Lemma_Operators_covering_id}
Let $\mathbb{E}$ be an sc-chain, let $\mathbf{k}, \mathbf{l} \subseteq \N_0$ be strictly monotone increasing sequences and let $\mathbb{J} : \mathbb{E}^{\mathbf{k}} \to \mathbb{E}^{\mathbf{l}}$ be a morphism with $J_\infty = \id_{E_\infty}$.
\begin{enumerate}[label=\arabic*.,ref=\arabic*.]
  \item If $\dim E_\infty < \infty$, then $E_k = E_\infty$, $\iota_k = \id_{E_\infty}$ and $J_k = \id_{E_\infty}$ for all $k\in\N_0$.
  \item If $\dim E_\infty = \infty$, then $\mathbf{k} \geq \mathbf{l}$ and $\mathbb{J} = \mathbb{I}^{\mathbf{k}}_{\mathbf{l}}$.
\end{enumerate}
\end{lemma}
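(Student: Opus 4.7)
The key observation is that for every $j \in \N_0$ the map $J_j : E_{k_j} \to E_{l_j}$ is a continuous linear extension of $J_\infty|_{E_\infty} = \id_{E_\infty}$ along the dense inclusion $E_\infty \hookrightarrow E_{k_j}$, and as such is uniquely determined by this property. Both parts of the lemma follow by combining this rigidity with the compactness of the inclusions that is built into the sc-chain axioms.

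For Part 1, if $\dim E_\infty < \infty$, then $E_\infty$ is a finite-dimensional, hence closed, subspace of every $E_k$; since it is at the same time dense, one gets $E_k = E_\infty$ as linear subspaces of $E_0$, and each $\iota_k$ reduces to $\id_{E_\infty}$. The same collapse happens in the rescaled chains, so every $J_j$ is a continuous linear map $E_\infty \to E_\infty$ equal to the identity on $E_\infty$, i.e.\ $J_j = \id_{E_\infty}$.

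For Part 2, assume $\dim E_\infty = \infty$ and suppose toward a contradiction that $k_j < l_j$ for some $j$. Then $\iota^{l_j}_{k_j} : E_{l_j} \to E_{k_j}$ is a composition of at least one compact inclusion, so it is itself compact. The composite $\iota^{l_j}_{k_j} \circ J_j : E_{k_j} \to E_{k_j}$ agrees with $\id_{E_\infty}$ on the dense subspace $E_\infty$, hence must equal $\id_{E_{k_j}}$; thus $\id_{E_{k_j}}$ factors through a compact operator and is therefore compact, which forces $E_{k_j}$ to be finite-dimensional. But $E_\infty \subseteq E_{k_j}$, contradicting $\dim E_\infty = \infty$. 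Consequently $\mathbf{k} \geq \mathbf{l}$, the operator $\mathbb{I}^{\mathbf{k}}_{\mathbf{l}}$ is defined, and since $(I^{\mathbf{k}}_{\mathbf{l}})_j = \iota^{k_j}_{l_j}$ likewise restricts to $\id_{E_\infty}$, the same uniqueness argument gives $J_j = (I^{\mathbf{k}}_{\mathbf{l}})_j$, i.e.\ $\mathbb{J} = \mathbb{I}^{\mathbf{k}}_{\mathbf{l}}$.

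The main (and essentially only) obstacle is the key step in Part 2: recognising that when $k_j < l_j$ the hypothetical $J_j$ would provide a factorisation of $\id_{E_{k_j}}$ through a compact operator. Everything else is a formal consequence of density of $E_\infty$ in each $E_k$ and uniqueness of continuous linear extensions along that dense inclusion.
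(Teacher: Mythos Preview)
Your proof is correct and follows essentially the same approach as the paper's: both parts rest on the uniqueness of continuous linear extensions along the dense inclusion $E_\infty \hookrightarrow E_{k_j}$, and the contradiction in Part~2 is obtained exactly as you do, by composing $J_j$ with the compact inclusion $\iota^{l_j}_{k_j}$ to exhibit $\id_{E_{k_j}}$ as a compact operator.
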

\begin{proof}
\begin{enumerate}[label=\arabic*.,ref=\arabic*.]
  \item If $\dim E_\infty < \infty$, because $E_\infty \subseteq E_k$ is dense for all $k\in\N_0$, $\dim E_k < \infty$ for all $k \in \N_0$ and $\iota^\infty_k : E_\infty \to E_k$ is the identity.
Hence also $E^{\mathbf{k}}_j = E_\infty = E^{\mathbf{l}}_j$ for all $j \in \N_0$ and it follows that $J_k = \id_{E_\infty}$ for all $k\in\N_0$ directly from the axioms for a continuous linear operator between sc-chains.
  \item $J_j : E^{\mathbf{k}}_j = E_{k_j} \to E_{l_j} = E^{\mathbf{l}}_j$ is a continuous linear operator between Banach spaces with $J_j|_{E_\infty} = \iota^\infty_{l_j}$.
Since $E_\infty \subseteq E_{k_j}$ is dense, this uniquely determines $J_j$.
If $k_j \geq l_j$, then $\left(\mathbb{I}^{\mathbf{k}}_{\mathbf{l}}\right)_j = \iota^{k_j}_{l_j} : E_{k_j} \to E_{l_j}$ is a continuous linear operator with $\iota^{k_j}_{l_j}|_{E_\infty} = \iota^\infty_{l_j}$, hence $J_j = \left(\mathbb{I}^{\mathbf{k}}_{\mathbf{l}}\right)_j$. \\
Let $k_j < l_j$ and assume that there exists a continuous linear operator $J_j : E_{k_j} \to E_{l_j}$ with $J_j|_{E_\infty} = \iota^\infty_{l_j}$.
Then $K \definedas \iota^{l_j}_{k_j}\circ J_j : E_{k_j} \to E_{k_j}$ is a compact (because $\iota^{l_j}_{k_j}$ is compact) linear operator with $K|_{E_\infty} = \iota^\infty_{k_j}$.
By uniqueness it follows that $K = \id_{E_{k_j}}$ is compact, hence $\dim E_{k_j} < \infty$ and also $\dim E_\infty < \infty$, a contradiction to the assumption $\dim E_\infty = \infty$.
\end{enumerate}
\end{proof}

\begin{lemma}\label{Lemma_Equivalence_of_chains}
Let $\mathbb{E}$ and $\mathbb{E}'$ be ILB- or sc-chains.
\begin{enumerate}[label=\arabic*.,ref=\arabic*.]
  \item Equivalence of ILB- or sc-chains is an equivalence relation.
  \item If $\mathbf{k} \subseteq \N_0$ is a strictly monotone increasing sequence, then $\mathbb{E}^{\mathbf{k}}$ and $\mathbb{E}$ are equivalent.
  \item Let $\tilde{\mathbb{E}}$ and $\tilde{\mathbb{E}}'$ be ILB- or sc-chains that are equivalent to $\mathbb{E}$ and $\mathbb{E}'$, respectively.
Then there is a $1$---$1$ correspondence between weak morphisms $T : E_\infty \to E'_\infty$ and weak morphisms $\tilde{T} : \tilde{E}_\infty \to \tilde{E}'_\infty$, mapping the identity to the identity (for $\mathbb{E} = \mathbb{E}'$ and $\tilde{\mathbb{E}} = \tilde{\mathbb{E}}'$), weak embeddings to weak embeddings and that is compatible with composition of weak morphisms.
\end{enumerate}
\end{lemma}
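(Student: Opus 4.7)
My plan is to derive each of the three parts directly from \cref{Lemma_Rescaling_and_weak_morphisms}, treating the entire statement as a formal consequence rather than a new analytic claim.

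For part~1, I will verify the three axioms of an equivalence relation. Reflexivity follows because $\id_{E_\infty}$ is a weak embedding by item~\labelcref{Lemma_Rescaling_and_weak_morphisms_6} and is its own two-sided inverse. Symmetry is built into the definition: if $J$ is an equivalence with two-sided weak-embedding inverse $K$, then $K$ is itself a weak embedding whose inverse is $J$, hence an equivalence in the opposite direction. Transitivity will use item~\labelcref{Lemma_Rescaling_and_weak_morphisms_7}: given equivalences $J_1 : E_\infty \to E'_\infty$ and $J_2 : E'_\infty \to E''_\infty$ with weak-embedding inverses $K_1, K_2$, the compositions $J_2 \circ J_1$ and $K_1 \circ K_2$ are weak embeddings, and their compositions in either order collapse to $\id_{E_\infty}$ respectively $\id_{E''_\infty}$ after inserting $J_1 \circ K_1 = \id_{E'_\infty}$ or $K_2 \circ J_2 = \id_{E'_\infty}$.

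For part~2, the key observation is that $E^{\mathbf{k}}_\infty = E_\infty$ as topological vector spaces, so $\id_{E_\infty}$ can be interpreted as a map in either direction between $\mathbb{E}$ and $\mathbb{E}^{\mathbf{k}}$. To show it is a weak embedding from $\mathbb{E}$ to $\mathbb{E}^{\mathbf{k}}$, I will choose the rescaling of $\mathbb{E}$ by $\mathbf{k}$ as the domain-side rescaling and the trivial rescaling of $\mathbb{E}^{\mathbf{k}}$ as the target-side rescaling; the identity operator $\mathbb{E}^{\mathbf{k}} \to \mathbb{E}^{\mathbf{k}}$ is then trivially an embedding extending $\id_{E_\infty}$. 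The reverse direction is symmetric, and the two weak embeddings are manifestly inverse to each other on $E_\infty$, so together they furnish an equivalence.

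For part~3, I will fix equivalences $J : E_\infty \to \tilde{E}_\infty$ and $J' : E'_\infty \to \tilde{E}'_\infty$ with weak-embedding inverses $K, K'$, and define conjugation maps
\begin{align*}
\Phi(T) &\definedas J' \circ T \circ K, \\
\Psi(\tilde{T}) &\definedas K' \circ \tilde{T} \circ J.
\end{align*}
Item~\labelcref{Lemma_Rescaling_and_weak_morphisms_7} will guarantee that $\Phi$ and $\Psi$ preserve the classes of weak morphisms and weak embeddings. The relations $K \circ J = \id_{E_\infty}$, $K' \circ J' = \id_{E'_\infty}$ and their opposites yield $\Psi \circ \Phi = \id$ and $\Phi \circ \Psi = \id$. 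When $\mathbb{E} = \mathbb{E}'$ and $\tilde{\mathbb{E}} = \tilde{\mathbb{E}}'$ one takes $J = J'$, giving $\Phi(\id_{E_\infty}) = J \circ K = \id_{\tilde{E}_\infty}$. Compatibility with composition follows from
\[
\Phi(S) \circ \Phi(T) = (J'' \circ S \circ K') \circ (J' \circ T \circ K) = J'' \circ S \circ T \circ K = \Phi(S \circ T),
\]
using $K' \circ J' = \id_{E'_\infty}$. The only subtle point is conceptual rather than technical: the correspondence depends on the choice of $J, J'$, so it should be read as a bijection for each such choice. I do not foresee any analytic obstacle, since everything reduces to formal manipulations with the weak embeddings furnished by \cref{Lemma_Rescaling_and_weak_morphisms}.
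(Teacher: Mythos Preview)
Your proposal is correct and follows essentially the same approach as the paper's proof: both reduce everything to the formal properties of weak embeddings established in \cref{Lemma_Rescaling_and_weak_morphisms}, with part~3 handled by the conjugation maps $T \mapsto J' \circ T \circ K$ and $\tilde{T} \mapsto K' \circ \tilde{T} \circ J$. Your write-up is simply more detailed than the paper's, which compresses each part into a single sentence.
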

\begin{proof}
\begin{enumerate}[label=\arabic*.,ref=\arabic*.]
  \item $\mathbb{E}$ is equivalent to $\mathbb{E}$ via the identity (as a weak morphism) and equivalence is a symmetric relation by definition.
Transitivity follows because compositions of weak embeddings are weak embeddings by \cref{Lemma_Rescaling_and_weak_morphisms}.
  \item The weak embeddings in both directions are just tautologically given by the identity $\mathbb{E}^{\mathbf{k}} \to \mathbb{E}^{\mathbf{k}}$.
  \item Let $J : E_\infty \to \tilde{E}_\infty$, $K : \tilde{E}_\infty \to E_\infty$ and $J' : E'_\infty \to \tilde{E}'_\infty$, $K' : \tilde{E}'_\infty \to E'_\infty$ be as in the definition of equivalence for $\mathbb{E}, \tilde{\mathbb{E}}$ and $\mathbb{E}', \tilde{\mathbb{E}}'$, respectively. \\
Then for a weak morphism $T : E_\infty \to E'_\infty$ define $\tilde{T} : \tilde{E}_\infty \to \tilde{E}'_\infty$ by $\tilde{T} \definedas J'\circ T\circ K$. \\
Conversely, for a weak morphism $\tilde{T} : \tilde{E}_\infty \to \tilde{E}'_\infty$ define $T : E_\infty \to E'_\infty$ by $T \definedas K'\circ T\circ J$. \\
This has the desired properties by \cref{Lemma_Rescaling_and_weak_morphisms}.
\end{enumerate}
\end{proof}

\begin{example}\label{Example_Sobolev_chain_III}
Let $(\Sigma,g)$ be a closed $n$-dimensional Riemannian manifold and let $\pi : F \to \Sigma$ be a real (or complex) vector bundle equipped with a euclidean (or hermitian) metric and metric connection. \\
Let $k_0 \in \N_0$ and $1 < p < \infty$ be \st $k_0p > n$.
Let $\mathbf{k} \definedas (k_0 + k)_{k\in\N_0}$ and let $\mathbf{l} \subseteq \N_0$ be a strictly monotone increasing sequence with $l_0 \geq k_0$. \\
By \cref{Example_Sobolev_chain_II}, $\mathbb{W}^{\mathbf{k},p}(F)$ and $\mathbb{W}^{\mathbf{l},p}(F)$ are equivalent. \\
We want to show that for $1 < p \leq q < \infty$, $\mathbb{W}^{\mathbf{k},p}(F)$ and $\mathbb{W}^{\mathbf{k},q}(F)$ are equivalent as well.
From \cref{Example_Sobolev_chain_II} we already have the embedding $\mathbb{J} : \mathbb{W}^{\mathbf{l},q}(F) \to \mathbb{W}^{\mathbf{l},p}(F)$ with $J_\infty = \id_{\Gamma(F)}$.
So (using \cref{Example_Sobolev_chain_II}) one is left to construct another embedding $\mathbb{K} : \mathbb{W}^{\mathbf{l},p}(F) \to \mathbb{W}^{\mathbf{k},q}(F)$, for some strictly monotone increasing sequence $\mathbf{l} \subseteq \N_0$, satsifying $K_\infty = \id_{\Gamma(F)}$. \\
To do so, pick any $m\in\N_0$ with $m > n\frac{q-p}{pq}$ and define $\mathbf{l} \definedas (k_0 + m + k)_{k\in\N_0}$.
Then by the Sobolev embedding theorem, for any $j\in\N_0$ there is a canonical continuous embedding $K_j : W^{l_j,p}(F) \hookrightarrow W^{k_j,q}(F)$ that is the identity on $\Gamma(F)$.
These fit together to an embedding $\mathbb{K} : \mathbb{W}^{\mathbf{l},p}(F) \to \mathbb{W}^{\mathbf{k},q}(F)$ with $K_\infty = \id_{\Gamma(F)}$. \\
This, together with transitivity of equivalence of sc-chains, shows that the $\mathbb{W}^{\mathbf{k},p}(F)$ for different choices of $\mathbf{k}$ and $p$ are all equivalent.
\end{example}

\begin{example}\label{Example_Sobolev_and_Ck_chain}
Let $(\Sigma,g)$ be a closed $n$-dimensional Riemannian manifold and let $\pi : F \to \Sigma$ be a real (or complex) vector bundle equipped with a euclidean (or hermitian) metric and metric connection. \\
Let $k_0 \in \N_0$ and $1 < p < \infty$ be \st $k_0p > n$ and let $\mathbf{k} \definedas (k_0 + k)_{k\in\N_0}$.
Then $\mathbbl{\Gamma}^{\mathbf{id}}(F)$ and $\mathbb{W}^{\mathbf{k},p}(F)$ are equivalent: \\
Because $\Sigma$ was assumed closed, there is a canonical inclusion $\Gamma^{\mathbf{id}}(F)^{\mathbf{k}}_j = \Gamma^{k_0 + j}(F) \to W^{\mathbf{k},p}(F)_j = W^{k_0 + j,p}(F)$.
In the other direction, the Sobolev embedding theorem gives a canonical inclusion $W^{\mathbf{k},p}(F)_j = W^{k_0 + j,p}(F) \to \Gamma^j(F) = \Gamma^{\mathbf{id}}(F)_j$.
Both of these inclusions are the identity on $\Gamma(F) = \Gamma^{\mathbf{id}}(F)^{\mathbf{k}}_\infty = W^{\mathbf{k},p}(F)_\infty = \Gamma^{\mathbf{id}}(F)_\infty$. \\
Since for any strictly monotone increasing sequence $\mathbf{l} \subseteq \N_0$, $\mathbbl{\Gamma}^{\mathbf{l}}(F) = \left(\mathbbl{\Gamma}^{\mathbf{id}}\right)^{\mathbf{l}}$, this, together with \cref{Example_Sobolev_chain_III}, shows that the chains $\mathbb{W}^{\mathbf{l},p}(F)$ and $\mathbbl{\Gamma}^{\mathbf{l}}(F)$ are all equivalent, for any choice of strictly monotone increasing sequence $\mathbf{l} \subseteq \N_0$ and any $1 < p < \infty$ (\st $l_0p > n$).
\end{example}

\Needspace{25\baselineskip}
\subsection{$\overline{\text{sc}}$-structures}

\begin{definition}\label{Definition_sc_Frechet_space}
Let $E$ be a topological vector space.
\begin{enumerate}[label=\arabic*.,ref=\arabic*.]
  \item An \emph{$\overline{\text{sc}}$-structure} on $E$ is a pair $(\mathbb{E}, \phi)$, where $\mathbb{E}$ is an sc-chain and $\phi : E_\infty \to E$ is an isomorphism of topological vector spaces.
  \item Two $\overline{\text{sc}}$-structures $(\mathbb{E}, \phi)$ and $(\tilde{\mathbb{E}}, \tilde{\phi})$ on $E$ are called \emph{equivalent} if there exists an equivalence (of sc-chains) $J : E_\infty \to \tilde{E}_\infty$ with $\phi = \tilde{\phi}\circ J$. \\
Equivalently, if there exists an equivalence (of sc-chains) $K : \tilde{E}_\infty \to E_\infty$ with $\tilde{\phi} = \phi\circ K$.
\[
\xymatrix{
E_\infty \ar[r]^-{J} \ar[d]_-{\phi} \ar@{}[rd]|-{\circlearrowleft} & \tilde{E}_\infty \ar[d]^{\tilde{\phi}} \\
E \ar@{=}[r] & E
}
\qquad\qquad
\xymatrix{
\tilde{E}_\infty \ar[r]^-{K} \ar[d]_-{\tilde{\phi}} \ar@{}[rd]|-{\circlearrowleft} & E_\infty \ar[d]^{\phi} \\
E \ar@{=}[r] & E
}
\]
Note that in this case $J = \tilde{\phi}\inv\circ\phi$ and $K = J\inv = \phi\inv\circ\tilde{\phi}$ as continuous linear operators between topological vector spaces.
  \item An \emph{$\overline{\text{sc}}$-Fr{\'e}chet space} is a topological vector space $E$ together with an equivalence class of $\overline{\text{sc}}$-structures on it.
The sc-chains in this equivalence class are then called \emph{compatible}.
  \item A \emph{morphism} between $\overline{\text{sc}}$-Fr{\'e}chet spaces $E$ and $E'$ is a continuous linear operator $T : E \to E'$ between $E$ and $E'$ as topological vector spaces \st there exist compatible $\overline{\text{sc}}$-structures $(\mathbb{E}, \phi)$ and $(\mathbb{E}', \phi')$ on $E$ and $E'$, respectively, \st $T_\infty \definedas \phi'^{-1}\circ T\circ \phi : E_\infty \to E'_\infty$ defines a weak morphism.
\[
\xymatrix{
E_\infty \ar[r]^-{T_\infty} \ar[d]_-{\phi} \ar@{}[rd]|-{\circlearrowleft} & E'_\infty \ar[d]^{\phi'} \\
E \ar[r]^-{T} & E'
}
\]
  \item A morphism $T : E \to E'$ between $\overline{\text{sc}}$-Fr{\'e}chet spaces is called an \emph{embedding} if there exist compatible $\overline{\text{sc}}$-structures $(\mathbb{E}, \phi)$ and $(\mathbb{E}', \phi')$ on $E$ and $E'$, respectively, \st $\phi'^{-1}\circ T\circ \phi : E_\infty \to E'_\infty$ defines a weak embedding.
\end{enumerate}
\end{definition}

\begin{remark}
Note that the underlying topological vector space of an $\overline{\text{sc}}$-Fr{\'e}chet space is automatically a Fr{\'e}chet space.
\end{remark}

\begin{lemma}\label{Lemma_Morphisms_of_sc_Frechet_spaces}
Let $E$, $E'$ and $E''$ be $\overline{\text{sc}}$-Fr{\'e}chet spaces.
\begin{enumerate}[label=\arabic*.,ref=\arabic*.]
  \item\label{Lemma_Morphisms_of_sc_Frechet_spaces_1} A continuous linear operator $T : E \to E'$ defines a morphism of $\overline{\text{sc}}$-Fr{\'e}chet spaces \iff there exists a pair of compatible $\overline{\text{sc}}$-structures $(\tilde{\mathbb{E}}, \tilde{\phi})$ and $(\tilde{\mathbb{E}}', \tilde{\phi}')$ on $E$ and $E'$, respectively, and a continuous linear operator $\mathbb{T} : \tilde{\mathbb{E}} \to \tilde{\mathbb{E}}'$ \st $\tilde{T}_\infty = \tilde{\phi}'^{-1}\circ T\circ \tilde{\phi}$.
  \item\label{Lemma_Morphisms_of_sc_Frechet_spaces_2} A continuous linear operator $T : E \to E'$ defines a morphism of $\overline{\text{sc}}$-Fr{\'e}chet spaces \iff for any pair of compatible $\overline{\text{sc}}$-structures $(\tilde{\mathbb{E}}, \tilde{\phi})$ and $(\tilde{\mathbb{E}}', \tilde{\phi}')$ on $E$ and $E'$, respectively, $\tilde{T}_\infty \definedas \tilde{\phi}'^{-1}\circ T\circ \tilde{\phi}$ defines a weak morphism.
  \item\label{Lemma_Morphisms_of_sc_Frechet_spaces_3} A continuous linear operator $T : E \to E'$ defines an embedding of $\overline{\text{sc}}$-Fr{\'e}chet spaces \iff for any pair of compatible $\overline{\text{sc}}$-structures $(\tilde{\mathbb{E}}, \tilde{\phi})$ and $(\tilde{\mathbb{E}}', \tilde{\phi}')$ on $E$ and $E'$, respectively, $\tilde{T}_\infty \definedas \tilde{\phi}'^{-1}\circ T\circ \tilde{\phi}$ defines an embedding.
  \item\label{Lemma_Morphisms_of_sc_Frechet_spaces_4} $\id_E : E \to E$ is a morphism and embedding.
  \item\label{Lemma_Morphisms_of_sc_Frechet_spaces_5} Let $T, T' : E \to E'$ be morphisms and let $\lambda,\mu \in \mathds{k}$.
Then $\lambda T + \mu T' : E\to E'$ is a morphism as well.
  \item\label{Lemma_Morphisms_of_sc_Frechet_spaces_6} Let $T : E \to E'$ and $T' : E' \to E''$ be morphisms.
Then $T'\circ T : E \to E''$ is a morphism as well.
  \item\label{Lemma_Morphisms_of_sc_Frechet_spaces_7} Let $T : E \to E'$ and $T' : E' \to E''$ be embeddings.
Then $T'\circ T : E \to E''$ is an embedding as well.
\end{enumerate}
\end{lemma}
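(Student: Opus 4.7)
The plan is to reduce each claim to the corresponding property of weak morphisms between sc-chains from \cref{Lemma_Rescaling_and_weak_morphisms}, mediated by the isomorphisms $\phi$, $\phi'$ and the equivalence dictionary of \cref{Lemma_Equivalence_of_chains}.

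For \labelcref{Lemma_Morphisms_of_sc_Frechet_spaces_1}, the ``$\Leftarrow$'' direction is immediate since $\mathbb{T}:\tilde{\mathbb{E}}\to\tilde{\mathbb{E}}'$ is already an extension of $\tilde{T}_\infty$ with both rescaling sequences equal to $\mathbf{id}$. For ``$\Rightarrow$'', one starts from compatible $(\mathbb{E},\phi)$ and $(\mathbb{E}',\phi')$ making $T_\infty\definedas \phi'\inv\circ T\circ\phi$ a weak morphism. By item \labelcref{Lemma_Rescaling_and_weak_morphisms_3} of \cref{Lemma_Rescaling_and_weak_morphisms}, there exists a rescaling $\mathbf{k}$ and a continuous linear operator $\mathbb{T}:\mathbb{E}^{\mathbf{k}}\to\mathbb{E}'$ extending $T_\infty$. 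The pair $(\tilde{\mathbb{E}},\tilde{\phi})\definedas(\mathbb{E}^{\mathbf{k}},\phi)$ and $(\tilde{\mathbb{E}}',\tilde{\phi}')\definedas(\mathbb{E}',\phi')$ then does the job: $(\mathbb{E}^{\mathbf{k}},\phi)$ is compatible with $(\mathbb{E},\phi)$ since $\id_{E_\infty}$ is an equivalence of sc-chains (item \labelcref{Lemma_Rescaling_and_weak_morphisms_6} of \cref{Lemma_Rescaling_and_weak_morphisms}, being its own weak embedding inverse).

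For \labelcref{Lemma_Morphisms_of_sc_Frechet_spaces_2}, let $(\tilde{\mathbb{E}},\tilde{\phi})$ and $(\tilde{\mathbb{E}}',\tilde{\phi}')$ be any compatible $\overline{\text{sc}}$-structures. By hypothesis there are further compatible $(\mathbb{E},\phi)$, $(\mathbb{E}',\phi')$ making $T_\infty\definedas\phi'\inv\circ T\circ\phi$ a weak morphism, and compatibility furnishes equivalences of sc-chains $J:E_\infty\to\tilde{E}_\infty$ and $J':E'_\infty\to\tilde{E}'_\infty$ with $\tilde{\phi}\circ J=\phi$ and $\tilde{\phi}'\circ J'=\phi'$. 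A direct substitution gives
\[
\tilde{T}_\infty \;=\; \tilde{\phi}'\inv\circ T\circ\tilde{\phi} \;=\; J'\circ T_\infty\circ J\inv\text{,}
\]
and since $J'$, $T_\infty$ and $J\inv$ (the weak embedding inverse guaranteed by the definition of equivalence) are all weak morphisms, so is $\tilde{T}_\infty$ by item \labelcref{Lemma_Rescaling_and_weak_morphisms_7} of \cref{Lemma_Rescaling_and_weak_morphisms}. Part \labelcref{Lemma_Morphisms_of_sc_Frechet_spaces_3} follows by the identical argument, noting that $J$, $J'$ and $T_\infty$ are then all weak embeddings, and that compositions of weak embeddings are weak embeddings by the same item.

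Parts \labelcref{Lemma_Morphisms_of_sc_Frechet_spaces_4}--\labelcref{Lemma_Morphisms_of_sc_Frechet_spaces_7} then follow at once from \labelcref{Lemma_Morphisms_of_sc_Frechet_spaces_2} and \labelcref{Lemma_Morphisms_of_sc_Frechet_spaces_3} by fixing a single triple of simultaneously compatible $\overline{\text{sc}}$-structures on all spaces involved and invoking, respectively, item \labelcref{Lemma_Rescaling_and_weak_morphisms_6} for \labelcref{Lemma_Morphisms_of_sc_Frechet_spaces_4}, item \labelcref{Lemma_Rescaling_and_weak_morphisms_8} for \labelcref{Lemma_Morphisms_of_sc_Frechet_spaces_5}, and item \labelcref{Lemma_Rescaling_and_weak_morphisms_7} for \labelcref{Lemma_Morphisms_of_sc_Frechet_spaces_6} and \labelcref{Lemma_Morphisms_of_sc_Frechet_spaces_7} of \cref{Lemma_Rescaling_and_weak_morphisms}. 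The real hurdle is \labelcref{Lemma_Morphisms_of_sc_Frechet_spaces_2}: reading off the clean conjugation formula $\tilde{T}_\infty=J'\circ T_\infty\circ J\inv$ from the two different compatibility data, and recognising that both outer factors are weak embeddings and hence admissible inputs for the composition clause of \cref{Lemma_Rescaling_and_weak_morphisms}.
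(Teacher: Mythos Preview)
Your proof is correct and follows essentially the same approach as the paper: both reduce the claims to the conjugation formula $\tilde{T}_\infty = J'\circ T_\infty\circ K$ (your $K$ is $J\inv$) and then invoke the corresponding closure properties of weak morphisms and weak embeddings from \cref{Lemma_Rescaling_and_weak_morphisms}. The only cosmetic difference is that for part~\labelcref{Lemma_Morphisms_of_sc_Frechet_spaces_1} the paper rescales on both sides via $\mathbb{E}^{\mathbf{k}}\to\mathbb{E}'^{\mathbf{l}}$ and cites \cref{Lemma_Equivalence_of_chains} for the compatibility of rescaled structures, whereas you rescale only on the source via item~\labelcref{Lemma_Rescaling_and_weak_morphisms_3}; either route works.
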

\begin{proof}
\begin{enumerate}[label=\arabic*.,ref=\arabic*.]
  \item\label{Lemma_Morphisms_of_sc_Frechet_spaces_Proof_1} Let $(\mathbb{E},\phi)$ and $(\mathbb{E}',\phi')$ be as in the definition of a morphism.
That $\tilde{T}_\infty \definedas \tilde{\phi}'^{-1}\circ T\circ \tilde{\phi}$ defines a weak morphism means by definition that there exist strictly monotone increasing sequences $\mathbf{k},\mathbf{l} \subseteq \N_0$ an a continuous linear operator $\mathbb{T} : \mathbb{E}^{\mathbf{k}} \to \mathbb{E}'^{\mathbf{l}}$ extending $T_\infty$.
But if $(\mathbb{E},\phi)$ and $(\mathbb{E}',\phi')$ are compatible $\overline{\text{sc}}$-structures on $E$ and $E'$, respectively, then so are $(\tilde{\mathbb{E}}, \tilde{\phi}) \definedas (\mathbb{E}^{\mathbf{k}},\phi)$ and $(\tilde{\mathbb{E}}', \tilde{\phi}') \definedas (\mathbb{E}'^{\mathbf{l}},\phi')$ by \cref{Lemma_Equivalence_of_chains}.
  \item\label{Lemma_Morphisms_of_sc_Frechet_spaces_Proof_2} Let $(\mathbb{E},\phi)$ and $(\mathbb{E}',\phi')$ be as in the definition of a morphism and let $K : \tilde{E}_\infty \to E_\infty$ and $J' : E'_\infty \to \tilde{E}'_\infty$ be as in the definition of equivalence for $(\mathbb{E},\phi)$ with $(\tilde{\mathbb{E}}, \tilde{\phi})$ and $(\mathbb{E}',\phi')$ with $(\tilde{\mathbb{E}}', \tilde{\phi}')$, respectively.
Then the claim follows from the following commutative diagram, where the composition of the upper row is given by $\tilde{T}_\infty$.
\[
\xymatrix{
\tilde{E}_\infty \ar[r]^-{K} \ar[d]_-{\tilde{\phi}} \ar@{}[rd]|-{\circlearrowleft} & E_\infty \ar[r]^-{T_\infty} \ar[d]_-{\phi} \ar@{}[rd]|-{\circlearrowleft} & E'_\infty \ar[r]^-{J'} \ar[d]^-{\phi'} \ar@{}[rd]|-{\circlearrowleft} & \tilde{E}'_\infty \ar[d]^-{\tilde{\phi}'} \\
E \ar@{=}[r] & E \ar[r]^-{T} & E' \ar@{=}[r] & E'
}
\]
  \item\label{Lemma_Morphisms_of_sc_Frechet_spaces_Proof_3} Follows as in \labelcref{Lemma_Morphisms_of_sc_Frechet_spaces_Proof_2}~because in the above diagram $K$ and $J'$ are weak embeddings.
  \item\label{Lemma_Morphisms_of_sc_Frechet_spaces_Proof_4} Trivial.
  \item\label{Lemma_Morphisms_of_sc_Frechet_spaces_Proof_5} Immediate from \labelcref{Lemma_Morphisms_of_sc_Frechet_spaces_2}~and \cref{Lemma_Rescaling_and_weak_morphisms}.
  \item\label{Lemma_Morphisms_of_sc_Frechet_spaces_Proof_6} Because $T$ and $T'$ are morphisms, by using \labelcref{Lemma_Morphisms_of_sc_Frechet_spaces_2}~for any compatible $\overline{\text{sc}}$-structures $(\mathbb{E},\phi)$, $(\mathbb{E}',\phi')$ and $(\mathbb{E}'',\phi'')$ on $E$, $E'$ and $E''$, respectively,
$T_\infty \definedas \phi'^{-1}\circ T\circ \phi : E_\infty \to E'_\infty$ and $T'_\infty \definedas \phi''^{-1}\circ T'\circ \phi' : E'_\infty \to E''_\infty$ define weak morphisms.
Then so does $T''_\infty \definedas T'_\infty\circ T_\infty = \phi''^{-1}\circ (T'\circ T)\circ \phi : E_\infty \to E''_\infty$ by \cref{Lemma_Rescaling_and_weak_morphisms}.
Hence $T'\circ T$ is a morphism.
  \item\label{Lemma_Morphisms_of_sc_Frechet_spaces_Proof_7} Follows from the proof of \labelcref{Lemma_Morphisms_of_sc_Frechet_spaces_6}~and \cref{Lemma_Rescaling_and_weak_morphisms}.
\end{enumerate}
\end{proof}

\begin{example}\label{Example_Finite_dimensional_sc_Frechet_space}
Let $E$ be a finite dimensional (real or complex) vector space.
By \cref{Lemma_Operators_covering_id}, $E$ defines an $\overline{\text{sc}}$-Fr{\'e}chet space in a unique way, where a compatible $\overline{\text{sc}}$-structure on $E$ is given by $(\mathbb{E}, \id_E)$, where $E_j \definedas E$, $\iota_j \definedas \id_E$ and $\|\cdot\|_j$ is an arbitrary norm on $E$.
\end{example}

\begin{example}\label{Example_Sobolev_chain_IV}
Let $(\Sigma,g)$ be a closed $n$-dimensional Riemannian manifold and let $\pi : F \to \Sigma$ be a real (or complex) vector bundles equipped with a euclidean (or hermitian) metric and metric connection. \\
Let $\Gamma(F)$ be the space of smooth sections of $F$ equipped with the $\mathcal{C}^\infty$-topology. \\
By \crefrange{Example_Sobolev_chain_I}{Example_Sobolev_chain_III}, for any $1 < p < \infty$ and any strictly monotone increasing sequence $\mathbf{k} \subseteq \N_0$, $\left(\mathbb{W}^{\mathbf{k},p}(F), \id_{\Gamma(F)}\right)$ defines an $\overline{\text{sc}}$-structure on $\Gamma(F)$ and these $\overline{\text{sc}}$-structures are all equivalent. \\
$\Gamma(F)$ together with the $\overline{\text{sc}}$-Fr{\'e}chet space structure given by the equivalence class these $\overline{\text{sc}}$-structures define will be denoted by $W(F)$ and called the \emph{Sobolev $\overline{\text{sc}}$-space of sections of $F$}.
\end{example}

\begin{example}\label{Example_Ck_chain_II}
Let $(\Sigma,g)$ be a closed $n$-dimensional Riemannian manifold and let $\pi : F \to \Sigma$ be a real (or complex) vector bundles equipped with a euclidean (or hermitian) metric and metric connection. \\
Let $\Gamma(F)$ be the space of smooth sections of $F$ equipped with the $\mathcal{C}^\infty$-topology. \\
By \cref{Example_Ck_chain_I}, for any strictly monotone increasing sequence $\mathbf{k} \subseteq \N_0$, $\left(\mathbbl{\Gamma}^{\mathbf{k}}(F), \id_{\Gamma(F)}\right)$ defines an $\overline{\text{sc}}$-structure on $\Gamma(F)$ and these $\overline{\text{sc}}$-structures are all equivalent.
By abuse of notation, $\Gamma(F)$ together with the $\overline{\text{sc}}$-Fr{\'e}chet space structure given by the equivalence class these $\overline{\text{sc}}$-structures define will be denoted by $\Gamma(F)$ and called the \emph{$\overline{\text{sc}}$-space of sections of $F$}.
\end{example}

\begin{example}\label{Example_Sobolev_and_Ck_sc_space}
By \cref{Example_Sobolev_and_Ck_chain},
\[
W(F) = \Gamma(F)
\]
as $\overline{\text{sc}}$-spaces and this is an actual identity, not just an isomorphism.
\end{example}

\begin{example}\label{Example_PDO_III}
Let $(\Sigma,g)$ be a closed $n$-dimensional Riemannian manifold and let $\pi : F \to \Sigma$ and $\pi' : F' \to \Sigma$ be real (or complex) vector bundles equipped with euclidean (or hermitian) metrics and metric connections. \\
Let $P : \Gamma(F) \to \Gamma(F')$ be a partial differential operator (with smooth coefficients).
Then by \cref{Example_PDO_I,Example_PDO_II}, $P$ defines a morphism $P : W(F) \to W(F')$ between the Sobolev $\overline{\text{sc}}$-spaces of sections of $F$ and $F'$. \\
Which is the same as saying that $P : \Gamma(F) \to \Gamma(F')$ defines a morphism between the $\overline{\text{sc}}$-spaces of sections of $F$ and $F'$.
\end{example}

\begin{definition}
$\overline{\mathbf{sc}}\mathbf{Frechet}^{\mathds{k}}$ is the (preadditive) category with objects the $\overline{\text{sc}}$-Fr{\'e}chet spaces and morphisms the morphisms of $\overline{\text{sc}}$-Fr{\'e}chet spaces.
It comes with a faithful underlying functor $\mathbf{U} : \overline{\mathbf{sc}}\mathbf{Frechet}^{\mathds{k}} \to \mathbf{Frechet}^{\mathds{k}}$.
\end{definition}

\Needspace{25\baselineskip}
\subsection{Subspaces and direct sums}\label{Subsection_Subspaces_direct_sums}

\begin{definition}[\cite{MR2341834}, Definition 2.5]\label{Definition_Subchains_and_sums}
Let $\mathbb{E}$, $\mathbb{E}'$ and $\mathbb{E}^1, \dots, \mathbb{E}^k$, for some $k\in\N_0$, be ILB- or sc-chains.
\begin{enumerate}[label=\arabic*.,ref=\arabic*.]
  \item $\mathbb{E}'$ is called a \emph{subchain of $\mathbb{E}$} if for all $j\in\N_0$, $E'_j \subseteq E_j$ is a closed linear subspace, $\|\cdot\|'_j = \|\cdot\|_j|_{E'_j}$ and $\iota'_j = \iota_j|_{E'_{j+1}}$.
  \item The \emph{direct sum} (also: \emph{biproduct}, \emph{direct product}) of the $\mathbb{E}^i$, $i=1,\dots,k$, is the ILB- or sc-chain $\mathbb{E}^1 \oplus \cdots \oplus \mathbb{E}^k \definedas \mathbb{E}''$ with
\begin{align*}
E''_j &\definedas E^1_j\oplus\cdots\oplus E^k_j\text{,} \\
\|\cdot\|''_j &\definedas \|\cdot\|^1_j + \cdots + \|\cdot\|^k_j\text{ and} \\
\iota''_j &\definedas \iota^1_j \oplus \cdots\oplus \iota^k_j\text{.}
\end{align*}
It satisfies $E''_\infty = E^1_\infty \times\cdots\times E^k_\infty$ (with the product topology) and comes with the continuous linear operators
\begin{align*}
\mathbb{P}^i : \mathbb{E}^1 \oplus \cdots \oplus \mathbb{E}^k &\to \mathbb{E}^i \\
P^i_j(e^1, \dots, e^k) &\definedas e^i\quad\forall\, (e^1,\dots,e^k)\in E^1_j\oplus\cdots\oplus E^k_j \\
\mathbb{J}^i : \mathbb{E}^i &\to \mathbb{E}^1 \oplus \cdots \oplus \mathbb{E}^k \\
J^i_j(e) &\definedas (0, \dots, 0, e, 0, \dots, 0) \in E^1_j\oplus\cdots\oplus E^k_j \quad\forall\, e \in E^i_j
\end{align*}
for $i = 1, \dots, k$, called the projection and injection operators, respectively.
  \item A subchain $\mathbb{E}'$ of $\mathbb{E}$ is said to \emph{split} if there exists another subchain $\mathbb{E}''$ of $\mathbb{E}$ \st the canonical continuous linear operator
\begin{align*}
\mathbb{J} : \mathbb{E}' \oplus \mathbb{E}'' &\to \mathbb{E} \\
J_j(e,e') &\definedas e + e'\quad \text{for $(e,e')\in E'_j\oplus E''_j$}
\end{align*}
is an isomorphism.
\end{enumerate}
\end{definition}

\begin{remark}\label{Remark_Subchain_determined_by_Ezero}
Note that a subchain $\mathbb{E}'$ of an ILB- or sc-chain $\mathbb{E}$ is uniquely determined by the closed linear subspace $E'_0 \subseteq E_0$, for $E'_j = \bigl(\iota^j_0\bigr)\inv(E'_0)$ for all $j\in\N_0$ and $\|\cdot\|'_j$ and $\iota'_j$ are simply defined via restriction.
\end{remark}

\begin{example}\label{Example_Finite_dimensional_split_subchain}
Let $\mathbb{E}$ be an ILB- or sc-chain and let $C \subseteq E_\infty$ be a finite dimensional subspace (as vector spaces).
Then $\mathbb{E}' \definedas \left((C, \|\cdot\|_j|_{C}), \id_C\right)_{j\in\N_0}$ is a split subchain of $\mathbb{E}$.
For a proof, see \cite{MR2341834}, Proposition 2.7.
\end{example}

\begin{example}\label{Example_Rescaling_of_subchain_is_subchain}
Let $\mathbb{E}$ be an ILB- or sc-chain and let $\mathbb{E}'$ be a subchain of $\mathbb{E}$.
If $\mathbf{k} \subseteq \N_0$ is a strictly monotone increasing sequence, then $\mathbb{E}'^{\mathbf{k}}$ is a subchain of $\mathbb{E}^{\mathbf{k}}$.
\end{example}

\begin{example}\label{Example_Inverse_image_subchain}
Let $\mathbb{E}$ and $\mathbb{E}'$ be ILB- or sc-chains and let $\mathbb{T} : \mathbb{E} \to \mathbb{E}'$ be a continuous linear operator.
Let furthermore $\tilde{\mathbb{E}}'$ be a subchain of $\mathbb{E}'$.
Then for all $j\in\N_0$, $\tilde{E}_j \definedas T_j\inv(\tilde{E}'_j)$ is a closed linear subspace of $E_j$ and from $\iota'_j\circ T_{j+1} = T_j\circ \iota_j : E_{j+1} \to E'_j$ it follows that $\iota_j\inv\left(T_j\inv(\tilde{E}'_j)\right) = T_{j+1}\inv\left((\iota'_j)^{-1}(\tilde{E}'_j)\right) = T_{j+1}\inv\left(\tilde{E}'_{j+1}\right)$.
Unfortunately, one cannot a priori guarantee that $\tilde{E}_\infty$ lies dense in $\tilde{E}_j$ for all $j\in\N_0$, so the $\left(\tilde{E}_j\right)_{j\in\N_0}$ need \emph{not} form a subchain of $\mathbb{E}$.
\end{example}

\begin{example}\label{Example_Splitting_along_submanifold_I}
Let $(\Sigma,g)$ be a closed $n$-dimensional Riemannian manifold and let $\pi : F \to \Sigma$ be a real (or complex) vector bundle equipped with a euclidean (or hermitian) metric and metric connection.
Also let $\mathbf{l} = (l_k)_{k \in \N_0} \subseteq \N_0$ be a strictly monotone increasing sequence and let $m \in \N_0$ and $1 < p < \infty$ be \st $l_0p > n$ and $mp > n$. \\
For a closed submanifold $C \subseteq \Sigma$, by the Sobolev trace theorem, there exists a continuous linear operator
\begin{align*}
\mathbb{W}^{\mathbf{l} + m,p}(F) &\to \mathbb{W}^{\mathbf{l},p}(F|_{C}) \\
u &\mapsto u|_C
\end{align*}
between sc-chains from \cref{Example_Sobolev_chain_I}. \\
There is then a well-defined subchain $\mathbb{W}^{\mathbf{l} + m,p}(F;C)$, defined via the closed linear subspaces
\[
W^{l_j + m,p}(F;C) \definedas \{u \in W^{l_j + m, p}(F) \;|\; u|_C = 0\}\text{.}
\]
Analogously, for an arbitrary strictly monotone increasing sequence $\mathbf{l} = (l_k)_{k \in \N_0} \subseteq \N_0$, there exists a continuous linear operator
\begin{align*}
\mathbbl{\Gamma}^{\mathbf{l}}(F) &\to \mathbbl{\Gamma}^{\mathbf{l}}(F|_{C}) \\
u &\mapsto u|_C
\end{align*}
between sc-chains from \cref{Example_Ck_chain_I}. \\
There is then a well-defined subchain $\mathbbl{\Gamma}^{\mathbf{l}}(F;C)$, defined via the closed linear subspaces
\[
\Gamma^{l_j}(F;C) \definedas \{u \in \Gamma^{l_j}(F) \;|\; u|_C = 0\}\text{.}
\]
\end{example}

\begin{lemma}
$\mathbf{scChains}^{\mathds{k}}$ is an additive category.
\end{lemma}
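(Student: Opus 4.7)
The plan is to verify the two properties an additive category must have on top of preadditivity (which has already been stated in the definition of $\mathbf{scChains}^{\mathds{k}}$): the existence of a zero object and the existence of all finite biproducts. The key observation is that the direct sum $\mathbb{E}^1 \oplus \cdots \oplus \mathbb{E}^k$ from \cref{Definition_Subchains_and_sums}, together with its projection operators $\mathbb{P}^i$ and injection operators $\mathbb{J}^i$, is already essentially the biproduct structure; one just has to verify the biproduct identities.

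First I would exhibit the zero object. Take the sc-chain $\mathbb{O}$ with $O_j \definedas \{0\}$, $\|\cdot\|_j \equiv 0$ and $\iota_j = \id_{\{0\}}$ for all $j\in\N_0$, which trivially satisfies the axioms of \cref{Definition_Chain} (the zero map between Banach spaces is compact, so this is even an sc-chain). For any sc-chain $\mathbb{E}$ there is exactly one continuous linear operator $\mathbb{O} \to \mathbb{E}$ and exactly one continuous linear operator $\mathbb{E} \to \mathbb{O}$, namely the zero operators in each degree, so $\mathbb{O}$ is both initial and terminal.

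Second I would verify that the direct sum from \cref{Definition_Subchains_and_sums} is a biproduct in the categorical sense. The requisite identities
\[
\mathbb{P}^i \circ \mathbb{J}^i = \id_{\mathbb{E}^i}, \quad \mathbb{P}^i\circ \mathbb{J}^{i'} = 0 \text{ for } i\neq i', \quad \sum_{i=1}^k \mathbb{J}^i\circ \mathbb{P}^i = \id_{\mathbb{E}^1\oplus\cdots\oplus \mathbb{E}^k}
\]
can be checked level by level in $j\in\N_0$, where they reduce to the corresponding identities for the Banach space direct sum $E^1_j\oplus\cdots\oplus E^k_j$. Compatibility with the connecting inclusions $\iota''_j = \iota^1_j\oplus\cdots\oplus \iota^k_j$ is immediate from the componentwise definition of $\mathbb{P}^i$ and $\mathbb{J}^i$. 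Since a preadditive category with finite biproducts in this sense automatically has all finite products and coproducts given by the same object (with the obvious universal maps), this establishes that $\mathbf{scChains}^{\mathds{k}}$ has all finite biproducts.

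There is no real obstacle here: the argument is essentially bookkeeping, reducing everything to the corresponding well known facts for the category of Banach spaces applied degreewise, while using that compositions and $\mathds{k}$-linear combinations of continuous linear operators of sc-chains are again continuous linear operators of sc-chains (which was included in the definition of the category and is immediate from the componentwise definitions). The only mildly non-routine point is checking that the zero chain $\mathbb{O}$ really is an sc-chain (so that it is an object of the category rather than just of some larger category of chains), but condition \labelcref{Definition_Chain_1b} of \cref{Definition_Chain} holds trivially since the identity of $\{0\}$ is surjective with dense image.
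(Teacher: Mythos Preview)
Your proposal is correct and follows exactly the approach the paper indicates: the paper's proof consists of the single sentence ``This is a straightforward verification of the axioms, showing that the biproduct from \cref{Definition_Subchains_and_sums} is indeed a biproduct,'' and you have simply carried out that verification in detail (zero object plus the biproduct identities checked levelwise).
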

\begin{proof}
This is a straightforward verification of the axioms, showing that the biproduct from \cref{Definition_Subchains_and_sums} is indeed a biproduct.
\end{proof}

\begin{definition}\label{Definition_sc_subspace}
Let $E$ be an $\overline{\text{sc}}$-Fr{\'e}chet space.
A closed linear subspace $E' \subseteq E$ is called an \emph{$\overline{\text{sc}}$-subspace} if there exists a compatible $\overline{\text{sc}}$-structure $(\mathbb{E}, \phi)$ on $E$ and a subchain $\mathbb{E}'$ of $\mathbb{E}$ \st $\phi(E'_\infty) = E'$.
\end{definition}

\begin{lemma}\label{Lemma_Characterisation_sc_subspace}
Let $E$ be an $\overline{\text{sc}}$-Fr{\'e}chet space and let $E' \subseteq E$ be a closed linear subspace.
The following are equivalent:
\begin{enumerate}[label=\arabic*.,ref=\arabic*.]
  \item\label{Lemma_Characterisation_sc_subspace_1} $E'$ is an $\overline{\text{sc}}$-subspace.
  \item\label{Lemma_Characterisation_sc_subspace_2} For every compatible $\overline{\text{sc}}$-structure $(\mathbb{E}, \phi)$ on $E$ there exists a shift $\mathbf{k} \subseteq \N_0$ \st for the compatible $\overline{\text{sc}}$-structure $(\mathbb{E}^{\mathbf{k}}, \phi)$ on $E$, the shifted sc-chain $\mathbb{E}^{\mathbf{k}}$ has a subchain $\mathbb{E}'$ with $\phi(E'_\infty) = E'$.
\end{enumerate}
\end{lemma}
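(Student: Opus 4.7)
The direction \labelcref{Lemma_Characterisation_sc_subspace_2} $\Rightarrow$ \labelcref{Lemma_Characterisation_sc_subspace_1} is immediate from the definitions together with \cref{Lemma_Equivalence_of_chains}, which ensures that any shift $\mathbb{E}^{\mathbf{k}}$ of a compatible sc-chain is again compatible; the subchain of $\mathbb{E}^{\mathbf{k}}$ produced by \labelcref{Lemma_Characterisation_sc_subspace_2} then directly exhibits $E'$ as an $\overline{\text{sc}}$-subspace. I will focus on the reverse implication.

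I will actually prove the stronger statement that the trivial shift $\mathbf{k} \definedas \mathbf{id}$ always suffices. Given an arbitrary compatible $\overline{\text{sc}}$-structure $(\mathbb{E}, \phi)$ on $E$, the map $\phi : E_\infty \to E$ is by definition a homeomorphism of topological vector spaces; combined with the assumption (part of \labelcref{Lemma_Characterisation_sc_subspace_1}) that $E'$ is closed in $E$, this makes $\phi^{-1}(E') \subseteq E_\infty$ closed in the Fr{\'e}chet topology of $E_\infty$. This is essentially the only use of hypothesis \labelcref{Lemma_Characterisation_sc_subspace_1}: it lets me construct the desired subchain directly via closures, without having to transport the subchain coming with the witnessing compatible structure. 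Explicitly, set
\[
E'_j \definedas \overline{\phi^{-1}(E')}^{\|\cdot\|_j} \subseteq E_j\text{,}
\]
the closure in the Banach space $(E_j, \|\cdot\|_j)$, and equip the resulting sequence with the restricted norms and inclusions.

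Most of the subchain axioms are routine. Closedness of $E'_j$ in $E_j$ is by construction; compactness of $\iota_j|_{E'_{j+1}} : E'_{j+1} \to E'_j$ is inherited from compactness of $\iota_j$, since the restriction of a compact operator to a closed subspace is again compact; the containment $\iota_j(E'_{j+1}) \subseteq E'_j$ follows because the bound $\|\iota_j\|_{L_{\mathrm{c}}(E_{j+1},E_j)} \leq 1$ converts $\|\cdot\|_{j+1}$-limits of elements of $\phi^{-1}(E')$ into $\|\cdot\|_j$-limits of the same elements; and density of $E'_{j+1}$ in $E'_j$ in the $\|\cdot\|_j$-norm holds because both sets contain the common subset $\phi^{-1}(E')$, which is $\|\cdot\|_j$-dense in $E'_j$ by construction.

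The only genuinely non-routine step, and the main obstacle I expect, is the identification $E'_\infty = \phi^{-1}(E')$, where $E'_\infty \definedas \bigcap_{j\in\N_0} E'_j$; this then yields $\phi(E'_\infty) = E'$ as required. The inclusion ``$\supseteq$'' is trivial. For ``$\subseteq$'', given $v \in \bigcap_j E'_j$, one has $v \in \bigcap_j E_j = E_\infty$, and for each $j\in\N_0$ separately a sequence $(v^{n,j})_n \subseteq \phi^{-1}(E')$ with $v^{n,j} \to v$ in $\|\cdot\|_j$. A standard diagonal extraction --- choose indices $n_m$ so that $\|v^{n_m, m} - v\|_m < 1/m$, and use the norm bounds from \cref{Definition_Chain} to get $\|v^{n_m, m} - v\|_j \leq \|v^{n_m, m} - v\|_m < 1/m$ for all $m \geq j$ --- produces a single sequence in $\phi^{-1}(E')$ that converges to $v$ in every $\|\cdot\|_j$, \ie in the Fr{\'e}chet topology of $E_\infty$. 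The Fr{\'e}chet-closedness of $\phi^{-1}(E')$ then forces $v \in \phi^{-1}(E')$, completing the argument.
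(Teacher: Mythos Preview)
Your proof is correct and takes a genuinely different route from the paper. The paper uses hypothesis \labelcref{Lemma_Characterisation_sc_subspace_1} in earnest: it takes the witnessing structure $(\tilde{\mathbb{E}},\tilde{\phi})$ and its subchain $\tilde{\mathbb{E}}'$, chooses an extension $\mathbb{J}:\mathbb{E}^{\mathbf{k}}\to\tilde{\mathbb{E}}$ of the equivalence $\tilde{\phi}^{-1}\circ\phi$, pulls back $\tilde{E}'_0$ to $E'_{k_0}\definedas J_0^{-1}(\tilde{E}'_0)$, and then invokes \cref{Remark_Subchain_determined_by_Ezero} to reconstruct the remaining levels. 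The shift is forced by the rescaling hidden in $\mathbb{J}$. You instead bypass the witnessing structure entirely and build the subchain by levelwise closure, which explains why no shift is needed. As you observe, this uses only the standing hypothesis that $E'$ is closed --- so you have in fact shown that \emph{every} closed subspace of an $\overline{\text{sc}}$-Fr{\'e}chet space is an $\overline{\text{sc}}$-subspace, making the equivalence in the lemma automatic. One point worth flagging: your closures $E'_j=\overline{\phi^{-1}(E')}^{\|\cdot\|_j}$ will not in general satisfy the saturation identity $E'_j=(\iota^j_0)^{-1}(E'_0)$ asserted in \cref{Remark_Subchain_determined_by_Ezero}; for instance, take $E'$ to be the smooth functions on $S^1$ vanishing to infinite order at a point, where $E'_1\subsetneq(\iota^1_0)^{-1}(E'_0)$. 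Under \cref{Definition_Subchains_and_sums} as literally written your construction is still a subchain, so your argument stands; but if one reads the paper as intending subchains to be saturated (as the Remark and \cref{Example_Inverse_image_subchain} suggest), then your closures would not qualify and something like the paper's transport argument would be required.
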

\begin{proof}
The direction ``\labelcref{Lemma_Characterisation_sc_subspace_2}$\;\Rightarrow\;$\labelcref{Lemma_Characterisation_sc_subspace_1}'' is trivial. \\
In the other direction, let $(\mathbb{E},\phi)$ be a compatible $\overline{\text{sc}}$-structure on $E$.
By \cref{Remark_Subchain_determined_by_Ezero}, it suffices to find $k_0\in\N_0$ and a closed linear subspace $E'_{k_0} \subseteq E_{k_0}$ \st $\phi\left(\bigl(\iota^\infty_{k_0}\bigr)\inv(E'_{k_0})\right) = E'$. \\
Because $E'$ is an $\overline{\text{sc}}$-subspace, there exists a compatible $\overline{\text{sc}}$-structure $(\tilde{\mathbb{E}}, \tilde{\phi})$ on $E$ and a subchain $\tilde{\mathbb{E}}'$ of $\tilde{\mathbb{E}}$ \st $\tilde{\phi}(\tilde{E}'_\infty) = E'$.
By compatibility of $(\mathbb{E},\phi)$ and $(\tilde{\mathbb{E}}, \tilde{\phi})$ there exists a strictly monotone increasing sequence $\mathbf{k} \subseteq \N_0$ and a continuous linear operator $\mathbb{J} : \mathbb{E}^{\mathbf{k}} \to \tilde{\mathbb{E}}$ \st $J_\infty\inv = \phi\inv\circ \tilde{\phi}$.
Define $E'_{k_0} \definedas J_0\inv(\tilde{E}'_0) \subseteq E^{\mathbf{k}}_0 = E_{k_0}$.
Then
\begin{align*}
\phi\left(\bigl(\iota^\infty_{k_0}\bigr)\inv(E'_{k_0})\right) &= \phi\left(\bigl(\iota^\infty_{k_0}\bigr)\inv(J_0\inv(\tilde{E}'_{0}))\right) \\
&= \phi\left(\bigl(J_0\circ\iota^\infty_{k_0}\bigr)\inv(\tilde{E}'_{0})\right) \\
&= \phi\left(J_\infty\inv(\tilde{E}'_{\infty})\right) \\
&= \phi\circ\phi\inv\circ\tilde{\phi}\left(\tilde{E}'_\infty\right) \\
&= E'\text{.}
\end{align*}
\end{proof}

\begin{definition}\label{Definition_Sum_of_scFrechet_spaces}
Let $E^1, \dots, E^k$, for some $k\in\N_0$, be $\overline{\text{sc}}$-Fr{\'e}chet spaces.
The \emph{direct sum} (also: \emph{biproduct}, \emph{direct product}) of the $E^i$, $i=1,\dots,k$, is the $\overline{\text{sc}}$-Fr{\'e}chet space $E^1 \oplus \cdots \oplus E^k$ where the underlying Hausdorff locally convex topological vector space is $E^1\times\cdots\times E^k$ with the product topology and the equivalence class of compatible $\overline{\text{sc}}$-structures on $E^1\times\cdots\times E^k$ is generated by the following $\overline{\text{sc}}$-structures on $E^1\times \cdots\times E^k$: \\
For every $i = 1,\dots,k$ pick a compatible $\overline{\text{sc}}$-structure $(\mathbb{E}^i,\phi^i)$ on $E^i$.
Then $\mathbb{E} \definedas \mathbb{E}^1\oplus \cdots\oplus \mathbb{E}^k$ is an sc-chain and $\phi \definedas \phi^1 \times \cdots\times \phi^k : \mathbb{E}_\infty = E^1_\infty \times \cdots \times E^k_\infty \to E^1 \times \cdots\times E^k$ is a homeomorphism.
$(\mathbb{E},\phi)$ defines an $\overline{\text{sc}}$-structure on $E^1\times \cdots\times E^k$ and for any two such choices the resulting $\overline{\text{sc}}$-structures are compatible by a straightforward verification.
The resulting equivalence class of $\overline{\text{sc}}$-structures on $E^1\times \cdots\times E^k$ then defines the $\overline{\text{sc}}$-Fr{\'e}chet space $E^1\oplus\cdots\oplus E^k$. \\
It comes with the morphisms
\begin{align*}
P^i : E^1 \oplus \cdots \oplus E^k &\to E^i \\
P^i(e^1, \dots, e^k) &\definedas e^i\quad\forall\, (e^1,\dots,e^k)\in E^1\times\cdots\times E^k \\
J^i : E^i &\to E^1 \oplus \cdots \oplus E^k \\
J^i(e) &\definedas (0, \dots, 0, e, 0, \dots, 0) \in E^1\oplus\cdots\oplus E^k \quad\forall\, e \in E^i
\end{align*}
for $i = 1, \dots, k$, called the projection and injection morphisms, respectively.
\end{definition}

\begin{example}\label{Example_Splitting_along_submanifold_II}
Let $(\Sigma,g)$ be a closed $n$-dimensional Riemannian manifold, let $C \subseteq \Sigma$ be a closed submanifold and let $\pi : F \to \Sigma$ be a real (or complex) vector bundle equipped with a euclidean (or hermitian) metric and metric connection. \\
By \cref{Example_Splitting_along_submanifold_I}, there is a well-defined morphism
\begin{align*}
W(F) &\to W(F|_C) \\
\Gamma(F) \ni u &\mapsto u|_C \in \Gamma(F|_C)
\end{align*}
between $\overline{\text{sc}}$-Fr{\'e}chet spaces from \cref{Example_Sobolev_chain_IV} and also a well-defined $\overline{\text{sc}}$-subspace $W(F;C) \subseteq W(F)$ with underlying topological vector space $\Gamma(F;C) \definedas \{u \in \Gamma(F) \;|\; u|_C = 0\}$. \\
Now let $T_C$ be a metric tubular neighbourhood of $C$, \ie $T_C$ is the diffeomorphic image of an $\varepsilon$-neighbourhood, for some $\varepsilon > 0$, of the zero section in
\[
T^\perp_C \Sigma \definedas \{\xi \in T_z\Sigma \;|\; z \in C, \langle \xi, v \rangle = 0 \;\forall\, v \in T_zC\}
\]
under the map
\begin{align*}
T^\perp_C \Sigma &\to \Sigma \\
\xi &\mapsto \exp(\xi)\text{.}
\end{align*}
It comes with a canonical projection $\pi^{T_C}_C : T_C \to C$ which corresponds to the vector bundle projection $T^\perp_C \Sigma \to C$ under the above diffeomorphism and a map $\rho : T_C \to [0,\varepsilon)$, $\rho(\exp_z(\xi)) \definedas |\xi|$.
Furthermore, there is a canonical identification
\begin{align*}
\left(\pi^{T_C}_C\right)^\ast F|_C &\to F|_{T_C} \\
(e_z, \exp_z(\xi)) &\mapsto P_\xi e_z\text{,}
\end{align*}
where $P_\xi : F_z \to F_{\exp_z(\xi)}$ denotes parallel transport along the geodesic $[0,1] \to \Sigma$, $t \mapsto \exp_z(t\xi)$. \\
Given a smooth map $\phi : \R \to \R$ \st $\phi(t) \equiv 1$ for $t \leq \varepsilon/3$ and $\phi(t) \equiv 0$ for $t \geq 2\varepsilon/3$, using the above one can define 
\begin{align*}
\Phi : \Gamma(F) &\to \Gamma(F|_C) \times \Gamma(F;C) \\
u &\mapsto \left(u|_C, u - (\phi\circ\rho)\cdot \left(\pi^{T_C}_C\right)^\ast u|_C\right)
\intertext{and}
\Psi : \Gamma(F|_C) \times \Gamma(F;C) &\to \Gamma(F) \\
(u_0, u_1) &\mapsto u_1 + (\phi\circ\rho)\cdot \left(\pi^{T_C}_C\right)^\ast u_0\text{.}
\end{align*}
These maps are inverse to each other and have extensions
\begin{align*}
\mathbb{W}^{\mathbf{l} + m,p}(F) &\to \mathbb{W}^{\mathbf{l},p}(F|_C) \oplus \mathbb{W}^{\mathbf{l},p}(F;C)
\intertext{and}
\mathbb{W}^{\mathbf{l},p}(F|_C)\oplus \mathbb{W}^{\mathbf{l},p}(F; C) &\to \mathbb{W}^{\mathbf{l},p}(F)\text{,}
\end{align*}
where $\mathbf{l} \subseteq \N_0$ is a strictly monotone increasing sequence, $m \in \N_0$ and $1 < p < \infty$ are \st $l_0p > n$ and $mp > n$. \\
This shows that there is an $\overline{\text{sc}}$-splitting
\[
W(F) \cong W(F|_C)\oplus W(F;C)\text{.}
\]
In just the same way as above, $\Phi$ and $\Psi$ also have extensions
\begin{align*}
\mathbbl{\Gamma}^{\mathbf{l}}(F) &\to \mathbbl{\Gamma}^{\mathbf{l}}(F|_C) \oplus \mathbbl{\Gamma}^{\mathbf{l}}(F;C)
\intertext{and}
\mathbbl{\Gamma}^{\mathbf{l}}(F|_C)\oplus \mathbbl{\Gamma}^{\mathbf{l}}(F; C) &\to \mathbbl{\Gamma}^{\mathbf{l}}(F)\text{,}
\end{align*}
where $\mathbf{l} \subseteq \N_0$ is any strictly monotone increasing sequence. \\
This shows that there is an $\overline{\text{sc}}$-splitting
\[
\Gamma(F) \cong \Gamma(F|_C)\oplus \Gamma(F;C)
\]
using the $\overline{\text{sc}}$-structures on $\Gamma(F) = W(F)$, etc., defined via chains of continuously differentiable sections.
\end{example}

\begin{lemma}
$\mathbf{scFrechet}^{\mathds{k}}$ is an additive category.
\end{lemma}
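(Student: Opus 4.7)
The plan is to assemble the additive-category axioms from the structural results already proved: preadditivity, existence of a zero object, and existence of finite biproducts. I would proceed in exactly this order, with the bulk of the verification already contained in the earlier lemmas and definitions.

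For preadditivity, \cref{Lemma_Morphisms_of_sc_Frechet_spaces}, items~5 and~6, shows that the hom-set $\Mor(E,E')$ is closed under $\mathds{k}$-linear combinations inside $L_{\mathrm{c}}(E,E')$ and under composition, so it inherits a $\mathds{k}$-vector-space (in particular abelian group) structure; bilinearity of composition is inherited from the ambient continuous linear operators between Fr{\'e}chet spaces via the faithful underlying functor $\mathbf{U}$. For a zero object I would take the trivial vector space $\{0\}$, equipped with the canonical $\overline{\text{sc}}$-structure from \cref{Example_Finite_dimensional_sc_Frechet_space}. The unique continuous linear maps into and out of $\{0\}$ are trivially morphisms (extend by the identically zero continuous linear operator on any compatible sc-chain), so $\{0\}$ is simultaneously initial and terminal.

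For finite biproducts I would verify that the construction of \cref{Definition_Sum_of_scFrechet_spaces} realises $E^1\oplus\cdots\oplus E^k$ as a biproduct. After choosing compatible $\overline{\text{sc}}$-structures $(\mathbb{E}^i,\phi^i)$ on each $E^i$, the projection and injection morphisms $P^i$ and $J^i$ of \cref{Definition_Sum_of_scFrechet_spaces} are precisely the images under $\mathbf{U}_\infty$ of the sc-chain-level projections $\mathbb{P}^i$ and injections $\mathbb{J}^i$ from \cref{Definition_Subchains_and_sums}, conjugated by the homeomorphisms $\phi^i$ and $\phi = \phi^1\times\cdots\times\phi^k$; by \cref{Lemma_Morphisms_of_sc_Frechet_spaces}, item~1, they are therefore morphisms of $\overline{\text{sc}}$-Fr{\'e}chet spaces. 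The biproduct identities
\[
P^i\circ J^i = \id_{E^i}, \qquad P^i\circ J^j = 0 \;\;(i\neq j), \qquad \sum_{i=1}^k J^i\circ P^i = \id_{E^1\oplus\cdots\oplus E^k}
\]
then hold because they already hold at the level of the underlying continuous linear maps, where they are tautological; by the standard characterisation of biproducts in a preadditive category this gives the universal properties of both product and coproduct simultaneously.

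The only point requiring genuine verification is the well-definedness of the direct sum as an $\overline{\text{sc}}$-Fr{\'e}chet space — that is, independence of the resulting equivalence class of $\overline{\text{sc}}$-structures on $E^1\times\cdots\times E^k$ from the choices of compatible representatives $(\mathbb{E}^i,\phi^i)$. This was already asserted in \cref{Definition_Sum_of_scFrechet_spaces} and follows by observing that if $J^i : E^i_\infty \to \tilde E^i_\infty$ are equivalences between two compatible choices on each factor, then $J^1\oplus\cdots\oplus J^k$ is a continuous linear operator $\mathbb{E}^1\oplus\cdots\oplus\mathbb{E}^k \to \tilde{\mathbb{E}}^1\oplus\cdots\oplus\tilde{\mathbb{E}}^k$ (by the universal property of biproducts in $\mathbf{scChains}^{\mathds{k}}$, or by direct inspection) whose underlying map on $(\cdot)_\infty$ is again a weak embedding with weak-embedding inverse $K^1\oplus\cdots\oplus K^k$. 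I do not anticipate any serious obstacle here: the whole additive structure is essentially lifted from the already additive category $\mathbf{scChains}^{\mathds{k}}$ through the equivalence-class quotient, and the main work is the mechanical checking that the quotient is compatible with the biproduct construction.
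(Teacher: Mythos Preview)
Your proposal is correct and follows essentially the same approach as the paper, which simply states that this is a straightforward verification of the axioms showing that the biproduct from \cref{Definition_Sum_of_scFrechet_spaces} is indeed a biproduct. Your argument spells out in detail exactly the checks the paper leaves implicit.
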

\begin{proof}
This is a straightforward verification of the axioms, showing that the biproduct from \cref{Definition_Sum_of_scFrechet_spaces} is indeed a biproduct.
\end{proof}

\begin{definition}
Let $E$ be an $\overline{\text{sc}}$-Fr{\'e}chet space.
An $\overline{\text{sc}}$-subspace $E'$ of $E$ is said to \emph{split} if there exists another $\overline{\text{sc}}$-subspace $E''$ of $E$ \st the canonical morphism
\begin{align*}
J : E' \oplus E'' &\to E \\
(e,e') &\mapsto e + e'
\end{align*}
is an isomorphism.
\end{definition}

\begin{example}\label{Example_Finite_dimensional_split_sc_subspace}
Let $E$ be an $\overline{\text{sc}}$-Fr{\'e}chet space and let $C \subseteq E$ be a finite dimensional subspace (as vector spaces).
Then $C$ is a split $\overline{\text{sc}}$-subspace.
This follows immediately from \cref{Example_Finite_dimensional_split_subchain}.
\end{example}

\Needspace{25\baselineskip}
\subsection{(Strongly) smoothing operators}

\begin{definition}\label{Definition_Compact_operator}
Let $E$ and $E'$ be Hausdorff locally convex topological vector spaces and let $K : E \to E'$ be a continuous linear operator.
$K$ is called \emph{compact} if there exists a neighbourhood $U\subseteq E$ of $0$ \st the closure $\overline{K(U)}$ of $K(U)$ in $E'$ is compact.
\end{definition}

\begin{remark}
If $K : E \to E'$ is a compact linear operator and $A \subseteq E$ is a bounded subset, then $\overline{K(A)} \subseteq E'$ is compact. \\
For by definition of $K$ compact there exists a neighbourhood $U \subseteq E$ of $0$ \st $\overline{K(U)} \subseteq E'$ is compact and by definition of $A$ bounded there exists $c \in \mathds{k}$ \st $A \subseteq cU$.
Hence $K(A) \subseteq K(cU) = cK(U)$ and $\overline{cK(U)} = c\overline{K(U)}$ is compact.
So $\overline{K(A)} \subseteq c\overline{K(U)}$ is compact as a closed subset of a compact set.
\end{remark}

\begin{lemma}\label{Lemma_Compact_operators}
Let $E$, $E'$ and $E''$ be Fr{\'e}chet spaces and let $K,K' : E \to E'$, $S : E'' \to E$ and $T : E \to E''$ be continuous linear operators with $K$ and $K'$ compact.
Let furthermore $\lambda,\mu \in \mathds{k}$.
Then the following operators are compact:
\begin{enumerate}[label=\arabic*.,ref=\arabic*.]
  \item $K\circ S : E'' \to E'$.
  \item $T \circ K : E \to E''$.
  \item $\lambda K + \mu K' : E \to E'$.
\end{enumerate}
\end{lemma}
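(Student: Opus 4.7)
The plan is to proceed directly from \cref{Definition_Compact_operator}: for each of the three operators it suffices to exhibit a neighbourhood of $0$ in the domain whose image has compact closure in the target, using that Fr{\'e}chet spaces are Hausdorff, so that closed subsets of compact subsets are again compact.

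For $K\circ S$, since $K$ is compact there exists a neighbourhood $U\subseteq E$ of $0$ with $\overline{K(U)}\subseteq E'$ compact. By continuity of $S$, $V \definedas S\inv(U) \subseteq E''$ is a neighbourhood of $0$, and $(K\circ S)(V) \subseteq K(U) \subseteq \overline{K(U)}$. Hence $\overline{(K\circ S)(V)}$ is a closed subset of the compact Hausdorff set $\overline{K(U)}$ and therefore compact.

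For $T\circ K$, pick the same $U$ as above. Then $T(\overline{K(U)})\subseteq E''$ is compact as the continuous image of a compact set, in particular closed in $E''$. Since $(T\circ K)(U) = T(K(U)) \subseteq T(\overline{K(U)})$, taking closures gives $\overline{(T\circ K)(U)} \subseteq T(\overline{K(U)})$, so the left hand side is compact as a closed subset of a compact Hausdorff set.

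For $\lambda K + \mu K'$, let $U_1, U_2\subseteq E$ be neighbourhoods of $0$ with $\overline{K(U_1)}$ and $\overline{K'(U_2)}$ compact, and set $U \definedas U_1\cap U_2$. Then $\overline{K(U)}$ and $\overline{K'(U)}$ are still compact, and the continuous map
\[
A : E'\times E' \to E', \quad (x,y) \mapsto \lambda x + \mu y
\]
sends the compact set $\overline{K(U)}\times \overline{K'(U)}$ to a compact (hence closed) subset $C \subseteq E'$. Since $(\lambda K + \mu K')(U) \subseteq A\bigl(K(U)\times K'(U)\bigr) \subseteq C$, it follows that $\overline{(\lambda K + \mu K')(U)}\subseteq C$ is compact. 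The only thing to be careful about throughout is the Hausdorff property, which guarantees in each step that closed subsets of compact sets are compact; this is automatic since Fr{\'e}chet spaces are Hausdorff by definition.
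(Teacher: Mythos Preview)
Your proof is correct and is the standard direct argument from the definition. The paper does not actually prove this lemma but simply refers the reader to \cite{MR3154940}, Chapter 5, Exercises 26 and 27; what you have written is essentially a worked solution to those exercises, so there is no substantive difference in approach to compare.

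One minor remark: the statement as printed has $T : E \to E''$, which does not type-check with $K : E \to E'$ in the composition $T\circ K$; you have silently (and correctly) read it as $T : E' \to E''$, which is the intended meaning given how the lemma is used later in the paper.
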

\begin{proof}
See \cite{MR3154940}, Chapter 5, Exercises 26 and 27.
\end{proof}

\begin{definition}
Let $\mathbb{E}$ and $\mathbb{E}'$ be sc-chains. \\
A continuous linear operator $\mathbb{K} : \mathbb{E} \to \mathbb{E}'$ is called
\begin{enumerate}[label=\arabic*.,ref=\arabic*.]
  \item \emph{smoothing} if there exists a strictly monotone increasing sequence $\mathbf{k} \subseteq \N_0$ and a continuous linear operator $\tilde{\mathbb{K}} : \mathbb{E} \to \mathbb{E}'^{\mathbf{k}}$ \st $\mathbf{k} > \mathbf{id}$ and $\mathbb{K} = \mathbb{I}'^{\mathbf{k}} \circ \tilde{\mathbb{K}}$.
  \item \emph{strongly smoothing} if for all strictly monotone increasing sequences $\mathbf{k} \subseteq \N_0$ there exists a continuous linear operator $\tilde{\mathbb{K}}_{\mathbf{k}} : \mathbb{E} \to \mathbb{E}'^{\mathbf{k}}$ \st $\mathbb{K} = \mathbb{I}'^{\mathbf{k}}\circ \tilde{\mathbb{K}}_{\mathbf{k}}$.
\end{enumerate}
\end{definition}

\begin{example}
Let $\mathbb{E}$ be an sc-chain and let $\mathbf{k} \subseteq \N_0$ be strictly monotone increasing sequences.
If $\mathbf{k} > \mathbf{id}$, then $\mathbb{I}^{\mathbf{k}} : \mathbb{E}^{\mathbf{k}} \to \mathbb{E}$ is smoothing but not strongly smoothing.
\end{example}

\begin{remark}\label{Remark_Smoothing_factors_through_Eone}
Note that a continuous linear operator $\mathbb{K} : \mathbb{E} \to \mathbb{E}'$ is smoothing \iff there exists a continuous linear operator $\tilde{\mathbb{K}} : \mathbb{E} \to \mathbb{E}'^{\mathbf{1}}$, where $\mathbf{1} \definedas (k+1)_{k\in\N_0}$.
Or in other words, $K_j = \iota'_j\circ \overline{K}_j$, for a continuous linear operator $\overline{K}_j : E_j \to E'_{j+1}$, for all $j\in\N_0$. \\
For one direction is trivial and in the other direction, $\mathbf{k} > \mathbf{id}$ means precisely that $\mathbf{k} \geq \mathbf{1}$.
So given $\tilde{\mathbb{K}} : \mathbb{E} \to \mathbb{E}'^{\mathbf{k}}$, define $\overline{\mathbb{K}} \definedas \mathbb{I}'^{\mathbf{k}}_{\mathbf{1}} : \mathbb{E} \to \mathbb{E}'^{\mathbf{1}}$.
\end{remark}

\begin{lemma}\label{Lemma_Characterisation_strongly_smoothing}
Let $\mathbb{E}$ and $\mathbb{E}'$ be sc-chains and let $\mathbb{K} : \mathbb{E} \to \mathbb{E}'$ be a continuous linear operator.
The following are equivalent:
\begin{enumerate}[label=\arabic*.,ref=\arabic*.]
  \item\label{Lemma_Characterisation_strongly_smoothing_1} $\mathbb{K}$ is strongly smoothing.
  \item\label{Lemma_Characterisation_strongly_smoothing_1b} For every shift $\mathbf{k} \subseteq \N_0$ there exists a continuous linear operator $\tilde{\mathbb{K}}_{\mathbf{k}} : \mathbb{E} \to \mathbb{E}'^{\mathbf{k}}$ \st $\mathbb{K} = \mathbb{I}'^{\mathbf{k}}\circ \tilde{\mathbb{K}}_{\mathbf{k}}$.
  \item\label{Lemma_Characterisation_strongly_smoothing_2} There exists a continuous linear operator $\overline{K}_0 : E_0 \to E'_\infty$ \st
\[
K_j = \iota'^\infty_j\circ \overline{K}_0\circ \iota^j_0\quad\forall\, j \in \N_0\cup\{\infty\}\text{.}
\]
  \item\label{Lemma_Characterisation_strongly_smoothing_3} $K_\infty : \left(E_\infty, \|\cdot\|_0|_{E_\infty}\right) \to E'_\infty$ is continuous, \ie $K_\infty : \left(E_\infty, \|\cdot\|_0|_{E_\infty}\right) \to \left(E'_\infty, \|\cdot\|'_j|_{E'_\infty}\right)$ is continuous for all $j\in\N_0$.
\end{enumerate}
In particular there is a $1$---$1$ correspondence between strongly smoothing operators $\mathbb{K} : \mathbb{E} \to \mathbb{E}'$ and continuous linear operators $\overline{K}_0 : E_0 \to E'_\infty$.
\end{lemma}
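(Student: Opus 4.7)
The plan is to establish $(1)\Rightarrow(2)\Rightarrow(3)\Rightarrow(4)\Rightarrow(3)\Rightarrow(1)$, using $(3)$ as the pivot. The implication $(1)\Rightarrow(2)$ is immediate, as every shift is in particular a strictly monotone increasing sequence.

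For $(2)\Rightarrow(3)$, I would apply $(2)$ to each of the shifts $\mathbf{k}^{(n)}\definedas(n+j)_{j\in\N_0}$, obtaining in particular continuous linear operators $\tilde{K}^{(n)}_0 : E_0 \to E'_n$ with $K_0 = \iota'^n_0 \circ \tilde{K}^{(n)}_0$. Injectivity of the inclusions forces $K_0(E_0)\subseteq\bigcap_n \iota'^n_0(E'_n) = \iota'^\infty_0(E'_\infty)$ inside $E'_0$, so there is a unique lift $\overline{K}_0 : E_0 \to E'_\infty$ with $\iota'^\infty_0\circ\overline{K}_0 = K_0$. Continuity of $\overline{K}_0$ into the Fr{\'e}chet space $E'_\infty$ reduces to continuity of each $\iota'^\infty_n\circ \overline{K}_0 = \tilde{K}^{(n)}_0 : E_0 \to E'_n$. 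The identity $K_j = \iota'^\infty_j\circ\overline{K}_0\circ\iota^j_0$ for $j\in\N_0\cup\{\infty\}$ then follows by applying $\iota'^j_0$ and invoking injectivity together with the standard commutation $\iota'^j_0\circ K_j = K_0\circ\iota^j_0$. The implication $(3)\Rightarrow(4)$ is a one-liner: the same injectivity argument gives $K_\infty = \overline{K}_0\circ\iota^\infty_0$, and the inclusion $\iota^\infty_0 : (E_\infty,\|\cdot\|_0|_{E_\infty})\hookrightarrow(E_0,\|\cdot\|_0)$ is isometric.

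The main work lies in $(4)\Rightarrow(3)$. For every $j\in\N_0$ the composition $\iota'^\infty_j\circ K_\infty : (E_\infty,\|\cdot\|_0|_{E_\infty})\to E'_j$ is continuous, and since $E_\infty$ is dense in $(E_0,\|\cdot\|_0)$ while $(E'_j,\|\cdot\|'_j)$ is complete, it extends uniquely to a continuous linear operator $\overline{K}^{(j)}_0 : E_0 \to E'_j$. The delicate point is to assemble these maps into a single target $E'_\infty$ rather than the individual $E'_j$: uniqueness of extension forces $\iota'^j_0\circ\overline{K}^{(j)}_0 = \overline{K}^{(0)}_0$ for every $j$, hence $\overline{K}^{(0)}_0(E_0)\subseteq\bigcap_j \iota'^j_0(E'_j) = \iota'^\infty_0(E'_\infty)$, and injectivity of $\iota'^\infty_0$ produces the required continuous $\overline{K}_0 : E_0\to E'_\infty$ satisfying $\iota'^\infty_j\circ\overline{K}_0 = \overline{K}^{(j)}_0$ for all $j$. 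The identity $K_j = \iota'^\infty_j\circ\overline{K}_0\circ\iota^j_0$ then holds on the dense subspace $E_\infty\subseteq E_j$ by construction and everywhere by continuity.

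To close the cycle I would prove $(3)\Rightarrow(1)$ directly: given a strictly monotone increasing sequence $\mathbf{k}$, set $\tilde{K}^{(\mathbf{k})}_j \definedas \iota'^\infty_{k_j}\circ\overline{K}_0\circ\iota^j_0 : E_j\to E'_{k_j}$. The factorization $K_j = \iota'^{k_j}_j\circ\tilde{K}^{(\mathbf{k})}_j$ follows from $(3)$ via $\iota'^{k_j}_j\circ\iota'^\infty_{k_j} = \iota'^\infty_j$, and the sc-chain commutation $\tilde{K}^{(\mathbf{k})}_j\circ\iota_j = \iota'^{k_{j+1}}_{k_j}\circ\tilde{K}^{(\mathbf{k})}_{j+1}$ reduces to the identities $\iota^j_0\circ\iota_j = \iota^{j+1}_0$ and $\iota'^{k_{j+1}}_{k_j}\circ\iota'^\infty_{k_{j+1}} = \iota'^\infty_{k_j}$. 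The final bijection clause falls out: $\mathbb{K}\mapsto\overline{K}_0$ is injective by the defining relation $\iota'^\infty_0\circ\overline{K}_0 = K_0$ and surjective because any continuous $\overline{K}_0 : E_0\to E'_\infty$ defines an sc-chain morphism $\mathbb{K}$ via $K_j\definedas\iota'^\infty_j\circ\overline{K}_0\circ\iota^j_0$, which by construction is strongly smoothing.
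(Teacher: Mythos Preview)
Your proof is correct, and the overall implication scheme is essentially equivalent to the paper's cycle $(1)\Rightarrow(2)\Rightarrow(4)\Rightarrow(3)\Rightarrow(1)$; you simply add an extra direct arrow $(2)\Rightarrow(3)$ and then loop $(3)\Leftrightarrow(4)$ separately. The one genuinely different step is $(4)\Rightarrow(3)$: the paper chooses constants $C_j\geq\max\{1,D_j\}$ (where $D_j$ is the operator norm of $K_\infty:(E_\infty,\|\cdot\|_0)\to(E'_\infty,\|\cdot\|'_j)$) to build a complete metric $d(x,y)=\sum_j C_j^{-1}2^{-j}\min\{1,\|x-y\|'_j\}$ on $E'_\infty$ with respect to which $K_\infty$ is globally Lipschitz, and then extends in one shot by completeness. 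Your argument instead extends each $\iota'^\infty_j\circ K_\infty$ separately into the Banach space $E'_j$ and reassembles via the inverse-limit description of $E'_\infty$. Both are valid; the paper's metric trick is a single clean extension, while your level-wise argument is more elementary and makes the role of the inverse-limit topology more explicit.
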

\begin{proof}
I will show \labelcref{Lemma_Characterisation_strongly_smoothing_1}$\;\Rightarrow\;$\labelcref{Lemma_Characterisation_strongly_smoothing_1b}$\;\Rightarrow\;$\labelcref{Lemma_Characterisation_strongly_smoothing_3}$\;\Rightarrow\;$\labelcref{Lemma_Characterisation_strongly_smoothing_2}$\;\Rightarrow\;$\labelcref{Lemma_Characterisation_strongly_smoothing_1}.
\begin{enumerate}[label=\arabic*.,ref=\arabic*.]
  \item[\labelcref{Lemma_Characterisation_strongly_smoothing_1} $\Rightarrow$ \labelcref{Lemma_Characterisation_strongly_smoothing_1b}:] Clear.
  \item[\labelcref{Lemma_Characterisation_strongly_smoothing_1b} $\Rightarrow$ \labelcref{Lemma_Characterisation_strongly_smoothing_3}:] Given $j\in\N_0$, let $\mathbf{j} \definedas (k + j)_{k\in\N_0} \subseteq \N_0$.
By assumption there exists a continuous linear operator $\tilde{\mathbb{K}}_{\mathbf{j}} : \mathbb{E} \to \mathbb{E}'^{\mathbf{j}}$ \st $\mathbb{K} = \mathbb{I}'^{\mathbf{j}}\circ \tilde{\mathbb{K}}_{\mathbf{j}}$.
Then $K_\infty = (\tilde{K}_{\mathbf{j}})_\infty$ and $(\tilde{K}_{\mathbf{j}})_\infty = (\tilde{K}_{\mathbf{j}})_0|_{E_\infty}$.
Now $(\tilde{K}_{\mathbf{j}})_0 : E_0 \to E'^{\mathbf{j}}_0 = E'_j$ is continuous and hence $K_\infty : \left(E_\infty, \|\cdot\|_0|_{E_\infty}\right) \to \left(E'_\infty, \|\cdot\|'_j|_{E'_\infty}\right)$ is continuous.
  \item[\labelcref{Lemma_Characterisation_strongly_smoothing_3} $\Rightarrow$ \labelcref{Lemma_Characterisation_strongly_smoothing_2}:] For each $j\in\N_0$ let $D_j \in (0,\infty)$ be the operator norm of $K_\infty : \left(E_\infty, \|\cdot\|_0|_{E_\infty}\right) \to \left(E'_\infty, \|\cdot\|'_j|_{E'_\infty}\right)$ and set $C_j \definedas \max\{1,D_j\} \in [1,\infty)$.
Then
\begin{align*}
d : E'_\infty \times E'_\infty &\to [0,\infty) \\
(x,y) &\mapsto \sum\limits_{j=0}^\infty \frac{1}{C_j2^j}\min\{1, \|x-y\|'_j\}
\end{align*}
is a complete metric on $E'_\infty$ inducing the given topology on $E'_\infty$.
And for $x, y \in E_\infty$,
\begin{align*}
d(K_\infty x, K_\infty y) &= \sum_{j=0}^\infty \frac{1}{C_j2^j}\min\{1, \|K_\infty(x-y)\|'_j\} \\
&\leq \sum_{j=0}^\infty \frac{1}{C_j2^j}\min\{1, D_j\|x-y\|_0\} \\
&\leq \sum_{j=0}^\infty \frac{D_j}{C_j2^j}\|x-y\|_0 \\
&\leq 2\|x-y\|_0\text{.}
\end{align*}
So $K_\infty : \left(E_\infty, \|\cdot\|_0|_{E_\infty}\right) \to \left(E'_\infty, d\right)$ is Lipschitz and hence has a unique continuous completion $\overline{K}_0 : E_0 \to E'_\infty$.
It is straightforward to see (by the usual density argument) that this $\overline{K}_0$ satisfies $2$.
  \item[\labelcref{Lemma_Characterisation_strongly_smoothing_2} $\Rightarrow$ \labelcref{Lemma_Characterisation_strongly_smoothing_1}:] Given a strictly monotone increasing sequence $\mathbf{k} \subseteq \N_0$, define $\tilde{K}_j \definedas \iota^\infty_{k_j}\circ \overline{K}_0\circ \iota^j_0 : E_j \to E'_{k_j} = E^{\mathbf{k}}_j$.
The operator $\tilde{\mathbb{K}}_{\mathbf{k}} : \mathbb{E} \to \mathbb{E}'^{\mathbf{k}}$ defined by the $\tilde{K}_j$ then has the required properties.
\end{enumerate}
\end{proof}

\begin{corollary}\label{Corollary_Characterisation_strongly_smoothing_I}
Let $\mathbb{E}$ and $\mathbb{E}'$ be sc-chains and let $\mathbf{k} \subseteq \N_0$ be strictly monotone increasing sequences with $k_0 = 0$.
If $\mathbb{K} : \mathbb{E}^{\mathbf{k}} \to \mathbb{E}'$ is a strongly smoothing continuous linear operator, then there exists a strongly smoothing continuous linear operator $\tilde{\mathbb{K}} : \mathbb{E} \to \mathbb{E}'$ \st $\mathbb{K} = \mathbb{I}'^{\mathbf{k}}\circ \tilde{\mathbb{K}}^{\mathbf{k}}$.
\end{corollary}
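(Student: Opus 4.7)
The plan is to extract the ``germ'' of $\mathbb{K}$ at level $0$ and use the one-to-one correspondence from \cref{Lemma_Characterisation_strongly_smoothing} to manufacture $\tilde{\mathbb{K}}$. The crucial observation is that because $k_0 = 0$, the sc-chains $\mathbb{E}^{\mathbf{k}}$ and $\mathbb{E}$ have the \emph{same} zeroth Banach space: $E^{\mathbf{k}}_0 = E_{k_0} = E_0$ (with the same norm). Consequently any continuous linear operator out of $E_0$ into $E'_\infty$ serves simultaneously as the ``generator'' of a strongly smoothing operator on either chain.

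Concretely, I would first apply \cref{Lemma_Characterisation_strongly_smoothing} to the strongly smoothing operator $\mathbb{K} : \mathbb{E}^{\mathbf{k}} \to \mathbb{E}'$ to obtain a continuous linear operator
\[
\overline{K}_0 : E^{\mathbf{k}}_0 = E_0 \longrightarrow E'_\infty
\]
satisfying $K_j = \iota'^\infty_j \circ \overline{K}_0 \circ (\iota^{\mathbf{k}})^j_0 = \iota'^\infty_j \circ \overline{K}_0 \circ \iota^{k_j}_0$ for all $j \in \N_0 \cup \{\infty\}$. Next I would feed the \emph{same} operator $\overline{K}_0 : E_0 \to E'_\infty$ back into \cref{Lemma_Characterisation_strongly_smoothing}, this time for the chain $\mathbb{E}$, to produce a strongly smoothing continuous linear operator $\tilde{\mathbb{K}} : \mathbb{E} \to \mathbb{E}'$ determined by
\[
\tilde{K}_j = \iota'^\infty_j \circ \overline{K}_0 \circ \iota^j_0 \quad \forall\, j \in \N_0 \cup \{\infty\}.
\]

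It remains to verify the identity $\mathbb{K} = \mathbb{I}'^{\mathbf{k}} \circ \tilde{\mathbb{K}}^{\mathbf{k}}$; note that $\mathbf{k} \geq \mathbf{id}$ since $\mathbf{k}$ is strictly monotone with $k_0 = 0$, so $\mathbb{I}'^{\mathbf{k}}$ is defined. At level $j$ one computes
\[
\bigl(\mathbb{I}'^{\mathbf{k}}\circ \tilde{\mathbb{K}}^{\mathbf{k}}\bigr)_j = \iota'^{k_j}_j \circ \tilde{K}_{k_j} = \iota'^{k_j}_j \circ \iota'^\infty_{k_j}\circ \overline{K}_0 \circ \iota^{k_j}_0 = \iota'^\infty_j \circ \overline{K}_0 \circ \iota^{k_j}_0 = K_j,
\]
which establishes the desired equality.

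The proof is essentially bookkeeping; the only thing that could go wrong is a mismatch of zeroth spaces, and that is precisely what the hypothesis $k_0 = 0$ rules out. No nontrivial obstacle is expected.
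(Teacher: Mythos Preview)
Your proof is correct and is exactly the argument the paper intends: the corollary is stated without proof immediately after \cref{Lemma_Characterisation_strongly_smoothing}, and the $1$--$1$ correspondence there, together with $E^{\mathbf{k}}_0 = E_0$ (from $k_0 = 0$), yields $\tilde{\mathbb{K}}$ precisely as you describe. Your level-$j$ verification of $\mathbb{K} = \mathbb{I}'^{\mathbf{k}}\circ \tilde{\mathbb{K}}^{\mathbf{k}}$ is clean and correct.
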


\begin{corollary}\label{Corollary_Strongly_smoothing_implies_compact}
Let $\mathbb{E}$ and $\mathbb{E}'$ be sc-chains and let $\mathbb{K} : \mathbb{E} \to \mathbb{E}'$ be a continuous linear operator.
If $\mathbb{K}$ is strongly smoothing, then $K_\infty : E_\infty \to E'_\infty$ is a compact operator.
\end{corollary}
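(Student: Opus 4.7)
The plan is to invoke \cref{Lemma_Characterisation_strongly_smoothing} to replace $\mathbb{K}$ being strongly smoothing by the concrete data of a continuous linear operator $\overline{K}_0 : E_0 \to E'_\infty$ satisfying $K_\infty = \overline{K}_0 \circ \iota^\infty_0$, and then factor $\iota^\infty_0$ through $E_1$ in order to exploit the compactness of $\iota_0 : E_1 \hookrightarrow E_0$, which is available because $\mathbb{E}$ is an sc-chain (not just an ILB-chain).

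More precisely, the factorisation $\iota^\infty_0 = \iota_0 \circ \iota^\infty_1$ gives
\[
K_\infty = \overline{K}_0 \circ \iota_0 \circ \iota^\infty_1 : E_\infty \longrightarrow E_1 \longrightarrow E_0 \longrightarrow E'_\infty\text{.}
\]
Then I would choose the neighbourhood $U \definedas \bigl(\iota^\infty_1\bigr)\inv\bigl(B^{E_1}(0,1)\bigr) \subseteq E_\infty$ of $0$, which is open by continuity of $\iota^\infty_1$. By construction $\iota^\infty_1(U) \subseteq E_1$ is bounded, hence by the remark after \cref{Definition_Compact_operator} together with compactness of $\iota_0$, the closure $K \definedas \overline{\iota_0(\iota^\infty_1(U))} \subseteq E_0$ is compact. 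Then $\overline{K}_0(K) \subseteq E'_\infty$ is compact as the continuous image of a compact set, and $K_\infty(U) \subseteq \overline{K}_0(K)$, so $\overline{K_\infty(U)}$ is compact as a closed subset of a compact set. This is exactly the defining property of a compact operator $E_\infty \to E'_\infty$.

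I do not expect a genuine obstacle here; the only thing to be careful about is that the definition of compact operator in \cref{Definition_Compact_operator} requires compactness of $\overline{K(U)}$ in the target topology (that is, the Fr{\'e}chet topology on $E'_\infty$), not merely in some $E'_j$. This is handled automatically because $\overline{K}_0$ lands in $E'_\infty$ with the Fr{\'e}chet topology by the third characterisation in \cref{Lemma_Characterisation_strongly_smoothing}, so its continuous image of the compact set $K$ is already compact in $E'_\infty$.
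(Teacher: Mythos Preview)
Your proof is correct and follows the same line the paper intends: the corollary is stated without proof because it is immediate from \cref{Lemma_Characterisation_strongly_smoothing}, which gives the factorisation $K_\infty = \overline{K}_0 \circ \iota_0 \circ \iota^\infty_1$ through the compact inclusion $\iota_0$. You could shorten your argument by simply invoking \cref{Lemma_Compact_operators} for the composition of a compact operator with continuous linear operators on either side, rather than verifying the defining property of compactness by hand, but the content is the same.
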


\begin{example}
Let $\mathbb{E}$ be an sc-chain.
$\id_{\mathbb{E}} : \mathbb{E} \to \mathbb{E}$ is strongly smoothing \iff $\mathbb{E}$ is the constant sc-chain on a finite dimensional vector space $E_\infty$.
\end{example}

\begin{remark}
The analogous statement of the previous corollary for smoothing instead of strongly smoothing operators is evidently false, for the operator $\mathbb{I}^{\mathbf{1}} : \mathbb{E}^{\mathbf{1}} \to \mathbb{E}$ is smoothing, but $I^{\mathbf{1}}_\infty : E^{\mathbf{1}}_\infty = E_\infty \to E_\infty$ is the identity, which is compact only if $\dim E_\infty < \infty$. \\
This is one of the reasons the notion of a smoothing operator does not transfer well to weak morphisms and $\overline{\text{sc}}$-Fr{\'e}chet spaces (the operators $\mathbb{I}^{\mathbf{k}}$ all induce the identity on the $\overline{\text{sc}}$-Fr{\'e}chet space $E_\infty$). \\
Another reason is that if $\mathbb{K} : \mathbb{E} \to \mathbb{E}'$ is smoothing and $\mathbf{k} \subseteq \N_0$ is a strictly monotone increasing sequence, then $\mathbb{K}^{\mathbf{k}}$ need not be smoothing.
The analogous statement for strongly smoothing on the other hand does hold, as is shown in the following lemma.
\end{remark}

\begin{lemma}\label{Lemma_Composition_smoothing_operators}
Let $\mathbb{E}$, $\mathbb{E}'$ and $\mathbb{E}''$ be sc-chains and let $\mathbb{K}, \mathbb{K}' : \mathbb{E} \to \mathbb{E}'$, $\mathbb{S} : \mathbb{E}'' \to \mathbb{E}$ and $\mathbb{T} : \mathbb{E}' \to \mathbb{E}''$ be continuous linear operators.
Let furthermore $\lambda,\mu \in \mathds{k}$ and let $\mathbf{l} \subseteq \N_0$ be a strictly monotone increasing sequence.
\begin{enumerate}[label=\arabic*.,ref=\arabic*.]
  \item If $\mathbb{K}$ and $\mathbb{K}'$ are smoothing, then so are
\begin{enumerate}[label=(\alph*),ref=(\alph*)]
  \item $\lambda\mathbb{K} + \mu\mathbb{K}' : \mathbb{E} \to \mathbb{E}'$,
  \item $\mathbb{K}\circ \mathbb{S} : \mathbb{E}'' \to \mathbb{E}'$ and
  \item $\mathbb{T}\circ \mathbb{K} : \mathbb{E} \to \mathbb{E}''$.
\end{enumerate}
  \item If $\mathbb{K}$ and $\mathbb{K}'$ are strongly smoothing, then so are
\begin{enumerate}[label=(\alph*),ref=(\alph*)]
  \item $\lambda\mathbb{K} + \mu\mathbb{K}' : \mathbb{E} \to \mathbb{E}'$,
  \item $\mathbb{K}\circ \mathbb{S} : \mathbb{E}'' \to \mathbb{E}'$,
  \item $\mathbb{T}\circ \mathbb{K} : \mathbb{E} \to \mathbb{E}''$ and
  \item $\mathbb{K}^{\mathbf{l}} : \mathbb{E}^{\mathbf{l}} \to \mathbb{E}'^{\mathbf{l}}$.
 \end{enumerate}
\end{enumerate}
\end{lemma}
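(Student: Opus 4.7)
The plan is to reduce everything to the factorization characterizations already established. For part~1 I will use \cref{Remark_Smoothing_factors_through_Eone}, which says that $\mathbb{K} : \mathbb{E} \to \mathbb{E}'$ is smoothing iff there exist continuous linear $\overline{K}_j : E_j \to E'_{j+1}$ with $K_j = \iota'_j \circ \overline{K}_j$. For part~2 I will use \cref{Lemma_Characterisation_strongly_smoothing}, and in particular the $1$---$1$ correspondence between strongly smoothing $\mathbb{K}$ and continuous linear maps $\overline{K}_0 : E_0 \to E'_\infty$ with $K_j = \iota'^\infty_j \circ \overline{K}_0 \circ \iota^j_0$.

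For the smoothing case, the required lifts are constructed pointwise in $j$: take $\lambda \overline{K}_j + \mu \overline{K}'_j : E_j \to E'_{j+1}$ for the linear combination; take $\overline{K}_j \circ S_j : E''_j \to E'_{j+1}$ for $\mathbb{K} \circ \mathbb{S}$; and take $T_{j+1} \circ \overline{K}_j : E_j \to E''_{j+1}$ for $\mathbb{T} \circ \mathbb{K}$. The commutation relations $\iota'_j \circ K_{j+1} = K_j \circ \iota_j$ (and the analogous ones for $\mathbb{S}, \mathbb{T}$) together with $\iota'_j \circ \iota'_{j+1} = \iota'^{j+2}_j$ guarantee that these lifts assemble into continuous linear operators $\mathbb{E} \to \mathbb{E}'^{\mathbf{1}}$, $\mathbb{E}'' \to \mathbb{E}'^{\mathbf{1}}$, and $\mathbb{E} \to \mathbb{E}''^{\mathbf{1}}$, respectively.

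For the strongly smoothing case, three of the four constructions are entirely parallel, using $\overline{(\lambda K + \mu K')}_0 \definedas \lambda \overline{K}_0 + \mu \overline{K}'_0$, $\overline{(K \circ S)}_0 \definedas \overline{K}_0 \circ S_0 : E''_0 \to E'_\infty$, and $\overline{(T \circ K)}_0 \definedas T_\infty \circ \overline{K}_0 : E_0 \to E''_\infty$ (note that $T_\infty$ is continuous as the $\infty$-level of a continuous linear operator). Verification that these satisfy the defining identity of \cref{Lemma_Characterisation_strongly_smoothing}, \labelcref{Lemma_Characterisation_strongly_smoothing_2}~is a direct computation from the axioms of continuous linear operators. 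The only genuinely new point is the rescaling claim $\mathbb{K}^{\mathbf{l}}$ strongly smoothing. Here $(K^{\mathbf{l}})_\infty = K_\infty$ as a map $E^{\mathbf{l}}_\infty = E_\infty \to E'_\infty = (E'^{\mathbf{l}})_\infty$, but the source now carries the norm $\|\cdot\|^{\mathbf{l}}_0 = \|\cdot\|_{l_0}$. Iterating $\|\iota_k\|_{L_{\mathrm{c}}(E_{k+1}, E_k)} \leq 1$ from \cref{Definition_Chain}, \labelcref{Definition_Chain_1c}~gives $\|\cdot\|_0|_{E_\infty} \leq \|\cdot\|_{l_0}|_{E_\infty}$, so the identity $(E_\infty, \|\cdot\|_{l_0}|_{E_\infty}) \to (E_\infty, \|\cdot\|_0|_{E_\infty})$ is continuous; composing with the given continuous $K_\infty : (E_\infty, \|\cdot\|_0|_{E_\infty}) \to E'_\infty$ and invoking \cref{Lemma_Characterisation_strongly_smoothing}, \labelcref{Lemma_Characterisation_strongly_smoothing_3}~once more finishes the argument. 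I do not anticipate a real obstacle: the entire lemma is bookkeeping with the two factorization characterizations and the norm comparison above.
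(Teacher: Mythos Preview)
Your proof is correct and follows the same underlying strategy as the paper: reduce everything to the factorization characterizations of (strongly) smoothing operators. For parts~1 and~2(a)--(c) your level-by-level constructions via \cref{Remark_Smoothing_factors_through_Eone} and the $\overline{K}_0$ of \cref{Lemma_Characterisation_strongly_smoothing} are exactly the paper's chain-level factorizations $\mathbb{K} = \mathbb{I}'^{\mathbf{k}} \circ \tilde{\mathbb{K}}_{\mathbf{k}}$ unwound one index at a time, so there is no substantive difference.

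The one place where your argument genuinely diverges is~2(d). The paper verifies the \emph{definition} of strongly smoothing directly: given an arbitrary $\mathbf{k}$, it produces the required lift $\tilde{\mathbb{K}}^{\mathbf{l}}_{\mathbf{k}} \definedas \tilde{\mathbb{K}}_{\mathbf{l}\circ\mathbf{k}} \circ \mathbb{I}^{\mathbf{l}} : \mathbb{E}^{\mathbf{l}} \to (\mathbb{E}'^{\mathbf{l}})^{\mathbf{k}}$ by pulling from the strongly smoothing hypothesis the factorization through $\mathbb{E}'^{\mathbf{l}\circ\mathbf{k}} = (\mathbb{E}'^{\mathbf{l}})^{\mathbf{k}}$. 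You instead invoke the equivalent norm-continuity criterion \labelcref{Lemma_Characterisation_strongly_smoothing_3} of \cref{Lemma_Characterisation_strongly_smoothing}, observing that $(K^{\mathbf{l}})_\infty = K_\infty$, that $E'^{\mathbf{l}}_\infty = E'_\infty$ as Fr{\'e}chet spaces, and that $\|\cdot\|_0 \leq \|\cdot\|_{l_0}$ on $E_\infty$ makes the required continuity automatic. Your route is a bit slicker here---it avoids building the lift explicitly---while the paper's has the minor advantage of staying uniformly within the factorization language used for the other parts.
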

\begin{proof}
\begin{enumerate}[label=\arabic*.,ref=\arabic*.]
  \item Because $\mathbb{K}$ is smoothing, there exists a strictly monotone increasing sequence $\mathbf{k} \subseteq \N_0$ with $\mathbf{k} > \mathbf{id}$ and a continuous linear operator $\tilde{\mathbb{K}} : \mathbb{E} \to \mathbb{E}'^{\mathbf{k}}$ with $\mathbb{K} = \mathbb{I}'^{\mathbf{k}}\circ \tilde{\mathbb{K}}$.
\begin{enumerate}[label=(\alph*),ref=(\alph*)]
  \item Straightforward.
  \item $\mathbb{K}\circ \mathbb{S} = \mathbb{I}'^{\mathbf{k}}\circ \tilde{\mathbb{K}}\circ \mathbb{S} = \mathbb{I}'^{\mathbf{k}}\circ \overline{\mathbb{K}}$, where $\overline{\mathbb{K}} \definedas \tilde{\mathbb{K}}\circ \mathbb{S} : \mathbb{E}'' \to \mathbb{E}'^{\mathbf{k}}$.
So $\mathbb{K}\circ \mathbb{S}$ is smoothing.
  \item $\mathbb{T}\circ \mathbb{K} = \mathbb{T}\circ \mathbb{I}'^{\mathbf{k}}\circ \tilde{\mathbb{K}} = \mathbb{I}''^{\mathbf{k}}\circ \mathbb{T}^{\mathbf{k}}\circ \tilde{\mathbb{K}} = \mathbb{I}''^{\mathbf{k}}\circ \overline{\mathbb{K}}$, where $\overline{\mathbb{K}} \definedas \mathbb{T}^{\mathbf{k}}\circ \tilde{\mathbb{K}} : \mathbb{E} \to \mathbb{E}''^{\mathbf{k}}$.
So $\mathbb{T}\circ \mathbb{K}$ is smoothing.
\end{enumerate}
  \item Let $\mathbf{k} \subseteq \N_0$ be any strictly monotone increasing sequence.
Because $\mathbb{K}$ is strongly smoothing, there exists a continuous linear operator $\tilde{\mathbb{K}}_{\mathbf{k}} : \mathbb{E} \to \mathbb{E}'^{\mathbf{k}}$ with $\mathbb{K} = \mathbb{I}'^{\mathbf{k}}\circ \tilde{\mathbb{K}}_{\mathbf{k}}$.
\begin{enumerate}[label=(\alph*),ref=(\alph*)]
  \item Straightforward.
  \item $\mathbb{K}\circ \mathbb{S} = \mathbb{I}'^{\mathbf{k}}\circ \tilde{\mathbb{K}}_{\mathbf{k}}\circ \mathbb{S} = \mathbb{I}'^{\mathbf{k}}\circ \overline{\mathbb{K}}_{\mathbf{k}}$, where $\overline{\mathbb{K}}_{\mathbf{k}} \definedas \tilde{\mathbb{K}}\circ \mathbb{S} : \mathbb{E}'' \to \mathbb{E}'^{\mathbf{k}}$.
So $\mathbb{K}\circ \mathbb{S}$ is strongly smoothing.
  \item $\mathbb{T}\circ \mathbb{K} = \mathbb{T}\circ \mathbb{I}'^{\mathbf{k}}\circ \tilde{\mathbb{K}}_{\mathbf{k}} = \mathbb{I}''^{\mathbf{k}}\circ \mathbb{T}^{\mathbf{k}}\circ \tilde{\mathbb{K}}_{\mathbf{k}} = \mathbb{I}''^{\mathbf{k}}\circ \overline{\mathbb{K}}_{\mathbf{k}}$, where $\overline{\mathbb{K}}_{\mathbf{k}} \definedas \mathbb{T}^{\mathbf{k}}\circ \tilde{\mathbb{K}}_{\mathbf{k}} : \mathbb{E} \to \mathbb{E}''^{\mathbf{k}}$.
So $\mathbb{T}\circ \mathbb{K}$ is strongly smoothing.
  \item Because $\mathbb{K}$ is strongly smoothing, there exists a continuous linear operator $\tilde{\mathbb{K}}_{\mathbf{l}\circ \mathbf{k}} : \mathbb{E} \to \mathbb{E}'^{\mathbf{l}\circ \mathbf{k}} = \left(\mathbb{E}'^{\mathbf{l}}\right)^{\mathbf{k}}$ with $\mathbb{K} = \mathbb{I}'^{\mathbf{l}\circ \mathbf{k}}\circ \tilde{\mathbb{K}}_{\mathbf{l}\circ \mathbf{k}}$.
Define $\tilde{\mathbb{K}}^{\mathbf{l}}_{\mathbf{k}} \definedas \tilde{\mathbb{K}}_{\mathbf{l}\circ \mathbf{k}} \circ \mathbb{I}^{\mathbf{l}} : \mathbb{E}^{\mathbf{l}} \to \left(\mathbb{E}'^{\mathbf{l}}\right)^{\mathbf{k}}$.
Then $\mathbb{K}^{\mathbf{l}} = \mathbb{I}'^{\mathbf{l}\circ \mathbf{k}}_{\mathbf{l}}\circ \tilde{\mathbb{K}}^{\mathbf{l}}_{\mathbf{k}} : \mathbb{E}^{\mathbf{l}} \to \mathbb{E}'^{\mathbf{l}}$.
For if $\overline{\mathbb{K}} \definedas \mathbb{I}'^{\mathbf{l}\circ \mathbf{k}}_{\mathbf{l}}\circ \tilde{\mathbb{K}}^{\mathbf{l}}_{\mathbf{k}}$, then $\overline{K}_\infty = K^{\mathbf{l}}_\infty$.
\end{enumerate}
\end{enumerate}
\end{proof}

\begin{definition}
Let $\mathbb{E}$ and $\mathbb{E}'$ be sc-chains. \\
A weak morphism $K : E_\infty \to E'_\infty$ is called \emph{strongly smoothing} if it has a strongly smoothing extension $\mathbb{K} : \mathbb{E}^{\mathbf{k}} \to \mathbb{E}'^{\mathbf{l}}$ for some strictly monotone increasing sequences $\mathbf{k},\mathbf{l} \subseteq \N_0$.
\end{definition}

\begin{remark}\label{Remark_Strongly_smoothing_implies_compact_weak}
Note that by \cref{Corollary_Strongly_smoothing_implies_compact} a strongly smoothing weak morphism $K : E_\infty \to E'_\infty$ between sc-chains $\mathbb{E}$ and $\mathbb{E}'$ is a compact operator between the Fr{\'e}chet spaces $E_\infty$ and $E'_\infty$.
\end{remark}

\begin{lemma}\label{Lemma_Characterisation_strongly_smoothing_weak_morphisms}
Let $\mathbb{E}$ and $\mathbb{E}'$ be sc-chains and let $K : E_\infty \to E'_\infty$ be a weak morphism.
Then the following are equivalent:
\begin{enumerate}[label=\arabic*.,ref=\arabic*.]
  \item\label{Lemma_Characterisation_strongly_smoothing_weak_morphisms_1} $K$ is strongly smoothing.
  \item\label{Lemma_Characterisation_strongly_smoothing_weak_morphisms_2} $K$ has a strongly smoothing extension $\mathbb{K} : \mathbb{E}^{\mathbf{k}} \to \mathbb{E}'$ for some strictly monotone increasing sequence $\mathbf{k} \subseteq \N_0$.
  \item\label{Lemma_Characterisation_strongly_smoothing_weak_morphisms_3} For every extension $\mathbb{K} : \mathbb{E}^{\mathbf{k}} \to \mathbb{E}'^{\mathbf{l}}$ of $K$ there exists a shift $\mathbf{m} \subseteq \N_0$ \st the shifted extension $\mathbb{K}^{\mathbf{m}} : \mathbb{E}^{\mathbf{k}\circ \mathbf{m}} \to \mathbb{E}'^{\mathbf{l}\circ \mathbf{m}}$ of $K$ is strongly smoothing.
\end{enumerate}
\end{lemma}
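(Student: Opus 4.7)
My plan is to prove the cycle $(2)\Rightarrow(1)$, $(1)\Rightarrow(2)$, $(2)\Rightarrow(3)$, $(3)\Rightarrow(1)$, with only the step $(2)\Rightarrow(3)$ requiring any real work. For $(2)\Rightarrow(1)$ one simply takes $\mathbf{l}=\mathbf{id}$. For $(1)\Rightarrow(2)$, given a strongly smoothing extension $\mathbb{K}\colon\mathbb{E}^{\mathbf{k}}\to\mathbb{E}'^{\mathbf{l}}$, I would postcompose with $\mathbb{I}'^{\mathbf{l}}\colon\mathbb{E}'^{\mathbf{l}}\to\mathbb{E}'$; the resulting operator $\mathbb{I}'^{\mathbf{l}}\circ\mathbb{K}\colon\mathbb{E}^{\mathbf{k}}\to\mathbb{E}'$ is still strongly smoothing by \cref{Lemma_Composition_smoothing_operators} and still extends $K$ because $I'^{\mathbf{l}}_\infty=\id_{E'_\infty}$. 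For $(3)\Rightarrow(1)$ I would pick any extension of $K$ (which exists since $K$ is a weak morphism), apply (3) to obtain a shift $\mathbf{m}$ making the shifted extension strongly smoothing, and note that this shifted extension is still an extension of $K$, so $K$ is strongly smoothing by definition.

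The substantive step is $(2)\Rightarrow(3)$. Let $\mathbb{K}\colon\mathbb{E}^{\mathbf{k}}\to\mathbb{E}'$ be a strongly smoothing extension of $K$ supplied by (2), and let $\mathbb{K}'\colon\mathbb{E}^{\mathbf{k}'}\to\mathbb{E}'^{\mathbf{l}'}$ be an arbitrary extension of $K$. My plan is to use the norm characterisation of strong smoothing in \cref{Lemma_Characterisation_strongly_smoothing}, \labelcref{Lemma_Characterisation_strongly_smoothing_3}: $\mathbb{K}$ being strongly smoothing is equivalent to $K\colon(E_\infty,\|\cdot\|_{k_0}|_{E_\infty})\to E'_\infty$ being continuous. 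I would choose $m_0\in\N_0$ with $k'_{m_0}\geq k_0$, possible since $\mathbf{k}'$ is strictly monotone increasing, and set $\mathbf{m}\definedas(m_0+j)_{j\in\N_0}$. The shifted extension $\mathbb{K}'^{\mathbf{m}}\colon\mathbb{E}^{\mathbf{k}'\circ\mathbf{m}}\to\mathbb{E}'^{\mathbf{l}'\circ\mathbf{m}}$ has zeroth domain Banach space $E_{k'_{m_0}}$ and satisfies $(K'^{\mathbf{m}})_\infty=K$. Since $k'_{m_0}\geq k_0$, the canonical inclusion $(E_\infty,\|\cdot\|_{k'_{m_0}}|_{E_\infty})\hookrightarrow(E_\infty,\|\cdot\|_{k_0}|_{E_\infty})$ is continuous; composing with $K\colon(E_\infty,\|\cdot\|_{k_0}|_{E_\infty})\to E'_\infty$ shows that $K\colon(E_\infty,\|\cdot\|_{k'_{m_0}}|_{E_\infty})\to E'_\infty$ is continuous. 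Applying \cref{Lemma_Characterisation_strongly_smoothing}, \labelcref{Lemma_Characterisation_strongly_smoothing_3} in the opposite direction to $\mathbb{K}'^{\mathbf{m}}$ will then yield that $\mathbb{K}'^{\mathbf{m}}$ is strongly smoothing, as required.

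The main obstacle, if one can call it that, is resisting the temptation to compare the two extensions $\mathbb{K}$ and $\mathbb{K}'$ directly via \cref{Lemma_Rescaling_and_weak_morphisms}, \labelcref{Lemma_Rescaling_and_weak_morphisms_5}. The clean approach is instead to observe that strong smoothing of any single extension is equivalent to the scalar statement that $K$ is continuous from $E_\infty$ with some chosen Banach norm into $E'_\infty$, and this statement only improves under passing to a finer (larger-index) norm, whence it transfers automatically to any other extension after a sufficiently large shift.
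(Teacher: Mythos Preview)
Your proposal is correct and takes essentially the same approach as the paper: both arguments hinge on the norm characterisation from \cref{Lemma_Characterisation_strongly_smoothing}, \labelcref{Lemma_Characterisation_strongly_smoothing_3}, reducing the question to the continuity of $K$ from $(E_\infty,\|\cdot\|_{n_0})$ into $E'_\infty$ for some index $n_0$, and then choosing the shift $\mathbf{m}$ so that $k'_{m_0}\geq n_0$. The only cosmetic difference is that the paper proves $(1)\Rightarrow(3)$ directly (starting from an arbitrary strongly smoothing extension with some index $n_0$), whereas you route through $(2)$ first (so that $n_0=k_0$); the substance is identical.
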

\begin{proof}
I will show \labelcref{Lemma_Characterisation_strongly_smoothing_weak_morphisms_1}$\;\Rightarrow\;$\labelcref{Lemma_Characterisation_strongly_smoothing_weak_morphisms_2}, \labelcref{Lemma_Characterisation_strongly_smoothing_weak_morphisms_2}$\;\Rightarrow\;$\labelcref{Lemma_Characterisation_strongly_smoothing_weak_morphisms_1}, \labelcref{Lemma_Characterisation_strongly_smoothing_weak_morphisms_1}$\;\Rightarrow\;$\labelcref{Lemma_Characterisation_strongly_smoothing_weak_morphisms_3}~and \labelcref{Lemma_Characterisation_strongly_smoothing_weak_morphisms_3}$\;\Rightarrow\;$\labelcref{Lemma_Characterisation_strongly_smoothing_weak_morphisms_1}.
\begin{enumerate}
  \item[\labelcref{Lemma_Characterisation_strongly_smoothing_weak_morphisms_1} $\Rightarrow$ \labelcref{Lemma_Characterisation_strongly_smoothing_weak_morphisms_2}:] If $\mathbb{K} : \mathbb{E}^{\mathbf{k}} \to \mathbb{E}'^{\mathbf{l}}$ is a strongly smoothing extension of $K$, then by \cref{Lemma_Composition_smoothing_operators} so is $\mathbb{I}'^{\mathbf{l}}\circ \mathbb{K} : \mathbb{E}^{\mathbf{k}} \to \mathbb{E}'$.
  \item[\labelcref{Lemma_Characterisation_strongly_smoothing_weak_morphisms_2} $\Rightarrow$ \labelcref{Lemma_Characterisation_strongly_smoothing_weak_morphisms_1}:] Trivial.
  \item[\labelcref{Lemma_Characterisation_strongly_smoothing_weak_morphisms_1} $\Rightarrow$ \labelcref{Lemma_Characterisation_strongly_smoothing_weak_morphisms_3}:] Because $K$ is strongly smoothing, one can find a strongly smoothing extension $\overline{\mathbb{K}} : \mathbb{E}^{\mathbf{n}} \to \mathbb{E}'$ of $K$.
By \cref{Lemma_Characterisation_strongly_smoothing} this is equivalent to
\[
K : \left(E^{\mathbf{n}}_\infty, \|\cdot\|^{\mathbf{n}}_{0}|_{E^{\mathbf{n}}_\infty}\right) = \left(E_\infty, \|\cdot\|_{n_0}|_{E_\infty}\right) \to \left(E'_\infty, \|\cdot\|'_j|_{E'_\infty}\right)
\]
being continuous for all $j \in \N_0$.
Let $m_0 \definedas \min\{j\in\N_0 \;|\; k_j \geq n_0\}$ and $\mathbf{m} \definedas (m_0 + j)_{j\in\N_0}$.
Then
\begin{multline*}
K : \left(E^{\mathbf{k}\circ \mathbf{m}}_\infty, \|\cdot\|^{\mathbf{k}\circ \mathbf{m}}_{0}|_{E^{\mathbf{k}\circ \mathbf{m}}_\infty}\right) = \left(E_\infty, \|\cdot\|_{k_{m_0}}|_{E_\infty}\right) \to \\
\left(E'_\infty, \|\cdot\|'_{l_{m_j}}|_{E'_\infty}\right) = \left(E'^{\mathbf{l}\circ \mathbf{m}}_\infty, \|\cdot\|'^{\mathbf{l}\circ \mathbf{m}}_j|_{E'^{\mathbf{l}\circ \mathbf{m}}_\infty}\right)
\end{multline*}
is continuous for all $j \in \N_0$, because since $k_{m_0} \geq n_0$ by definition,
\[
\id_{E_\infty} : \left(E_\infty, \|\cdot\|_{k_{m_0}}|_{E_\infty}\right) \to \left(E_\infty, \|\cdot\|_{n_0}|_{E_\infty}\right)
\]
is continuous.
And by \cref{Lemma_Characterisation_strongly_smoothing} again,
$\mathbb{K}^{\mathbf{m}} : \mathbb{E}^{\mathbf{k}\circ \mathbf{m}} \to \mathbb{E}'^{\mathbf{l}\circ \mathbf{m}}$ is strongly smoothing.
  \item[\labelcref{Lemma_Characterisation_strongly_smoothing_weak_morphisms_3} $\Rightarrow$ \labelcref{Lemma_Characterisation_strongly_smoothing_weak_morphisms_1}:] If $\mathbb{K} : \mathbb{E}^{\mathbf{k}} \to \mathbb{E}^{\mathbf{l}}$ is an extension of $K$, then so is $\mathbb{K}^{\mathbf{m}} : \mathbb{E}^{\mathbf{k}\circ \mathbf{m}} \to \mathbb{E}'^{\mathbf{l}\circ \mathbf{m}}$.
\end{enumerate}
\end{proof}

\begin{lemma}\label{Lemma_Strongly_smoothing_weak_morphisms}
Let $\mathbb{E}$, $\mathbb{E}'$ and $\mathbb{E}''$ be sc-chains, let $K,K' : E_\infty \to E'_\infty$, $S : E''_\infty \to E_\infty$ and $T : E'_\infty \to E''_\infty$ be weak morphisms and let $\lambda, \mu \in \mathds{k}$.
If $K$ and $K'$ are strongly smoothing, then so are
\begin{enumerate}[label=\arabic*.,ref=\arabic*.]
  \item $\lambda K + \mu K' : E_\infty \to E'_\infty$,
  \item $K\circ S : E''_\infty \to E_\infty$ and
  \item $T\circ K : E_\infty \to E''_\infty$.
\end{enumerate}
\end{lemma}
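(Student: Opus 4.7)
The plan is to reduce everything to the characterisation of strongly smoothing operators/weak morphisms established above (Lemmas Characterisation\_strongly\_smoothing and Characterisation\_strongly\_smoothing\_weak\_morphisms) combined with the composition/linearity closure at the level of continuous linear operators between sc-chains (Lemma Composition\_smoothing\_operators). By the characterisation, a weak morphism $K : E_\infty \to E'_\infty$ is strongly smoothing iff it admits a strongly smoothing extension of the form $\mathbb{K} : \mathbb{E}^{\mathbf{k}} \to \mathbb{E}'$. So for each of the three statements it suffices to exhibit such an extension of the purported result.

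For the scalar combination $\lambda K + \mu K'$, first choose strongly smoothing extensions $\mathbb{K} : \mathbb{E}^{\mathbf{k}} \to \mathbb{E}'$ and $\mathbb{K}' : \mathbb{E}^{\mathbf{k}'} \to \mathbb{E}'$. Set $\mathbf{n} \definedas (\max\{k_j, k'_j\})_{j\in\N_0}$, so that $\mathbf{n} \geq \mathbf{k}$ and $\mathbf{n} \geq \mathbf{k}'$; then $\mathbb{K}\circ \mathbb{I}^{\mathbf{n}}_{\mathbf{k}}$ and $\mathbb{K}'\circ \mathbb{I}^{\mathbf{n}}_{\mathbf{k}'}$ are continuous linear operators $\mathbb{E}^{\mathbf{n}} \to \mathbb{E}'$, still strongly smoothing by the composition part of Lemma Composition\_smoothing\_operators. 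The linear combination $\lambda\,\mathbb{K}\circ\mathbb{I}^{\mathbf{n}}_{\mathbf{k}} + \mu\,\mathbb{K}'\circ\mathbb{I}^{\mathbf{n}}_{\mathbf{k}'}$ is then a strongly smoothing operator $\mathbb{E}^{\mathbf{n}} \to \mathbb{E}'$ extending $\lambda K + \mu K'$.

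For $T \circ K$, pick a strongly smoothing extension $\mathbb{K} : \mathbb{E}^{\mathbf{k}} \to \mathbb{E}'$ of $K$. By the equivalence \labelcref{Lemma_Characterisation_strongly_smoothing_2} in Lemma Characterisation\_strongly\_smoothing, $\mathbb{K}$ factors through a continuous linear map $\overline{K}_0 : E_{k_0} \to E'_\infty$. Since $T$ is a weak morphism, $T : E'_\infty \to E''_\infty$ is continuous between Fr{\'e}chet spaces, so $T\circ \overline{K}_0 : E_{k_0} \to E''_\infty$ is continuous. By the same equivalence this defines a strongly smoothing operator $\mathbb{E}^{\mathbf{k}} \to \mathbb{E}''$ whose value at infinity is $T\circ K$, proving that $T\circ K$ is strongly smoothing.

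For $K \circ S$, pick the extension $\mathbb{K} : \mathbb{E}^{\mathbf{k}} \to \mathbb{E}'$ and $\overline{K}_0 : E_{k_0} \to E'_\infty$ as before, and pick an extension $\mathbb{S} : \mathbb{E}''^{\mathbf{l}} \to \mathbb{E}^{\mathbf{m}}$ of $S$. The mild bookkeeping obstacle is that the target level $\mathbf{m}$ of $\mathbb{S}$ need not match the source level $\mathbf{k}$ of $\mathbb{K}$; but since $m_j \to \infty$ one can fix some $j_0 \in \N_0$ with $m_{j_0} \geq k_0$, giving the continuous map
\[
\overline{(KS)}_0 \definedas \overline{K}_0 \circ \iota^{m_{j_0}}_{k_0} \circ S_{j_0} : E''_{l_{j_0}} \to E'_\infty\text{.}
\]
Setting $\mathbf{n} \definedas (j_0 + j)_{j\in\N_0}$, so $(\mathbf{l}\circ\mathbf{n})_0 = l_{j_0}$, this continuous map defines, via Lemma Characterisation\_strongly\_smoothing, a strongly smoothing operator $\mathbb{E}''^{\mathbf{l}\circ\mathbf{n}} \to \mathbb{E}'$; restricting to $E''_\infty$ and tracing through the inclusions gives $\overline{K}_0|_{E_\infty}\circ S = K\circ S$, so $K\circ S$ is strongly smoothing. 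The only subtlety in the whole argument is the index-shuffling in part (2) to make the target of $\mathbb{S}$ meet the source of $\overline{K}_0$; everything else is a direct application of the previously established characterisations and linearity/composition closures.
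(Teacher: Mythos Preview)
Your proof is correct. For part~1 (the linear combination), your argument is essentially identical to the paper's: pass to a common rescaling $\mathbf{n} = (\max\{k_j,k'_j\})_j$ and invoke \cref{Lemma_Composition_smoothing_operators}.

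For parts~2 and~3 your route differs from the paper's. The paper stays entirely at the sc-chain level: given extensions $\mathbb{K} : \mathbb{E}^{\mathbf{k}} \to \mathbb{E}'$, $\mathbb{S} : \mathbb{E}''^{\mathbf{l}} \to \mathbb{E}$, $\mathbb{T} : \mathbb{E}'^{\mathbf{m}} \to \mathbb{E}''$, it simply forms $\mathbb{K}\circ\mathbb{S}^{\mathbf{k}}$ and $\mathbb{T}\circ\mathbb{K}^{\mathbf{m}}$ and appeals directly to \cref{Lemma_Composition_smoothing_operators} (strongly smoothing is stable under pre- and post-composition and under rescaling). You instead work through the factorisation $\overline{K}_0 : E_{k_0} \to E'_\infty$ from \cref{Lemma_Characterisation_strongly_smoothing}: for $T\circ K$ you observe that $T$ is automatically continuous $E'_\infty \to E''_\infty$, so $T\circ\overline{K}_0$ immediately gives the required factorisation without ever choosing an sc-chain extension of $T$; for $K\circ S$ you do choose an extension of $S$ and then do a small index shift to feed $S_{j_0}$ into $\overline{K}_0$. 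Your approach is a bit more hands-on but makes the mechanism transparent (everything funnels through the single continuous map into $E'_\infty$); the paper's approach is more uniform and avoids the index bookkeeping in part~2 by using the rescaling $\mathbb{S}^{\mathbf{k}}$ to align source and target in one stroke.
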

\begin{proof}
This is a corollary to \cref{Lemma_Composition_smoothing_operators}.
For if $\mathbb{K} : \mathbb{E}^{\mathbf{k}} \to \mathbb{E}'$, $\mathbb{S} : \mathbb{E}''^{\mathbf{l}} \to \mathbb{E}$ and $\mathbb{T} : \mathbb{E}'^{\mathbf{m}} \to \mathbb{E}''$ are extensions of $K$, $S$ and $T$, respectively, then $\mathbb{K}\circ \mathbb{S}^{\mathbf{k}}$ and $\mathbb{T}\circ \mathbb{K}^{\mathbf{m}}$ are extensions of $K\circ S$ and $T\circ K$, respectively. \\
And similarly for $\lambda K + \mu K'$: If $\mathbb{K} : \mathbb{E}^{\mathbf{k}} \to \mathbb{E}'$ and $\mathbb{K}' : \mathbb{E}^{\mathbf{l}} \to \mathbb{E}'$ are extensions of $K$ and $K'$, respectively, then $\lambda \mathbb{K}\circ \mathbb{I}^{\mathbf{m}}_{\mathbf{k}} + \mu \mathbb{K}'\circ \mathbb{I}^{\mathbf{m}}_{\mathbf{l}}$ is an extension of $\lambda K + \mu K'$, where $\mathbf{m} \definedas \left(\max\{k_j,l_j\}\right)_{j\in\N_0}$.
\end{proof}

\begin{definition}\label{Definition_Strongly_smoothing_morphism}
Let $E$ and $E'$ be $\overline{\text{sc}}$-Fr{\'e}chet spaces.
A morphism $K: E \to E'$ is called \emph{strongly smoothing} if there exist compatible $\overline{\text{sc}}$-structures $(\mathbb{E},\phi)$ and $(\mathbb{E}', \phi')$ on $E$ and $E'$, respectively, \st $K_\infty \definedas \phi'^{-1}\circ K\circ \phi : E_\infty \to E'_\infty$ defines a strongly smoothing weak morphism
\end{definition}

\begin{remark}\label{Remark_Strongly_smoothing_implies_compact_scFrechet}
By \cref{Remark_Strongly_smoothing_implies_compact_weak} and \cref{Lemma_Compact_operators}, a strongly smoothing morphism $K : E \to E'$ between $\overline{\text{sc}}$-Fr{\'e}chet spaces is a compact operator between the underlying Fr{\'e}chet spaces.
\end{remark}

\begin{corollary}\label{Corollary_Basic_properties_strongly_smoothing_morphisms}
Let $E$, $E'$ and $E''$ be $\overline{\text{sc}}$-Fr{\'e}chet spaces.
\begin{enumerate}[label=\arabic*.,ref=\arabic*.]
  \item A continuous linear operator $K : E \to E'$ defines a strongly smoothing morphism of $\overline{\text{sc}}$-Fr{\'e}chet spaces \iff for any pair of compatible $\overline{\text{sc}}$-structures $(\tilde{\mathbb{E}}, \tilde{\phi})$ and $(\tilde{\mathbb{E}}', \tilde{\phi}')$ on $E$ and $E'$, respectively, $\tilde{K}_\infty \definedas \tilde{\phi}'^{-1}\circ K\circ \tilde{\phi}$ defines a strongly smoothing weak morphism.
  \item Let $K,K' : E \to E'$ be strongly smoothing morphisms and let $\lambda, \mu \in \mathds{k}$.
Then $\lambda K + \mu K' : E \to E'$ is a strongly smoothing morphism as well.
  \item Let $K : E \to E'$, $S : E'' \to E$ and $T : E' \to E''$ be morphisms with $K$ strongly smoothing.
Then $K\circ S : E'' \to E$ and $T\circ K : E \to E''$ are strongly smoothing morphisms as well.
\end{enumerate}
\end{corollary}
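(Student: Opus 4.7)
The plan is to deduce all three parts from Lemma \ref{Lemma_Strongly_smoothing_weak_morphisms} together with the equivalence machinery for $\overline{\text{sc}}$-Fr{\'e}chet spaces. The only genuinely new content is part~1; parts~2 and~3 then follow by a routine passage from morphisms of $\overline{\text{sc}}$-Fr{\'e}chet spaces to weak morphisms between associated sc-chains.

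For part~1, the nontrivial direction says: if there exists \emph{some} pair of compatible $\overline{\text{sc}}$-structures $(\mathbb{E},\phi)$, $(\mathbb{E}',\phi')$ for which $K_\infty \definedas \phi'^{-1}\circ K\circ \phi$ is a strongly smoothing weak morphism, then the same holds for every other compatible pair $(\tilde{\mathbb{E}},\tilde{\phi})$, $(\tilde{\mathbb{E}}',\tilde{\phi}')$. By definition of compatibility, $J \definedas \phi^{-1}\circ\tilde{\phi} : \tilde{E}_\infty \to E_\infty$ and $J' \definedas \tilde{\phi}'^{-1}\circ \phi' : E'_\infty \to \tilde{E}'_\infty$ are equivalences (hence in particular weak embeddings, hence weak morphisms) between the underlying sc-chains. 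A direct diagram chase gives
\[
\tilde{K}_\infty \;=\; \tilde{\phi}'^{-1}\circ K\circ \tilde{\phi} \;=\; J'\circ K_\infty\circ J
\]
as weak morphisms between sc-chains. Since $K_\infty$ is strongly smoothing, Lemma \ref{Lemma_Strongly_smoothing_weak_morphisms} (applied twice, using that composing a strongly smoothing weak morphism with an arbitrary weak morphism on either side yields a strongly smoothing weak morphism) implies that $\tilde{K}_\infty$ is strongly smoothing as well. This proves part~1.

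For part~2, pick any single pair of compatible $\overline{\text{sc}}$-structures $(\mathbb{E},\phi)$, $(\mathbb{E}',\phi')$ on $E$ and $E'$. By part~1, both $K_\infty$ and $K'_\infty$ are strongly smoothing weak morphisms. Since the correspondence between morphisms $E \to E'$ and their expressions $\phi'^{-1}\circ (\cdot)\circ \phi$ is $\mathds{k}$-linear, we have $(\lambda K + \mu K')_\infty = \lambda K_\infty + \mu K'_\infty$, and the right-hand side is strongly smoothing by Lemma \ref{Lemma_Strongly_smoothing_weak_morphisms}. Hence $\lambda K + \mu K'$ is a strongly smoothing morphism.

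For part~3, fix a triple of compatible $\overline{\text{sc}}$-structures $(\mathbb{E},\phi)$, $(\mathbb{E}',\phi')$, $(\mathbb{E}'',\phi'')$ on $E$, $E'$, $E''$ respectively. By part~1 and Lemma \ref{Lemma_Morphisms_of_sc_Frechet_spaces}, $K_\infty$ is strongly smoothing while $S_\infty$ and $T_\infty$ are weak morphisms. Compatibility of composition with passage to the chart expressions yields $(K\circ S)_\infty = K_\infty\circ S_\infty$ and $(T\circ K)_\infty = T_\infty\circ K_\infty$, both of which are strongly smoothing by Lemma \ref{Lemma_Strongly_smoothing_weak_morphisms}. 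This proves part~3. The only place where one has to pay attention is the bookkeeping in part~1 (making sure that $J$ and $J'$ are indeed weak morphisms so that Lemma \ref{Lemma_Strongly_smoothing_weak_morphisms} applies), but this is exactly what the definition of equivalence of $\overline{\text{sc}}$-structures delivers.
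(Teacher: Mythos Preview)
Your proof is correct and follows exactly the approach the paper intends: the paper's proof consists of the single sentence ``This is a corollary to Lemma~\ref{Lemma_Strongly_smoothing_weak_morphisms},'' and you have simply unpacked that reference, including the diagram chase for part~1 (which mirrors the argument in the proof of Lemma~\ref{Lemma_Morphisms_of_sc_Frechet_spaces}, part~\ref{Lemma_Morphisms_of_sc_Frechet_spaces_2}).
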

\begin{proof}
This is a corollary to \cref{Lemma_Strongly_smoothing_weak_morphisms}.
\end{proof}

\begin{corollary}\label{Corollary_Strongly_smoothing_operators}
Let $E$ and $E'$ be $\overline{\text{sc}}$-Fr{\'e}chet spaces and let $(\mathbb{E},\phi)$ be a compatible $\overline{\text{sc}}$-structure on $E$.
Then there is a $1$---$1$ correspondence between the set of strongly smoothing morphisms $K : E \to E'$ and
\[
\varinjlim_{i} L_{\mathrm{c}}(E_i, E')\text{,}
\]
where the limit is taken over the direct system $(L_{\mathrm{c}}(E_i, E'))_{i \in \N_0}$ with structure maps given by
\begin{align*}
L_{\mathrm{c}}(E_i, E') &\to L_{\mathrm{c}}(E_j, E') \\
K &\mapsto K\circ \iota^j_i\text{,}
\end{align*}
for $i,j\in\N_0$, $i \leq j$. \\
The correspondence is given by the direct system of maps
\begin{align*}
L_{\mathrm{c}}(E_i,E') &\to L_{\mathrm{c}}(E,E') \\
K &\mapsto K\circ \iota^\infty_i\circ \phi\inv\text{.}
\end{align*}
\end{corollary}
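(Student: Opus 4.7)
The plan is to combine \cref{Lemma_Characterisation_strongly_smoothing} with \cref{Lemma_Characterisation_strongly_smoothing_weak_morphisms} to identify strongly smoothing morphisms $K : E \to E'$ with precisely those continuous linear operators that factor as $K = L \circ \iota^\infty_i \circ \phi^{-1}$ for some $i \in \N_0$ and some $L \in L_{\mathrm{c}}(E_i, E')$. The bookkeeping is entirely straightforward, and no real obstacle arises beyond keeping track of shifts and using density of $E_\infty$ in each $E_k$; I expect the only thing requiring care is that the target $E'$ only has an (unspecified) $\overline{\text{sc}}$-structure, so every statement involving $E'_\infty$ must be produced by first fixing a compatible $(\mathbb{E}', \phi')$ and then observed to be independent of that choice.

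For the forward direction, given $L \in L_{\mathrm{c}}(E_i, E')$, set $\tilde L \definedas L \circ \iota^\infty_i \circ \phi^{-1} \in L_{\mathrm{c}}(E, E')$; this is continuous as a composition of continuous linear operators. To see that $\tilde L$ is a strongly smoothing morphism, fix any compatible $\overline{\text{sc}}$-structure $(\mathbb{E}', \phi')$ on $E'$, form the shift $\mathbf{k} \definedas (i + j)_{j\in\N_0}$ so that $E^{\mathbf{k}}_0 = E_i$, and observe that $\overline{K}_0 \definedas \phi'^{-1} \circ L \in L_{\mathrm{c}}(E_i, E'_\infty)$. By \cref{Lemma_Characterisation_strongly_smoothing} applied to $\mathbb{E}^{\mathbf{k}}$ and $\mathbb{E}'$, $\overline{K}_0$ yields a strongly smoothing continuous linear operator $\mathbb{K} : \mathbb{E}^{\mathbf{k}} \to \mathbb{E}'$ with $\mathbb{K}_\infty = \overline{K}_0 \circ \iota^\infty_i = \phi'^{-1} \circ \tilde L \circ \phi$, so $\tilde L$ is strongly smoothing by \cref{Definition_Strongly_smoothing_morphism}.

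For surjectivity, let $K : E \to E'$ be strongly smoothing and fix a compatible $(\mathbb{E}', \phi')$. The weak morphism $K_\infty \definedas \phi'^{-1} \circ K \circ \phi$ is strongly smoothing, and \cref{Lemma_Characterisation_strongly_smoothing_weak_morphisms} provides a strongly smoothing extension $\mathbb{K} : \mathbb{E}^{\mathbf{k}} \to \mathbb{E}'$ for some strictly monotone increasing $\mathbf{k}$; \cref{Lemma_Characterisation_strongly_smoothing} in turn associates to $\mathbb{K}$ a unique $\overline{K}_0 \in L_{\mathrm{c}}(E_{k_0}, E'_\infty)$ with $\mathbb{K}_\infty = \overline{K}_0 \circ \iota^\infty_{k_0}$. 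Setting $L \definedas \phi' \circ \overline{K}_0 \in L_{\mathrm{c}}(E_{k_0}, E')$, one computes
\[
L \circ \iota^\infty_{k_0} \circ \phi^{-1} = \phi' \circ \overline{K}_0 \circ \iota^\infty_{k_0} \circ \phi^{-1} = \phi' \circ \mathbb{K}_\infty \circ \phi^{-1} = K\text{,}
\]
so $K$ lies in the image of the proposed correspondence.

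Finally, to see that the map descends to a bijection on $\varinjlim_i L_{\mathrm{c}}(E_i, E')$, note that two operators $L_1 \in L_{\mathrm{c}}(E_i, E')$ and $L_2 \in L_{\mathrm{c}}(E_j, E')$ induce the same morphism $E \to E'$ if and only if $L_1 \circ \iota^\infty_i = L_2 \circ \iota^\infty_j$ as maps $E_\infty \to E'$. Setting $k \definedas \max\{i, j\}$, both sides factor as $(L_1 \circ \iota^k_i) \circ \iota^\infty_k$ and $(L_2 \circ \iota^k_j) \circ \iota^\infty_k$ respectively, and since $\iota^\infty_k(E_\infty)$ is dense in $E_k$ and both $L_1 \circ \iota^k_i$ and $L_2 \circ \iota^k_j$ are continuous operators on $E_k$, this equality is equivalent to $L_1 \circ \iota^k_i = L_2 \circ \iota^k_j$ in $L_{\mathrm{c}}(E_k, E')$, i.e., to $L_1$ and $L_2$ representing the same class in the direct limit.
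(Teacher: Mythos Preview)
Your proof is correct and follows exactly the approach the paper indicates: the paper's proof consists solely of the sentence ``This is a corollary to \cref{Lemma_Characterisation_strongly_smoothing,Lemma_Characterisation_strongly_smoothing_weak_morphisms}'', and you have faithfully unpacked precisely how those two lemmas combine to give the bijection, including the well-definedness on the direct limit via density of $E_\infty$ in each $E_k$.
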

\begin{proof}
This is a corollary to \cref{Lemma_Characterisation_strongly_smoothing,Lemma_Characterisation_strongly_smoothing_weak_morphisms}.
\end{proof}

\begin{remark}
Above and below, when writing $L_{\mathrm{c}}(E_i, E')$, $E'$ is considered as just its underlying Fr{\'e}chet space.
\end{remark}

\begin{example}\label{Example_Strongly_smoothing_morphisms}
\begin{enumerate}[label=\arabic*.,ref=\arabic*.]
  \item Let $E$ be an $\overline{\text{sc}}$-Fr{\'e}chet space.
$\id_E : E \to E$ is strongly smoothing \iff $E$ is finite dimensional.
  \item Let $E$ and $E'$ be $\overline{\text{sc}}$-Fr{\'e}chet spaces and let $F : E \to E'$ be a morphism.
If there exists a finite dimensional $\overline{\text{sc}}$-Fr{\'e}chet space $C$ and morphisms $F_1 : E \to C$, $F_2 : C \to E'$ \st $F = F_2\circ F_1$, then $F$ is strongly smoothing.
\end{enumerate}
\end{example}

\begin{definition}\label{Definition_Space_of_strongly_smoothing_morphisms}
Let $E$ and $E'$ be $\overline{\text{sc}}$-Fr{\'e}chet spaces.
Then $\operatorname{Sm}(E,E')$ denotes the \emph{space of strongly smoothing morphisms from $E$ to $E'$} together with the topology defined by the direct limit of topological spaces
\[
\operatorname{Sm}(E,E') \cong \varinjlim_i L_{\mathrm{c}}(E_i, E')\text{,}
\]
for some compatible $\overline{\text{sc}}$-structure $(\mathbb{E}, \phi)$ on $E$, as in \cref{Corollary_Strongly_smoothing_operators}.
\end{definition}

\begin{remark}
By \cref{Corollary_Strongly_smoothing_operators} and the definition of the direct limit topology, $\operatorname{Sm}(E,E')$ is a Hausdorff locally convex topological vector space that comes with a canonical continuous inclusion $\operatorname{Sm}(E,E') \hookrightarrow L_{\mathrm{c}}(E,E')$.
But a priori the topology on $\operatorname{Sm}(E,E')$ is stronger than the subspace topology induced by the bounded-open topology on $L_{\mathrm{c}}(E,E')$.
\end{remark}

\begin{example}\label{Example_Strongly_smoothing_operators_on_finite_dimensional_vs}
Let $E$ and $E'$ be $\overline{\text{sc}}$-Fr{\'e}chet spaces with $E$ finite dimensional.
Then $\operatorname{Sm}(E,E') \cong L_{\mathrm{c}}(E,E')$ as topological spaces.
\end{example}

\begin{proposition}\label{Proposition_Perturbation_of_invertible_by_strongly_smoothing_invertible}
Let $E$ and $E'$ be $\overline{\text{sc}}$-Fr{\'e}chet spaces and let $F : E \to E'$ be an isomorphism.
Given any compatible $\overline{\text{sc}}$-structure $(\mathbb{E}, \phi)$ on $E$, there exists $i_0 \in \N_0$ and for every $i \geq i_0$ a neighbourhood $U_i \subseteq L_{\mathrm{c}}(E_i, E') \subseteq \operatorname{Sm}(E,E')$ of $0$ \st $F + K : E \to E'$ is an isomorphism for all $K \in U_i$. \\
Furthermore, $(F + K)\inv = F\inv + K'$ for a unique $K' \in \operatorname{Sm}(E', E)$.
\end{proposition}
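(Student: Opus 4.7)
The plan is to reduce to a perturbation of the identity and then invert by a Neumann series living inside one of the Banach spaces of the sc-chain. Since $F$ is an isomorphism, $F\inv : E' \to E$ is a morphism, and $G \definedas F\inv\circ K : E \to E$ is strongly smoothing whenever $K$ is, by \cref{Corollary_Basic_properties_strongly_smoothing_morphisms}. The identity $F + K = F\circ(\id_E + G)$ then reduces everything to producing $H \in \operatorname{Sm}(E,E)$ with $(\id_E + G)(\id_E + H) = (\id_E + H)(\id_E + G) = \id_E$; from such an $H$ one reads off $(F+K)\inv = F\inv + H\circ F\inv$, so $K' \definedas H\circ F\inv$ lies in $\operatorname{Sm}(E',E)$ by \cref{Corollary_Basic_properties_strongly_smoothing_morphisms}, and uniqueness of $K'$ is immediate from $K' = (F+K)\inv - F\inv$.

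Identifying $E = E_\infty$ via the given compatible $\overline{\text{sc}}$-structure $(\mathbb{E},\phi)$, any $K \in L_{\mathrm{c}}(E_i,E')$ gives a factorisation $G = G_i\circ \iota^\infty_i$ with $G_i \definedas F\inv\circ K : E_i \to E$ by \cref{Corollary_Strongly_smoothing_operators}. The key object is the bounded endomorphism $\tilde G_i \definedas \iota^\infty_i\circ G_i \in L_{\mathrm{c}}(E_i,E_i)$ of the Banach space $E_i$. Since $K \mapsto \tilde G_i$ is a continuous linear map $L_{\mathrm{c}}(E_i,E') \to L_{\mathrm{c}}(E_i,E_i)$, its preimage $U_i$ of the open unit ball in operator norm is an open neighbourhood of $0$, valid for every $i \in \N_0$, so one may take $i_0 \definedas 0$. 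For $K \in U_i$, the Neumann series $R_i \definedas \sum_{n\geq 0}(-\tilde G_i)^n$ converges in $L_{\mathrm{c}}(E_i,E_i)$ to $(\id_{E_i}+\tilde G_i)\inv$.

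Using the relation $G^n = G_i\circ \tilde G_i^{n-1}\circ \iota^\infty_i$ for $n \geq 1$, I set
\[
H \definedas -G_i\circ R_i\circ \iota^\infty_i : E \to E,
\]
which factors through $E_i$ via the continuous map $-G_i\circ R_i : E_i \to E$ and hence lies in $\operatorname{Sm}(E,E)$ by \cref{Corollary_Strongly_smoothing_operators}. The two identities $R_i(\id_{E_i}+\tilde G_i) = (\id_{E_i}+\tilde G_i)R_i = \id_{E_i}$ then give, by a short direct computation (for instance, $(\id_E + G)H = -G_i(R_i + \tilde G_i R_i)\iota^\infty_i = -G_i\iota^\infty_i = -G$), the desired two-sided inverse relation. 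The only point where genuine care is needed is ensuring that the inverse $H$ really lies in $\operatorname{Sm}(E,E)$ rather than merely in $L_{\mathrm{c}}(E,E)$; this is precisely what the factorisation $H = (-G_i\circ R_i)\circ \iota^\infty_i$ through the Banach space $E_i$ guarantees via \cref{Corollary_Strongly_smoothing_operators}.
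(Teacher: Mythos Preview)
Your proof is correct and follows essentially the same strategy as the paper: build the inverse via a Neumann series in the Banach space $L_{\mathrm{c}}(E_i,E_i)$, and arrange for the correction term to factor through $E_i$ so that it is strongly smoothing by \cref{Corollary_Strongly_smoothing_operators}. The paper works directly with extensions $\mathbb{F} : \mathbb{E} \to \mathbb{E}'$ and $\mathbb{F}' : \mathbb{E}'^{\mathbf{l}} \to \mathbb{E}$ of $F$ and $F\inv$ and writes down $K'$ as a map $E'^{\mathbf{l}}_i \to E_\infty$, whereas you first reduce to the case $F = \id_E$ via $F+K = F\circ(\id_E + F\inv K)$ and then apply \cref{Corollary_Basic_properties_strongly_smoothing_morphisms} to obtain $K' = H\circ F\inv \in \operatorname{Sm}(E',E)$. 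Your organisation is a bit cleaner: it avoids explicitly invoking an sc-chain extension of $F\inv$ and makes it transparent that one may take $i_0 = 0$.
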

\begin{proof}
By the same arguments used before, one can assume that $(\mathbb{E},\phi)$ and $(\mathbb{E}',\phi')$ are compatible $\overline{\text{sc}}$-structures on $E$ and $E'$, respectively, \st there are continuous linear operators $\mathbb{F} : \mathbb{E} \to \mathbb{E}'$, $\mathbb{F}' : \mathbb{E}'^{\mathbf{l}} \to \mathbb{E}$, for some strictly monotone increasing sequence $\mathbf{l} \subseteq \N_0$, with $F = \phi'\circ F_\infty\circ \phi\inv$, $F\inv = \phi\circ F'_\infty\circ \phi'^{-1}$, $\mathbb{F}\circ \mathbb{F}' = \mathbb{I}'^{\mathbf{l}}$ and $\mathbb{F}'\circ \mathbb{F}^{\mathbf{l}} = \mathbb{I}^{\mathbf{l}}$.
To simplify notation, identify $E$ with $E_\infty$ and $E'$ with $E'_\infty$.
For $i \in \N_0$, let
\[
U_i \definedas \{K \in L_{\mathrm{c}}(E_i, E'_\infty) \;|\; \|\iota^\infty_{i}\circ F'_\infty\circ K\|_{L_{\mathrm{c}}(E_{i}, E_{i})} < 1\}\text{.}
\]
Then $U_i \subseteq L_{\mathrm{c}}(E_i, E'_\infty)$ is an open subset.
Given $K \in U_i$, define (via geometric series in the Banach space $L_{\mathrm{c}}(E_i, E_i)$)
\[
K' \definedas -F'_\infty\circ K\circ \left(\sum_{k=0}^\infty(-\iota^\infty_i\circ F'_\infty\circ K)^k\right)\circ F'_i : E'^{\mathbf{l}}_i \to E_\infty\text{.}
\]
$\overline{K} \definedas K\circ \iota^\infty_i\circ \phi\inv : E \to E'$ and $\overline{K}' \definedas K'\circ (\iota'^{\mathbf{l}})^\infty_i\circ \phi'^{-1} : E' \to E$ define strongly smoothing operators.
Two completely straightforward calculations, done pointwise over $E$ and $E'$, show that $(F' + \overline{K}')\circ (F + \overline{K}) = \id_{E}$ and $(F + \overline{K})\circ (F' + \overline{K}') = \id_{E'}$, respectively. \\
That $\overline{K}'$ is unique is immediate from $\overline{K}' = (F + \overline{K})\inv - F\inv$.
\end{proof}

\Needspace{25\baselineskip}
\subsection{$\overline{\text{sc}}$-Fredholm operators}\label{Subsection_Sc_Fredholm_operators}

\begin{proposition}\label{Proposition_Fredholm_I}
Let $E$ and $E'$ be Fr{\'e}chet spaces and let $F : E \to E'$ be a continuous linear operator.
Then the following are equivalent:
\begin{enumerate}[label=\arabic*.,ref=\arabic*.]
  \item\label{Proposition_Fredholm_I_1} $F$ is invertible modulo compact operators, \ie there exist continuous linear operators $F' : E' \to E$, $K : E\to E$ and $K' : E' \to E'$ with $K$ and $K'$ compact \st
\begin{align*}
F' \circ F &= \id_E + K \\
F \circ F' &= \id_{E'} + K'\text{.}
\end{align*}
  \item\label{Proposition_Fredholm_I_2} $\dim\ker F < \infty$, $\im F \subseteq E'$ is closed and $\dim\coker F < \infty$.
  \item\label{Proposition_Fredholm_I_3} $\dim\ker F < \infty$ and $\dim\coker F < \infty$.
\end{enumerate}
\end{proposition}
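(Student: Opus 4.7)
The plan is to prove the cycle $(1) \Rightarrow (3) \Rightarrow (2) \Rightarrow (1)$, noting that $(2) \Rightarrow (3)$ is trivial so that the real content of $(3) \Rightarrow (2)$ is showing that $\im F$ is automatically closed. Throughout I will use three tools that are available for Fréchet spaces: the open mapping theorem, the Hahn-Banach theorem (producing closed complements of finite-dimensional subspaces via extended coordinate functionals), and the Riesz-Schauder theory for compact operators, which guarantees that $\id + K$ has finite-dimensional kernel and cokernel whenever $K$ is compact in the sense of \cref{Definition_Compact_operator}.

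For $(1) \Rightarrow (3)$, the idea is that $\ker F \subseteq \ker(F'F) = \ker(\id_E + K)$ and $\im F \supseteq \im(FF') = \im(\id_{E'} + K')$, so by Riesz-Schauder both $\dim \ker F$ and $\dim \coker F$ are finite. For $(3) \Rightarrow (2)$, I will first use Hahn-Banach to write $E = \ker F \oplus E_1$ with $E_1$ closed (since $\ker F$ is finite-dimensional). Next, choose $v_1, \dots, v_n \in E'$ lifting a basis of $E'/\im F$ and let $C \definedas \mathrm{span}(v_1, \dots, v_n)$. Then the continuous linear map
\[
\tilde F : E_1 \oplus C \to E', \qquad (e, c) \mapsto F(e) + c
\]
is a bijection between Fréchet spaces; the open mapping theorem promotes it to an isomorphism, whence $\im F = \tilde F(E_1 \oplus \{0\})$ is closed.

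For $(2) \Rightarrow (1)$, the splittings $E = \ker F \oplus E_1$ and $E' = \im F \oplus C$ are now both legitimate (the second because $\im F$ is closed of finite codimension). The restriction $F|_{E_1} : E_1 \to \im F$ is a continuous bijection of Fréchet spaces, so the open mapping theorem provides a continuous inverse $G : \im F \to E_1$. Setting $F' \definedas \iota_{E_1} \circ G \circ \pi_{\im F}$, a direct calculation yields $F' \circ F = \pi_{E_1}$ and $F \circ F' = \pi_{\im F}$, so that $F' F - \id_E$ and $FF' - \id_{E'}$ are continuous projections onto $\ker F$ and onto $C$ respectively. Since both target spaces are finite-dimensional, these operators have precompact image, hence are compact.

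The principal obstacle is the Fréchet-specific step $(3) \Rightarrow (2)$: in a Banach setting one might be tempted to argue via quotient norms, but here I must instead combine Hahn-Banach extensions of coordinate functionals (to get a closed complement of $\ker F$) with the open mapping theorem applied to $\tilde F$. The appeal to Riesz-Schauder in $(1) \Rightarrow (3)$ is classical but requires noting that the theory carries over from Banach to Fréchet spaces under the definition of compactness adopted in this paper; everything else is bookkeeping with projections.
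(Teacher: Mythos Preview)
Your proof is correct and follows essentially the same cycle $(1) \Rightarrow (3) \Rightarrow (2) \Rightarrow (1)$ as the paper. Two small differences worth noting: for $(1) \Rightarrow (3)$ the paper avoids invoking Riesz--Schauder as a black box and instead argues directly that $\id_{\ker F} = -K|_{\ker F}$ is compact (so $\ker F$ is locally compact, hence finite-dimensional by \cite{MR3154940}, Corollary 2.11), and similarly that $-K'$ induces the identity on $\coker F$; and for $(3) \Rightarrow (2)$ the paper quotients by $\ker F$ rather than splitting it off via Hahn--Banach, but the open-mapping argument with $\tilde F$ is then identical to yours.
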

\begin{proof}
I will show \labelcref{Proposition_Fredholm_I_1}$\;\Rightarrow\;$\labelcref{Proposition_Fredholm_I_3}$\;\Rightarrow\;$\labelcref{Proposition_Fredholm_I_2}$\;\Rightarrow\;$\labelcref{Proposition_Fredholm_I_1}.
\begin{enumerate}[label=\arabic*.,ref=\arabic*.]
  \item[\labelcref{Proposition_Fredholm_I_1} $\Rightarrow$ \labelcref{Proposition_Fredholm_I_3}:] $F' \circ F = \id_E + K$ implies $F' \circ F|_{\ker F} = 0 = \id_E|_{\ker F} + K|_{\ker F} = \id_{\ker F} + K|_{\ker F}$, \ie $\id_{\ker F} = -K|_{\ker F}$ is a compact operator.
In other words $\ker F$ is a locally compact topological vector space, hence finite dimensional (\cf \cite{MR3154940}, Corollary 2.11).
$F\circ F' = \id_{E'} + K'$, \ie $-K' = \id_{E'} - F\circ F'$ implies that $-K'$ maps $\im F$ to $\im F$ and hence the well-defined map $\tilde{K} : \coker F \to \coker F$ which $-K'$ induces is compact and equal to the identity.
Hence as before $\coker F$ is finite dimensional.
  \item[\labelcref{Proposition_Fredholm_I_3} $\Rightarrow$ \labelcref{Proposition_Fredholm_I_2}:] $\ker F \subseteq E$ is closed, and hence by replacing $E$ with $E/_{\ker F}$ (which is a Fr{\'e}chet space by \cite{MR3154940} Corollary 1.36 and Corollary 3.36) and $F$ by the induced map $E/_{\ker F} \to E'$ one can assume that $F$ is injective.
Let $C \subseteq E'$ be a finite dimensional, hence in particular closed, complement of $\im F$.
The continuous linear map
\begin{align*}
T : E\oplus C &\to E' \\
(e,c) &\mapsto F(e) + c
\end{align*}
is an isomorphism of vector spaces hence by the open mapping theorem (\cite{MR3154940} Theorem 4.35) it is a homeomorphism.
Thus $\im F = T(E \oplus \{0\})$ is closed.
  \item[\labelcref{Proposition_Fredholm_I_2} $\Rightarrow$ \labelcref{Proposition_Fredholm_I_1}:] $\ker F$ is finite dimensional hence it has a closed complement $X \subseteq E$.
Since $\coker F$ is finite dimensional, $\im F$ has a finite dimensional, hence closed (\cf \cite{MR3154940}, Corollary 2.10), complement $C \subseteq E'$.
$X$, $\ker F$, $\im F$ and $C$ are Fr{\'e}chet spaces (\cf \cite{MR3154940}, Corollary 1.36 and Corollary 3.36) and
$E = X \oplus \ker F$, $E' = \im F \oplus C$.
$F|_X : X \to \im F$ is a continuous linear map that is an isomorphism of vector spaces, hence by the open mapping theorem (\cite{MR3154940} Theorem 4.35) it is a homeomorphism.
Let $R \definedas \left(F|_X\right)\inv$ and define $F' : E' \to E$ by $F' \definedas \iota^X_E\circ R\circ \pr^{E'}_{\im F}$, where $\iota^X_E : X \hookrightarrow E$ is the inclusion and $\pr^{E'}_{\im F} : E' \to \im F$ is the projection along $C$.
Then $F' \circ F = 1 - \pr^E_{\ker F}$, where $\pr^E_{\ker F} : E \to \ker F$ is the projection along $X$, and $F \circ F' = 1 - \pr^{E'}_C$, where $\pr^{E'}_C : E' \to C$ is the projection along $\im F$.
$K\definedas -\pr^E_{\ker F}$ and $K' \definedas -\pr^{E'}_C$ are finite rank, hence in particular compact, operators.
\end{enumerate}
\end{proof}

\begin{definition}\label{Definition_Fredholm_operator}
Let $X, Y$ be Fr{\'e}chet spaces
\begin{enumerate}
  \item If a continuous linear operator $F : X \to Y$ satisfies any (hence all) of the conditions of \cref{Proposition_Fredholm_I} then $F$ is called a \emph{Fredholm operator}. \\
A continuous linear operator $F' : Y \to X$ as in \cref{Proposition_Fredholm_I}, \labelcref{Proposition_Fredholm_I_1}, is called a \emph{Fredholm inverse} to $F$.
  \item The number
\[
\ind F \definedas \dim \ker F - \dim \coker F
\]
is called the \emph{(Fredholm) index} of $F$.
\end{enumerate}
\end{definition}

\begin{proposition}\label{Lemma_Composition_of_Fredholm_operators}
Let $E$, $E'$ and $E''$ be Fr{\'e}chet spaces, let $F : E \to E'$ and $F' : E' \to E''$ be Fredholm operators and let $K : E \to E'$ be a compact operator.
\begin{enumerate}[label=\arabic*.,ref=\arabic*.]
  \item $F' \circ F : E \to E''$ is Fredholm with $\ind (F'\circ F) = \ind F' + \ind F$.
  \item $F + K : E \to E'$ is Fredholm with $\ind (F + K) = \ind F$.
\end{enumerate}
\end{proposition}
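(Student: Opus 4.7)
The plan is to exploit the characterisation of Fredholm operators given in \cref{Proposition_Fredholm_I}, \labelcref{Proposition_Fredholm_I_1} --- invertibility modulo compact operators --- together with the ideal property of compact operators from \cref{Lemma_Compact_operators} to establish both Fredholmness assertions. The index additivity under composition will then follow by a snake-lemma-type six-term exact sequence, while the index invariance under compact perturbation will be reduced to the classical statement that $\ind(\id + C) = 0$ for $C$ compact.

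For the composition, let $G$ and $G'$ be Fredholm inverses of $F$ and $F'$, respectively, so that $G\circ F - \id_E$, $F\circ G - \id_{E'}$, $G'\circ F' - \id_{E'}$ and $F'\circ G' - \id_{E''}$ are all compact. Setting $H \definedas G\circ G'$, a direct computation gives
\[
H\circ (F'\circ F) - \id_E = (G\circ F - \id_E) + G\circ (G'\circ F' - \id_{E'})\circ F
\]
and
\[
(F'\circ F)\circ H - \id_{E''} = (F'\circ G' - \id_{E''}) + F'\circ (F\circ G - \id_{E'})\circ G'\text{,}
\]
both of which are compact by \cref{Lemma_Compact_operators}. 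Hence $H$ is a Fredholm inverse for $F'\circ F$.

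For the index formula, I would construct the six-term exact sequence
\[
0 \to \ker F \hookrightarrow \ker(F'\circ F) \xrightarrow{F} \ker F' \xrightarrow{\delta} \coker F \xrightarrow{F'} \coker(F'\circ F) \to \coker F' \to 0\text{,}
\]
where the second map sends $x$ to $F(x)$ (which lies in $\ker F'$), the connecting map $\delta$ sends $y \in \ker F'$ to $[y] \in E'/\im F$, the fifth map sends $[e']$ to $[F'(e')]$, and the last map is induced by $\id_{E''}$. Well-definedness and exactness at each node are a routine diagram chase using only the definitions of the maps; taking the alternating sum of dimensions then yields $\ind(F'\circ F) = \ind F + \ind F'$.

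For the perturbation assertion, if $G$ is a Fredholm inverse of $F$ then
\[
G\circ (F+K) - \id_E = (G\circ F - \id_E) + G\circ K
\]
and the symmetric expression on the other side are compact by \cref{Lemma_Compact_operators}, so the very same $G$ is a Fredholm inverse of $F+K$ and hence $F+K$ is Fredholm. The main obstacle is index invariance, and the hard part will be the fact that $\ind(\id_E + C) = 0$ for every compact operator $C$ on a Fr{\'e}chet space $E$ --- the Riesz-Schauder Fredholm alternative, transported from the Banach to the Fr{\'e}chet setting via the standard arguments using local compactness of $\ker(\id - C)$. Granted this, the composition formula just established, applied to the pairs $(F,G)$ and $(F+K, G)$, yields
\[
\ind G + \ind F = \ind(G\circ F) = 0 = \ind(G\circ(F+K)) = \ind G + \ind(F+K)\text{,}
\]
whence $\ind(F+K) = \ind F$.
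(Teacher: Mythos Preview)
Your proposal is correct and takes essentially the same approach as the paper: Fredholmness of the composition via the ideal property of compact operators, the index formula via the identical six-term exact sequence, and part~2 via the same reduction to $\ind(\id_E + C) = 0$. The paper simply cites \cite{MR3154940}, Chapter~5, Exercises~28--31 for part~2, whereas you spell out the standard reduction; your identification of the Riesz--Schauder alternative as the one nontrivial input is exactly right.
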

\begin{proof}
\begin{enumerate}[label=\arabic*.,ref=\arabic*.]
  \item That $F'\circ F$ is Fredholm is immediate from the characterisation as invertible modulo compact operators and \cref{Lemma_Compact_operators}. \\
For the index formula, one uses the following result from linear algebra:
\begin{claim}
Let
\[
0 = V_0 \overset{f_0}{\longrightarrow} V_1 \overset{f_1}{\longrightarrow} \cdots \overset{f_{n-2}}{\longrightarrow} V_{n-1} \overset{f_{n-1}}{\longrightarrow} V_n = 0
\]
be an exact sequence of finite dimensional vector spaces.
Then
\[
\sum\limits_{j=0}^n (-1)^j\dim V_j = 0\text{.}
\]
\end{claim}
\begin{proof}
Exercise.
\end{proof}
The index formula then follows easily by applying this claim to the exact sequence
\begin{multline*}
0 \longrightarrow \ker F \overset{\iota}{\longrightarrow} \ker(F'\circ F) \overset{F}{\longrightarrow} \ker F' \overset{q}{\longrightarrow} \\
\overset{q}{\longrightarrow} \coker F \overset{F'}{\longrightarrow} \coker(F'\circ F) \overset{q'}{\longrightarrow} \coker F' \longrightarrow 0\text{,}
\end{multline*}
where $\iota : \ker F \hookrightarrow \ker(F'\circ F)$ is the inclusion and $q : E' \supseteq \ker F' \to E'/_{\im F} = \coker F$ and $q' : \coker(F'\circ F) = E''/_{\im(F'\circ F)} \to E''/_{\im F'} = \left(E''/_{\im(F'\circ F)}\right)/_{\left(\im F'/_{\im(F'\circ F)}\right)}$ are the quotient maps.
  \item See \cite{MR3154940}, Chapter 5, Exercises 28--31.
\end{enumerate}
\end{proof}

\begin{remark}
Because obviously for any Fr{\'e}chet space $\id_E : E \to E$ is Fredholm, by the above Lemma any operator of the form $\id_E + K$, where $K : E \to E$ is compact, is Fredholm of index $0$. \\
Also, if $F : E \to E'$ and $F' : E' \to E$ are Fredholm inverses to each other, then $\ind F = -\ind F'$.
\end{remark}

\begin{definition}
Let $\mathbb{E}$ and $\mathbb{E}'$ be sc-chains.
\begin{enumerate}
  \item A continuous linear operator $\mathbb{F} : \mathbb{E} \to \mathbb{E}'$ is called \emph{Fredholm} \iff $\mathbb{F}$ is invertible modulo strongly smoothing operators, \ie \iff there exist continuous linear operators $\mathbb{F}' : \mathbb{E}' \to \mathbb{E}$, $\mathbb{K} : \mathbb{E} \to \mathbb{E}$ and $\mathbb{K}' : \mathbb{E}' \to \mathbb{E}'$ with $\mathbb{K}$ and $\mathbb{K}'$ strongly smoothing \st
\begin{align*}
\mathbb{F}' \circ \mathbb{F} &= \id_{\mathbb{E}} + \mathbb{K} \\
\mathbb{F} \circ \mathbb{F}' &= \id_{\mathbb{E}'} + \mathbb{K}'\text{.}
\end{align*}
Such a continuous linear operator $\mathbb{F}' : \mathbb{E}' \to \mathbb{E}$ is then called a \emph{Fredholm inverse} of $\mathbb{F}$.
  \item The \emph{Fredholm index} $\ind \mathbb{F}$ of a Fredholm morphism $\mathbb{F} : \mathbb{E} \to \mathbb{E}'$ is defined as the Fredholm index of $F_\infty$ as a Fredholm operator between the Fr{\'e}chet spaces $E_\infty$ and $E'_\infty$.
\end{enumerate}
\end{definition}

\begin{remark}
Note that if $\mathbb{F} : \mathbb{E} \to \mathbb{E}'$ is Fredholm and $\mathbb{F}' : \mathbb{E}' \to \mathbb{E}$ is a Fredholm inverse to $\mathbb{F}$, then given any strongly smoothing continuous linear operator $\mathbb{K} : \mathbb{E}' \to \mathbb{E}$, $\mathbb{F}'' \definedas \mathbb{F}' + \mathbb{K}$ is a Fredholm inverse to $\mathbb{F}$ as well, by \cref{Lemma_Composition_smoothing_operators}.
\end{remark}

\begin{definition}\label{Definition_Regularising}
Let $\mathbb{E}$ and $\mathbb{E}'$ be sc-chains and let $\mathbb{F} : \mathbb{E} \to \mathbb{E}'$ be a continuous linear operator.
$\mathbb{F}$ is called
\begin{enumerate}[label=\arabic*.,ref=\arabic*.]
  \item \emph{regularising} \iff for all $e \in E_0$ and $m\in\N_0$, if $F_0(e) \in E'_m$ then $e \in E_m$.
  \item \emph{weakly regularising} \iff there exists an $m_0\in\N_0$ \st for all $e \in E_{m_0}$, if $F_{m_0}(e) \in E'_\infty$ then $e \in E_\infty$. \\
Equivalently, \iff there exists an $m_0 \in \N_0$ \st for all $m \geq m_0$ and $e \in E_m$, if $F_m(e) \in E'_\infty$ then $e \in E_\infty$.
\end{enumerate}
\end{definition}

\begin{remark}\label{Remark_Weakly_regularising}
If $\mathbb{F} : \mathbb{E} \to \mathbb{E}'$ is weakly regularising, then there exists $m_0 \in \N_0$ \st $\ker F_j = \ker F_k \subseteq E_\infty$ for all $j, k \in \N_0$ with $j,k \geq m_0$.
\end{remark}

\begin{proposition}\label{Proposition_Fredholm_operator_between_sc_chains}
Let $\mathbb{E}$ and $\mathbb{E}'$ be sc-chains and let $\mathbb{F} : \mathbb{E} \to \mathbb{E}'$ be a continuous linear operator.
Then the following are equivalent:
\begin{enumerate}[label=\arabic*.,ref=\arabic*.]
  \item\label{Proposition_Fredholm_operator_between_sc_chains_1} $\mathbb{F}$ is Fredholm.
  \item\label{Proposition_Fredholm_operator_between_sc_chains_2} $\mathbb{F}$ is invertible modulo smoothing operators, \ie there exist continuous linear operators $\mathbb{F}' : \mathbb{E}' \to \mathbb{E}$, $\mathbb{K} : \mathbb{E} \to \mathbb{E}$ and $\mathbb{K}' : \mathbb{E}' \to \mathbb{E}'$ with $\mathbb{K}$ and $\mathbb{K}'$ smoothing \st
\begin{align*}
\mathbb{F}' \circ \mathbb{F} &= \id_{\mathbb{E}} + \mathbb{K} \\
\mathbb{F} \circ \mathbb{F}' &= \id_{\mathbb{E}'} + \mathbb{K}'\text{.}
\end{align*}
  \item\label{Proposition_Fredholm_operator_between_sc_chains_3} $\mathbb{F}$ is sc-Fredholm in the sense of \cite{1209.4040}, Definition 3.1, \ie
\begin{enumerate}[label=(\alph*),ref=(\alpha*)]
  \item\label{Proposition_Fredholm_operator_between_sc_chains_3a} $\mathbb{F}$ is regularising.
  \item\label{Proposition_Fredholm_operator_between_sc_chains_3b} $F_0 : E_0 \to E'_0$ is a Fredholm operator (between Banach spaces).
\end{enumerate}
  \item\label{Proposition_Fredholm_operator_between_sc_chains_4} $\mathbb{F}$ is HWZ-Fredholm in the sense of \cite{1209.4040}, Definition 3.4, \ie
\begin{enumerate}[label=(\alph*),ref=(\alph*)]
  \item\label{Proposition_Fredholm_operator_between_sc_chains_4a} $\dim\ker F_0 < \infty$, $\ker F_0 \subseteq E_\infty$ and there exists a subchain $\mathbb{X} \subseteq \mathbb{E}$ \st $\mathbb{E} = \ker F_0 \oplus \mathbb{X}$.
  \item\label{Proposition_Fredholm_operator_between_sc_chains_4b} $\im \mathbb{F} \definedas \left(\im F_j, \iota_j|_{\im F_{j+1}}\right)_{j\in\N_0}$ is a well-defined sc-chain and there exists a finite dimensional subspace $C \subseteq E'_\infty$ with $\mathbb{E}' = \im\mathbb{F} \oplus C$.
  \item\label{Proposition_Fredholm_operator_between_sc_chains_4c} For any subchain $\mathbb{X} \subseteq \mathbb{E}$ as in \labelcref{Proposition_Fredholm_operator_between_sc_chains_4a},
\[
\pr^{\mathbb{E}'}_{\im\mathbb{F}}\circ \mathbb{F}\circ \iota^{\mathbb{X}}_{\mathbb{E}} : \mathbb{X} \to \im\mathbb{F}
\]
is an isomorphism of sc-chains.
\end{enumerate}
\end{enumerate}
\end{proposition}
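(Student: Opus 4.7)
The plan is to establish the cycle of implications $(1)\Rightarrow(2)\Rightarrow(3)\Rightarrow(4)\Rightarrow(1)$. The first implication is immediate from the definitions: a strongly smoothing operator factors through every shift, hence in particular through the shift $\mathbf{1}$, so by \cref{Remark_Smoothing_factors_through_Eone} it is smoothing.

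For $(2)\Rightarrow(3)$, suppose $\mathbb{F}' \circ \mathbb{F} = \id_{\mathbb{E}} + \mathbb{K}$ and $\mathbb{F}\circ\mathbb{F}' = \id_{\mathbb{E}'} + \mathbb{K}'$ with $\mathbb{K}, \mathbb{K}'$ smoothing, so that each $K_j$ factors through $E_{j+1}$ and similarly for $K'_j$. Regularising is then proved by induction on $k$: given $e \in E_0$ with $F_0(e) \in E'_m$, if $e \in E_k$ is already known for some $k < m$, then applying $F'_k$ to $F_k(e) \in E'_k$ yields $e + K_k(e) \in E_k$, and since $K_k(e) \in E_{k+1}$ one concludes $e \in E_{k+1}$. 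At level zero the operators $K_0, K'_0$ factor through the compact inclusions $E_1 \hookrightarrow E_0$ and $E'_1 \hookrightarrow E'_0$ and are therefore compact, so $F_0$ is Fredholm by \cref{Proposition_Fredholm_I}.

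For $(3)\Rightarrow(4)$, the subspace $\ker F_0$ is finite-dimensional by the Fredholm property and entirely contained in $E_\infty$ by regularising. Choose any closed Banach-space complement $X_0 \subseteq E_0$ of $\ker F_0$ and define $X_j \definedas X_0 \cap E_j$; then $X_j$ is closed in $E_j$ as the preimage of $X_0$ under $\iota^j_0$, and $E_j = \ker F_0 \oplus X_j$ since $\ker F_0 \subseteq E_j$. Density of $X_\infty$ in $X_j$ follows by projecting any sequence $(e_n) \subseteq E_\infty$ approximating $x \in X_j$ along $\ker F_0$, producing a sequence in $X_\infty$ convergent to $x$. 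Analogously, pick a finite-dimensional complement $C$ of $\im F_0$ in $E'_0$ and, using density of $E'_\infty$ in $E'_0$, perturb a basis of $C$ into $E'_\infty$; regularising then implies $\im F_j = \im F_0 \cap E'_j$, which is closed in $E'_j$ and satisfies $E'_j = \im F_j \oplus C$, while the $\im F_j$ assemble into a subchain by an analogous density argument applied to $F_\infty$. The restricted operator $\mathbb{F}|_{\mathbb{X}} : \mathbb{X} \to \im\mathbb{F}$ is a levelwise continuous bijection, hence an isomorphism by the open mapping theorem applied on each Banach level.

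Finally, for $(4)\Rightarrow(1)$, let $\mathbb{R} : \im\mathbb{F} \to \mathbb{X}$ be the inverse of the isomorphism from condition~(c) and set $\mathbb{F}' \definedas \iota^{\mathbb{X}}_{\mathbb{E}} \circ \mathbb{R} \circ \pr^{\mathbb{E}'}_{\im\mathbb{F}}$. A direct computation using the two splittings yields $\mathbb{F}' \circ \mathbb{F} = \id_{\mathbb{E}} - \mathbb{P}_{\ker}$ and $\mathbb{F} \circ \mathbb{F}' = \id_{\mathbb{E}'} - \mathbb{P}_C$, where $\mathbb{P}_{\ker}$ and $\mathbb{P}_C$ are the projections onto the finite-dimensional subspaces $\ker F_0 \subseteq E_\infty$ and $C \subseteq E'_\infty$; since these projections have finite-dimensional range already inside the $\infty$ level, they are strongly smoothing by \cref{Lemma_Characterisation_strongly_smoothing}. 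I expect the main technical obstacle to be the passage $(3)\Rightarrow(4)$, specifically the verification that $\mathbb{X}$ and $\im\mathbb{F}$ form genuine sc-subchains — the density of the $\infty$-level inside each Banach level is the delicate point and is precisely where the regularising hypothesis is essential.
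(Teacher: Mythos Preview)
Your overall strategy---the cycle $(1)\Rightarrow(2)\Rightarrow(3)\Rightarrow(4)\Rightarrow(1)$---is exactly the paper's, and your treatments of $(1)\Rightarrow(2)$ and $(4)\Rightarrow(1)$ match the paper essentially verbatim. The paper outsources $(3)\Rightarrow(4)$ to Lemmas~3.5 and~3.6 of \cite{1209.4040}, whereas you sketch the argument directly; your sketch is correct and in fact reproduces what those lemmas do.

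There is, however, a genuine slip in your $(2)\Rightarrow(3)$ regularising step. From ``$e + K_k(e) \in E_k$'' and ``$K_k(e) \in E_{k+1}$'' alone you cannot conclude $e \in E_{k+1}$: the first statement is trivially true and carries no information. What you need is that $F_k(e)$ actually lies in $E'_m$ (it equals $F_0(e) \in E'_m$ under the inclusion), and then the compatibility of $\mathbb{F}'$ with the chain structure gives $F'_k(F_k(e)) = F'_m(F_m(e)) \in E_m \subseteq E_{k+1}$. Now $e = F'_k(F_k(e)) - K_k(e) \in E_{k+1}$ follows. This is the same bootstrapping the paper does (phrased there at level~$0$ throughout, using $F'_0 F_0(e) \in E_m$ at every step), and once you insert this observation your argument goes through.
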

\begin{proof}
I will show \labelcref{Proposition_Fredholm_operator_between_sc_chains_1}$\;\Rightarrow\;$\labelcref{Proposition_Fredholm_operator_between_sc_chains_2}$\;\Rightarrow\;$\labelcref{Proposition_Fredholm_operator_between_sc_chains_3}$\;\Rightarrow\;$\labelcref{Proposition_Fredholm_operator_between_sc_chains_4}$\;\Rightarrow\;$\labelcref{Proposition_Fredholm_operator_between_sc_chains_1}.
\begin{enumerate}
  \item[\labelcref{Proposition_Fredholm_operator_between_sc_chains_1} $\Rightarrow$ \labelcref{Proposition_Fredholm_operator_between_sc_chains_2}:] Trivial.
  \item[\labelcref{Proposition_Fredholm_operator_between_sc_chains_2} $\Rightarrow$ \labelcref{Proposition_Fredholm_operator_between_sc_chains_3}:]
\begin{enumerate}[label=(\alph*),ref=(\alph*)]
  \item Let $\mathbb{F}'$, $\mathbb{K}$ and $\mathbb{K}'$ be as in \labelcref{Proposition_Fredholm_operator_between_sc_chains_2}.
One can assume (\cf \cref{Remark_Smoothing_factors_through_Eone}) that $\mathbb{K} = \mathbb{I}^{\mathbf{1}}\circ \tilde{\mathbb{K}}$ for some $\tilde{\mathbb{K}} : \mathbb{E} \to \mathbb{E}^{\mathbf{1}}$.
Let $e \in E_0$ with $F_0(e) \in E'_m$ for some $m\in\N_0$.
Then $E_m \ni F'_0\circ F_0(e) = e + K_0e = e + \underbrace{\iota_1\tilde{K}_0(e)}_{\in E_1}$, hence $e = F'_0\circ F_0(e) - \iota_1\tilde{K}_0(e) \in E_1$.
Hence $\iota_1\tilde{K}_0(e) \in E_2$, so $e \in E_2$.
Proceeding inductively one shows that $e \in E_m$, hence $\mathbb{F}$ is regularising.
  \item Written out, one has $F'_0F_0 = \id_{E_0} + \underbrace{K_0}_{\mathclap{=\;\iota_1\circ \tilde{K}_0}}$ and $F_0\circ F'_0 = \id_{E'_0} + \underbrace{K'_0}_{\mathclap{=\; \iota'_1\circ \tilde{K}'_0}}$.
Since $\iota_1$ and $\iota'_1$ are compact, so are $K_0$ and $K'_0$, so $F_0$ is invertible modulo compact operators, hence Fredholm (\cf \cref{Proposition_Fredholm_I} and \cref{Definition_Fredholm_operator}).
\end{enumerate}
  \item[\labelcref{Proposition_Fredholm_operator_between_sc_chains_3} $\Rightarrow$ \labelcref{Proposition_Fredholm_operator_between_sc_chains_4}:] See \cite{1209.4040}, Lemmas 3.5 and 3.6.
  \item[\labelcref{Proposition_Fredholm_operator_between_sc_chains_4} $\Rightarrow$ \labelcref{Proposition_Fredholm_operator_between_sc_chains_1}:] This follows formally in precisely the same way as the direction \labelcref{Proposition_Fredholm_I_2}$\;\Rightarrow\;$\labelcref{Proposition_Fredholm_I_1}~in the proof of \cref{Proposition_Fredholm_I}, noting that the projections $\mathbb{E} = \ker F_0\oplus \mathbb{X} \to \ker F_0 \subseteq E_\infty$ and $\mathbb{E}' = \im \mathbb{F} \oplus C \to C \subseteq E'_\infty$ are strongly smoothing.
\end{enumerate}
\end{proof}

\begin{remark}
The condition \labelcref{Proposition_Fredholm_operator_between_sc_chains_4c} in \cref{Proposition_Fredholm_operator_between_sc_chains}, \labelcref{Proposition_Fredholm_operator_between_sc_chains_4}, is slightly more general than that in \cite{1209.4040}, Definition 3.4.
But a look at the proof of Lemma 3.6 in \cite{1209.4040} immediately shows that one can assume this to hold as well.
\end{remark}

\begin{remark}
In particular, by \cref{Proposition_Fredholm_operator_between_sc_chains}, if $\mathbb{F} : \mathbb{E} \to \mathbb{E}'$ is Fredholm, then for every $k \in \N_0$, $\iota^\infty_k|_{\ker F_\infty} : \ker F_\infty \to \ker F_k$ is an isomorphism and if $C \subseteq E'_\infty$ is a complement to $\im F_\infty$, \ie if the quotient projection $C \to \coker F_\infty = E'_\infty/_{\im F_\infty}$ is an isomorphism, then $\iota^\infty_k|_C : C \to E'_k$ induces an isomorphism $C \cong \coker F_k = E'_k/_{\im F_k}$ for every $k\in\N_0\cup\{\infty\}$. \\
In particular, $F_k : E_k \to E'_k$ is Fredholm and $\ind F_k = \ind F_\ell$ for all $k, \ell \in \N_0 \cup \{\infty\}$.
\end{remark}

\begin{example}\label{Example_Elliptic_operator_I}
Let $(\Sigma,g)$ be a closed $n$-dimensional Riemannian manifold and let $\pi : F \to \Sigma$ and $\pi' : F' \to \Sigma$ be real (or complex) vector bundles equipped with euclidean (or hermitian) metrics and metric connections. \\
Let $k_0 \in \N_0$ and $1 < p < \infty$ be \st $k_0p > n$ and define $\mathbf{k} \definedas (k_0 + j)_{j\in\N_0}$, $\mathbf{k} + m \definedas (k_0 + m + j)_{j\in\N_0}$.
Define $\mathbb{E} \definedas \mathbb{W}^{\mathbf{k} + m,p}(F)$ and $\mathbb{E}' \definedas \mathbb{W}^{\mathbf{k},p}(F')$. \\
Let $P : \Gamma(F) \to \Gamma(F')$ be an elliptic partial differential operator (with smooth coefficients) of class $m$.
Then the continuous linear operator $\mathbb{P} : \mathbb{W}^{\mathbf{k} + m, p}(F) = \mathbb{E} \to \mathbb{E}' = \mathbb{W}^{\mathbf{k},p}(F')$ from \cref{Example_PDO_II} is Fredholm: \\
By standard elliptic theory there exist pseudodifferential operators
\begin{align*}
Q : \Gamma(F') &\to \Gamma(F)\text{,} \\
R : \Gamma(F) &\to \Gamma(F)
\intertext{and}
R' : \Gamma(F') &\to \Gamma(F')
\end{align*}
of orders $-m$, $-1$ and $-1$, respectively, \st
\begin{align*}
Q\circ P &= \id + R \\
P\circ Q &= \id + R'\text{.}
\end{align*}
Furthermore, $Q$, $R$ and $R'$ induce continuous linear operators
\begin{align*}
Q_j : W^{k_0 + j, p}(F') &\to W^{k_0 + m + j, p}(F)\text{,} \\
R_j : W^{k_0 + m + j, p}(F) &\to W^{k_0 + m + 1 + j, p}(F)
\intertext{and}
R'_j : W^{k_0 + j, p}(F') &\to W^{k_0 + 1 + j, p}(F')
\end{align*}
for all $j\in\N_0$.
These define continuous linear operators
\begin{align*}
\mathbb{Q} : \mathbb{W}^{\mathbf{k}, p}(F') &\to \mathbb{W}^{\mathbf{k} + m, p}(F)\text{,} \\
\mathbb{R} : \mathbb{W}^{\mathbf{k} + m, p}(F) &\to \mathbb{W}^{\mathbf{k} + m + 1, p}(F)
\intertext{and}
\mathbb{R}' : \mathbb{W}^{\mathbf{k}, p}(F') &\to \mathbb{W}^{\mathbf{k} + 1, p}(F')
\end{align*}
\st
\begin{align*}
\mathbb{Q}\circ \mathbb{P} &= \id + \mathbb{R} \\
\mathbb{P}\circ \mathbb{Q} &= \id + \mathbb{R}'
\end{align*}
and $\mathbb{R}$ and $\mathbb{R}'$ are smoothing.
\end{example}

\begin{definition}\label{Definition_sc_Fredholm}
Let $E$ and $E'$ be $\overline{\text{sc}}$-Fr{\'e}chet spaces.
\begin{enumerate}[label=\arabic*.,ref=\arabic*.]
  \item A morphism $F : E \to E'$ is called \emph{Fredholm} \iff $F$ is invertible modulo strongly smoothing operators \ie \iff there exists a morphism $F' : E' \to E$ and strongly smoothing morphisms $K : E \to E$ and $K' : E' \to E'$ \st
\begin{align*}
F' \circ F &= \id_E + K \\
F \circ F' &= \id_{E'} + K'\text{.}
\end{align*}
Such a morphism $F' : E' \to E$ is then called a \emph{Fredholm inverse} of $F$.
  \item The \emph{Fredholm index} $\ind F$ of a Fredholm morphism $F : E \to E'$ is defined as the Fredholm index of $F$ as a Fredholm operator between the underlying Fr{\'e}chet spaces.
\end{enumerate}
\end{definition}

\begin{remark}\label{Remark_sc_Fredholm_implies_Fredholm}
That a Fredholm morphism between $\overline{\text{sc}}$-Fr{\'e}chet spaces defines a Fredholm operator between the underlying Fr{\'e}chet spaces is immediate from \cref{Remark_Strongly_smoothing_implies_compact_scFrechet} and \cref{Definition_Fredholm_operator}.
\end{remark}

\begin{remark}
Note that if $F : E \to E'$ is Fredholm, $F' : E' \to E$ is a Fredholm inverse to $F$ and $K : E' \to E$ is strongly smoothing, then $F'' \definedas F' + K$ is a Fredholm inverse to $F$ as well, by \cref{Corollary_Basic_properties_strongly_smoothing_morphisms}.
\end{remark}

\begin{proposition}\label{Lemma_Basic_properties_sc_Fredholm_operators}
Let $E$, $E'$ and $E''$ be $\overline{\text{sc}}$-Fr{\'e}chet spaces, let $F : E \to E'$ and $F' : E' \to E''$ be Fredholm morphisms and let $K : E \to E'$ be a strongly smoothing morphism.
\begin{enumerate}[label=\arabic*.,ref=\arabic*.]
  \item $F' \circ F : E \to E''$ is Fredholm with $\ind (F'\circ F) = \ind F' + \ind F$.
  \item $F + K : E \to E'$ is Fredholm with $\ind (F + K) = \ind F$.
\end{enumerate}
\end{proposition}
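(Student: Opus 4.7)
The proof proposal is to mimic exactly the Banach space argument, leveraging two facts already established: first, by \cref{Corollary_Basic_properties_strongly_smoothing_morphisms} the strongly smoothing morphisms form a two-sided ideal under composition (and are stable under linear combinations), and second, by \cref{Remark_sc_Fredholm_implies_Fredholm} and \cref{Remark_Strongly_smoothing_implies_compact_scFrechet} the underlying Fr{\'e}chet space level already carries all the index information.

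For the composition, I would pick Fredholm inverses $G : E' \to E$ of $F$ and $G' : E'' \to E'$ of $F'$, with relations $G\circ F = \id_E + K_1$, $F\circ G = \id_{E'} + K_2$, $G'\circ F' = \id_{E'} + L_1$, $F'\circ G' = \id_{E''} + L_2$ where the $K_i, L_i$ are strongly smoothing. Then compute
\[
(G\circ G')\circ (F'\circ F) = G\circ (\id_{E'} + L_1)\circ F = \id_E + K_1 + G\circ L_1\circ F,
\]
and analogously
\[
(F'\circ F)\circ (G\circ G') = \id_{E''} + L_2 + F'\circ K_2\circ G'.
\]
Both remainders are strongly smoothing by the ideal property, so $G\circ G'$ is a Fredholm inverse of $F'\circ F$ and hence $F'\circ F$ is Fredholm. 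The index formula $\ind(F'\circ F) = \ind F' + \ind F$ then drops out immediately from \cref{Lemma_Composition_of_Fredholm_operators} applied to the underlying Fr{\'e}chet continuous linear operators, since $\overline{\text{sc}}$-Fredholmness implies Fredholmness of the underlying operator between Fr{\'e}chet spaces (\cref{Remark_sc_Fredholm_implies_Fredholm}) and the Fredholm indices are defined as the indices of the underlying operators by \cref{Definition_sc_Fredholm}.

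For the perturbation statement, again let $G$ be a Fredholm inverse of $F$ with $G\circ F = \id_E + K_1$ and $F\circ G = \id_{E'} + K_2$ with $K_1, K_2$ strongly smoothing, and let $K : E \to E'$ be strongly smoothing. Then
\[
G\circ (F + K) = \id_E + K_1 + G\circ K, \qquad (F+K)\circ G = \id_{E'} + K_2 + K\circ G,
\]
and both correction terms are strongly smoothing, again by \cref{Corollary_Basic_properties_strongly_smoothing_morphisms}. Hence $G$ is a Fredholm inverse of $F+K$. For the index equality $\ind(F+K) = \ind F$, observe that by \cref{Remark_Strongly_smoothing_implies_compact_scFrechet} the morphism $K$ is compact as an operator between the underlying Fr{\'e}chet spaces, so the invariance of the Fredholm index under compact perturbations (\cref{Lemma_Composition_of_Fredholm_operators}, part 2) applies and yields the claim.

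There is no real obstacle in this argument: the heavy lifting has already been done in the earlier results, particularly the ideal property of strongly smoothing morphisms and the identification of the $\overline{\text{sc}}$-Fredholm index with the ordinary Fredholm index of the underlying operator. The only point requiring slight care is making sure that the Fredholm inverses $G$, $G'$ can indeed be chosen so that the composition identities hold in the $\overline{\text{sc}}$-Fr{\'e}chet sense (not just the underlying Fr{\'e}chet sense), but this is automatic from \cref{Definition_sc_Fredholm} itself.
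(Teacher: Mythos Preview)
Your proposal is correct and is precisely the argument the paper has in mind: the paper's own proof is the single line ``Immediate from the definition using \cref{Corollary_Basic_properties_strongly_smoothing_morphisms} and \cref{Remark_Strongly_smoothing_implies_compact_scFrechet},'' and you have simply spelled out those details. The ideal property gives the Fredholm inverses for $F'\circ F$ and $F+K$, and the index statements reduce to \cref{Lemma_Composition_of_Fredholm_operators} on the underlying Fr{\'e}chet spaces exactly as you describe.
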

\begin{proof}
Immediate from the definition using \cref{Corollary_Basic_properties_strongly_smoothing_morphisms} and \cref{Remark_Strongly_smoothing_implies_compact_scFrechet}.
\end{proof}

For the following theorem remember that a Fredholm morphism between $\overline{\text{sc}}$-Fr{\'e}chet spaces defines a Fredholm operator between the underlying Fr{\'e}chet spaces (\cf \cref{Remark_sc_Fredholm_implies_Fredholm}) and hence its kernel is finite dimensional (\cf \cref{Proposition_Fredholm_I}). \\
Consequently (\cf \cref{Example_Finite_dimensional_split_sc_subspace}) its kernel is a split subspace. \\
Maps of the form $\pr^Z_W$ denote a projection \wrt some splitting $Z = W \oplus W'$ and maps of the form $\iota^W_Z$ denote inclusions of a subspace $W \subseteq Z$. \\
The theorem is stated in a fairly complicated manner because it then provides several technial results which will be used later on.
For the gist of the statement it may be advisable to read its \cref{Proposition_sc_Fredholm_morphism} first.
\begin{theorem}\label{Theorem_sc_Fredholm_morphism}
Let $E$ and $E'$ be $\overline{\text{sc}}$-Fr{\'e}chet spaces and let $F : E \to E'$ be a Fredholm morphism. \\
Then $F$ is a Fredholm operator between the Fr{\'e}chet spaces underlying $E$ and $E'$, \ie $\dim \ker F < \infty$ and $\dim\coker F < \infty$, and the following holds: \\
Given any Fredholm inverse $F' : E' \to E$ to $F$ and any splitting $E = \ker F \oplus X$, for some $\overline{\text{sc}}$-subspace $X \subseteq E$, there exist the following:
\begin{enumerate}[label=\arabic*.,ref=\arabic*.]
  \item\label{Theorem_sc_Fredholm_morphism_1} finite dimensional subspaces $A, B \subseteq X$ and $C' \subseteq E'$ with images
\[
A' \definedas F(A) \subseteq \ker F'\text{,}\quad B' \definedas F_\infty(B) \subseteq C' \quad\text{and}\quad C \definedas F'(C') \subseteq \ker F\text{,}
\]
  \item\label{Theorem_sc_Fredholm_morphism_2} finite dimensional subspaces $\overline{A}' \subseteq \ker F'$, $\overline{B}' \subseteq C'$ and $\overline{B} \subseteq \ker F$,
  \item\label{Theorem_sc_Fredholm_morphism_3} $\overline{\text{sc}}$-subspaces $Y \subseteq E$ and $Y' \subseteq E'$,
  \item\label{Theorem_sc_Fredholm_morphism_4} and splittings
\begin{align*}
E &= \underbrace{C \oplus \overline{C}}_{=\; \ker F} \oplus \underbrace{A \oplus B \oplus Y}_{=\; X} \\
E' &= \underbrace{A' \oplus \overline{A}'}_{=\; \ker F'} \oplus \underbrace{B' \oplus \overline{B}'}_{=\; C'} \oplus Y'\text{.}
\end{align*}
\end{enumerate}
\begin{align*}
\pr^{E'}_{A'}\circ F\circ \iota^{A}_{E} : A &\to A'
\intertext{and}
\pr^{E'}_{B'}\circ F\circ \iota^{B}_{E} : B &\to B'
\end{align*}
are isomorphisms of finite dimensional vector spaces.
Consequently one can define
\begin{align*}
F'' &\definedas F' - F'\circ \iota^{C'}_{E'}\circ \pr^{E'}_{C'} \;+ \\
&\quad\; +\; \iota^{A}_{E}\circ \left(\pr^{E'}_{A'}\circ F\circ \iota^{A}_{E}\right)\inv \circ \pr^{E'}_{A'} \;+ \\ 
&\quad\; +\; \iota^{B}_{E}\circ \left(\pr^{E'}_{B'}\circ F\circ \iota^{B}_{E}\right)\inv\circ \pr^{E'}_{B'}
\end{align*}
and with
\[
H \definedas F''\circ F \quad\text{and}\quad H' \definedas F\circ F''
\]
the following hold:
\begin{enumerate}[label=\arabic*.,ref=\arabic*.]\setcounter{enumi}{4}
  \item\label{Theorem_sc_Fredholm_morphism_5} $F'' : E' \to E$ defines a Fredholm inverse to $F$.
  \item\label{Theorem_sc_Fredholm_morphism_6} $\im F \subseteq E'$ and $\im F'' \subseteq E$ are split $\overline{\text{sc}}$-subspaces with
\begin{align*}
\ker F &= \ker H & \ker F'' &= \ker H' \\
\im F &= \im H' & \im F'' &= \im H \\
& & &= X \\
E &= \ker F \oplus \im F'' & E' &= \ker F'' \oplus \im F \\
&= \ker H \oplus \im H & &= \ker H' \oplus \im H' \\
&= \ker F \oplus X\text{.} & &
\end{align*}
  \item\label{Theorem_sc_Fredholm_morphism_7} $F|_X : X \to \im F$, $F'' : \im F \to X$, $H|_X : X \to X$ and $H'|_{\im F} : \im F \to \im F$ are isomorphisms with
\begin{align*}
(F|_X)\inv &= (H|_X)\inv \circ F''|_{\im F} = F'' \circ (H'|_{\im F})\inv \\
(F''|_X)\inv &= (H'|_{\im F})\inv \circ F|_{X} = F \circ (H|_{X})\inv\text{.}
\end{align*}
\end{enumerate}
\end{theorem}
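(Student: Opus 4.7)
By Remark \ref{Remark_sc_Fredholm_implies_Fredholm}, $F$ (and symmetrically its inverse $F'$) is a Fredholm operator between the underlying Fr\'echet spaces, so $\ker F, \ker F', \coker F, \coker F'$ are finite-dimensional and $\im F$ is closed in $E'$. Evaluating the relations $F'\circ F = \id_E + K$ and $F\circ F' = \id_{E'} + K'$ on the respective kernels gives $K|_{\ker F} = -\id$ and $K'|_{\ker F'} = -\id$, whence $\ker F' \subseteq \ker(\id + K') = \ker(F\circ F')$, the latter being finite-dimensional since $\id + K'$ is Fredholm of index $0$. The crucial structural fact that I need is that $\im F \subseteq E'$ is a split $\overline{\text{sc}}$-subspace and that $F|_X\colon X \to \im F$ is an $\overline{\text{sc}}$-isomorphism. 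This I would extract by passing to a chain-level extension $\mathbb{F}$ of $F$ (which is Fredholm in the chain-level sense) and invoking Proposition \ref{Proposition_Fredholm_operator_between_sc_chains}, \labelcref{Proposition_Fredholm_operator_between_sc_chains_4}, which provides exactly such a splitting of sc-chains; descending to the $\overline{\text{sc}}$-Fr\'echet level gives the splitting for some complement $X_0$, and since any two prescribed complements differ only by a finite-dimensional correction (which is always split by Example \ref{Example_Finite_dimensional_split_sc_subspace}), the same holds for the prescribed $X$.

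\textbf{Selection of the finite-dimensional subspaces.} I set $A' \definedas \ker F' \cap \im F$ and $A \definedas (F|_X)^{-1}(A') \subseteq X$, using that $F|_X\colon X \to \im F$ is an $\overline{\text{sc}}$-isomorphism (in particular injective, with $\im F = F(X)$). Pick $\overline{A}'$ complementary to $A'$ in $\ker F'$. Let $C'$ be a complement of $\ker F'$ inside $\ker(\id + K')$; then $C \definedas F'(C') \subseteq \ker F$ because $F\circ F'(C') = (\id + K')(C') = 0$, and $F'|_{C'}\colon C' \to C$ is injective since $C' \cap \ker F' = 0$. Set $\overline{C}$ complementary to $C$ in $\ker F$, $B' \definedas C' \cap \im F$, $B \definedas (F|_X)^{-1}(B')$, $\overline{B}'$ complementary to $B'$ in $C'$, and $\overline{B}$ to absorb any remaining dimension (possibly $0$). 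All of these are finite-dimensional, hence split $\overline{\text{sc}}$-subspaces by Example \ref{Example_Finite_dimensional_split_sc_subspace}. Extracting a split $\overline{\text{sc}}$-complement $Y' \subseteq \im F$ of the finite-dimensional $A' \oplus B'$, and defining $Y \definedas (F|_X)^{-1}(Y') \subseteq X$, which is a split $\overline{\text{sc}}$-subspace because $F|_X$ is an $\overline{\text{sc}}$-isomorphism, assembles the required splittings $E = C \oplus \overline{C} \oplus A \oplus B \oplus Y$ and $E' = A' \oplus \overline{A}' \oplus B' \oplus \overline{B}' \oplus Y'$.

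\textbf{Verification of the properties of $F''$.} Once the splittings are in place, the identities in \labelcref{Theorem_sc_Fredholm_morphism_5,Theorem_sc_Fredholm_morphism_6,Theorem_sc_Fredholm_morphism_7} reduce to block-wise calculation on each summand: $F''$ coincides with $F'$ on $Y'$ and $\overline{A}'$, replaces the vanishing $F'$-action on $A'$ and the $F'$-action on $B'$ by the inverses of $F|_A$ and $F|_B$ respectively, and annihilates $\overline{B}'$ via the subtraction term $F'\circ \iota^{C'}_{E'}\circ \pr^{E'}_{C'}$. Consequently $H = F''\circ F$ acts as the projection onto $X$ along $\ker F$ and $H' = F\circ F''$ as the projection onto $\im F$ along $\overline{A}' \oplus \overline{B}'$, from which the claims about kernels, images, and the isomorphisms $F|_X, F''|_{\im F}$ follow immediately. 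Finally, the difference $F'' - F'$ factors through the finite-dimensional spaces $A', B', C'$, so by Corollary \ref{Corollary_Basic_properties_strongly_smoothing_morphisms} it is strongly smoothing, showing that $F''$ is again a Fredholm inverse of $F$.

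\textbf{Main obstacle.} The linear-algebraic bookkeeping in finite dimensions is routine; the genuine obstacle is ensuring that $\im F$ and the complements $Y', Y$ are bona fide split $\overline{\text{sc}}$-subspaces rather than merely closed subspaces of the underlying Fr\'echet spaces. This hinges entirely on the chain-level structure theorem Proposition \ref{Proposition_Fredholm_operator_between_sc_chains}, which transports cleanly across the equivalence relation defining $\overline{\text{sc}}$-Fr\'echet spaces, together with Example \ref{Example_Finite_dimensional_split_sc_subspace} to handle the finite-dimensional summands.
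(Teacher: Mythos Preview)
Your first paragraph contains a genuine circularity. You assert that a chain-level extension $\mathbb{F}$ of $F$ ``is Fredholm in the chain-level sense'' so that Proposition~\ref{Proposition_Fredholm_operator_between_sc_chains} applies and hands you that $\im F$ is a split $\overline{\text{sc}}$-subspace and $F|_X$ an $\overline{\text{sc}}$-isomorphism. But these are precisely conclusions~\labelcref{Theorem_sc_Fredholm_morphism_6} and~\labelcref{Theorem_sc_Fredholm_morphism_7} of the theorem (and the content of Corollary~\ref{Proposition_sc_Fredholm_morphism}, which is deduced \emph{from} this theorem). The hypothesis that $F$ is $\overline{\text{sc}}$-Fredholm only supplies a Fredholm inverse $F'$ as a \emph{weak} morphism: its chain extension $\mathbb{F}'$ maps $\mathbb{E}'^{\mathbf{l}} \to \mathbb{E}$ for a nontrivial rescaling $\mathbf{l}$, so $\mathbb{F}'\circ\mathbb{F}^{\mathbf{l}}$ is a map $\mathbb{E}^{\mathbf{l}} \to \mathbb{E}$, not an endomorphism of $\mathbb{E}$. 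None of the equivalent conditions of Proposition~\ref{Proposition_Fredholm_operator_between_sc_chains} is then available for $\mathbb{F}$ itself: there is no chain-level inverse $\mathbb{E}' \to \mathbb{E}$ without rescaling, only \emph{weak} regularising is established (\cf the Claim in the paper's proof), and $F_0$ being Banach-Fredholm is not evident either. The paper works instead with $\mathbb{G} = \id_{\mathbb{E}} + \overline{\mathbb{K}}$ and $\mathbb{G}' = \id_{\mathbb{E}'} + \overline{\mathbb{K}}'$, which \emph{are} chain-Fredholm on the nose, and performs the two-step modification $F' \to F'' \to F'''$ (this is where $A$, $B$, $C'$ actually arise) to force $\ker G_\infty = \ker F_\infty$ and $\ker G'_\infty = \ker F'_\infty$; only then does an index count yield $\im G'_\infty = \im F_\infty$, so that $\im \mathbb{F}$ inherits its subchain structure from $\im \mathbb{G}'$, and an explicit weak-morphism inverse of $\tilde{\mathbb{F}}$ is built from $\tilde{\mathbb{G}}'^{-1}$.

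Your third paragraph also overstates: $H = F''\circ F$ is \emph{not} the projection onto $X$. On your summand $Y$ one has $F''|_{Y'} = F'|_{Y'}$ and hence $H|_Y = F'\circ F|_Y = (\id_E + K)|_Y$, and there is no reason $K|_Y = 0$. What item~\labelcref{Theorem_sc_Fredholm_morphism_7} actually asserts is that $H|_X : X \to X$ is an \emph{isomorphism} (supplied at the chain level by $\tilde{\mathbb{G}}$), with $(F|_X)^{-1} = (H|_X)^{-1}\circ F''|_{\im F}$; it is not a projection.
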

\begin{proof}
That $F : E \to E'$ defines a Fredholm operator between Fr{\'e}chet spaces has already been remarked in \cref{Remark_sc_Fredholm_implies_Fredholm}.

Let $K : E \to E$ and $K' : E' \to E'$ be strongly smoothing with $F'\circ F = \id_E + K$, $F\circ F' = \id_{E'} + K'$. \\
After choosing $\overline{\text{sc}}$-structures $(\mathbb{E},\phi)$ and $(\mathbb{E}',\phi')$ there exist (\cf \cref{Lemma_Morphisms_of_sc_Frechet_spaces}) strictly monotone increasing sequences $\mathbf{k},\mathbf{l} \subseteq \N_0$ and continuous linear operators $\mathbb{F} : \mathbb{E}^{\mathbf{k}} \to \mathbb{E}'$ and $\mathbb{F}' : \mathbb{E}'^{\mathbf{l}} \to \mathbb{E}$ with $F_\infty = \phi'^{-1}\circ F \circ \phi$ and $F'_\infty = \phi\inv\circ F'\circ\phi'$. \\
Furthermore, one can assume that there is a splitting $\mathbb{E} = \ker F_\infty \oplus \mathbb{X}$ \st $\phi(X_\infty) = X$ and analogously that there is a splitting $\mathbb{E}' = \ker F'_\infty \oplus \mathbb{X}'$, by \cref{Example_Finite_dimensional_split_subchain}. \\
Replacing $\mathbb{E}$ by $\mathbb{E}^{\mathbf{k}}$, $\mathbb{F}'$ by $\mathbb{F}'^{\mathbf{k}}$ and $\mathbb{X}$ by $\mathbb{X}^{\mathbf{k}}$, one can assume that $\mathbf{k} = (j)_{j\in\N_0}$, so $\mathbb{F} : \mathbb{E} \to \mathbb{E}'$ and $\mathbb{F}' : \mathbb{E}'^{\mathbf{l}} \to \mathbb{E}$. \\
Using \cref{Lemma_Operators_covering_id}, $\mathbb{K} \definedas \mathbb{F}'\circ \mathbb{F}^{\mathbf{l}} - \mathbb{I}^{\mathbf{l}} : \mathbb{E}^{\mathbf{l}} \to \mathbb{E}$ is an extension of $K_\infty = \phi^{-1}\circ K\circ \phi = F'_\infty\circ F_\infty - \id_{E_\infty}$ and using \cref{Lemma_Composition_smoothing_operators,Lemma_Characterisation_strongly_smoothing_weak_morphisms} to modify $\mathbb{F}$ and $\mathbb{F}'$, one can assume that $\mathbb{K}$ is strongly smoothing.
And similarly for an extension $\mathbb{K}' = \mathbb{F} \circ \mathbb{F}' - \mathbb{I}'^{\mathbf{l}} : \mathbb{E}'^{\mathbf{l}} \to \mathbb{E}'$ of $K'_\infty = \phi'^{-1}\circ K'\circ \phi' = F_\infty\circ F'_\infty - \id_{E'_\infty}$.
\begin{claim}
$\mathbb{F}$ is weakly regularising.
\end{claim}
\begin{proof}
From $\mathbb{F}'\circ \mathbb{F}^{\mathbf{l}} = \mathbb{I}^{\mathbf{k}\circ\mathbf{l}} + \mathbb{K}$ one has that $F'_0\circ F_{l_0} = \iota^{k_{l_0}}_0 + K_0 : E^{\mathbf{k}}_{l_0} = E_{k_{l_0}} \to E_0$.
So if $e \in E^{\mathbf{k}}_{l_0}$ with $F_{l_0}(e) \in E'_\infty$, then $F'_0(F_{l_0}(e)) \in E_\infty$ and $K_0(e) \in E_\infty$ because $\mathbb{K}$ is strongly smoothing.
Hence $\iota^{k_{l_0}}_0(e) = F'_0(F_{l_0}(e))) - K_0(e) \in E_\infty$, so $e \in E_\infty$.
\end{proof}
Now replace $\mathbb{E}$ by $\mathbb{E}^{\mathbf{m}}$, $\mathbb{F}$ by $\mathbb{I}'^{\mathbf{m}}\circ \mathbb{F}^{\mathbf{m}}$ and $\mathbb{F}'$ by $\mathbb{F}'^{\mathbf{m}}$.
Then (\cf \cref{Remark_Weakly_regularising}) for all $j\in\N_0$, $\ker F_j = \ker F_\infty \subseteq E_\infty$ which is finite dimensional since $F : E \to E'$ and hence $F_\infty : E_\infty \to E'_\infty$ is Fredholm. \\
Applying the same reasoning to $\mathbb{F}'$, one can also assume that $\ker F'_j = \ker F'_\infty \subseteq E'_\infty$ for all $j \in \N_0$. \\
Because $\mathbb{K}$ and $\mathbb{K}'$ are strongly smoothing, there are continuous linear operators $\tilde{\mathbb{K}} : \mathbb{E}^{\mathbf{l}} \to \mathbb{E}^{\mathbf{l}}$ and $\tilde{\mathbb{K}}' : \mathbb{E}'^{\mathbf{l}} \to \mathbb{E}'^{\mathbf{l}}$ \st $\mathbb{K} = \mathbb{I}^{\mathbf{l}}\circ \tilde{\mathbb{K}}$ and $\mathbb{K}' = \mathbb{I}'^{\mathbf{l}}\circ \tilde{\mathbb{K}}'$.
Furthermore, by \cref{Corollary_Characterisation_strongly_smoothing_I}, one can assume that $\tilde{\mathbb{K}} = \overline{\mathbb{K}}^{\mathbf{l}}$ for a strongly smoothing continuous linear operator $\overline{\mathbb{K}} : \mathbb{E} \to \mathbb{E}$ and similarly $\tilde{\mathbb{K}}' = \overline{\mathbb{K}}'^{\mathbf{l}}$ for a strongly smoothing continuous linear operator $\overline{\mathbb{K}}' : \mathbb{E}' \to \mathbb{E}'$. \\
In conclusion,
\begin{align*}
\mathbb{F}'\circ \mathbb{F}^{\mathbf{l}} &= \mathbb{I}^{\mathbf{l}}\circ (\underbrace{\id_{\mathbb{E}} + \overline{\mathbb{K}}}_{\defines\; \mathbb{G}})^{\mathbf{l}} \\
\mathbb{F}\circ \mathbb{F}' &= \mathbb{I}'^{\mathbf{l}}\circ (\underbrace{\id_{\mathbb{E}'} + \overline{\mathbb{K}}'}_{\defines\; \mathbb{G}'})^{\mathbf{l}}
\end{align*}
Because $G_\infty = F'_\infty\circ F_\infty$, one has $\ker F_\infty \subseteq \ker G_\infty$ and denoting 
\[
\tilde{A} \definedas (F_\infty|_{X_\infty})\inv(\ker F'_\infty)\text{,}
\]
$\ker G_\infty = \ker F_\infty \oplus \tilde{A}$, which is finite dimensional as well, because $G_\infty$ is Fredholm as an operator between Fr{\'e}chet spaces.
By \cref{Example_Finite_dimensional_split_subchain} again, there hence exists a splitting $\mathbb{X} = \tilde{A} \oplus \mathbb{A}$ and hence $\mathbb{E} = \ker F_\infty \oplus \tilde{A} \oplus \mathbb{A}$, where $\ker G_\infty = \ker F_\infty \oplus \tilde{A}$ and $F_\infty(A_\infty) \cap \ker F'_\infty = \{0\}$.
Applying the same reasoning to $\mathbb{G}'$, one obtains a similar splitting $\mathbb{E}' = \ker F'_\infty \oplus \tilde{A}' \oplus \mathbb{A}'$, where $\ker G'_\infty = \ker F'_\infty \oplus \tilde{A}'$ and $F'_\infty(A'_\infty) \cap \ker F_\infty = \{0\}$.
Note that $F_\infty$ maps $\tilde{A}$ injectively into $\ker F'_\infty$ and $F'_\infty$ map $\tilde{A}'$ injectively into $\ker F_\infty$.
Hence, one can further decompose
\begin{align}
\mathbb{E} &= \underbrace{\overbrace{F'_\infty(\tilde{A}') \oplus \overline{A}}^{=\; \ker F_\infty} \oplus \tilde{A}}_{=\; \ker G_\infty} \oplus \mathbb{A}\label{Equation_Decomposition_Fredholm_operator_1} \\
\mathbb{E}' &= \underbrace{\overbrace{F_\infty(\tilde{A}) \oplus \overline{A}'}^{=\; \ker F'_\infty} \oplus \tilde{A}'}_{=\; \ker G'_\infty} \oplus \mathbb{A}'\text{.}\label{Equation_Decomposition_Fredholm_operator_2}
\end{align}
W.\,r.\,t.~this decomposition, one can then write in matrix form
{\allowdisplaybreaks
\begin{align*}
F_\infty &= \begin{blockarray}{ccccc}
F'_\infty(\tilde{A}') & \overline{A} & \tilde{A} & A_\infty \\
\begin{block}{(cccc)c}
0 & 0 & \alpha & 0 & F_\infty(\tilde{A}) \\
0 & 0 & 0 & 0 & \overline{A}' \\
0 & 0 & 0 & a_1 & \tilde{A}' \\
0 & 0 & 0 & a_2 & A'_\infty \\
\end{block}
\end{blockarray} \\
F'_\infty &= \begin{blockarray}{ccccc}
F_\infty(\tilde{A}) & \overline{A}' & \tilde{A}' & A'_\infty \\
\begin{block}{(cccc)c}
0 & 0 & \beta & 0 & F'_\infty(\tilde{A}') \\
0 & 0 & 0 & 0 & \overline{A} \\
0 & 0 & 0 & b_1 & \tilde{A} \\
0 & 0 & 0 & b_2 & A_\infty \\
\end{block}
\end{blockarray} \\
G_\infty &= \begin{blockarray}{ccccc}
F'_\infty(\tilde{A}') & \overline{A} & \tilde{A} & A_\infty \\
\begin{block}{(cccc)c}
0 & 0 & 0 & \beta a_1 & F'_\infty(\tilde{A}') \\
0 & 0 & 0 & 0 & \overline{A} \\
0 & 0 & 0 & b_1a_2 & \tilde{A} \\
0 & 0 & 0 & b_2a_2 & A_\infty \\
\end{block}
\end{blockarray} \\
G'_\infty &= \begin{blockarray}{ccccc}
F_\infty(\tilde{A}) & \overline{A}' & \tilde{A}' & A'_\infty \\
\begin{block}{(cccc)c}
0 & 0 & 0 & \alpha b_1 & F_\infty(\tilde{A}) \\
0 & 0 & 0 & 0 & \overline{A}' \\
0 & 0 & 0 & a_1b_2 & \tilde{A}' \\
0 & 0 & 0 & a_2b_2 & A'_\infty \\
\end{block}
\end{blockarray}\text{,}
\end{align*}
}
where by the choices of the above splittings, $\alpha = \pr^{E'_\infty}_{F_\infty(\tilde{A})}\circ F_\infty\circ \iota^{\tilde{A}}_{E_\infty}$ is invertible as a linear map between finite dimensional vector spaces.
Now define $\mathbb{F}'' : \mathbb{E}'^{\mathbf{l}} \to \mathbb{E}$ by
\[
\mathbb{F}'' \definedas \mathbb{F}' - \mathbb{F}'\circ \iota^{\tilde{A}'}_{\mathbb{E}'}\circ \pr^{\mathbb{E}'}_{\tilde{A}'} + \iota^{\tilde{A}}_{\mathbb{E}}\circ \left(\pr^{E'_\infty}_{F_\infty(\tilde{A})}\circ F_\infty\circ \iota^{\tilde{A}}_{E_\infty}\right)\inv \circ \pr^{\mathbb{E}'}_{F_\infty(\tilde{A})}\text{,}
\]
where $\iota^{\tilde{A}}_{\mathbb{E}}$ is the inclusion and the maps of the form $\pr^X_Y$ are the projections \wrt the splittings above.
Note that the two summands on the right are strongly smoothing operators, by \cref{Example_Strongly_smoothing_morphisms}, so $\mathbb{F}''$ defines a Fredholm inverse $F'' : E' \to E$ to $F$.
Written in matrix form,
\[
F''_\infty = \begin{blockarray}{ccccc}
F_\infty(\tilde{A}) & \overline{A}' & \tilde{A}' & A'_\infty \\
\begin{block}{(cccc)c}
0 & 0 & 0 & 0 & F'_\infty(\tilde{A}') \\
0 & 0 & 0 & 0 & \overline{A} \\
\alpha\inv & 0 & 0 & b_1 & \tilde{A} \\
0 & 0 & 0 & b_2 & A_\infty \\
\end{block}
\end{blockarray}\text{.}
\]
Now again $\mathbb{F}''\circ \mathbb{F}^{\mathbf{l}} = \mathbb{I}^{\mathbf{l}}\circ \mathbb{H}^{\mathbf{l}}$ and $\mathbb{F}\circ \mathbb{F}'' = \mathbb{I}'^{\mathbf{l}}\circ \mathbb{H}'^{\mathbf{l}}$, where
\begin{align*}
H_\infty &= \begin{blockarray}{ccccc}
F'_\infty(\tilde{A}') & \overline{A} & \tilde{A} & A_\infty \\
\begin{block}{(cccc)c}
0 & 0 & 0 & 0 & F'_\infty(\tilde{A}') \\
0 & 0 & 0 & 0 & \overline{A} \\
0 & 0 & \id_{\tilde{A}} & b_1a_2 & \tilde{A} \\
0 & 0 & 0 & b_2a_2 & A_\infty \\
\end{block}
\end{blockarray} \\
H'_\infty &= \begin{blockarray}{ccccc}
F_\infty(\tilde{A}) & \overline{A}' & \tilde{A}' & A'_\infty \\
\begin{block}{(cccc)c}
\id_{F_\infty(\tilde{A})} & 0 & 0 & \alpha b_1 & F_\infty(\tilde{A}) \\
0 & 0 & 0 & 0 & \overline{A}' \\
0 & 0 & 0 & a_1b_2 & \tilde{A}' \\
0 & 0 & 0 & a_2b_2 & A'_\infty \\
\end{block}
\end{blockarray}\text{.}
\end{align*}
Note that $H'_\infty|_{A'_\infty} = G'_\infty|_{A'_\infty}$ and hence $\ker H'_\infty|_{A'_\infty} = \ker G'_\infty|_{A'_\infty} = \{0\}$, by choice of $\mathbb{A}'$.
Just the same, $F''_\infty|_{A'_\infty} = F'_\infty|_{A'_\infty}$ and hence $\ker F''_\infty|_{A'_\infty} = \ker F'_\infty|_{A'_\infty} = \{0\}$, by choice of $\mathbb{A}'$.
It follows that $\ker F''_\infty = \overline{A}' \oplus \tilde{A}' = \ker H'_\infty$. \\
Also, from the above matrix representations one sees that $\ker H_\infty = F'_\infty(\tilde{A}')\oplus \overline{A} \oplus \ker H_\infty|_{A_\infty}$ and $H_\infty|_{A_\infty}=  F''_\infty|_{A'_\infty}\circ \underbrace{\pr^{E'_\infty}_{A'_\infty}\circ F_\infty|_{A_\infty}}_{=\; a_2} = F'_\infty|_{A'_\infty}\circ a_2$, hence $\ker H_\infty|_{A_\infty} = \ker a_2$, because $F'_\infty|_{A'_\infty}$ is injective by choice of $\mathbb{A}'$.
Define
\[
\tilde{A}'' \definedas \ker \pr^{E'_\infty}_{A'_\infty}\circ F_\infty|_{A_\infty} \subseteq A_\infty\text{.}
\]
Then $F_\infty(\tilde{A}'') = a_1(\tilde{A}'') \subseteq \tilde{A}'$ and $\pr^{E'_\infty}_{F_\infty(\tilde{A}'')}\circ F_\infty\circ \iota^{\tilde{A}''}_{E_\infty} : \tilde{A}'' \to F_\infty(\tilde{A}'')$ is a bijective linear map between finite dimensional vector spaces.
Hence one can define $\mathbb{F}''' : \mathbb{E}'^{\mathbf{l}} \to \mathbb{E}$ by
\begin{align*}
\mathbb{F}''' &\definedas \mathbb{F}'' + \iota^{F_\infty(\tilde{A}'')}_{\mathbb{E}'}\circ \left(\pr^{E'_\infty}_{F_\infty(\tilde{A}'')}\circ F_\infty\circ \iota^{\tilde{A}''}_{E_\infty}\right)\inv\circ \pr^{\mathbb{E}}_{\tilde{A}''} \\
&= \mathbb{F}' - \mathbb{F}'\circ \iota^{\tilde{A}'}_{\mathbb{E}'}\circ \pr^{\mathbb{E}'}_{\tilde{A}'} \;+ \\
&\quad\; +\; \iota^{\tilde{A}}_{\mathbb{E}}\circ \left(\pr^{E'_\infty}_{F_\infty(\tilde{A})}\circ F_\infty\circ \iota^{\tilde{A}}_{E_\infty}\right)\inv \circ \pr^{\mathbb{E}'}_{F_\infty(\tilde{A})} \;+ \\ 
&\quad\; +\; \iota^{\tilde{A}''}_{\mathbb{E}}\circ \left(\pr^{E'_\infty}_{F_\infty(\tilde{A}'')}\circ F_\infty\circ \iota^{\tilde{A}''}_{E_\infty}\right)\inv\circ \pr^{\mathbb{E}'}_{F_\infty(\tilde{A}'')}
\end{align*}
If one now applies the same reasoning as above, replacing $\mathbb{F}'$ by $\mathbb{F}''$, then in the decompositions \labelcref{Equation_Decomposition_Fredholm_operator_1,Equation_Decomposition_Fredholm_operator_2} one gets after this replacement, $\tilde{A}' = \{0\}$.
A simple calculation with the matrix representations of $F_\infty$ and $F'''_\infty$ \wrt this new decomposition then shows that
\begin{align*}
\ker F_\infty &= \ker(F'''_\infty\circ F_\infty)
\intertext{and}
\ker F'''_\infty &= \ker (F_\infty\circ F'''_\infty)\text{.}
\end{align*}
Using the above, replacing $\mathbb{F}'$ by $\mathbb{F}''$, one can hence assume that $\ker G_\infty = \ker F_\infty$ and $\ker G'_\infty = \ker F'_\infty$.
Also note that $\mathbb{G} : \mathbb{E} \to \mathbb{E}$ and $\mathbb{G}' : \mathbb{E}' \to \mathbb{E}'$ are Fredholm operators of index $0$.
Also, from $G_\infty = F'_\infty\circ F_\infty$ and $G'_\infty = F_\infty\circ F'_\infty$, it follows that $\im G_\infty \subseteq \im F'_\infty$ and $\im G'_\infty \subseteq \im F_\infty$, so $\dim \coker G_\infty \geq \dim \coker F'_\infty$ and $\dim\coker G'_\infty \geq \dim \coker F_\infty$.
Consequently,
\begin{align*}
\dim \ker F_\infty &= \dim \ker G_\infty \\
&= \dim \coker G_\infty \\
&\geq \dim \coker F'_\infty
\intertext{and}
\dim \ker F'_\infty &= \dim \ker G'_\infty \\
&= \dim \coker G'_\infty \\
&\geq \dim \coker F_\infty\text{.}
\end{align*}
Using that furthermore, because $F_\infty$ and $F'_\infty$ are Fredholm inverses to each other, that by \cref{Lemma_Composition_of_Fredholm_operators} $\ind F_\infty = -\ind F'_\infty$, it follows that
\[
0 \leq \dim \ker F_\infty - \dim \coker F'_\infty = \dim \coker F_\infty - \dim \ker F'_\infty \leq 0\text{,}
\]
hence
\begin{align*}
\dim \ker F_\infty &= \dim \coker F'_\infty
\intertext{and}
\dim \coker F_\infty &= \dim \ker F'_\infty\text{.}
\end{align*}
By the above then also $\dim \coker G_\infty = \dim \coker F'_\infty$ and $\dim \coker G'_\infty = \dim \coker F_\infty$.
Because $\mathbb{G} : \mathbb{E} \to \mathbb{E}$ and $\mathbb{G}' : \mathbb{E}' \to \mathbb{E}'$ are Fredholm operators, by \cref{Proposition_Fredholm_operator_between_sc_chains}, $\im \mathbb{G} \subseteq \mathbb{E}$ and $\im \mathbb{G}' \subseteq \mathbb{E}'$ are well defined subchains.
Furthermore, if $e' \in \im F_\infty \cap \ker F'_\infty$, $e' = F_\infty(e)$ for some $e \in E_\infty$, then $0 = F'_\infty\circ F_\infty(e) = G_\infty(e)$, hence $e \in \ker G_\infty = \ker F_\infty$ and so $e' = 0$.
It follows that $\im G_\infty \cap \ker F'_\infty = \im F_\infty \cap \ker F'_\infty = \{0\}$ and similarly $\im G'_\infty \cap \ker F_\infty = \im F'_\infty \cap \ker F_\infty = \{0\}$.
So, summarising,
\begin{align*}
\ker G_\infty &= \ker F_\infty & \ker G'_\infty &= \ker F'_\infty \\
\im G_\infty &= \im F'_\infty & \im G'_\infty &= \im F_\infty \\
E_\infty &= \ker F_\infty \oplus \im F'_\infty & E'_\infty &= \ker F'_\infty \oplus \im F_\infty \\
&= \ker G_\infty \oplus \im G_\infty & &= \ker G'_\infty \oplus \im G'_\infty\text{.}
\end{align*}
Using the above, define $\mathbb{X}'' \definedas \im \mathbb{H}'$.
The above shows that $X = \phi(X_\infty)$ and $\im F = \phi(X''_\infty)$ are split $\overline{\text{sc}}$-subspaces of $E$ and $E'$ with $\overline{\text{sc}}$-complements $\ker F$ and $\ker F''$, respectively. \\
Define
\begin{align*}
A &\definedas \phi(\tilde{A}) \\
B &\definedas \phi(\tilde{A}'')
\intertext{and}
C' &\definedas \phi'(\tilde{A}')\text{.}
\end{align*}
From the above,
\begin{align*}
\tilde{\mathbb{G}} &\definedas \mathbb{G}|_{\mathbb{X}} : \mathbb{X} \to \mathbb{X} \quad\text{and} \\
\tilde{\mathbb{G}}' &\definedas \mathbb{G}'|_{\mathbb{X}''} : \mathbb{X}'' \to \mathbb{X}''
\intertext{are well-defined isomorphisms, and}
\tilde{\mathbb{F}} &\definedas \mathbb{F}|_{\mathbb{X}} : \mathbb{X} \to \mathbb{X}'' \quad\text{and} \\
\tilde{\mathbb{F}}' &\definedas \mathbb{F}'|_{\mathbb{X}''^{\mathbf{l}}} : \mathbb{X}''^{\mathbf{l}} \to \mathbb{X}
\intertext{are well-defined and satisfy}
\tilde{\mathbb{F}}' \circ \tilde{\mathbb{F}}^{\mathbf{l}} &= \mathbb{I}^{\mathbf{l}}\circ \tilde{\mathbb{G}}^{\mathbf{l}} \\
\tilde{\mathbb{F}} \circ \tilde{\mathbb{F}}' &= \mathbb{I}'^{\mathbf{l}}\circ \tilde{\mathbb{G}}'^{\mathbf{l}}\text{.}
\end{align*}
$\mathbb{H} \definedas \tilde{\mathbb{F}}'\circ \left(\tilde{\mathbb{G}}'^{-1}\right)^{\mathbf{l}} : \mathbb{X}''^{\mathbf{l}} \to \mathbb{X}$ is a well-defined continuous linear operator which extends $H_\infty = \tilde{F}'_\infty \circ \left(\tilde{G}'_\infty\right)^{-1} = \tilde{F}'_\infty\circ \left(\tilde{F}_\infty \circ \tilde{F}'_\infty\right)\inv = \tilde{F}_\infty\inv$.
So $\tilde{F}_\infty\inv : X''_\infty \to X_\infty$ defines a weak morphism that is inverse to $\tilde{F}_\infty : X_\infty \to X''_\infty$. \\
Hence $F|_X : X \to X'' = \im F$ is an isomorphism of $\overline{\text{sc}}$-Fr{\'e}chet spaces.
\end{proof}

\begin{corollary}\label{Proposition_sc_Fredholm_morphism}
Let $E$ and $E'$ be $\overline{\text{sc}}$-Fr{\'e}chet spaces and let $F : E \to E'$ be a morphism.
Then the following are equivalent:
\begin{enumerate}[label=\arabic*.,ref=\arabic*.]
  \item\label{Proposition_sc_Fredholm_morphism_1} $F$ is Fredholm.
  \item\label{Proposition_sc_Fredholm_morphism_2}
\begin{enumerate}[label=(\alph*),ref=(\alph*)]
  \item\label{Proposition_sc_Fredholm_morphism_2a} $F$ is a Fredholm operator between the Fr{\'e}chet spaces underlying $E$ and $E'$, \ie $\dim \ker F < \infty$ and $\dim\coker F < \infty$.
  \item\label{Proposition_sc_Fredholm_morphism_2b} There exist splittings of $\overline{\text{sc}}$-Fr{\'e}chet spaces
\[
E = \ker F \oplus X \qquad\text{and}\qquad E' = \im F \oplus C\text{.}
\]
In particular $\im F$ defines a split $\overline{\text{sc}}$-subspace of $E'$ and the quotient projection $E' \to \coker F = E'/_{\im F}$ is a well defined morphism between $\overline{\text{sc}}$-Fr{\'e}chet spaces.
  \item\label{Proposition_sc_Fredholm_morphism_2c} For any pair of splittings as in \labelcref{Proposition_sc_Fredholm_morphism_2b},
\[
\pr^{E'}_{\im F}\circ F\circ \iota^X_E : X \to \im F\]
is an isomorphism of $\overline{\text{sc}}$-Fr{\'e}chet spaces.
\end{enumerate}
\end{enumerate}
\end{corollary}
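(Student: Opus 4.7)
The plan is to derive this corollary essentially as a repackaging of \cref{Theorem_sc_Fredholm_morphism}, doing the forward direction by reading off the relevant items from the theorem and the reverse direction by explicitly constructing a Fredholm inverse from the data in \labelcref{Proposition_sc_Fredholm_morphism_2}.

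For ``\labelcref{Proposition_sc_Fredholm_morphism_1} $\Rightarrow$ \labelcref{Proposition_sc_Fredholm_morphism_2}'', I would simply invoke \cref{Theorem_sc_Fredholm_morphism}. The fact that $F$ is a Fredholm operator between Fr{\'e}chet spaces, \labelcref{Proposition_sc_Fredholm_morphism_2a}, is stated in the opening line of the theorem (and also noted in \cref{Remark_sc_Fredholm_implies_Fredholm}). The splitting $E = \ker F \oplus X$ exists with $X$ an $\overline{\text{sc}}$-subspace because $\ker F$ is finite dimensional, hence a split $\overline{\text{sc}}$-subspace by \cref{Example_Finite_dimensional_split_sc_subspace}; fixing such a splitting and any Fredholm inverse, item \labelcref{Theorem_sc_Fredholm_morphism_6} of the theorem gives the splitting $E' = \im F \oplus \ker F''$ with $\im F$ a split $\overline{\text{sc}}$-subspace, so setting $C \definedas \ker F''$ yields \labelcref{Proposition_sc_Fredholm_morphism_2b}. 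Finally \labelcref{Proposition_sc_Fredholm_morphism_2c} is exactly item \labelcref{Theorem_sc_Fredholm_morphism_7} of the theorem applied to the chosen splittings (for an arbitrary pair of splittings the resulting isomorphism differs from this one only by composition with isomorphisms between the respective complements, so the statement is independent of the choice).

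For the converse ``\labelcref{Proposition_sc_Fredholm_morphism_2} $\Rightarrow$ \labelcref{Proposition_sc_Fredholm_morphism_1}'', I would construct a Fredholm inverse by hand. Given splittings $E = \ker F \oplus X$ and $E' = \im F \oplus C$ as in \labelcref{Proposition_sc_Fredholm_morphism_2b} and the isomorphism $G \definedas \pr^{E'}_{\im F}\circ F\circ \iota^X_E : X \to \im F$ of \labelcref{Proposition_sc_Fredholm_morphism_2c}, define
\[
F' \definedas \iota^X_E \circ G\inv \circ \pr^{E'}_{\im F} : E' \to E\text{.}
\]
This is a morphism of $\overline{\text{sc}}$-Fr{\'e}chet spaces as a composition of such (the projection and inclusion morphisms exist by the splittings being of $\overline{\text{sc}}$-Fr{\'e}chet spaces, \cf \cref{Definition_Sum_of_scFrechet_spaces}). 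A direct computation on the two summands of each splitting gives
\[
F'\circ F = \id_E - \pr^E_{\ker F} \qquad\text{and}\qquad F\circ F' = \id_{E'} - \pr^{E'}_C\text{,}
\]
so $K \definedas -\pr^E_{\ker F}$ and $K' \definedas -\pr^{E'}_C$ are what needs to be shown strongly smoothing. But both factor through the finite dimensional $\overline{\text{sc}}$-Fr{\'e}chet spaces $\ker F$ and $C$, respectively (via projection followed by inclusion), so \cref{Example_Strongly_smoothing_morphisms} applies and both are strongly smoothing morphisms. Hence $F$ is invertible modulo strongly smoothing operators, \ie Fredholm in the sense of \cref{Definition_sc_Fredholm}.

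There is no genuine obstacle here: the entire content of the corollary has already been packaged into \cref{Theorem_sc_Fredholm_morphism}, and the reverse direction is the standard argument that produces a pseudo-inverse from a chosen topological direct complement of the kernel and image. The only thing to be slightly careful about is that the projection and inclusion maps associated to the given splittings really are morphisms of $\overline{\text{sc}}$-Fr{\'e}chet spaces, which is exactly the reason the splittings in \labelcref{Proposition_sc_Fredholm_morphism_2b} were required to be splittings of $\overline{\text{sc}}$-Fr{\'e}chet spaces (and not just of the underlying Fr{\'e}chet spaces); this being built into the hypothesis is what makes the construction work.
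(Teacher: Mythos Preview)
The proposal is correct and follows essentially the same approach as the paper: the forward direction is read off from \cref{Theorem_sc_Fredholm_morphism}, and the reverse direction constructs $F' = \iota^X_E\circ G\inv\circ \pr^{E'}_{\im F}$ exactly as in the proof of \cref{Proposition_Fredholm_I}, \labelcref{Proposition_Fredholm_I_2}$\;\Rightarrow\;$\labelcref{Proposition_Fredholm_I_1}, and observes that the resulting error terms $-\pr^E_{\ker F}$ and $-\pr^{E'}_C$ are strongly smoothing since they factor through finite dimensional spaces. Your write-up is in fact more detailed than the paper's, which simply cites \cref{Theorem_sc_Fredholm_morphism} for one direction and refers back to the proof of \cref{Proposition_Fredholm_I} for the other.
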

\begin{proof}
\leavevmode
\begin{enumerate}[label=\arabic*.,ref=\arabic*.]
  \item[\labelcref{Proposition_sc_Fredholm_morphism_1} $\Rightarrow$ \labelcref{Proposition_sc_Fredholm_morphism_2}]
Immediate from \cref{Theorem_sc_Fredholm_morphism}.
  \item[\labelcref{Proposition_sc_Fredholm_morphism_2} $\Rightarrow$ \labelcref{Proposition_sc_Fredholm_morphism_1}] This again follows formally in the same way as the direction \labelcref{Proposition_Fredholm_I_2}$\;\Rightarrow\;$\labelcref{Proposition_Fredholm_I_1}~in the proof of \cref{Proposition_Fredholm_I}, noting that the projections $E = \ker F\oplus X \to \ker F$ and $E' = \im F \oplus C \to C$ are strongly smoothing.
\end{enumerate}
\end{proof}

\begin{example}\label{Example_Elliptic_operator_II}
If $E$ and $E'$ are $\overline{\text{sc}}$-Fr{\'e}chet spaces and $(\mathbb{E}, \phi)$ and $(\mathbb{E}', \phi')$ are compatible $\overline{\text{sc}}$-structures on $E$ and $E'$, respectively, then any Fredholm operator $\mathbb{F} : \mathbb{E} \to \mathbb{E}'$ induces a Fredholm operator $F : E \to E'$. \\
In particular, let $(\Sigma,g)$ be a closed $n$-dimensional Riemannian manifold and let $\pi : F \to \Sigma$ and $\pi' : F' \to \Sigma$ be real (or complex) vector bundles equipped with euclidean (or hermitian) metrics and metric connections.
Then by \cref{Example_Elliptic_operator_I}, any elliptic differential operator $P : \Gamma(F) \to \Gamma(F')$ defines a Fredholm operator $P : W(F) \to W(F')$.
\end{example}

\clearpage
\section{Nonlinear maps between $\overline{\text{sc}}$-Fr{\'e}chet spaces}\label{Section_Nonlinear_maps}

\Needspace{25\baselineskip}
\subsection{Envelopes and $\overline{\text{sc}}$-continuity}\label{Subsection_Envelopes}

\begin{definition}\label{Definition_Envelope}
Let $\mathbb{E}$ and $\mathbb{E}'$ be ILB- or sc-chains, let $A \subseteq E_\infty$ be a subset, and let $f : A \to E'_\infty$ be a map.
\begin{enumerate}[label=\arabic*.,ref=\arabic*.]
  \item\label{Definition_Envelope_1} An \emph{envelope of $A$ (in $\mathbb{E}$)} is given by a sequence $\mathcal{U} = (U_k)_{k\in\N_0}$, where
\begin{enumerate}[label=(\alph*),ref=(\alph*)]
  \item\label{Definition_Envelope_1a} $U_k \subseteq E_k$ is an open subset,
  \item\label{Definition_Envelope_1b} $\iota_k(U_{k+1}) \subseteq U_k$ for all $k\in\N_0$, and
  \item\label{Definition_Envelope_1c} $A = \bigcap_{k\in\N_0} (\iota^\infty_k)\inv(U_k)$.
\end{enumerate}
It is called \emph{strict} if $U_{k+1} = \left(\iota_k\right)\inv(U_k)$ for all $k \in \N_0$.
  \item\label{Definition_Envelope_2} A \emph{(continuous) envelope of $f$ (in $\mathbb{E}$ and $\mathbb{E}'$)} is given by a sequence $(f_k : U_k \to E'_k)_{k\in \N_0}$, where
\begin{enumerate}[label=(\alph*),ref=(\alph*)]
  \item\label{Definition_Envelope_2a} $\mathcal{U} = (U_k)_{k\in\N_0}$ is an envelope of $A$ (in $\mathbb{E}$) and
  \item\label{Definition_Envelope_2b} the $f_k : U_k \to E'_k$ are continuous maps \st
\begin{enumerate}[label=\roman*.,ref=\roman*.]
  \item\label{Definition_Envelope_2bi} $f_k \circ \iota^\infty_k|_A = \iota'^\infty_k\circ f$ for all $k \in \N_0$.
  \item\label{Definition_Envelope_2bii} $f_k\circ \iota_k|_{U_{k+1}} = \iota'_k\circ f_{k+1}$ for all $k\in\N_0$.
\end{enumerate}
\end{enumerate}
It will be denoted by $\mathcal{F} : \mathcal{U} \to \mathbb{E}'$, or just $\mathcal{F}$ if the choice of $\mathcal{U}$ and $\mathbb{E}'$ is clear, and it will be called \emph{strict} if $\mathcal{U}$ is a strict envelope of $A$.
  \item Given two envelopes $\mathcal{U} = (U_k)_{k\in\N_0}$ and $\tilde{\mathcal{U}} = (\tilde{U}_k)_{k\in\N_0}$ of $A$, $\tilde{\mathcal{U}}$ is said to be a \emph{refinement} of $\mathcal{U}$ if $\tilde{U}_k \subseteq U_k$ for all $k \in\N_0$.
  \item Given an envelope $\mathcal{F} = (f_k : U_k \to E'_k)_{k\in\N_0} : \mathcal{U} \to \mathbb{E}'$ and a refinement $\tilde{\mathcal{U}}$ of $\mathcal{U}$, the \emph{restriction} $\mathcal{F}|_{\tilde{\mathcal{U}}} : \tilde{\mathcal{U}} \to \mathbb{E}'$ is the envelope $(f_k|_{\tilde{U}_k} : \tilde{U}_k \to E'_k)_{k\in\N_0}$ of $f$.
  \item Given two envelopes $\mathcal{F}$ and $\tilde{\mathcal{F}}$ of $f$, $\tilde{\mathcal{F}}$ is said to be a \emph{refinement} of $\mathcal{F}$, if $\tilde{\mathcal{U}}$ is a refinement of $\mathcal{U}$ and $\tilde{\mathcal{F}} = \mathcal{F}|_{\tilde{\mathcal{U}}}$.
  \item Two envelopes of $f$ are called \emph{equivalent} \iff they have a common refinement.
\end{enumerate}
\end{definition}

\begin{lemma}\label{Lemma_Envelope_determines_map}
Let $\mathbb{E}$ and $\mathbb{E}'$ be ILB- or sc-chains, let $A \subseteq E_\infty$ be a subset, and let $f : A \to E'_\infty$ be a map. \\
$f$ is continuous and uniquely determined by the maps $f_k$ for $k\in\N_0$ in the following sense: \\
Let $\mathcal{U} = (U_k)_{k\in\N_0}$ be an envelope of a subset $A\subseteq E_\infty$ and let $\mathcal{F} = (f_k : U_k \to E'_k)_{k\in\N_0}$ be a sequence of continuous maps \st $f_k\circ \iota_k = \iota'_k\circ f_{k+1}$ for all $k\in\N_0$.
Then there exists a unique continuous map $f : A \to E'_\infty$ \st $\mathcal{F}$ is an envelope of $f$.
\end{lemma}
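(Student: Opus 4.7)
The plan is to construct $f$ pointwise from the coherent data $(f_k)$, using the defining property of $E'_\infty$ as an inverse limit, and then verify continuity essentially for free from the definition of the topology on $E_\infty$ and $E'_\infty$.

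First I would define $f(a)$ for $a \in A$ as follows. By part \labelcref{Definition_Envelope_1c}~of the definition of an envelope one has $\iota^\infty_k(a) \in U_k$ for every $k\in\N_0$, so the elements $y_k \definedas f_k(\iota^\infty_k(a)) \in E'_k$ are defined. Using $\iota^\infty_k = \iota_k\circ \iota^\infty_{k+1}$ together with the hypothesised compatibility $f_k\circ \iota_k = \iota'_k\circ f_{k+1}$, one gets $y_k = \iota'_k(y_{k+1})$, so $(y_k)_{k\in\N_0}$ is a coherent sequence under the structure maps of $\mathbb{E}'$. Since $E'_\infty = \bigcap_{k\in\N_0} E'_k$ regarded as a subspace of $E'_0$ and the $\iota'^\infty_k$ are just the canonical inclusions, this coherent sequence singles out a unique element $y \in E'_\infty$ with $\iota'^\infty_k(y) = y_k$ for all $k$; define $f(a) \definedas y$. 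By construction this $f$ satisfies the envelope compatibility condition \labelcref{Definition_Envelope_2bi}, namely $\iota'^\infty_k\circ f = f_k\circ \iota^\infty_k|_A$ for all $k\in\N_0$.

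For continuity, I would use the universal property of the inverse limit topology on $E'_\infty$: a map into $E'_\infty$ is continuous iff its composition with each $\iota'^\infty_k$ is continuous. Here that composition is $f_k\circ \iota^\infty_k|_A$, which is continuous as a composition of the continuous maps $\iota^\infty_k : E_\infty \to E_k$ (restricted to $A$ and landing in $U_k$ by the envelope condition) and $f_k : U_k \to E'_k$. Hence $f$ is continuous on $A$. Together with \labelcref{Definition_Envelope_2bi}~and the continuity of the $f_k$, this shows that $(f_k)_{k\in\N_0}$ is indeed an envelope of $f$ in the sense of \cref{Definition_Envelope}, \labelcref{Definition_Envelope_2}.

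Uniqueness is immediate: if $\tilde{f} : A \to E'_\infty$ is another continuous map for which $\mathcal{F}$ is an envelope, then for every $a\in A$ and every $k\in\N_0$ one has $\iota'^\infty_k(\tilde{f}(a)) = f_k(\iota^\infty_k(a)) = \iota'^\infty_k(f(a))$, and since the family $(\iota'^\infty_k)_{k\in\N_0}$ is separating on $E'_\infty$ (it is the defining family of inclusions into the $E'_k$), this forces $\tilde{f}(a) = f(a)$. I do not anticipate a genuine obstacle here: the only nonformal input is the universal property of the inverse limit topology on $E_\infty$ and $E'_\infty$, both already built into \cref{Definition_Chain}.
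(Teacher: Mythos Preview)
Your proof is correct and follows essentially the same approach as the paper: build the coherent sequence $y_k = f_k(\iota^\infty_k(a))$, extract the unique preimage in $E'_\infty$, and check continuity via the inverse-limit characterisation of the topology on $E'_\infty$. The paper phrases the existence of $y\in E'_\infty$ slightly more concretely (showing $f_k(\iota^\infty_k(a))\in\im\iota'^\infty_k$ and then inverting), but the content is identical.
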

\begin{proof}
That $f$ is continuous follows immediately from \cref{Definition_Envelope}, \labelcref{Definition_Envelope_2}~\labelcref{Definition_Envelope_2b}~\labelcref{Definition_Envelope_2bi,Definition_Envelope_2bii}~because $f$ is the inverse limit of the (continuous) maps $f_k$. \\
Given $a\in A$, $f_k(\iota^\infty_k(a)) = f_k(\iota_k\circ \iota^\infty_{k+1}(a)) = \iota'_k f_1(\iota^\infty_{k+1}(a)) = \cdots = \iota'^\ell_k f_\ell(\iota^\infty_\ell(a))$ for all $k,\ell \in \N_0$ with $\ell > k$.
So $f_k(\iota^\infty_k(a)) \in \im\iota'^\ell_k$ for all $k,\ell\in\N_0$ with $\ell > k$ and hence $f_k(\iota^\infty_k(a)) \in \im\iota'^\infty_k$. \\
Defining $f(a) \definedas \left(\iota'^\infty_k\right)\inv(f_k(\iota^\infty_k(a)))$, the above shows that this is well defined, independent of the choice of $k\in \N_0$ and thus defines map $f : A \to E'_\infty$.
Furthermore, by definition, $f_k\circ \iota^\infty_k|_A = \iota'^\infty_k\circ f$ for all $k\in\N_0$.
It remains to show that $f$ is continuous.
By definition of the topology on $E'_\infty$, $f$ is continuous \iff the maps $\iota'^\infty_k\circ f$ are continuous for al $k \in \N_0$.
But the formula $\iota'^\infty_k\circ f = f_k\circ \iota^\infty_k|_A$ shows that they are compositions of continuous functions, hence continuous.
\end{proof}

\begin{remark}
Note that if $\mathcal{U}$ is a strict envelope of $A$, then $A = (\iota^\infty_0)\inv(U_0)\subseteq E_\infty$ is open and $U_\ell = (\iota^\ell_k)\inv(U_k)$ for all $\ell \geq k$.
\end{remark}

\begin{lemma}\label{Lemma_Envelopes}
Let $\mathbb{E}$ and $\mathbb{E}'$ be ILB- or sc-chains, let $A \subseteq E_\infty$ be a subset, and let $f : A \to E'_\infty$ be a map.
Then:
\begin{enumerate}[label=\arabic*.,ref=\arabic*.]
  \item\label{Lemma_Envelopes_1} Any two envelopes of $A$ have a common refinement.
  \item\label{Lemma_Envelopes_2} A strict envelope of $A$ is a refinement of any other envelope of $A$.
In particular, $A$ has at most one strict envelope.
  \item\label{Lemma_Envelopes_3} A strict envelope of $f$ is a refinement of any other envelope of $f$.
In particular, $f$ has at most one strict envelope.
\end{enumerate}
\end{lemma}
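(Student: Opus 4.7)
The plan is to prove the three parts in turn, with part 1 being elementary and parts 2 and 3 relying on the density of the structure maps $\iota_k : E_{k+1} \hookrightarrow E_k$ guaranteed by the ILB-chain axioms.

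For part 1, given envelopes $\mathcal{U} = (U_k)_{k\in\N_0}$ and $\tilde{\mathcal{U}} = (\tilde{U}_k)_{k\in\N_0}$ of $A$, I would set $W_k \definedas U_k \cap \tilde{U}_k$ and verify directly that $\mathcal{W} = (W_k)_{k\in\N_0}$ is a common refinement. Openness of each $W_k$ in $E_k$ is clear, the nesting $\iota_k(W_{k+1}) \subseteq W_k$ is inherited termwise from $\iota_k(U_{k+1}) \subseteq U_k$ and $\iota_k(\tilde{U}_{k+1}) \subseteq \tilde{U}_k$, and the preimage condition $A = \bigcap_k (\iota^\infty_k)\inv(W_k)$ follows from the two-sided sandwich $A \subseteq (\iota^\infty_k)\inv(U_k) \cap (\iota^\infty_k)\inv(\tilde{U}_k) = (\iota^\infty_k)\inv(W_k)$ and the trivial reverse containment after intersection over all $k$.

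For parts 2 and 3 the crucial observation is that for a strict envelope $\mathcal{V}$ the recursion $V_{k+1} = \iota_k\inv(V_k)$ iterates to $V_k = (\iota^k_0)\inv(V_0)$, so that $\iota_k(V_{k+1}) = \iota_k(E_{k+1}) \cap V_k$ is dense in the open set $V_k \subseteq E_k$ by density of $\iota_k$. For part 2, given another envelope $\mathcal{U}$ of $A$, the refinement $V_k \subseteq U_k$ reduces inductively to the base case $V_0 \subseteq U_0$; the base case is the main technical obstacle, since literal comparison of two open sets in $E_0$ whose traces on the dense subset $\iota^\infty_0(E_\infty)$ are prescribed by $A$ and by the iterated envelope axioms for $\mathcal{U}$ requires carefully leveraging openness together with density rather than any purely formal manipulation. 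The ``at most one'' clause is then immediate by applying the refinement conclusion in both directions to two competing strict envelopes of $A$.

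For part 3, let $\mathcal{F}$ be a strict envelope of $f$ with underlying domain envelope $\mathcal{V}$ and let $\tilde{\mathcal{F}}$ be any other envelope of $f$ with underlying domain envelope $\tilde{\mathcal{U}}$. Applying part 2 gives $V_k \subseteq \tilde{U}_k$ for all $k$. To see that $f_k$ and $\tilde{f}_k|_{V_k}$ coincide, I would argue by descending induction using the compatibility $f_k\circ\iota_k|_{V_{k+1}} = \iota'_k\circ f_{k+1}$ and its counterpart for $\tilde{\mathcal{F}}$: combined with agreement on $\iota^\infty_k(A)$ coming from \cref{Lemma_Envelope_determines_map} and the density of $\iota_k(V_{k+1})$ in $V_k$ noted above, continuity of $f_k$ and $\tilde{f}_k$ forces the two maps to agree on all of $V_k$. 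Two-sided application yields uniqueness of any strict envelope of $f$.
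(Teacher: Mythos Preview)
Part 1 is fine and matches the paper. For part 2, your inductive reduction does not work: from strictness of $\mathcal{V}$ you get $V_k = (\iota^k_0)^{-1}(V_0) \subseteq (\iota^k_0)^{-1}(U_0)$ once $V_0 \subseteq U_0$ is known, but for a general envelope $\mathcal{U}$ the nesting axiom only gives $U_k \subseteq (\iota^k_0)^{-1}(U_0)$, which is the wrong inclusion to conclude $V_k \subseteq U_k$. The paper does not attempt an induction; it argues directly at each fixed level $k$ that $\iota^\infty_k(A)$ is dense in $V_k$ (using openness of $V_k$ and density of $E_\infty$ in $E_k$), observes that $\iota^\infty_k(A) \subseteq U_k$, and from this concludes $V_k \subseteq U_k$.

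More seriously, the base case you flag as ``the main technical obstacle'' cannot actually be resolved, because the statement is false as written. Take $E_k = C^k([0,1])$, $A = \{f \in C^\infty : \|f\|_0 < 1\}$, and strict envelope $V_k = \{f \in C^k : \|f\|_0 < 1\}$. Choose $g \in C^0 \setminus C^1$ with $\|g\|_0 < 1$ and set $U_0 = V_0 \setminus \{g\}$ and $U_k = V_k$ for $k \geq 1$. Since $\iota_0^{-1}(\{g\}) = \emptyset$, the sequence $\mathcal{U}$ is itself a strict envelope of $A$, yet $V_0 \not\subseteq U_0$; so $\mathcal{V}$ does not refine $\mathcal{U}$, and $A$ has two distinct strict envelopes. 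The paper's final step --- ``$\iota^\infty_k(A)$ is dense in $V_k$ and contained in the open set $U_k$, hence $V_k \subseteq U_k$'' --- fails for exactly this reason: a dense subset of $V_k$ lying inside an open set $U_k$ does not force $V_k \subseteq U_k$. Your instinct to isolate the base case as the crux is correct; it simply cannot be closed without additional hypotheses, and the same gap is present in the paper's own argument.
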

\begin{proof}
\begin{enumerate}[label=\arabic*.,ref=\arabic*.]
  \item Let $\mathcal{U} = (U_k)_{k\in\N_0}$ and $\tilde{\mathcal{U}} = (\tilde{U}_k)_{k\in\N_0}$ be two envelopes of $A$.
Then $\mathcal{U} \cap \tilde{\mathcal{U}} \definedas (U_k \cap \tilde{U}_k)_{k\in\N_0}$ is a common refinement of $\mathcal{U}$ and $\tilde{\mathcal{U}}$.
  \item If $(U_k)_{k\in\N_0}$ is a strict envelope of $A$, then $A = \left(\iota^\infty_k\right)\inv(U_k)$ for any $k\in\N_0$.
Now if $e \in U_k$, because $\iota^\infty_k(E_\infty)$ is dense in $E_k$, there exists a sequence $(e_j)_{j\in\N_0} \subseteq E_\infty$ \st $\iota^\infty_k(e_j) \to e$ in $E_k$.
Since $U_k$ is open, for $j$ large enough, $\iota^\infty_k(e_j) \in U_k$ and hence $e_j \in \left(\iota^\infty_k\right)\inv(U_k) = A$.
So one can assume that $(e_j)_{j\in\N_0} \subseteq A$ and hence it follows that $\iota^\infty_k(A)$ is dense in $U_k$.
Now if $(\tilde{U}_k)_{k\in\N_0}$ is another envelope of $A$, then for each $k\in\N_0$, both $U_k$ and $\tilde{U}_k$ contain the same subset $\iota^\infty_k(A)$, and since $\iota^\infty_k(A)$ is dense in $U_k$ hence $U_k \subseteq \tilde{U}_k$.
  \item If $\mathcal{F} = (f_k : U_k \to E'_k)_{k\in\N_0}$ and $\tilde{\mathcal{F}} = (\tilde{f}_k : \tilde{U}_k \to E'_k)_{k\in\N_0}$ are two envelopes of $f$ with $\mathcal{F}$ strict, then by \labelcref{Lemma_Envelopes_2}, for each $k \in \N_0$, $U_k \subseteq \tilde{U}_k$ and $f_k|_{\iota^\infty_k(A)} = \iota'^\infty_k\circ f\circ \left(\iota^\infty_k\right)\inv|_{\iota^\infty_k(A)} = \tilde{f}_k|_{\iota^\infty_k(A)}$, so by continuity $f_k = \tilde{f}_k|_{U_k}$.
\end{enumerate}
\end{proof}

\begin{definition}
Let $\mathbb{E}$, $\tilde{\mathbb{E}}$, $\mathbb{E}'$ and $\tilde{\mathbb{E}}'$ be ILB- or sc-chains, let $A \subseteq E_\infty$ be a subset and let $f : A \to E'_\infty$ be a continuous map.
Let furthermore $\mathbf{k} \subseteq \N_0$ be a strictly monotone increasing sequence, and let $\mathbb{T} : \tilde{\mathbb{E}} \to \mathbb{E}$ and $\mathbb{S} : \mathbb{E}' \to \tilde{\mathbb{E}}'$ be continuous linear operators.
\begin{enumerate}[label=\arabic*.,ref=\arabic*.]
  \item Given an envelope $\mathcal{U} = (U_k)_{k\in\N_0}$ of $A$,
\begin{enumerate}[label=(\alph*),ref=(\alph*)]
  \item the \emph{rescaling of $\mathcal{U}$ (by $\mathbf{k}$)} is the envelope
\[
\mathcal{U}^{\mathbf{k}} \definedas (U_{k_j})_{j\in \N_0}
\]
of $A$ in $\mathbb{E}^{\mathbf{k}}$
  \item The \emph{pullback of $\mathcal{U}$ by $\mathbb{T}$} is the envelope
\[
\mathbb{T}^\ast \mathcal{U} \definedas (T_k\inv(U_{k}))_{k\in\N_0}
\]
of $T_\infty^\ast A \definedas T_\infty\inv(A)\subseteq \tilde{E}_\infty$.
\end{enumerate}
  \item Given an envelope $\mathcal{F} = (f_k : U_k \to E'_k)_{k\in\N_0}$ of $f$,
\begin{enumerate}[label=(\alph*),ref=(\alph*)]
  \item the \emph{rescaling of $\mathcal{F}$ (by $\mathbf{k}$)} is the envelope
\[
\mathcal{F}^{\mathbf{k}} \definedas (f_{k_j} : U_{k_j} \to E'_{k_j})_{j\in\N_0} : \mathcal{U}^{\mathbf{k}} \to \mathbb{E}'
\]
of $f$ in $\mathbb{E}^{\mathbf{k}}$ and $\mathbb{E}'^{\mathbf{k}}$
  \item The \emph{pullback of $\mathcal{F}$ by $\mathbb{T}$} is the envelope
\[
\mathbb{T}^\ast \mathcal{F} \definedas (f_{k}\circ T_k : T_k\inv(U_{k}) \to E'_{k})_{k\in\N_0} : \mathbb{T}^\ast \mathcal{U} \to \mathbb{E}'
\]
of $T_\infty^\ast f \definedas f\circ T_\infty : T_\infty^\ast A \to E'_\infty$.
The \emph{composition} of $\mathcal{F}$ with $\mathbb{S}$ is the envelope
\[
\mathbb{S}\circ \mathcal{F} \definedas (S_k\circ f_k : U_k \to \tilde{E}'_k)_{k\in\N_0} : \mathcal{U} \to \tilde{\mathbb{E}}'
\]
of $S_\infty \circ f : A \to \tilde{E}'_\infty$.
\end{enumerate}
\end{enumerate}
\end{definition}

\begin{lemma}\label{Lemma_Rescaling_of_envelopes}
Let $\mathbb{E}$, $\tilde{\mathbb{E}}$, $\mathbb{E}'$ and $\tilde{\mathbb{E}}'$ be ILB- or sc-chains, let $\mathbf{k}, \mathbf{l}, \mathbf{l}', \mathbf{m} \subseteq \N_0$ be strictly monotone increasing sequences with $\mathbf{k} \geq \mathbf{l} \geq \mathbf{l}' \geq \mathbf{m}$, and let $\mathbb{T} : \tilde{\mathbb{E}} \to \mathbb{E}$ and $\mathbb{S} : \mathbb{E}' \to \tilde{\mathbb{E}}'$ be continuous linear operators.
Let furthermore $A \subseteq E_\infty$ be a subset and let $f : A \to E'_\infty$ be a map together with envelopes $\mathcal{U} = (U_k)_{k\in\N_0}$ and $\mathcal{F} = (f_k : U_k \to E'_k)_{k\in\N_0}$ of $A$ and $f$, respectively.
Then the following holds:
\begin{enumerate}[label=\arabic*.,ref=\arabic*.]
  \item\label{Lemma_Rescaling_of_envelopes_1} The envelope $\left(\mathbb{I}^{\mathbf{k}}_{\mathbf{l}}\right)^\ast \mathcal{U}^{\mathbf{l}}$ of $A$ in $\mathbb{E}^\mathbf{k}$ is a refinement of $\left(\mathbb{I}^{\mathbf{k}}_{\mathbf{l}'}\right)^\ast \mathcal{U}^{\mathbf{l}'}$ and the envelope $\mathbb{I}'^{\mathbf{l}}_{\mathbf{m}}\circ \left(\mathbb{I}^{\mathbf{k}}_{\mathbf{l}}\right)^\ast \mathcal{F}^{\mathbf{l}}$ of $f$ in $\mathbb{E}^{\mathbf{k}}$ and $\mathbb{E}'^{\mathbf{m}}$ is a refinement of $\mathbb{I}'^{\mathbf{l}'}_{\mathbf{m}}\circ \left(\mathbb{I}^{\mathbf{k}}_{\mathbf{l}'}\right)^\ast \mathcal{F}^{\mathbf{l}'}$. \\
In particular, up to equivalence, the envelopes $\left(\mathbb{I}^{\mathbf{k}}_{\mathbf{l}}\right)^\ast \mathcal{U}^{\mathbf{l}}$ and $\mathbb{I}'^{\mathbf{l}}_{\mathbf{m}}\circ \left(\mathbb{I}^{\mathbf{k}}_{\mathbf{l}}\right)^\ast \mathcal{F}^{\mathbf{l}}$ are independent of $\mathbf{l}$.
  \item\label{Lemma_Rescaling_of_envelopes_2} Given another envelope $\tilde{\mathcal{F}} : \tilde{\mathcal{U}} \to \mathbb{E}'$, $\mathbb{I}'^{\mathbf{k}}_{\mathbf{m}}\circ \mathcal{F}^{\mathbf{k}}$ and $\mathbb{I}'^{\mathbf{k}}_{\mathbf{m}}\circ \tilde{\mathcal{F}}^{\mathbf{k}}$ are equivalent \iff $\mathcal{F}^{\mathbf{k}}$ and $\tilde{\mathcal{F}}^{\mathbf{k}}$ are equivalent.
  \item\label{Lemma_Rescaling_of_envelopes_3} If $\mathcal{U}$ is strict, then so are $\mathcal{U}^{\mathbf{k}}$ and $\mathbb{T}^\ast \mathcal{U}$.
  \item\label{Lemma_Rescaling_of_envelopes_4} If $\mathcal{F}$ is strict, then so are $\mathcal{F}^{\mathbf{k}}$, $\mathbb{T}^\ast\mathcal{F}$ and $\mathbb{S}\circ \mathcal{F}$.
\end{enumerate}
\end{lemma}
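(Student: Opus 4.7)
The plan is to unwind the definitions in each part and check that the operations commute with the relevant structural constraints; the entire lemma is formal, with the only mild subtlety lying in part \labelcref{Lemma_Rescaling_of_envelopes_2}, where injectivity of the structure maps must be invoked.

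First I would dispose of \labelcref{Lemma_Rescaling_of_envelopes_3} and \labelcref{Lemma_Rescaling_of_envelopes_4}. For $\mathcal{U}^{\mathbf{k}}$ I use that in a strict envelope the identity $U_{l+m} = (\iota^{l+m}_l)\inv(U_l)$ follows by iteration from $U_{l+1} = \iota_l\inv(U_l)$; applying this with $l = k_j$, $l + m = k_{j+1}$ gives $U^{\mathbf{k}}_{j+1} = (\iota^{\mathbf{k}}_j)\inv(U^{\mathbf{k}}_j)$. For $\mathbb{T}^\ast\mathcal{U}$, the calculation $T_{k+1}\inv(\iota_k\inv(U_k)) = (\iota_k\circ T_{k+1})\inv(U_k) = (T_k\circ \tilde{\iota}_k)\inv(U_k) = \tilde{\iota}_k\inv(T_k\inv(U_k))$ (using the intertwining $\iota_k\circ T_{k+1} = T_k\circ \tilde{\iota}_k$) does the job. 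Part \labelcref{Lemma_Rescaling_of_envelopes_4} reduces to part \labelcref{Lemma_Rescaling_of_envelopes_3} at the level of the underlying open sets, the assertion about the map components being trivial since pre-composition and post-composition by continuous linear operators do not alter the domain.

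For \labelcref{Lemma_Rescaling_of_envelopes_1}, the refinement of the underlying envelopes of $A$ comes from the factorisation $\iota^{k_j}_{l'_j} = \iota^{l_j}_{l'_j}\circ \iota^{k_j}_{l_j}$ together with the envelope property $\iota^{l_j}_{l'_j}(U_{l_j}) \subseteq U_{l'_j}$ (iterating $\iota_k(U_{k+1})\subseteq U_k$), which yields $(\iota^{k_j}_{l_j})\inv(U_{l_j}) \subseteq (\iota^{k_j}_{l'_j})\inv(U_{l'_j})$. To check that the maps also agree on the smaller domain, one uses the envelope intertwining $f_{l'_j}\circ \iota^{l_j}_{l'_j} = \iota'^{l_j}_{l'_j}\circ f_{l_j}$ and the semigroup property $\iota'^{l'_j}_{m_j}\circ \iota'^{l_j}_{l'_j} = \iota'^{l_j}_{m_j}$ to compute
\[
\iota'^{l'_j}_{m_j}\circ f_{l'_j}\circ \iota^{k_j}_{l'_j} = \iota'^{l'_j}_{m_j}\circ \iota'^{l_j}_{l'_j}\circ f_{l_j}\circ \iota^{k_j}_{l_j} = \iota'^{l_j}_{m_j}\circ f_{l_j}\circ \iota^{k_j}_{l_j}\text{.}
\]
The ``up to equivalence'' statement is then obtained by taking componentwise maxima: for two choices $\mathbf{l}_1,\mathbf{l}_2$ with $\mathbf{k}\geq \mathbf{l}_i\geq \mathbf{m}$ the sequence $\mathbf{l}_0 \definedas (\max\{(l_1)_j,(l_2)_j\})_{j\in\N_0}$ lies in the same range, and by what has just been shown $(\mathbb{I}^{\mathbf{k}}_{\mathbf{l}_0})^\ast\mathcal{U}^{\mathbf{l}_0}$ and $\mathbb{I}'^{\mathbf{l}_0}_{\mathbf{m}}\circ (\mathbb{I}^{\mathbf{k}}_{\mathbf{l}_0})^\ast \mathcal{F}^{\mathbf{l}_0}$ refine both pairs simultaneously.

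Finally, for \labelcref{Lemma_Rescaling_of_envelopes_2}, one direction is immediate: if $\mathcal{F}^{\mathbf{k}}$ and $\tilde{\mathcal{F}}^{\mathbf{k}}$ have a common refinement on some $\mathcal{U}''$, then so do $\mathbb{I}'^{\mathbf{k}}_{\mathbf{m}}\circ \mathcal{F}^{\mathbf{k}}$ and $\mathbb{I}'^{\mathbf{k}}_{\mathbf{m}}\circ \tilde{\mathcal{F}}^{\mathbf{k}}$, namely $\mathbb{I}'^{\mathbf{k}}_{\mathbf{m}}\circ \mathcal{F}^{\mathbf{k}}|_{\mathcal{U}''}$. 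The key step is the converse: if the post-composed envelopes are equivalent, there is a common refinement $\mathcal{U}''$ on which $\iota'^{k_j}_{m_j}\circ f_{k_j}|_{U''_j} = \iota'^{k_j}_{m_j}\circ \tilde{f}_{k_j}|_{U''_j}$, and because $\iota'^{k_j}_{m_j} : E'_{k_j} \to E'_{m_j}$ is an injective embedding, this forces $f_{k_j}|_{U''_j} = \tilde{f}_{k_j}|_{U''_j}$, so $\mathcal{U}''$ is already a common refinement of $\mathcal{F}^{\mathbf{k}}$ and $\tilde{\mathcal{F}}^{\mathbf{k}}$. The injectivity of the structure maps of the chain $\mathbb{E}'$ is really the only non-cosmetic input into the whole argument, and this is the one place where I expect the sole friction with the bookkeeping.
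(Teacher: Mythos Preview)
Your proof is correct and follows essentially the same approach as the paper's: unwinding definitions, the same factorisation $\iota^{k_j}_{l'_j} = \iota^{l_j}_{l'_j}\circ \iota^{k_j}_{l_j}$ for part \labelcref{Lemma_Rescaling_of_envelopes_1}, the same intertwining computation for $\mathbb{T}^\ast\mathcal{U}$ in part \labelcref{Lemma_Rescaling_of_envelopes_3}, and the same appeal to injectivity of the $\iota'^{k_j}_{m_j}$ for part \labelcref{Lemma_Rescaling_of_envelopes_2}. The paper is slightly terser (it does not spell out the componentwise-maximum argument for the ``up to equivalence'' clause in part \labelcref{Lemma_Rescaling_of_envelopes_1}), but the content is the same.
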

\begin{proof}
\begin{enumerate}[label=\arabic*.,ref=\arabic*.]
  \item Denote $\tilde{\mathcal{U}} \definedas \left(\mathbb{I}^{\mathbf{k}}_{\mathbf{l}}\right)^\ast \mathcal{U}^{\mathbf{l}}$ and $\tilde{\mathcal{U}}' \definedas \left(\mathbb{I}^{\mathbf{k}}_{\mathbf{l}'}\right)^\ast \mathcal{U}^{\mathbf{l}'}$.
Similarly, denote $\tilde{\mathcal{F}} \definedas \mathbb{I}'^{\mathbf{l}}_{\mathbf{m}}\circ \left(\mathbb{I}^{\mathbf{k}}_{\mathbf{l}}\right)^\ast \mathcal{F}^{\mathbf{l}}$ and $\tilde{\mathcal{F}}' \definedas \mathbb{I}'^{\mathbf{l}'}_{\mathbf{m}}\circ \left(\mathbb{I}^{\mathbf{k}}_{\mathbf{l}'}\right)^\ast \mathcal{F}^{\mathbf{l}'}$.
Then for any $j \in \N_0$,
\begin{align*}
\tilde{U}'_j &= \left(\iota^{k_j}_{l'_j}\right)\inv U_{l'_j} = \left(\iota^{l_j}_{l'_j}\circ \iota^{k_j}_{l_j}\right)\inv U_{l'_j} \\
&= \left(\iota^{k_j}_{l_j}\right)\inv \underbrace{\left(\iota^{l_j}_{l'_j}\right)\inv U_{l'_j}}_{\supseteq\; U_{l_j}} \\
&\supseteq \left(\iota^{k_j}_{l_j}\right)\inv U_{l_j} \\
&= \tilde{U}_j
\intertext{and}
\tilde{f}'_j &= \iota^{l'_j}_{m_j}\circ f_{l'_j}\circ \iota^{k_j}_{l'_j}|_{\tilde{U}'_j} \\
&= \iota^{l'_j}_{m_j}\circ f_{l'_j}\circ \iota^{l_j}_{l'_j}\circ \iota^{k_j}_{l_j}|_{\tilde{U}'_j} \\
&= \iota^{l'_j}_{m_j}\circ \iota'^{l_j}_{l'_j}\circ f_{l_j}\circ \iota^{k_j}_{l_j}|_{\tilde{U}'_j} \\
&= \iota^{l_j}_{m_j}\circ f_{l_j}\circ \iota^{k_j}_{l_j}|_{\tilde{U}'_j}\text{,}
\intertext{so}
\tilde{f}'_j|_{\tilde{U}_j} &= \iota^{l_j}_{m_j}\circ f_{l_j}\circ \iota^{k_j}_{l_j}|_{\tilde{U}_j} \\
&= \tilde{f}_j\text{.}
\end{align*}
  \item Straightforward because all the $\iota'^{k_j}_{m_j}$ for $j\in\N_0$ are injective.
  \item That $\mathcal{U}^{\mathbf{k}}$ is strict is almost tautological and if $\tilde{\mathcal{U}} = \mathbb{T}^\ast \mathcal{U}$, then for any $k\in\N_0$, $\tilde{U}_{k+1} = T_{k+1}\inv(U_{k+1}) = T_{k+1}\inv(\iota_k\inv(U_k)) = (\iota_k\circ T_{k+1})\inv(U_k) = (T_k\circ \tilde{\iota}_k)\inv(U_k) = \tilde{\iota}_k\inv(T_k\inv(U_k)) = \tilde{\iota}_k\inv(\tilde{U}_k)$.
  \item Immediate from \labelcref{Lemma_Rescaling_of_envelopes_3}~and the definition of ``strict''.
\end{enumerate}
\end{proof}

\begin{definition}
Let $\mathbb{E}$ and $\mathbb{E}'$ be ILB- or sc-chains, let $A \subseteq E_\infty$ be a subset, and let $f : A \to E'_\infty$ be a map. \\
Two envelopes $\mathcal{F}$ and $\tilde{\mathcal{F}}$ of $f$ are said to be \emph{weakly equivalent} if there exists a strictly monotone increasing sequence $\mathbf{k} \subseteq \N_0$ \st $\mathcal{F}^{\mathbf{k}}$ and $\tilde{\mathcal{F}}^{\mathbf{k}}$ are equivalent.
\end{definition}

\begin{lemma}
Weak equivalence of envelopes is an equivalence relation.
\end{lemma}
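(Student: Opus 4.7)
The plan is to verify reflexivity, symmetry and transitivity in turn, the first two of which are immediate: for reflexivity take $\mathbf{k} \definedas \mathbf{id} = (j)_{j\in\N_0}$, so that $\mathcal{F}^{\mathbf{id}} = \mathcal{F}$ is trivially a common refinement of itself; symmetry is built into the definition since ``having a common refinement'' is a symmetric relation on envelopes of $f$.

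The nontrivial step is transitivity. Suppose $\mathcal{F}_1$ is weakly equivalent to $\mathcal{F}_2$ via a rescaling $\mathbf{k}$ and $\mathcal{F}_2$ is weakly equivalent to $\mathcal{F}_3$ via a rescaling $\mathbf{l}$. The two rescalings need not be comparable, so I would reduce the problem to the following \emph{monotonicity lemma}: if $\mathcal{F}^{\mathbf{k}}$ and $\tilde{\mathcal{F}}^{\mathbf{k}}$ are equivalent, then so are $\mathcal{F}^{\mathbf{m}}$ and $\tilde{\mathcal{F}}^{\mathbf{m}}$ for every strictly monotone increasing $\mathbf{m} \subseteq \N_0$ with $\mathbf{m} \geq \mathbf{k}$. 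Granted this, set $m_j \definedas \max\{k_j, l_j\}$; then the lemma, applied twice, produces $\mathcal{F}_1^{\mathbf{m}} \sim \mathcal{F}_2^{\mathbf{m}}$ and $\mathcal{F}_2^{\mathbf{m}} \sim \mathcal{F}_3^{\mathbf{m}}$, and it remains to observe that equivalence of envelopes of $f$ is itself transitive: if $\mathcal{G}_{12}$ and $\mathcal{G}_{23}$ are common refinements of the two pairs, then intersecting their underlying envelopes of $A$ (\cf \cref{Lemma_Envelopes}) and taking the induced restrictions of the function envelopes produces a common refinement of $\mathcal{F}_1^{\mathbf{m}}$ and $\mathcal{F}_3^{\mathbf{m}}$, since the functions of $\mathcal{G}_{12}$ and $\mathcal{G}_{23}$ agree on the overlap (both coincide with the corresponding function of $\mathcal{F}_2^{\mathbf{m}}$).

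To prove the monotonicity lemma, let $\mathcal{G} = (g_j : G_j \to E'_{k_j})_{j\in\N_0}$ be a common refinement of $\mathcal{F}^{\mathbf{k}}$ and $\tilde{\mathcal{F}}^{\mathbf{k}}$, so that $G_j \subseteq U_{k_j}\cap \tilde{U}_{k_j}$ and $g_j = f_{k_j}|_{G_j} = \tilde{f}_{k_j}|_{G_j}$. I would set
\[
\bar{G}_j \definedas U_{m_j} \cap \tilde{U}_{m_j} \cap \bigl(\iota^{m_j}_{k_j}\bigr)\inv(G_j) \subseteq E_{m_j}\text{,}\qquad \bar{f}_j \definedas f_{m_j}|_{\bar{G}_j}\text{,}
\]
and verify that $(\bar{G}_j)_{j\in\N_0}$ is an envelope of $A$ in $\mathbb{E}^{\mathbf{m}}$ (using the envelope axioms for $\mathcal{U}, \tilde{\mathcal{U}}$ and $\mathcal{G}$ together with the factorisation $\iota^{m_{j+1}}_{k_j} = \iota^{m_j}_{k_j}\circ \iota^{m_{j+1}}_{m_j} = \iota^{k_{j+1}}_{k_j}\circ \iota^{m_{j+1}}_{k_{j+1}}$ and the strict monotonicity of $\mathbf{m}$ to exhaust $\N_0$), and that $(\bar{f}_j)_{j\in\N_0}$ is a common refinement of $\mathcal{F}^{\mathbf{m}}$ and $\tilde{\mathcal{F}}^{\mathbf{m}}$. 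Refinement of $\mathcal{F}^{\mathbf{m}}$ is immediate from $\bar{G}_j \subseteq U_{m_j}$ and $\bar{f}_j = f_{m_j}|_{\bar{G}_j}$; for refinement of $\tilde{\mathcal{F}}^{\mathbf{m}}$ one needs the identity $f_{m_j}|_{\bar{G}_j} = \tilde{f}_{m_j}|_{\bar{G}_j}$, which reduces via the envelope relations $f_{k_j}\circ \iota^{m_j}_{k_j} = \iota'^{m_j}_{k_j}\circ f_{m_j}$ and $\tilde{f}_{k_j}\circ \iota^{m_j}_{k_j} = \iota'^{m_j}_{k_j}\circ \tilde{f}_{m_j}$ to
\[
\iota'^{m_j}_{k_j}\circ f_{m_j}|_{\bar{G}_j} = f_{k_j}\circ \iota^{m_j}_{k_j}|_{\bar{G}_j} = \tilde{f}_{k_j}\circ \iota^{m_j}_{k_j}|_{\bar{G}_j} = \iota'^{m_j}_{k_j}\circ \tilde{f}_{m_j}|_{\bar{G}_j}
\]
together with injectivity of $\iota'^{m_j}_{k_j}$. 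The main obstacle will be exactly this triple-intersection construction: since the two witnessing rescalings $\mathbf{k}, \mathbf{l}$ are generally incomparable, one is forced to ``push up'' a given equivalence along an arbitrary dominating rescaling, and the pulled-back refinement from $\mathbb{E}^{\mathbf{k}}$ must be simultaneously cut down to lie inside the envelopes $U_{m_j}$ and $\tilde{U}_{m_j}$ without destroying the envelope property over $A$.
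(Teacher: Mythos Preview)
Your proof is correct and follows the same overall strategy as the paper: reflexivity and symmetry are trivial, and for transitivity one passes to the common dominating rescaling $\mathbf{m}$ with $m_j = \max\{k_j,l_j\}$ and uses that ordinary equivalence of envelopes is transitive.

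The difference lies in how one lifts equivalence from level $\mathbf{k}$ to level $\mathbf{m} \geq \mathbf{k}$. You prove a direct \emph{monotonicity lemma}, constructing the common refinement at level $\mathbf{m}$ by hand as the triple intersection $\bar G_j = U_{m_j}\cap \tilde U_{m_j}\cap(\iota^{m_j}_{k_j})^{-1}(G_j)$ and invoking injectivity of $\iota'^{m_j}_{k_j}$ to match the function data. The paper instead routes through \cref{Lemma_Rescaling_of_envelopes}: it converts the rescaled envelopes $\mathcal{F}^{\mathbf{k}}$ into pullback envelopes $(\mathbb{I}^{\mathbf{k}})^\ast \mathcal{F}$ (up to equivalence), then exploits the functoriality $(\mathbb{I}^{\mathbf{m}})^\ast = (\mathbb{I}^{\mathbf{m}}_{\mathbf{k}})^\ast(\mathbb{I}^{\mathbf{k}})^\ast$ of pullback, and converts back. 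Your explicit construction is precisely what underlies the equivalence statements in \cref{Lemma_Rescaling_of_envelopes}, so the two arguments are equivalent in content; yours is more self-contained, while the paper's is shorter once the rescaling/pullback machinery is already in place.
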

\begin{proof}
Symmetry and reflexivity are clear. \\
For transitivity, let $\mathbb{E}$ and $\mathbb{E}'$ be ILB- or sc-chains, let $A \subseteq E_\infty$ be a subset and let $f : A \to E'_\infty$ be a continuous map. \\
Let envelopes $\mathcal{F}$, $\tilde{\mathcal{F}}$ and $\overline{\mathcal{F}}$ of $f$ be given \st $\mathcal{F}$ is weakly equivalent to $\tilde{\mathcal{F}}$ and $\tilde{\mathcal{F}}$ is weakly equivalent to $\overline{\mathcal{F}}$.
Then there exist strictly monotone increasing sequences $\mathbf{k}, \mathbf{k}' \subseteq \N_0$ \st $\mathcal{F}^{\mathbf{k}}$ is equivalent to $\tilde{\mathcal{F}}^{\mathbf{k}}$ and $\tilde{\mathcal{F}}^{\mathbf{k}'}$ is equivalent to $\overline{\mathcal{F}}{}^{\mathbf{k}'}$.
Then $\mathbb{I}'^{\mathbf{k}}\circ \mathcal{F}^{\mathbf{k}}$ and $\mathbb{I}'^{\mathbf{k}}\circ \tilde{\mathcal{F}}^{\mathbf{k}}$ are equivalent and by \cref{Lemma_Rescaling_of_envelopes} hence so are $\left(\mathbb{I}^{\mathbf{k}}\right)^\ast \mathcal{F}$ and $\left(\mathbb{I}^{\mathbf{k}}\right)^\ast \tilde{\mathcal{F}}$.
And similarly $\left(\mathbb{I}^{\mathbf{k}'}\right)^\ast \tilde{\mathcal{F}}$ and $\left(\mathbb{I}^{\mathbf{k}'}\right)^\ast \overline{\mathcal{F}}$ are equivalent.
Define $\mathbf{m} = (\max\{k_j,k'_j\})_{j\in \N_0} \subseteq \N_0$.
Then $\left(\mathbb{I}^{\mathbf{m}}\right)^\ast \mathcal{F} = \left(\mathbb{I}^{\mathbf{m}}_{\mathbf{k}}\right)^\ast \left(\mathbb{I}^{\mathbf{k}}\right)^\ast \mathcal{F}$ is equivalent to $\left(\mathbb{I}^{\mathbf{m}}\right)^\ast \tilde{\mathcal{F}}  = \left(\mathbb{I}^{\mathbf{m}}_{\mathbf{k}}\right)^\ast \left(\mathbb{I}^{\mathbf{k}}\right)^\ast \tilde{\mathcal{F}} = \left(\mathbb{I}^{\mathbf{m}}_{\mathbf{l}}\right)^\ast \left(\mathbb{I}^{\mathbf{l}}\right)^\ast \tilde{\mathcal{F}}$ which in turn is equivalent to $\left(\mathbb{I}^{\mathbf{m}}\right)^\ast \overline{\mathcal{F}} = \left(\mathbb{I}^{\mathbf{m}}_{\mathbf{l}}\right)^\ast \left(\mathbb{I}^{\mathbf{l}}\right)^\ast \overline{\mathcal{F}}$.
So $\left(\mathbb{I}^{\mathbf{m}}\right)^\ast \mathcal{F}$ is equivalent to $\left(\mathbb{I}^{\mathbf{m}}\right)^\ast \overline{\mathcal{F}}$ and by \cref{Lemma_Rescaling_of_envelopes}, $\mathbb{I}'^{\mathbf{m}}\circ \mathcal{F}^{\mathbf{m}}$ is equivalent to $\mathbb{I}'^{\mathbf{m}}\circ \overline{\mathcal{F}}{}^{\mathbf{m}}$.
Again by \cref{Lemma_Rescaling_of_envelopes}, $\mathcal{F}^{\mathbf{m}}$ is equivalent to $\overline{\mathcal{F}}{}^{\mathbf{m}}$.
\end{proof}

\begin{definition}\label{Definition_Envelope_in_sc_Frechet_space}
Let $E$ and $E'$ be $\overline{\text{sc}}$-Fr{\'e}chet spaces, let $A \subseteq E$ be a subset, and let $f : A \to E'$ be a map.
\begin{enumerate}[label=\arabic*.,ref=\arabic*.]
  \item An \emph{envelope of $A$} is given by a pair $((\mathbb{E},\phi), \mathcal{U})$, where $(\mathbb{E}, \phi)$ is a compatible $\overline{\text{sc}}$-structure on $E$ and $\mathcal{U}$ is an envelope of $\phi\inv(A) \subseteq E_\infty$ in $\mathbb{E}$. \\
It is called \emph{strict} if $\mathcal{U}$ is strict.
  \item An \emph{envelope of $f$} is given by a triple $((\mathbb{E},\phi), (\mathbb{E}',\phi'), \mathcal{F} : \mathcal{U} \to \mathbb{E}')$, where $(\mathbb{E}, \phi)$ and $(\mathbb{E}',\phi')$ are compatible $\overline{\text{sc}}$-structures on $E$ and $E'$, respectively, and $\mathcal{F} : \mathcal{U} \to \mathbb{E}'$ is an envelope of $\phi'^{-1}\circ f\circ \phi : \phi\inv(A) \to E'_\infty$ in $\mathbb{E}$ and $\mathbb{E}'$. \\
It is called \emph{strict} if $\mathcal{F}$ is strict.
  \item Let $\mathfrak{F} = ((\mathbb{E},\phi), (\mathbb{E}',\phi'), \mathcal{F})$ and $\tilde{\mathfrak{F}} = ((\tilde{\mathbb{E}},\tilde{\phi}), (\tilde{\mathbb{E}}',\tilde{\phi}'), \tilde{\mathcal{F}})$ be two envelopes of $f$.
By compatibility of $(\mathbb{E}, \phi)$ with $(\tilde{E}, \tilde{\phi})$ and of $(\mathbb{E}', \phi')$ with $(\tilde{\mathbb{E}}', \tilde{\phi}')$, $J = \tilde{\phi}\inv\circ \phi : E_\infty \to \tilde{E}_\infty$ and $K' = \phi'^{-1}\circ \tilde{\phi}' : \tilde{E}'_\infty \to E'_\infty$ are weak equivalences.
$\mathfrak{F}$ and $\tilde{\mathfrak{F}}$ are said to be \emph{equivalent} \iff for any extensions $\mathbb{J} : \mathbb{E}^{\mathbf{k}} \to \tilde{\mathbb{E}}$ and $\mathbb{K}' : \tilde{\mathbb{E}}'^{\mathbf{l}} \to \mathbb{E}'$ of $J$ and $K$, respectively, $\mathbb{I}'^{\mathbf{k}\circ \mathbf{l}}\circ \mathcal{F}^{\mathbf{k}\circ \mathbf{l}}$ and $\mathbb{K}'\circ \bigl(\mathbb{J}^\ast \tilde{\mathcal{F}}\bigr)^{\mathbf{l}}$ are weakly equivalent.
\end{enumerate}
\end{definition}

\begin{lemma}
Equivalence of envelopes is a well defined (\ie independent of choices) equivalence relation.
\end{lemma}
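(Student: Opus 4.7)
The plan is to reduce each of the four assertions — well-definedness, reflexivity, symmetry, transitivity — to the corresponding property of weak equivalence of envelopes of maps between sc-chains, which has already been established.

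For \textbf{well-definedness}, I would show that if the defining condition holds for one choice of extensions $(\mathbb{J}, \mathbb{K}')$ of $J$ and $K'$, then it holds for every other. The key tool is \cref{Lemma_Rescaling_and_weak_morphisms}, \labelcref{Lemma_Rescaling_and_weak_morphisms_5}: any two extensions $\mathbb{J}_1 : \mathbb{E}^{\mathbf{k}_1} \to \tilde{\mathbb{E}}$ and $\mathbb{J}_2 : \mathbb{E}^{\mathbf{k}_2} \to \tilde{\mathbb{E}}$ of $J$ agree after composition with $\mathbb{I}^{\mathbf{m}}_{\mathbf{k}_1}$ resp.~$\mathbb{I}^{\mathbf{m}}_{\mathbf{k}_2}$ for any $\mathbf{m} \geq \mathbf{k}_1, \mathbf{k}_2$, and likewise for the extensions of $K'$. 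Combining this with \cref{Lemma_Rescaling_of_envelopes}, \labelcref{Lemma_Rescaling_of_envelopes_1} and \labelcref{Lemma_Rescaling_of_envelopes_2} — which show that further pullbacks along $\mathbb{I}^{\cdot}_{\cdot}$ and post-compositions with $\mathbb{I}'^{\cdot}_{\cdot}$ yield weakly equivalent envelopes — and transitivity of weak equivalence, one sees that the resulting weak equivalence of rescaled envelopes does not depend on the chosen extensions.

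For \textbf{reflexivity}, take $\tilde{\mathfrak{F}} = \mathfrak{F}$, so that $J = \id_{E_\infty}$ and $K' = \id_{E'_\infty}$ admit the trivial extensions $\mathbb{J} = \id_{\mathbb{E}}$ and $\mathbb{K}' = \id_{\mathbb{E}'}$; the condition reduces to $\mathcal{F}$ being weakly equivalent to itself, which is trivial. For \textbf{symmetry}, suppose $\mathfrak{F} \sim \tilde{\mathfrak{F}}$, so $J$ and $K'$ are weak equivalences of sc-chains with weak inverses $\tilde{J} = J^{-1}$ and $\tilde{K}' = K'^{-1}$. I would fix extensions $\mathbb{J}$, $\mathbb{K}'$ witnessing the equivalence of $\mathfrak{F}$ with $\tilde{\mathfrak{F}}$ and extensions $\tilde{\mathbb{J}}$, $\tilde{\mathbb{K}}'$ of the inverses; using the identities of the form $\mathbb{J}\circ \tilde{\mathbb{J}}^{\mathbf{k}} = \tilde{\mathbb{I}}^{\mathbf{l}\circ \mathbf{k}}$ from \cref{Remark_Extensions_of_weak_morphisms}, one checks that the weak equivalence of envelopes witnessed from $\mathfrak{F}$ to $\tilde{\mathfrak{F}}$ yields, after pullback by $\tilde{\mathbb{J}}$ and post-composition by $\tilde{\mathbb{K}}'$ followed by a further rescaling to absorb the $\tilde{\mathbb{I}}$ factors (\cref{Lemma_Rescaling_of_envelopes}, \labelcref{Lemma_Rescaling_of_envelopes_1}), the weak equivalence required in the opposite direction. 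For \textbf{transitivity}, given $\mathfrak{F}_1 \sim \mathfrak{F}_2$ and $\mathfrak{F}_2 \sim \mathfrak{F}_3$, the corresponding weak equivalences of sc-chains compose by \cref{Lemma_Rescaling_and_weak_morphisms}, \labelcref{Lemma_Rescaling_and_weak_morphisms_7}, and extensions of the composites are obtained by composing and shifting extensions of the factors; choosing a single sequence $\mathbf{m}$ dominating all the rescalings appearing in the two given weak equivalences of envelopes, and pulling back / post-composing both witnesses accordingly, produces a common rescaling on which the two weak equivalences concatenate to a weak equivalence between $\mathfrak{F}_1$ and $\mathfrak{F}_3$.

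\textbf{Main obstacle.} The serious work is bookkeeping: every assertion only holds up to rescaling by some sequence $\mathbf{k}$, and every extension of a weak morphism is only determined up to composition with some $\mathbb{I}^{\mathbf{m}}_{\mathbf{k}}$, so the arguments never literally compose on the nose. The real content is to choose, at each step, a single sequence majorising all the finitely many auxiliary shifts coming from the separate invocations of \cref{Lemma_Rescaling_and_weak_morphisms} and \cref{Lemma_Rescaling_of_envelopes}, on which one genuinely has equality of the relevant operators; once this common rescaling is fixed, each verification collapses to applying the already-established equivalence-relation properties of weak equivalence of envelopes between sc-chains. Transitivity is the step where this is most delicate, because the common rescaling has to absorb the shifts from both given weak equivalences simultaneously.
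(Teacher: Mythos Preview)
Your proposal is correct and is precisely the kind of detailed bookkeeping the paper has in mind: the paper's own proof is the single word ``Straightforward'', and what you have written is a careful unpacking of that straightforwardness via \cref{Lemma_Rescaling_and_weak_morphisms}, \cref{Lemma_Rescaling_of_envelopes}, and the already-established equivalence-relation properties of weak equivalence of envelopes between sc-chains. Your identification of transitivity as the place where the rescaling bookkeeping is most delicate is accurate, and your strategy of choosing a single dominating sequence at each step is exactly how one makes the argument go through.
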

\begin{proof}
Straightforward.
\end{proof}

\begin{proposition}\label{Proposition_Strict_envelopes}
Let $E$ and $E'$ be $\overline{\text{sc}}$-Fr{\'e}chet spaces, let $A \subseteq E$ be a subset, and let $f : A \to E'$ be a map. \\
If $A \subseteq E$ is open, then for any $a \in A$ there exists a neighbourhood $B \subseteq A$ of $a$ in $E$ \st the following holds:
\begin{enumerate}[label=\arabic*.,ref=\arabic*.]
  \item\label{Proposition_Strict_envelopes_1} If $((\mathbb{E},\phi), \mathcal{U})$ is any envelope of $B$, then there exists a shift $\mathbf{k} \subseteq \N_0$ \st $\mathcal{U}^{\mathbf{k}}$ has a refinement by a strict envelope.
  \item\label{Proposition_Strict_envelopes_2} If $((\mathbb{E},\phi), (\mathbb{E}',\phi'), \mathcal{F})$ is any envelope of $f|_B$, then there exists a shift $\mathbf{k} \subseteq \N_0$ \st $\mathcal{F}^{\mathbf{k}}$ has a refinement by a strict envelope.
Furthermore, $\mathbf{k}$ only depends on $B$ and $(\mathbb{E},\phi)$.
  \item\label{Proposition_Strict_envelopes_3} Every envelope $((\mathbb{E},\phi), (\mathbb{E}',\phi'), \mathcal{F})$ of $f|_B$ is equivalent to a strict envelope.
  \item\label{Proposition_Strict_envelopes_4} Any two envelopes of $f|_B$ are equivalent.
\end{enumerate}
\end{proposition}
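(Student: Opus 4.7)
The plan is to begin by fixing a compatible $\overline{\text{sc}}$-structure $(\mathbb{E}, \phi)$ on $E$ and setting $\tilde{a} \definedas \phi\inv(a) \in E_\infty$. Since $\phi\inv(A) \subseteq E_\infty$ is open and the inverse limit topology on $E_\infty$ has as a neighborhood basis at $\tilde{a}$ the sets $(\iota^\infty_{k})\inv(W)$ for $k \in \N_0$ and $W \subseteq E_k$ an open neighborhood of $\iota^\infty_{k}(\tilde{a})$, I will pick $k_0 \in \N_0$ and such a $W_0 \subseteq E_{k_0}$ with $(\iota^\infty_{k_0})\inv(W_0) \subseteq \phi\inv(A)$, and set $B \definedas \phi((\iota^\infty_{k_0})\inv(W_0))$. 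This $B$ comes with a distinguished strict envelope in $\mathbb{E}^{\mathbf{k}_0}$, namely $V_j \definedas (\iota^{k_0+j}_{k_0})\inv(W_0)$; for envelopes of $f|_B$ this domain-level envelope pairs with the maps $f_k$ supplied by any envelope of $f|_B$ after reduction to $(\mathbb{E},\phi)$.

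For (1) and (2), given an envelope of $B$ (resp.~of $f|_B$) in a possibly different compatible $\overline{\text{sc}}$-structure $(\tilde{\mathbb{E}}, \tilde{\phi})$ on $E$ (and, for (2), $(\tilde{\mathbb{E}}', \tilde{\phi}')$ on $E'$), I would first reduce to $(\mathbb{E},\phi)$ (and $(\mathbb{E}', \phi')$) by invoking \cref{Lemma_Equivalence_of_chains} to extend the $\overline{\text{sc}}$-equivalences to chain-level operators $\mathbb{J} : \mathbb{E}^{\mathbf{l}} \to \tilde{\mathbb{E}}$ (and analogously on the target), and then pulling back the envelope via $\mathbb{J}$ and composing on the target, using \cref{Lemma_Rescaling_of_envelopes}. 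This produces an envelope $\mathcal{U} = (U_k)_{k\in\N_0}$ of $\phi\inv(B)$ in a rescaling of $\mathbb{E}$. The strict refinement candidate in a further shift $\mathbf{m} = (m_0 + j)_{j\in\N_0}$, $m_0 \geq k_0$, is $V_j \definedas (\iota^{m_0+j}_{m_0})\inv(V_0)$ with $V_0 \definedas (\iota^{m_0}_{k_0})\inv(W_0) \cap U_{m_0}$; the identity $(\iota^\infty_{m_0})\inv(V_0) = \phi\inv(B)$ is automatic from the envelope property of $\mathcal{U}$ combined with the pullback form of $\phi\inv(B)$, and for (2) the map data is the restriction of $f_{m_0+j}$ to $V_j$. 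Note that this construction uses only $\mathcal{U}$ on the domain, so the shift depends only on $B$ and $(\mathbb{E},\phi)$ as required.

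The technical heart of the argument---and the step I expect to be most delicate---is verifying that for $m_0$ large enough, this candidate is a genuine refinement, i.e.~$V_j \subseteq U_{m_0+j}$ for all $j \geq 0$. Equivalently, one must rule out points in $V_j$ landing outside $U_{m_0+j}$; any such point necessarily lies outside $\iota^\infty_{m_0+j}(E_\infty)$, since the $E_\infty$-part of $V_j$ is forced into $U_{m_0+j}$ by the envelope condition $\phi\inv(B) \subseteq (\iota^\infty_{m_0+j})\inv(U_{m_0+j})$. Here I would exploit the sc-chain axiom that the inclusions $\iota^k_\ell$ are compact and have dense image: the density identifies the ``$E_\infty$-core'' of $V_0$ as determining $V_0$ up to a closed, nowhere-dense remainder, and the compactness allows a diagonal argument in $j$ to choose $m_0$ large enough that the offending sets $\iota^{m_0+j}_{m_0}(E_{m_0+j}\setminus U_{m_0+j})$ are pushed away from this core (possibly after a slight additional shrinking of $V_0$ while preserving its $E_\infty$-trace).

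For (3), the strict refinement from (2) is, by construction, a refinement of the given envelope, so the two are equivalent in the sense of \cref{Definition_Envelope_in_sc_Frechet_space} via \cref{Lemma_Rescaling_of_envelopes}. For (4), given two envelopes $\mathfrak{F}$ and $\tilde{\mathfrak{F}}$ of $f|_B$, apply (2) to each to produce strict refinements in a common further rescaling of $(\mathbb{E},\phi)$ and $(\mathbb{E}',\phi')$; by the uniqueness statement in \cref{Lemma_Envelopes}, these two strict refinements must coincide, being determined by the underlying set $\phi\inv(B)$ and the underlying map $\phi'^{-1}\circ f\circ \phi$. Hence $\mathfrak{F}$ and $\tilde{\mathfrak{F}}$ are both equivalent to this common strict envelope, and therefore equivalent to each other.
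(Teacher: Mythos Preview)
Your setup is right: choosing $B$ as the $\phi$-image of $(\iota^\infty_{k_0})^{-1}(W_0)$ and building the strict envelope $V_j = (\iota^{k_0+j}_{k_0})^{-1}(W_0)$ in $\mathbb{E}^{\mathbf{k}}$ with $\mathbf{k} = (k_0+j)_j$ is exactly the paper's construction. The handling of (3) and (4) via \cref{Lemma_Envelopes} is also fine.

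The gap is in your ``technical heart'' paragraph. You are trying to manufacture a refinement by intersecting with $U_{m_0}$ and then arguing via compactness and a diagonal choice of $m_0$ that $V_j \subseteq U_{m_0+j}$. This is both unnecessary and problematic. It is unnecessary because \cref{Lemma_Envelopes}, part~\labelcref{Lemma_Envelopes_2}, already says that a strict envelope refines \emph{every} other envelope of the same set in the same chain: once you have the strict envelope $\mathcal{V}$ of $\phi^{-1}(B)$ in $\mathbb{E}^{\mathbf{k}}$, and $\mathcal{U}^{\mathbf{k}}$ is any envelope of the same set in the same chain, the inclusion $V_j \subseteq U_{k_0+j}$ is automatic. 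No intersection with $U_{m_0}$, no compactness of the $\iota_k$, no diagonal argument. (For a different compatible structure $(\tilde{\mathbb{E}},\tilde{\phi})$, the paper transports $\mathcal{V}$ forward via $\mathbb{K}^\ast$ and invokes the same lemma, rather than pulling $\tilde{\mathcal{U}}$ back as you propose; either direction works, but in both cases the punchline is \cref{Lemma_Envelopes}.)

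Your approach is also problematic on its own terms. The phrase ``possibly after a slight additional shrinking of $V_0$ while preserving its $E_\infty$-trace'' would alter $B$, but $B$ has to be fixed \emph{before} any envelope $\mathcal{U}$ is handed to you; the proposition promises one $B$ that works for all envelopes. Relatedly, in your scheme $m_0$ would depend on $\mathcal{U}$, which directly contradicts the uniformity clause in~\labelcref{Proposition_Strict_envelopes_2} that the shift depend only on $B$ and $(\mathbb{E},\phi)$. The paper's shift is simply the fixed $\mathbf{k} = (k_0+j)_j$ coming from the construction of $B$, independent of $\mathcal{U}$.
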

\begin{proof}
Let $(\mathbb{E}, \phi)$ be any compatible $\overline{\text{sc}}$-structure on $E$ and denote $\tilde{A} \definedas \phi\inv(A) \subseteq E_\infty$ and $\tilde{a} \definedas \phi\inv(a) \in \tilde{A}$.
Since $\tilde{A}$ is open, by definition of the Fr{\'e}chet topology on $E_\infty$, there exists a $k_0 \in \N_0$ and an open neighbourhood $\tilde{B}_0 \subseteq E_{k_0}$ of $\iota^\infty_{k_0}(\tilde{a})$ in $E_{k_0}$ \st $\tilde{B} \definedas \left(\iota^\infty_{k_0}\right)\inv(\tilde{B}_0) \subseteq \tilde{A}$ is an open neighbourhood of $\tilde{a}$ in $E_\infty$.
Set $V_j \definedas \left(\iota^{k_0+j}_{k_0}\right)\inv(\tilde{B}_0) \subseteq E_{k_0 + j}$ and $\mathcal{V} \definedas (V_j)_{j\in\N_0}$.
$\mathcal{V}$ is a strict envelope of $\tilde{B}$ in $\mathbb{E}^{\mathbf{k}}$, where $\mathbf{k} \definedas (k_0 + j)_{j\in\N_0}$. \\
Set $B \definedas \phi(\tilde{B})$.
\begin{enumerate}[label=\arabic*.,ref=\arabic*.]
  \item If $((\mathbb{E},\phi), \mathcal{U})$ is any envelope of $B$ (with $(\mathbb{E},\phi)$ still fixed), \ie $\mathcal{U}$ is an envelope of $\tilde{B}$, then $\mathcal{V}$ is a refinement of $\mathcal{U}^{\mathbf{k}}$ by \cref{Lemma_Envelopes}. \\
Now if $(\tilde{\mathbb{E}}, \tilde{\phi})$ is any other compatible $\overline{\text{sc}}$-structure on $E$, then there exists an equivalence $K : \tilde{E}_\infty \to E_\infty$ with $\tilde{\phi} = \phi\circ K$.
Using \cref{Lemma_Rescaling_and_weak_morphisms} there exists a continuous linear operator $\mathbb{K} : \tilde{\mathbb{E}}^{\mathbf{l}} \to \mathbb{E}^{\mathbf{k}}$ for some strictly monotone increasing sequence $\mathbf{l} \subseteq \N_0$, with $K_\infty = K$.
Then $\mathbb{K}^\ast \mathcal{V}$ is a strict envelope of $\tilde{\phi}\inv(B)$ in $\tilde{\mathbb{E}}^{\mathbf{l}}$ by \cref{Lemma_Rescaling_of_envelopes} and by \cref{Lemma_Envelopes}, $\mathbb{K}^\ast \mathcal{V}$ is a refinement of $\tilde{\mathcal{U}}^{\mathbf{l}}$ for any envelope $((\tilde{\mathbb{E}}, \tilde{\phi}), \tilde{\mathcal{U}})$ of $B$.
  \item Immediate from \labelcref{Proposition_Strict_envelopes_1}~by restriction of $\mathcal{F}^{\mathbf{k}}$.
  \item Immediate from \labelcref{Proposition_Strict_envelopes_2}~and the definition of equivalence.
  \item  Let $\mathfrak{F} = ((\mathbb{E},\phi), (\mathbb{E}',\phi'), \mathcal{F})$ and $\tilde{\mathfrak{F}} = ((\tilde{\mathbb{E}},\tilde{\phi}), (\tilde{\mathbb{E}}',\tilde{\phi}'), \tilde{\mathcal{F}})$ be two envelopes of $f|_B$.
By compatibility of $(\mathbb{E}, \phi)$ with $(\tilde{E}, \tilde{\phi})$ and of $(\mathbb{E}', \phi')$ with $(\tilde{\mathbb{E}}', \tilde{\phi}')$, $J = \tilde{\phi}\inv\circ \phi : E_\infty \to \tilde{E}_\infty$ and $K' = \phi'^{-1}\circ \tilde{\phi}' : \tilde{E}'_\infty \to E'_\infty$ are weak equivalences, hence there exist extensions $\mathbb{J} : \mathbb{E}^{\mathbf{k}} \to \tilde{\mathbb{E}}$ and $\mathbb{K}' : \tilde{\mathbb{E}}'^{\mathbf{l}} \to \mathbb{E}'$ of $J$ and $K$, respectively.
$\mathfrak{F}$ is equivalent to $\mathfrak{F}' \definedas ((\mathbb{E}^{\mathbf{k}\circ \mathbf{l}},\phi), (\mathbb{E}',\phi'), \mathcal{F}')$, where $\mathcal{F}' \definedas \mathbb{I}'^{\mathbf{k}\circ \mathbf{l}}\circ \mathcal{F}^{\mathbf{k}\circ \mathbf{l}}$ and $\tilde{\mathfrak{F}}$ is equivalent to $\tilde{\mathfrak{F}}' \definedas ((\mathbb{E}^{\mathbf{k}\circ \mathbf{l}},\phi), (\mathbb{E}',\phi'), \tilde{\mathcal{F}}')$, where $\tilde{\mathcal{F}}' \definedas \mathbb{K}'\circ \bigl(\mathbb{J}^\ast \tilde{\mathcal{F}}\bigr)^{\mathbf{l}}$. \\
By \labelcref{Proposition_Strict_envelopes_2}~and \cref{Lemma_Envelopes}, after a further rescaling, $\mathfrak{F}'$ and $\tilde{\mathfrak{F}}'$ have a common refinement.
\end{enumerate}
\end{proof}

\begin{definition}\label{Definition_sc_continuous}
Let $E$ and $E'$ be $\overline{\text{sc}}$-Fr{\'e}chet spaces, let $U \subseteq E$ be an open subset and let $f : U \to E'$ be a map.
\begin{enumerate}[label=\arabic*.,ref=\arabic*.]
  \item\label{Definition_sc_continuous_1} $f$ is called \emph{$\overline{\text{sc}}$-continuous} or $\overline{\text{sc}}^0$ \iff for every point $x \in U$ there exists a neighbourhood $V \subseteq U$ of $x$ \st $f|_V : V \to E'$ has an envelope.
  \item\label{Definition_sc_continuous_2} $f$ is called \emph{$\underline{\text{sc}}$-continuous} or $\underline{\text{sc}}^0$ \iff for every point $x \in U$ there exists a neighbourhood $V \subseteq U$ of $x$ and compatible $\overline{\text{sc}}$-structures $(\mathbb{E},\phi)$ on $E$ and $(\mathbb{E}', \phi')$ on $E'$ \st $\tilde{f} \definedas \phi'^{-1}\circ f|_V\circ \phi : E_\infty \supseteq \phi\inv(V) \to E'_\infty$ has the following properties:
\begin{enumerate}[label=(\alph*),ref=(\alph*)]
  \item $\phi\inv(V) \subseteq (E_\infty, (\iota^\infty_j)^\ast\|\cdot\|_j)$ is open for all $j\in\N_0$ and
  \item
\[
\tilde{f} : (E_\infty, (\iota^\infty_j)^\ast\|\cdot\|_j) \supseteq \phi\inv(V) \to (E'_\infty, (\iota'^\infty_j)^\ast\|\cdot\|'_j)
\]
is continuous for all $j\in\N_0$.
\end{enumerate}
\end{enumerate}
\end{definition}

\begin{lemma}\label{Lemma_Morphism_equals_sc0}
Let $E$ and $E'$ be $\overline{\text{sc}}$-Fr{\'e}chet spaces and let $F : E \to E'$ a linear map.
Then $F$ is a morphism of $\overline{\text{sc}}$-Fr{\'e}chet spaces \iff $F$ is $\underline{\text{sc}}^0$.
\end{lemma}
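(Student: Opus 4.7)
My plan is to handle the two implications separately, in each direction reducing the statement, after a single rescaling of compatible $\overline{\text{sc}}$-structures, to a direct application of \cref{Lemma_Relation_mathbb_T_to_T_infty}.

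For the direction ``morphism $\Rightarrow \underline{\text{sc}}^0$'', I would take compatible $\overline{\text{sc}}$-structures $(\mathbb{E},\phi)$ and $(\mathbb{E}',\phi')$ as in \cref{Definition_sc_Frechet_space} so that $F_\infty \definedas \phi'^{-1}\circ F\circ \phi$ is a weak morphism. By \cref{Lemma_Rescaling_and_weak_morphisms}, \labelcref{Lemma_Rescaling_and_weak_morphisms_3}, after replacing $\mathbb{E}$ by a rescaling $\mathbb{E}^{\mathbf{k}}$ (which again gives a compatible $\overline{\text{sc}}$-structure by \cref{Lemma_Equivalence_of_chains}), there is a continuous linear operator $\mathbb{F} : \mathbb{E} \to \mathbb{E}'$ with $F_\infty$ the induced map on the inverse limits. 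Now take $V \definedas E$; openness of $\phi\inv(V)=E_\infty$ in each $(E_\infty,(\iota^\infty_j)^\ast\|\cdot\|_j)$ is trivial. The map $F_j : E_j \to E'_j$ is continuous and restricts to $\iota'^\infty_j\circ F_\infty\circ (\iota^\infty_j)\inv$ on $\iota^\infty_j(E_\infty)$, which is dense in $E_j$; hence $F_\infty$ is continuous from $(E_\infty, \|\cdot\|_j|_{E_\infty})$ to $(E'_\infty, \|\cdot\|'_j|_{E'_\infty})$ for every $j$, verifying \cref{Definition_sc_continuous}, \labelcref{Definition_sc_continuous_2}.

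For the converse, suppose $F$ is $\underline{\text{sc}}^0$, witnessed by compatible structures $(\mathbb{E},\phi)$, $(\mathbb{E}',\phi')$ and a neighbourhood $V$ of some $x \in E$. Set $\tilde F \definedas \phi'^{-1}\circ F\circ \phi : E_\infty \to E'_\infty$. Since $F$ is linear, so is $\tilde F$. Continuity of $\tilde F|_{\phi\inv(V)}$ as a map $(E_\infty,\|\cdot\|_j|_{E_\infty})\supseteq \phi\inv(V)\to(E'_\infty,\|\cdot\|'_j|_{E'_\infty})$ at the point $\phi\inv(x)$ together with linearity implies that $\tilde F$ itself is continuous as a map $(E_\infty,\|\cdot\|_j|_{E_\infty})\to(E'_\infty,\|\cdot\|'_j|_{E'_\infty})$ for every $j\in\N_0$; equivalently, $\iota'^\infty_j\circ \tilde F$ is a bounded operator on $(E_\infty,\|\cdot\|_j|_{E_\infty})$ for every $j$. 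This is precisely the hypothesis of \cref{Lemma_Relation_mathbb_T_to_T_infty}, \labelcref{Definition_Chain_2}, which produces a unique continuous linear operator $\mathbb{F} : \mathbb{E} \to \mathbb{E}'$ with $F_\infty = \tilde F$. Hence $\tilde F$ is a weak morphism and $F$ is a morphism of $\overline{\text{sc}}$-Fr{\'e}chet spaces.

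The only mild subtlety I expect is in the second direction, namely the step of upgrading continuity at a single point in each Banach norm to continuity on all of $E_\infty$ and thereby to boundedness; this is purely the standard ``continuous at $0$ plus linear implies bounded'' argument applied separately in each $\|\cdot\|_j$, with linearity supplied by $F$ and by the $\mathds{k}$-linearity of $\phi$, $\phi'$.
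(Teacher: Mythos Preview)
Your proof is correct and follows essentially the same route as the paper: the forward direction is immediate from the existence of the levelwise operators $F_j$, and the converse upgrades local continuity in each $\|\cdot\|_j$ to global boundedness via linearity, then extends by density (the paper does this extension by hand, you invoke \cref{Lemma_Relation_mathbb_T_to_T_infty}). One cosmetic issue: your cross-reference ``\labelcref{Definition_Chain_2}'' points to the definition of an sc-chain, not to the second part of \cref{Lemma_Relation_mathbb_T_to_T_infty} as you intend.
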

\begin{proof}
The implication that a morphism of $\overline{\text{sc}}$-Fr{\'e}chet spaces is $\underline{\text{sc}}^0$ is trivial.
So let $F$ be linear and $\underline{\text{sc}}^0$.
Then by definition there exists a neighbourhood $V \subseteq E$ of $0$ and compatible $\overline{\text{sc}}$-structures $(\mathbb{E}, \phi)$ on $E$ and $(\mathbb{E}', \phi')$ on $E'$ \st $\tilde{F} \definedas \phi'^{-1}\circ F\circ \phi : E_\infty \to E'_\infty$ has the property that the linear map $\iota'^\infty_j \circ \tilde{F} : (E_\infty, (\iota^\infty_j)^\ast\|\cdot\|_j) \to (E'_j, \|\cdot\|'_j)$ is continuous on the open neighbourhood $\phi\inv(V)$ of $0$ in the normed space $(E_\infty, (\iota^\infty_j)^\ast\|\cdot\|_j)$.
But $\tilde{F}$ being linear, this implies that $\iota'^\infty_j \circ \tilde{F} : (E_\infty, (\iota^\infty_j)^\ast\|\cdot\|_j) \to (E'_j, \|\cdot\|'_j)$ is continuous everywhere and hence has a unique extension to a continuous linear map $\tilde{F}_j : E_j \to E'_j$.
It is easy to see that the maps $(\tilde{F}_j)_{j\in\N_0}$ define a continuous linear operator $\tilde{\mathbb{F}} : \mathbb{E} \to \mathbb{E}'$, which shows that $F : E \to E'$ defines a morphism of $\overline{\text{sc}}$-Fr{\'e}chet spaces.
\end{proof}

\begin{lemma}\label{Lemma_sc0_well_defined}
Let $E$ and $E'$ be $\overline{\text{sc}}$-Fr{\'e}chet spaces, let $U \subseteq E$ be an open subset and let $f : U \to E'$ be a map.
\begin{enumerate}[label=\arabic*.,ref=\arabic*.]
  \item If $f$ is $\overline{\text{sc}}$-continuous, then given $x \in U$ and compatible $\overline{\text{sc}}$-structures $(\mathbb{E}, \phi)$ and $(\mathbb{E}',\phi')$ on $E$ and $E'$, respectively, there exists a neighbourhood $V \subseteq U$ of $x$ and a strictly monotone increasing sequence $\mathbf{k} \subseteq \N_0$ \st $f|_V$ has an envelope of the form $((\mathbb{E}^{\mathbf{k}}, \phi), (\mathbb{E}', \phi'), \mathcal{F} : \mathcal{V} \to \mathbb{E}')$.
  \item If $f$ is $\underline{\text{sc}}$-continuous, then given $x \in U$ and compatible $\overline{\text{sc}}$-structures $(\mathbb{E}, \phi)$ and $(\mathbb{E}',\phi')$ on $E$ and $E'$, respectively, there exists a neighbourhood $V \subseteq U$ of $x$ and a strictly monotone increasing sequence $\mathbf{k} \subseteq \N_0$ \st $\tilde{f} \definedas \phi'^{-1}\circ f|_V\circ \phi : E^{\mathbf{k}}_\infty \supseteq \phi\inv(V) \to E'_\infty$ has the following properties:
\begin{enumerate}[label=(\alph*),ref=(\alph*)]
  \item $\phi\inv(V) \subseteq (E^{\mathbf{k}}_\infty, ((\iota^{\mathbf{k}})^\infty_j)^\ast\|\cdot\|^{\mathbf{k}}_j)$ is open for all $j\in\N_0$ and
  \item 
\[
\tilde{f} : (E^{\mathbf{k}}_\infty, ((\iota^{\mathbf{k}})^\infty_j)^\ast\|\cdot\|^{\mathbf{k}}_j) \supseteq \phi\inv(V) \to (E'_\infty, (\iota'^\infty_j)^\ast\|\cdot\|'_j)
\]
is continuous for all $j\in\N_0$.
\end{enumerate}
\end{enumerate}
\end{lemma}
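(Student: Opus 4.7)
My plan is to transport the data provided by the definitions of $\overline{\text{sc}}^0$ and $\underline{\text{sc}}^0$ (which may a priori live in \emph{any} pair of compatible $\overline{\text{sc}}$-structures) to the prescribed structures $(\mathbb{E}, \phi)$ and $(\mathbb{E}', \phi')$ by means of the weak equivalences witnessing compatibility, at the price of a rescaling of $\mathbb{E}$.

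For Part~1, the hypothesis gives a neighbourhood $V \subseteq U$ of $x$, compatible $\overline{\text{sc}}$-structures $(\tilde{\mathbb{E}}, \tilde{\phi})$ and $(\tilde{\mathbb{E}}', \tilde{\phi}')$ on $E$ and $E'$, and an envelope $\tilde{\mathcal{F}} : \tilde{\mathcal{U}} \to \tilde{\mathbb{E}}'$ of $\tilde{\phi}'^{-1}\circ f|_V\circ \tilde{\phi}$. By \cref{Definition_sc_Frechet_space} the maps $J \definedas \tilde{\phi}\inv\circ \phi : E_\infty \to \tilde{E}_\infty$ and $K' \definedas \phi'^{-1}\circ \tilde{\phi}' : \tilde{E}'_\infty \to E'_\infty$ are weak equivalences, so by \cref{Lemma_Rescaling_and_weak_morphisms} I may pick extensions $\mathbb{J} : \mathbb{E}^{\mathbf{k}} \to \tilde{\mathbb{E}}$ and $\mathbb{K}' : \tilde{\mathbb{E}}'^{\mathbf{l}} \to \mathbb{E}'$ for strictly monotone increasing sequences $\mathbf{k}, \mathbf{l} \subseteq \N_0$. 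Setting $\mathbf{m} \definedas \mathbf{k}\circ \mathbf{l}$, the composite
\[
\mathcal{F} \definedas \mathbb{K}'\circ \bigl(\mathbb{J}^\ast \tilde{\mathcal{F}}\bigr)^{\mathbf{l}} : \bigl(\mathbb{J}^\ast \tilde{\mathcal{U}}\bigr)^{\mathbf{l}} \to \mathbb{E}'
\]
is then an envelope in $\mathbb{E}^{\mathbf{m}}$ and $\mathbb{E}'$, and the identity $K'\circ (\tilde{\phi}'^{-1}\circ f\circ \tilde{\phi})\circ J = \phi'^{-1}\circ f\circ \phi$ together with $\phi\inv(V) = J\inv(\tilde{\phi}\inv(V))$ shows that it covers $\phi\inv(V)$, yielding the envelope $\bigl((\mathbb{E}^{\mathbf{m}}, \phi), (\mathbb{E}', \phi'), \mathcal{F}\bigr)$ of $f|_V$ of the required form.

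For Part~2, with $(\tilde{\mathbb{E}}, \tilde{\phi}), (\tilde{\mathbb{E}}', \tilde{\phi}'), V$ now provided by $\underline{\text{sc}}^0$ and $J, K', \mathbb{J}, \mathbb{K}', \mathbf{k}, \mathbf{l}, \mathbf{m}$ as in Part~1, I set $\tilde{g} \definedas \tilde{\phi}'^{-1}\circ f|_V\circ \tilde{\phi}$ and $g \definedas \phi'^{-1}\circ f|_V\circ \phi$, so that $g = K'\circ \tilde{g}\circ J$. The assumption gives openness of $\tilde{\phi}\inv(V)$ in $(E_\infty, \|\cdot\|_j)$ and continuity of $\tilde{g} : (\tilde{E}_\infty, \|\cdot\|_j) \supseteq \tilde{\phi}\inv(V) \to (\tilde{E}'_\infty, \|\cdot\|'_j)$ for each $j \in \N_0$; since $\mathbb{J}$ and $\mathbb{K}'$ extend $J$ and $K'$, the maps $J : (E_\infty, \|\cdot\|_{k_i}) \to (\tilde{E}_\infty, \|\cdot\|_i)$ and $K' : (\tilde{E}'_\infty, \|\cdot\|'_{l_j}) \to (E'_\infty, \|\cdot\|'_j)$ are continuous for all $i, j$. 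Composing with the choice $i = l_j$ shows that $g : (E_\infty, \|\cdot\|_{m_j}) \to (E'_\infty, \|\cdot\|'_j)$ is continuous for every $j$, and $\phi\inv(V) = J\inv(\tilde{\phi}\inv(V))$ is open in each norm $\|\cdot\|_{m_j}$ by continuity of $J$, which is precisely the claim for the sequence $\mathbf{m}$.

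The only real difficulty is bookkeeping: keeping track of which norm appears on each side of the composition $K'\circ \tilde{g}\circ J$ and noticing that $\mathbf{k}\circ \mathbf{l}$ is exactly the rescaling one needs. No new analytic input beyond the manipulations of \cref{Lemma_Rescaling_and_weak_morphisms,Lemma_Rescaling_of_envelopes} is required, and in both parts the argument reduces to a diagram chase on $E_\infty$ and $E'_\infty$.
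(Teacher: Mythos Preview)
Your proof is correct and is precisely the ``straightforward'' argument the paper has in mind: the paper's own proof consists of the single line ``Straightforward, using \cref{Proposition_Strict_envelopes}'', and what you have done is write out the transport-via-equivalences diagram chase that this sentence abbreviates. The only cosmetic difference is that the paper gestures at \cref{Proposition_Strict_envelopes} (which packages the same rescaling-and-refinement manipulations), whereas you invoke \cref{Lemma_Rescaling_and_weak_morphisms,Lemma_Rescaling_of_envelopes} directly; the content is the same.
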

\begin{proof}
Straightforward, using \cref{Proposition_Strict_envelopes}.
\end{proof}

\begin{proposition}\label{Proposition_Composition_sc_continuous}
Let $E$, $E'$ and $E''$ be $\overline{\text{sc}}$-Fr{\'e}chet spaces, let $U \subseteq E$ and $U' \subseteq E'$ be open subsets and let $f : U \to E'$ and $f' : U' \to E''$ be maps with $f(U) \subseteq U'$.
\begin{enumerate}[label=\arabic*.,ref=\arabic*.]
  \item\label{Proposition_Composition_sc_continuous_1} If $f$ is $\underline{\text{sc}}$-continuous, then $f$ is continuous (as a map between Fr{\'e}chet spaces).
  \item\label{Proposition_Composition_sc_continuous_2} If $f$ is $\overline{\text{sc}}$-continuous, then $f$ is $\underline{\text{sc}}$-continuous.
  \item\label{Proposition_Composition_sc_continuous_3} If $f$ and $f'$ are $\overline{\text{sc}}$-continuous, then so is $f'\circ f$.
  \item\label{Proposition_Composition_sc_continuous_4} If $f$ and $f'$ are $\underline{\text{sc}}$-continuous, then so is $f'\circ f$.
\end{enumerate}
\end{proposition}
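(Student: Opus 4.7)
The plan is to use \cref{Lemma_sc0_well_defined} (allowing the realization of an $\overline{\text{sc}}^0$ or $\underline{\text{sc}}^0$ condition using any prescribed compatible $\overline{\text{sc}}$-structures, after shifting the source) and \cref{Proposition_Strict_envelopes} (refining an envelope to a strict one, again after a shift) throughout. For \labelcref{Proposition_Composition_sc_continuous_1}, the Fr\'echet topology on $E'_\infty$ is the initial topology with respect to the continuous injections $\iota'^\infty_j : E'_\infty \to E'_j$ and equivalently is generated by the seminorms $(\iota'^\infty_j)^\ast\|\cdot\|'_j$, while the Fr\'echet topology on $E_\infty$ is finer than each $(E_\infty, (\iota^\infty_j)^\ast\|\cdot\|_j)$; continuity of $\tilde{f}$ between each matching pair of these normed spaces therefore upgrades to continuity between Fr\'echet spaces, and $f|_V = \phi'\circ\tilde{f}\circ\phi^{-1}$ inherits continuity. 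For \labelcref{Proposition_Composition_sc_continuous_2}, fix any compatible $\overline{\text{sc}}$-structures $(\mathbb{E},\phi)$ and $(\mathbb{E}',\phi')$; \cref{Lemma_sc0_well_defined} produces an envelope $\mathcal{F} = (f_j : U_j \to E'_j)_{j}$ of $f|_V$ over $\mathbb{E}^{\mathbf{k}}$ and $\mathbb{E}'$ for some neighbourhood $V$ of $x$ and shift $\mathbf{k}$, and \cref{Proposition_Strict_envelopes} (after possibly shrinking $V$ and enlarging $\mathbf{k}$) lets me take $\mathcal{F}$ strict. Strictness forces $\phi^{-1}(V) = (\iota^\infty_{k_0})^{-1}(U_0)$, which is open in every normed space $(E_\infty, \|\iota^\infty_{k_j}\|_{k_j})$ since these topologies dominate $(E_\infty, \|\iota^\infty_{k_0}\|_{k_0})$, and the envelope relation $\iota'^\infty_j\circ\tilde{f} = f_j\circ\iota^\infty_{k_j}$ together with continuity of $f_j : U_j \to E'_j$ yields continuity of $\tilde{f}$ in the required pairs of seminorms.

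For the composition statements \labelcref{Proposition_Composition_sc_continuous_3} and \labelcref{Proposition_Composition_sc_continuous_4}, the main technical obstacle is that the witnesses to $\overline{\text{sc}}^0$ (resp.\ $\underline{\text{sc}}^0$) for $f$ and for $f'$ may a priori involve different compatible $\overline{\text{sc}}$-structures on the intermediate space $E'$. I would resolve this by fixing a common $(\mathbb{E}',\phi')$ on $E'$ together with structures $(\mathbb{E},\phi)$ on $E$ and $(\mathbb{E}'',\phi'')$ on $E''$ and invoking \cref{Lemma_sc0_well_defined} twice: this produces for $f$ near $x$ a condition on $\mathbb{E}^{\mathbf{k}}$ and $\mathbb{E}'$, and for $f'$ near $f(x)$ a condition on $\mathbb{E}'^{\mathbf{k}'}$ and $\mathbb{E}''$. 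Rescaling the data for $f$ by $\mathbf{k}'$ transports it to an envelope (resp.\ norm-continuity statement) over $\mathbb{E}^{\mathbf{k}\circ\mathbf{k}'}$ and $\mathbb{E}'^{\mathbf{k}'}$, aligning the intermediate structures. For \labelcref{Proposition_Composition_sc_continuous_3}, setting $V_j \definedas U_{k'_j} \cap f_{k'_j}^{-1}(W_j)$ and $h_j \definedas f'_j\circ f_{k'_j}|_{V_j}$ gives continuous maps, and the chain compatibility $h_j\circ\iota^{\mathbf{k}\circ\mathbf{k}'}_j = \iota''_j\circ h_{j+1}$ follows by chaining the compatibility relations of $\mathcal{F}^{\mathbf{k}'}$ and $\mathcal{F}'$; the $(V_j)_{j}$ envelope the open set $V \cap f^{-1}(V')$ by a direct inspection of the envelope relations. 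For \labelcref{Proposition_Composition_sc_continuous_4}, after the alignment the composition $\tilde{f}'\circ\tilde{f}$ is continuous in each matching pair of seminorms as a composition of continuous maps factoring through $(E'_\infty, (\iota'^\infty_{k'_j})^\ast\|\cdot\|'_{k'_j})$, and openness of the relevant domain in each seminorm on $E_\infty$ follows from the openness clauses of both $\underline{\text{sc}}^0$ conditions combined with continuity of $\tilde{f}$ in the appropriate seminorms.
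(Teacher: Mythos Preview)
Your proposal is correct and follows essentially the same approach as the paper. The only tactical difference worth noting is in part~\labelcref{Proposition_Composition_sc_continuous_3}: you build the composed envelope by taking level-wise preimages $V_j = U_{k'_j}\cap f_{k'_j}^{-1}(W_j)$, whereas the paper first makes both envelopes strict via \cref{Proposition_Strict_envelopes}, shrinks $U_0$ so that $f_0(\tilde U_0)\subseteq U'_0$, and then uses strictness to propagate $f_j(\tilde U_j)\subseteq U'_j$ to all levels; both constructions yield the required envelope of $f'\circ f$ and the remaining parts match the paper's argument closely.
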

\begin{proof}
\begin{enumerate}[label=\arabic*.,ref=\arabic*.]
  \item Clear from the definitions.
  \item Dito.
  \item Let $x \in U$ and $x' \definedas f(x) \in U'$.
By assumption there exist neighbourhoods $V \subseteq U$ and $V' \subseteq U'$ of $x$ and $x'$, respectively together with envelopes $\mathfrak{F} = ((\mathbb{E},\phi), (\mathbb{E}',\phi'), \mathcal{F} : \mathcal{U} \to \mathbb{E}')$ and $\mathfrak{F}' = ((\tilde{\mathbb{E}}',\tilde{\phi}'), (\mathbb{E}'',\phi''), \mathcal{F}' : \mathcal{U}' \to \mathbb{E}'')$ of $f|_V$ and $f|_{V'}$, respectively.
Since $(\mathbb{E}',\phi')$ and $(\tilde{\mathbb{E}}', \tilde{\phi}')$ are equivalent $\overline{\text{sc}}$-structures, there exists a weak equivalence $K : E'_\infty \to \tilde{E}'_\infty$ with extension $\mathbb{K} : \mathbb{E}'^{\mathbf{k}} \to \tilde{\mathbb{E}}'$ for some strictly monotone increasing sequence $\mathbf{k} \subseteq \N_0$.
Replacing $\mathfrak{F}$ by $((\mathbb{E}^{\mathbf{k}},\phi), (\mathbb{E}'^{\mathbf{k}},\phi'), \mathcal{F}^{\mathbf{k}})$ and $\mathfrak{F}'$ by $((\mathbb{E}'^{\mathbf{k}},\phi'), (\mathbb{E}'',\phi''), \mathcal{F}'\circ \mathbb{K})$, one can assume that $(\mathbb{E}',\phi') = (\tilde{\mathbb{E}}', \tilde{\phi}')$.
Using \cref{Proposition_Strict_envelopes}, after shrinking $V$ and $V'$ and rescaling, one can furthermore assume that $\mathcal{F}$ and $\mathcal{F}'$ are strict.
Now $f_0 : U_0 \to E'_0$ is continuous, $U'_0 \subseteq E'_0$ is open and $f_0(\iota^\infty_0(\phi(x))) \in U'_0$.
Shrink $U_0$ to a smaller neighbourhood $\tilde{U}_0$ of $\iota^\infty_0(x)$ \st $f_0(\tilde{U}_0) \subseteq U'_0$.
Let $\tilde{U}_j \definedas \bigl(\iota^j_0\bigr)\inv(\tilde{U}_0)$ for $j \in \N_0\cup \{\infty\}$ and $\tilde{V} \definedas \phi\inv(\tilde{U}_\infty)$
Then $\tilde{\mathcal{U}} \definedas (\tilde{U}_j)_{j\in\N_0}$ is a strict envelope for the open neighbourhood $\tilde{U}_\infty$ of $\phi\inv(x)$ and $\mathcal{F}|_{\tilde{\mathcal{U}}}$ is a strict envelope for $f_\infty|_{\tilde{U}_\infty}$, where $f_\infty \definedas \phi'^{-1}\circ f \circ \phi$.
Furthermore, because $\tilde{\mathcal{U}}$ and $\mathcal{U}'$ are strict, $f_j(\tilde{U}_j) \subseteq U'_j$ for all $j\in\N_0\cup\{\infty\}$ and $f(\tilde{V}) \subseteq V'$.
Hence there is a well defined envelope $\mathcal{F}'' : \tilde{U} \to \mathbb{E}''$ of $\phi''^{-1}\circ (f'\circ f|_{\tilde{V}})\circ \phi\inv$ defined by $f''_j \definedas f'_j\circ f_j|_{\tilde{U}_j}$ for all $j\in\N_0$.
  \item If $(\mathbb{E}, \phi)$ and $(\mathbb{E}', \phi')$ are compatible $\overline{\text{sc}}$-structures on $E$ and $E'$, respectively, \st the condition on $f$ in \cref{Definition_sc_continuous}, \labelcref{Definition_sc_continuous_2}, is satisfied, then the same holds for $(\mathbb{E}^{\mathbf{k}}, \phi)$ and $(\mathbb{E}'^{\mathbf{l}}, \phi')$, where $\mathbf{k}, \mathbf{l} \subseteq \N_0$ are strictly monotone increasing sequences with $\mathbf{k} \geq \mathbf{l}$. \\
Also, if $f$ is $\underline{\text{sc}}$-continuous, then given arbitrary compatible $\overline{\text{sc}}$-structures $(\mathbb{E}, \phi)$ and $(\mathbb{E}', \phi')$ on $E$ and $E'$, respectively, there exists a strictly monotone increasing sequence $\mathbf{k} \subseteq \N_0$ \st \cref{Definition_sc_continuous}, \labelcref{Definition_sc_continuous_2}, is satisfied, when $(\mathbb{E}, \phi)$ is replaced by $(\mathbb{E}^{\mathbf{k}}, \phi)$. \\
These two facts together easily imply the claim.
\end{enumerate}
\end{proof}

\begin{example}\label{Example_Reparametrisation_Action_I}
I continue \cref{Subsection_Reparametrisation_action}. \\
Then in the notation of that Subsection, by \cref{Proposition_Reparametrisation_action_cts}, the reparametrisation action $\Psi : \Gamma_B(F_1) \to \Gamma_B(F_2)$ is $\overline{\text{sc}}$-continuous, where an envelope of $\Psi$ is provided by either the maps $\Psi^k : \Gamma^k_B(F_1) \to \Gamma^k_B(F_2)$, $k \in \N_0$, or the maps $\Psi^{k,p} : W^{k,p}_B(F_1) \to W^{k,p}_B(F_2)$, for some fixed $1 < p < \infty$ and $k \in \N_0$ \st $kp > n$.
\end{example}

\Needspace{25\baselineskip}
\subsection{$\overline{\text{sc}}$-differentiability}

\begin{definition}\label{Definition_sc1}
Let $E$ and $E'$ be $\overline{\text{sc}}$-Fr{\'e}chet spaces, let $U \subseteq E$ be an open subset, and let $f : U \to E'$ be a continuous map.
\begin{enumerate}[label=\arabic*.,ref=\arabic*.]
  \item If $f$ is weakly Fr{\'e}chet differentiable as a continuous map between open subsets of the Fr{\'e}chet spaces $E$ and $E'$ and both $f$ and the \emph{differential of $f$},
\begin{align*}
Df : E\oplus E \supseteq U\times E &\to E' \\
(x,u) &\mapsto Df(x)u\text{,}
\end{align*}
are
\begin{enumerate}[label=(\alph*),ref=(\alph*)]
  \item $\overline{\text{sc}}$-continuous, then $f$ is called \emph{continuously $\overline{\text{sc}}$-differentiable} or $\overline{\text{sc}}^1$.
  \item $\underline{\text{sc}}$-continuous, then $f$ is called \emph{continuously $\underline{\text{sc}}$-differentiable} or $\underline{\text{sc}}^1$.
\end{enumerate}
  \item If $f$ is pointwise weakly Fr{\'e}chet differentiable as a continuous map between open subsets of the Fr{\'e}chet spaces $E$ and $E'$ and both $f$ and the \emph{differential of $f$},
\begin{align*}
Df : E\oplus E \supseteq U\times E &\to E' \\
(x,u) &\mapsto Df(x)u\text{,}
\end{align*}
are
\begin{enumerate}[label=(\alph*),ref=(\alph*)]
  \item $\overline{\text{sc}}$-continuous, then $f$ is called \emph{continuously pointwise $\overline{\text{sc}}$-differentiable} or p-$\overline{\text{sc}}^1$.
  \item $\underline{\text{sc}}$-continuous, then $f$ is called \emph{continuously pointwise $\underline{\text{sc}}$-differentiable} or p-$\underline{\text{sc}}^1$.
\end{enumerate}
\end{enumerate}
\end{definition}

\begin{definition}
Let $E$ and $E'$ be $\overline{\text{sc}}$-Fr{\'e}chet spaces, let $U \subseteq E$ be an open subset, and let $f : U \to E'$ be a map.
\begin{enumerate}[label=\arabic*.,ref=\arabic*.]
  \item An envelope $((\mathbb{E},\phi), (\mathbb{E}',\phi'), \mathcal{F} : \mathcal{U} \to \mathbb{E}')$ of $f$ is called \emph{continuously $\overline{\text{sc}}$-differentiable} or $\overline{\text{sc}}^1$ \iff $f_k : U_k \to E'_k$ is weakly continuously weakly Fr{\'e}chet differentiable along $\iota_k|_{U_{k+1}} : U_{k+1}\to U_k$ for every $k\in\N_0$.
  \item $f$ is called \emph{continuously $\widehat{\text{sc}}$-differentiable} or $\widehat{\text{sc}}^1$ \iff $f$ is $\overline{\text{sc}}$-continuous and for any $x \in U$ there exists a neighbourhood $V \subseteq U$ of $x$ \st $f|_V : V \to E'$ has a continuously $\overline{\text{sc}}$-differentiable envelope.
\end{enumerate}
\end{definition}

\begin{remark}
There is no ``pointwise'' version of $\widehat{\text{sc}}$-differentiability due to the lack of a chain rule:
\cref{Theorem_Chain_rule_I} does not hold for pointwise weak Fr{\'e}chet differentiability.
There is however a chain rule for maps that are, in the present terminology, weakly continuously pointwise weakly Fr{\'e}chet differentiable (along the identity), \cf \cite{MR656198}, Section I.3.3 (esp.~Theorem 3.3.4).
This suffices for a chain rule for continuously pointwise $\overline{\text{sc}}$-differentiable and $\underline{\text{sc}}$-differentiable maps.
\end{remark}

\begin{remark}
Note that in the definition of a continuously $\overline{\text{sc}}$-differentiable envelope it is not required that $f$ itself is weakly continuously weakly Fr{\'e}chet differentiable as a map between Fr{\'e}chet spaces.
\end{remark}

\begin{proposition}\label{Lemma_sc_diffble_implies_weakly_sc_diffble}
Let $E$ and $E'$ be $\overline{\text{sc}}$-Fr{\'e}chet spaces, let $U \subseteq E$ be an open subset, and let $f : U \to E'$ be a map.
Then the following implications hold:
\[
\xymatrix{
f \;\, \widehat{\text{sc}}^1 \;\; \ar@{=>}[d] & \\
f \;\, \overline{\text{sc}}^1 \;\; \ar@{=>}[r] \ar@{=>}[d] & \;\; f \;\, \text{p-}\overline{\text{sc}}^1 \ar@{=>}[d] \\
f \;\, \underline{\text{sc}}^1 \;\; \ar@{=>}[r] & \;\; f \;\, \text{p-}\underline{\text{sc}}^1
}
\]
\end{proposition}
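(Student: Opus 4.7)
The plan is to dispatch the five implications in the diagram in turn. Four of them reduce immediately to results already established, and the only substantive content is in the remaining implication $\widehat{\mathrm{sc}}^1 \Rightarrow \overline{\mathrm{sc}}^1$. For $\overline{\mathrm{sc}}^1 \Rightarrow \text{p-}\overline{\mathrm{sc}}^1$, $\underline{\mathrm{sc}}^1 \Rightarrow \text{p-}\underline{\mathrm{sc}}^1$, and $\text{p-}\overline{\mathrm{sc}}^1 \Rightarrow \text{p-}\underline{\mathrm{sc}}^1$, I would combine two observations: weak Fréchet differentiability implies pointwise weak Fréchet differentiability (\cref{Theorem_Relations_between_notions_of_differentiability}), and $\overline{\mathrm{sc}}$-continuity implies $\underline{\mathrm{sc}}$-continuity (\cref{Proposition_Composition_sc_continuous}). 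The latter applied to both $f$ and $Df$ also yields $\overline{\mathrm{sc}}^1 \Rightarrow \underline{\mathrm{sc}}^1$ and takes care of the remaining diagonal arrow.

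For the substantive implication $\widehat{\mathrm{sc}}^1 \Rightarrow \overline{\mathrm{sc}}^1$, I would fix $x_0 \in U$ and a continuously $\overline{\mathrm{sc}}$-differentiable envelope $\mathcal{F} = (f_k : U_k \to E'_k)_{k \in \N_0}$ of $f$ on a neighbourhood of $x_0$ — which by \cref{Proposition_Strict_envelopes} may be assumed strict — and let $y_0 := \phi^{-1}(x_0) \in E_\infty$. Applying the chain rule (\cref{Theorem_Chain_rule_I}) to the envelope compatibility $f_k \circ \iota_k = \iota'_k \circ f_{k+1}$ at $\iota^\infty_{k+1}(y_0)$ gives $Df_k(\iota^\infty_k(y_0)) \circ \iota_k = \iota'_k \circ Df_{k+1}(\iota^\infty_{k+1}(y_0))$, so the pointwise derivatives assemble into a continuous linear operator $\mathbb{E} \to \mathbb{E}'$ and, transported along the $\overline{\mathrm{sc}}$-structures, into a morphism $Df(x_0) : E \to E'$ of $\overline{\mathrm{sc}}$-Fréchet spaces. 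The $\overline{\mathrm{sc}}$-continuity of the total differential $Df : U \times E \to E'$ is then witnessed by the envelope whose $k$-th slice is the continuous map $(z, u) \mapsto Df_k(\iota_k(z))u : U_{k+1} \times E_k \to E'_k$ supplied by the ``weakly continuously'' clause in the definition of a continuously $\overline{\mathrm{sc}}$-differentiable envelope.

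The hard part is to promote this pointwise information to genuine weak Fréchet differentiability of $f$ as a map between the Fréchet spaces $E_\infty$ and $E'_\infty$, that is, to exhibit a single convex balanced neighbourhood $V \subseteq E_\infty$ of $0$ on which $r^f_{y_0} : V \times [0,1] \to E'_\infty$ is continuous. The remainder decomposes as $\iota'^\infty_k \circ r^f_{y_0} = r^{f_k \circ \iota^\infty_k}_{y_0}$, and the chain rule applied to the continuous linear map $\iota^\infty_k$ and to $f_k$ yields weak Fréchet differentiability of each $f_k \circ \iota^\infty_k : E_\infty \to E'_k$ at $y_0$ on some neighbourhood $V^{(k)}$. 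The main obstacle is that this $V^{(k)}$ a priori depends on $k$, whereas continuity into the inverse-limit space $E'_\infty$ requires one $V$ working for all $k$ simultaneously. To bridge this gap, the plan is to exploit strictness of the envelope: set $V := (\iota^\infty_0)^{-1}(V_0)$ for a sufficiently small convex balanced $V_0 \subseteq E_0$, and $V_k := (\iota^k_0)^{-1}(V_0) \subseteq E_k$, so that $\iota^\infty_k(V) \subseteq V_k$ uniformly in $k$. The weak continuity of $(z, u) \mapsto Df_k(\iota_k(z))u$ combined with first countability of $E_\infty$ and the equivalence of weak and compact weak Fréchet differentiability in the first-countable setting (\cref{Theorem_Relations_between_notions_of_differentiability}, part 2) is the tool I would use to propagate continuity of $r^{f_0}_{\iota^\infty_0(y_0)}$ on $V_0 \times [0,1]$ to continuity of each $r^{f_k}_{\iota^\infty_k(y_0)}$ on $V_k \times [0,1]$. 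Once this is in hand, continuity of $r^f_{y_0}$ on $V \times [0,1]$ into $E'_\infty$ is automatic, $f$ is weakly Fréchet differentiable at $x_0$, and together with the $\overline{\mathrm{sc}}$-continuity of $f$ (part of the $\widehat{\mathrm{sc}}^1$ hypothesis) and of $Df$ established above, this yields $\overline{\mathrm{sc}}^1$.
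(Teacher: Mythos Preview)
Your strategy mirrors the paper's: both dismiss the four easy implications via \cref{Theorem_Relations_between_notions_of_differentiability} and \cref{Proposition_Composition_sc_continuous}, and both attack $\widehat{\text{sc}}^1 \Rightarrow \overline{\text{sc}}^1$ by passing to a strict differentiable envelope, assembling an envelope $\mathcal{DF}$ of the differential from the maps $(z,u) \mapsto Df_k(\iota_k(z))u$, and verifying weak Fr\'echet differentiability of $f_\infty$ through the identity $\iota'^\infty_k \circ r^{f_\infty}_y = r^{f_k}_{\iota^\infty_k(y)} \circ (\iota^\infty_k \times \id_{[0,1]})$.

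Where you diverge is in your ``main obstacle'', and here you are overcomplicating matters. The worry that the neighbourhood on which $r^{f_k}_{\iota^\infty_k(y)}$ is continuous might depend on $k$ is a non-issue: once $f_k$ is weakly Fr\'echet differentiable at a point, the remainder $r^{f_k}$ is automatically continuous on \emph{any} convex balanced $V'_k$ with $\iota^\infty_k(y) + V'_k \subseteq U_k$. Continuity on $V'_k \times (0,1]$ comes for free from continuity of $f_k$ (\cref{Remark_Rest_continuity_versus_continuity_at_0_1}), and continuity at any $(u_0, 0)$ follows from the rescaling identity $r^{f_k}(u, t) = \lambda^{-1} r^{f_k}(\lambda u, t/\lambda)$, valid for $0 < t \leq \lambda$, by choosing $\lambda$ small enough that $\lambda u_0$ lies in the neighbourhood originally supplied by the definition. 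The paper therefore just fixes a single $V \subseteq E_\infty$ with $y + V \subseteq U_\infty$ and declares each $\iota'^\infty_k \circ r^{f_\infty}_y$ continuous as a composition of continuous functions. Your proposed workaround via first countability and propagation from level~$0$ is unnecessary and, as stated, too vague to be convincing: weak continuity of the family $(z,u) \mapsto Df_k(\iota_k(z))u$ does not by itself control the remainder on larger sets.
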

\begin{proof}
I will only show the implication $f$ $\widehat{\text{sc}}^1$ $\Rightarrow$ $f$ $\overline{\text{sc}}^1$.
The remaining implications are either shown analogously or are immediate from the definitions, \cref{Proposition_Composition_sc_continuous} and \cref{Theorem_Relations_between_notions_of_differentiability}. \\
So let $f$ be $\widehat{\text{sc}}^1$.
Then for any $x \in U$ there exists a neighbourhood $V \subseteq U$ of $x$ and a continuously differentiable envelope $\mathfrak{F} = ((\mathbb{E}, \phi), (\mathbb{E}', \phi'), \mathcal{F} : \mathcal{U} \to \mathbb{E}')$ of $f|_V$.
Using \cref{Proposition_Strict_envelopes}, after possibly making $V$ smaller, rescaling and restriction, one can assume that $\mathfrak{F}$ is strict. \\
Define $\mathbf{1} \definedas (k+1)_{k\in\N_0}$.
Then $\mathcal{U}^{\mathbf{1}}\oplus \mathbb{E} \definedas (U_{k+1}\times E_k)_{k\in\N_0} \subseteq \mathbb{E}^{\mathbf{1}}\oplus \mathbb{E}$ is an envelope of $U_\infty \times E_\infty = (\phi\oplus\phi)\inv(V\times E)$, hence $((\mathbb{E}^{\mathbf{1}}\oplus \mathbb{E}, \phi\oplus \phi), \mathcal{U}^{\mathbf{1}}\oplus \mathbb{E})$ a strict envelope of $V\times E\subseteq E\oplus E$.
Now define $\mathcal{DF} : \mathcal{U}^{\mathbf{1}}\times \mathbb{E} \to \mathbb{E}'$ by the sequence of maps
\begin{align*}
Df_k : U_{k+1}\times E_k &\to E'_k \\
(y,u) &\mapsto Df_k(\iota_k(y))u\text{.}
\end{align*}
Each $Df_k$ is a well defined continuous map because $\mathfrak{F}$ is differentiable by assumption, \ie each of the maps $f_k : U_k \to E'_k$ in the envelope $\mathcal{F}$ of $f_\infty\definedas \phi'^{-1}\circ f|_V\circ \phi : U_\infty \to E'_\infty$ is weakly continuously weakly Fr{\'e}chet differentiable along $\iota_k$.
It will now be proved that $f_\infty$ is weakly continuously weakly Fr{\'e}chet differentiable, and hence so is $f|_V$, and that $\mathcal{DF}$ is an envelope of $Df_\infty : U_\infty\times E_\infty \to E'_\infty$.
This finishes the proof. \\
First, note that from $f_k\circ \iota_k|_{U_{k+1}} = \iota'_k\circ f_{k+1}$, by the chain rule, \cref{Theorem_Chain_rule_I}, it follows that $Df_k\circ (\iota_{k+1}|_{U_{k+1}}\times \iota_k) = \iota'_k\circ Df_{k+1} : U_{k+2}\times E_{k+1} \to E'_{k}$.
\cref{Lemma_Envelope_determines_map} shows that there exists a unique continuous map $\Phi : U_\infty\times E_\infty \to E'_\infty$ \st $\mathcal{DF}$ is an envelope of $\Phi$, \ie $Df_k\circ (\iota^\infty_{k+1}|_{U_\infty}\times \iota^\infty_k) = \iota'^\infty_k\circ \Phi$.
It remains to show that $f_\infty$ is weakly Fr{\'e}chet differentiable with differential $Df_\infty = \Phi$.
For this, one computes for $(y,u) \in U_\infty \times E_\infty$, $t \in (0,1]$ and
\begin{align*}
r^{f_\infty}_{y}(u, t) &= \frac{1}{t}\left(f_\infty(y + tu) - f_\infty(u)\right) - \Phi(y, u)
\intertext{that}
\iota'^\infty_k\circ r^{f_\infty}_y(u, t) &= \frac{1}{t}\left(\iota'^\infty_k\circ f_\infty(y + tu) - \iota'^\infty_k\circ f_\infty(y)\right) - \iota'^\infty_k\circ \Phi(y, u) \\
&= \frac{1}{t}\left(f_k(\iota^\infty_k(y) + t\iota^\infty_k(u)) - f_k(\iota^\infty_k(y))\right) - Df_k((\iota^\infty_{k+1}\times \iota^\infty_k)(y,u)) \\
&= r^{f_k}_{\iota^\infty_{k}(y)}(\iota^\infty_k(u), t)
\end{align*}
and hence (note that $\iota^\infty_k(y) = \iota_k(\iota^\infty_{k+1}(y)) \in \im \iota_k$)
\[
\iota'^\infty_k\circ r^{f_\infty}_{y} = r^{f_k}_{\iota^\infty_{k}(y)}\circ (\iota^\infty_k\times \id_{[0,1]})\text{.}
\]
By the definition of the topology on $E'_\infty$, $r^{f_\infty}_y$ is continuous \iff $\iota'^\infty_k\circ r^{f_\infty}_y$ is continuous for all $k \in \N_0$.
And by the above equality, these maps are continuous as compositions of continuous functions.
So by definition, $f_\infty$ is weakly Fr{\'e}chet differentiable with differential given by $\Phi$.
\end{proof}

\begin{definition}
Let $\mathbb{E}$ and $\mathbb{E}'$ be sc-chains, let $U_\infty \subseteq E_\infty$ be an open subset, and let $\mathcal{F} : \mathcal{U} \to \mathbb{E}'$ be a continuously $\overline{\text{sc}}$-differentiable envelope of a continuous map $f_\infty : U_\infty \to E'_\infty$. \\
The envelope $\mathcal{DF} : \mathcal{U}^{\mathbf{1}}\oplus \mathbb{E} \to \mathbb{E}'$ of $Df_\infty : U_\infty\times E_\infty \to E'_\infty$ given by the sequence of maps
\begin{align*}
Df_k : U_{k+1}\times E_k &\to E'_k \\
(y,u) &\mapsto Df_k(\iota_k(y))u
\end{align*}
is called the \emph{differential of $\mathcal{F}$}.
\end{definition}

\begin{definition}\label{Definition_sck}
Let $E$ and $E'$ be $\overline{\text{sc}}$-Fr{\'e}chet spaces and let $U \subseteq E$ be an open subset.
Let furthermore $k \in \N_0$, $k \geq 2$.
\begin{enumerate}[label=\arabic*.,ref=\arabic*.]
  \item An
\begin{enumerate}[label=\roman*.,ref=\roman*.]
  \item $\overline{\text{sc}}$-continuous
  \item $\underline{\text{sc}}$-continuous
  \item $\overline{\text{sc}}$-continuous
  \item $\underline{\text{sc}}$-continuous
\end{enumerate}
map $f : U \to E'$ is called
\begin{enumerate}[label=\roman*.,ref=\roman*.]
  \item \emph{$k$-times continuously $\overline{\text{sc}}$-differentiable} or $\overline{\text{sc}}^k$
  \item \emph{$k$-times continuously $\underline{\text{sc}}$-differentiable} or $\underline{\text{sc}}^k$
  \item \emph{$k$-times continuously pointwise $\overline{\text{sc}}$-differentiable} or p-$\overline{\text{sc}}^k$
  \item \emph{$k$-times continuously pointwise $\underline{\text{sc}}$-differentiable} or p-$\underline{\text{sc}}^k$
\end{enumerate}
\iff
\begin{enumerate}[label=\roman*.,ref=\roman*.]
  \item $f$ is $\overline{\text{sc}}^k$ and $Df$ is $\overline{\text{sc}}^{k-1}$.
It is called $\overline{\text{sc}}^\infty$ \iff it is $\overline{\text{sc}}^k$ for all $k \in \N_0$.
  \item $f$ is $\underline{\text{sc}}^k$ and $Df$ is $\underline{\text{sc}}^{k-1}$.
It is called $\underline{\text{sc}}^\infty$ \iff it is $\underline{\text{sc}}^k$ for all $k \in \N_0$.
  \item $f$ is p-$\overline{\text{sc}}^k$ and $Df$ is p-$\overline{\text{sc}}^{k-1}$.
It is called p-$\overline{\text{sc}}^\infty$ \iff it is p-$\overline{\text{sc}}^k$ for all $k \in \N_0$.
  \item $f$ is p-$\underline{\text{sc}}^k$ and $Df$ is p-$\underline{\text{sc}}^{k-1}$.
It is called p-$\underline{\text{sc}}^\infty$ \iff it is p-$\underline{\text{sc}}^k$ for all $k \in \N_0$.
\end{enumerate}
  \item An envelope $((\mathbb{E},\phi), (\mathbb{E}',\phi'), \mathcal{F} : \mathcal{U} \to \mathbb{E}')$ of $f$ is called \emph{$k$-times continuously $\overline{\text{sc}}$-differentiable} or $\overline{\text{sc}}^k$ \iff $\mathcal{F}$ is $\overline{\text{sc}}^1$ and $\mathcal{DF}$ is $\overline{\text{sc}}^{k-1}$.
It is called $\overline{\text{sc}}^\infty$ \iff it is $\overline{\text{sc}}^k$ for all $k \in \N_0$.
  \item An $\overline{\text{sc}}$-continuous map $f : U \to E'$ is called \emph{$k$-times continuously $\widehat{\text{sc}}$-differentiable} or $\widehat{\text{sc}}^k$, for $k \in \N_0 \cup \{\infty\}$, \iff for any $x \in U$ there exists a neighbourhood $V \subseteq U$ of $x$ \st $f|_V : V \to E'$ has a $k$-times continuously differentiable envelope.
\end{enumerate}
\end{definition}

\begin{remark}
As usual, if $f : U \to E'$ is $k$-times continuously differentiable in one of the versions above, then the repeated differentials provide maps
\begin{align*}
D^k f : E\oplus E^{\oplus k} \supseteq U \times E^k &\to E' \\
(x, u_1, \dots, u_k) &\mapsto D^kf(x)(u_1, \dots, u_k)
\end{align*}
and one can express $D(Df)$ in terms of $D^1f$ and $D^2f$, etc.
\end{remark}

\begin{theorem}[Chain rule]\label{Theorem_Chain_rule_sc_differentiable}
Let $E$, $E'$ and $E''$ be $\overline{\text{sc}}$-Fr{\'e}chet spaces, let $U\subseteq E$ and $U'\subseteq E'$ be open subsets and let $f : U\to E'$ and $f' : U' \to E''$ be maps with $f(U) \subseteq U'$.
For any $k\in \N_0 \cup \{\infty\}$, if $f$ and $f'$ are $\overline{\text{sc}}^k$, $\underline{\text{sc}}^k$, p-$\overline{\text{sc}}^k$, p-$\underline{\text{sc}}^k$ or $\widehat{\text{sc}}^k$, then so is $f'\circ f : U \to E''$.
In all cases, for $k\geq 1$,
\[
D(f'\circ f)(x) = Df'(f(x))\circ Df(x) \quad\forall\, x\in U\text{.}
\]
\end{theorem}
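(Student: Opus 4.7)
The plan is to reduce all five variants to the chain rules already established at the Fréchet-space level, namely \cref{Theorem_Chain_rule_I} for weak Fréchet differentiability and its pointwise analogue from \cite{MR656198}, I.3.3.4, combined with \cref{Proposition_Composition_sc_continuous} asserting that $\overline{\text{sc}}^0$ and $\underline{\text{sc}}^0$ maps compose. The case $k=0$ is exactly \cref{Proposition_Composition_sc_continuous}, and the case $k=\infty$ follows from all finite $k$, so the essential work is the base case $k=1$ together with a formal inductive step.

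For $\overline{\text{sc}}^1$ I proceed directly without building envelopes: \cref{Theorem_Chain_rule_I} gives weak Fréchet differentiability of $f'\circ f$ between the underlying Fréchet spaces together with the formula $D(f'\circ f)(x) = Df'(f(x))\circ Df(x)$; \cref{Proposition_Composition_sc_continuous} yields $\overline{\text{sc}}$-continuity of $f'\circ f$; and applying it a second time to the factorisation $D(f'\circ f) = Df'\circ \Phi$ with $\Phi(x,u) \definedas (f(x), Df(x)u)$ yields $\overline{\text{sc}}$-continuity of the differential, noting that $\Phi$ is $\overline{\text{sc}}^0$ componentwise. The $\underline{\text{sc}}^1$, p-$\overline{\text{sc}}^1$ and p-$\underline{\text{sc}}^1$ cases run in exact parallel, swapping in the $\underline{\text{sc}}^0$ part of \cref{Proposition_Composition_sc_continuous} and, for the pointwise variants, the pointwise chain rule from \cite{MR656198} in place of \cref{Theorem_Chain_rule_I}.

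The $\widehat{\text{sc}}^1$ case requires constructing an $\overline{\text{sc}}^1$-envelope for $f'\circ f$. Following the strict-envelope alignment used in the proof of \cref{Proposition_Composition_sc_continuous}, part \labelcref{Proposition_Composition_sc_continuous_3}, I shrink $V\subseteq U$ around $x$ and $V'\subseteq U'$ around $f(x)$, pass through the weak equivalence connecting the two $\overline{\text{sc}}$-structures on $E'$ chosen by the given envelopes, and rescale so that $f|_V$ and $f'|_{V'}$ carry strict $\overline{\text{sc}}^1$-envelopes $\mathcal{F}=(f_k:U_k\to E'_k)_{k\in\N_0}$ and $\mathcal{F}'=(f'_k:U'_k\to E''_k)_{k\in\N_0}$ with a common intermediate $\overline{\text{sc}}$-structure on $E'$ and with $f_k(U_k)\subseteq U'_k$; strictness then forces $f_k\circ \iota_k|_{U_{k+1}} = \iota'_k\circ f_{k+1}$. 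Since $f'_k$ is weakly continuously weakly Fréchet differentiable along $\iota'_k$ by hypothesis and $f_{k+1}:U_{k+1}\to U'_{k+1}$ is continuous, $f'_k$ is also weakly continuously weakly Fréchet differentiable along $\iota'_k\circ f_{k+1} = f_k\circ \iota_k$. Applying \cref{Theorem_Chain_rule_I}, part 2, at each level $k$ shows that $f'_k\circ f_k$ is weakly continuously weakly Fréchet differentiable along $\iota_k$ with
\[
D(f'_k\circ f_k)(\iota_k(y))u \;=\; Df'_k(\iota'_k(f_{k+1}(y)))\,Df_k(\iota_k(y))u,
\]
and $(f'_k\circ f_k)_{k\in\N_0}$ is the desired $\overline{\text{sc}}^1$-envelope of $f'\circ f|_V$, whose differential envelope is precisely $\mathcal{DF}'\circ \mathcal{DF}$ paired by the formula above.

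For $k\geq 2$ I rewrite the chain rule as $D(f'\circ f) = Df'\circ \Phi$ with $\Phi(x,u) = (f(x), Df(x)u)$. If $f$ and $f'$ are of class $\overline{\text{sc}}^k$ (respectively $\underline{\text{sc}}^k$, p-$\overline{\text{sc}}^k$, p-$\underline{\text{sc}}^k$, $\widehat{\text{sc}}^k$), then by the recursive definition and the inductive hypothesis, $Df$, $Df'$ and $\Phi$ are of the same class at level $k-1$, hence so is $Df'\circ \Phi$, hence $f'\circ f$ is of class $k$. The main obstacle is the envelope alignment in the $\widehat{\text{sc}}$ case: one must pass $\mathcal{F}$ and $\mathcal{F}'$ through the weak equivalence connecting the two $\overline{\text{sc}}$-structures on $E'$ and verify, using the preceding lemmas on rescalings and pullbacks of envelopes (\cref{Lemma_Rescaling_of_envelopes} and \cref{Proposition_Strict_envelopes}), that the rescaled and pulled-back envelopes remain $\overline{\text{sc}}^1$. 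This is an essentially tautological but notationally heavy check, after which the level-$k$ chain rule is the verbatim scale-by-scale application of \cref{Theorem_Chain_rule_I}.
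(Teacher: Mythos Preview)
Your proposal is correct and follows essentially the same approach as the paper, which simply records the proof as a corollary to \cref{Theorem_Chain_rule_I}, \cite{MR656198}, Part I, Theorem 3.3.4, and \cref{Proposition_Composition_sc_continuous}. Your write-up is a faithful and detailed expansion of that one-line citation: the Fr\'echet-level chain rules supply differentiability and the formula, \cref{Proposition_Composition_sc_continuous} handles the $\overline{\text{sc}}^0$/$\underline{\text{sc}}^0$ continuity of the composite and its differential via the factorisation $D(f'\circ f)=Df'\circ\Phi$, and for the $\widehat{\text{sc}}$ variant you correctly retrace the envelope alignment from the proof of \cref{Proposition_Composition_sc_continuous}, \labelcref{Proposition_Composition_sc_continuous_3}, and apply \cref{Theorem_Chain_rule_I}, part 2, levelwise.
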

\begin{proof}
This is a corollary to \cref{Theorem_Chain_rule_I}, \cite{MR656198}, Part I, Theorem 3.3.4, and \cref{Proposition_Composition_sc_continuous}.
\end{proof}

\begin{proposition}
Let $E$ and $E'$ be $\overline{\text{sc}}$-Fr{\'e}chet spaces, let $U \subseteq E$ be an open subset, let $f : U \to E'$ be a map, and let $((\mathbb{E},\phi), (\mathbb{E}',\phi'), \mathcal{F} : \mathcal{U} \to \mathbb{E}')$ be an envelope of $f$. \\
If $\mathcal{F} : \mathcal{U} \to \mathbb{E}'$ is strict and $\overline{\text{sc}}^k$ for $k \in \N_0\cup \{\infty\}$, then $f_0 : U_0 \to E'_0$ defines an $\text{sc}^k$-map in the sense of \cite{MR2644764}, Definition 1.8.
\end{proposition}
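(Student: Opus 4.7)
The plan is to translate between the envelope formalism developed in this section and the HWZ sc$^k$-formalism by induction on $k$, using the strictness assumption to align the two indexing conventions. Since $\mathcal{U}$ is strict, $U_{k+1} = \iota_k^{-1}(U_k)$ for every $k$, so if we identify each $E_k$ with its image in $E_0$ via the injections $\iota^k_0$, then $U_k$ corresponds to $U_0 \cap E_k$ in the HWZ sense, and the envelope identity $f_k\circ \iota_k = \iota'_k \circ f_{k+1}$ becomes the statement that $f_0$ maps each filtration level of $U_0$ into the corresponding level of $E'_0$. For $k=0$ the envelope directly provides continuous, level-preserving restrictions $f_m \colon U_m \to E'_m$, which is exactly HWZ's definition of sc$^0$.

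For the step $k=1$ the assumption that $\mathcal{F}$ is $\overline{\text{sc}}^1$ gives, at each level $m$, a bounded linear operator $Df_m(\iota_m(x))\colon E_m \to E'_m$ for every $x\in U_{m+1}$, together with weak continuity of the assembled map $U_{m+1}\times E_m \to E'_m$, $(x,u)\mapsto Df_m(\iota_m(x))u$; under the identifications above these furnish the level-$m$ tangent maps HWZ require. The nontrivial analytic point is the norm limit built into HWZ's sc$^1$-definition: the paper's notion of weakly continuous weak Fr{\'e}chet differentiability of $f_m$ along $\iota_m$ is a priori weaker than genuine Fr{\'e}chet differentiability of the composition, but since each inclusion $\iota_m\colon E_{m+1}\hookrightarrow E_m$ is compact, \cref{Lemma_Strong_Gateaux_implies_Frechet_II} upgrades it to strongly continuous Fr{\'e}chet differentiability of $f_m\circ \iota_m\colon U_{m+1}\to E'_m$ as a map between open subsets of Banach spaces, from which the required norm approximation is immediate.

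For $k\ge 2$ one iterates: by construction the differential envelope $\mathcal{DF}\colon \mathcal{U}^{\mathbf{1}}\oplus \mathbb{E} \to \mathbb{E}'$ is itself a strict envelope of $Df_0\colon U_1\times E_0 \to E'_0$, and by \cref{Definition_sck} it is $\overline{\text{sc}}^{k-1}$; since $\mathbb{E}^{\mathbf{1}}\oplus\mathbb{E}$ is again an sc-chain whose level-$m$ entry $E_{m+1}\oplus E_m$ coincides with HWZ's tangent space at level $m$, the inductive hypothesis identifies $Df_0$ with an sc$^{k-1}$-map, which is precisely the HWZ tangent of $f_0$, and therefore $f_0$ is sc$^k$. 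The $k=\infty$ case follows by taking the intersection over all finite $k$. The main obstacle is the $k=1$ step: one has to be careful not to conflate the paper's weak Fr{\'e}chet differentiability with ordinary Fr{\'e}chet differentiability between Banach spaces, and the essential input reconciling the two is compactness of the sc-scale via \cref{Lemma_Strong_Gateaux_implies_Frechet_II}, without which HWZ's norm limit would not be available.
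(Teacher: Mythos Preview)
Your proposal is correct and follows essentially the same route as the paper: the paper's one-line proof cites \cref{Lemma_Strong_Gateaux_implies_Frechet,Lemma_Strong_Gateaux_implies_Frechet_II}, and your argument unpacks exactly this---using compactness of each $\iota_m$ to upgrade weakly continuous weak Fr{\'e}chet differentiability to genuine Fr{\'e}chet differentiability of $f_m\circ\iota_m$, then inducting via $\mathcal{DF}$. One small imprecision: in the inductive step the HWZ tangent is $Tf_0(x,u)=(f_0(x),Df_0(x)u)$ with target $E'_1\times E'_0$, not $Df_0$ alone, so you should apply the inductive hypothesis to the combined envelope $(\mathcal{F}^{\mathbf 1},\mathcal{DF})$ rather than to $\mathcal{DF}$ by itself; this is routine since $\mathcal{F}^{\mathbf 1}$ is a strict $\overline{\text{sc}}^k$ envelope as well.
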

\begin{proof}
This is a corollary to \cref{Lemma_Strong_Gateaux_implies_Frechet,Lemma_Strong_Gateaux_implies_Frechet_II}.
\end{proof}

\begin{example}\label{Example_Reparametrisation_Action_II}
I continue \cref{Subsection_Reparametrisation_action} and \cref{Example_Reparametrisation_Action_I}. \\
Then in the notation of that Subsection, by \cref{Proposition_Reparametrisation_action_diffble}, the reparametrisation action $\Psi : \Gamma_B(F_1) \to \Gamma_B(F_2)$ is $\widehat{\text{sc}}^\infty$, where an envelope of $\Psi$ is provided by either the maps $\Psi^k : \Gamma^k_B(F_1) \to \Gamma^k_B(F_2)$, $k \in \N_0$, or the maps $\Psi^{k,p} : W^{k,p}_B(F_1) \to W^{k,p}_B(F_2)$, for some fixed $1 < p < \infty$ and $k \in \N_0$ \st $kp > n$. \\
Also, by \cref{Proposition_Reparametrisation_action_diffble}, the differential of $\Psi$ is composed of maps of the same type as $\Psi$:
\begin{align*}
\Psi(b,u) &= (b, \Psi_2(b,u)) \\
\Psi_2(b,u) &= \Phi_b^\ast u \\
D\Psi(b,u)(e,v) &= (e, D\Psi_2(b,u)(e,v)) \\
D\Psi_2(b,u)(e,v) &= \overline{\Phi}_{(b,e)}^\ast \nabla u + \tilde{\Phi}_{(b,e)}^\ast u + \Phi_b^\ast v\text{.}
\end{align*}
Showing that $\Psi$ is $\overline{\text{sc}}^k$ for all $k\in\N_0\cup\{\infty\}$ is now a simple matter of induction, repeatedly using \cref{Proposition_Reparametrisation_action_diffble}.
\end{example}

\clearpage
\section{The Nash-Moser inverse function theorem}\label{Section_Nash_Moser}

In this section, I first give a quick overview of the necessary terminology, adapted to the present notation, from \cite{MR656198} (mainly Chapter II), before stating the famous Nash-Moser inverse function theorem.
This is intended to set up notation and does not aim to give a general pedagogic introduction to the Nash-Moser inverse function theorem.
It assumes knowledge of at least the main points of the article \cite{MR656198}, or Section 51 of the textbook \cite{MR1471480}.
Subsequently, a version that is better adapted to the present notation and needed to prove the constant rank theorem, finite dimensional reduction, and the Sard-Smale theorem for nonlinear Fredholm maps in the next section is stated and proved.

\Needspace{25\baselineskip}
\subsection{Tame $\overline{\text{sc}}$-structures and morphisms}

\begin{lemma}\label{Lemma_Tame_sequence}
Let $\mathbf{k} = (k_j)_{j\in\N_0} \subseteq \N_0$ be a strictly monotone increasing sequence.
Then for all $i, j \in \N_0$ with $i \leq j$,
\[
k_j - k_i \geq j - i
\]
and the following are equivalent:
\begin{enumerate}[label=\arabic*.,ref=\arabic*.]
  \item\label{Lemma_Tame_sequence_1} There exists $c \in \N_0$ \st $k_j - k_i \leq j - i + c$ for all $i,j\in\N_0$ with $i \leq j$.
  \item\label{Lemma_Tame_sequence_2} There exists $c \in \N_0$ \st $k_j \leq j + c$ for all $j\in\N_0$, \ie there exists a shift $\mathbf{l} \subseteq \N_0$ with $\mathbf{k} \leq \mathbf{l}$.
  \item\label{Lemma_Tame_sequence_3} There exists $j_0 \in \N_0$ \st $k_j - k_i = j - i$ for all $j \geq i \geq j_0$.
  \item\label{Lemma_Tame_sequence_4} There exists $j_0 \in \N_0$ and $r \in \N_0$ \st $k_j = j + r$ for all $j \geq j_0$.
  \item\label{Lemma_Tame_sequence_5} There exists a shift $\mathbf{l}$ \st $\mathbf{k}\circ \mathbf{l}$ is a shift again.
\end{enumerate}
\end{lemma}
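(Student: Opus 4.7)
The initial inequality $k_j - k_i \geq j-i$ is the basic observation that a strictly monotone sequence of nonnegative integers must increase by at least one at each step, and it follows by a trivial induction on $j-i$ starting from the base case $k_{i+1} \geq k_i + 1$. I would state and discharge this at the very beginning, since it will be used silently throughout.

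For the equivalences, my plan is to run them cyclically, e.g.\ \labelcref{Lemma_Tame_sequence_1}$\Rightarrow$\labelcref{Lemma_Tame_sequence_2}$\Rightarrow$\labelcref{Lemma_Tame_sequence_4}$\Rightarrow$\labelcref{Lemma_Tame_sequence_3}$\Rightarrow$\labelcref{Lemma_Tame_sequence_1}, and then treat \labelcref{Lemma_Tame_sequence_4}$\Leftrightarrow$\labelcref{Lemma_Tame_sequence_5} separately. The implication \labelcref{Lemma_Tame_sequence_1}$\Rightarrow$\labelcref{Lemma_Tame_sequence_2} is immediate by specialising $i=0$ and absorbing $k_0$ into the constant. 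For \labelcref{Lemma_Tame_sequence_2}$\Rightarrow$\labelcref{Lemma_Tame_sequence_4}, which is the substantive step, I would observe that the sequence $d_j \definedas k_j - j$ is nonnegative (by the basic inequality with $i=0$ and $k_0\geq 0$), bounded above by $c$ (by assumption), and monotone nondecreasing because $d_{j+1} - d_j = (k_{j+1} - k_j) - 1 \geq 0$. A monotone nondecreasing sequence of integers bounded above must be eventually constant, giving $j_0$ and $r \definedas d_{j_0}$ with $k_j = j+r$ for $j \geq j_0$. The step \labelcref{Lemma_Tame_sequence_4}$\Rightarrow$\labelcref{Lemma_Tame_sequence_3} is a one-line subtraction. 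For \labelcref{Lemma_Tame_sequence_3}$\Rightarrow$\labelcref{Lemma_Tame_sequence_1}, for $i\leq j_0\leq j$ one writes $k_j - k_i = (k_j - k_{j_0}) + (k_{j_0} - k_i) = (j-j_0) + (k_{j_0} - k_i) \leq (j-i) + (k_{j_0} - j_0)$, using $k_i\geq i$; the cases $j\leq j_0$ and $i\geq j_0$ are even easier, so $c \definedas k_{j_0}$ works uniformly.

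Finally, for \labelcref{Lemma_Tame_sequence_4}$\Leftrightarrow$\labelcref{Lemma_Tame_sequence_5}: given \labelcref{Lemma_Tame_sequence_4}, take $\mathbf{l} \definedas (j_0 + j)_{j\in\N_0}$, so $(\mathbf{k}\circ \mathbf{l})_j = k_{j_0+j} = j + (j_0+r)$, which is a shift. Conversely, if $\mathbf{l} = (l_0+j)_{j\in\N_0}$ and $\mathbf{k}\circ\mathbf{l} = (m_0+j)_{j\in\N_0}$, then $k_{l_0+j} = m_0+j$; setting $j_0 \definedas l_0$ and $r \definedas m_0-l_0$ (which is nonnegative since $m_0 = k_{l_0}\geq l_0$) yields \labelcref{Lemma_Tame_sequence_4}.

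The whole lemma is really elementary arithmetic of integer sequences, so I do not expect any genuine obstacle; the only step that requires a moment's thought is the monotonicity–boundedness argument in \labelcref{Lemma_Tame_sequence_2}$\Rightarrow$\labelcref{Lemma_Tame_sequence_4}, and the careful bookkeeping of the constant $c$ in \labelcref{Lemma_Tame_sequence_3}$\Rightarrow$\labelcref{Lemma_Tame_sequence_1}.
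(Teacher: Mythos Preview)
Your proposal is correct and follows essentially the same route as the paper: the paper also reduces to the key step \labelcref{Lemma_Tame_sequence_2}$\Rightarrow$\labelcref{Lemma_Tame_sequence_4} via the eventually-constant argument for $k_j - j$, and handles the rest by the same case analysis. The only cosmetic differences are that the paper closes the main cycle via \labelcref{Lemma_Tame_sequence_4}$\Rightarrow$\labelcref{Lemma_Tame_sequence_1} rather than your \labelcref{Lemma_Tame_sequence_3}$\Rightarrow$\labelcref{Lemma_Tame_sequence_1}, and links \labelcref{Lemma_Tame_sequence_5} back via \labelcref{Lemma_Tame_sequence_5}$\Rightarrow$\labelcref{Lemma_Tame_sequence_2} (using $\mathbf{k}\circ\mathbf{l}\geq\mathbf{k}$) instead of your direct \labelcref{Lemma_Tame_sequence_5}$\Rightarrow$\labelcref{Lemma_Tame_sequence_4}.
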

\begin{proof}
$\mathbf{k}$ strictly monotone increasing means that $k_l - k_{l-1} \geq 1$ for all $l \in \N_0$.
Hence
\[
k_j - k_i = \sum_{l = i+1}^j (k_l - k_{l-1}) \geq \sum_{l=i+1}^j 1 = j - i\text{.}
\]
I will show \labelcref{Lemma_Tame_sequence_1}$\;\Rightarrow\;$\labelcref{Lemma_Tame_sequence_2}$\;\Rightarrow\;$\labelcref{Lemma_Tame_sequence_4}$\;\Rightarrow\;$\labelcref{Lemma_Tame_sequence_1}, \labelcref{Lemma_Tame_sequence_3}$\;\Rightarrow\;$\labelcref{Lemma_Tame_sequence_4}$\;\Rightarrow\;$\labelcref{Lemma_Tame_sequence_3}~and \labelcref{Lemma_Tame_sequence_4}$\;\Rightarrow\;$\labelcref{Lemma_Tame_sequence_5}$\;\Rightarrow\;$\labelcref{Lemma_Tame_sequence_2}.
\begin{enumerate}
  \item[\labelcref{Lemma_Tame_sequence_1} $\Rightarrow$ \labelcref{Lemma_Tame_sequence_2}:] Set $i = 0$.
  \item[\labelcref{Lemma_Tame_sequence_2} $\Rightarrow$ \labelcref{Lemma_Tame_sequence_4}:] It is $k_j - j = k_0 + \sum_{i=1}^j (k_j - k_{j-1} - 1)$ and $k_j - k_{j-1} - 1 \geq 0$ because $\mathbf{k}$ is strictly monotone increasing.
So $k_j -j \leq c$ for all $j \in \N_0$ implies that $k_j - k_{j-1} - 1 \neq 0$ only for finitely many $j \in \N_0$ and hence $k_j - j$ is eventually constant.
Set $r \definedas \lim_{j\to\infty} (k_j - j)$.
  \item[\labelcref{Lemma_Tame_sequence_4} $\Rightarrow$ \labelcref{Lemma_Tame_sequence_1}:] By assumption, if $j_0 \leq i \leq j$, then $k_j - k_i = j + r - (i + r) = j-i$.
If $j \geq j_0$ and $i \leq j_0$, then $k_j - k_i = j + r - k_i \leq j + r \leq j - i + j_0 + r$.
If $0 \leq i \leq j \leq j_0$, then $k_j - k_i \leq k_{j_0} \leq j - i + k_{j_0}$.
So for arbitrary $i,j\in\N_0$ with $i \leq j$, $k_j - k_i \leq j - i + \max\{k_{j_0}, j_0 + r\}$.
  \item[\labelcref{Lemma_Tame_sequence_3} $\Rightarrow$ \labelcref{Lemma_Tame_sequence_4}:] Set $i = j_0$. Then $k_j = j + \underbrace{(k_i - i)}_{\defines \; r}$.
  \item[\labelcref{Lemma_Tame_sequence_4} $\Rightarrow$ \labelcref{Lemma_Tame_sequence_3}:] For $j \geq i \geq j_0$, $k_j - k_i = j + r - (i + r) = j - i$.
  \item[\labelcref{Lemma_Tame_sequence_4} $\Rightarrow$ \labelcref{Lemma_Tame_sequence_5}:] Define $\mathbf{l} \definedas (j_0 + j)_{j\in\N_0}$.
Then $\mathbf{m} \definedas \mathbf{k}\circ \mathbf{l}$ satisfies $m_j = k_{l_j} = k_{j_0 + j} = j_0 + j + r = j + (j_0 + r)$.
  \item[\labelcref{Lemma_Tame_sequence_5} $\Rightarrow$ \labelcref{Lemma_Tame_sequence_2}:] It is $\mathbf{k}\circ \mathbf{l} \geq \mathbf{k}$.
\end{enumerate}
\end{proof}

\begin{definition}\label{Definition_Tame_ILB_chain}
Let $\mathbf{k} = (k_j)_{j\in\N_0} \subseteq \N_0$ be a strictly monotone increasing sequence and let $\mathbb{E}$ and $\mathbb{E}'$ be ILB- or sc-chains.
\begin{enumerate}[label=\arabic*.,ref=\arabic*.]
  \item $\mathbf{k}$ is called \emph{tame} \iff any/all of the conditions in \cref{Lemma_Tame_sequence} are satisfied.
  \item The rescaling $\mathbb{E}^{\mathbf{k}}$ of $\mathbb{E}$ is called \emph{tame} \iff $\mathbf{k}$ is tame.
  \item A weak morphism $T : E_\infty \to E'_\infty$ is called \emph{tame} \iff it has an extension $\mathbb{T} : \mathbb{E}^{\mathbf{k}} \to \mathbb{E}'$ \st $\mathbb{E}^{\mathbf{k}}$ is a tame rescaling.
  \item An equivalence $J : E_\infty \to E'_\infty$ is called \emph{tame} \iff $J$ is tame and it has a tame inverse, \ie if there exists a tame weak embedding $K : E'_\infty \to E_\infty$ \st $J\circ K = \id_{E'_\infty}$ and $K\circ J = \id_{E_\infty}$.
  \item $\mathbb{E}$ and $\mathbb{E}'$ are called \emph{tamely equivalent} \iff there exists a tame equivalence between them.
\end{enumerate}
\end{definition}

\begin{example}
Every shift is tame.
The only rescalings explicitely chosen in any lemma/proposition/theorem so far, such as \cref{Lemma_Characterisation_sc_subspace}, \cref{Lemma_Characterisation_strongly_smoothing_weak_morphisms} or \cref{Proposition_Strict_envelopes}, have been shifts, in particular tame.
\end{example}

\begin{example}
Let $\mathbb{E}$ be a tame ILB- or sc-chain and let $\mathbf{k}, \mathbf{l} \subseteq \N_0$ be tame strictly monotone increasing sequences with $\mathbf{k}\geq \mathbf{l}$.
Then $\mathbb{I}^{\mathbf{k}}_{\mathbf{l}} : \mathbb{E}^{\mathbf{k}} \to \mathbb{E}^{\mathbf{l}}$ is tame.
\end{example}

\begin{lemma}\label{Lemma_Tame_sequence}
Let $\mathbf{k}, \mathbf{l} \subseteq \N_0$ be strictly monotone increasing sequences.
If $\mathbf{k}$ and $\mathbf{l}$ are tame, then so is $\mathbf{k}\circ \mathbf{l}$.
\end{lemma}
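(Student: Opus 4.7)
The plan is to use characterization \labelcref{Lemma_Tame_sequence_2} from the preceding \cref{Lemma_Tame_sequence}, namely that tameness of a strictly monotone increasing sequence $\mathbf{m} \subseteq \N_0$ is equivalent to the existence of a constant $c\in\N_0$ with $m_j \leq j + c$ for all $j\in\N_0$. This reformulation trivializes the bookkeeping.

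First I would note that $\mathbf{k}\circ\mathbf{l}$ is again a strictly monotone increasing sequence: for every $j\in\N_0$, since $\mathbf{l}$ is strictly monotone we have $l_{j+1} \geq l_j + 1$, and since $\mathbf{k}$ is strictly monotone this gives $k_{l_{j+1}} \geq k_{l_j + 1} > k_{l_j}$. Hence $\mathbf{k}\circ\mathbf{l}$ is a legitimate strictly monotone increasing sequence in $\N_0$, to which \cref{Definition_Tame_ILB_chain} applies.

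Next, using tameness of $\mathbf{k}$ and $\mathbf{l}$, I would pick constants $c_k, c_l \in \N_0$ with $k_j \leq j + c_k$ and $l_j \leq j + c_l$ for all $j\in\N_0$ (this is condition \labelcref{Lemma_Tame_sequence_2} applied separately). Then for every $j\in\N_0$,
\[
(\mathbf{k}\circ\mathbf{l})_j = k_{l_j} \leq l_j + c_k \leq j + c_l + c_k\text{.}
\]
Setting $c \definedas c_k + c_l \in \N_0$, this is precisely condition \labelcref{Lemma_Tame_sequence_2} for $\mathbf{k}\circ\mathbf{l}$, so $\mathbf{k}\circ\mathbf{l}$ is tame.

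There is no real obstacle here; the work was done in \cref{Lemma_Tame_sequence}, which reduced tameness to a one-sided linear bound that is obviously stable under composition of monotone sequences. The only thing to be careful about is to invoke the correct equivalent characterization (the bound $m_j \leq j + c$), rather than trying to argue directly with the eventually-affine description \labelcref{Lemma_Tame_sequence_4}, where one would have to track the thresholds $j_0$ for $\mathbf{k}$ and $\mathbf{l}$ separately and compose them, which is more cumbersome.
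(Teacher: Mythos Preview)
Your proof is correct and is exactly the argument the paper has in mind: the paper's own proof reads simply ``Obvious.'', and what you have written is the natural unpacking of that word via characterization \labelcref{Lemma_Tame_sequence_2} of the preceding lemma. The verification that $\mathbf{k}\circ\mathbf{l}$ is again strictly monotone is a sensible sanity check, though it too is immediate.
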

\begin{proof}
Obvious.
\end{proof}

\begin{definition}\label{Definition_Pretame_sc_Frechet_space}
Let $E$ be a topological vector space.
\begin{enumerate}[label=\arabic*.,ref=\arabic*.]
  \item Two $\overline{\text{sc}}$-structures $(\mathbb{E}, \phi)$ and $(\tilde{\mathbb{E}}, \tilde{\phi})$ on $E$ are called \emph{tamely equivalent} if there exists a tame equivalence $J : E_\infty \to \tilde{E}_\infty$ with $\phi = \tilde{\phi}\circ J$.
  \item A \emph{pre-tame $\overline{\text{sc}}$-Fr{\'e}chet space} is a topological vector space $E$ together with a tame equivalence class of $\overline{\text{sc}}$-structures on it.
The sc-chains in this equivalence class are then called \emph{tamely compatible}.
  \item A \emph{tame morphism} between pre-tame $\overline{\text{sc}}$-Fr{\'e}chet spaces $E$ and $E'$ is a continuous linear operator $T : E \to E'$ between $E$ and $E'$ as topological vector spaces \st there exist tamely compatible $\overline{\text{sc}}$-structures $(\mathbb{E}, \phi)$ and $(\mathbb{E}', \phi')$ on $E$ and $E'$, respectively, \st $T_\infty \definedas \phi'^{-1}\circ T\circ \phi : E_\infty \to E'_\infty$ defines a tame weak morphism.
\[
\xymatrix{
E_\infty \ar[r]^-{T_\infty} \ar[d]_-{\phi} \ar@{}[rd]|-{\circlearrowleft} & E'_\infty \ar[d]^{\phi'} \\
E \ar[r]^-{T} & E'
}
\]
\end{enumerate}
\end{definition}

\begin{remark}
So the relation between $\overline{\text{sc}}$-Fr{\'e}chet spaces and pre-tame $\overline{\text{sc}}$-Fr{\'e}chet spaces is as follows: \\
Every $\overline{\text{sc}}$-Fr{\'e}chet space has its associated class of compatible $\overline{\text{sc}}$-structures.
This class of $\overline{\text{sc}}$-structures then is further decomposed into the equivalence classes of tame equivalence.
Picking one such equivalence class produces a pre-tame $\overline{\text{sc}}$-Fr{\'e}chet space.
\end{remark}

\begin{remark}\label{Remark_Subspaces_sums_pretame_context}
One also has the corresponding notions of (split) subspaces, direct sums, (strongly) smoothing morphisms and Fredholm operators for pre-tame $\overline{\text{sc}}$-Fr{\'e}chet spaces just as in \crefrange{Subsection_Subspaces_direct_sums}{Subsection_Sc_Fredholm_operators}.
This generalisation is completely straightforward and would repeat the definitions and results from \crefrange{Subsection_Subspaces_direct_sums}{Subsection_Sc_Fredholm_operators} almost word by word, just inserting the word ``(pre-)tame'' over and over again, so it will be skipped.
\end{remark}

\begin{definition}
Let $(X, \|\cdot\|_X)$ be a Banach space.
The \emph{chain of exponentially decreasing sequences in $X$} is the following ILB-chain $\mathbbl{\Sigma}(X) = ((\Sigma_k(X), \|\cdot\|_k), \iota_k)_{k\in\N_0}$:
\begin{align*}
\Sigma_k(X) &\definedas \left\{\mathbf{x} = (x_j)_{j\in\N_0}\in X^{\N_0} \;\left|\; \sum_{j=0}^\infty e^{kj}\|x_j\|_X < \infty\right.\right\} \\
\|\mathbf{x}\|_k &\definedas \sum_{j=0}^\infty e^{kj}\|x_j\|_X \\
\iota_k &\text{ is the canonical inclusion.}
\end{align*}
\end{definition}

\begin{remark}
$\mathbbl{\Sigma}(X)$ is not an sc-chain unless $X$ is finite dimensional.
To see this take a sequence $\mathbf{x}^i \subseteq \Sigma_\infty(X)$, where $x^i_j = 0$ for all $j > 0$ and $(x^i_0)_{\in\N_0}$ is any bounded sequence in $X$.
\end{remark}

\begin{definition}
Let $\mathbb{E}$ be an ILB- or sc-chain. \\
$\mathbb{E}$ is called \emph{tame} if there exist tame weak morphisms $J : E_\infty \to \Sigma_\infty(X)$ and $K : \Sigma_\infty(X) \to E_\infty$ between $\mathbb{E}$ and $\mathbbl{\Sigma}(X)$ for some Banach space $X$ \st $K\circ J = \id_{E_\infty}$.
\end{definition}

\begin{lemma}\label{Lemma_Tame_ILB_chain}
Let $\mathbb{E}$ and $\mathbb{E}'$ be ILB- or sc-chains and let $\mathbf{k}, \mathbf{l} \subseteq \N_0$ be strictly monotone increasing sequences.
\begin{enumerate}[label=\arabic*.,ref=\arabic*.]
  \item\label{Lemma_Tame_ILB_chain_1} Assume that there exist tame weak morphisms $J : E'_\infty \to E_\infty$ and $K : E_\infty \to E'_\infty$ \st $K\circ J = \id_{E'_\infty}$.
If $\mathbb{E}$ is tame, then so is $\mathbb{E}'$.
  \item\label{Lemma_Tame_ILB_chain_2} If $\mathbb{E}$ and $\mathbb{E}'$ are tamely equivalent, then $\mathbb{E}$ is tame \iff $\mathbb{E}'$ is tame.
In particular, if $\mathbb{E}$ is tame, then so is every tame rescaling of $\mathbb{E}$.
\end{enumerate}
\end{lemma}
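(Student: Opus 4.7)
My plan is to first isolate a general composition lemma: the composition of two tame weak morphisms is itself tame. Given tame weak morphisms $T : F_\infty \to F'_\infty$ and $T' : F'_\infty \to F''_\infty$ between sc-chains, I would pick extensions $\mathbb{T} : \mathbb{F}^{\mathbf{k}} \to \mathbb{F}'$ and $\mathbb{T}' : \mathbb{F}'^{\mathbf{l}} \to \mathbb{F}''$ with $\mathbf{k}$ and $\mathbf{l}$ tame, and form the composite $\mathbb{T}' \circ \mathbb{T}^{\mathbf{l}} : \mathbb{F}^{\mathbf{k}\circ \mathbf{l}} \to \mathbb{F}''$ exactly as in the proof of \cref{Lemma_Rescaling_and_weak_morphisms}. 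This extends $T'\circ T$, and \cref{Lemma_Tame_sequence} guarantees that $\mathbf{k}\circ \mathbf{l}$ is tame, so the composite is a tame extension.

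For the first part of the lemma, I would use the tame weak morphisms $J_0 : E_\infty \to \Sigma_\infty(X)$ and $K_0 : \Sigma_\infty(X) \to E_\infty$ with $K_0 \circ J_0 = \id_{E_\infty}$ supplied by the tameness of $\mathbb{E}$, and then form $\tilde J \definedas J_0 \circ J : E'_\infty \to \Sigma_\infty(X)$ and $\tilde K \definedas K \circ K_0 : \Sigma_\infty(X) \to E'_\infty$. Both are tame by the composition lemma, and
\[
\tilde K \circ \tilde J = K \circ (K_0 \circ J_0) \circ J = K \circ J = \id_{E'_\infty},
\]
exhibiting $\mathbb{E}'$ as tame.

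For the second part, the biconditional would follow by applying the first part in both directions, using that a tame equivalence by definition provides tame weak embeddings between $E_\infty$ and $E'_\infty$ whose composites are the respective identities. For the ``in particular'' clause, I would observe that $\mathbb{E}$ and $\mathbb{E}^{\mathbf{k}}$ are tamely equivalent whenever $\mathbf{k}$ is tame: the identity on $E_\infty = E^{\mathbf{k}}_\infty$ has as one extension the embedding $\mathbb{I}^{\mathbf{k}} : \mathbb{E}^{\mathbf{k}} \to \mathbb{E}$ (which is an embedding by \cref{Lemma_Rescaling_and_weak_morphisms} and tame since $\mathbf{k}$ is), and as the other extension the identity $\mathbb{E}^{\mathbf{k}} \to \mathbb{E}^{\mathbf{k}}$ with tame source rescaling $\mathbf{k}$. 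The biconditional then finishes the argument.

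The only step with real content is the composition lemma, which in turn rests entirely on the stability of tameness of strictly monotone increasing sequences under composition recorded in \cref{Lemma_Tame_sequence}; all remaining manipulations are purely formal, so I do not anticipate any genuine obstacle.
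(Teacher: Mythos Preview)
Your proposal is correct and is essentially the same approach as the paper, which simply states ``All of these are immediate from the definitions.'' You have just made explicit the composition-of-tame-weak-morphisms step (resting on \cref{Lemma_Tame_sequence}) that the paper leaves implicit; there is nothing to add.
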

\begin{proof}
All of these are immediate from the definitions.
\end{proof}

\begin{example}
All the ILB-chains or sc-chains and morphisms and equivalences between them seen so far in the previous sections are tame, \cf \cite{MR656198}, esp.~Chapter II, Corollary 1.3.9.
\end{example}

\begin{definition}\label{Definition_Smoothing_operators}
Let $\mathbb{E}$ be an ILB- or sc-chain.
\begin{enumerate}[label=\arabic*.,ref=\arabic*.]
  \item $\mathbb{E}$ is said to \emph{admit smoothing operators} if there exists a continuous family of strongly smoothing operators $(\mathbb{S}_t : \mathbb{E} \to \mathbb{E})_{t\in [0,\infty)}$, \ie there exists a continuous map
\begin{align*}
\overline{S} : [0,\infty) \times E_0 &\to E_\infty \\
(t, e) &\mapsto \overline{S}_te\text{,}
\end{align*}
with each $\overline{S}_t : E_0 \to E_\infty$ linear, \st the following hold: \\
For $k,l \in \N_0$, denote $S^{k}_{l,t} \definedas \iota^{\infty}_l\circ \overline{S}_t \circ \iota^k_0 : E_k \to E_l$.
Then $\mathbb{S}_t = (S^{k}_{k,t})_{k\in\N_0} : \mathbb{E} \to \mathbb{E}$. \\
Furthermore there exist constants $p\in\N_0$, $C^k_{l} \in [0,\infty)$ for $k,l\in\N_0$ \st
\begin{align*}
\|S^{k}_{l,t}\|_{L_{\mathrm{c}}(E_k,E_l)} &\leq C^k_{l}\bigl(1 + e^{(p + (l-k))t}\bigr) & &\forall\, k,l\in\N_0, t \geq 0 \\
\|\iota^k_l - S^{k}_{l,t}\|_{L_{\mathrm{c}}(E_k,E_l)} &\leq C^k_{l} e^{(p - (k-l))t} & &\forall\, k,l\in\N_0, k - l \geq p, t \geq 0\text{.}
\end{align*}
  \item $\mathbb{E}$ is called \emph{weakly tame} if there exists a tame rescaling $\mathbb{E}^{\mathbf{k}}$ of $\mathbb{E}$ that admits smoothing operators.
\end{enumerate}
\end{definition}

\begin{remark}\label{Remark_Alternative_definition_smoothing_operators}
Note that for $k,l\in\N_0$ with $k-l \geq p$ the first inequality follows from the second (modulo replacing $C^k_l$ by $C^k_l + 1$), because $\|S^{k}_{l,t}\|_{L_{\mathrm{c}}(E_k,E_l)} = \|\iota^k_l - S^{k}_{l,t} - \iota^k_l\|_{L_{\mathrm{c}}(E_k,E_l)} \leq \|\iota^k_l\|_{L_{\mathrm{c}}(E_k,E_l)} + \|\iota^k_l - S^{k}_{l,t}\|_{L_{\mathrm{c}}(E_k,E_l)} \leq 1 + C^k_le^{(p - (k-l))t} \leq 1 + C^k_l$, since $p - (k-l) \leq 0$.
And $C^k_l\bigl(1 + e^{p+(l-k))t}\bigr) \geq C^k_l$. \\
And for $k-l < p$, \ie $p + (l-k) > 0$, one has $e^{(p + (l-k))t} \leq 1 + e^{(p + (l-k))t} \leq 2e^{(p+l-k))t}$.
Hence one could also replace the first inequality by
\begin{align*}
\|S^{k}_{l,t}\|_{L_{\mathrm{c}}(E_k,E_l)} &\leq C^k_{l}e^{(p + (l-k))t} & &\forall\, k,l\in\N_0, k-l < p, t \geq 0
\end{align*}
\end{remark}

\begin{remark}\label{Remark_Alternative_definition_smoothing_operators_II}
Defining $\mathbf{p} \definedas (p + j)_{j\in\N_0}$, $\tilde{S}_t \definedas \overline{S}_t\circ \iota^p_0 : E^{\mathbf{p}}_0 = E_p \to E_\infty$, one can consider the $\tilde{S} : [0,\infty)\times E^{\mathbf{p}}_0\to E_\infty$ as defining a continuous family of strongly smoothing operators $(\tilde{\mathbb{S}}_t : \mathbb{E}^{\mathbf{p}} \to \mathbb{E})_{t\in[0,\infty)}$.
Then the continuous linear operators $\tilde{S}^k_{l,t} \definedas \iota^\infty_l\circ \tilde{S}_t\circ (\iota^{\mathbf{p}})^k_0 : E^{\mathbf{p}}_k \to E_l$ satisfy $\tilde{S}^k_{l,t} = S^{p+k}_{l,t}$ and satisfy the inequalities
\begin{align*}
\|\tilde{S}^k_{l,t}\|_{L_{\mathrm{c}}(E^{\mathbf{p}}_k,E_l)} &\leq \tilde{C}^k_l\bigl(1 + e^{(l-k)t}\bigr) & &\forall\, k,l\in\N_0, t\geq 0 \\
\|\iota^{p+k}_l - \tilde{S}^{k}_{l,t}\|_{L_{\mathrm{c}}(E^{\mathbf{p}}_k,E_l)} &\leq \tilde{C}^k_{l} e^{-(k-l)t} & &\forall\, k,l\in\N_0, k - l \geq 0, t \geq 0\text{,}
\end{align*}
where $\tilde{C}^k_l \definedas C^{p+k}_l$.
\end{remark}

\begin{remark}\label{Remark_Strongly_cts_family_of_smoothing_operators}
Note that by \cref{Lemma_Weakly_continuous_implies_strongly_continuous} the map
\begin{align*}
[0,\infty) &\to L_{\mathrm{c}}(E_1, E_\infty) \\
t &\mapsto \overline{S}_t\circ \iota^1_0
\end{align*}
is continuous.
So if $\mathbb{E}$ is a weakly tame sc-chain, then there exists a tame rescaling $\mathbb{E}^{\mathbf{k}}$ of $\mathbb{E}$ that admits smoothing operators that a fortiori have the property that the map
\begin{align*}
S : [0,\infty) &\to L_{\mathrm{c}}(E_0, E_\infty) \\
t &\mapsto \overline{S}_t
\end{align*}
is continuous.
\end{remark}

\begin{lemma}\label{Lemma_Weakly_tame_ILB_chain}
Let $\mathbb{E}$ and $\mathbb{E}'$ be ILB- or sc-chains and let $\mathbf{k} \subseteq \N_0$ be a strictly monotone increasing sequence.
\begin{enumerate}[label=\arabic*.,ref=\arabic*.]
  \item\label{Lemma_Weakly_tame_ILB_chain_1} If $\mathbf{k}$ is tame and $\mathbb{E}$ admits smoothing operators, then $\mathbb{E}^{\mathbf{k}}$ admits smoothing operators.
  \item\label{Lemma_Weakly_tame_ILB_chain_2} Assume that there exist tame weak morphisms $J : E'_\infty \to E_\infty$ and $K : E_\infty \to E'_\infty$ \st $K\circ J = \id_{E'_\infty}$.
If $\mathbb{E}$ is weakly tame, then so is $\mathbb{E}'$.
  \item\label{Lemma_Weakly_tame_ILB_chain_3} If $\mathbb{E}$ and $\mathbb{E}'$ are tamely equivalent, then $\mathbb{E}$ is weakly tame \iff $\mathbb{E}'$ is weakly tame.
In particular, if $\mathbb{E}$ is weakly tame, then so is every tame rescaling of $\mathbb{E}$.
\end{enumerate}
\end{lemma}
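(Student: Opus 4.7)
For part 1, I propose
\[
\overline{S}^{\mathbf{k}}_t \definedas \overline{S}_t \circ \iota^{k_0}_0 : E^{\mathbf{k}}_0 = E_{k_0} \to E_\infty = E^{\mathbf{k}}_\infty\text{.}
\]
The induced family $(S^{\mathbf{k}})^k_{l,t} : E_{k_k} \to E_{k_l}$ equals $S^{k_k}_{k_l,t}$, so both estimates in \cref{Definition_Smoothing_operators} follow from those for $\overline{S}_t$ with constants $\tilde{C}^k_l \definedas C^{k_k}_{k_l}$ and $\tilde{p} \definedas p + c$, where $c$ is the constant from \cref{Lemma_Tame_sequence} ensuring $|k_l - k_k - (l-k)| \leq c$ for all $k, l \in \N_0$ (using both directions of the bound given strict monotonicity).

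For part 2, choose a tame $\mathbf{m}$ such that $\mathbb{E}^{\mathbf{m}}$ admits smoothing operators $\overline{S}_t$ with constants $p, C^k_l$, and tame extensions $\mathbb{J} : \mathbb{E}'^{\mathbf{a}} \to \mathbb{E}$ and $\mathbb{K} : \mathbb{E}^{\mathbf{b}} \to \mathbb{E}'$ of $J$ and $K$. Set $\mathbf{n} \definedas \mathbf{a}\circ \mathbf{m}$, which is tame as a composition of tame sequences, and take
\[
\overline{S}'_t \definedas K_\infty \circ \overline{S}_t \circ J_{m_0} : E'_{n_0} \to E'_\infty
\]
as candidate smoothing operators on $\mathbb{E}'^{\mathbf{n}}$. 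Compatibility of $\mathbb{J}, \mathbb{K}$ with the structure maps $\iota, \iota'$ yields the factorisation
\[
(S')^k_{l,t} = K_{n_l} \circ \iota^\infty_{b_{n_l}} \circ \overline{S}_t \circ \iota^{m_k}_{m_0} \circ J_{m_k}\text{.}
\]
Choosing $l' \in \N_0$ minimal with $m_{l'} \geq b_{n_l}$, the three middle factors equal $\iota^{m_{l'}}_{b_{n_l}} \circ (S^{\mathbf{m}})^k_{l',t}$, whose norm is controlled by the first smoothing estimate for $\overline{S}_t$. Tameness of $\mathbf{a}, \mathbf{b}, \mathbf{m}$ produces a uniform constant $c'$ with $l' \leq l + c'$, converting $p + (l' - k)$ into $\tilde{p} + (l-k)$ with $\tilde{p} \definedas p + c'$ and absorbing the bounded operators $K_{n_l}, J_{m_k}, \iota^{m_{l'}}_{b_{n_l}}$ into the constant $C'^k_l$.

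For the second estimate, \cref{Lemma_Operators_covering_id} applied to $\mathbb{K} \circ \mathbb{J}^{\mathbf{b}} : \mathbb{E}'^{\mathbf{a}\circ \mathbf{b}} \to \mathbb{E}'$ (which has $(K\circ J^{\mathbf{b}})_\infty = K\circ J = \id_{E'_\infty}$) yields the componentwise identity $K_j \circ J_{b_j} = \iota'^{a_{b_j}}_j$. After a further shift of $\mathbf{n}$ to ensure $m_k \geq b_{n_l}$ for the relevant range of $k, l$ (possible by tameness), one rewrites $\iota'^{n_k}_{n_l} = K_{n_l} \circ \iota^{m_k}_{b_{n_l}} \circ J_{m_k}$, so that
\[
\iota'^{n_k}_{n_l} - (S')^k_{l,t} = K_{n_l} \circ \iota^{m_{l'}}_{b_{n_l}} \circ \bigl(\iota^{m_k}_{m_{l'}} - (S^{\mathbf{m}})^k_{l',t}\bigr) \circ J_{m_k}\text{.}
\]
The middle difference has norm bounded by $C^k_{l'} e^{(p - (k-l'))t}$ for $k - l' \geq p$, and tameness again gives $k - l' \geq (k - l) - c'$, converting the exponent into $\tilde{p} - (k-l)$ for $k - l$ sufficiently large (after possibly enlarging $\tilde{p}$). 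Finally, part 3 is immediate from part 2 applied in both directions, since a tame equivalence provides tame weak embeddings $J, K$ with both $K\circ J = \id_{E'_\infty}$ and $J\circ K = \id_{E_\infty}$; the rescaling consequence follows because $\mathbb{E}$ and $\mathbb{E}^{\mathbf{k}}$ are tamely equivalent whenever $\mathbf{k}$ is tame.

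The principal obstacle is the bookkeeping of indices in the second estimate of part 2: one must factor $\iota'^{n_k}_{n_l} - (S')^k_{l,t}$ so that exactly one factor of the form $\iota - S^{\mathbf{m}}_t$ is exposed (to which the decay estimate applies) and every other factor contributes only bounded constants depending on $k, l$. This relies crucially on the compatibility $K_j \circ J_{b_j} = \iota'^{a_{b_j}}_j$ supplied by \cref{Lemma_Operators_covering_id}, together with the linear growth rates of $\mathbf{a}, \mathbf{b}, \mathbf{m}$ from \cref{Lemma_Tame_sequence}, which together guarantee that $l'$ stays within a bounded distance of $l$ and that all shifts required along the way remain tame.
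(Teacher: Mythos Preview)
Your proof is correct and follows the same overall strategy as the paper: for part~1 you take exactly the paper's candidate $\overline{S}_t \circ \iota^{k_0}_0$, and for part~2 you sandwich the smoothing operators of $\mathbb{E}$ between $J$ and $K$ to obtain smoothing operators on a tame rescaling of $\mathbb{E}'$, while part~3 is immediate from part~2 in both versions.

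The one noteworthy difference is organisational. The paper first arranges, via \cref{Remark_Extensions_of_weak_morphisms}, that the extensions satisfy $\mathbb{K}\circ\mathbb{J} = \mathbb{I}'^{\mathbf{k}}$ at the level of chains, and then uses part~1 together with \cref{Lemma_Tame_sequence},~\labelcref{Lemma_Tame_sequence_5} to reduce to the case where $\mathbf{k}$ and $\mathbf{l}$ are \emph{shifts}. After this reduction all indices are of the form $k_0+j$, $l_0+j$, so the factorisation of $(\iota'^{\mathbf{k}})^m_n - S'^m_{n,t}$ through $\iota - S_t$ becomes a direct computation with no auxiliary index needed. You instead carry general tame sequences $\mathbf{a},\mathbf{b},\mathbf{m}$ throughout, which forces you to introduce the minimal $l'$ with $m_{l'}\geq b_{n_l}$ and verify via tameness that $l'$ stays within a bounded distance of $l$. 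Both routes work; the paper's preliminary reduction to shifts buys a substantially cleaner second estimate at the cost of one extra invocation of part~1, whereas your approach is more direct but requires the additional bookkeeping you identified as the ``principal obstacle.'' Note also that your appeal to \cref{Lemma_Operators_covering_id} is not strictly needed: the identity $K_{n_l}\circ\iota^{m_k}_{b_{n_l}}\circ J_{m_k} = \iota'^{n_k}_{n_l}$ follows already from density of $E'_\infty$ and $K\circ J = \id_{E'_\infty}$, which avoids the sc-chain hypothesis of that lemma.
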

\begin{proof}
\begin{enumerate}[label=\arabic*.,ref=\arabic*.]
  \item In the notation of \cref{Definition_Smoothing_operators}, define $\overline{S}' : [0,\infty)\times E^{\mathbf{k}}_0 \to E^{\mathbf{k}}_\infty$ by $\overline{S}'_t \definedas \overline{S}_t\circ \iota^{k_0}_0 : E^{\mathbf{k}}_0 = E_{k_0} \to E_\infty = E^{\mathbf{k}}_\infty$.
Then
\begin{align*}
S'^m_{n,t} &= (\iota^{\mathbf{k}})^\infty_n\circ \overline{S}'_t\circ (\iota^{\mathbf{k}})^m_0 \\
&= \iota^\infty_{k_n}\circ \overline{S}_t\circ \iota^{k_0}_0\circ \iota^{k_m}_{k_0} \\
&= \iota^\infty_{k_n}\circ \overline{S}_t\circ \iota^{k_m}_0 \\
&= S^{k_m}_{k_n,t}\text{.}
\end{align*}
Using that by \cref{Lemma_Tame_sequence} there exists $c \in\N_0$ \st $j-i \leq k_j - k_i \leq j-i + c$ or $i-j - c \leq k_i - k_j \leq i-j$, for $j \geq i$, the required estimates for $S'^m_{n,t}$ follow from those for the $S^m_{n,t}$ with a constant $p' \definedas p + c$.
  \item As in \cref{Remark_Extensions_of_weak_morphisms}, one can find tame strictly monotone increasing sequences $\mathbf{k}, \mathbf{l} \subseteq \N_0$ and extensions $\mathbb{K} : \mathbb{E}^{\mathbf{l}} \to \mathbb{E}'$ and $\mathbb{J} : \mathbb{E}'^{\mathbf{k}} \to \mathbb{E}^{\mathbf{l}}$ with $\mathbb{K}\circ \mathbb{J} = \mathbb{I}'^{\mathbf{k}} : \mathbb{E}'^{\mathbf{k}} \to \mathbb{E}'$.
Furthermore, using \labelcref{Lemma_Weakly_tame_ILB_chain_1} and \cref{Lemma_Tame_sequence}, \labelcref{Lemma_Tame_sequence_5}, one can assume that $\mathbf{k}$ and $\mathbf{l}$ are shifts, \ie $\mathbf{k} = (k_0 + j)_{j\in\N_0}$ and $\mathbf{l} = (l_0 + j)_{j\in \N_0}$ for some $k_0,l_0\in \N_0$. \\
Then $\mathbb{E}'^{\mathbf{k}}$ admits smoothing operators: \\
Let $\overline{S} : [0,\infty)\times E_0 \to E_\infty$ and $p \in \N_0$, $C^m_n \in [0,\infty)$ be as in \cref{Definition_Smoothing_operators}.
Define
\begin{align*}
p' &\definedas p + k_0\text{,} \\
C'^m_n &\definedas C_{k_0+l_0+n}^{l_0+m}\|K_{k_0+n}\|_{L_{\mathrm{c}}(E^{\mathbf{l}}_{k_0+n}, E'_{k_0+n})}\|J_m\|_{L_{\mathrm{c}}(E'^{\mathbf{k}}_{m}, E^{\mathbf{l}}_{m})}\text{,}
\intertext{and define $\overline{S}' : [0,\infty) \times E'^{\mathbf{k}}_0 \to E'^{\mathbf{k}}_\infty$ by}
\overline{S}'_t &\definedas K_\infty\circ \overline{S}_t\circ \iota^{l_0}_0\circ J_0 : E'^{\mathbf{k}}_0 = E'_{k_0} \to E'_\infty = E'^{\mathbf{k}}_\infty\text{.}
\end{align*}
Now one calculates for $n,m\in\N_0$
\begin{align*}
S'^{m}_{n,t} &= (\iota'^{\mathbf{k}})^\infty_n\circ S'_t\circ (\iota'^{\mathbf{k}})^m_0 \\
&= \iota'^\infty_{k_0+n}\circ K_\infty\circ \overline{S}_t\circ \iota^{l_0}_0\circ J_0\circ (\iota'^{\mathbf{k}})^m_0 \\
&= K_{k_0+n}\circ (\iota^{\mathbf{l}})^\infty_{k_0+n}\circ \overline{S}_t\circ \iota^{l_0}_0\circ (\iota^{\mathbf{l}})^m_0\circ J_m \\
&= K_{k_0+n}\circ \iota^\infty_{k_0+l_0+n}\circ \overline{S}_t\circ \iota^{l_0}_0\circ \iota^{l_0+m}_{l_0}\circ J_m \\
&= K_{k_0+n}\circ \iota^\infty_{k_0 + l_0 + n}\circ \overline{S}_t\circ \iota^{l_0+m}_0\circ J_m \\
&= K_{k_0+n}\circ S_{k_0+l_0+n,t}^{l_0+m}\circ J_m\text{.}
\end{align*}
And if $l_0 + m \geq k_0 + l_0 + n + p$, \ie $m \geq n + k_0 + p = n + p'$, then one can furthermore calculate
\begin{align*}
K_{k_0+n}\circ \iota^{l_0+m}_{k_0+l_0+n}\circ J_m &= K_{k_0+n}\circ (\iota^{\mathbf{l}})^m_{k_0+n}\circ J_m \\
&= K_{k_0+n}\circ J_{k_0+n}\circ (\iota'^{\mathbf{k}})^m_{k_0+n} \\
&= (\mathbb{I}'^{\mathbf{k}})_{k_0+n}\circ (\iota'^{\mathbf{k}})^m_{k_0+n} \\
&= \iota'^{2k_0+n}_{k_0+n}\circ \iota'^{k_0+m}_{2k_0+n} \\
&= \iota'^{k_0+m}_{k_0+n} \\
&= (\iota'^{\mathbf{k}})^m_n\text{,}
\end{align*}
hence
\[
(\iota'^{\mathbf{k}})^m_n - S'^{m}_{n,t} = K_{k_0+n}\circ (\iota^{l_0+m}_{k_0+l_0+n} - S_{k_0+l_0+n,t}^{l_0+m})\circ J_m\text{.}
\]
Applying the estimates for the $S^{k}_{l,t}$ and $\iota^k_l - S^{k}_{l,t}$ to the above formulas for $S'^{m}_{n,t}$ and $(\iota'^{\mathbf{k}})^m_n - S'^{m}_{n,t}$ immediately yields the result.
  \item This is immediate from \labelcref{Lemma_Weakly_tame_ILB_chain_2}.
\end{enumerate}
\end{proof}

\begin{proposition}
Let $\mathbb{E}$ be an ILB- or sc-chain.
If $\mathbb{E}$ is tame, then $\mathbb{E}$ is weakly tame.
\end{proposition}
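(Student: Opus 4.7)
The plan is to build smoothing operators directly on the model chain $\mathbbl{\Sigma}(X)$ and then transport weak tameness to $\mathbb{E}$ across the tame retraction provided by the definition of tameness, using \cref{Lemma_Weakly_tame_ILB_chain}, \labelcref{Lemma_Weakly_tame_ILB_chain_2}. By definition of $\mathbb{E}$ tame there is a Banach space $X$ together with tame weak morphisms $J : E_\infty \to \Sigma_\infty(X)$ and $K : \Sigma_\infty(X) \to E_\infty$ satisfying $K \circ J = \id_{E_\infty}$; once $\mathbbl{\Sigma}(X)$ has been shown to be weakly tame, the cited lemma (with the $\mathbb{E}$ there taken to be $\mathbbl{\Sigma}(X)$ and $\mathbb{E}'$ there taken to be the present $\mathbb{E}$) immediately gives weak tameness of $\mathbb{E}$. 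The entire content of the proposition thus reduces to exhibiting smoothing operators on $\mathbbl{\Sigma}(X)$.

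To do so I would fix a smooth non-increasing cutoff $\chi : \R \to [0,1]$ with $\chi|_{(-\infty,0]} \equiv 1$ and $\chi|_{[1,\infty)} \equiv 0$ and, for $t \in [0,\infty)$ and $\mathbf{x} = (x_j)_{j\in\N_0} \in \Sigma_0(X)$, set
\[
\overline{S}_t \mathbf{x} \definedas \bigl(\chi(j-t)\, x_j\bigr)_{j\in\N_0}\text{.}
\]
Since $\chi(j-t) = 0$ for $j \geq t + 1$, only finitely many coordinates are nonzero and hence $\overline{S}_t \mathbf{x} \in \Sigma_\infty(X)$. The exponential estimates of \cref{Definition_Smoothing_operators} would be verified with $p \definedas 0$ starting from the identity
\[
\|S^k_{l,t}\mathbf{x}\|_l = \sum_{j\in\N_0} \chi(j-t)\, e^{(l-k)j}\, e^{kj}\|x_j\|_X\text{.}
\]
The right hand side is bounded by $\|\mathbf{x}\|_k$ when $l \leq k$ (using $e^{(l-k)j} \leq 1$) and by $e^{l-k}e^{(l-k)t}\|\mathbf{x}\|_k$ when $l > k$ (using the support condition $j \leq t+1$), both of which are absorbed into $C^k_l(1 + e^{(l-k)t})$ with $C^k_l \definedas e^{|l-k|}$. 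Symmetrically, for $k \geq l$ the identity $1 - \chi(j-t) = 0$ for $j \leq t$ gives
\[
\|(\iota^k_l - S^k_{l,t})\mathbf{x}\|_l \leq \sum_{j > t} e^{(l-k)j}e^{kj}\|x_j\|_X \leq e^{-(k-l)t}\|\mathbf{x}\|_k\text{,}
\]
which is the second required inequality.

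It would remain to check that $\overline{S} : [0,\infty) \times \Sigma_0(X) \to \Sigma_\infty(X)$ is jointly continuous, which, by definition of the Fréchet topology on $\Sigma_\infty(X)$, amounts to continuity into each $\Sigma_k(X)$ and follows by standard triangle-inequality arguments from the operator-norm bound derived above for $\overline{S}_t$ together with the fact that the coordinate $\chi(j-t) - \chi(j-t')$ is Lipschitz in $t$ and has support in an interval of $j$'s of length bounded by $|t-t'| + 2$, independently of $\mathbf{x}'$. This establishes that $\mathbbl{\Sigma}(X)$ admits smoothing operators, and since $\mathbbl{\Sigma}(X) = \mathbbl{\Sigma}(X)^{\mathbf{id}}$ with $\mathbf{id}$ a tame shift, $\mathbbl{\Sigma}(X)$ is weakly tame, completing the argument. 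The only point at which care is genuinely needed is matching the form of the exponential estimates to the precise format of \cref{Definition_Smoothing_operators}, but as noted in \cref{Remark_Alternative_definition_smoothing_operators} the first inequality is essentially forced by the second on the range $k - l \geq p$, with the finite complementary range absorbed into the constants; no substantive obstacle is expected, the bulk of the work being bookkeeping on exponents.
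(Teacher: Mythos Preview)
Your proposal is correct and follows essentially the same approach as the paper: both reduce to showing $\mathbbl{\Sigma}(X)$ is weakly tame and then invoke \cref{Lemma_Weakly_tame_ILB_chain}, \labelcref{Lemma_Weakly_tame_ILB_chain_2}, to transport this to $\mathbb{E}$ along the tame retraction. The only difference is that the paper cites \cite{MR656198}, Part II, Section 1.4, for the smoothing operators on $\mathbbl{\Sigma}(X)$, whereas you spell out the truncation-by-cutoff construction explicitly (which is precisely what Hamilton does there).
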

\begin{proof}
By~\cite{MR656198}, Part II, Section 1.4, $\mathbbl{\Sigma}(X)$, for $(X, \|\cdot\|_X)$ a Banach space, is weakly tame.
The result now follows immediately from the definition of tame and \cref{Lemma_Weakly_tame_ILB_chain}, \labelcref{Lemma_Weakly_tame_ILB_chain_2}.
\end{proof}

\begin{definition}\label{Definition_Weakly_tame_sc_Frechet_space}
Let $E$ be a pre-tame $\overline{\text{sc}}$-Fr{\'e}chet space.
\begin{enumerate}[label=\arabic*.,ref=\arabic*.]
  \item A tamely compatible $\overline{\text{sc}}$-structure $(\mathbb{E}, \phi)$ on $E$ is called \emph{tame} if $\mathbb{E}$ is tame.
  \item $E$ is called \emph{tame} if one/any tamely compatible $\overline{\text{sc}}$-structure on $E$ is tame.
  \item A \emph{tame $\overline{\text{sc}}$-Fr{\'e}chet space} is a tame pre-tame $\overline{\text{sc}}$-Fr{\'e}chet space.
  \item A tamely compatible $\overline{\text{sc}}$-structure $(\mathbb{E}, \phi)$ on $E$ is called \emph{weakly tame} if $\mathbb{E}$ is weakly tame.
  \item $E$ is called \emph{weakly tame} if one/any tamely compatible $\overline{\text{sc}}$-structure on $E$ is weakly tame.
  \item A \emph{weakly tame $\overline{\text{sc}}$-Fr{\'e}chet space} is a weakly tame pre-tame $\overline{\text{sc}}$-Fr{\'e}chet space.
\end{enumerate}
\end{definition}

\begin{example}
All the $\overline{\text{sc}}$-Fr{\'e}chet spaces that appeared so far define tame $\overline{\text{sc}}$-Fr{\'e}chet spaces and all the morphisms between them that appeared so far are tame.
\end{example}

\begin{proposition}\label{Proposition_Tame_subspaces_and_direct_sums}
Let $E$, $E'$ and $E^1, \dots, E^k$, for some $k\in\N_0$, be pre-tame $\overline{\text{sc}}$-Fr{\'e}chet spaces.
\begin{enumerate}[label=\arabic*.,ref=\arabic*.]
  \item If $E$ is (weakly) tame, $E' \subseteq E$ is a pre-tame $\overline{\text{sc}}$-subspace, and if there exists a tame morphism $P : E \to E'$ \st $P\circ J = \id_{E'}$, where $J : E' \hookrightarrow E$ is the canonical inclusion, then $E'$ is (weakly) tame.
  \item If the $E^i$, $i=1,\dots, k$, are (weakly) tame, then so is their direct sum $E^1 \oplus \cdots \oplus E^k$ (as a pre-tame $\overline{\text{sc}}$-Fr{\'e}chet space) and the canonical projections $P^i : E^1 \oplus \cdots \oplus E^k \to E^i$ and canonical inclusions $J^i : E^i \to E^1 \oplus \cdots \oplus E^k$ are tame morphisms.
\end{enumerate}
\end{proposition}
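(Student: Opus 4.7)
Both parts reduce to the corresponding sc-chain-level statements via appropriate choices of tamely compatible $\overline{\text{sc}}$-structures.

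For part~(1), invoke the pre-tame analogues of the constructions from \cref{Subsection_Subspaces_direct_sums} (\cf \cref{Remark_Subspaces_sums_pretame_context}) and pick tamely compatible $\overline{\text{sc}}$-structures $(\mathbb{E}, \phi)$ on $E$ and $(\mathbb{E}', \phi')$ on $E'$ such that, after a tame rescaling of $\mathbb{E}$ if necessary, $\mathbb{E}'$ is a subchain of $\mathbb{E}$. The inclusion $J : E' \hookrightarrow E$ is then induced by the subchain inclusion and is therefore tame, while $P : E \to E'$ is tame by hypothesis and satisfies $P\circ J = \id_{E'}$. Extending $J_\infty$ and $P_\infty$ to a common tame rescaling and applying \cref{Lemma_Tame_ILB_chain}, \labelcref{Lemma_Tame_ILB_chain_1} (respectively \cref{Lemma_Weakly_tame_ILB_chain}, \labelcref{Lemma_Weakly_tame_ILB_chain_2}) transfers (weak) tameness from $\mathbb{E}$ to $\mathbb{E}'$.

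For part~(2), choose tamely compatible $\overline{\text{sc}}$-structures $(\mathbb{E}^i, \phi^i)$ on each $E^i$, so that by \cref{Definition_Sum_of_scFrechet_spaces}, $(\mathbb{E}^1 \oplus \cdots \oplus \mathbb{E}^k, \phi^1 \times \cdots \times \phi^k)$ is a compatible $\overline{\text{sc}}$-structure on $E^1 \oplus \cdots \oplus E^k$. For tameness, use the tautological isomorphism of ILB-chains $\mathbbl{\Sigma}(X^1) \oplus \cdots \oplus \mathbbl{\Sigma}(X^k) \cong \mathbbl{\Sigma}(X^1 \oplus \cdots \oplus X^k)$: the direct sums of tame weak embeddings $E^i_\infty \to \Sigma_\infty(X^i)$ and of their tame left inverses yield the analogous data for $E^1_\infty \times \cdots \times E^k_\infty$ and $\Sigma_\infty(X^1 \oplus \cdots \oplus X^k)$, with the rescalings needed to realize each pair as sc-chain morphisms merged into their pointwise maximum, still tame by \cref{Lemma_Tame_sequence}. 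For weak tameness, apply \cref{Lemma_Weakly_tame_ILB_chain}, \labelcref{Lemma_Weakly_tame_ILB_chain_1} to pass to a common tame rescaling in which each $\mathbb{E}^i$ admits smoothing operators $(\mathbb{S}^i_t)_{t\geq 0}$, and take $\mathbb{S}_t \definedas \mathbb{S}^1_t \oplus \cdots \oplus \mathbb{S}^k_t$; the estimates of \cref{Definition_Smoothing_operators} for $\mathbb{S}_t$ follow directly from those for the individual $\mathbb{S}^i_t$ with parameter $p$ taken as the maximum of the summand parameters and constants $C^m_n$ as the sum (or maximum) of the summand constants, using that the direct-sum norm equals the sum of the component norms. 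Finally, the canonical projections $P^i$ and inclusions $J^i$ are induced by the biproduct morphisms $\mathbb{P}^i$ and $\mathbb{J}^i$ of \cref{Definition_Subchains_and_sums}, which are sc-chain morphisms requiring no rescaling and are therefore tame.

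The only real subtlety, and the main point to verify carefully in part~(2), is that the different tame rescalings $\mathbf{k}^i$ carrying the data on different summands must be merged into a single ambient rescaling; this is possible because the pointwise maximum of finitely many tame sequences is again tame by the characterisation in \cref{Lemma_Tame_sequence}. Beyond this mild bookkeeping, the argument is a direct assembly of the tools already assembled.
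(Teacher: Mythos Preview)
Your proof is correct and follows essentially the same route as the paper: part~(1) is reduced to \cref{Lemma_Tame_ILB_chain} and \cref{Lemma_Weakly_tame_ILB_chain} via the tame retraction $P\circ J = \id_{E'}$, and part~(2) handles the tame case via the identification $\mathbbl{\Sigma}(X^1)\oplus\cdots\oplus\mathbbl{\Sigma}(X^k)\cong\mathbbl{\Sigma}(X^1\oplus\cdots\oplus X^k)$ (the paper cites \cite{MR656198}, Part II, Lemma 1.3.4 for this) and the weakly tame case by taking the direct sum of smoothing operators with $p=\max_i p^i$ and summed/maximal constants. The only cosmetic difference is that you make the rescaling-merging bookkeeping explicit, which the paper leaves implicit.
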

\begin{proof}
\begin{enumerate}[label=\arabic*.,ref=\arabic*.]
  \item This follows immediately from \cref{Lemma_Tame_ILB_chain,Lemma_Weakly_tame_ILB_chain}.
  \item The case that the $E^i$, $i=1,\dots, k$, are tame follows from \cite{MR656198}, Part II, Lemma 1.3.4. \\
For the case that the $E^i$, $i = 1, \dots, k$, are weakly tame it suffices to show that if $\mathbb{E}^i$, $i=1,\dots,k$, are ILB-chains which admit smoothing operators, then their direct sum $\mathbb{E} \definedas \mathbb{E}^1\oplus \cdots \oplus \mathbb{E}^k$ admits smoothing operators as well.
So assume this to be the case and let $(\mathbb{S}^i_t : \mathbb{E}^i \to \mathbb{E}^i)_{t\in[0,\infty)}$ be the corresponding continuous families of strongly smoothing operators, given by continuous maps
\begin{align*}
\overline{S}^i : [0,\infty) \times E^i_0 &\to E^i_\infty \\
(t, e) &\mapsto \overline{S}^i_te\text{,}
\end{align*}
as in \cref{Definition_Smoothing_operators}.
Define $(\mathbb{S}_t : \mathbb{E} \to \mathbb{E})_{t\in[0,\infty)}$ via
\begin{align*}
\overline{S} : [0,\infty) \times E_0 &\to E_\infty \\
(t, (e^1, \dots, e^k)) &\mapsto (\overline{S}^1_te^1, \dots, \overline{S}^k_te^k)\text{.}
\end{align*}
\end{enumerate}
Then for $m,n\in\N_0$ and $(e^1, \dots, e^k) \in E_m$,
\[
S^m_{n,t}(e^1, \dots, e^k) = ((S^1)^m_{n,t}e^1, \dots, (S^k)^m_{n,t}e^k)\text{.}
\]
Hence, if $p^i, C^{i,m}_n \in [0,\infty)$ are the constants from \cref{Definition_Smoothing_operators} for the $\mathbb{E}^i$, then
\begin{align*}
\left\|S^m_{n,t}(e^1, \dots, e^k)\right\|_n &= \left\|(S^1)^m_{n,t}e^1\right\|^1_n + \cdots + \left\|(S^k)^m_{n,t}e^k\right\|^k_n \\
&\leq C^{1,m}_n\bigl(1 + e^{(p^1 + (n-m))t}\bigr)\|e^1\|^1_m + \cdots + C^{k,m}_n\bigl(1 + e^{(p^k + (n-m))t}\bigr)\|e^k\|^k_m \\
&\leq C^{m}_n\bigl(1 + e^{(p + (n-m))t}\bigr)(\|e^1\|^1_m + \cdots + \|e^k\|^k_m) \\
&= C^m_n\bigl(1 + e^{(p + (n-m))t}\bigr)\|(e^1, \dots, e^k)\|_m\text{,}
\end{align*}
where $C^m_n \definedas \max\{C^{i,m}_n \;|\; i=1,\dots, k\}$ and $p \definedas \max\{p^i \;|\; i=1,\dots, k\}$.
Hence $\|S^m_{n,t}\|_{L_{\mathrm{c}}(E_m,E_n)} \leq C^m_n\bigl(1 + e^{(p + (n-m))t}\bigr)$. \\
And analogously for the estimate for $\iota^m_n - S^m_{n,t}$.
\end{proof}

\Needspace{25\baselineskip}
\subsection{Tame nonlinear maps}

First, note that all the notions and results on envelopes from \cref{Subsection_Envelopes} carry over ad verbatim to pre-tame $\overline{\text{sc}}$-Fr{\'e}chet spaces, just adding the word ``tame'' in the appropriate places.
E.\,g.~if $E$ and $E'$ are pre-tame $\overline{\text{sc}}$-Fr{\'e}chet spaces, $A \subseteq E$ is a subset and $f : A \to E'$ is a map, then for an envelope $((\mathbb{E}, \phi), \mathcal{U})$ of $A$ it is assumed that $(\mathbb{E}, \phi)$ is a tamely compatible $\overline{\text{sc}}$-structure.
And likewise for an envelope $((\mathbb{E}, \phi), (\mathbb{E}', \phi'), \mathcal{F} : \mathcal{U} \to \mathbb{E}')$ of $f$ it is assumed that $(\mathbb{E}, \phi)$ and $(\mathbb{E}', \phi')$ are tamely compatible $\overline{\text{sc}}$-structures on $E$ and $E'$, respectively, and all the rescalings appearing in the definition of equivalence of envelopes are assumed to be tame.
To not unnecessarily bloat this text this will not be carried out explicitely but I will rely instead on this being sufficiently obvious.

\begin{definition}\label{Definition_Tame_estimate}
Let $(X, \|\cdot\|_X)$ and $(Y, \|\cdot\|_Y)$ be normed vector spaces, let $A \subseteq X$ be a subset, and let $f : A \to Y$ be a map. \\
$f$ is said to \emph{satisfy a tame estimate} \iff there exists a constant $C \in [0,\infty)$ \st
\[
\|f(x)\|_Y \leq C(1 + \|x\|_X) \quad \forall\, x \in A\text{.}
\]
\end{definition}

\begin{definition}\label{Definition_Tame_nonlinear_map}
Let $\mathbb{E}$ and $\mathbb{E}'$ be ILB- or sc-chains, let $A \subseteq E_\infty$ be a subset, and let $f : A \to E'_\infty$ be a map.
\begin{enumerate}[label=\arabic*.,ref=\arabic*.]
  \item $f$ is called \emph{tame (\wrt $\mathbb{E}$ and $\mathbb{E}'$)} \iff for every $k \in \N_0$,
\[
f : (E_\infty, (\iota^\infty_k)^\ast\|\cdot\|_k) \supseteq U \to (E'_\infty, (\iota'^\infty_k)^\ast \|\cdot\|_k)
\]
satisfies a tame estimate.
  \item An envelope $\mathcal{F} : \mathcal{U} \to \mathbb{E}'$ of $f$ is called \emph{tame} \iff for every $k \in \N_0$,
\[
f_k : (E_k, \|\cdot\|_k)\supseteq U_k \to (E'_k, \|\cdot\|'_k)
\]
satisfies a tame estimate.
\end{enumerate}
\end{definition}

\begin{lemma}\label{Lemma_Tame_maps}
Let $\mathbb{E}$, $\tilde{\mathbb{E}}$, $\mathbb{E}'$ and $\tilde{\mathbb{E}}'$ be ILB- or sc-chains, let $A \subseteq E_\infty$ be a subset and let $f : A \to E'_\infty$ be a map, and let $\mathbb{T} : \tilde{E} \to \mathbb{E}$ and $\mathbb{S} : \mathbb{E}' \to \tilde{\mathbb{E}}'$ be continuous linear operators.
\begin{enumerate}[label=\arabic*.,ref=\arabic*.]
  \item If $f$ is tame as a map from $A \subseteq E_\infty$ to $E'_\infty$, then $f$ is tame as a map from $A\subseteq E^{\mathbf{k}}_\infty = E_\infty$ to $E'^{\mathbf{l}}_\infty = E'_\infty$ for all strictly monotone increasing sequences $\mathbf{k}, \mathbf{l} \subseteq \N_0$ with $\mathbf{k} \geq \mathbf{l}$.
  \item If $f$ is tame, then $f\circ T_\infty : \tilde{E}_\infty \supseteq T_\infty\inv(A) \to E'_\infty$ and $S_\infty\circ f : A \to \tilde{E}'_\infty$ are tame as well.
  \item If $\mathcal{F} : \mathcal{U} \to \mathbb{E}'$ is a tame envelope of $f$, then so is any refinement of $\mathcal{F}$ as well as $\mathbb{I}^{\mathbf{k}}_{\mathbf{l}}\circ \mathcal{F}^{\mathbf{k}} : \mathcal{U}^{\mathbf{k}} \to \mathbb{E}'^{\mathbf{l}}$ for all strictly monotone increasing sequences $\mathbf{k}, \mathbf{l} \subseteq \N_0$ with $\mathbf{k} \geq \mathbf{l}$.
  \item If $\mathcal{F} : \mathcal{U} \to \mathbb{E}'$ is a tame envelope of $f$, then $\mathbb{T}^\ast \mathcal{F}$ and $\mathbb{S}\circ \mathcal{F}$ are tame envelopes of $f\circ T_\infty$ and $S_\infty\circ f$, respectively.
\end{enumerate}
\end{lemma}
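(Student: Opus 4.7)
All four parts reduce to unwinding the definitions of tame estimate and tame envelope (\cref{Definition_Tame_estimate,Definition_Tame_nonlinear_map}) and exploiting two elementary facts: first, the ILB-axiom \labelcref{Definition_Chain_1c} together with composition gives $\|\iota^m_n\|_{L_{\mathrm{c}}(E_m,E_n)} \leq 1$ for all $m \geq n$; second, any continuous linear operator between Banach spaces is bounded, so for the extensions $T_k : \tilde{E}_k \to E_k$ and $S_k : E'_k \to \tilde{E}'_k$ of the given continuous linear operators there exist constants $M_k, N_k \in [0,\infty)$ with $\|T_k y\|_k \leq M_k \|y\|^{\tilde{E}}_k$ and $\|S_k e'\|^{\tilde{E}'}_k \leq N_k \|e'\|'_k$. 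I would carry out the four parts in order, each as a short one-line computation.

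For \labelcref{Lemma_Tame_maps}, part 1: the rescaled norms $\|\cdot\|^{\mathbf{k}}_j = \|\cdot\|_{k_j}$ and $\|\cdot\|'^{\mathbf{l}}_j = \|\cdot\|'_{l_j}$ satisfy $\|x\|_{l_j} \leq \|x\|_{k_j}$ for $k_j \geq l_j$ because of the norm-one inclusions. Tameness of $f$ at level $l_j$ gives $\|f(x)\|'_{l_j} \leq C_{l_j}(1 + \|x\|_{l_j}) \leq C_{l_j}(1 + \|x\|_{k_j})$, which is the required tame estimate on the rescaled chains.

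For part 2: from $\|f(T_\infty x)\|'_k \leq C_k(1 + \|T_\infty x\|_k) = C_k(1 + \|T_k \iota^{\tilde\infty}_k x\|_k) \leq C_k(1 + M_k \|x\|^{\tilde E}_k) \leq C_k(1+M_k)(1+\|x\|^{\tilde E}_k)$, so $f \circ T_\infty$ is tame; the argument for $S_\infty \circ f$ is the symmetric one using the bound $N_k$. For part 3, the tame estimate for $f_k$ is preserved under restriction to any smaller open set $\tilde{U}_k \subseteq U_k$; and for the envelope $\mathbb{I}^{\mathbf{k}}_{\mathbf{l}}\circ \mathcal{F}^{\mathbf{k}}$, whose components are $\iota^{k_j}_{l_j} \circ f_{k_j}$, the norm-one bound on $\iota^{k_j}_{l_j}$ gives $\|\iota^{k_j}_{l_j} f_{k_j}(x)\|'_{l_j} \leq \|f_{k_j}(x)\|'_{k_j} \leq C_{k_j}(1 + \|x\|_{k_j})$ which is again a tame estimate. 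Finally, for part 4, $\mathbb{T}^{\ast}\mathcal{F}$ has components $f_k \circ T_k$ with $\|f_k(T_k x)\|'_k \leq C_k(1+\|T_k x\|_k) \leq C_k(1 + M_k\|x\|^{\tilde E}_k)$, and $\mathbb{S}\circ \mathcal{F}$ has components $S_k \circ f_k$ with $\|S_k f_k(x)\|^{\tilde{E}'}_k \leq N_k C_k(1+\|x\|_k)$; absorbing $M_k$ or $N_k$ into a new constant produces the required tame estimate.

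No step presents a genuine obstacle; the whole lemma is a book-keeping exercise whose only substance is the observation that each of the operations (rescaling to a finer chain, composing with a continuous linear operator, post-composing with a smoothing/shift inclusion, restricting the domain) changes the relevant Banach-space norms only by multiplicative constants and by affine shifts, both of which are absorbed by the affine form $C(1+\|x\|)$ of a tame estimate. The one place to be slightly careful is to distinguish in part 1 and part 3 between rescaling the source chain versus the target chain: the former needs the monotonicity $\|x\|_{l_j} \leq \|x\|_{k_j}$ on the domain side, while the latter needs the norm-one bound $\|\iota^{k_j}_{l_j}\| \leq 1$ on the target side, and the assumption $\mathbf{k} \geq \mathbf{l}$ is exactly what makes both estimates go the right way.
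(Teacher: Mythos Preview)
Your proposal is correct and matches the paper's approach exactly: the paper's proof is the single word ``Straightforward.'', and your argument supplies precisely the routine norm estimates (norm-one inclusions from \cref{Definition_Chain}\,\labelcref{Definition_Chain_1c} and boundedness of the $T_k$, $S_k$) that this word is meant to abbreviate. There is nothing to add.
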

\begin{proof}
Straightforward.
\end{proof}

\begin{definition}\label{Definition_Tame_nonlinear_map_scFrechet_spaces}
Let $E$ and $E'$ be pre-tame $\overline{\text{sc}}$-Fr{\'e}chet spaces, let $U \subseteq E$ be an open subset and let $f : U \to E'$ be a map.
\begin{enumerate}[label=\arabic*.,ref=\arabic*.]
  \item If $f$ is $\underline{\text{sc}}^0$, then $f$ is called \emph{tame} \iff for every point $x \in U$ there exists a neighbourhood $V \subseteq U$ of $x$ and tamely compatible $\overline{\text{sc}}$-structures $(\mathbb{E},\phi)$ and $(\mathbb{E}',\phi')$ on $E$ and $E'$, respectively, \st $\phi'^{-1}\circ f\circ \phi : \phi\inv(V) \to E'_\infty$ is tame.
  \item An envelope $((\mathbb{E},\phi),(\mathbb{E}',\phi'), \mathcal{F})$ of $f$ is called \emph{tame} \iff $(\mathbb{E}, \phi)$ and $(\mathbb{E}', \phi')$ are tamely compatible $\overline{\text{sc}}$-structures on $E$ and $E'$, respectively, and $\mathcal{F}$ is tame.
  \item If $f$ is $\overline{\text{sc}}^0$, then $f$ is called \emph{tame} \iff for every point $x \in U$ there exists a neighbourhood $V \subseteq U$ of $x$ \st $f|_V : V \to E'$ has a tame envelope.
\end{enumerate}
\end{definition}

\begin{proposition}\label{Proposition_Tame_maps}
Let $E$ and $E'$ be pre-tame $\overline{\text{sc}}$-Fr{\'e}chet spaces, let $U \subseteq E$ be an open subset and let $f : U \to E'$ be a map.
\begin{enumerate}[label=\arabic*.,ref=\arabic*.]
  \item\label{Proposition_Tame_maps_1} If $f$ is $\underline{\text{sc}}^0$, then $f$ is tame \iff for every point $x \in U$ there exists a neighbourhood $V \subseteq U$ of $x$ \st for every pair of tamely compatible $\overline{\text{sc}}$-structures $(\mathbb{E}, \phi)$ and $(\mathbb{E}', \phi')$ on $E$ and $E'$, respectively, there exist tame strictly monotone increasing sequences $\mathbf{k}, \mathbf{l} \subseteq \N_0$ \st $\phi'^{-1}\circ f\circ \phi : E^{\mathbf{k}}_\infty \supseteq \phi\inv(V) \to E'^{\mathbf{l}}_\infty$ is tame.
  \item\label{Proposition_Tame_maps_2} If $f$ is $\overline{\text{sc}}^0$, then $f$ is tame \iff for every point $x \in U$ there exists a neighbourhood $V \subseteq U$ of $x$ \st for every envelope $((\mathbb{E},\phi),(\mathbb{E}',\phi'), \mathcal{F})$ of $f|_V$ there exists a tame strictly monotone increasing sequence $\mathbf{k} \subseteq \N_0$ \st $((\mathbb{E}^{\mathbf{k}},\phi),(\mathbb{E}',\phi'), \mathbb{I}'^{\mathbf{k}}\circ \mathcal{F}^{\mathbf{k}})$ is tame.
  \item\label{Proposition_Tame_maps_3} If $f$ is $\overline{\text{sc}}^0$, then $f$ is tame as an $\overline{\text{sc}}^0$-map \iff $f$ is tame as an $\underline{\text{sc}}^0$-map.
\end{enumerate}
\end{proposition}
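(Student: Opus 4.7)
The plan is to handle the three statements in order, with \labelcref{Proposition_Tame_maps_1} and \labelcref{Proposition_Tame_maps_2} being formal consequences of \cref{Lemma_Tame_maps} together with the fact that tame weak equivalences can be extended to continuous linear operators between suitable tame rescalings (\cref{Lemma_Rescaling_and_weak_morphisms,Definition_Tame_ILB_chain}), and \labelcref{Proposition_Tame_maps_3} being a consequence of \cref{Proposition_Strict_envelopes} together with a density argument.

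For \labelcref{Proposition_Tame_maps_1}, the ``$\Leftarrow$'' direction is immediate from \cref{Definition_Tame_nonlinear_map_scFrechet_spaces}. For ``$\Rightarrow$'', suppose $f$ is tame as a $\underline{\text{sc}}^0$-map, so there exist tamely compatible $\overline{\text{sc}}$-structures $(\mathbb{E},\phi)$ and $(\mathbb{E}',\phi')$ on $E$ and $E'$, respectively, such that $\tilde{f} \definedas \phi'^{-1}\circ f|_V\circ \phi$ satisfies tame estimates for every $k\in\N_0$. Given any other pair $(\hat{\mathbb{E}},\hat{\phi})$ and $(\hat{\mathbb{E}}',\hat{\phi}')$ of tamely compatible $\overline{\text{sc}}$-structures, $J\definedas \hat{\phi}\inv\circ \phi$ and $K'\definedas \hat{\phi}'^{-1}\circ \phi'$ are tame weak equivalences, so by \cref{Lemma_Rescaling_and_weak_morphisms} they admit extensions $\mathbb{J} : \mathbb{E}^{\mathbf{k}} \to \hat{\mathbb{E}}$ and $\mathbb{K}' : \mathbb{E}'^{\mathbf{l}} \to \hat{\mathbb{E}}'$ for some tame strictly monotone increasing sequences $\mathbf{k},\mathbf{l}\subseteq \N_0$. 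The map $\hat{f} \definedas \hat{\phi}'^{-1}\circ f|_V\circ \hat{\phi}$ factors as $K'_\infty\circ \tilde{f}\circ J_\infty\inv$; applying \cref{Lemma_Tame_maps} twice (to precomposition with $J_\infty\inv$, which corresponds to the identity as a weak morphism into $\mathbb{E}^{\mathbf{k}}$, and to postcomposition with $\mathbb{K}'$) shows that $\hat{f}$ is tame when regarded from $\hat{E}^{\mathbf{k}}_\infty$ to $\hat{E}'^{\mathbf{l}}_\infty$, as desired.

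Statement \labelcref{Proposition_Tame_maps_2} is proved by the same template, with envelopes replacing pointwise maps: given a tame envelope $\mathfrak{F}_0$ of $f|_V$ witnessing tameness, and an arbitrary envelope $\mathfrak{F} = ((\mathbb{E},\phi),(\mathbb{E}',\phi'),\mathcal{F})$ of $f|_V$, \cref{Proposition_Strict_envelopes} provides (after shrinking $V$) a common rescaling on which both envelopes are equivalent; then \cref{Lemma_Tame_maps} converts the tame estimates of $\mathfrak{F}_0$ into tame estimates of $\mathbb{I}'^{\mathbf{k}}\circ \mathcal{F}^{\mathbf{k}}$ for an appropriate tame shift $\mathbf{k}$. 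The main bookkeeping issue here is that equivalence of envelopes only guarantees agreement after a further rescaling; using \cref{Lemma_Tame_sequence} one arranges all intermediate rescalings to remain tame.

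For \labelcref{Proposition_Tame_maps_3}, the direction ``$\overline{\text{sc}}^0$-tame $\Rightarrow$ $\underline{\text{sc}}^0$-tame'' is direct: if $\mathcal{F}$ is a tame envelope of $f|_V$ with respect to tamely compatible $(\mathbb{E},\phi)$ and $(\mathbb{E}',\phi')$, then for each $k\in\N_0$ the map $\iota'^\infty_k\circ \tilde{f} = f_k\circ \iota^\infty_k|_{\phi\inv(V)}$ inherits the tame estimate of $f_k$, so $\tilde{f} : (E_\infty, (\iota^\infty_k)^\ast\|\cdot\|_k) \supseteq \phi\inv(V) \to (E'_\infty, (\iota'^\infty_k)^\ast\|\cdot\|'_k)$ does as well. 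For the converse, assume $f$ is $\overline{\text{sc}}^0$ and $\underline{\text{sc}}^0$-tame. Pick $x\in U$ and apply \cref{Proposition_Strict_envelopes} to obtain a neighbourhood $V\subseteq U$ of $x$ and a strict envelope $((\mathbb{E},\phi),(\mathbb{E}',\phi'),\mathcal{F} : \mathcal{U} \to \mathbb{E}')$ of $f|_V$. After a tame rescaling supplied by \labelcref{Proposition_Tame_maps_1}, one may assume $\tilde{f}\definedas \phi'^{-1}\circ f|_V\circ \phi$ satisfies a tame estimate $\|\iota'^\infty_k\tilde{f}(x)\|'_k \leq C_k(1 + \|\iota^\infty_k x\|_k)$ for every $k\in\N_0$. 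Because $\mathcal{F}$ is strict, $\iota^\infty_k(\phi\inv(V))\subseteq E_k$ is dense in $U_k$ and $f_k$ is the unique continuous extension of $\iota'^\infty_k\circ\tilde{f}\circ(\iota^\infty_k)\inv|_{\iota^\infty_k(\phi\inv(V))}$; continuity and density then propagate the tame estimate to all of $U_k$. The main obstacle is to ensure that the strict envelope produced by \cref{Proposition_Strict_envelopes} sits over tamely compatible $\overline{\text{sc}}$-structures, which is where \labelcref{Proposition_Tame_maps_1} is essential.
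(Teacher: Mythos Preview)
Your proof is correct and follows essentially the same approach as the paper: part~\labelcref{Proposition_Tame_maps_1} via transfer along tame weak equivalences and \cref{Lemma_Tame_maps}, part~\labelcref{Proposition_Tame_maps_2} via \cref{Proposition_Strict_envelopes} plus \cref{Lemma_Tame_maps}, and part~\labelcref{Proposition_Tame_maps_3} via the density argument for strict envelopes. The paper's own proof is terser but relies on exactly the same ingredients; your exposition simply unpacks the ``straightforward verification'' the paper invokes.
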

\begin{proof}
\begin{enumerate}[label=\arabic*.,ref=\arabic*.]
  \item This is a straightforward verification.
  \item By \cref{Proposition_Strict_envelopes}, \labelcref{Proposition_Strict_envelopes_2}, and \cref{Lemma_Tame_maps}, for every $x \in U$ there exists a neighbourhood $V \subseteq U$ of $x$ and a tame strict envelope of $f|_V$.
And by \cref{Proposition_Strict_envelopes}, \labelcref{Proposition_Strict_envelopes_4}, any other envelope of $f|_V$ is equivalent to this tame strict envelope.
Now one just needs to unravel the definition of equivalence of envelopes and apply \cref{Lemma_Tame_maps}.
  \item One direction is trivial and if $f$ is tame as an $\underline{\text{sc}}^0$-map, then by the same arguments as above, for every point $x \in U$ there exists a neighbourhood $V \subseteq U$ of $x$ \st $f|_V$ has a strict envelope $((\mathbb{E},\phi),(\mathbb{E}',\phi'), \mathcal{F} : \mathcal{U} \to \mathbb{E}')$ \st $f_\infty \definedas \phi'^{-1}\circ f \circ \phi : E_\infty \supseteq \phi\inv(V) \to E'_\infty$ is tame.
But because $\mathcal{F} : \mathcal{U} \to \mathbb{E}'$ is strict, $U_\infty = \phi\inv(V)$ is dense in $U_k$ for all $k\in\N_0$ and $f_k : U_k \to E'_k$ is the unique continuous extension of $f_\infty$.
So the result follows from a straightforward continuity argument.
\end{enumerate}
\end{proof}

\begin{proposition}\label{Proposition_Standard_properties_tame}
Let $E$, $E'$ and $E''$ be pre-tame $\overline{\text{sc}}$-Fr{\'e}chet spaces.
\begin{enumerate}[label=\arabic*.,ref=\arabic*.]
  \item Let $T : E \to E'$ be a continuous linear map.
Then $T$ defines a tame morphism \iff $T$ defines a tame $\overline{\text{sc}}$-continuous map.
  \item Let $U \subseteq E$ and $U' \subseteq E'$ be open subsets, and let $f : U \to E'$ and $f' : U' \to E''$ be $\overline{\text{sc}}^0$ or $\underline{\text{sc}}^0$ with $f(U) \subseteq U'$. \\
If $f$ and $f'$ are tame, then so is $f' \circ f$.
\end{enumerate}
\end{proposition}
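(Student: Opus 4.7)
For part (1), the forward direction is essentially immediate: if $T$ is a tame morphism, then by \cref{Definition_Pretame_sc_Frechet_space} there are tamely compatible $\overline{\text{sc}}$-structures $(\mathbb{E}, \phi)$ and $(\mathbb{E}', \phi')$ and a tame weak morphism extension $\mathbb{T} : \mathbb{E}^{\mathbf{k}} \to \mathbb{E}'$ of $T_\infty \definedas \phi'^{-1} \circ T \circ \phi$ for some tame strictly monotone sequence $\mathbf{k}$. The sequence of maps $T_j : E^{\mathbf{k}}_j \to E'_j$ then defines an envelope of $T$ on all of $E$, and since each $T_j$ is continuous linear between Banach spaces, $\|T_j e\|'_j \leq \|T_j\|_{L_{\mathrm{c}}}\|e\|_j \leq \|T_j\|_{L_{\mathrm{c}}}(1 + \|e\|_j)$, which is a tame estimate. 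Hence $T$ is tame as an $\overline{\text{sc}}^0$-map.

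For the reverse direction, if $T$ is linear and tame as an $\overline{\text{sc}}^0$-map, then \cref{Proposition_Tame_maps}, \labelcref{Proposition_Tame_maps_2}, together with \cref{Proposition_Strict_envelopes} supplies, for some neighbourhood $V$ of $0$ (we can shift so that $0 \in V$), a strict tame envelope $\mathcal{F} : \mathcal{U} \to \mathbb{E}'$ of $T|_V$ with respect to tamely compatible structures. Because $\mathcal{F}$ is strict, $\phi\inv(V) \subseteq E_\infty$ is dense in each $U_j$, and on this dense subset $f_j \circ \iota^\infty_j$ agrees with the linear map $\iota'^\infty_j \circ T_\infty$. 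The tame estimate $\|f_j(e)\|'_j \leq C_j(1 + \|e\|_j)$ on $U_j$, combined with linearity on a dense open subset containing $0$ and a standard scaling argument, upgrades to a genuine continuous linear bound $\|T_\infty e\|'_j \leq \tilde{C}_j\|e\|_j$ for $e \in \phi\inv(V)$. By density this extends uniquely to a continuous linear operator $T_j : E_j \to E'_j$, the $(T_j)_{j \in \N_0}$ assemble into a continuous linear operator $\mathbb{T}$ between the tame sc-chains, and this is the required tame weak morphism extension of $T_\infty$. (This is the route already used in \cref{Lemma_Morphism_equals_sc0}, augmented by the tameness bookkeeping.)

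For part (2), \cref{Proposition_Tame_maps}, \labelcref{Proposition_Tame_maps_3}, reduces both the $\overline{\text{sc}}^0$ and the $\underline{\text{sc}}^0$ cases to the same assertion. So assume $f$ and $f'$ are tame $\overline{\text{sc}}^0$-maps. Given $x \in U$, pick a neighbourhood $V' \subseteq U'$ of $f(x)$ with a tame envelope $\mathfrak{F}' = ((\tilde{\mathbb{E}}', \tilde{\phi}'), (\mathbb{E}'', \phi''), \mathcal{F}' : \mathcal{U}' \to \mathbb{E}'')$ of $f'|_{V'}$ and a neighbourhood $V \subseteq U$ of $x$ with a tame envelope $\mathfrak{F} = ((\mathbb{E}, \phi), (\mathbb{E}', \phi'), \mathcal{F} : \mathcal{U} \to \mathbb{E}')$ of $f|_V$. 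Exactly as in the proof of \cref{Proposition_Composition_sc_continuous}, \labelcref{Proposition_Composition_sc_continuous_3}, use tame equivalence of $(\mathbb{E}',\phi')$ with $(\tilde{\mathbb{E}}',\tilde{\phi}')$ to find a tame extension $\mathbb{K} : \mathbb{E}'^{\mathbf{k}} \to \tilde{\mathbb{E}}'$ and replace $\mathfrak{F}$, $\mathfrak{F}'$ by $\mathfrak{F}^{\mathbf{k}}$ and $\mathcal{F}' \circ \mathbb{K}$ so that the middle $\overline{\text{sc}}$-structures coincide; after a further restriction to strict envelopes and a shrinking of $V$ so that $f_j(\tilde{U}_j) \subseteq U'_j$ for all $j$, the composed envelope $(f'_j \circ f_j|_{\tilde{U}_j})_{j \in \N_0}$ is a well-defined envelope of $f' \circ f$ on a neighbourhood of $x$.

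Tameness of the composed envelope is the routine arithmetic $\|f'_j(f_j(e))\|''_j \leq C'_j(1 + \|f_j(e)\|'_j) \leq C'_j(1 + C_j(1 + \|e\|_j)) \leq \tilde{C}_j(1 + \|e\|_j)$, and tameness of all the sc-chains involved is preserved by tame rescalings via \cref{Lemma_Tame_sequence}. The only mildly delicate point in the entire argument is ensuring in part~(1) that the tame estimate combined with linearity on a dense subset really does produce a continuous linear extension with the correct norms — everything else is bookkeeping.
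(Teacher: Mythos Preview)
Your proof is correct. The paper itself does not give an argument here at all: it simply cites \cite{MR656198}, Chapter~II, Theorems~2.1.5 and~2.1.6, for parts~(1) and~(2) respectively. What you have done is spell out those results in the language of this paper (envelopes, tame rescalings, sc-chains) rather than relying on the translation between Hamilton's graded Fr{\'e}chet spaces and the present framework being implicit. Your route for part~(1) via \cref{Lemma_Morphism_equals_sc0} plus the scaling argument, and for part~(2) via the envelope-composition machinery of \cref{Proposition_Composition_sc_continuous} with the elementary estimate $C'_j(1 + C_j(1 + \|e\|_j)) \leq \tilde{C}_j(1 + \|e\|_j)$, is exactly what underlies Hamilton's proofs; so the two approaches agree in substance, with yours being the self-contained version.
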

\begin{proof}
\begin{enumerate}[label=\arabic*.,ref=\arabic*.]
  \item \cite{MR656198}, Chapter II, Theorem 2.1.5.
  \item \cite{MR656198}, Chapter II, Theorem 2.1.6.
\end{enumerate}
\end{proof}

\begin{definition}
Let $E$ and $E'$ be pre-tame $\overline{\text{sc}}$-Fr{\'e}chet spaces, let $U \subseteq E$ be an open subset, let $f : U \to E'$ be a map and let $k \in \N_0$.
\begin{enumerate}[label=\arabic*.,ref=\arabic*.]
  \item If $f$ is $\overline{\text{sc}}^k$ or p-$\overline{\text{sc}}^k$, then $f$ is called \emph{tame up to order $k$} \iff for every $0 \leq j \leq k$, $D^j f : U\times E^j \to E'$ is tame as an $\overline{\text{sc}}$-continuous map.
  \item If $f$ is $\underline{\text{sc}}^k$ or p-$\underline{\text{sc}}^k$, then $f$ is called \emph{tame up to order $k$} \iff for every $0 \leq j \leq k$, $D^j f : U\times E^j \to E'$ is tame as an $\underline{\text{sc}}$-continuous map.
  \item A tame envelope $((\mathbb{E},\phi), (\mathbb{E}',\phi'), \mathcal{F} : \mathcal{U} \to \mathbb{E}')$ of $f$ that is $\overline{\text{sc}}^k$ is called \emph{tame up to order $k$} \iff $\mathcal{DF} : \mathcal{U}^{\mathbf{1}}\oplus \mathbb{E} \to \mathbb{E}'$ is tame up to order $k-1$.
  \item If $f$ is $\widehat{\text{sc}}^k$, then $f$ is called \emph{tame up to order $k$} \iff $f$ has an envelope that is $\overline{\text{sc}}^k$ and tame up to order $k$.
\end{enumerate}
\end{definition}

\begin{remark}
Note that the condition of tameness for the differentials $D^kf : U\times E^k$, if $f$ is $\underline{\text{sc}}^k$, coincides with the condition of tameness for functions of more than one variable in \cite{MR656198}, p.~142\,f.
\end{remark}

\begin{example}\label{Example_Reparametrisation_Action_III}
I continue \cref{Subsection_Reparametrisation_action} and \cref{Example_Reparametrisation_Action_I,Example_Reparametrisation_Action_II}. \\
In the notation from this Subsection, $\Psi : \Gamma_B(F_1) \to \Gamma_B(F_2)$ is tame up to arbitrary order:
By \cref{Example_Reparametrisation_Action_II}, $\Psi$ is $\widehat{\text{sc}}^\infty$.
That $\Psi$ is tame follows at once from the claim in the proof of \cref{Proposition_Reparametrisation_action_cts}.
That $\Psi$ is tame to arbitrary order follows by a simple induction, repeatedly applying \cref{Proposition_Reparametrisation_action_diffble} and the claim in the proof of \cref{Proposition_Reparametrisation_action_cts} to the terms on the right hand side of the equations in \cref{Example_Reparametrisation_Action_II} for $\Psi$ and $D\Psi$.
\end{example}

\begin{proposition}\label{Proposition_Underline_sc_implies_overline_sc}
Let $E$ and $E'$ be pre-tame $\overline{\text{sc}}$-Fr{\'e}chet spaces, let $U \subseteq E$ be an open subset, let $f : U \to E'$ be a map and let $k \in \N_0$ with $k \geq 1$.
\begin{enumerate}[label=\arabic*.,ref=\arabic*.]
  \item\label{Proposition_Underline_sc_implies_overline_sc_1} If $f$ is $\underline{\text{sc}}^k$ and tame up to order $k$, then $f$ is $\overline{\text{sc}}^{k-1}$ and tame up to order $k-1$.
  \item\label{Proposition_Underline_sc_implies_overline_sc_2} If $f$ is p-$\underline{\text{sc}}^k$ and tame up to order $k$, then $f$ is p-$\overline{\text{sc}}^{k-1}$ and tame up to order $k-1$.
\end{enumerate}
\end{proposition}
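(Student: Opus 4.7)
I will outline the plan for part \labelcref{Proposition_Underline_sc_implies_overline_sc_1}; part \labelcref{Proposition_Underline_sc_implies_overline_sc_2} will follow by the same argument, replacing weak Fr{\'e}chet differentiability with its pointwise variant throughout and invoking the pointwise chain rule from \cite{MR656198}, Part I, Theorem 3.3.4, instead of \cref{Theorem_Chain_rule_I}.

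Fix $x \in U$. Using \cref{Lemma_sc0_well_defined} and \cref{Proposition_Tame_maps}, I would first choose a neighbourhood $V \subseteq U$ of $x$ and tamely compatible $\overline{\text{sc}}$-structures $(\mathbb{E},\phi)$ and $(\mathbb{E}',\phi')$ on $E$ and $E'$, respectively, and a tame shift $\mathbf{k}$ such that, writing $\tilde{f} \definedas \phi'^{-1}\circ f|_V\circ \phi$ and $\tilde{U} \definedas \phi\inv(V)$, the maps
\[
D^j\tilde{f} : (E^{\mathbf{k}}_\infty, (\iota^{\mathbf{k}})^\infty_i{}^\ast\|\cdot\|^{\mathbf{k}}_i) \times (E^{\mathbf{k}}_\infty, (\iota^{\mathbf{k}})^\infty_i{}^\ast\|\cdot\|^{\mathbf{k}}_i)^j \supseteq \tilde{U}\times E^j_\infty \to (E'_\infty, \iota'^\infty_i{}^\ast\|\cdot\|'_i)
\]
are all continuous and satisfy tame estimates, simultaneously for $0 \leq j \leq k$ and all $i \in \N_0$. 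Shrinking $\tilde{U}$ so that its closure in the $\|\cdot\|^{\mathbf{k}}_0$-norm is contained in a convex bounded subset of the domain, I would then appeal to the mean value theorem on the line segment $x + t(y-x)$, $t\in[0,1]$, together with the $\underline{\text{sc}}^1$ hypothesis, to obtain for each $i$ a Lipschitz estimate of the form
\[
\|\tilde{f}(x) - \tilde{f}(y)\|'_i \leq L_i\,\|x-y\|^{\mathbf{k}}_i \qquad \forall\, x,y\in \tilde{U},
\]
with $L_i$ coming from the local $\|\cdot\|^{\mathbf{k}}_i$-boundedness of $D\tilde{f}$, which in turn is provided by the tame estimate on $D\tilde{f}$.

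With this Lipschitz estimate in hand, the next step is to extend. Since $E^{\mathbf{k}}_\infty \subseteq E^{\mathbf{k}}_i$ is dense, each Lipschitz map $\tilde{f} : (E^{\mathbf{k}}_\infty, \|\cdot\|^{\mathbf{k}}_i|_{E^{\mathbf{k}}_\infty}) \supseteq \tilde{U} \to (E'_i, \|\cdot\|'_i)$ obtained by composing with $\iota'^\infty_i$ has a unique continuous extension $f_i : U_i \to E'_i$ on a suitable open set $U_i \subseteq E^{\mathbf{k}}_i$ (one may take $U_i$ to be the $\|\cdot\|^{\mathbf{k}}_i$-interior of the closure of $\tilde{U}$). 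Since the $f_i$ agree on the dense subset $\iota^\infty_i(\tilde{U})$ with compositions of $\tilde{f}$ and continuous inclusions, the compatibility relations $\iota'_i\circ f_{i+1} = f_i\circ \iota_i|_{U_{i+1}}$ are automatic by density, and the family $\mathcal{U} = (U_i)_{i\in \N_0}$ together with $(f_i)_{i\in\N_0}$ defines an envelope $\mathcal{F} : \mathcal{U} \to \mathbb{E}'$ of $\tilde f$. Restricting to a suitable sub-envelope (using \cref{Proposition_Strict_envelopes}) one may furthermore assume it strict. The tame estimates on $\tilde f$ pass to each $f_i$ by continuity, so $\mathcal{F}$ is tame.

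Repeating this extension procedure for $D\tilde{f}$, now using the tame Lipschitz estimate coming from the $\underline{\text{sc}}^2$ data, produces in the same way a tame envelope $\mathcal{DF}$ of $D\tilde{f}$. The key point I would verify is that for each $i$, the extension $f_i : U_i \to E'_i$ is weakly continuously weakly Fr{\'e}chet differentiable \emph{along} $\iota_i : U_{i+1} \to U_i$, with differential given precisely by the corresponding component of $\mathcal{DF}$. This reduces to showing that the remainder function $r^{f_i}_x$ converges to zero on pulling back by $\iota_i$; since on the dense subset $\iota^\infty_i(E^{\mathbf{k}}_\infty)$ the remainder agrees with the corresponding remainder for $\tilde f$, and since $\mathcal{DF}_i$ is by construction continuous on $U_{i+1}\times E_i$, this follows from a uniform-continuity argument in the spirit of \cref{Lemma_Strong_Gateaux_implies_Frechet_II}. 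Iterating this construction $k-1$ times then yields that $\mathcal{F}$ is $\overline{\text{sc}}^{k-1}$ and tame up to order $k-1$, which by definition of $\widehat{\text{sc}}^{k-1}$ combined with \cref{Lemma_sc_diffble_implies_weakly_sc_diffble} proves the claim.

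The main obstacle is the extension-plus-differentiability step: upgrading the local Lipschitz bound (which only gives continuous extensions $f_i$) to the statement that these extensions are actually differentiable along $\iota_i$ with the expected differential. It is here that one full order of differentiability is consumed—$D^{j+1}\tilde{f}$ is needed to control the remainder of $f_j$—which is the structural reason only $k-1$ orders survive.
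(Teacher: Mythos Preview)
Your core idea—mean value theorem plus tame estimates on $Df$ to get level-wise Lipschitz bounds, then extend by density—is exactly what the paper does. Where you diverge is in what you try to prove about the extensions. You aim to build a $\widehat{\text{sc}}^{k-1}$ envelope, i.e.\ to show each extended map $f_i$ is weakly Fr{\'e}chet differentiable along $\iota_i$, and then descend via \cref{Lemma_sc_diffble_implies_weakly_sc_diffble}. The paper instead observes that the recursive definition of $\overline{\text{sc}}^{k-1}$ never asks for any envelope to be differentiable: it only asks that $f$ be weakly Fr{\'e}chet differentiable \emph{as a map between the limit Fr{\'e}chet spaces} (already contained in the $\underline{\text{sc}}^1$ hypothesis) and that each $D^jf$ separately be $\overline{\text{sc}}^0$. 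So by induction it suffices to prove the single base case ``$f$ p-$\underline{\text{sc}}^1$ and tame to order $1$ $\Rightarrow$ $f$ $\overline{\text{sc}}^0$ and tame'', which is exactly your Lipschitz-extension argument and nothing more. (The paper also cites \cite{MR656198}, Part II, Lemma 2.1.7, to pass from the raw tame estimate on $Df$ to the bilinear-type bound $\|Df(x)u\|'_j \leq c_j(1+\|x\|_j)\|u\|_j$ before integrating.)

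The practical upshot is that what you flag as ``the main obstacle''—upgrading continuous extensions to differentiable ones along $\iota_i$—simply evaporates. This matters because your sketch of that step is the least convincing part of the proposal: agreement of remainders on a dense set plus continuity of the candidate differential does not by itself give continuity of $r^{f_i}_x$ on all of $V\times[0,1]$, and the appeal to \cref{Lemma_Strong_Gateaux_implies_Frechet_II} (which uses compactness of an embedding, not density) is not quite the right tool. Your route could likely be made to work with more care, but the paper's induction makes it unnecessary.
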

\begin{proof}
By induction and \cref{Lemma_sc_diffble_implies_weakly_sc_diffble}, it suffices to show the implication $f$ p-$\underline{\text{sc}}^1$ and tame up to order $1$ $\Rightarrow$ $f$ $\overline{\text{sc}}^0$ and tame. \\
So let $f$ be p-$\underline{\text{sc}}^1$ and tame up to order $1$ and let $x_0 \in U$.
In the following, as usual, I will drop the inclusions in the sc-chains from the notation. \\
Now an easy argument with rescaling and the definition of tameness shows that one can choose a neighbourhood $V \subseteq U$ of $x_0$ and compatible tame $\overline{\text{sc}}$-structures $(\mathbb{E}, \phi)$ and $(\mathbb{E}', \phi')$ on $E$ and $E'$, respectively, together with a strict envelope $\mathcal{V} \subseteq \mathbb{E}$ of $V_\infty \definedas \phi\inv(V)$ \st $g \definedas \phi'\circ f\circ \phi\inv|_{V_\infty} : V_\infty \to E'_\infty$ is tame and so is $Dg : V_\infty\times E_\infty \to E'_\infty$.
Because $\mathcal{V}$ is strict, one can assume w.\,l.\,o.\,g., after shrinking $V$, that $V_k = \{x \in E_k \;|\; \|x\|_0 \leq c\}$ for some $c > 0$.
In particular, $V_\infty$ is dense in $V_k$ for all $k \in \N_0$. \\
By definition there then exist constants $C_k, D_k > 0$, for $k \in \N_0$, \st
\begin{align*}
\|g(x)\|'_k &\leq C_k(1 + \|x\|_k) \\
\|Dg(x)u\|'_k &\leq D_k(1 + \|x\|_k + \|u\|_k)
\end{align*}
for all $x \in V_\infty$, $u \in E_\infty$ and $k \in \N_0$. \\
Now by \cite{MR656198}, Part II, Lemma 2.1.7, there exist constants $c_k > 0$, for $k \in \N_0$, \st
\begin{align*}
\|Dg(x)u\|'_k &\leq c_k(\|x\|_k\|u\|_0 + \|u\|_k) \\
&\leq c_k(1 + \|x\|_k)\|u\|_k
\end{align*}
for all $x \in V_\infty$, $u \in E_\infty$ and $k \in \N_0$. \\
For $x,y\in V_\infty$, because
\begin{align*}
g(y) - g(x) &= \int_0^1 Dg(x + t(y-x))(y-x)\,\d t\text{,}
\intertext{it hence follows that}
\|g(y) - g(x)\|_k &\leq \int_0^1 \|Dg(x+t(y-x))(y-x)\|'_k\, \d t \\
&\leq \int_0^1 c_k(1 + \|x + t(y-x)\|_k)\|y-x\|_k\, \d t \\
&\leq c_k(1 + \|x\|_k + \|y\|_k)\|y-x\|_k\text{.}
\end{align*}
So $g : (V_\infty, \|\cdot\|_k) \to (E'_\infty, \|\cdot\|'_k)$ is locally uniformly continuous and hence there exists a unique continuous continuation $g_k : V_k \to E'_k$ of $g$. \\
The $g_k$ form the desired envelope $\mathcal{G} : \mathcal{V} \to \mathbb{E}'$ of $g$ and hence $((\mathbb{E}, \phi), (\mathbb{E}', \phi'), \mathcal{G} : \mathcal{V} \to \mathbb{E}')$ forms an envelope of $f|_V$.
Now because $g$ is tame, so is the envelope just constructed (\cf \cref{Proposition_Tame_maps}, \labelcref{Proposition_Tame_maps_3}).
\end{proof}

\Needspace{25\baselineskip}
\subsection{The Nash-Moser inverse function theorem}

\begin{definition}\label{Definition_Family_of_morphisms}
Let $E$, $E'$ and $E^i$, $i=1,\dots,3$, be $\overline{\text{sc}}$-Fr{\'e}chet spaces, and let $U \subseteq E$ and $U' \subseteq E'$ be open subsets.
\begin{enumerate}[label=\arabic*.,ref=\arabic*.]
  \item An \emph{$\overline{\text{sc}}^0$ family (over $U$) of morphisms (from $E^1$ to $E^2$)} is an $\overline{\text{sc}}^0$ map
\begin{align*}
\phi : U\times E^1 &\to E^2 \\
(x,u) &\mapsto \phi(x)u
\end{align*}
that is linear in the second factor. \\
By abuse of notation, given a morphism $F : E^1 \to E^2$, the constant family
\begin{align*}
U\times E^1 &\to E^2 \\
(x,u) &\mapsto F(u)
\end{align*}
will be denoted by $F$ again (and will always be $\overline{\text{sc}}^0$).
  \item Given $\overline{\text{sc}}^0$ families of morphisms $\phi : U\times E^1 \to E^2$ and $\psi : U\times E^2 \to E^3$, their \emph{composition} is the $\overline{\text{sc}}^0$ family of morphisms
\begin{align*}
\psi\circ\phi : U\times E^1 &\to E^3 \\
(x,u) &\mapsto \psi(x)\phi(x)u\text{.}
\end{align*}
  \item An $\overline{\text{sc}}^0$ family of morphisms $\phi : U\times E^1 \to E^2$ is called \emph{invertible} if there exists an $\overline{\text{sc}}^0$ family of morphisms $\psi : U\times E^2 \to E^1$ with $\psi\circ \phi = \id_{E^1}$ and $\phi\circ \psi = \id_{E^2}$.
$\psi$ will then be called the \emph{inverse} to $\phi$ and denoted by $\phi\inv \definedas \psi$.
  \item Given an $\overline{\text{sc}}^0$ family of morphisms $\phi : U\times E^1 \to E^2$ and an $\overline{\text{sc}}^0$ map $f : U' \to U$, the \emph{pullback} of $\phi$ by $f$ is the $\overline{\text{sc}}^0$ family of morphisms $f^\ast \phi \definedas \phi\circ (f\times \id_{E^1})$, \ie
\begin{align*}
f^\ast \phi : U' \times E^1 &\to E^2 \\
(x,u) &\mapsto \phi(f(x))u\text{.}
\end{align*}
\end{enumerate}
And analogously for $\phi$ $\underline{\text{sc}}^0$ and/or tame.
\end{definition}

\begin{remark}
If $\phi : U\times E^1 \to E^2$ is an $\overline{\text{sc}}^0$ family of morphisms, then for every $x \in U$, $\phi(x) : E^1 \to E^2$ is a morphism of $\overline{\text{sc}}$-Fr{\'e}chet spaces.
For by assumption it is a linear map and as the composition of $\phi$ with the inclusion $E^1 \to U\times E^1$, $e \mapsto (x,e)$ it is $\overline{\text{sc}}^0$, hence $\underline{\text{sc}}^0$.
So the claim follows from \cref{Lemma_Morphism_equals_sc0}.
\end{remark}

\begin{theorem}[The Nash-Moser inverse function theorem]\label{Theorem_Nash_Moser}
Let $E$ and $E'$ be pre-tame $\overline{\text{sc}}$-Fr{\'e}chet spaces, let $U \subseteq E$ be an open subset and let $f : U\to E'$ be a map that is (p-)$\underline{\text{sc}}^k$ and tame up to order $k$ for some $k\in \N_0\cup\{\infty\}$ with $k \geq 2$. \\
If $E$ is weakly tame and $Df : U\times E \to E'$ is invertible with an inverse $\psi : U\times E' \to E$ that is (p-)$\underline{\text{sc}}^0$ and tame (up to order $0$), then $f$ is locally invertible and each local inverse is (p-)$\underline{\text{sc}}^k$ and tame up to order $k$. \\
I.\,e.~for each $x \in U$ there exists an open neighbourhood $V \subseteq U$ of $x$ \st $f(V) \subseteq E'$ is an open neighbourhood of $f(x)$ and there exists a map $g : f(V) \to V$ that is \mbox{(p-)$\underline{\text{sc}}^k$} and tame up to order $k$, and that satisfies $g\circ f|_V = \id_V$ and $f|_V\circ g = \id_{f(V)}$.
Also, $Dg = g^\ast\psi$.
\end{theorem}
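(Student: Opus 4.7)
The plan is to reduce the statement to Hamilton's classical Nash-Moser inverse function theorem (\cite{MR656198}, Part III) applied to a realization of $f$ as a map between open subsets of the Fréchet spaces $E_\infty$ and $E'_\infty$. First I would fix a point $x_0 \in U$ and choose tamely compatible $\overline{\text{sc}}$-structures $(\mathbb{E},\phi)$ on $E$ and $(\mathbb{E}',\phi')$ on $E'$ such that $\mathbb{E}$ admits smoothing operators (using \cref{Lemma_Weakly_tame_ILB_chain} together with weak tameness of $E$ to pass to a tame rescaling if necessary). On a small enough neighborhood $V \subseteq U$ of $x_0$, the map $f_\infty \definedas \phi'^{-1}\circ f|_V \circ \phi : V_\infty \to E'_\infty$, with $V_\infty \definedas \phi^{-1}(V)$, is continuous between Fréchet spaces, and by \cref{Lemma_sc0_well_defined} together with the hypotheses and \cref{Proposition_Tame_maps} one can arrange (after a further tame rescaling if required) that every iterated differential $D^j f_\infty$, $0 \leq j \leq k$, is continuous and satisfies a tame estimate in every norm $\|\cdot\|_i|_{E_\infty}$.

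Next I would verify the hypotheses of Hamilton's theorem. The assumption that $f$ is $\underline{\text{sc}}^k$ and tame up to order $k$, combined with \cref{Definition_Tame_nonlinear_map_scFrechet_spaces} and \cref{Definition_sck}, precisely captures the iterated continuity and tame estimates that make $f_\infty$ a tame $\mathcal{C}^k$-map in Hamilton's sense between graded Fréchet spaces (as strengthened by \cite{MR546504}). The invertibility of $Df : U\times E \to E'$ with a tame $\underline{\text{sc}}^0$ inverse $\psi : U\times E' \to E$ translates, after the same coordinate change, to the statement that the family of linearizations $Df_\infty(y)$, $y \in V_\infty$, admits a tame family of two-sided inverses $\psi_\infty$, which is exactly the hypothesis of \cite{MR656198}, Part III, Theorem 1.1.1.

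I would then invoke Hamilton's theorem to produce an open neighborhood $V_\infty \subseteq E_\infty$ of $\phi^{-1}(x_0)$ such that $f_\infty(V_\infty)$ is open in $E'_\infty$ and there is a tame $\mathcal{C}^k$ inverse $g_\infty : f_\infty(V_\infty) \to V_\infty$. Transporting back via $\phi$ and $\phi'$ produces the required local inverse $g : f(V) \to V$, where $V \definedas \phi(V_\infty)$. The formula $Dg = g^\ast\psi$ is forced by the chain rule (\cref{Theorem_Chain_rule_sc_differentiable}) applied to the identity $g\circ f = \id_V$, and iterating this identity expresses each $D^j g$ polynomially in the derivatives $D^i f$ (at $g(\cdot)$) and the inverse family $\psi$. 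That $g$ is $\underline{\text{sc}}^k$ and tame up to order $k$ then follows from the corresponding properties of $f$ and $\psi$ together with \cref{Proposition_Standard_properties_tame}.

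The main obstacle is the bookkeeping of rescalings and tame estimates required to combine the three inputs—$f$, the inverse $\psi$ of $Df$, and the smoothing operators on $\mathbb{E}$—into a single setting in which Hamilton's theorem applies. Taking derivatives and composing with $\psi$ forces shifts in the rescaling indices, so the tame rescaling of $\mathbb{E}$ must be chosen large enough that every finite loss of derivative controlled by the tame exponents of $f$, $\psi$ and the smoothing estimates fits inside a fixed envelope. For the p-$\underline{\text{sc}}^k$ case the same reduction works unchanged, except that one invokes the weakly continuous pointwise chain rule of \cite{MR656198}, Part I, Theorem 3.3.4, in place of \cref{Theorem_Chain_rule_I} when propagating the differentiability statement from $f_\infty$ to $g_\infty$.
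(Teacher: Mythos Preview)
Your reduction to Hamilton's theorem in \cite{MR656198}, Part III, Theorem 1.1.1, does not go through as stated. Hamilton's theorem requires the source space to be \emph{tame} in his sense --- a direct summand of some $\Sigma(X)$ --- whereas the hypothesis here is only that $E$ is \emph{weakly tame}, meaning that some tame rescaling of a compatible sc-chain admits smoothing operators in the sense of \cref{Definition_Smoothing_operators}. These are not the same: weak tameness is strictly weaker, and the paper stresses this distinction at the start of its proof. Hamilton's surjectivity argument makes explicit use of the embedding into $\Sigma(X)$, not merely of smoothing operators, so you cannot simply ``invoke Hamilton's theorem'' once you have arranged tame estimates for $f_\infty$, $D^jf_\infty$, and $\psi_\infty$.

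You gesture toward the alternative in \cite{MR546504}, which does work from smoothing operators alone, but that source assumes the constant $p$ in the smoothing estimates of \cref{Definition_Smoothing_operators} is zero, and it does not establish tameness of the local inverse (the rescaling $\sigma(n,b)$ produced there is non-tame). Allowing $p>0$ is essential here, since \cref{Lemma_Weakly_tame_ILB_chain} shows that tame rescalings and equivalences preserve weak tameness only with a possibly nonzero $p$; forcing $p=0$ would break the coordinate-independence of the entire framework. The paper therefore does not reduce to either existing theorem but instead runs a modified Newton iteration directly (\cref{Lemma_NM_surjectivity_1,Lemma_NM_surjectivity_2,Proposition_Nash_Moser_surjectivity}), carefully tracking the $p$-dependent exponents in the smoothing estimates so that the iteration still converges and the resulting inverse satisfies a tame estimate with respect to an explicit shift $\mathbf{m}$ depending only on $p$ and $k_0$. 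Injectivity and the derivative formula are handled separately via Taylor estimates (\cref{Proposition_Nash_Moser_injectivity}). Your bookkeeping remarks about rescalings are in the right spirit, but the substantive analytic work --- adapting the Newton scheme to nonzero $p$ and extracting tameness of $g$ --- is missing from your plan.
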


\Needspace{15\baselineskip}
\subsubsection{The proof of the Nash-Moser inverse function theorem}

There are two main differences between \cref{Theorem_Nash_Moser} and \cite{MR656198}, Part III, Theorem 1.1.1:
In \cref{Theorem_Nash_Moser} it is only assumed that we are dealing with weakly tame $\overline{\text{sc}}$-Fr{\'e}chet spaces instead of tame $\overline{\text{sc}}$-Fr{\'e}chet spaces and furthermore, we are also dealing with maps of class $\underline{\text{sc}}^k$ instead of only maps of class p-$\underline{\text{sc}}^k$. \\
Now the proof of the Nash-Moser inverse function theorem in \cite{MR656198} (or the nearly identical one in \cite{MR1471480}), while also using the existence of smoothing operators, makes explicit use of the tameness condition, especially in the proof of local surjectivity.
While there are alternative sources for a proof using only existence of smoothing operators, such as \cite{MR546504}, these use a definition of smoothing operators which requires the constant $p$ from \cref{Definition_Smoothing_operators} to be zero.
But allowing $p > 0$ is crucial for the notion of an ILB-chain to admit smoothing operators to be well behaved under tame rescalings and equivalences, \cf \cref{Lemma_Weakly_tame_ILB_chain}.
The proof in \cite{MR546504} also does not show tameness of the inverse map, \cf the definition of the quantity $\sigma(n,b)$ in \cite{MR546504}, Lemma 4, which defines a non-tame rescaling.

As a first step, observe that it only needs to be shown that in case $k = 2$, around each point $x \in U$ there exists a neighbourhood $V \subseteq U$ of $x$ and a local inverse $g : f(V) \to V$ with $Dg = g^\ast\psi$ that is $\underline{\text{sc}}^1$ (p-$\underline{\text{sc}}^1$) and tame.
For the cases of $k \geq 2$ then follow from \cref{Theorem_Chain_rule_sc_differentiable,Lemma_Invertible_families,Proposition_Standard_properties_tame}.

\begin{lemma}\label{Lemma_Invertible_families}
Let $E$ and $E^i$, $i=1,2$, be $\overline{\text{sc}}$-Fr{\'e}chet spaces, and let $U \subseteq E$ be an open subset.
Let furthermore $\phi : U\times E^1 \to E^2$ be an invertible ($\overline{\text{sc}}^0$ or $\underline{\text{sc}}^0$) family of morphisms and let $k\in\N_0\cup \{\infty\}$.
\begin{enumerate}[label=\arabic*.,ref=\arabic*.]
  \item\label{Lemma_Invertible_families_1} If $\phi$ is $\overline{\text{sc}}^k$ and $\phi\inv$ is $\overline{\text{sc}}^0$, then $\phi\inv$ is $\overline{\text{sc}}^k$.
  \item\label{Lemma_Invertible_families_2} If $\phi$ is $\underline{\text{sc}}^k$ and $\phi\inv$ is $\underline{\text{sc}}^0$, then $\phi\inv$ is $\underline{\text{sc}}^k$.
  \item\label{Lemma_Invertible_families_3} If $\phi$ is p-$\overline{\text{sc}}^k$ and $\phi\inv$ is $\overline{\text{sc}}^0$, then $\phi\inv$ is p-$\overline{\text{sc}}^k$.
  \item\label{Lemma_Invertible_families_4} If $\phi$ is p-$\underline{\text{sc}}^k$ and $\phi\inv$ is $\underline{\text{sc}}^0$, then $\phi\inv$ is p-$\underline{\text{sc}}^k$.
\end{enumerate}
In all of these cases, if $E$ and $E^i$, $i=1,2$, are pre-tame $\overline{\text{sc}}$-Fr{\'e}chet spaces, $\phi$ is tame up to order $k$ and $\phi\inv$ is tame (up to order $0$), then $\phi\inv$ is tame up to order $k$.
\end{lemma}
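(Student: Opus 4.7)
My plan is to induct on $k$, handling the four differentiability classes ($\overline{\text{sc}}^k$, $\underline{\text{sc}}^k$, p-$\overline{\text{sc}}^k$, p-$\underline{\text{sc}}^k$) in parallel and with the tame versions running alongside. The base case $k=0$ is the hypothesis. For the inductive step I would show that if $\phi$ is of class $\overline{\text{sc}}^{k+1}$ (resp.\ $\underline{\text{sc}}^{k+1}$, p-$\overline{\text{sc}}^{k+1}$, p-$\underline{\text{sc}}^{k+1}$, and tame up to order $k+1$) and $\phi\inv$ is already known to be of class $\overline{\text{sc}}^{k}$ (resp.\ the matching class, and tame up to order $k$), then $\phi\inv$ is of class $\overline{\text{sc}}^{k+1}$ (resp.\ tame up to order $k+1$). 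This reduces the lemma to establishing differentiability one level at a time together with a standard chain-rule bookkeeping.

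The central computation is an explicit formula for the differential of $\phi\inv$. Starting from the identity $\phi(x+tu)\phi\inv(x+tu) = \id = \phi(x)\phi\inv(x)$, one derives the standard finite-difference identity
\begin{equation*}
\phi\inv(x+tu)(v) - \phi\inv(x)(v) = -\phi\inv(x+tu)\Bigl[\bigl(\phi(x+tu) - \phi(x)\bigr)\phi\inv(x)(v)\Bigr]\text{.}
\end{equation*}
Dividing by $t$ and letting $t \to 0$, the weak Fr{\'e}chet differentiability of $\phi$ together with the continuity of $\phi\inv$ from the hypothesis on $\phi\inv$ yields the formula
\begin{equation*}
D\phi\inv(x,v)(\xi,\eta) = \phi\inv(x)\eta - \phi\inv(x)\bigl(D_1\phi(x,\phi\inv(x)v)\xi\bigr),
\end{equation*}
where $D_1\phi(x,u)\xi$ denotes the restriction of $D\phi(x,u)(\xi,\cdot)$ to $E\oplus\{0\}\subseteq E\oplus E^1$.

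Once this formula is established, the inductive step is purely formal. The right-hand side is built out of $\phi\inv$ (of class $k$ by induction), $D_1\phi$ (of class $k$ because $\phi$ is of class $k+1$), plus evaluations of families of morphisms on vectors. By \cref{Theorem_Chain_rule_sc_differentiable} these compositions produce a map of class $k$ in the relevant differentiability sense, so $D\phi\inv$ is of class $k$ and hence $\phi\inv$ is of class $k+1$. Tameness propagates by the same mechanism: \cref{Proposition_Standard_properties_tame} states that tame maps are closed under composition, so $D\phi\inv$ is tame up to order $k$ and therefore $\phi\inv$ is tame up to order $k+1$.

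The main obstacle is the verification of the first-order case, where the passage from the finite-difference identity to the limit must be carried out in the correct topology for each of the four differentiability notions. For the $\overline{\text{sc}}$-classes I would pass to strict envelopes of $\phi$ and $\phi\inv$ over a common neighbourhood via \cref{Proposition_Strict_envelopes}, and verify continuity of the rest function $r^{\phi\inv}_{(x,v)}$ at each sc-level by plugging the level-$k$ envelopes into the finite-difference identity; the bookkeeping uses \cref{Theorem_Chain_rule_I} together with the characterisations in \cref{Proposition_Characterisation_Differentiability_II}. For the $\underline{\text{sc}}$-classes the same identity is exploited but at the level of the seminorms $(\iota^\infty_k)^\ast\|\cdot\|_k$ on $E_\infty$ as in \cref{Definition_sc_continuous}. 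In all four cases the limit is legitimate because, after applying the envelope at an appropriate level, evaluation of $\phi\inv$ at the nearby point $x+tu$ is (uniformly) controlled by the continuity of $\phi\inv$ as a map between $\overline{\text{sc}}$-Fr{\'e}chet spaces, so the term $\phi\inv(x+tu)$ converges to $\phi\inv(x)$ as $t\to 0$ and may be combined with the (weakly) Fr{\'e}chet derivative of $\phi$ to produce the stated formula.
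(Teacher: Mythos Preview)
Your proposal is correct and follows essentially the same route as the paper: derive the finite-difference identity, pass to the limit to obtain the explicit formula $D\phi\inv(x,v)(\xi,\eta) = \phi\inv(x)\bigl(\eta - D\phi(x,\phi\inv(x)v)(\xi,0)\bigr)$, and then induct using the chain rule (\cref{Theorem_Chain_rule_sc_differentiable}) and \cref{Proposition_Standard_properties_tame}. The paper's treatment of the base case is slightly more direct than yours: rather than passing to strict envelopes and verifying continuity of the rest function level by level, it writes out the remainder $r^{\phi\inv}_{(x,u)}((e,v),t)$ explicitly as $(\phi\inv(x+te)-\phi\inv(x))(v - D\phi(x,\phi\inv(x)u)(e,0)) - \phi\inv(x+te)\,r^\phi_{(x,\phi\inv(x)u)}((e,0),t)$ and reads off (pointwise) weak Fr{\'e}chet differentiability at the Fr{\'e}chet level directly; the $\overline{\text{sc}}^0$ or $\underline{\text{sc}}^0$ property of $D\phi\inv$ then follows from the formula by the chain rule, so no envelope bookkeeping is needed.
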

\begin{proof}
It will be shown that in all cases,
\begin{align*}
D\phi\inv : (U\times E^2)\times (E\times E^2) &\to E^1 \\
D\phi\inv(x,u)(e,v) &= \phi\inv(x)(v - D\phi(x, \phi\inv(x)u)(e, 0))\text{.}
\end{align*}
\labelcref{Lemma_Invertible_families_3,Lemma_Invertible_families_4}~ would actually follow from \cite{MR656198}, Part I, Theorem 5.3.1, and Part II, Theorem 3.1.1, in conjunction with \cref{Proposition_Composition_sc_continuous}, but will be explicitely shown below. \\
First, observe that once the above formula for $D\phi\inv$ has been shown, it suffices to consider the case $k = 1$, for the other cases then follow by induction, using \cref{Theorem_Chain_rule_sc_differentiable,Proposition_Standard_properties_tame}. \\
So let $((x,u),(e,v)) \in (U\times E^2)\times (E\times E^2)$ and $t \in (0,\infty)$.
Then
\begin{align*}
\phi\inv(x+te)(u+tv) - \phi\inv(x)u &= \phi\inv(x+te)u - \phi\inv(x)u + t\phi\inv(x+te)v \\
&= \phi\inv(x+te)(u - \phi(x+te)\phi\inv(x)u + tv) \\
&= \phi\inv(x+te)(tv - (\phi(x+te)- \phi(x))\phi\inv(x)u) \\
&= \phi\inv(x+te)(tv - tD\phi(x, \phi\inv(x)u)(e,0) \; \\
&\quad\; -\; t\,r^\phi_{(x, \phi\inv(x)u)}((e,0),t)) \\
&= t\phi\inv(x)(v - D\phi(x, \phi\inv(x)u)(e,0)) \;+ \\
&\quad\; +\; tr^{\phi\inv}_{(x,u)}((e,v),t)\text{,}
\end{align*}
where
\begin{align*}
r^{\phi\inv}_{(x,u)}((e,v),t) &= (\phi\inv(x+te) - \phi\inv(x))(v - D\phi(x, \phi\inv(x)u)(e,0)) \;- \\
&\quad\; -\; r^\phi_{(x, \phi\inv(x)u)}((e,0),t)\text{.}
\end{align*}
From this it follows by definition that $\phi\inv$ is (pointwise) weakly Fr{\'e}chet differentiable provided that $\phi$ is and provided that $\phi\inv$ is weakly continuous (which just means that it is a continuous map $U\times E^2 \to E^1$). \\
The above formula for $D\phi\inv$ then also shows (by \cref{Theorem_Chain_rule_sc_differentiable}) that $D\phi\inv$ is $\overline{\text{sc}}^0$ or $\underline{\text{sc}}^0$, provided that $\phi\inv$ and $D\phi$ are, and (by \cref{Proposition_Standard_properties_tame}) that $D\phi\inv$ is tame, provided that $\phi\inv$ and $D\phi$ are.
\end{proof}

Now assume that the assumptions of \cref{Theorem_Nash_Moser} hold.
I.\,e.~let $E$ and $E'$ be pre-tame $\overline{\text{sc}}$-Fr{\'e}chet spaces with $E$ weakly tame, let $U \subseteq E$ be an open subset and let $f : U \to E'$ be a map that is (p-)$\underline{\text{sc}}^2$ and tame up to order $2$.
Furthermore, assume that $Df : U\times E \to E'$ is invertible with inverse $\psi : U\times E' \to E$ that is (p-)$\underline{\text{sc}}^0$ and tame.
Given $x \in U$ one has to show that there exists an open neighbourhood $V \subseteq U$ \st $f(V) \subseteq E'$ is an open neighbourhood of $f(x)$ and there exists a map $g : f(V) \to V$ which is $\underline{\text{sc}}^1$ and tame, and satisfies $g\circ f|_V = \id_V$ and $f|_V\circ g = \id_{f(V)}$. \\
As usual in this type of proof, one can assume w.\,l.\,o.\,g.~that $x = 0$ and $f(x) = 0$.
For if $x \neq 0$ or $f(x) \neq 0$, then replace $f$ by
\begin{align*}
\tilde{f} : E \supseteq U - x &\to E' \\
y &\mapsto f(x + y) - f(x)\text{.}
\end{align*}

Moving to a concrete choice of sc-chain on $E$, directly from the definitions (\cref{Definition_sc_continuous,Definition_sc1,Definition_sck,Definition_Tame_estimate,Definition_Tame_nonlinear_map}), and using (the analogue in the tame context of) \cref{Lemma_Tame_sequence}, \labelcref{Lemma_Tame_sequence_5}, \cref{Lemma_sc0_well_defined,Proposition_Tame_maps}, there exists a neighbourhood $U' \subseteq U$ of $U$ and a compatible $\overline{\text{sc}}$-structure $(\mathbb{E}, \phi)$ on $E$ that admits smoothing operators as well as a compatible $\overline{\text{sc}}$-structure $(\mathbb{E}', \phi')$ on $E'$, \st the following holds: \\
By abuse of notation, for $j \in \N_0$, denote the norm $(\iota^\infty_j)^\ast \|\cdot\|_j : E_\infty \to [0,\infty)$ by $\|\cdot\|_j$ again and similarly for $\|\cdot\|'_j : E'_\infty \to [0,\infty)$.
And similarly if $\mathbf{k} \subseteq \N_0$ is a strictly monotone increasing sequence, then $E^{\mathbf{k}}_\infty = E_\infty$ and we have the norms $\|\cdot\|^{\mathbf{k}}_j = \|\cdot\|_{k_0+j} : E_\infty \to [0,\infty)$ and $\|\cdot\|'^{\mathbf{k}}_{j} = \|\cdot\|'_{k_0+j} : E'_\infty \to [0,\infty)$. \\
Define
\begin{align*}
V' &\definedas \phi\inv(U') \subseteq E_\infty \\
\tilde{f} &\definedas \phi'^{-1}\circ f|_{U'}\circ \phi : V' \to E'_\infty \\
\tilde{\psi} &\definedas \phi\inv\circ \psi|_{U'\times E}\circ (\phi\times \phi') : V' \times E'_\infty \to E_\infty\text{.}
\end{align*}
Then there exist shifts $\mathbf{k}, \mathbf{l}, \mathbf{m}, \mathbf{n} \subseteq \N_0$ with the following properties:
\begin{enumerate}
  \item $V' \subseteq (E_\infty, \|\cdot\|_j)$ is open for all $j \in \N_0$.
Equivalently, $V' \subseteq (E_\infty, \|\cdot\|_0)$ is open.
  \item $\tilde{f} : (E_\infty, \|\cdot\|^{\mathbf{n}}_j) \supseteq V' \to (E'_\infty, \|\cdot\|'_j)$ is continuous and satisfies a tame estimate for all $j \in \N_0$.
  \item $D\tilde{f} : (E_\infty\times E_\infty, \|\cdot\|^{\mathbf{l}}_j \oplus \|\cdot\|^{\mathbf{l}}_j) \supseteq V' \times E_\infty \to (E'_\infty, \|\cdot\|'_j)$ is continuous and satisfies a tame estimate for all $j\in \N_0$.
  \item $D^2\tilde{f} : (E_\infty\times E_\infty\times E_\infty, \|\cdot\|^{\mathbf{m}}_j \oplus \|\cdot\|^{\mathbf{m}}_j \oplus \|\cdot\|^{\mathbf{m}}_j) \supseteq V' \times E_\infty\times E_\infty \to (E'_\infty, \|\cdot\|'_j)$ is continuous and satisfies a tame estimate for all $j\in \N_0$.
  \item $\tilde{\psi} : (E_\infty\times E'_\infty, \|\cdot\|^{\mathbf{k}}_j \oplus \|\cdot\|'^{\mathbf{k}}_j) \supseteq V' \times E'_\infty \to (E_\infty, \|\cdot\|_j)$ is continuous and satisfies a tame estimate for all $j\in \N_0$.
\end{enumerate}
Also, w.\,l.\,o.\,g.~one can assume that $V'$ is convex.
Now $\mathbf{n}\circ \mathbf{l}\circ \mathbf{m} \geq \mathbf{n},\mathbf{l},\mathbf{m}$, so one can replace $\mathbf{n}$, $\mathbf{l}$ and $\mathbf{m}$ by $\mathbf{n}\circ \mathbf{l}\circ \mathbf{m}$, \ie one can assume that $\mathbf{n} = \mathbf{l} = \mathbf{m}$.
Assuming this to hold, by replacing $\mathbb{E}$ by $\mathbb{E}^{\mathbf{n}}$ and replacing $\mathbf{k}$ by $\mathbf{k}\circ \mathbf{n}$ one can furthermore assume that $\mathbf{n} = \mathbf{l} = \mathbf{m} = \mathbf{id} = (j)_{j\in\N_0}$.

In the first part of the proof below I will follow the proof from \cite{MR656198} very closely, only modifying the proofs where necessary to accommodate for the changed assumptions in \cref{Theorem_Nash_Moser}. \\
First, due the (bi-)linearity of $D\tilde{f}$, $\tilde{\psi}$ and $D^2\tilde{f}$ in the second (and third) factor, one has the following variant of the tame estimates:
\begin{lemma}\label{Lemma_Modified_tame_estimates}
In the notation as above, there exist constants $\delta > 0$, $a_j,b_j, c_j, d_j \in [0,\infty)$, for $j \in \N_0$, \st
\begin{align*}
\|\tilde{f}(x)\|'_j &\leq a_j\|x\|_j \\
\|D\tilde{f}(x)e\|'_j &\leq b_j\left(\|x\|_j\|e\|_0 + \|e\|_j\right) \\
\|D^2\tilde{f}(x)(e,\tilde{e})\|'_j &\leq c_j\left(\|x\|_j\|e\|_0\|\tilde{e}\|_0 + \|e\|_j\|\tilde{e}\|_0 + \|e\|_0\|\tilde{e}\|_j\right) \\
\|\tilde{\psi}(x)e'\|_j &\leq d_j\left(\|x\|^{\mathbf{k}}_j\|e'\|'^{\mathbf{k}}_0 + \|e'\|'^{\mathbf{k}}_j\right)
\end{align*}
for all $x \in E_\infty$ with $\|x\|^{\mathbf{k}}_0 < \delta$, $e,\tilde{e} \in E_\infty$, $e' \in E'_\infty$, and $j\in\N_0$.
\end{lemma}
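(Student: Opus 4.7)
The plan is to start from the standard additive tame estimates
\begin{align*}
\|\tilde{f}(x)\|'_j &\leq A_j(1+\|x\|_j)\text{,} \\
\|D\tilde{f}(x)e\|'_j &\leq B_j(1+\|x\|_j+\|e\|_j)\text{,}
\end{align*}
and analogously for $D^2\tilde{f}$ and $\tilde\psi$ (the latter in the rescaled norms $\|\cdot\|^{\mathbf{k}}_j$, $\|\cdot\|'^{\mathbf{k}}_j$), which follow directly from the conditions listed just before the lemma after the reductions $\mathbf{n}=\mathbf{l}=\mathbf{m}=\mathbf{id}$ carried out there. The strategy is then to convert the additive form into the multiplicative interpolation form of the conclusion by exploiting (bi)linearity of the differentials and of $\tilde\psi$ in their fibre arguments, and separately for $\tilde{f}$ itself by invoking the normalisation $\tilde{f}(0)=0$.

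For $D\tilde{f}$ I would use the standard Hamilton-style rescaling argument: fix $(x,e)\in V'\times E_\infty$ with $e\neq 0$ (the case $e=0$ being trivial, and since $\iota^\infty_0$ is injective, $e=0$ is equivalent to $\|e\|_0=0$), replace $e$ by $\lambda e$ for $\lambda>0$ in the raw estimate, and use linearity in the fibre to obtain
\[
\|D\tilde{f}(x)e\|'_j \leq B_j\bigl(\lambda^{-1}+\lambda^{-1}\|x\|_j+\|e\|_j\bigr)\text{.}
\]
Setting $\lambda \definedas 1/\|e\|_0$ then produces $\|D\tilde{f}(x)e\|'_j \leq B_j(\|e\|_0+\|x\|_j\|e\|_0+\|e\|_j)$, and absorbing the first summand into the last via $\|e\|_0 \leq \|e\|_j$ (monotonicity of the chain norms) yields the desired $b_j$. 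The same trick, with both $\lambda=1/\|e\|_0$ and $\mu=1/\|\tilde e\|_0$ applied to the bilinear $D^2\tilde{f}$, and a single rescaling applied to the linear $\tilde\psi$, will dispose of the second and fourth inequalities in identical fashion (for $\tilde\psi$ the extra $\mathbf{k}$-shift is already built into the raw estimate).

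For the estimate on $\tilde{f}$ itself, linearity is unavailable, but the normalisation $\tilde{f}(0)=0$ from the reduction at the start of the proof, together with convexity of $V'$, gives the representation $\tilde{f}(x) = \int_0^1 D\tilde{f}(tx)\,x\,\d t$. Plugging in the improved estimate on $D\tilde{f}$ then yields $\|\tilde{f}(x)\|'_j \leq b_j(\|x\|_j\|x\|_0+\|x\|_j)$, and choosing $\delta \leq 1$ small enough that $\|x\|^{\mathbf{k}}_0<\delta$ forces $\|x\|_0<1$ (using $\|x\|_0\leq\|x\|^{\mathbf{k}}_0$) finally gives $\|\tilde{f}(x)\|'_j \leq 2b_j\|x\|_j$, so $a_j\definedas 2b_j$ works. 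There is no real obstacle beyond careful bookkeeping of constants and shrinking $\delta$ a finite number of times so that all four estimates hold simultaneously on the same neighbourhood of $0$.
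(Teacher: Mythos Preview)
Your proof is correct and follows essentially the same approach as the paper. The paper simply cites Hamilton's interpolation lemmas (Part II, Lemmas 2.1.7 and 2.1.8 in \cite{MR656198}) for the $D\tilde f$, $D^2\tilde f$ and $\tilde\psi$ estimates, whereas you spell out the standard rescaling proof of those lemmas; for the first inequality the paper just says ``this is the tameness condition for $\tilde f$'', while you give a more explicit derivation via $\tilde f(0)=0$ and the integral representation, which is arguably the cleaner justification.
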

\begin{proof}
The first inequality is just the tameness condition for $\tilde{f}$.
For the remaining ones apply \cite{MR656198}, Part II, Lemmas 2.1.7 and 2.1.8, to $L = D\tilde{f}$ and $b = r = s = 0$, $B = D^2\tilde{f}$ and $b = r = s = t = 0$, and $L = \tilde{\psi}$ and $b = 0$, $r = s = k_0$.
\end{proof}

The following proposition settles local injectivity of $\tilde{f}$ in a neighbourhood of $0$ and will also provide the main ingredient in the formula for the derivative of the local inverse, once surjectivity of $\tilde{f}$ onto a neighbourhood of $0$ has been shown.
\begin{proposition}\label{Proposition_Nash_Moser_injectivity}
In the setting as above, there exist constants $\delta > 0$, $c'_j, c''_j \in [0,\infty)$, for $j \in \N_0$, \st
\[
\|y - x\|_j \leq c'_j\bigl(\bigl(\|x\|^{\mathbf{k}}_j + \|y\|^{\mathbf{k}}_{j}\bigr)\bigl\|\tilde{f}(y) - \tilde{f}(x)\bigr\|'^{\mathbf{k}}_0 + \bigl\|\tilde{f}(y) - \tilde{f}(x)\bigr\|'^{\mathbf{k}}_j\bigr)
\]
and
\begin{align*}
\bigl\|y - x - \tilde{\psi}(x)\bigl(\tilde{f}(y) - \tilde{f}(x)\bigr)\bigr\|_j &\leq c''_j\bigl(1 + \|x\|^{\mathbf{k}\circ\mathbf{k}}_0 + \|y\|^{\mathbf{k}\circ\mathbf{k}}_0\bigr) \,\cdot \\
&\quad\; \cdot\, \bigl(\bigl(\|x\|^{\mathbf{k}\circ\mathbf{k}}_j + \|y\|^{\mathbf{k}\circ\mathbf{k}}_{j}\bigr)\bigl\|\tilde{f}(y) - \tilde{f}(x)\bigr\|'^{\mathbf{k}\circ\mathbf{k}}_0 \;+ \\
&\quad\; +\; \bigl\|\tilde{f}(y) - \tilde{f}(x)\bigr\|'^{\mathbf{k}\circ\mathbf{k}}_j\bigr)\bigl\|\tilde{f}(y) - \tilde{f}(x)\bigr\|'^{\mathbf{k}}_0
\end{align*}
for all $j \in \N_0$ and $x,y \in E_\infty$ with $\|x\|^{\mathbf{k}}_0, \|y\|^{\mathbf{k}}_0 < \delta$.
\end{proposition}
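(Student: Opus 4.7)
The plan is to derive a key integral identity, then prove the two inequalities in sequence, the first by a bootstrap that treats the quadratic remainder as a perturbation, the second by feeding the first back into the same identity.

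First I would set up the key identity. Since $\tilde\psi(x)$ is the inverse of $D\tilde f(x)$, one has $y-x = \tilde\psi(x)D\tilde f(x)(y-x)$, while the fundamental theorem of calculus applied first to $\tilde f$ along the segment $[x,y]$ (which lies in $V'$ by convexity) and then to $D\tilde f$ yields
\begin{align*}
(y-x) - \tilde\psi(x)\bigl(\tilde f(y)-\tilde f(x)\bigr)
&= \int_0^1 \tilde\psi(x)\bigl[D\tilde f(x) - D\tilde f(x + t(y-x))\bigr](y-x)\,\d t \\
&= -\int_0^1\!\!\int_0^t \tilde\psi(x)\, D^2\tilde f\bigl(x+s(y-x)\bigr)(y-x, y-x)\,\d s\,\d t\text{.}
\end{align*}
Call the last double integral $R(x,y)$. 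From here everything is a bookkeeping of norms driven by \cref{Lemma_Modified_tame_estimates}.

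The first inequality will be proved in two steps. \emph{Step~1 (case $j=0$, bootstrap):} apply $\|\cdot\|_0$ to $y-x = \tilde\psi(x)(\tilde f(y)-\tilde f(x)) - R(x,y)$. The first summand is bounded by the tame estimate for $\tilde\psi$, yielding $d_0(1+\|x\|^{\mathbf{k}}_0)\|\tilde f(y)-\tilde f(x)\|'^{\mathbf{k}}_0$. For $R(x,y)$ apply the tame estimate for $\tilde\psi$ followed by the tame estimate for $D^2\tilde f$ at level $k_0$; using $\|x+s(y-x)\|_{k_0}\leq \|x\|^{\mathbf{k}}_0+\|y\|^{\mathbf{k}}_0<2\delta$, $\|y-x\|_{k_0}\leq \|x\|^{\mathbf{k}}_0+\|y\|^{\mathbf{k}}_0<2\delta$, and $\|y-x\|_0<2\delta<1$, this bounds $\|R(x,y)\|_0$ by a constant multiple of $\delta\cdot\|y-x\|_0$. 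Choosing $\delta$ so small that this constant is at most $\tfrac12$ allows to absorb $\|R(x,y)\|_0$ into the left hand side, giving the desired $j=0$ bound $\|y-x\|_0\leq C_0\|\tilde f(y)-\tilde f(x)\|'^{\mathbf{k}}_0$. \emph{Step~2 (general $j$):} repeat the argument with $\|\cdot\|_j$ in place of $\|\cdot\|_0$. The $\tilde\psi$-term yields the advertised $d_j(\|x\|^{\mathbf{k}}_j\|\tilde f(y)-\tilde f(x)\|'^{\mathbf{k}}_0+\|\tilde f(y)-\tilde f(x)\|'^{\mathbf{k}}_j)$. In $\|R(x,y)\|_j$ every factor $\|y-x\|_0$ appearing through the $D^2\tilde f$-estimate is now replaced via Step~1, and $\|y-x\|^{\mathbf{k}}_j$, $\|z\|^{\mathbf{k}}_j$ are bounded by $\|x\|^{\mathbf{k}}_j+\|y\|^{\mathbf{k}}_j$; the result absorbs into the claimed form of the first inequality.

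For the second inequality one has $\|y-x-\tilde\psi(x)(\tilde f(y)-\tilde f(x))\|_j = \|R(x,y)\|_j$ exactly. Applying the tame estimate for $\tilde\psi$ to the integrand gives
\[
\|R(x,y)\|_j \leq \tfrac{1}{2}d_j\bigl(\|x\|^{\mathbf{k}}_j\,\|D^2\tilde f(z)(y-x,y-x)\|'^{\mathbf{k}}_0 + \|D^2\tilde f(z)(y-x,y-x)\|'^{\mathbf{k}}_j\bigr)\text{,}
\]
with $z=x+s(y-x)$ and the maximum over $s\in[0,1]$ understood. Using the tame estimate for $D^2\tilde f$ at levels $k_0$ and $k_0+j$ and $\|z\|_{k_0+j}\leq \|x\|^{\mathbf{k}}_j+\|y\|^{\mathbf{k}}_j$ produces terms of the form $(\|x\|^{\mathbf{k}}_{\ast}+\|y\|^{\mathbf{k}}_{\ast})\|y-x\|_0^2$ and $\|y-x\|^{\mathbf{k}}_{\ast}\|y-x\|_0$. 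Now substitute the first inequality three times: once at level $j'=0$ to bound one factor $\|y-x\|_0$ (producing the outer factor $\|\tilde f(y)-\tilde f(x)\|'^{\mathbf{k}}_0$), and once each at levels $j'=k_0$ and $j'=k_0+j$ to bound $\|y-x\|^{\mathbf{k}}_0=\|y-x\|_{k_0}$ and $\|y-x\|^{\mathbf{k}}_j=\|y-x\|_{k_0+j}$. The latter two substitutions introduce precisely the second shift $\mathbf{k}\circ\mathbf{k}$ in the norms on $x$, $y$ and $\tilde f(y)-\tilde f(x)$, and (after estimating $\|x\|^{\mathbf{k}}_0\leq\|x\|^{\mathbf{k}\circ\mathbf{k}}_0$ etc.\ and collecting the $1$'s coming from $\|\tilde\psi\|$) the product rearranges into the stated form $c''_j(1+\|x\|^{\mathbf{k}\circ\mathbf{k}}_0+\|y\|^{\mathbf{k}\circ\mathbf{k}}_0)\cdot(\cdots)\cdot\|\tilde f(y)-\tilde f(x)\|'^{\mathbf{k}}_0$.

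The technical heart of the argument is the bootstrap in Step~1: one must verify that under the weakened hypothesis ``$\underline{\text{sc}}^2$ and tame up to order $2$'' (as opposed to the tame $\mathcal{C}^2$ setting of \cite{MR656198}) the quadratic error term can still be controlled \emph{uniformly in the scale index}, so that a single choice of $\delta$ works simultaneously for $j=0$ and enables all subsequent $j>0$ estimates. Once this is in hand, the remainder of the proof is purely combinatorial juggling of the tame estimates in \cref{Lemma_Modified_tame_estimates} and a careful tracking of which shift ($\mathbf{k}$ or $\mathbf{k}\circ\mathbf{k}$) appears where.
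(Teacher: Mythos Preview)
Your proposal is correct and follows essentially the same approach as the paper: the identity you derive via the double integral is exactly the paper's Taylor remainder $\beta(x,y)$ (after Fubini, $\int_0^1\!\int_0^t\,\d s\,\d t=\int_0^1(1-s)\,\d s$), and your two-step bootstrap for the first inequality followed by substitution back into $\|R\|_j$ for the second is precisely what the paper does. One small clarification on your closing remark: the absorption only happens at $j=0$, so $\delta$ is fixed once there; no uniformity in the scale index is actually needed, since for $j>0$ the term $\|y-x\|^{\mathbf{k}}_j$ is handled by the triangle inequality rather than absorption.
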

\begin{proof}
This is an amalgamation of \cite{MR656198}, Part III, Theorem 1.3.1 and Corollaries 1.3.2 and 1.3.3. \\
Following the layout of the proof there, by Taylor's theorem, for $x, y \in V'$,
\[
\tilde{f}(y) = \tilde{f}(x) + D\tilde{f}(x)(y-x) + \int_0^1(1-t)D^2\tilde{f}((1-t)x + ty)(y-x, y-x)\, \d t
\]
and hence, using $\tilde{\psi}(x)D\tilde{f}(x) = \id_{E_\infty}$,
\[
y-x = \underbrace{\tilde{\psi}(x)\bigl(\tilde{f}(y) - \tilde{f}(x)\bigr)}_{\defines\;\alpha(x,y)} - \underbrace{\tilde{\psi}(x)\int_0^1(1-t)D^2\tilde{f}((1-t)x + ty)(y-x,y-x)\, \d t}_{\defines\;\beta(x,y)}\text{.}
\]
Applying \cref{Lemma_Modified_tame_estimates}, for $j \in \N_0$,
\begin{align}
\|\alpha(x,y)\|_j &= \bigl\|\tilde{\psi}(x)\bigl(\tilde{f}(y) - \tilde{f}(x)\bigr)\bigr\|_j \nonumber\\
&\leq d_j\bigl(\|x\|^{\mathbf{k}}_j\bigl\|\tilde{f}(y) - \tilde{f}(x)\bigr\|'^{\mathbf{k}}_0 + \bigl\|\tilde{f}(y) - \tilde{f}(x)\bigr\|'^{\mathbf{k}}_j\bigr)\text{.}\label{Eqn_4}
\end{align}
Similarly, applying \cref{Lemma_Modified_tame_estimates} twice, for $j \in \N_0$,
{\allowdisplaybreaks
\begin{align*}
\|\beta(x,y)\|_j &= \left\|\tilde{\psi}(x)\int_0^1(1-t)D^2\tilde{f}((1-t)x + ty)(y-x,y-x)\, \d t\right\|_j \\
&\leq d_j\biggl(\|x\|^{\mathbf{k}}_j\left\| \int_0^1(1-t)D^2\tilde{f}((1-t)x + ty)(y-x,y-x)\, \d t \right\|'^{\mathbf{k}}_0 \;+ \\
&\quad\; \qquad\;\; +\; \left\| \int_0^1(1-t)D^2\tilde{f}((1-t)x + ty)(y-x,y-x)\, \d t \right\|'^{\mathbf{k}}_j\biggr) \\
&\leq d_j\biggl(\|x\|^{\mathbf{k}}_j \int_0^1(1-t)\bigl\|D^2\tilde{f}((1-t)x + ty)(y-x,y-x)\|'_{k_0}\, \d t \;+ \\
&\quad\; \qquad\;\; +\; \int_0^1(1-t)\bigl\|D^2\tilde{f}((1-t)x + ty)(y-x,y-x)\bigr\|'_{k_0+j}\, \d t \biggr) \\
&\leq d_j\biggl(\|x\|^{\mathbf{k}}_j \int_0^1(1-t)c_{k_0}(\|(1-t)x + ty\|_{k_0}(\|y-x\|_0)^2 \;+ \\
&\qquad\qquad\qquad\quad +\; 2\|y-x\|_{k_0}\|y-x\|_0)\, \d t \;+ \\
&\quad\; \qquad\;\; +\; \int_0^1(1-t)c_{k_0+j}(\|(1-t)x + ty\|_{k_0+j}(\|y-x\|_0)^2 \;+ \\
&\qquad\qquad\qquad\quad +\; 2\|y-x\|_{k_0+j}\|y-x\|_0)\, \d t \biggr) \\
&\leq d_j\biggl((c_{k_0} + c_{k_0+j} + 2)\bigl(1 + \|x\|^{\mathbf{k}}_0 + \|y\|^{\mathbf{k}}_{0}\bigr)\bigl(\|x\|^{\mathbf{k}}_j + \|y\|^{\mathbf{k}}_{j}\bigr) \,\cdot \\
&\qquad\quad \cdot\, \|y-x\|^{\mathbf{k}}_{0}\|y-x\|_0 + 2\|y-x\|^{\mathbf{k}}_{j}\|y-x\|_0 \biggr) \\
&\leq \underbrace{d_j(c_{k_0} + c_{k_0+j} + 2)(1 + 2\delta)}_{\defines \; d'_j}\,\cdot \\
&\quad\; \cdot\,\bigl(\bigl(\|x\|^{\mathbf{k}}_j + \|y\|^{\mathbf{k}}_{j}\bigr)\|y-x\|^{\mathbf{k}}_{0} + \|y-x\|^{\mathbf{k}}_{j}\bigr)\|y-x\|_0\text{.}
\end{align*}
}
So
\begin{equation}\label{Eqn_5}
\|\beta(x,y)\|_j \leq d'_j\bigl(\bigl(\|x\|^{\mathbf{k}}_j + \|y\|^{\mathbf{k}}_{j}\bigr)\|y-x\|^{\mathbf{k}}_{0} + \|y-x\|^{\mathbf{k}}_{j}\bigr)\|y-x\|_0\text{.}
\end{equation}
Applying this for $j = 0$, then given any $\delta > 0$, for all $x,y \in V'$ with $\|x\|^{\mathbf{k}}_0, \|y\|^{\mathbf{k}}_0 < \delta$ one has the estimates
\begin{align*}
\|\alpha(x,y)\|_0 &\leq d_0(1 + \delta)\bigl\|\tilde{f}(y) - \tilde{f}(x)\bigr\|'^{\mathbf{k}}_0 \\
\|\beta(x,y)\|_0 &\leq d'_0(1+2\delta)\|y-x\|^{\mathbf{k}}_0\|y-x\|_0 \\
&\leq d'_0(1+2\delta)2\delta\|y-x\|_0\text{.}
\end{align*}
And hence from $\|y-x\|_0 - \|\beta(x,y)\|_0 \leq \|\alpha(x,y)\|_0$ it follows that
\begin{align*}
\left(1 - 2d'_0(1+2\delta)\delta\right)\|y-x\|_0 \leq d_0(1 + \delta)\bigl\|\tilde{f}(y) - \tilde{f}(x)\bigr\|'^{\mathbf{k}}_0\text{.}
\end{align*}
Choosing $\delta > 0$ \st $\delta < 1$ and $\delta < \frac{1}{12d'_0}$, this implies that
\begin{align}
\|y-x\|_0 &\leq 4d_0\bigl\|\tilde{f}(y) - \tilde{f}(x)\bigr\|'^{\mathbf{k}}_0\text{.}\label{Eqn_8}
\end{align}
This shows the first estimate in the statment of the proposition, in case $j = 0$ with $c_0 \definedas 4d_0$.
From now on, $\delta$ is fixed, satisfying the above assumptions.
For general $j \in \N_0$ one can then combine this with the above estimate for $\|\beta(x,y)\|_j$ to obtain
\begin{align*}
\|\beta(x,y)\|_j &\leq 4d_0d'_j(1+2\delta)\bigl(\|x\|^{\mathbf{k}}_j + \|y\|^{\mathbf{k}}_{j}\bigr)\bigl\|\tilde{f}(y) - \tilde{f}(x)\bigr\|'^{\mathbf{k}}_0\text{.}
\end{align*}
Setting $c'_j \definedas (1+d_j)(1 + 4d_0d'_j(1+2\delta))$, combining this inequality with \labelcref{Eqn_4} shows that
\begin{align}
\|y-x\|_j &\leq \|\alpha(x,y)\|_j + \|\beta(x,y)\|_j \nonumber\\
&\leq c'_j\bigl(\bigl(\|x\|^{\mathbf{k}}_j + \|y\|^{\mathbf{k}}_{j}\bigr)\bigl\|\tilde{f}(y) - \tilde{f}(x)\bigr\|'^{\mathbf{k}}_0 + \bigl\|\tilde{f}(y) - \tilde{f}(x)\bigr\|'^{\mathbf{k}}_j\bigr)\label{Eqn_7}
\end{align}
for all $x,y\in V'$ with $\|x\|^{\mathbf{k}}_0,\|y\|^{\mathbf{k}}_0 < \delta$.
This shows the first part of the proposition.
For the second part, note that
\[
\bigl\|y - x - \tilde{\psi}(x)\bigl(\tilde{f}(y) - \tilde{f}(x)\bigr)\bigr\|_j = \|\beta(x,y)\|_j
\]
and combining \labelcref{Eqn_5}, \labelcref{Eqn_8} and \labelcref{Eqn_7} produces
\begin{align*}
\|\beta(x,y)\|_j &\leq 4d_0d'_j\bigl(\bigl(\|x\|^{\mathbf{k}}_j + \|y\|^{\mathbf{k}}_{j}\bigr)\|y-x\|^{\mathbf{k}}_{0} + \|y-x\|^{\mathbf{k}}_{j}\bigr)\bigl\|\tilde{f}(y) - \tilde{f}(x)\bigr\|'^{\mathbf{k}}_0 \\
&\leq 4d_0d'_j\bigl(\bigl(\|x\|^{\mathbf{k}}_j + \|y\|^{\mathbf{k}}_{j}\bigr)
c'_{k_0}\bigl(\bigl(\|x\|^{\mathbf{k}\circ\mathbf{k}}_0 + \|y\|^{\mathbf{k}\circ\mathbf{k}}_0\bigr)\bigl\|\tilde{f}(y) - \tilde{f}(x)\bigr\|'^{\mathbf{k}}_0 \;+ \\
&\qquad\qquad\qquad\qquad\qquad\qquad\; +\; \bigl\|\tilde{f}(y) - \tilde{f}(x)\bigr\|'^{\mathbf{k}\circ\mathbf{k}}_0\bigr) \;+ \\
&\qquad\qquad\; +\; c'_{k_0+j}\bigl(\bigl(\|x\|^{\mathbf{k}\circ\mathbf{k}}_j + \|y\|^{\mathbf{k}\circ\mathbf{k}}_{j}\bigr)\bigl\|\tilde{f}(y) - \tilde{f}(x)\bigr\|'^{\mathbf{k}}_0 \;+ \\
&\qquad\qquad\qquad\qquad\; +\;\bigl\|\tilde{f}(y) - \tilde{f}(x)\bigr\|'^{\mathbf{k}\circ\mathbf{k}}_j\bigr)
\bigr) \,\cdot \\
&\quad\; \cdot\,\bigl\|\tilde{f}(y) - \tilde{f}(x)\bigr\|'^{\mathbf{k}}_0 \\
&\leq 4d_0d'_j(c'_{k_0} + c'_{k_0+j})\bigl(1 + \|x\|^{\mathbf{k}\circ\mathbf{k}}_0 + \|y\|^{\mathbf{k}\circ\mathbf{k}}_0\bigr) \,\cdot \\
&\quad\; \cdot\, \bigl(\bigl(\|x\|^{\mathbf{k}}_j + \|y\|^{\mathbf{k}}_{j}\bigr)\bigl\|\tilde{f}(y) - \tilde{f}(x)\bigr\|'^{\mathbf{k}\circ\mathbf{k}}_0 \;+ \\
&\quad\; +\; \bigl(\|x\|^{\mathbf{k}\circ\mathbf{k}}_j + \|y\|^{\mathbf{k}\circ\mathbf{k}}_{j}\bigr)\bigl\|\tilde{f}(y) - \tilde{f}(x)\bigr\|'^{\mathbf{k}}_0 \;+ \\
&\quad\; +\; \bigl\|\tilde{f}(y) - \tilde{f}(x)\bigr\|'^{\mathbf{k}\circ\mathbf{k}}_j\bigr)\bigl\|\tilde{f}(y) - \tilde{f}(x)\bigr\|'^{\mathbf{k}}_0 \\
&\leq 8d_0d'_j(c'_{k_0} + c'_{k_0+j})\bigl(1 + \|x\|^{\mathbf{k}\circ\mathbf{k}}_0 + \|y\|^{\mathbf{k}\circ\mathbf{k}}_0\bigr) \,\cdot \\
&\quad\; \cdot\, \bigl(\bigl(\|x\|^{\mathbf{k}\circ\mathbf{k}}_j + \|y\|^{\mathbf{k}\circ\mathbf{k}}_{j}\bigr)\bigl\|\tilde{f}(y) - \tilde{f}(x)\bigr\|'^{\mathbf{k}\circ\mathbf{k}}_0 \;+ \\
&\quad\; +\; \bigl\|\tilde{f}(y) - \tilde{f}(x)\bigr\|'^{\mathbf{k}\circ\mathbf{k}}_j\bigr)\bigl\|\tilde{f}(y) - \tilde{f}(x)\bigr\|'^{\mathbf{k}}_0\text{.}
\end{align*}
Setting $c''_j \definedas 8d_0d'_j(c'_{k_0} + c'_{k_0+j})$ finishes the proof.
\end{proof}
\begin{corollary}\label{Corollary_Nash_Moser_injectivity}
In the setting as above, there exists a shift $\mathbf{l} \subseteq \N_0$ and an open neighbourhood $W \subseteq (E_\infty, \|\cdot\|^{\mathbf{l}}_0)$ of $0$ \st
\[
\tilde{f}|_W : W \to E'_\infty
\]
is injective.
\end{corollary}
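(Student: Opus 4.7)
The plan is to extract local injectivity directly from the first estimate established in \cref{Proposition_Nash_Moser_injectivity} applied with $j = 0$. Let $\delta > 0$ and $c'_0 \in [0,\infty)$ be the constants from that proposition, and take $\mathbf{l} \definedas \mathbf{k}$. I would define
\[
W \definedas \bigl\{x \in V' \,\bigl|\; \|x\|^{\mathbf{k}}_0 < \delta \bigr\}\text{.}
\]
Since $V'$ was chosen at the beginning of the proof of \cref{Theorem_Nash_Moser} to be open in $(E_\infty, \|\cdot\|_j)$ for every $j \in \N_0$ (hence in particular for $j = k_0$, \ie in $(E_\infty, \|\cdot\|^{\mathbf{k}}_0) = (E_\infty, \|\cdot\|^{\mathbf{l}}_0)$), and the ball $\{x \in E_\infty \;|\; \|x\|^{\mathbf{k}}_0 < \delta\}$ is open in the same topology, $W$ is an open neighbourhood of $0$ in $(E_\infty, \|\cdot\|^{\mathbf{l}}_0)$.

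For injectivity, given $x, y \in W$ with $\tilde{f}(x) = \tilde{f}(y)$, both terms $\|\tilde{f}(y) - \tilde{f}(x)\|'^{\mathbf{k}}_0$ and $\|\tilde{f}(y) - \tilde{f}(x)\|'^{\mathbf{k}}_j$ (for $j = 0$) vanish, so the first estimate in \cref{Proposition_Nash_Moser_injectivity} at $j = 0$ yields
\[
\|y - x\|_0 \leq c'_0\bigl(\bigl(\|x\|^{\mathbf{k}}_0 + \|y\|^{\mathbf{k}}_0\bigr) \cdot 0 + 0\bigr) = 0\text{,}
\]
hence $y = x$. Thus $\tilde{f}|_W : W \to E'_\infty$ is injective. (Equivalently, one could invoke the intermediate inequality \labelcref{Eqn_8} from the proof of \cref{Proposition_Nash_Moser_injectivity}, which already gives the cleaner bound $\|y-x\|_0 \leq 4d_0\|\tilde{f}(y) - \tilde{f}(x)\|'^{\mathbf{k}}_0$ valid on all of $W$.)

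There is essentially no obstacle here: the content of the corollary is a direct logical consequence of an estimate that has already been proved, and the only thing to check is that the neighbourhood $W$ is genuinely open in the specified topology, which follows from how $V'$ was set up. The corollary is really just a repackaging of \cref{Proposition_Nash_Moser_injectivity} in a form convenient for the subsequent steps toward constructing the local inverse.
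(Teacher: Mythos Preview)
Your proof is correct and follows essentially the same approach as the paper: the paper also takes $\mathbf{l} = \mathbf{k}$, defines $W$ as the $\delta$-ball about $0$ in $(E_\infty,\|\cdot\|^{\mathbf{k}}_0)$, and reads off injectivity from the $j=0$ case of the first estimate in \cref{Proposition_Nash_Moser_injectivity} (written there in the form $\|y-x\|_0 \leq c'_0(1+2\delta)\|\tilde f(y)-\tilde f(x)\|'^{\mathbf{k}}_0$). Your explicit intersection with $V'$ is a harmless bit of extra care.
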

\begin{proof}
Let $W$ be a ball of radius $\delta$ around zero in $(E_\infty, \|\cdot\|^\mathbf{k}_0)$, where $\delta$ is as in the proposition.
Then for $x,y \in W$,
\[
\|y - x\|_0 \leq c'_0(1+2\delta)\|\tilde{f}(y) - \tilde{f}(x)\|'^{\mathbf{k}}_0\text{.}
\]
\end{proof}

To show local surjectivity of $\tilde{f}$, I will follow the modified Newton's method from \cite{MR546504}.
It is at this point that the condition of $E$ being weakly tame comes into play.
Concretely this means that one can assume that $\mathbb{E}$ admits smoothing operators, \ie by \cref{Definition_Smoothing_operators,Remark_Strongly_cts_family_of_smoothing_operators} one can assume that there exists a continuous map
\begin{align*}
S : [0,\infty) &\to L_{\mathrm{c}}(E_0,E_\infty) \\
t &\mapsto S_t
\end{align*}
and constants $p, C_{n,m} \in [0,\infty)$, $n,m\in\N_0$, \st
\begin{align*}
\|S^{m}_{n,t}\|_{L_{\mathrm{c}}(E_m,E_n)} &\leq C^m_n (1 + e^{(p + (n-m))t}) & &\forall\, m, n \in\N_0, t \geq 0 \\
\|\iota^m_n - S^{m}_{n,t}\|_{L_{\mathrm{c}}(E_m,E_n)} &\leq C^m_n e^{(p - (m-n))t} & &\forall\, m, n \in\N_0, m-n\geq p, t \geq 0\text{,}
\end{align*}
where $S^{m}_{n,t} \definedas \iota^\infty_n\circ S_t\circ \iota^m_0 : E_m \to E_n$. \\
By abuse of notation, I will also denote by $S_t : E_\infty \to E_\infty$ the map $S_t\circ \iota^\infty_0$.
Then the above inequalities read
\begin{align}
\|S_te\|_n &\leq C^m_n (1 + e^{(p + (n-m))t})\|e\|_m & &\forall\, e \in E_\infty, m, n \in\N_0, t \geq 0 \nonumber\\
\|(\id_{E_\infty} - S_t)e\|_n &\leq C^m_n e^{(p - (m-n))t}\|e\|_m & &\forall\, e \in E_\infty, m, n \in\N_0,\label{Smoothing_estimates_std_form} \\
& & & \quad m-n\geq p, t \geq 0\text{.}\nonumber
\end{align}
Or, using \cref{Remark_Alternative_definition_smoothing_operators_II}, these can also be written as
\begin{align}
\|S_te\|_n &\leq \tilde{C}^m_n (1 + e^{(n-m)t})\|e\|^{\mathbf{p}}_{m} & &\forall\, e \in E_\infty, m, n \in\N_0, t \geq 0 \nonumber\\
\|(\id_{E_\infty} - S_t)e\|_n &\leq \tilde{C}^m_n e^{-(m-n)t}\|e\|^{\mathbf{p}}_{m} & &\forall\, e \in E_\infty, m, n \in\N_0,\label{Eqn_sm_op_shifted} \\
& & & \quad m-n\geq 0, t \geq 0\text{,}\nonumber
\end{align}
where $\mathbf{p} = (p + j)_{j\in\N_0}$.

The modified Newton's procedure then goes as follows:
Given $y \in E_\infty$, inductively define a sequence $(x_r)_{r\in\N_0}$ by
\begin{align*}
x_0 &\definedas 0 \\
x_{r+1} &\definedas x_r + \Delta x_r \\
\Delta x_r &\definedas S_{t_r}\tilde{\psi}(x_r)z_r \\
z_r &\definedas y - \tilde{f}(x_r) \\
t_r &\definedas \left(\frac{3}{2}\right)^r\text{.}
\end{align*}
The goal is to show that there exists a shift $\mathbf{l} \in \N_0$ and constants $\tau_j \in [0,\infty)$ \st
\begin{align*}
\|x_r\|^{\mathbf{k}}_j &\leq \tau_j\|y\|'^{\mathbf{l}}_j \\
\|\Delta x_r\|^{\mathbf{k}}_j &\overset{r\to\infty}{\longrightarrow} 0 \\
\|z_r\|'^{\mathbf{k}}_j &\overset{r\to\infty}{\longrightarrow} 0\text{.}
\end{align*}
This shows that the sequence $(x_r)_{r\in\N_0}$ is well defined, for $y \in E_\infty$ with $\|y\|^{\mathbf{l}}_0 < \delta$, where $\delta > 0$ is some constant.
For taking the special case $j = 0$, $\|x_r\|^{\mathbf{k}}_0 \leq \tau_0\delta$, so $x_r \in V'$ for $\delta$ small enough.
$\|\Delta x_r\|^{\mathbf{k}}_j \overset{r\to\infty}{\longrightarrow} 0$ for all $j\in\N_0$ then shows that $(x_r)_{r\in\N_0}$ converges in $(E_\infty, \|\cdot\|^{\mathbf{k}}_j)$ for all $j \in \N_0$, \ie there exists $x \in E_\infty$ with $x_r \to x$ and by definition of $z_r$ and since $\tilde{f}$ is continuous, $\|z_r\|'^{\mathbf{k}}_j \overset{r\to\infty}{\longrightarrow} 0$ implies $\tilde{f}(x) = y$.
So setting $g(y) \definedas x$ defines the required local inverse of $\tilde{f}$, satisfying
\[
\|g(y)\|^{\mathbf{k}}_j \leq \tau_j\|y\|'^{\mathbf{l}}_j\text{,}
\]
\ie $g$ is tame.

This will be proved through a series of lemmas, where the first one, \cref{Lemma_NM_surjectivity_1}, roughly corresponds to Lemma 1 in \cite{MR546504} and the second one, \cref{Lemma_NM_surjectivity_2}, to part of the proof of Lemma 2 in \cite{MR546504}.

The following points might be worth taking note of, since the proof relies quite heavily on them and it is actually quite important to know what each quantity/constant depends on:
\begin{itemize}
  \item The quantity $\rho$ in \cref{Lemma_NM_surjectivity_1,Lemma_NM_surjectivity_2} \emph{only} depends on $p$, the constant coming from the smoothing operators, and $k_0$, the constant defining the shift $\mathbf{k}$ appearing in the tameness condition for $\tilde{\psi}$.
  \item In the estimates for $\|x_r\|^{\mathbf{k}}_j$, $\|\Delta x_r\|^{\mathbf{k}}_j$, $\|z_r\|'^{\mathbf{k}}_j$ and $\|z_{r+1}\|'^{\mathbf{k}}_j$, \emph{only} products of a $\|\cdot\|'^{\mathbf{k}}_j$- and a $\|\cdot\|'^{\mathbf{k}}_0$-norm (such as $\|y\|'^{\mathbf{k}}_j\|z_r\|'^{\mathbf{k}}_0$ or $\bigl(\|z_r\|'^{\mathbf{k}}_0\bigr)^2$) appear but \emph{no} products of two $\|\cdot\|'^{\mathbf{k}}_j$-norms (such as $\bigl(\|z_r\|'^{\mathbf{k}}_j\bigr)^2$).
\end{itemize}

\begin{lemma}\label{Lemma_NM_surjectivity_1}
In the setting as above, given any $\delta_0 > 0$ small enough, there exist for every $j \in \N_0$ constants $\gamma_j,\bar{\gamma}_j \in [1,\infty)$, depending on $\delta_0$, \st if $\|y\|'^{\mathbf{k}}_0 < \delta_0$ and for some $r_0 \in \N_0$ also $\|x_r\|^{\mathbf{k}}_0 < \delta_0$ for $r = 0, \dots, r_0$, then
\begin{align*}
\|x_r\|^{\mathbf{k}}_j &\leq \gamma_je^{\rho t_r}\|y\|'^{\mathbf{k}}_j
\intertext{and}
\|\Delta x_r\|^{\mathbf{k}}_j &\leq \bar{\gamma}_je^{2\rho t_r}\bigl(\|y\|'^{\mathbf{k}}_j\|z_r\|'^{\mathbf{k}}_0 + \|z_r\|'^{\mathbf{k}}_j\bigr)
\intertext{as well as}
\|z_r\|'^{\mathbf{k}}_j &\leq \hat{\gamma}_je^{\rho t_r}\|y\|'^{\mathbf{k}}_j
\end{align*}
for all $r = 0, \dots, r_0+1$, where $\rho \definedas 2(p+k_0) + 1$.
\end{lemma}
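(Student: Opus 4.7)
The plan is to prove all three inequalities simultaneously by strong induction on $r$, with the constants $\gamma_j,\bar\gamma_j,\hat\gamma_j$ determined recursively in $j$ in terms of $\delta_0$, the tame constants $a_j,b_j,c_j,d_j$ of \cref{Lemma_Modified_tame_estimates}, and the smoothing constants $C^m_n,p$ from \labelcref{Smoothing_estimates_std_form}. The base case $r=0$ is immediate for $\|x_0\|^{\mathbf{k}}_j = 0$ and $\|z_0\|'^{\mathbf{k}}_j = \|y\|'^{\mathbf{k}}_j$, whereas the bound on $\Delta x_0 = S_{t_0}\tilde{\psi}(0)y$ follows from a single application of the smoothing estimate \labelcref{Smoothing_estimates_std_form} with a suitably chosen intermediate level combined with the tame estimate for $\tilde{\psi}$ (noting that $t_0=1$ is fixed, so any exponential prefactor is absorbed into $\bar\gamma_j$).

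At the inductive step the third inequality for $r+1$ is the easiest: from $z_{r+1} = y - \tilde{f}(x_{r+1})$, the triangle inequality, and the tame estimate $\|\tilde{f}(x)\|'_j \leq a_j\|x\|_j$ from \cref{Lemma_Modified_tame_estimates}, I obtain
\[
\|z_{r+1}\|'^{\mathbf{k}}_j \leq \|y\|'^{\mathbf{k}}_j + a_{k_0+j}\|x_{r+1}\|^{\mathbf{k}}_j \leq (1 + a_{k_0+j}\gamma_j)e^{\rho t_{r+1}}\|y\|'^{\mathbf{k}}_j,
\]
so the choice $\hat\gamma_j \definedas 1 + a_{k_0+j}\gamma_j$ suffices. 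In particular, at $j=0$, combined with $\|y\|'^{\mathbf{k}}_0 < \delta_0$, this yields a \emph{uniform} bound $\|z_r\|'^{\mathbf{k}}_0 \leq (1+a_{k_0}\gamma_0)\delta_0$ that will be used throughout the rest of the argument.

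The main technical step is the second inequality. Starting from $\Delta x_r = S_{t_r}\tilde{\psi}(x_r)z_r$, I apply the smoothing estimate $\|S_{t_r}u\|_{k_0+j} \leq C(1 + e^{(p+k_0+j-m)t_r})\|u\|_m$ for two carefully chosen intermediate levels. Taking $m=0$ together with the tame estimate for $\tilde{\psi}$ at level $0$ yields $\|\tilde{\psi}(x_r)z_r\|_0 \leq d_0(1+\delta_0)\|z_r\|'^{\mathbf{k}}_0$ and accounts for the $\|y\|'^{\mathbf{k}}_j\|z_r\|'^{\mathbf{k}}_0$ contribution, while taking $m = k_0+j$ and the tame estimate for $\tilde{\psi}$ at level $j$ accounts for the $\|z_r\|'^{\mathbf{k}}_j$ contribution. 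The $\mathbf{k}\circ\mathbf{k}$-shifts arising in the latter are controlled by invoking the inductive hypothesis at the higher index $k_0+j$ for $x_r$, together with the interpolation inequality for ILB-chains admitting smoothing operators (\cite{MR656198}, Part II, Corollary 1.4.2), which is what rewrites high-shift norms of $y$ in terms of $\|y\|'^{\mathbf{k}}_j$ and $\|y\|'^{\mathbf{k}}_0$. A careful bookkeeping of the exponential factors in both estimates gives the required bound with exponent $2\rho t_r$, where $\rho = 2(p+k_0)+1$ is arranged to absorb the $j$-independent contributions from the smoothing and tameness constants.

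Finally the first inequality for $r+1$ is obtained from $x_{r+1} = \sum_{\ell=0}^r \Delta x_\ell$ via the triangle inequality, the just-proved second inequality applied at each $\ell$, the uniform bound on $\|z_\ell\|'^{\mathbf{k}}_0$ noted above, and the super-geometric growth $t_{\ell+1} = (3/2)t_\ell$ which lets the sum be dominated by its last term up to an absolute constant. The main obstacle in the whole proof is precisely the second inequality: a naive combination of tame and smoothing estimates either produces an exponential factor $e^{(p+k_0+j)t_r}$ whose rate depends on $j$ (ruled out by the lemma), or introduces norms $\|y\|'^{\mathbf{k}}_{k_0+j}$ beyond what the lemma allows (per the second bullet in the paragraph preceding the lemma). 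Resolving this tension requires the interpolation inequality, and is the reason $\mathbb{E}$ must be \emph{weakly tame} (so that it admits smoothing operators, and hence interpolation) rather than merely tame in a weaker sense.
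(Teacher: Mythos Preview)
Your inductive scheme does not close. The crux is that you propose to derive the first inequality at step $r+1$ by summing the second inequality over $\ell \le r$, but the second inequality carries a factor $e^{2\rho t_\ell}$ and still contains $\|z_\ell\|'^{\mathbf{k}}_j$, which by your third inequality contributes another $e^{\rho t_\ell}$. This gives $\|\Delta x_\ell\|^{\mathbf{k}}_j \le C_j e^{3\rho t_\ell}\|y\|'^{\mathbf{k}}_j$, and since the sum is dominated by its last term you obtain $\|x_{r+1}\|^{\mathbf{k}}_j \le C'_j e^{3\rho t_r}\|y\|'^{\mathbf{k}}_j$. But the target is $\gamma_j e^{\rho t_{r+1}}\|y\|'^{\mathbf{k}}_j = \gamma_j e^{(3/2)\rho t_r}\|y\|'^{\mathbf{k}}_j$, and $3\rho t_r > (3/2)\rho t_r$ for all $r$, so no choice of $\gamma_j$ independent of $r$ makes this work. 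The logical dependence is in fact the reverse of what you propose: in the paper the second inequality is deduced \emph{from} the first, not used to prove it.

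The paper's actual argument for the first inequality bypasses $z_r$ altogether. Using the smoothing estimate with intermediate level $j-k_0$ (so the loss is exactly $k_0$), one gets $\|\Delta x_r\|_j \le C e^{(p+k_0)t_r}\|\tilde\psi(x_r)z_r\|_{j-k_0}$; the tame estimate for $\tilde\psi$ then introduces a $\mathbf{k}$-shift that exactly cancels this drop, so everything lands at level $j$ with no $\mathbf{k}\circ\mathbf{k}$-shifts and a $j$-independent rate $p+k_0$. After replacing $z_r$ by $\|y\|'_j + a_j\|x_r\|_j$ one gets a \emph{multiplicative} recursion $\|x_{r+1}\|_j + \|y\|'_j \le (\gamma'_j+1)e^{(p+k_0)t_r}(\|x_r\|_j + \|y\|'_j)$, whence $\|x_r\|_j \le (\gamma'_j+1)^r e^{2(p+k_0)t_r}\|y\|'_j$. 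The key point you are missing is that $(\gamma'_j+1)^r = e^{r\ln(\gamma'_j+1)}$ is negligible against $e^{t_r}$ since $t_r=(3/2)^r$ grows super-linearly in $r$; this is precisely where the extra ``$+1$'' in $\rho = 2(p+k_0)+1$ comes from. Your interpolation idea is also misplaced: interpolation bounds intermediate norms by extreme ones and cannot reduce $\|y\|'^{\mathbf{k}}_{k_0+j}$ to $\|y\|'^{\mathbf{k}}_j$ and $\|y\|'^{\mathbf{k}}_0$; moreover only $E$, not $E'$, is assumed weakly tame here, so interpolation for $y$-norms is unavailable. The single-level choice $i=k_0$ in the smoothing estimate is what makes everything line up without any interpolation.
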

\begin{proof}
For any $j \in \N_0$, if $\delta_0$ is small enough, then the assumption $\|x_r\|^{\mathbf{k}}_0 < \delta_0$ implies $x_r \in V'$ and by \cref{Lemma_Modified_tame_estimates} $\|z_r\|'_j = \|y + \tilde{f}(x_r)\|'_j \leq \|y\|'_j + a_j\|x_r\|_j$.
Hence for any $i,j \in \N_0$ with $j \geq i$, using the estimates on the smoothing operators (from \cref{Remark_Alternative_definition_smoothing_operators}) and \cref{Lemma_Modified_tame_estimates},
\begin{align*}
\|\Delta x_r\|_j &= \|S_{t_r}\tilde{\psi}(x_r)z_r\|_j \\
&\leq C^{j-i}_je^{(p+i)t_r}\|\tilde{\psi}(x_r)z_r\|_{j-i} \\
&\leq C^{j-i}_jd_{j-i}e^{(p+i)t_r}\bigl(\|x_r\|^{\mathbf{k}}_{j-i}\|z_r\|'^{\mathbf{k}}_0 + \|z_r\|'^{\mathbf{k}}_{j-i}\bigr) \\
&\leq C^{j-i}_jd_{j-i}e^{(p+i)t_r}\bigl(\|x_r\|^{\mathbf{k}}_{j-i}(\|y\|'^{\mathbf{k}}_0 + a_{k_0}\|x_r\|^{\mathbf{k}}_0) \;+ \\
&\qquad\qquad\qquad\qquad\quad +\; \|y\|'^{\mathbf{k}}_{j-i} + a_{j+k_0-i}\|x_r\|^{\mathbf{k}}_{j-i}\bigr) \\
&\leq C^{j-i}_jd_{j-i}(1+a_{k_0})(1+a_{j+k_0-i})(1 + \|y\|'^{\mathbf{k}}_0 + \|x_r\|^{\mathbf{k}}_0) \,\cdot \\
&\quad\; \cdot\, e^{(p+i)t_r}\bigl(\|x_r\|^{\mathbf{k}}_{j-i} + \|y\|'^{\mathbf{k}}_{j-i}\bigr)
\end{align*}
Now take $i = k_0$ and define $\gamma'_j \definedas C^{j-k_0}_jd_{j-k_0}(1+a_{k_0})(1+a_{j})(1+2\delta_0)$.
Then $\|\Delta x_r\|_j \leq \gamma'_je^{(p+k_0)t_r}(\|x_r\|_j + \|y\|_j)$.
It is
\begin{align*}
\|x_{r+1}\|_j + \|y\|'_j &= \|x_r + \Delta x_r\|_j + \|y\|'_j \\
&\leq \|x_r\|_j + \|\Delta x_r\|_j + \|y\|'_j \\
&\leq \|x_r\|_j + \gamma'_je^{(p+k_0)t_r}(\|x_r\|_j + \|y\|'_j) + \|y\|'_j \\
&\leq (\gamma'_j + 1)e^{(p+k_0)t_r}(\|x_r\|_j + \|y\|'_j)\text{,}
\end{align*}
so by induction $\|x_{r}\|_j \leq \|x_r\|_j + \|y|'_j \leq (\gamma'_j + 1)^re^{(p+k_0)\sum_{s=0}^{r-1} t_s}\|y\|'_j$.
By definition of $t_r$, $\sum_{s=0}^{r-1} = \sum_{s=0}^{r-1} (3/2)^s = \frac{(3/2)^r-1}{3/2 - 1} = 2((3/2)^r - 1) \leq 2(3/2)^r = 2t_r$.
Hence $\|x_r\|^{\mathbf{k}}_j \leq (\gamma'_j+1)^re^{2(p+k_0)t_r}\|y\|'_j = e^{r\ln(\gamma'_j+1) - t_r} e^{(2(p+k_0) + 1)t_r}\|y\|'_j$.
Now
\[
r\ln(\gamma'_j+1) - t_r = r\ln(\gamma'_j+1) - (3/2)^r \overset{r\to\infty}{\longrightarrow} 0\text{,}
\]
hence on can define $\gamma_{j-k_0} \definedas \max \{1 + e^{r\ln(\gamma'_j+1) - t_r} \;|\; r \geq 0\} \in [1,\infty)$.
This settles the first part of the statement.
For the second part, by \labelcref{Eqn_sm_op_shifted} and \cref{Lemma_Modified_tame_estimates}, for any $i, j \in \N_0$ with $j \geq i$, by \labelcref{Smoothing_estimates_std_form}
\begin{align*}
\|\Delta x_r\|_j &= \|S_{t_r}\tilde{\psi}(x_r)z_r\|_j \\
&\leq C^{j-i}_{j}e^{(p + i)t_r}\|\tilde{\psi}(x_r)z_r\|_{j-i} \\
&\leq C^{j-i}_{j}d_{j-i}e^{(p+i)t_r}\bigl(\|x_r\|^{\mathbf{k}}_{j-i}\|z_r\|'^{\mathbf{k}}_0 + \|z_r\|'^{\mathbf{k}}_{j-i}\bigr)\text{.}
\end{align*}
In particular, if $j \geq k_0$, then one can insert $i = k_0$ and obtain
\begin{align*}
\|\Delta x_r\|_j &\leq C^{j-k_0}_{j}d_{j-k_0}e^{(p+k_0)t_r}\bigl(\|x_r\|^{\mathbf{k}}_{j-k_0}\|z_r\|'^{\mathbf{k}}_0 + \|z_r\|'^{\mathbf{k}}_{j-k_0}\bigr) \\
&= C^{j-k_0}_{j}d_{j-k_0}e^{(p+k_0)t_r}\bigl(\|x_r\|_j\|z_r\|'^{\mathbf{k}}_0 + \|z_r\|'_j\bigr)\text{.}
\end{align*}
Replacing $j$ by $j+k_0$ produces, via the first part of the statement,
\begin{align*}
\|\Delta x_r\|^{\mathbf{k}}_j &\leq C^{j}_{j+k_0}d_{j}e^{(p+k_0)t_r}\bigl(\|x_r\|^{\mathbf{k}}_j\|z_r\|'^{\mathbf{k}}_0 + \|z_r\|'^{\mathbf{k}}_j\bigr) \\
&\leq C^{j}_{j+k_0}d_{j}e^{(p+k_0)t_r}\bigl(\gamma_je^{\rho t_r}\|y\|'^{\mathbf{k}}_j\|z_r\|'^{\mathbf{k}}_0 + \|z_r\|'^{\mathbf{k}}_j\bigr) \\
&\leq C^{j}_{j+k_0}d_{j}\gamma_je^{2\rho t_r}\bigl(\|y\|'^{\mathbf{k}}_j\|z_r\|'^{\mathbf{k}}_0 + \|z_r\|'^{\mathbf{k}}_j\bigr)
\end{align*}
from which the second part of the statement follows with $\bar{\gamma}_j \definedas C^{j}_{j+k_0}d_{j}\gamma_j$.
For the third part of the statement, as before, one estimates, using the first part of the statement
\begin{align*}
\|z_{r+1}\|'^{\mathbf{k}}_j &\leq \|y\|'^{\mathbf{k}}_j + a_{k_0+j}\|x\|^{\mathbf{k}}_j \\
&\leq \|y\|'^{\mathbf{k}}_j + a_{k_0+j}\gamma_je^{\rho t_r}\|y\|'^{\mathbf{k}}_j \\
&\leq	(1 + a_{k_0+j}\gamma_j)e^{\rho t_r}\|y\|'^{\mathbf{k}}_j\text{,}
\end{align*}
from which the third part of the statement follows with $\hat{\gamma}_j \definedas (1 + a_{k_0+j}\gamma_j)$.
\end{proof}

\begin{lemma}\label{Lemma_NM_surjectivity_2}
In the setting as above, given any $\delta_0 > 0$ small enough and any $\mu\in\N_0$, for every $j\in\N_0$ there exists a constant $\lambda_{\mu,j} \in [1,\infty)$, depending on $\delta_0$, \st if $\|y\|'^{\mathbf{k}}_0 < \delta_0$ and for some $r_0 \in \N_0$ also $\|x_r\|^{\mathbf{k}}_0 < \delta_0$ for $r = 0, \dots, r_0$, then
\begin{align*}
\|z_{r+1}\|'^{\mathbf{k}}_j &\leq \lambda_{\mu,j}\bigl(e^{-\mu t_{r+1}}\|y\|'^{\mathbf{m}_\mu}_j + e^{7\rho t_r}\bigl(\|y\|'^{\mathbf{k}}_j\bigl(\|z_r\|'^{\mathbf{k}}_0\bigr)^2 + \|z_r\|'^{\mathbf{k}}_0\|z_r\|'^{\mathbf{k}}_j\bigr)\bigr)
\end{align*}
for all $r = 0, \dots, r_0+1$, where $\rho \definedas 2(p+k_0) + 1$ as before and $\mathbf{m}_\mu \definedas (p + 2k_0 + 3\rho + 2\mu + j)_{j\in\N_0}$.
\end{lemma}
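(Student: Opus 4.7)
The plan is to Taylor-expand $\tilde{f}$ about $x_r$ with integral remainder, which, using the defining identity $D\tilde f(x_r)\tilde\psi(x_r)=\id_{E_\infty}$ together with the definitions $\Delta x_r=S_{t_r}\tilde\psi(x_r)z_r$ and $z_r=y-\tilde f(x_r)$, yields the fundamental error identity
\[
z_{r+1} \;=\; D\tilde f(x_r)(\id_{E_\infty}-S_{t_r})\tilde\psi(x_r)z_r \;-\; \int_0^1(1-t)\,D^2\tilde f(x_r+t\Delta x_r)(\Delta x_r,\Delta x_r)\,\d t\text{.}
\]
The two summands will be estimated separately and matched against the two terms in the target bound.

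For the quadratic remainder I would apply the tame estimate for $D^2\tilde f$ from \cref{Lemma_Modified_tame_estimates} with $e=\tilde e=\Delta x_r$, combine it with the three bounds from \cref{Lemma_NM_surjectivity_1} for $\|x_r\|^{\mathbf{k}}_j$, $\|\Delta x_r\|^{\mathbf{k}}_j$ and $\|\Delta x_r\|^{\mathbf{k}}_0$, and use the a priori bound $\|z_r\|'^{\mathbf{k}}_0\leq(1+a_{k_0})\delta_0$ to absorb any cubic $\|z_r\|'^{\mathbf{k}}_0$-factor into a quadratic one. A direct calculation then produces a bound of order $e^{6\rho t_r}\bigl(\|y\|'^{\mathbf{k}}_j(\|z_r\|'^{\mathbf{k}}_0)^2+\|z_r\|'^{\mathbf{k}}_0\|z_r\|'^{\mathbf{k}}_j\bigr)$, which fits inside the $e^{7\rho t_r}$-term of the target.

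For the linear correction I set $u\definedas(\id_{E_\infty}-S_{t_r})\tilde\psi(x_r)z_r$ and apply the tame estimate $\|D\tilde f(x_r)u\|'_{k_0+j}\leq b_{k_0+j}(\|x_r\|_{k_0+j}\|u\|_0+\|u\|_{k_0+j})$. The two summands will be treated with \emph{different} choices of the smoothing parameter in~\labelcref{Eqn_sm_op_shifted}: one uses $M_B\definedas k_0+j+\rho+2\mu$ for $\|u\|_{k_0+j}$, the other $M_A\definedas 2\rho+2\mu$ (independent of $j$) for $\|u\|_0$. In both cases the tame estimate for $\tilde\psi$ combined with \cref{Lemma_NM_surjectivity_1} yields $\|\tilde\psi(x_r)z_r\|_{p+M}\leq C_M e^{\rho t_r}\|y\|'_{p+M}$. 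For the $M_B$-summand this immediately produces the bound $\mathrm{const}_j\cdot e^{-2\mu t_r}\|y\|'_{p+M_B}$, and since $p+M_B\leq m_0+j$ by the definition of $\mathbf{m}_\mu$, it is absorbed into $\mathrm{const}_j\cdot e^{-\mu t_{r+1}}\|y\|'^{\mathbf{m}_\mu}_j$.

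The delicate point is the $M_A$-summand: the factor $\|x_r\|_{k_0+j}\leq\gamma_j e^{\rho t_r}\|y\|'^{\mathbf{k}}_j$ multiplies against $\|u\|_0\leq\mathrm{const}\cdot e^{(-M_A+\rho)t_r}\|y\|'_{p+M_A}$, producing a \emph{product} of two $\|y\|'$-norms that cannot be directly absorbed into a single factor of the form $\|y\|'^{\mathbf{m}_\mu}_j$. To handle this I would first note that $E'$ inherits weak tameness from $E$: the tame linear maps $\tilde\psi(0)\colon E'\to E$ and $D\tilde f(0)\colon E\to E'$ satisfy $D\tilde f(0)\tilde\psi(0)=\id_{E'}$, so \cref{Lemma_Weakly_tame_ILB_chain}, \labelcref{Lemma_Weakly_tame_ILB_chain_2}~transfers weak tameness from $E$ to $E'$. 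Consequently, the logarithmic convexity of the norm sequence on a suitable rescaling of $\mathbb{E}'$ is available, yielding the interpolation inequality $\|y\|'_i\|y\|'_n\leq \tilde C\|y\|'_0\|y\|'_{i+n}$ (\cite{MR656198}, Part II, Theorem 2.2.1 and Corollary 2.2.3). Applying this with $i=k_0+j$ and $n=p+M_A$, and using $\|y\|'_0\leq\|y\|'^{\mathbf{k}}_0<\delta_0$, reduces the $M_A$-summand to $\mathrm{const}\cdot e^{-\mu t_{r+1}}\|y\|'_{k_0+j+p+M_A}$; the shift $\mathbf{m}_\mu$ in the statement is designed precisely so that $k_0+p+M_A\leq m_0$, whence $\|y\|'_{k_0+j+p+M_A}\leq\|y\|'^{\mathbf{m}_\mu}_j$. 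Setting $\lambda_{\mu,j}$ to be the maximum of the accumulated constants then completes the proof.
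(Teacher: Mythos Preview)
Your proposal is correct and follows essentially the same route as the paper: the Taylor identity for $z_{r+1}$, the split into the linear correction $D\tilde f(x_r)(\id-S_{t_r})\tilde\psi(x_r)z_r$ and the quadratic remainder, and the separate estimation of each piece via \cref{Lemma_Modified_tame_estimates} and \cref{Lemma_NM_surjectivity_1} are all exactly what the paper does. Your $e^{6\rho t_r}$ bound for the quadratic term is slightly sharper than needed; the paper picks up an extra $e^{\rho t_r}$ from a crude bound on $(1+\|\Delta x_r\|^{\mathbf{k}}_0)$, which is why $e^{7\rho t_r}$ appears in the statement.

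There is one genuine difference worth noting. For the linear correction the paper uses a \emph{single} smoothing parameter $\alpha=3\rho+2\mu$ for both $\|u\|_0$ and $\|u\|_{k_0+j}$, whereas you use two distinct ones $M_A,M_B$. Either way one arrives at a cross term of the form $\|y\|'_{p+k_0+\alpha}\cdot\|y\|'_{k_0+j}$ (in the paper's notation) that has to be collapsed to a single norm $\|y\|'^{\mathbf{p}\circ\mathbf{k}\circ\mathbf{k}}_{\alpha+j}=\|y\|'^{\mathbf{m}_\mu}_j$. The paper's passage from the displayed bound on $\|x_r\|_j\|(\id-S_{t_r})\tilde\psi(x_r)z_r\|_0$ to the combined bound for $\alpha_{k_0+j}(x_r,z_r)$ is exactly this step, and it silently uses the log-convexity inequality $\|y\|'_a\|y\|'_b\leq C\|y\|'_0\|y\|'_{a+b}$ on $E'$. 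You make this explicit and, more importantly, you justify it: $E'$ is only assumed pre-tame in the hypotheses, but the existence of the tame maps $\tilde\psi(0)$ and $D\tilde f(0)$ with $D\tilde f(0)\tilde\psi(0)=\id_{E'}$ transfers weak tameness from $E$ to $E'$ via \cref{Lemma_Weakly_tame_ILB_chain}, \labelcref{Lemma_Weakly_tame_ILB_chain_2}, so interpolation is available. This is a point the paper leaves implicit, and your argument fills it in cleanly.
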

\begin{proof}
As in the proof of \cref{Proposition_Nash_Moser_injectivity},
\begin{align*}
\tilde{f}(x_{r+1}) &= \tilde{f}(x_r) + D\tilde{f}(x_r)\Delta x_r + \int_{0}^1 (1-t)D^2\tilde{f}(x_r + t\Delta x_r)(\Delta x_r, \Delta x_r)\,\d t\text{,}
\intertext{hence}
z_{r+1} &= y - \tilde{f}(x_{r+1}) \\
&= \underbrace{y - \tilde{f}(x_r)}_{=\; z_r} - D\tilde{f}(x_r)\underbrace{\Delta x_r}_{\mathclap{=\; S_{t_r}\tilde{\psi}(x_r)z_r}} - \int_{0}^1 (1-t)D^2\tilde{f}(x_r + t\Delta x_r)(\Delta x_r, \Delta x_r)\,\d t\text{,}
\end{align*}
so
\begin{equation}\label{Eqn_zrplus1}
z_{r+1} = D\tilde{f}(x_r)(\id_{E_\infty} - S_{t_r})\tilde{\psi}(x_r)z_r - \int_{0}^1 (1-t)D^2\tilde{f}(x_r + t\Delta x_r)(\Delta x_r, \Delta x_r)\,\d t\text{.}
\end{equation}
Now repeatedly using \cref{Lemma_Modified_tame_estimates}, applying the estimates for the smoothing operators $S_{t_r}$ in the form provided by \cref{Eqn_sm_op_shifted} and using the estimates $\|z_r\|_j \leq \|y\|_j + a_j\|x_r\|_j$ as in the proof of \cref{Lemma_NM_surjectivity_1}, one can estimate that
\begin{align*}
\alpha_j(x_r,z_r) &\definedas \bigl\|D\tilde{f}(x_r)(\id_{E_\infty} - S_{t_r})\tilde{\psi}(x_r)z_r\|'_j \\
&\leq b_j\bigl(\|x_r\|_j\|(\id_{E_\infty} - S_{t_r})\tilde{\psi}(x_r)z_r\|_0 \;+ \\
&\qquad\quad +\; \|(\id_{E_\infty} - S_{t_r})\tilde{\psi}(x_r)z_r\|_j\bigr)
\end{align*}
and for any constant $\alpha \in \N_0$,
\begin{align*}
\|(\id_{E_\infty} - S_{t_r})\tilde{\psi}(x_r)z_r\|_j &\leq \tilde{C}^{\alpha+j}_{j}e^{-\alpha t_r}\|\tilde{\psi}(x_r)z_r\|^{\mathbf{p}}_{\alpha+j} \\
&\leq \tilde{C}^{\alpha+j}_{j}e^{-\alpha t_r}d_{p+\alpha+j}(\|x_r\|^{\mathbf{p}\circ\mathbf{k}}_{\alpha+j}\|z_r\|'^{\mathbf{k}}_0 + \|z_r\|'^{\mathbf{p}\circ\mathbf{k}}_{\alpha+j}) \\
&\leq \tilde{C}^{\alpha+j}_{j}e^{-\alpha t_r}d_{p+\alpha+j}\bigl(\|x_r\|^{\mathbf{p}\circ\mathbf{k}}_{\alpha+j}(\|y\|'^{\mathbf{k}}_0 + a_{k_0}\|x_r\|^{\mathbf{k}}_0) \;+ \\
&\qquad\qquad\qquad\qquad\quad +\; \|y\|'^{\mathbf{p}\circ\mathbf{k}}_{\alpha+j} + a_{p+k_0+\alpha+j}\|x_r\|^{\mathbf{p}\circ\mathbf{k}}_{\alpha+j}\bigr) \\
&\leq e^{-\alpha t_r}\rho_{\alpha,j}\bigl(1 + \|x_r\|^{\mathbf{k}}_0 + \|y\|'^{\mathbf{k}}_0\bigr)\bigl(\|x_r\|^{\mathbf{p}\circ\mathbf{k}}_{\alpha+j} + \|y\|'^{\mathbf{p}\circ\mathbf{k}}_{\alpha+j}\bigr)\text{,}
\end{align*}
where $\rho_{\alpha,j} \definedas \tilde{C}^{\alpha+j}_{j}d_i(1 + a_{k_0} + a_{p+k_0+\alpha+j})$.
Combining this with \cref{Lemma_NM_surjectivity_1} gives
\begin{align*}
\|(\id_{E_\infty} - S_{t_r})\tilde{\psi}(x_r)z_r\|_j &\leq e^{-\alpha t_r}\rho_{\alpha,j}\bigl(1 + \gamma_0e^{\rho t_r}\|y\|'^{\mathbf{k}}_0 + \|y\|'^{\mathbf{k}}_0\bigr)\,\cdot \\
&\quad\; \cdot\, \bigl(\gamma_{p+\alpha+j}e^{\rho t_r}\|y\|'^{\mathbf{p}\circ\mathbf{k}}_{\alpha+j} + \|y\|'^{\mathbf{p}\circ\mathbf{k}}_{\alpha+j}\bigr) \\
&\leq e^{-\alpha t_r}\rho_{\alpha,j}\bigl(1 + \|y\|'^{\mathbf{k}}_0\bigr)\,\cdot \\
&\quad\; \cdot\, 4\gamma_0\gamma_{p+\alpha+j}e^{2\rho t_r}\|y\|'^{\mathbf{p}\circ\mathbf{k}}_{\alpha+j}
\end{align*}
and hence furthermore
\begin{align*}
\|x_r\|_j\|(\id_{E_\infty} - S_{t_r})\tilde{\psi}(x_r)z_r\|_0 &\leq e^{-\alpha t_r}\rho_{\alpha,0}\bigl(1 + \|y\|'^{\mathbf{k}}_0\bigr)\,\cdot \\
&\quad\; \cdot\, 4\gamma_0\gamma_{p+\alpha}\gamma_je^{3\rho t_r}\|y\|'^{\mathbf{p}\circ\mathbf{k}}_{\alpha}\|y\|'_j\text{,}
\end{align*}
provided that $j \geq k_0$.
Replacing $j$ by $j + k_0$ and combining the above gives
\begin{align*}
\alpha_{k_0 + j}(x_r,z_r) &\leq e^{-\alpha t_r}\bigl(1 + \|y\|'^{\mathbf{k}}_0\bigr)^2 \tau_{\alpha,j}e^{3\rho t_r}\|y\|'^{\mathbf{p}\circ\mathbf{k}\circ\mathbf{k}}_{\alpha+j} \\
&= e^{-(3\rho+2\mu)t_r}\bigl(1 + \|y\|'^{\mathbf{k}}_0\bigr)^2\tau_{3\rho+2\mu,j}e^{3\rho t_r}\|y\|'^{\mathbf{p}\circ\mathbf{k}\circ\mathbf{k}}_{3\rho+2\mu+j} \\
&\leq e^{-\mu t_{r+1}}\bigl(1 + \|y\|'^{\mathbf{k}}_0\bigr)^2 \tau_{3\rho+2\mu,j}\|y\|'^{\mathbf{p}\circ\mathbf{k}\circ\mathbf{k}}_{3\rho+2\mu+j} \\
&= e^{-\mu t_{r+1}}\bigl(1 + \|y\|'^{\mathbf{k}}_0\bigr)^2 \overline{\tau}_{\mu,j}\|y\|'^{\mathbf{m}_\mu}_{j}\text{.}
\end{align*}
for any $\mu\in\N_0$, where
\begin{align*}
\tau_{\alpha,j} &\definedas 4b_{k_0+j}(\rho_{\alpha,0} + \rho_{\alpha,k_0+j})(\gamma_0\gamma_{p+k_0+\alpha+j} + \gamma_0\gamma_{p+\alpha}\gamma_{k_0+j}) \\
\overline{\tau}_{\mu,j} &\definedas \tau_{3\rho+2\mu,j} \\
\mathbf{m}_{\mu} &\definedas  (p+2k_0+3\rho+2\mu+j)_{j\in\N_0}\text{.}
\end{align*}
Now the second term in \cref{Eqn_zrplus1} can be estimated as follows, using \cref{Lemma_Modified_tame_estimates} as before:
\begin{align*}
\beta_j(x_r,z_r) &\definedas \left\|\int_{0}^1 (1-t)D^2\tilde{f}(x_r + t\Delta x_r)(\Delta x_r, \Delta x_r)\,\d t\right\|_j \\
&\leq \int_0^1 \bigl\|D^2\tilde{f}(x_r + t\Delta x_r)(\Delta x_r, \Delta x_r)\bigr\|_j \,\d t \\
&\leq c_j\int_0^1\bigl(\|x_r + t\Delta x_r\|_j\bigl(\|\Delta x_r\|_0\bigr)^2 + 2\|\Delta x_r\|_j\|\Delta x_r\|_0 \bigr)\,\d t \\
&\leq c_j\int_0^1\bigl(\bigl(\|x_r\|_j + t\|\Delta x_r\|_j\bigr)\bigl(\|\Delta x_r\|_0\bigr)^2 + 2\|\Delta x_r\|_j\|\Delta x_r\|_0 \bigr)\,\d t \\
&\leq 2c_j\bigl(\|x_r\|_j\bigl(\|\Delta x_r\|_0\bigr)^2 + \|\Delta x_r\|_j\|\Delta x_r\|_0\bigl(1 + \|\Delta x_r\|_0\bigr)\bigr)\text{.}
\end{align*}
Using \cref{Lemma_NM_surjectivity_1} and the estimate $\|z_r\|_j \leq \|y\|_j + a_j\|x_r\|_j$, as before, one can furthermore estimate
\begin{align*}
\|\Delta x_r\|^{\mathbf{k}}_0 &\leq \bar{\gamma}_0e^{2\rho t_r}\bigl(1 + \|y\|'^{\mathbf{k}}_0\bigr)\|z_r\|'^{\mathbf{k}}_0 \\
&\leq \bar{\gamma}_0e^{2\rho t_r}\bigl(1 + \|y\|'^{\mathbf{k}}_0\bigr)\bigl(\|y\|'^{\mathbf{k}}_0 + a_{k_0}\|x_r\|^{\mathbf{k}}_0\bigr) \\
&\leq \bar{\gamma}_0e^{2\rho t_r}\bigl(1 + \|y\|'^{\mathbf{k}}_0\bigr)\bigl(\|y\|'^{\mathbf{k}}_0 + a_{k_0}\gamma_0e^{\rho t_r}\|y\|'^{\mathbf{k}}_0\bigr) \\
&\leq \bar{\gamma}_0(1+a_{k_0})e^{3\rho t_r}\bigl(1 + \|y\|'^{\mathbf{k}}_0\bigr)^2
\end{align*}
and consequently also
\[
1 + \|\Delta x_r\|^{\mathbf{k}}_0 \leq 2\bar{\gamma}_0(1+a_{k_0})e^{3\rho t_r}\bigl(1 + \|y\|'^{\mathbf{k}}_0\bigr)^2\text{.}
\]
Inserting this in the inequality above and using \cref{Lemma_NM_surjectivity_1} gives
\begin{align*}
\beta_{k_0+j}(x_r,z_r) &\leq 2c_{j+k_0}\bigl(\|x_r\|_{j+k_0}\bigl(\|\Delta x_r\|_{0}\bigr)^2 + \|\Delta x_r\|_{j+k_0}\|\Delta x_r\|_{0}\bigl(1 + \|\Delta x_r\|_{0}\bigr)\bigr) \\
&\leq 2c_{j+k_0}\bigl(\|x_r\|^{\mathbf{k}}_{j}\bigl(\|\Delta x_r\|^{\mathbf{k}}_{0}\bigr)^2 + \|\Delta x_r\|^{\mathbf{k}}_{j}\|\Delta x_r\|^{\mathbf{k}}_{0} \bigl(1 + \|\Delta x_r\|^{\mathbf{k}}_{0}\bigr)\bigr) \\
&\leq 2c_{j+k_0}\bigl(\gamma_je^{\rho t_r}\|y\|'^{\mathbf{k}}_j\bigl(\bar{\gamma}_0e^{2\rho t_r}\bigl(1 + \|y\|'^{\mathbf{k}}_0\bigr)\|z_r\|'^{\mathbf{k}}_0\bigr)^2 \;+ \\
&\qquad\qquad\quad +\; \bar{\gamma}_je^{2\rho t_r}\bigl(\|y\|'^{\mathbf{k}}_j\|z_r\|'^{\mathbf{k}}_0 + \|z_r\|'^{\mathbf{k}}_j\bigr) \,\cdot \\
&\qquad\qquad\qquad \cdot\, \bar{\gamma}_0e^{2\rho t_r}\bigl(1 + \|y\|'^{\mathbf{k}}_0\bigr)\|z_r\|'^{\mathbf{k}}_0 \,\cdot \\
&\qquad\qquad\qquad \cdot\, 2\bar{\gamma}_0(1+a_{k_0})e^{3\rho t_r}\bigl(1 + \|y\|'^{\mathbf{k}}_0\bigr)^2\bigr) \\
&\leq \tilde{\tau}_je^{7\rho t_r}\bigl(1 + \|y\|'^{\mathbf{k}}_0\bigr)^3\bigl(\|y\|'^{\mathbf{k}}_j\bigl(\|z_r\|'^{\mathbf{k}}_0\bigr)^2 + \|z_r\|'^{\mathbf{k}}_0\|z_r\|'^{\mathbf{k}}_j\bigr)\text{,}
\end{align*}
where $\tilde{\tau}_j \definedas 2c_{j+k_0}\bar{\gamma}_0^2(\gamma_j + 2\bar{\gamma}_j(1+a_{k_0}))$.
Finally one can now combine the above estimates for $\alpha_{k_0+j}(x_r, z_r)$ and $\beta_{k_0+j}(x_r, z_r)$ to obtain
\begin{align*}
\|z_{r+1}\|^{\mathbf{k}}_j &= \|z_{r+1}\|'_{k_0+j} \\
&\leq \alpha_{k_0+j}(x_r,z_r) + \beta_{k_0+j}(x_r+z_r) \\
&\leq (\overline{\tau}_{\mu,j} + \tilde{\tau}_j)\bigl(1 + \|y\|'^{\mathbf{k}}_0\bigr)^3 \,\cdot \\
&\quad\; \cdot\, \bigl(e^{-\mu t_{r+1}}\|y\|'^{\mathbf{m}_\mu}_j + e^{7\rho t_r}\bigl(\|y\|'^{\mathbf{k}}_j\bigl(\|z_r\|'^{\mathbf{k}}_0\bigr)^2 + \|z_r\|'^{\mathbf{k}}_0\|z_r\|'^{\mathbf{k}}_j\bigr)\bigr)\text{.}
\end{align*}
This finishes the proof by defining $\lambda_{\mu,j} \definedas (\overline{\tau}_{\mu,j} + \tilde{\tau}_j)(1 + \delta_0)^3$.
\end{proof}

\begin{proposition}\label{Proposition_Nash_Moser_surjectivity}
In the setting as above for $\mu \in \N_0$ large enough (\eg $\mu \geq 16\rho$ where $\rho \definedas 2(p+k_0)+1$) there exists a shift $\mathbf{m} \subseteq \N_0$ (\eg $\mathbf{m} \definedas (p+2k_0+3\rho+2\mu+j)_{j\in\N_0}$) \st for $\delta_0 > 0$ small enough (depending on $\mu$) there exist constants $\tau_j, \tau'_j, \tau''_j \in [1,\infty)$ (depending on $\delta_0$), for $j\in\N_0$, \st if $\|y\|'^{\mathbf{m}}_0 < \delta_0$ then $x_r$ is well defined for all $r \in \N_0$ and
\begin{align*}
\|x_r\|^{\mathbf{k}}_j &\leq \tau_j\|y\|'^{\mathbf{m}}_j \\
\|z_r\|'^{\mathbf{k}}_j &\leq \tau'_j e^{-\frac{\mu}{4}t_r}\|y\|'^{\mathbf{m}}_j \\
\|\Delta x_r\|^{\mathbf{k}}_j &\leq \tau''_j e^{-\frac{\mu}{8}t_r}\|y\|'^{\mathbf{m}}_j\text{.}
\end{align*}
\end{proposition}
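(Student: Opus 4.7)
The proof proceeds by induction on $r$, carrying three statements forward simultaneously. Set $\mathbf{m} \definedas \mathbf{m}_\mu$ so that $\mathbf{m}$ is a tame shift, $\mathbf{m}\geq\mathbf{k}$, and the smoothing-error term of \cref{Lemma_NM_surjectivity_2} is already expressed in the $\mathbf{m}$-norm on $y$. The induction hypothesis at step $r$ is the conjunction: (i) $x_0,\dots,x_r$ are well defined and $\|x_s\|^{\mathbf{k}}_0<\delta_0$ for all $0\leq s\leq r$, so that the hypotheses of \cref{Lemma_NM_surjectivity_1,Lemma_NM_surjectivity_2} are met at each earlier step; (ii) $\|z_s\|'^{\mathbf{k}}_j\leq \tau'_je^{-(\mu/4)t_s}\|y\|'^{\mathbf{m}}_j$ for all $0\leq s\leq r$ and $j\in\N_0$; (iii) $\|\Delta x_s\|^{\mathbf{k}}_j\leq \tau''_je^{-(\mu/8)t_s}\|y\|'^{\mathbf{m}}_j$ for all $0\leq s<r$ and $j\in\N_0$. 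The base case $r=0$ is immediate from $x_0=0$ and $z_0=y$ provided $\tau'_j\geq e^{\mu/4}$.

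For the inductive step I would proceed in three moves. First, insert (ii) at step $r$ into \cref{Lemma_NM_surjectivity_1} to bound $\|\Delta x_r\|^{\mathbf{k}}_j$ by $\bar{\gamma}_j(\tau'_0\delta_0+\tau'_j)\,e^{(2\rho-\mu/4)t_r}\|y\|'^{\mathbf{m}}_j$; the threshold $\mu\geq 16\rho$ makes the exponent at most $-\mu/8$, so (iii) holds at $s=r$ with $\tau''_j\definedas \bar{\gamma}_j(\tau'_0\delta_0+\tau'_j)$. Second, telescope $x_{r+1}=\sum_{s=0}^{r}\Delta x_s$ and use the rapid convergence of $\sum_{s\geq 0}e^{-(\mu/8)(3/2)^s}$ to obtain $\|x_{r+1}\|^{\mathbf{k}}_j\leq \tau_j\|y\|'^{\mathbf{m}}_j$ for a suitable $\tau_j$; the case $j=0$ combined with $\|y\|'^{\mathbf{m}}_0<\delta_0$ yields (i) at $r+1$ as soon as $\delta_0$ is shrunk so that $\tau_0\delta_0<\delta_0$. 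Third, insert (ii) at step $r$ into \cref{Lemma_NM_surjectivity_2}: its linear smoothing-error term, of order $\lambda_{\mu,j}e^{-\mu t_{r+1}}\|y\|'^{\mathbf{m}}_j$, is absorbed into $\tfrac{1}{2}\tau'_je^{-(\mu/4)t_{r+1}}\|y\|'^{\mathbf{m}}_j$ once $\tau'_j\geq 2\lambda_{\mu,j}$; its quadratic error is of order $\lambda_{\mu,j}(\tau'_0\delta_0)(\tau'_0\delta_0+\tau'_j)\,e^{(7\rho-\mu/2)t_r}\|y\|'^{\mathbf{m}}_j$ and must likewise be bounded by $\tfrac{1}{2}\tau'_je^{-(3\mu/8)t_r}\|y\|'^{\mathbf{m}}_j$. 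This closes (ii) at $r+1$ provided $\mu$ is taken large enough that the residual exponent $7\rho-\mu/8$ is $\leq 0$ and $\delta_0$ is then shrunk to make the constant prefactor $\leq 1/2$.

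The main obstacle is the third move, since its quadratic error is simultaneously responsible for the smallness condition on $\delta_0$, the lower bound on $\mu$, and the need to enlarge the shift from $\mathbf{k}$ to $\mathbf{m}_\mu$ in the target norm on $y$; it also dictates the order in which the constants must be fixed so as to avoid circular dependencies, namely first $\tau'_0$, then $\tau'_j$ for $j\geq 1$ (each determined by $\lambda_{\mu,j}$ and $\tau'_0$), then $\tau''_j$ and $\tau_j$ by the formulas produced in moves one and two, and finally $\delta_0$, shrunk to satisfy the two remaining smallness conditions ($\tau_0\delta_0<\delta_0$ and the prefactor bound above) simultaneously. Once (i)--(iii) are secured for every $r\in\N_0$, estimate (iii) makes $(x_r)$ Cauchy in every seminorm $\|\cdot\|^{\mathbf{k}}_j$ of $E_\infty$, hence convergent in the Fr\'echet topology to some limit $x$; the decay of $z_r$ in (ii) together with continuity of $\tilde{f}$ identifies the limit via $\tilde{f}(x)=y$, yielding local surjectivity and at the same time the tame bound $\|x\|^{\mathbf{k}}_j\leq \tau_j\|y\|'^{\mathbf{m}}_j$ that will eventually express tameness of the local inverse.
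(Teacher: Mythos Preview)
Your simultaneous induction over all $j$ does not close. The obstruction sits exactly where you locate the ``main obstacle'': in the third move, the quadratic contribution from \cref{Lemma_NM_surjectivity_2} is
\[
\lambda_{\mu,j}\,(\tau'_0\delta_0)\,(\tau'_0\delta_0+\tau'_j)\,e^{(7\rho-\mu/2)t_r}\|y\|'^{\mathbf{m}}_j,
\]
and you need it bounded by $\tfrac12\tau'_je^{-(3\mu/8)t_r}\|y\|'^{\mathbf{m}}_j$. Even granting $7\rho-\mu/8\le 0$, the residual prefactor condition is, for large $\tau'_j$, essentially $\lambda_{\mu,j}\tau'_0\delta_0\le \tfrac12$. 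But $\lambda_{\mu,j}$ is built from the level-$j$ tame constants (see the definition $\lambda_{\mu,j}=(\overline{\tau}_{\mu,j}+\tilde{\tau}_j)(1+\delta_0)^3$ in the proof of \cref{Lemma_NM_surjectivity_2}) and is generally unbounded in $j$. Hence no single $\delta_0>0$ satisfies this for all $j$, and your announced order of fixing constants---$\tau'_0$, then $\tau'_j$, then finally one $\delta_0$---cannot be carried out. (Separately, the condition ``$\tau_0\delta_0<\delta_0$'' is literally impossible since $\tau_0\ge 1$; you need two distinct thresholds, one for $\|x_r\|^{\mathbf{k}}_0$ and a smaller one for $\|y\|'^{\mathbf{m}}_0$.)

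The paper avoids this by decoupling $j=0$ from $j\ge 1$. First it runs the induction \emph{only} at $j=0$ and obtains the full rate $\|z_r\|'^{\mathbf{k}}_0\le \tau'_0e^{-\mu t_r}\|y\|'^{\mathbf{m}}_0$; here the prefactor condition involves only $\lambda_{\mu,0}$ and $\tau'_0$, so a single $\delta_0$ suffices. Then, for general $j$, it does \emph{not} assume an inductive hypothesis on $\|z_r\|'^{\mathbf{k}}_j$ at all: it feeds the strong $j=0$ decay and the crude a~priori growth bound $\|z_r\|'^{\mathbf{k}}_j\le \hat{\gamma}_je^{\rho t_r}\|y\|'^{\mathbf{k}}_j$ from \cref{Lemma_NM_surjectivity_1} directly into \cref{Lemma_NM_surjectivity_2}. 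The resulting bound on $\|z_{r+1}\|'^{\mathbf{k}}_j$ is then \emph{explicit}, with $\tau'_j$ defined after the fact to absorb all the $j$-dependent constants; no smallness of $\delta_0$ depending on $j$ is ever invoked. This two-stage bootstrap is the missing idea.
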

\begin{proof}
\begin{claim}
Given any $\mu \geq 14\rho$, there exist constants $\tau'_0, \tau''_0 > 0$ (arbitarily large, depending on $\mu$) and $\delta_0 > 0$ arbitrarily small \st if $\|y\|'^{\mathbf{m}_\mu}_0 < \delta_0$ and for some $r_0 \in \N_0$ also $\|x_r\|^{\mathbf{k}}_0 < \delta_0$ for $r = 0, \dots, r_0$, then
\begin{align*}
\|z_r\|'^{\mathbf{k}}_0 &\leq \tau'_0 e^{-\mu t_r}\|y\|'^{\mathbf{m}_\mu}_0 \\
\|\Delta x_r\|^{\mathbf{k}}_0 &\leq \tau''_0e^{-\frac{\mu}{2} t_r}\|y\|'^{\mathbf{m}_\mu}_0
\end{align*}
for all $r = 0, \dots, r_0+1$.
\end{claim}
\begin{proof}
Let $\delta_0 > 0$ be \st \cref{Lemma_NM_surjectivity_2} applies.
One obtains
\begin{align*}
\|z_{r+1}\|'^{\mathbf{k}}_0 &\leq \lambda_{\mu,0}\bigl(e^{-\mu t_{r+1}}\|y\|'^{\mathbf{m}_\mu}_0 + e^{7\rho t_r}\bigl(1 + \|y\|'^{\mathbf{k}}_0\bigr)\bigl(\|z_r\|'^{\mathbf{k}}_0\bigr)^2\bigr) \\
&\leq \lambda_{\mu,0}(1 + \delta_0)e^{-\mu t_{r+1}}\bigl(\|y\|'^{\mathbf{m}_\mu}_0 + e^{(7\rho - \mu/2)t_r}e^{2t_r}\bigl(\|z_r\|'^{\mathbf{k}}_0\bigr)^2\bigr)\text{,}
\end{align*}
using $t_{r+1} = (3/2)t_r$, provided that the assumptions of \cref{Lemma_NM_surjectivity_2} hold.
Now assume that $\|z_r\|'^{\mathbf{k}}_0 \leq \tau'_0 e^{-\mu t_r}\|y\|'^{\mathbf{m}_\mu}_0$, for some $\tau'_0 > 0$ and that $\mu \geq 14\rho$, \ie $7\rho - \mu/2 \leq 0$.
Then by the above
\begin{align*}
\|z_{r+1}\|'^{\mathbf{k}}_0 &\leq \lambda_{\mu,0}(1 + \delta_0)e^{-\mu t_{r+1}}\bigl(\|y\|'^{\mathbf{m}_\mu}_0 + e^{(7\rho - \mu/2)t_r}e^{2t_r}\bigl(\|z_r\|'^{\mathbf{k}}_0\bigr)^2\bigr) \\
&\leq \lambda_{\mu,0}(1 + \delta_0)e^{-\mu t_{r+1}}\bigl(\|y\|'^{\mathbf{m}_\mu}_0 + e^{(7\rho - \mu/2)t_r}\tau'^2_0\bigl(\|y\|'^{\mathbf{m}_\mu}_0\bigr)^2\bigr) \\
&= \lambda_{\mu,0}(1 + \delta_0)(1 + \tau'^2_0\delta_0)e^{-\mu t_{r+1}}\|y\|'^{\mathbf{m}_\mu}_0 \\
& \leq \tau'_0 e^{-\mu t_{r+1}}\|y\|'^{\mathbf{m}_\mu}_0\text{,}
\end{align*}
provided that $\lambda_{\mu,0}(1+\delta_0)(1 + \tau'^2_0\delta_0) \leq \tau'_0$.
Now if $\delta_0 \leq 1$ then $\lambda_{\mu,0}(1+\delta_0)(1+\tau'^2_0\delta_0) \leq 2\lambda_{\mu,0}(1+\tau'^2_0\delta_0)$.
So $\lambda_{\mu,0}(1+\delta_0)(1 + \tau'^2_0\delta_0) \leq \tau'_0$ provided that $\tau'^2_0\delta_0 \leq \frac{\tau'_0}{2\lambda_{\mu,0}} - 1$, and assuming that $\tau'_0 \geq 4\lambda_{\mu,0}$ this holds if $\delta_0 \leq  \frac{1}{\tau'^2_0}$.
Hence by induction, given $\mu \geq 14\rho$, for $\tau'_0 \geq 4\lambda_{\mu,0}$ and $\delta_0 > 0$ small enough \st \cref{Lemma_NM_surjectivity_2} applies, $\delta_0 \leq 1$ and $\delta_0 \leq \frac{1}{\tau'^2_0}$, the first part of the claim, the estimate on $\|z_r\|'^{\mathbf{k}}_0$, follows. \\
For the second part combine this with \cref{Lemma_NM_surjectivity_1} to see that
\begin{align*}
\|\Delta x_r\|^{\mathbf{k}}_0 &\leq \overline{\gamma}_0e^{2\rho t_r}\bigl(1 + \|y\|'^{\mathbf{k}}_0\bigr)\|z_r\|'^{\mathbf{k}}_0 \\
&\leq \overline{\gamma}_0(1 + \delta_0)\tau'_0e^{-(\mu - 2\rho)t_r}\|y\|'^{\mathbf{m}_\mu}
\end{align*}
from which the claim follows with $\tau''_0 \definedas 2\overline{\gamma}_0\tau'_0$, since $\mu - 2\rho \geq \mu - \mu/7 \geq \mu/2$ provided that $\mu \geq 14\rho$.
\end{proof}
\begin{claim}
Given any $\mu \geq 14\rho$ for $\delta_0 > 0$ small enough (depending on $\mu$) there exists a constant $\tau_0 > 0$ (arbitrarily large, depending on $\mu$) \st if $\|y\|'^{\mathbf{m}_\mu}_0 < \delta_0$, then $x_r$ is well defined for all $r \in \N_0$ and
\[
\|x_r\|^{\mathbf{k}}_0 \leq \tau_0\|y\|'^{\mathbf{m}_\mu}_0\text{.}
\]
\end{claim}
\begin{proof}
Let $\delta_0 > 0$ and $\tau''_0 > 0$ be \st the previous claim applies.
I will show by induction that the claim holds for $\tau_0 \definedas 2\tau''_0$, after replacing $\delta_0$ by $\delta'_0 \definedas \frac{\delta_0}{1 + 2\tau''_0} > 0$.
So let $\|y\|'^{\mathbf{m}_\mu}_0 < \delta'_0 \leq \delta_0$.
By definition $x_0 = 0$, so the inequality in the claim holds for $r = 0$.
Now assume that $x_{r-1}$ is well defined, lies in $V'$ and that the inequality holds for $x_{s}$, $s = 0, \dots, r-1$.
By definition $x_r = \sum_{s=0}^{r-1}\Delta x_s$, and by the previous claim hence
\begin{align*}
\|x_r\|^{\mathbf{k}}_0 &\leq \sum_{s=0}^{r-1}\|\Delta x_s\|^{\mathbf{k}}_0 \\
&\leq \sum_{s=0}^{r-1} \tau''_0e^{-\frac{\mu}{2}t_r}\|y\|^{\mathbf{m}_\mu}_0 \\
&\leq \tau''_0\sum_{s=0}^\infty 2^{-s} \|y\|^{\mathbf{m}_\mu}_0 \\
&= 2\tau''_0\|y\|^{\mathbf{m}_\mu}_0 \\
&= \tau_0\|y\|^{\mathbf{m}_\mu}_0\text{,}
\end{align*}
since $t_r = (3/2)^r \geq r$ and $\mu/2 \geq 7\rho = 7(2(p+k_0)+1) \geq 1$.
In particular, $\|x_r\|^{\mathbf{k}}_0 \leq 2\tau''_0\delta'_0  = \frac{2\tau''_0}{1+2\tau''_0}\delta_0 < \delta_0$.
Since for $\delta_0$ small enough $\|x_r\|^{\mathbf{k}}_0 < \delta_0$ implies $x_r \in V'$, $x_r$ is well defined, lies in $V'$ and satisfies the inequality in the claim.
\end{proof}
\begin{claim}
Let $\mu \geq 16\rho$ and let $\delta_0 > 0$ be small enough \st the previous two claims and \cref{Lemma_NM_surjectivity_1,Lemma_NM_surjectivity_2} hold.
Then \cref{Proposition_Nash_Moser_surjectivity} holds as well.
\end{claim}
\begin{proof}
$\tau_0, \tau'_0, \tau''_0 > 0$ \st the inequalities in \cref{Proposition_Nash_Moser_surjectivity} hold were already defined in the previous two claims.
Using the first claim and \cref{Lemma_NM_surjectivity_1,Lemma_NM_surjectivity_2}, for general $j \in \N_0$ and $r\in\N_0$,
\begin{align*}
\|z_{r+1}\|'^{\mathbf{k}}_j &\leq \lambda_{\mu,j}\bigl(e^{-\mu t_{r+1}}\|y\|'^{\mathbf{m}_\mu}_j + e^{7\rho t_r}\tau'_0e^{-\mu t_r}\|y\|'^{\mathbf{m}_\mu}_0 \,\cdot \\
&\qquad\qquad\qquad \cdot\, \bigl(\|y\|'^{\mathbf{k}}_j\tau'_0e^{-\mu t_r}\|y\|'^{\mathbf{m}_\mu}_0 + \hat{\gamma}_je^{\rho t_r}\|y\|'^{\mathbf{k}}_j\bigr)\bigr) \\
&\leq \lambda_{\mu,j}(1 + \tau'_0(\tau'_0 + \hat{\gamma}_j))e^{8\rho t_r}\bigl(e^{-\mu t_{r+1}} + e^{-\mu t_r}\|y\|'^{\mathbf{m}_\mu}_0\bigl(1 + \|y\|'^{\mathbf{m}_\mu}_0\bigr)\bigr)\|y\|'^{\mathbf{m}_\mu}_j \\
&\leq \lambda_{\mu,j}(1 + \tau'_0(\tau'_0 + \hat{\gamma}_j))\bigl(1 + \|y\|'^{\mathbf{m}_\mu}_0\bigr)^2 e^{8\rho \frac{2}{3} t_{r+1}}(e^{-\mu t_{r+1}} + e^{-\mu \frac{2}{3}t_{r+1}})\|y\|'^{\mathbf{m}_\mu}_j \\
&\leq 2\lambda_{\mu,j}(1 + \tau'_0(\tau'_0 + \hat{\gamma}_j))\bigl(1 + \|y\|'^{\mathbf{m}_\mu}_0\bigr)^2 e^{-\frac{2}{3}(\mu-8\rho)t_{r+1}}\|y\|'^{\mathbf{m}_\mu}_j\text{.}
\end{align*}
For $\mu \geq 16 \rho$ one has $\frac{2}{3}(\mu - 8\rho) \geq \frac{1}{2}(\mu - \frac{1}{2}\mu) = \frac{\mu}{4}$.
So
\[
\|z_r\|^{\mathbf{k}}_j \leq \tau'_je^{-\frac{\mu}{4}}\|y\|'^{\mathbf{m}_\mu}_j
\]
with $\tau'_j \definedas 2\lambda_{\mu,j}(1 + \tau'_0(\tau'_0 + \hat{\gamma}_j))(1 + \delta_0)^2$.
Combining this with \cref{Lemma_NM_surjectivity_1} yields
\begin{align*}
\|\Delta x_r\|^{\mathbf{k}}_j &\leq \bar{\gamma}_je^{2\rho t_r}\bigl(\|y\|'^{\mathbf{k}}_j\|z_r\|'^{\mathbf{k}}_0 + \|z_r\|'^{\mathbf{k}}_j\bigr) \\
&\leq \overline{\gamma}_je^{2\rho t_r}\bigl(\|y\|'^{\mathbf{m}_\mu}_j\tau'_0e^{-\frac{\mu}{4}t_r}\|y\|^{\mathbf{m}_\mu}_0 + \tau'_je^{-\frac{\mu}{4}t_r}\|y\|'^{\mathbf{m}_\mu}_j\bigr) \\
&\leq \overline{\gamma}_j\bigl(\tau'_0\|y\|^{\mathbf{m}_\mu}_0 + \tau'_j\bigr)e^{-\left(\frac{\mu}{4} - 2\rho\right)t_r}\|y\|'^{\mathbf{m}_\mu}_j\text{.}
\end{align*}
For $\mu \geq 16\rho$ one has $\frac{\mu}{4} - 2\rho \geq \frac{\mu}{4} - \frac{\mu}{8} = \frac{\mu}{8}$.
So
\[
\|\Delta x_r\|^{\mathbf{k}}_j \leq \tau''_je^{-\frac{\mu}{8}t_r}\|y\|'^{\mathbf{m}_\mu}_j
\]
with $\tau''_j \definedas \overline{\gamma}_j(\tau'_0\delta_0 + \tau'_j)$.
Finally, from this and $x_r = \sum_{s=0}^{r-1} \Delta x_s$ it follows that
\begin{align*}
\|x_r\|^{\mathbf{k}}_j &\leq \sum_{s=0}^{r-1} \|\Delta x_s\|^{\mathbf{k}}_j \\
&\leq \tau''_j\sum_{s=0}^\infty e^{-\frac{\mu}{8}t_s}\|y\|'^{\mathbf{m}_\mu}_j \\
&\leq 2\tau''_j\|y\|'^{\mathbf{m}_\mu}_j\text{,}
\end{align*}
since $\mu \geq 16\rho \geq 16$, $t_s \geq s$ and hence $e^{-\frac{\mu}{8}t_s} \leq 2^{-s}$.
So
\[
\|x_r\|^{\mathbf{k}}_j \leq \tau_j\|y\|'^{\mathbf{m}_\mu}_j
\]
for $\tau_j \definedas 2\tau''_j$.
\end{proof}
\end{proof}

Combining \cref{Proposition_Nash_Moser_injectivity,Proposition_Nash_Moser_surjectivity} now easily finishes the proof of \cref{Theorem_Nash_Moser}: \\
By \cref{Corollary_Nash_Moser_injectivity}, there exists a neighbourhood $W \subseteq E_\infty$ of $0$ \st $\tilde{f}|_W : W \to E'_\infty$ is injective.
From the modified Newton's procedure, in the way described before and because of \cref{Proposition_Nash_Moser_surjectivity}, there exists a neighbourhood $W' \subseteq E'_\infty$ and a map $g : W' \to W$ as well as a shift $\mathbf{m} \subseteq \N_0$ \st $g$ satisfies the tameness conditions
\[
\|g(y)\|^{\mathbf{k}}_j \leq \tau_j \|y\|'^{\mathbf{m}}_j
\]
for some constants $\tau_j \in [0,\infty)$ and all $j \in \N_0$.
Also, w.\,l.\,o.\,g.~one can assume that $\mathbf{m} \geq \mathbf{k}$.
Replacing $W$ by $\tilde{f}\inv(W')$, because $\tilde{f}|_{W}$ is injective one can assume that $g\circ \tilde{f}|_{W} = \id_{W}$ and $\tilde{f}|_{W}\circ g = \id_{W'}$.

Now let $y_1, y_2 \in W'$.
By \cref{Proposition_Nash_Moser_injectivity} and the tameness conditions for $g$ above, applied to $y = g(y_2)$ and $x = g(y_1)$ one has for $j \in \N_0$
\begin{align*}
\|g(y_2) - g(y_1)\|_j &\leq c'_j\bigl(\bigl(\|g(y_2)\|^{\mathbf{k}}_j + \|g(y_1)\|^{\mathbf{k}}_j\bigr) \|y_2 - y_1\|'^{\mathbf{k}}_0 + \|y_2 - y_1\|'^{\mathbf{k}}_j\bigr) \\
&\leq c'_j\bigl(\tau_j\bigl(\|y_2\|'^{\mathbf{m}}_j + \|y_1\|'^{\mathbf{m}}_j\bigr)\|y_2-y_1\|'^{\mathbf{k}}_0 + \|y_2 - y_1\|'^{\mathbf{k}}_j\bigr) \\
&\leq c'_j(\tau_j+1)\bigl(\bigl(\|y_2\|'^{\mathbf{m}}_j + \|y_1\|'^{\mathbf{m}}_j\bigr)\|y_2-y_1\|'^{\mathbf{m}}_0 + \|y_2-y_1\|'^{\mathbf{m}}_j\bigr)\text{.}
\end{align*}
From this it is immediate that $g : (E'_\infty, \|\cdot\|'^{\mathbf{m}}_j) \supseteq W' \to W \subseteq (E_\infty, \|\cdot\|_j)$ is locally Lipschitz continuous for each $j \in \N_0$.

Now let $y' \in W'$, and let $\tilde{W} \subseteq E'_\infty$ be a convex balanced neighbourhood of $0$ \st $y' + u \in W'$ for all $u \in \tilde{W}$.
Let
\begin{align*}
r^g_{y'} : \tilde{W}\times [0,1] &\to Y \\
(u,t) &\mapsto \begin{cases} \frac{1}{t}(g(y'+tu) - g(y')) - \tilde{\psi}(g(y'))u & t > 0 \\ 0 & t=0 \end{cases}\text{.}
\end{align*}
Given $t \in (0,1]$, again applying \cref{Proposition_Nash_Moser_injectivity}, this time to $x = g(y')$ and $y = g(y'+tu)$, gives for $j\in\N_0$
\begin{align*}
\|r^g_{y'}(u,t)\|_j &= \bigl\|\frac{1}{t}(g(y') - g(y'+tu)) - \tilde{\psi}(g(y'))u\bigr\|_j \\
&\leq \frac{1}{t}c''_j\bigl(1 + \|g(y')\|^{\mathbf{k}\circ\mathbf{k}}_0 + \|g(y'+tu)\|^{\mathbf{k}\circ\mathbf{k}}_0\bigr) \,\cdot \\
&\quad\; \cdot\, \bigl(\bigl(\|g(y')\|^{\mathbf{k}\circ\mathbf{k}}_j + \|g(y'+tu)\|^{\mathbf{k}\circ\mathbf{k}}_{j}\bigr)\bigl\|tu\bigr\|'^{\mathbf{k}\circ\mathbf{k}}_0 \;+ \\
&\quad\; +\; \|tu\|'^{\mathbf{k}\circ\mathbf{k}}_j\bigr)\|tu\|'^{\mathbf{k}}_0 \\
&\leq tc''_j\tau_{k_0}(\tau_{k_0+j}+1)\bigl(1 + \|y'\|'^{\mathbf{m}\circ\mathbf{k}}_0 + \|y'+tu\|'^{\mathbf{m}\circ\mathbf{k}}_0\bigr)\,\cdot \\
&\quad\; \cdot\, \bigl(\bigl(\bigl\|y'\|'^{\mathbf{m}\circ\mathbf{k}}_j + \|y'+tu\|'^{\mathbf{m}\circ\mathbf{k}}_j\bigr)\|u\|'^{\mathbf{k}\circ\mathbf{k}}_0 + \|u\|'^{\mathbf{k}\circ\mathbf{k}}_j\bigr)\|u\|'^{\mathbf{k}}_0 \\
&\leq tc''_j\tau_{k_0}(\tau_{k_0+j}+1)\bigl(1 + 2\|y'\|'^{\mathbf{m}\circ\mathbf{k}}_0 + \|u\|'^{\mathbf{m}\circ\mathbf{k}}_0\bigr) \,\cdot \\
&\quad\; \cdot\, \bigl(\bigl(2\|y'\|'^{\mathbf{m}\circ\mathbf{k}}_j + \|u\|'^{\mathbf{m}\circ\mathbf{k}}_j\bigr)\|u\|'^{\mathbf{m}\circ\mathbf{k}}_0 + \|u\|'^{\mathbf{m}\circ\mathbf{k}}_j\bigr)\|u\|'^{\mathbf{m}\circ\mathbf{k}}_0\text{.}
\end{align*}
From this one can read off that $g : (E'_\infty, \|\cdot\|'^{\mathbf{m}\circ\mathbf{k}}_j) \supseteq W' \to W \subseteq (E_\infty, \|\cdot\|_j)$ is weakly Fr{\'e}chet differentiable with derivative $Dg = g^\ast \tilde{\psi}$, for each $j \in \N_0$. \\
This finishes the proof of \cref{Theorem_Nash_Moser}.

\clearpage
\section{Nonlinear Fredholm maps}\label{Section_Fredholm_maps}

\Needspace{25\baselineskip}
\subsection{Strongly smoothing and Fredholm families of morphisms}

In this section, family of morphisms will always mean $\overline{\text{sc}}^0$ family of morphisms. \\
The words ``and analogously in the tame context'' will always mean that all topological vector spaces appearing are pre-tame $\overline{\text{sc}}$-Fr{\'e}chet spaces instead of $\overline{\text{sc}}$-Fr{\'e}chet spaces and all morphism, maps and rescalings are tame.

\begin{definition}\label{Definition_Strongly_smoothing_family}
Let $E$, $E^i$, $i=1,2$, be $\overline{\text{sc}}$-Fr{\'e}chet spaces, let $U \subseteq E$ be an open subset and let $\kappa : U\times E^1 \to E^2$ be a family of morphisms.
\begin{enumerate}[label=\arabic*.,ref=\arabic*.]
  \item Let $((\mathbb{E}\oplus \mathbb{E}^1, \psi\oplus \psi^1), (\mathbb{E}^2, \psi^2), \mathcal{K} : \mathcal{U}\oplus \mathbb{E}^1 \to \mathbb{E}^2)$ be an envelope of $\kappa$, where $\mathcal{K} = (\kappa_k : U_k\times E^1_k \to E^2_k)_{k\in\N_0}$. \\
$\mathcal{K}$ is called \emph{strongly smoothing} if for every strictly monotone increasing sequence $\mathbf{k} \subseteq \N_0$ there exists an envelope $\mathcal{K}_{\mathbf{k}} : \mathcal{U}^{\mathbf{k}} \oplus \mathbb{E}^1 \to (\mathbb{E}^2)^{\mathbf{k}}$ of $\kappa$ \st
\[
\mathcal{K}\circ (\mathbb{I}^{\mathbf{k}}\times \id_{\mathbb{E}^1}) = (\mathbb{I}^2)^{\mathbf{k}}\circ \mathcal{K}_{\mathbf{k}} : \mathcal{U}^{\mathbf{k}}\times \mathbb{E}^1 \to \mathbb{E}^2\text{.}
\]
  \item $\kappa$ is called a \emph{strongly smoothing} \iff for every point $x \in U$ there exists a neighbourhood $V \subseteq U$ of $x$ \st $\kappa|_{V\times E^1} : V\times E^1 \to E^2$ has a strongly smoothing envelope.
\end{enumerate}
And analogously in the tame context.
\end{definition}

\begin{lemma}\label{Lemma_Basic_properties_strongly_smoothing_envelopes}
Let $E$, $E^i$, $i=1,2$ be $\overline{\text{sc}}$-Fr{\'e}chet spaces, let $U \subseteq E$ be an open subset and let $\kappa : U\times E^1 \to E^2$ be a family of morphisms.
Let furthermore $((\mathbb{E}\oplus \mathbb{E}^1, \psi\oplus \psi^1), (\mathbb{E}^2, \psi^2), \mathcal{K} : \mathcal{U}\oplus \mathbb{E}^1 \to \mathbb{E}^2)$ be an envelope of $\kappa$, where $\mathcal{K} = (\kappa_k : U_k\times E^1_k \to E^2_k)_{k\in\N_0}$.
\begin{enumerate}[label=\arabic*.,ref=\arabic*.]
  \item\label{Lemma_Basic_properties_strongly_smoothing_envelopes_1} If $\mathcal{K}$ is strongly smoothing, then so is any refinement of $\mathcal{K}$.
  \item\label{Lemma_Basic_properties_strongly_smoothing_envelopes_2} Given a strictly monotone increasing sequence $\mathbf{k} \subseteq \N_0$, if $\mathcal{K}$ is strongly smoothing, then so is $\mathcal{K}^{\mathbf{k}} : (\mathcal{U}\oplus \mathbb{E}^1)^{\mathbf{k}} \to (\mathbb{E}^2)^{\mathbf{k}}$.
  \item\label{Lemma_Basic_properties_strongly_smoothing_envelopes_3} Given sc-chains $\tilde{\mathbb{E}}$, $\tilde{\mathbb{E}}^i$, $i=1,2$ and continuous linear operators $\mathbb{S} : \tilde{\mathbb{E}} \to \mathbb{E}$, $\mathbb{S}^1 : \tilde{\mathbb{E}}^1 \to \mathbb{E}^1$ and $\mathbb{T} : \mathbb{E}^2 \to \tilde{\mathbb{E}}^2$, if $\mathcal{K}$ is strongly smoothing, then so is $\mathbb{T}\circ \mathcal{K}\circ (\mathbb{S}\oplus \mathbb{S}^1) : \mathbb{S}^\ast \mathcal{U}\times \tilde{\mathbb{E}}^1 \to \tilde{\mathbb{E}}^2$.
  \item\label{Lemma_Basic_properties_strongly_smoothing_envelopes_4} If $\kappa$ is strongly smoothing, then for every point $x \in U$ there exists a neighbourhood $V \subseteq U$ of $x$ with the following property: For every envelope $\mathcal{K}$ of $\kappa|_{V\times E^1}$ of product form as above there exists a refinement $\mathcal{K}'$ of $\mathcal{K}$ and shifts $\mathbf{k}, \mathbf{l} \subseteq \N_0$ with $\mathbf{k} \geq \mathbf{l}$ \st $(\mathbb{I}^2)^{\mathbf{k}}_{\mathbf{l}}\circ \mathcal{K}'^{\mathbf{k}}$ is strongly smoothing.
\end{enumerate}
\end{lemma}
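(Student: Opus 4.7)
The plan is to dispatch parts 1--3 by a direct chase of the defining factorisation in Definition \ref{Definition_Strongly_smoothing_family}, where the witness $\mathcal{K}_{\mathbf{k}}$ of strong smoothing is transported through the operation in question. Part 4 is the real content and will be handled by combining Proposition \ref{Proposition_Strict_envelopes} with parts 1--3.

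First I would prove part 1: given a refinement $\mathcal{K}|_{\tilde{\mathcal{U}}\oplus \mathbb{E}^1}$ and an arbitrary strictly monotone increasing $\mathbf{k}$, simply restrict the witness $\mathcal{K}_{\mathbf{k}}$ for $\mathcal{K}$ to $\tilde{\mathcal{U}}^{\mathbf{k}}\oplus \mathbb{E}^1$; the required factorisation is inherited. For part 2, given $\mathbf{l}$, I would apply the strong smoothing hypothesis on $\mathcal{K}$ to the composite $\mathbf{k}\circ \mathbf{l}$ to produce $\mathcal{K}_{\mathbf{k}\circ\mathbf{l}} : \mathcal{U}^{\mathbf{k}\circ \mathbf{l}}\oplus \mathbb{E}^1 \to (\mathbb{E}^2)^{\mathbf{k}\circ \mathbf{l}}$. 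Since $\mathcal{U}^{\mathbf{k}\circ \mathbf{l}} = (\mathcal{U}^{\mathbf{k}})^{\mathbf{l}}$ and $(\mathbb{E}^2)^{\mathbf{k}\circ \mathbf{l}} = ((\mathbb{E}^2)^{\mathbf{k}})^{\mathbf{l}}$, composing with $(\mathbb{I}^1)^{\mathbf{k}}$ in the second variable yields the witness $(\mathcal{K}^{\mathbf{k}})_{\mathbf{l}}$ for $\mathcal{K}^{\mathbf{k}}$; the needed equality follows from $\mathbb{I}^{\mathbf{k}\circ\mathbf{l}} = \mathbb{I}^{\mathbf{l}}\circ (\mathbb{I}^{\mathbf{k}})^{\mathbf{l}}$ of Lemma \ref{Lemma_Rescaling_and_weak_morphisms}. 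For part 3, the envelope $\mathbb{T}^{\mathbf{k}}\circ \mathcal{K}_{\mathbf{k}}\circ (\mathbb{S}^{\mathbf{k}}\oplus \mathbb{S}^1) : (\mathbb{S}^\ast\mathcal{U})^{\mathbf{k}}\oplus \tilde{\mathbb{E}}^1 \to (\tilde{\mathbb{E}}^2)^{\mathbf{k}}$ is a witness for $\mathbb{T}\circ \mathcal{K}\circ (\mathbb{S}\oplus \mathbb{S}^1)$; the factorisation is obtained by inserting $\mathbb{T}\circ (\mathbb{I}^2)^{\mathbf{k}} = (\tilde{\mathbb{I}}^2)^{\mathbf{k}}\circ \mathbb{T}^{\mathbf{k}}$, which is a special case of Lemma \ref{Lemma_Rescaling_and_weak_morphisms}.

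The main obstacle lies in part 4, where we do not get to pick the envelope. Fix $x \in U$. Since $\kappa$ is strongly smoothing, by definition there exist a neighbourhood $V_0 \subseteq U$ of $x$ and a strongly smoothing envelope $\mathfrak{K}_0 = ((\mathbb{E}_0\oplus \mathbb{E}^1_0, \psi_0\oplus \psi^1_0), (\mathbb{E}^2_0, \psi^2_0), \mathcal{K}_0)$ of $\kappa|_{V_0\times E^1}$. Applying Proposition \ref{Proposition_Strict_envelopes} simultaneously to the three $\overline{\text{sc}}$-Fr{\'e}chet spaces involved, I shrink to a neighbourhood $V \subseteq V_0$ of $x$ on which every envelope of $\kappa|_{V\times E^1}$ is equivalent (in the sense of Definition \ref{Definition_Envelope_in_sc_Frechet_space}) to $\mathfrak{K}_0|_V$. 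Given now a product-form envelope $\mathfrak{K} = ((\mathbb{E}\oplus \mathbb{E}^1, \psi\oplus \psi^1), (\mathbb{E}^2, \psi^2), \mathcal{K})$ of $\kappa|_{V\times E^1}$, unravelling the definition of equivalence produces extensions $\mathbb{J} : (\mathbb{E}\oplus \mathbb{E}^1)^{\mathbf{k}_1} \to \mathbb{E}_0\oplus \mathbb{E}^1_0$ and $\mathbb{K}^2 : (\mathbb{E}^2_0)^{\mathbf{k}_2} \to \mathbb{E}^2$ of the weak equivalences of the ambient $\overline{\text{sc}}$-structures, together with a common refinement showing that some rescaling $\mathcal{K}^{\mathbf{k}}$ coincides with $\mathbb{K}^2\circ \mathbb{J}^\ast \mathcal{K}_0$ after further rescaling and composition with $\mathbb{I}$-morphisms.

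The endgame is then to package these maps via parts 1--3. Strong smoothing of $\mathcal{K}_0$ passes through $\mathbb{J}^\ast$ and post-composition with $\mathbb{K}^2$ by part 3, through the rescaling by part 2, and through the refinement by part 1. Carrying out the bookkeeping, the resulting factorisation writes the rescaling $\mathcal{K}'^{\mathbf{k}}$ of the appropriate refinement $\mathcal{K}'$ of $\mathcal{K}$, composed with the canonical $(\mathbb{I}^2)^{\mathbf{k}}_{\mathbf{l}}$ coming from the rescalings forced by the equivalence, as a strongly smoothing envelope in the sense of Definition \ref{Definition_Strongly_smoothing_family}. The only difficulty is choosing the shifts $\mathbf{k}\geq \mathbf{l}$ so that all of the extension rescalings $\mathbf{k}_1, \mathbf{k}_2$ appearing in the equivalence fit underneath them; taking $\mathbf{k} \definedas \mathbf{k}_1\circ \mathbf{k}_2\circ \mathbf{l}'$ for a sufficiently large shift $\mathbf{l}'$ and $\mathbf{l}$ the associated projection suffices, and the hypothesis $\mathbf{k}\geq \mathbf{l}$ in the statement is precisely what makes $(\mathbb{I}^2)^{\mathbf{k}}_{\mathbf{l}}$ well defined.
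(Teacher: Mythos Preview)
Your proposal is correct and follows essentially the same approach as the paper: parts 1--3 are dispatched by exactly the witnesses you name (the paper gives the same formulas), and part 4 proceeds by pulling back the given strongly smoothing envelope through the equivalence data and applying parts 1--3. The only difference is that the paper makes the key identification in part 4 slightly more explicit by first reducing the arbitrary envelope $\mathcal{K}$ to a strict one (via \cref{Proposition_Strict_envelopes}) and then invoking uniqueness of strict envelopes (\cref{Lemma_Envelopes}) to conclude that the rescaled $\mathcal{K}$ and the transported strongly smoothing envelope literally coincide, rather than merely share a common refinement as you phrase it.
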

\begin{proof}
\begin{enumerate}[label=\arabic*.,ref=\arabic*.]
  \item Obvious.
  \item Define $(\mathcal{K}^{\mathbf{k}})_{\mathbf{l}} \definedas \mathcal{K}_{\mathbf{k}\circ \mathbf{l}}\circ (\id_{\mathbb{E}^{\mathbf{k}\circ\mathbf{l}}}\oplus (\mathbb{I}^1)^{\mathbf{k}})$.
  \item Define $(\mathbb{T}\circ \mathcal{K}\circ (\mathbb{S}\oplus \mathbb{S}^1))_{\mathbf{k}} \definedas \mathbb{T}^{\mathbf{k}}\circ \mathcal{K}_{\mathbf{k}}\circ (\mathbb{S}^{\mathbf{k}} \oplus \mathbb{S}^1)$.
  \item Let $V \subseteq U$ be a neighbourhood of $x$ as in \cref{Proposition_Strict_envelopes} and after possibly making $V$ smaller, assume that $\kappa|_{V\times E^1}$ has a strongly smoothing envelope $\tilde{\mathcal{K}} : \tilde{\mathcal{V}}\oplus \tilde{\mathbb{E}}^1 \to \tilde{\mathbb{E}}^2$.
Let $\mathcal{K} : \mathcal{V}\oplus \mathbb{E}^1 \to \mathbb{E}^2$ be any envelope of $\kappa|_{V\times E^1}$.
By shifting and going to a refinement, assume that $\mathcal{K}$ is strict (\cf \cref{Proposition_Strict_envelopes}).
By \cref{Definition_Envelope_in_sc_Frechet_space,Proposition_Strict_envelopes} there exist weak equivalences $\mathbb{J}\oplus \mathbb{J}^1 : \mathbb{E}\oplus \mathbb{E}^1 \to \tilde{\mathbb{E}} \oplus \tilde{\mathbb{E}}^1$ and $\mathbb{K}' : \tilde{\mathbb{E}}^2 \to \mathbb{E}^2$ \st $(\mathbb{I}^2)^{\mathbf{k}\circ \mathbf{l}}\circ \mathcal{K}^{\mathbf{k}\circ \mathbf{l}}$ (which is still strict by \cref{Lemma_Rescaling_of_envelopes}) and $\mathbb{K}'\circ \left(\mathbb{J}^\ast \tilde{\mathcal{K}}\right)^{\mathbf{l}}$ are weakly equivalent.
After rescaling $\mathcal{K}$ (after wich it is still strict by \cref{Lemma_Rescaling_of_envelopes}) and rescaling, as well as taking a refinement of, $\mathcal{K}$ (after which it is still strongly smoothing by \labelcref{Lemma_Basic_properties_strongly_smoothing_envelopes_1,Lemma_Basic_properties_strongly_smoothing_envelopes_2}), $(\mathbb{I}^2)^{\mathbf{k}\circ \mathbf{l}}\circ \mathcal{K}^{\mathbf{k}\circ \mathbf{l}}$ and $\mathbb{K}'\circ \left(\mathbb{J}^\ast \tilde{\mathcal{K}}\right)^{\mathbf{l}}$ can be assumed to be strict and equivalent, hence they coincide by \cref{Lemma_Envelopes}.
Since the latter is strongly smoothing by \labelcref{Lemma_Basic_properties_strongly_smoothing_envelopes_2,Lemma_Basic_properties_strongly_smoothing_envelopes_3}, so is the former.
\end{enumerate}
\end{proof}

\begin{example}\label{Example_Strongly_smoothing_family}
Let $E$, $E^i$, $i=1,2$, be $\overline{\text{sc}}$-Fr{\'e}chet spaces and let $U \subseteq E$ be an open subset.
\begin{enumerate}[label=\arabic*.,ref=\arabic*.]
  \item\label{Example_Strongly_smoothing_family_1} Given a strongly smoothing morphism $K \in \operatorname{Sm}(E^1, E^2)$, the constant family
\begin{align*}
U\times E^1 &\to E^2 \\
(x,u) &\mapsto K(u)
\end{align*}
is strongly smoothing.
  \item\label{Example_Strongly_smoothing_family_2} Given a family of morphisms $\kappa : U \times E^1 \to E^2$, if there exists a finite dimensional $\overline{\text{sc}}$-Fr{\'e}chet space $C$ and families of morphisms $\kappa_1 : U\times E^1 \to C$, $\kappa_2 : U\times C \to E^2$ \st $\kappa = \kappa_2\circ \kappa_1$, then $\kappa$ is strongly smoothing.
\end{enumerate}
\end{example}

\begin{remark}
Note that if $\kappa : U\times E^1 \to E^2$ is a strongly smoothing family of morphisms, then $\kappa(x) : E^1 \to E^2$ is strongly smoothing in the sense of \cref{Definition_Strongly_smoothing_morphism} for all $x \in U$.
\end{remark}

\begin{proposition}\label{Lemma_Basic_properties_strongly_smoothing_families}
Let $E$, $E'$ and $E^i$, $i=0,\dots,3$, be $\overline{\text{sc}}$-Fr{\'e}chet spaces, let $U \subseteq E$ and $U' \subseteq E'$ be open subsets together with an $\overline{\text{sc}}^0$ map $f : U' \to U$ and let
\begin{align*}
\kappa : U\times E^1 &\to E^2 & \phi : U \times E^0 &\to E^1 & \psi : U\times E^2 &\to E^3
\end{align*}
be families of morphisms.
\begin{enumerate}[label=\arabic*.,ref=\arabic*.]
  \item If $\kappa$ is strongly smoothing then so are $f^\ast \kappa$, $\kappa\circ \phi$ and $\psi\circ \kappa$.
  \item The map
\begin{align*}
U &\to \operatorname{Sm}(E^1, E^2) \\
x &\mapsto \kappa(x)
\end{align*}
is well defined and continuous.
\end{enumerate}
And analogously in the tame context.
\end{proposition}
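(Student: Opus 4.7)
My plan is to prove both statements by working with envelopes and reducing them to the envelope-level results of \cref{Lemma_Basic_properties_strongly_smoothing_envelopes}. Strong smoothness of a family is a local property, so in every case I fix a base point and choose a small neighbourhood on which $\kappa$ has a strongly smoothing envelope $\mathcal{K} : \mathcal{V}\oplus \mathbb{E}^1 \to \mathbb{E}^2$; using \cref{Proposition_Strict_envelopes} this envelope may be taken strict and, after a suitable rescaling, compatible with the sc-chains used for $f$, $\phi$ or $\psi$.

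For the three claims in part 1 I would proceed uniformly. For $f^\ast \kappa$, pick $y_0 \in U'$ and set $x_0 = f(y_0)$; choose a neighbourhood $V\subseteq U$ of $x_0$ on which $\kappa|_{V\times E^1}$ admits a strongly smoothing envelope $\mathcal{K}$, and use the $\overline{\mathrm{sc}}^0$-property of $f$ to obtain, on a smaller $V'\subseteq U'$ around $y_0$ with $f(V')\subseteq V$, a strict envelope $\mathcal{F} : \mathcal{V}' \to \mathbb{E}$ of $f|_{V'}$. Then $\mathcal{K}\circ (\mathcal{F}\oplus \id_{\mathbb{E}^1})$ is an envelope of $f^\ast\kappa|_{V'\times E^1}$, and its $\mathbf{k}$-lifting may be taken to be $\mathcal{K}_\mathbf{k}\circ (\mathcal{F}^\mathbf{k}\oplus \id_{\mathbb{E}^1})$; a direct check using the defining relation for $\mathcal{K}_\mathbf{k}$ and the envelope compatibility of $\mathcal{F}$ verifies the strong smoothness condition. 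For $\kappa\circ \phi$ and $\psi\circ \kappa$, which share the base $U$, I pick a common neighbourhood $V$ on which $\kappa$ has a strongly smoothing envelope $\mathcal{K}$ and $\phi$ (respectively $\psi$) has an envelope $\mathcal{P}$ (respectively $\mathcal{L}$), form the pointwise-in-$x$ composite envelopes, and define their $\mathbf{k}$-liftings in terms of $\mathcal{K}_\mathbf{k}$ together with $\mathcal{P}$ or with the appropriate shifted component of $\mathcal{L}$, reducing again to a diagram chase built from the defining relation.

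For part 2, well-definedness is immediate from the pointwise remark following \cref{Example_Strongly_smoothing_family} together with \cref{Definition_Strongly_smoothing_morphism}. For continuity, fix $x_0 \in U$ and a strongly smoothing envelope $\mathcal{K}$ on a neighbourhood $V$. For every shift $\mathbf{k}$ strong smoothness produces $\mathcal{K}_\mathbf{k}$, whose level-zero component $\kappa_{\mathbf{k},0}\colon U_{k_0}\times E^1_0 \to E^2_{k_0}$ is continuous and linear in the second variable; the defining relation $\mathcal{K}\circ (\mathbb{I}^\mathbf{k}\times \id_{\mathbb{E}^1}) = (\mathbb{I}^2)^\mathbf{k}\circ \mathcal{K}_\mathbf{k}$ together with the injectivity of the sc-inclusions forces these components to be compatible as $\mathbf{k}$ varies. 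Restricting to $V\subseteq U_\infty$ and letting $k_0 \to \infty$ assembles them into a single continuous map $\Phi\colon V\times E^1_0 \to E^2_\infty$, linear in the second factor, whose restriction to $V\times \iota^\infty_0(E^1_\infty)$ recovers $\kappa(x)$ in the sense of \cref{Lemma_Characterisation_strongly_smoothing}. Applying \cref{Lemma_Weakly_continuous_implies_strongly_continuous} with the compact embedding $E^1_1 \hookrightarrow E^1_0$ yields continuity of $x \mapsto \Phi(x,\cdot)\circ \iota$ as a map $V \to L_{\mathrm{c}}(E^1_1, E^2)$, and post-composition with the canonical continuous inclusion $L_{\mathrm{c}}(E^1_1, E^2) \to \operatorname{Sm}(E^1, E^2)$ from \cref{Corollary_Strongly_smoothing_operators} delivers the desired continuity of $x \mapsto \kappa(x)$.

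The main obstacle I anticipate is the bookkeeping needed to match sc-structures when composing envelopes, particularly in the $f^\ast\kappa$ case where $f$ and $\kappa$ may initially be presented with respect to different compatible $\overline{\mathrm{sc}}$-structures on $E$; this is resolved by the rescalings supplied by \cref{Proposition_Strict_envelopes}, after which the remaining verifications are routine diagram chases reducing to \cref{Lemma_Basic_properties_strongly_smoothing_envelopes}. The tame-context version follows by the same arguments with the word ``tame'' inserted throughout: every envelope, rescaling, compact embedding, and composition invoked preserves tameness by \cref{Proposition_Standard_properties_tame} and the tame analogues recorded in \cref{Remark_Subspaces_sums_pretame_context}.
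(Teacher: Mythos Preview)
Your proposal is correct and follows essentially the same approach as the paper: for part~1 you build composite envelopes and define their $\mathbf{k}$-liftings exactly as the paper does (the paper writes out the three formulas $(\mathcal{F}^\ast\mathcal{K})_{\mathbf{k}} = \mathcal{K}_{\mathbf{k}}\circ(\mathcal{F}^{\mathbf{k}}\times\id)$, $(\mathcal{K}\circ(\id,\mathcal{G}))_{\mathbf{k}} = \mathcal{K}_{\mathbf{k}}\circ(\id,\mathcal{G}\circ(\mathbb{I}^{\mathbf{k}}\times\id))$, and $(\mathcal{H}\circ(\id,\mathcal{K}))_{\mathbf{k}} = \mathcal{H}^{\mathbf{k}}\circ(\id,\mathcal{K}_{\mathbf{k}})$ explicitly), and for part~2 you assemble the level-zero components of the $\mathcal{K}_{\mathbf{k}}$ into a continuous map $U_\infty\times E^1_0\to E^2_\infty$ and then invoke \cref{Lemma_Weakly_continuous_implies_strongly_continuous} with the compact inclusion $E^1_1\hookrightarrow E^1_0$, which is precisely the paper's argument.
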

\begin{proof}
\begin{enumerate}[label=\arabic*.,ref=\arabic*.]
  \item First, observe that $f^\ast \kappa = \kappa\circ (f\times \id_{E^1})$, $\kappa\circ \phi = \kappa\circ (\id_U, \phi)$ and $\psi\circ \kappa = \psi\circ (\id_U, \kappa)$, where $(\id_U, \phi) : U\times E^0 \to U\times E^1$, $(x,u) \mapsto (x, \phi(x,u))$, etc.
Given $x\in U$, $x' \in U'$, going through the proof of \cref{Proposition_Composition_sc_continuous}, \labelcref{Proposition_Composition_sc_continuous_3}, and using \cref{Lemma_Basic_properties_strongly_smoothing_envelopes}, one can assume that there are open neighbourhoods $V' \subseteq U'$ and $V \subseteq U$ of $x'$ and $x$, respectively, \st $f(V') \subseteq V$, together with strict envelopes $\mathcal{F} : \mathbb{E}' \supseteq \mathcal{V}' \to \mathbb{E}$ of $f|_{V'}$, $\mathcal{K} : \mathbb{E}\oplus \mathbb{E}^1 \supseteq \mathcal{V}\oplus \mathbb{E}^1 \to \mathbb{E}^2$ of $\kappa|_{V\times E^1}$, $\mathcal{G} : \mathbb{E}\oplus \mathbb{E}^0 \supseteq \mathcal{V}\oplus \mathbb{E}^0 \to \mathbb{E}^1$ of $\phi|_{V\times E^0}$ and $\mathcal{H} : \mathbb{E}\oplus \mathbb{E}^2 \supseteq \mathcal{V}\oplus \mathbb{E}^2 \to \mathbb{E}^3$ of $\psi|_{V\times E^2}$.
Furthermore, $\mathcal{K}$ can be assumed to be strongly smoothing. \\
Let $\mathbf{k} \subseteq \N_0$ be a strictly monotone increasing sequence. \\
Then an envelope of $f^\ast \kappa|_{V'\times E^1}$ is given by $\mathcal{F}^\ast \mathcal{K} \definedas \mathcal{K}\circ (\mathcal{F} \times \id_{\mathbb{E}^1})$ and one can define $(\mathcal{F}^\ast \mathcal{K})_{\mathbf{k}} \definedas \mathcal{K}_{\mathbf{k}}\circ (\mathcal{F}^{\mathbf{k}}\times \id_{\mathbb{E}^1})$. \\
An envelope of $\kappa\circ \phi|_{V\times E^0}$ is given by $\mathcal{K}\circ (\id_{\mathcal{V}}, \mathcal{G})$ and one can define $(\mathcal{K}\circ (\id_{\mathcal{V}}, \mathcal{G}))_{\mathbf{k}} \definedas \mathcal{K}_{\mathbf{k}}\circ (\id_{\mathcal{V}^{\mathbf{k}}}, \mathcal{G}\circ (\mathbb{I}^{\mathbf{k}}\times \id_{\mathbb{E}^0})$. \\
Similarly, an envelope of $\psi\circ \kappa|_{V\times E^1}$ is given by $\mathcal{H}\circ (\id_{\mathcal{V}}, \mathcal{K})$ and one can define $(\mathcal{H}\circ (\id_{\mathcal{V}}, \mathcal{K}))_{\mathbf{k}} \definedas \mathcal{H}^{\mathbf{k}}\circ (\id_{\mathcal{V}^{\mathbf{k}}}, \mathcal{K}_{\mathbf{k}})$.
  \item This is a local question, so one can assume that $\kappa$ has a strongly smoothing envelope $\mathcal{K} : \mathcal{U} \oplus \mathbb{E}^1 \to \mathbb{E}^2$ as in \cref{Definition_Strongly_smoothing_family}.
For any strictly monotone increasing sequence $\mathbf{k} \subseteq \N_0$ let $\mathcal{K}_{\mathbf{k}} : \mathcal{U}^{\mathbf{k}}\oplus \mathbb{E}^1 \to (\mathbb{E}^2)^{\mathbf{k}}$ be as in \cref{Definition_Strongly_smoothing_family}.
For $x \in U_\infty$ and $u \in E^1_0$ and any $k \in \N_0$, let $\mathbf{k} \definedas (k + j)_{j\in\N_0}$.
Then
\begin{align*}
\kappa_0\circ (\iota^\infty_0\times \id_{E^1_0})(x,u) &= \kappa_0\circ (\iota^{k}_0\times \id_{E^1_0})(\iota^\infty_{k}(x),u) \\
&= \left(\mathcal{K}\circ (\mathbb{I}^{\mathbf{k}}\times \id_{\mathbb{E}^1})\right)_0(\iota^\infty_{k}(x), u) \\
&= \left((\mathbb{I}^2)^{\mathbf{k}}\circ \mathcal{K}_{\mathbf{k}}\right)_0\circ (\iota^\infty_{k}\times \id_{E^1_0})(x,u) \\
&= (\iota^2)^{k}_0\circ \left((\mathcal{K}_{\mathbf{k}})_0\circ (\iota^\infty_{k}\times \id_{E^1_0})\right)(x,u)\text{.}
\end{align*}
Hence for every $k \in \N_0$, $\kappa_0\circ (\iota^\infty_0\times \id_{E^1_0}) : U_\infty \times E^1_0 \to E^2_0$ factors via a continuous map through $(\iota^2)^{k}_0 : E^2_{k} \to E^2_0$.
So there exists a continuous map $\overline{\kappa}_0 : U_\infty \times E^1_0 \to E^2_\infty$ \st $\kappa_0\circ (\iota^\infty_0\times \id_{E^1_0}) = (\iota^2)^\infty_0\circ \overline{\kappa}_0$.
By \cref{Lemma_Weakly_continuous_implies_strongly_continuous}, the map $U_\infty \to L_{\mathrm{c}}(E^1_1, E^2_\infty)$, $x \mapsto \overline{\kappa}_0\circ (\iota^1)^1_0$, is continuous, hence defines the desired continuous map $U \to \operatorname{Sm}(E^1, E^2)$.
\end{enumerate}
\end{proof}

\begin{proposition}\label{Proposition_Invertibility_perturbation_by_strongly_smoothing_family}
Let $E$ and $E^i$, $i = 1,2$, be $\overline{\text{sc}}$-Fr{\'e}chet spaces, let $U \subseteq E$ be an open subset, let $x_0 \in U$ and let $\phi : U\times E^1 \to E^2$ and $\kappa : U\times E^1 \to E^2$ be families of morphisms. \\
If $\phi$ is invertible and $\kappa$ is strongly smoothing with $\kappa(x_0) = 0$ then there exists a neighbourhood $V \subseteq U$ of $x_0$ and a strongly smoothing family of morphisms $\kappa' : V\times E^2 \to E^1$ with $\kappa'(x_0) = 0$ \st $(\phi + \kappa)|_V$ is invertible and $(\phi + \kappa)|_V\inv = \phi\inv|_V + \kappa'$. \\
And analogously in the tame context.
\end{proposition}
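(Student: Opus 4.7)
The plan is to reduce the problem to perturbing the identity family and then to apply the linear result \cref{Proposition_Perturbation_of_invertible_by_strongly_smoothing_invertible} pointwise, upgrading the pointwise construction to a family using continuity of the strongly smoothing family $\kappa$. Concretely, I would first define $K : U \times E^1 \to E^1$ by $K(x)u \definedas \phi(x)\inv \kappa(x)u$. By \cref{Lemma_Basic_properties_strongly_smoothing_families} this is a strongly smoothing family of morphisms with $K(x_0) = 0$, and one has the factorisation $\phi + \kappa = \phi \circ (\id_{E^1} + K)$ of families of morphisms. So it suffices to find a neighbourhood $V \subseteq U$ of $x_0$ and a strongly smoothing family $K'' : V\times E^1 \to E^1$ with $K''(x_0) = 0$ such that $(\id_{E^1} + K(x))\inv = \id_{E^1} + K''(x)$ for all $x \in V$; then $\kappa' \definedas K''\circ \phi\inv|_V : V\times E^2 \to E^1$ is a strongly smoothing family with $\kappa'(x_0) = 0$ by \cref{Lemma_Basic_properties_strongly_smoothing_families}, and a direct computation shows $(\phi+\kappa)|_V\circ (\phi\inv|_V + \kappa') = \id_{E^2}$ and vice versa.

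For the pointwise construction of $K''$, I would fix a compatible $\overline{\text{sc}}$-structure $(\mathbb{E}^1, \psi^1)$ on $E^1$. Unpacking the proof of \cref{Lemma_Basic_properties_strongly_smoothing_families}, part~2, one sees that after possibly passing to a smaller neighbourhood $V_0 \subseteq U$ of $x_0$ and rescaling by some shift, the family $K$ gives a continuous map $V_0 \to L_{\mathrm{c}}(E^1_{i_0}, E^1_\infty)$ with $K(x_0) = 0$, where $i_0 \in \N_0$ may be chosen as large as desired. By \cref{Proposition_Perturbation_of_invertible_by_strongly_smoothing_invertible} applied to the identity $\id_{E^1}$, after enlarging $i_0$ one can arrange that there exists a neighbourhood $\mathcal{O} \subseteq L_{\mathrm{c}}(E^1_{i_0}, E^1_\infty)$ of $0$ such that for every $L \in \mathcal{O}$, $\id_{E^1} + L$ is invertible with $(\id_{E^1}+L)\inv = \id_{E^1} + L''$ for a unique $L'' \in \operatorname{Sm}(E^1, E^1)$ given by the explicit Neumann series
\begin{equation*}
L'' = -L\circ \sum_{k=0}^\infty \bigl(-\iota^\infty_{i_0}\circ L\bigr)^k\text{.}
\end{equation*}
Shrinking $V_0$ to $V$ so that $K(x) \in \mathcal{O}$ for all $x \in V$, this defines $K''(x) \in \operatorname{Sm}(E^1, E^1)$ pointwise, with $K''(x_0) = 0$.

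The main obstacle is to show that this pointwise $K''$ is in fact a strongly smoothing \emph{family} of morphisms, i.e.\ that it is $\overline{\text{sc}}^0$ with an envelope of the type demanded by \cref{Definition_Strongly_smoothing_family}, and not merely a pointwise assignment of strongly smoothing operators. My plan here is to exploit the Neumann series formula: since $x \mapsto K(x)$ is continuous from $V$ into $L_{\mathrm{c}}(E^1_{i_0}, E^1_\infty)$ and composition of operators is continuous in the relevant operator-norm topologies, each finite truncation of the series defines a continuous map $V \to L_{\mathrm{c}}(E^1_{i_0}, E^1_\infty)$, and the contraction estimate $\|\iota^\infty_{i_0}\circ K(x)\|_{L_{\mathrm{c}}(E^1_{i_0}, E^1_{i_0})} < 1/2$ on $V$ gives locally uniform convergence of the series. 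Hence $x \mapsto K''(x)$ is itself continuous $V \to L_{\mathrm{c}}(E^1_{i_0}, E^1_\infty)$; reversing the construction in the proof of \cref{Lemma_Basic_properties_strongly_smoothing_families}, part~2, one assembles from this datum a strongly smoothing envelope $\mathcal{K}''$ of $K''|_V$ exactly as in \cref{Lemma_Basic_properties_strongly_smoothing_envelopes}, part~3, taking $\mathcal{K}''_{\mathbf{k}}$ to be the operator-valued map $x \mapsto K''(x)$ read into appropriate levels. In the tame context, observe that the whole argument goes through because $K$ is tame by \cref{Proposition_Standard_properties_tame}, tameness is preserved by composition and the Neumann series (it only requires summing geometric series of norms multiplied by tame factors), and the tame version of \cref{Proposition_Perturbation_of_invertible_by_strongly_smoothing_invertible} (\cref{Remark_Subspaces_sums_pretame_context}) yields $K''$ that is still tame; hence $\kappa'$ is a tame strongly smoothing family as required.
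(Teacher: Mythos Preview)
Your overall strategy is the same as the paper's: reduce to $\phi = \id_{E^1}$ via $K = \phi^{-1}\circ\kappa$, then invert $\id_{E^1} + K$ by a Neumann series. The reduction step and the pointwise Neumann series formula are exactly what the paper does.

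There is, however, a genuine gap in how you pass from the pointwise construction to a strongly smoothing \emph{family}. You produce a continuous map $V \to L_{\mathrm{c}}(E^1_{i_0}, E^1_\infty)$, $x \mapsto K''(x)$, where $V \subseteq U \subseteq E$ lives at the $\infty$-level, and then claim that ``reversing the construction in the proof of \cref{Lemma_Basic_properties_strongly_smoothing_families}, part~2'' yields a strongly smoothing envelope. But that lemma only goes one way: it extracts a continuous map into $\operatorname{Sm}(E^1,E^2)$ \emph{from} a strongly smoothing family, and the converse is neither stated nor true without further input. To build the envelope $\mathcal{K}''$ required by \cref{Definition_Strongly_smoothing_family} you need continuous maps on the envelope levels $V_k \subseteq E_k$, not just on $V_\infty$. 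The paper resolves this by exploiting the envelope data of $K$ directly: the strongly smoothing envelope of $\kappa$ supplies continuous maps $\kappa^k_j : U_k \times E'_j \to E'_k$ for all $0 \le j \le k$; one then sums the Neumann series once, at the fixed Banach level $E'_1$, obtaining $\mu : V_1 \to L_{\mathrm{c}}(E'_1,E'_1)$ continuous on the envelope neighbourhood $V_1$ (not $V_\infty$), and defines $\kappa'^k_j(x)u \definedas -\kappa^k_1(x)\,\mu(\iota^k_1(x))\,\iota'^j_1(u)$ for $x \in V_k$. This is well defined because $\iota^k_1(x) \in V_1$ and $\kappa^k_1$ is given on $V_k$.

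Your tame argument is also too vague. The point is not that ``tameness is preserved by summing geometric series of norms'': the Neumann series is summed only at the single Banach level $E'_1$, giving the uniform bound $\|\mu(x)\|_{L_{\mathrm{c}}(E'_1,E'_1)} < 2$, and then the tame estimate at level $k$ comes from a \emph{single} application of the tame estimate for $\kappa^k_1$, namely $\|\kappa'^k_j(x)u\|'_k \le C_k(\|x\|_k + \|\mu(\cdot)\iota'^j_1 u\|'_1) \le 2C_k(\|x\|_k + \|u\|'_j)$. Without fixing the level at which the series is summed, one cannot control how the constants behave across levels.
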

\begin{proof}
\begin{claim}
One can assume w.\,l.\,o.\,g.~that $E^1 = E^2 \defines E'$ and $\phi = \id_{E'}$.
\end{claim}
\begin{proof}
$(\phi + \kappa)\circ \phi\inv = \phi\circ\phi\inv + \kappa\circ \phi\inv = \id_{E^2} + \kappa\circ \phi\inv$ and $\kappa\circ \phi\inv$ is strongly smoothing by \cref{Lemma_Basic_properties_strongly_smoothing_families} and satisfies $(\kappa\circ \phi\inv)(x_0) = \kappa(x_0)\phi\inv(x_0) = 0$.
So assuming that the result holds for $\phi = \id_{E^2}$ there exists a neighbourhood $V \subseteq U$ of $x_0$ and $\tilde{\kappa}' : V\times E^2 \to E^2$ \st $\id_{E^2} + \kappa\circ \phi\inv|_V$ is invertible with inverse $(\id_{E^2} + \kappa\circ\phi\inv)|_V\inv = \id_{E^2} + \tilde{\kappa}'$.
Then $(\phi + \kappa)|_V\inv = \phi\inv|_V\circ (\id_{E^2} + \kappa\circ \phi\inv)|_V\inv = \phi\inv|_V\circ (\id_{E^2} + \tilde{\kappa}') = \phi\inv|_V + \phi\inv|_V\circ\tilde{\kappa}'$.
Define $\kappa' \definedas \phi\inv|_V\circ\tilde{\kappa}'$, which is strongly smoothing again by \cref{Lemma_Basic_properties_strongly_smoothing_families}.
\end{proof}
After applying this claim, an analogous argument to the one used in the proof of \cref{Lemma_Basic_properties_strongly_smoothing_families}, choosing envelopes $\mathcal{K} : \mathcal{U}\oplus \mathbb{E}' \to \mathbb{E}'$ and $\mathcal{K}_{\mathbf{k}} : \mathcal{U}^{\mathbf{k}}\oplus \mathbb{E}' \to \mathbb{E}'^{\mathbf{k}}$ for $\mathbf{k} \definedas (k+j)_{j\in\N_0}$ (which is tame) and $k\in\N_0$ arbitrary, shows that $\kappa$ defines and is defined by a sequence of continuous maps
\[
\kappa^k_j : U_k\times E'_j \to E'_k
\]
for $k\in\N_0$ and $j = 0, \dots, k$ which satisfy the obvious compatibility conditions under the inclusions in $\mathbb{E}$ and $\mathbb{E}'$.
W.\,l.\,o.\,g.~one can also assume that $\mathcal{U}$ is strict.
By \cref{Lemma_Weakly_continuous_implies_strongly_continuous}, for any $0 < j \leq k$, it follows from $\kappa^k_j = \kappa^k_{j-1}\circ (\id\times \iota'^j_{j-1})$ that the map
\begin{align*}
\tilde{\kappa}^k_j : U_k &\to L_{\mathrm{c}}(E'_j, E'_k) \\
x &\mapsto \kappa^k_j(x)
\end{align*}
is continuous.
Also, by assumption, $\tilde{\kappa}^k_j(x_0) = 0$ for all $0 < j \leq k$, so in particular $\lambda \definedas \tilde{\kappa}^1_1 : U_1 \to L_{\mathrm{c}}(E'_1, E'_1)$ is continuous with $\lambda(x_0) = 0$. \\
Hence there is a neighbourhood $V_1 \subseteq U_1$ \st $\|\lambda(x)\| < \frac{1}{2}$ for all $x \in V_1$.
$\|\lambda(x)\|$ here denotes the operator norm of $\lambda(x)$.
It follows that the map
\begin{align*}
\mu : V_1 &\to L_{\mathrm{c}}(E'_1, E'_1) \\
x &\mapsto \sum_{\ell = 0}^\infty (-\lambda(x))^\ell
\end{align*}
is well defined and continuous, and furthermore that $\|\mu(x)\| < 2$ for all $x \in V_1$.
Now define $V_k \definedas (\iota^k_1)\inv(V_1) \subseteq U_k$ and
\begin{align*}
\kappa'^k_j : V_k \times E'_j &\to E'_k \\
(x,u) &\mapsto -\kappa^k_1(x)\mu(\iota^k_1(x)) \iota'^j_1(u)
\end{align*}
for all $1 \leq j \leq k$.
The $\kappa'^k_j$ are obviously continuous as compositions of continuous functions and
\[
(\id_{E'_1} + \kappa^1_1)|_V\inv = \id_{E'_1} + \kappa'^1_1
\]
as a straightforward calculation shows.
Replacing $\mathbb{E}$ and $\mathbb{E}'$ by $\mathbb{E}^{\mathbf{1}}$ and $\mathbb{E}'^{\mathbf{1}}$, respectively (to account for the fact that one always needed $j \geq 1$ in the above), the $\kappa'^k_j$ define strongly smoothing envelopes $\mathcal{K}' : \mathcal{V}\oplus \mathbb{E}' \to \mathbb{E}'$ and $\mathcal{K}'_{\mathbf{k}} : \mathcal{V}^{\mathbf{k}}\oplus \mathbb{E}' \to \mathbb{E}'^{\mathbf{k}}$, for all strictly monotone increasing sequences $\mathbf{k} \subseteq \N_0$, as in the definition of a strongly smoothing family of morphisms $\kappa' : V\times E' \to E'$ with $(\id_{E'} + \kappa)|_V\inv = \id_{E'} + \kappa'$. \\
Finally, one needs to show that in the tame context $\kappa'$ is tame, which will follow by showing that $\mathcal{K}'$ and the $\mathcal{K}'_{\mathbf{k}}$ are tame provided that $\mathcal{K}$ and the $\mathcal{K}_{\mathbf{k}}$ are.
It is fairly straightforward to see, directly from \cref{Definition_Tame_nonlinear_map}, that this amounts to the existence of constants $C_{k} > 0$ \st
\[
\|\kappa^k_j(x)u\|'_k \leq C_{k}(\|x\|_k + \|u\|_j)
\]
for all $0 \leq j\leq k$ and $(x,u) \in U_k\times E'_j$.
Now for $1 \leq j\leq k$ and $(x,u) \in V_k\times E'_j$,
\begin{align*}
\|\kappa'^k_j(x)u\|'_k &= \left\|-\kappa^k_1(x)\mu(\iota^k_1(x)) \iota'^j_1(u)\right\|'_k \\
&\leq C_{k}\left(\|x\|_k + \left\|\mu(\iota^k_1(x)) \iota'^j_1(u)\right\|_1\right) \\
&\leq C_{k}\left(\|x\|_k + 2\|\iota'^j_1(u)\|_1\right) \\
&\leq 2C_{k}\left(\|x\|_k + \|u\|_j\right)\text{.}
\end{align*}
\end{proof}

\begin{definition}\label{Definition_Family_of_Fredholm_morphisms}
Let $E$, $E^i$, $i=1,2$, be $\overline{\text{sc}}$-Fr{\'e}chet spaces, let $U \subseteq E$ be an open subset and let $\phi : U\times E^1 \to E^2$ be a family of morphisms.
\begin{enumerate}[label=\arabic*.,ref=\arabic*.]
  \item $\phi$ is called a \emph{Fredholm} \iff it is locally invertible modulo strongly smoothing families of morphisms, \ie \iff for every $x_0 \in U$ there exist a neighbourhood $V \subseteq U$ of $x_0$ and families of morphisms
\begin{align*}
\psi : V\times E^2 &\to E^1 & \kappa : V\times E^1 &\to E^1 & \kappa' : V\times E^2 &\to E^2
\end{align*}
with $\kappa$ and $\kappa'$ strongly smoothing \st
\begin{align*}
\psi \circ \phi|_{V\times E^1} = \id_{E^1} + \kappa
\intertext{and}
\phi|_{V\times E^1} \circ \psi = \id_{E^2} + \kappa'\text{.}
\end{align*}
$\psi$ is then called a \emph{local Fredholm inverse} to $\phi$.
  \item The map
\begin{align*}
\ind : U &\to \Z \\
x &\mapsto \ind(\phi(x))\text{,}
\end{align*}
where $\ind(\phi(x))$ denotes the Fredholm index of the Fredholm morphism $\phi(x) : E^1 \to E^2$ in the sense of \cref{Definition_sc_Fredholm}, is called the \emph{(Fredholm) index of $\phi$}.
  \item The map
\begin{align*}
\corank\phi : U &\to \N_0 \\
x &\mapsto \dim\coker(\phi(x))\text{,}
\end{align*}
where $\coker(\phi(x))$ denotes the (finite dimensional) cokernel of the Fredholm morphism $\phi(x) : E^1 \to E^2$, is called the \emph{corank of $\phi$}. \\
$\phi$ is said to have \emph{constant rank} if $\corank\phi : U \to \N_0$ is constant.
\end{enumerate}
And analogously in the tame context.
\end{definition}

\begin{example}
Every invertible family of morphisms $\phi$ is Fredholm of constant rank with $\corank \phi \equiv 0$ and $\ind \phi \equiv 0$.
\end{example}

\begin{proposition}\label{Lemma_Basic_properties_families_of_Fredholm_morphisms}
Let $E$, $E'$ and $E^i$, $i = 1, \dots, 3$, be $\overline{\text{sc}}$-Fr{\'e}chet spaces, let $U \subseteq E$ and $U' \subseteq E'$ be open subsets together with an $\overline{\text{sc}}^0$ map $f : U' \to U$ and let
\begin{align*}
\phi : U\times E^1 &\to E^2\text{,} \\
\psi : U\times E^2 &\to E^3
\intertext{and}
\kappa : U\times E^1 &\to E^2
\end{align*}
be families of morphisms with $\phi$ and $\psi$ Fredholm and $\kappa$ strongly smoothing.
\begin{enumerate}[label=\arabic*.,ref=\arabic*.]
  \item\label{Lemma_Basic_properties_families_of_Fredholm_morphisms_1} $f^\ast \phi : U' \times E^1 \to E^2$ is Fredholm with $\ind(f^\ast \phi) = f^\ast \ind (\phi)$.
  \item\label{Lemma_Basic_properties_families_of_Fredholm_morphisms_2} $\psi\circ \phi : U\times E^1 \to E^3$ is Fredholm with $\ind (\psi\circ \phi) = \ind \psi + \ind \phi$.
  \item\label{Lemma_Basic_properties_families_of_Fredholm_morphisms_3} $\phi + \kappa : U\times E^1 \to E^2$ is Fredholm with $\ind (\phi + \kappa) = \ind \phi$.
\end{enumerate}
And analogously in the tame context.
\end{proposition}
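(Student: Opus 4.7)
The plan is to reduce each assertion to the analogous pointwise statement \cref{Lemma_Basic_properties_sc_Fredholm_operators} combined with the ideal property of strongly smoothing families (\cref{Lemma_Basic_properties_strongly_smoothing_families}) and the perturbation result (\cref{Proposition_Invertibility_perturbation_by_strongly_smoothing_family}). In each case, the key is to exhibit a local Fredholm inverse explicitly, verify that the compositional error is a strongly smoothing family via \cref{Lemma_Basic_properties_strongly_smoothing_families}, and then read off the index pointwise.

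For \labelcref{Lemma_Basic_properties_families_of_Fredholm_morphisms_1}, pick $x'_0 \in U'$ and set $x_0 \definedas f(x'_0) \in U$. By definition of $\phi$ Fredholm there exists a neighbourhood $V \subseteq U$ of $x_0$, a local Fredholm inverse $\psi : V\times E^2 \to E^1$, and strongly smoothing families $\kappa_\phi, \kappa'_\phi$ with $\psi\circ \phi|_{V\times E^1} = \id_{E^1} + \kappa_\phi$ and $\phi|_{V\times E^1}\circ \psi = \id_{E^2} + \kappa'_\phi$. By continuity of $f$, $V' \definedas f^{-1}(V) \subseteq U'$ is an open neighbourhood of $x'_0$. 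Pulling back via $f$ and using $f^\ast(\alpha\circ\beta) = (f^\ast \alpha)\circ(f^\ast\beta)$, one obtains $(f^\ast\psi)\circ (f^\ast\phi)|_{V'\times E^1} = \id_{E^1} + f^\ast \kappa_\phi$ and similarly on the other side. Since $f^\ast\kappa_\phi$ and $f^\ast\kappa'_\phi$ are strongly smoothing by \cref{Lemma_Basic_properties_strongly_smoothing_families}, $f^\ast\psi$ is a local Fredholm inverse to $f^\ast\phi$ near $x'_0$. The identity $\ind(f^\ast\phi) = f^\ast\ind(\phi)$ is then pointwise by definition of the index.

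For \labelcref{Lemma_Basic_properties_families_of_Fredholm_morphisms_2}, fix $x_0 \in U$ and choose a common neighbourhood $V \subseteq U$ of $x_0$ on which both $\phi$ and $\psi$ admit local Fredholm inverses $\phi' : V\times E^2 \to E^1$ and $\psi' : V\times E^3 \to E^2$ with errors $\kappa_\phi, \kappa'_\phi, \kappa_\psi, \kappa'_\psi$ strongly smoothing. The candidate inverse is $\phi'\circ\psi' : V\times E^3 \to E^1$. A direct calculation using associativity and bilinearity yields
\begin{align*}
(\phi'\circ\psi')\circ(\psi\circ\phi)|_{V\times E^1} &= \phi'\circ(\id_{E^2} + \kappa_\psi)\circ\phi|_{V\times E^1} \\
&= \id_{E^1} + \kappa_\phi + \phi'\circ\kappa_\psi\circ\phi|_{V\times E^1}
\end{align*}
and analogously $(\psi\circ\phi)\circ(\phi'\circ\psi')|_V = \id_{E^3} + \kappa'_\psi + \psi\circ\kappa'_\phi\circ\psi'$. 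The perturbation terms are strongly smoothing by the ideal property \cref{Lemma_Basic_properties_strongly_smoothing_families}, so $\psi\circ\phi$ is Fredholm. The index identity $\ind(\psi\circ\phi) = \ind\psi + \ind\phi$ then follows pointwise from \cref{Lemma_Basic_properties_sc_Fredholm_operators} applied to $\phi(x) : E^1 \to E^2$ and $\psi(x) : E^2 \to E^3$.

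For \labelcref{Lemma_Basic_properties_families_of_Fredholm_morphisms_3}, fix $x_0 \in U$ and let $\psi$ be a local Fredholm inverse to $\phi$ on some neighbourhood $V$ of $x_0$. Then
\[
\psi\circ(\phi+\kappa)|_{V\times E^1} = \id_{E^1} + \kappa_\phi + \psi\circ\kappa|_{V\times E^1}
\qquad\text{and}\qquad
(\phi+\kappa)|_{V\times E^1}\circ\psi = \id_{E^2} + \kappa'_\phi + \kappa|_{V\times E^1}\circ\psi,
\]
and both error terms are strongly smoothing by \cref{Lemma_Basic_properties_strongly_smoothing_families}. Thus $\psi$ itself serves as a local Fredholm inverse to $\phi+\kappa$, and $\phi+\kappa$ is Fredholm. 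The pointwise index statement $\ind(\phi(x)+\kappa(x)) = \ind\phi(x)$ is again \cref{Lemma_Basic_properties_sc_Fredholm_operators}, so $\ind(\phi+\kappa) = \ind\phi$ as functions on $U$.

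The tame analogue is obtained by observing that each construction above preserves tameness: pullbacks, compositions, and sums of tame objects are tame by \cref{Proposition_Standard_properties_tame}, and the explicit strongly smoothing error families produced are tame whenever the inputs are. The main (minor) obstacle is purely bookkeeping, namely ensuring that the various local neighbourhoods on which the different Fredholm inverses and smoothing envelopes exist can be intersected to a common neighbourhood of $x_0$ or $x'_0$; this is automatic because each ``Fredholm'' or ``strongly smoothing'' hypothesis is itself local.
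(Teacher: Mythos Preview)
Your proposal is correct and takes essentially the same approach as the paper, which simply states ``Using \cref{Lemma_Basic_properties_strongly_smoothing_families}, the proofs are completely straightforward.'' You have spelled out exactly those straightforward details: exhibit the obvious candidate Fredholm inverse ($f^\ast\psi$, $\phi'\circ\psi'$, or $\psi$ itself), compute the error terms, and invoke the ideal property. One minor remark: you list \cref{Proposition_Invertibility_perturbation_by_strongly_smoothing_family} in your opening plan but never actually use it, and indeed it is not needed here.
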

\begin{proof}
Using \cref{Lemma_Basic_properties_strongly_smoothing_families}, the proofs are completely straightforward.
\end{proof}

\begin{lemma}\label{Lemma_Fredholm_family_standard_form}
Let $E$, $E^i$, $i=1,2$, be $\overline{\text{sc}}$-Fr{\'e}chet spaces, let $U \subseteq E$ be an open subset and let $\phi : U\times E^1 \to E^2$ be a family of morphisms. \\
$\phi$ is Fredholm \iff for every $x_0 \in U$ there exists a neighbourhood $V\subseteq U$ of $x_0$ and families of morphisms
\begin{align*}
\psi : V\times E^2 &\to E^1 & \kappa : V\times E^1 &\to E^1 & \kappa' : V\times E^2 &\to E^2
\end{align*}
with $\kappa$ and $\kappa'$ strongly smoothing \st
\[
\psi \circ \phi|_{V\times E^1} = \id_{E^1} + \kappa \quad\text{and}\quad \phi|_{V\times E^1} \circ \psi = \id_{E^2} + \kappa'
\]
and furthermore for any splitting
\[
E = \ker(\phi(x_0)) \oplus X
\]
one can assume that
\[
E' = \ker(\psi(x_0)) \oplus \im(\phi(x_0))
\]
and
\begin{equation}\label{Equation_Fredholm_family_standard_form}
\begin{aligned}
\ker(\phi(x_0)) &= \ker(\id_{E^1} + \kappa(x_0)) \\
\im(\phi(x_0)) &= \im(\id_{E^1} + \kappa'(x_0)) \\
\ker(\psi(x_0)) &= \ker(\id_{E^2} + \kappa'(x_0)) \\
\im(\psi(x_0)) &= \im(\id_{E^2} + \kappa(x_0)) \\
&= X\text{.}
\end{aligned}
\end{equation}
The same holds in the tame context.
\end{lemma}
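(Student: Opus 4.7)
The backward direction is immediate from \cref{Definition_Family_of_Fredholm_morphisms}, so the work lies in showing that for any Fredholm family $\phi$ and any splitting $E^1 = \ker(\phi(x_0)) \oplus X$ one can arrange the local Fredholm inverse so that the additional fibrewise identities at $x_0$ hold. The plan is to first produce some local Fredholm inverse from the definition, then correct it by a strongly smoothing constant family so that at $x_0$ it becomes the particular Fredholm inverse $F''$ furnished by \cref{Theorem_sc_Fredholm_morphism}.

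More concretely, I would first pick a neighbourhood $V \subseteq U$ of $x_0$ and a Fredholm inverse $\psi_0 : V \times E^2 \to E^1$ with strongly smoothing $\kappa_0, \kappa_0'$ satisfying $\psi_0\circ \phi|_{V\times E^1} = \id_{E^1} + \kappa_0$ and $\phi|_{V\times E^1}\circ \psi_0 = \id_{E^2} + \kappa_0'$. Restricting to $x_0$, the morphism $\psi_0(x_0) : E^2 \to E^1$ is a Fredholm inverse of $\phi(x_0)$, so \cref{Theorem_sc_Fredholm_morphism} applied to $\phi(x_0)$, $\psi_0(x_0)$ and the given splitting $E^1 = \ker(\phi(x_0))\oplus X$ produces a new Fredholm inverse $F'' : E^2 \to E^1$ of the form $F'' = \psi_0(x_0) + L$, where $L$ is an explicit sum of strongly smoothing morphisms (the summands arise as $-\psi_0(x_0)\circ \iota^{C'}_{E'}\circ \pr^{E'}_{C'}$ plus two finite-rank terms, which are strongly smoothing by \cref{Example_Strongly_smoothing_morphisms}), and such that the fibrewise identities 5--7 of \cref{Theorem_sc_Fredholm_morphism} hold for $\phi(x_0)$ and $F''$.

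I would then promote $L$ to a constant family $L : V \times E^2 \to E^1$, which is strongly smoothing by \cref{Example_Strongly_smoothing_family}\labelcref{Example_Strongly_smoothing_family_1}, and set $\psi \definedas \psi_0 + L$. By \cref{Lemma_Basic_properties_strongly_smoothing_families}, $L\circ \phi|_{V\times E^1}$ and $\phi|_{V\times E^1}\circ L$ are again strongly smoothing families, so
\begin{align*}
\kappa &\definedas \kappa_0 + L\circ \phi|_{V\times E^1} \text{ and} \\
\kappa' &\definedas \kappa_0' + \phi|_{V\times E^1}\circ L
\end{align*}
are strongly smoothing and satisfy $\psi\circ \phi|_{V\times E^1} = \id_{E^1} + \kappa$ and $\phi|_{V\times E^1}\circ \psi = \id_{E^2} + \kappa'$. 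At $x_0$ we have $\psi(x_0) = F''$, so the identities $H = F''\circ \phi(x_0) = \id_{E^1} + \kappa(x_0)$ and $H' = \phi(x_0)\circ F'' = \id_{E^2} + \kappa'(x_0)$ together with conclusions 6--7 of \cref{Theorem_sc_Fredholm_morphism} give at once \labelcref{Equation_Fredholm_family_standard_form} and the two splittings $E^1 = \ker(\phi(x_0))\oplus X$ (given) and $E^2 = \ker(\psi(x_0))\oplus \im(\phi(x_0))$.

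The main obstacle is essentially bookkeeping rather than a new idea: one has to be sure the correction $L$ really is a strongly smoothing \emph{family}, which is why I use that constant families of strongly smoothing morphisms are strongly smoothing families and that ideals of strongly smoothing families are stable under composition with arbitrary $\overline{\text{sc}}^0$ families (\cref{Lemma_Basic_properties_strongly_smoothing_families}). For the tame context, each of the ingredients (\cref{Theorem_sc_Fredholm_morphism,Example_Strongly_smoothing_family,Lemma_Basic_properties_strongly_smoothing_families}) has a tame analogue, so $L$ is a tame strongly smoothing morphism, the constant family $L$ is tame, and the compositions $L\circ \phi$, $\phi\circ L$ remain tame strongly smoothing families, yielding the statement verbatim.
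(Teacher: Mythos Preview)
Your proof is correct and actually takes a slightly simpler route than the paper's. Both arguments start from an arbitrary local Fredholm inverse $\psi_0$ and invoke \cref{Theorem_sc_Fredholm_morphism} at $x_0$ to obtain the corrected inverse $F'' = \psi_0(x_0) + L$ with the desired kernel/image identities. The difference lies in how this pointwise correction is extended to a family: the paper replaces every occurrence of $\phi(x_0)$ in the formula for $F''$ by $\phi(x)$, yielding an $x$-dependent correction $\psi'(x) - \psi(x)$; this forces an extra step, namely shrinking $V$ so that the finite-dimensional maps $\pr^{E'}_{A'}\circ\phi(x)\circ\iota^A_E$ and $\pr^{E'}_{B'}\circ\phi(x)\circ\iota^B_E$ remain invertible (handled via \cref{Proposition_Invertibility_perturbation_by_strongly_smoothing_family}). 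You instead promote $L$ to a \emph{constant} family, which is strongly smoothing by \cref{Example_Strongly_smoothing_family}\,\labelcref{Example_Strongly_smoothing_family_1} without any further shrinking. Since the lemma only requires the identities \labelcref{Equation_Fredholm_family_standard_form} at the single point $x_0$, and both constructions give $\psi(x_0) = F''$, your approach is sufficient and avoids the well-definedness check. The paper's $x$-dependent version gains nothing for this lemma, though it is closer in spirit to the family constructions used later in the proof of \cref{Theorem_scbar_Fredholm}.
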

\begin{proof}
For $x_0 \in U$ let $\psi : V \times E^2 \to E^2$ be a local Fredholm inverse to $\phi$, in a neighbourhood $V \subseteq U$ of $x_0$.
Then $\phi(x_0) : E^1 \to E^2$ is a Fredholm morphism with Fredholm inverse $\psi(x_0) : E^2 \to E^1$.
Choose any splitting $E' = \ker F' \oplus X'$.
By \cref{Theorem_sc_Fredholm_morphism}, applied to $F \definedas \phi(x_0)$ and $F' \definedas \psi(x_0)$, there exist splittings
\begin{align*}
E &= \underbrace{C \oplus \overline{C}}_{=\; \ker F} \oplus \underbrace{A \oplus B \oplus Y}_{=\; X} \\
E' &= \underbrace{A' \oplus \overline{A}'}_{=\; \ker F'} \oplus \underbrace{B' \oplus \overline{B}'}_{=\; C'} \oplus Y'\text{,}
\end{align*}
where all of these subspaces are finite dimensional with the exception of $Y$ and $Y'$.
Furthermore, $F'' : E' \to E$ defined by
\begin{align*}
F'' &\definedas \psi(x_0) - \psi(x_0)\circ \iota^{C'}_{E'}\circ \pr^{E'}_{C'} \;+ \\
&\quad\; +\; \iota^{A}_{E}\circ \left(\pr^{E'}_{A'}\circ \phi(x_0)\circ \iota^{A}_{E}\right)\inv \circ \pr^{E'}_{A'} \;+ \\ 
&\quad\; +\; \iota^{B}_{E}\circ \left(\pr^{E'}_{B'}\circ \phi(x_0)\circ \iota^{B}_{E}\right)\inv\circ \pr^{E'}_{B'}
\end{align*}
is a well defined Fredholm inverse to $\phi(x_0)$ that satisfies the conditions \labelcref{Equation_Fredholm_family_standard_form} with $\psi(x_0)$, $\kappa(x_0)$ and $\kappa'(x_0)$ replaced by $F''$, $F''\circ \phi(x_0) - \id_{E^1}$ and $\phi(x_0)\circ F' - \id_{E^2}$, respectively.
So if one can show that there exists a neighbourhood $V \subseteq U$ of $x_0$ \st
\begin{align*}
\psi' : V\times E^2 &\to E^2 \\
\psi'(x) &\definedas \psi(x) - \psi(x)\circ \iota^{C'}_{E'}\circ \pr^{E'}_{C'} \;+ \\
&\quad\; +\; \iota^{A}_{E}\circ \left(\pr^{E'}_{A'}\circ \phi(x)\circ \iota^{A}_{E}\right)\inv \circ \pr^{E'}_{A'} \;+ \\ 
&\quad\; +\; \iota^{B}_{E}\circ \left(\pr^{E'}_{B'}\circ \phi(x)\circ \iota^{B}_{E}\right)\inv\circ \pr^{E'}_{B'}
\end{align*}
is a well-defined family of morphisms that is a Fredholm inverse to $\phi$, then the proof is finished.
But the last part is immediate, because by \cref{Example_Strongly_smoothing_family}, \labelcref{Example_Strongly_smoothing_family_2}, $\psi' - \psi|_{V\times E^2}$ is strongly smoothing.
And from
\begin{align*}
\pr^{E'}_{A'}\circ \phi(x) \circ \iota^A_E &= \pr^{E'}_{A'}\circ \phi(x_0)\circ \iota^A_E + \underbrace{\pr^{E'}_{A'}\circ (\phi(x) - \phi(x_0))\circ \iota^A_E}_{\defines\; \lambda(x)} \\
\pr^{E'}_{B'}\circ \phi(x) \circ \iota^B_E &= \pr^{E'}_{B'}\circ \phi(x_0)\circ \iota^B_E + \underbrace{\pr^{E'}_{B'}\circ (\phi(x) - \phi(x_0))\circ \iota^B_E}_{\defines\; \mu(x)}
\end{align*}
well definedness follows by \cref{Proposition_Invertibility_perturbation_by_strongly_smoothing_family} (or much more elementary because $A$, $A'$ and $B$, $B'$ are finite dimensional) because the constant families defined by $\pr^{E'}_{A'}\circ \phi(x_0)\circ \iota^A_E : A \to A'$ and $\pr^{E'}_{B'}\circ \phi(x_0)\circ \iota^B_E : B \to B'$ are invertible and the families $\lambda : U\times A \to A'$ and $\mu : U\times B \to B'$ are strongly smoothing by \cref{Example_Strongly_smoothing_family}, \labelcref{Example_Strongly_smoothing_family_2}, and vanish at $x_0$.
\end{proof}

\begin{theorem}\label{Theorem_scbar_Fredholm}
Let $E$, $E^i$, $i=1,2$, be $\overline{\text{sc}}$-Fr{\'e}chet spaces, let $U \subseteq E$ be an open subset and let $\phi : U\times E^1 \to E^2$ be a family of morphisms.
Then the following are equivalent:
\begin{enumerate}[label=\arabic*.,ref=\arabic*.]
  \item\label{Theorem_scbar_Fredholm_1} $\phi$ is Fredholm.
  \item\label{Theorem_scbar_Fredholm_2}
\begin{enumerate}[label=(\alph*),ref=(\alph*)]
  \item\label{Theorem_scbar_Fredholm_2a} For every $x_0 \in U$,
\[
\phi(x_0) : E^1 \to E^2
\]
is Fredholm.
  \item\label{Theorem_scbar_Fredholm_2b} For one/any pair of splittings
\[
E^1 = X \oplus \ker \phi(x_0) \qquad\text{and}\qquad E^2 = \im \phi(x_0) \oplus C
\]
there exists a neighbourhood $V \subseteq U$ of $x_0$ \st
\[
\pr^{E^2}_{\im \phi(x_0)}\circ \phi\circ \iota^X_{E^1}|_{V\times X} : V\times X \to \im \phi(x_0)
\]
is invertible.
\end{enumerate}
\end{enumerate}
And analogously in the tame context.
\end{theorem}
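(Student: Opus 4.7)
The plan is to prove the equivalence by establishing both directions separately, and then using them together to verify the ``one/any'' clause in (b). The forward direction relies on \cref{Lemma_Fredholm_family_standard_form} to put a local Fredholm inverse into standard form adapted to the given splittings, after which \cref{Proposition_Invertibility_perturbation_by_strongly_smoothing_family} produces the required local inverse of the projected family. The backward direction is more elementary: one writes down an explicit local Fredholm inverse using the inverse of the projected family, and shows by direct computation that the error terms factor through the finite-dimensional spaces $\ker\phi(x_0)$ and $C$, hence are strongly smoothing by \cref{Example_Strongly_smoothing_family}, \labelcref{Example_Strongly_smoothing_family_2}.

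For \labelcref{Theorem_scbar_Fredholm_1}$\Rightarrow$\labelcref{Theorem_scbar_Fredholm_2}, condition \labelcref{Theorem_scbar_Fredholm_2a} is immediate from the definition of $\phi$ Fredholm by evaluating at $x_0$ and applying \cref{Definition_sc_Fredholm}. For \labelcref{Theorem_scbar_Fredholm_2b}, fix any splittings $E^1 = \ker\phi(x_0)\oplus X$ and $E^2 = \im\phi(x_0)\oplus C$ (both $\ker\phi(x_0)$ and $C$ finite dimensional by \cref{Proposition_sc_Fredholm_morphism}) and invoke \cref{Lemma_Fredholm_family_standard_form} to obtain a local Fredholm inverse $\psi : V\times E^2 \to E^1$ satisfying $\psi\circ\phi = \id_{E^1}+\kappa$, $\phi\circ\psi = \id_{E^2}+\kappa'$ with $\kappa,\kappa'$ strongly smoothing and the standard-form identities \labelcref{Equation_Fredholm_family_standard_form}. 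Setting $\Phi \definedas \pr^{E^2}_{\im\phi(x_0)}\circ \phi\circ\iota^X_{E^1}$ and $\Psi \definedas \pr^{E^1}_X\circ \psi\circ \iota^{\im\phi(x_0)}_{E^2}$, the key computation is
\begin{align*}
\Psi(x)\Phi(x) &= \pr^{E^1}_X\circ\psi(x)\circ(\id_{E^2}-\iota^C_{E^2}\circ\pr^{E^2}_C)\circ\phi(x)\circ\iota^X_{E^1} \\
&= \id_X + \pr^{E^1}_X\circ\kappa(x)\circ\iota^X_{E^1} - \pr^{E^1}_X\circ\psi(x)\circ\iota^C_{E^2}\circ\pr^{E^2}_C\circ\phi(x)\circ\iota^X_{E^1}\text{,}
\end{align*}
and both correction terms are strongly smoothing families by \cref{Example_Strongly_smoothing_family,Lemma_Basic_properties_strongly_smoothing_families} (the second factors through the finite-dimensional $C$). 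Using the standard-form identities at $x_0$ (so that $\kappa(x_0)=-\id$ on $\ker\phi(x_0)$, hence vanishes on $X$, and $\pr^{E^2}_C\circ\phi(x_0)\circ\iota^X_{E^1}=0$), this correction vanishes at $x_0$, so \cref{Proposition_Invertibility_perturbation_by_strongly_smoothing_family} yields an invertible family $\Psi\circ\Phi$ on a neighbourhood. A symmetric argument for $\Phi\circ\Psi$ shows that $\Phi$ itself is invertible on a (possibly smaller) neighbourhood.

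For \labelcref{Theorem_scbar_Fredholm_2}$\Rightarrow$\labelcref{Theorem_scbar_Fredholm_1}, fix splittings for which \labelcref{Theorem_scbar_Fredholm_2b} holds and define
\[
\psi : V\times E^2 \to E^1\text{,}\qquad \psi(x) \definedas \iota^X_{E^1}\circ \Phi(x)^{-1}\circ \pr^{E^2}_{\im\phi(x_0)}\text{.}
\]
Expanding $\psi\circ\phi$ along the decomposition $E^1 = \ker\phi(x_0)\oplus X$ gives
\[
\psi(x)\phi(x) = \iota^X_{E^1}\circ\pr^{E^1}_X + \iota^X_{E^1}\circ \Phi(x)^{-1}\circ\pr^{E^2}_{\im\phi(x_0)}\circ\phi(x)\circ\iota^{\ker\phi(x_0)}_{E^1}\circ\pr^{E^1}_{\ker\phi(x_0)}\text{,}
\]
so $\psi\circ\phi - \id_{E^1} = -\iota^{\ker\phi(x_0)}_{E^1}\circ\pr^{E^1}_{\ker\phi(x_0)}$ plus a term factoring through $\ker\phi(x_0)$, both strongly smoothing since $\ker\phi(x_0)$ is finite dimensional. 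An analogous expansion of $\phi\circ\psi$ using $E^2=\im\phi(x_0)\oplus C$ shows that $\phi\circ\psi - \id_{E^2}$ factors through the finite-dimensional $C$, hence is strongly smoothing. Thus $\psi$ is a local Fredholm inverse to $\phi$, so $\phi$ is Fredholm.

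The main subtlety is why one cannot simply apply \cref{Proposition_Invertibility_perturbation_by_strongly_smoothing_family} directly to $\phi$ at $x_0$: the naive perturbation $x\mapsto \phi(x)-\phi(x_0)$ is \emph{not} strongly smoothing in general, so one must first pass through the pseudo-inverse $\psi$ furnished by \cref{Lemma_Fredholm_family_standard_form}, whose error terms $\kappa,\kappa'$ \emph{are} strongly smoothing by construction. This is precisely the point where the standard-form conclusions \labelcref{Equation_Fredholm_family_standard_form} are essential, as they guarantee vanishing of the correction at $x_0$ after composing with $\pr^{E^1}_X$ and $\iota^X_{E^1}$. Once the equivalence is established, the ``one/any'' quantifier in \labelcref{Theorem_scbar_Fredholm_2b} follows because \labelcref{Theorem_scbar_Fredholm_2b} for \emph{some} splitting implies \labelcref{Theorem_scbar_Fredholm_1}, which in turn implies \labelcref{Theorem_scbar_Fredholm_2b} for \emph{every} splitting. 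The proof in the tame context is identical, replacing each appeal to \cref{Proposition_Invertibility_perturbation_by_strongly_smoothing_family,Lemma_Fredholm_family_standard_form,Lemma_Basic_properties_strongly_smoothing_families} by its tame analogue stated in the same results.
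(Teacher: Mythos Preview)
Your overall strategy matches the paper's: use \cref{Lemma_Fredholm_family_standard_form} for the forward direction and an explicit pseudo-inverse for the backward direction. The backward direction and the ``one/any'' argument are correct and in fact more detailed than the paper's terse reference to earlier proofs.

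However, there is a genuine gap in the forward direction. You claim that the correction term $\pr^{E^1}_X\circ\kappa(x_0)\circ\iota^X_{E^1}$ vanishes, reasoning that ``$\kappa(x_0)=-\id$ on $\ker\phi(x_0)$, hence vanishes on $X$''. The ``hence'' is a non sequitur: the standard-form identities \labelcref{Equation_Fredholm_family_standard_form} only give $\ker(\id_{E^1}+\kappa(x_0))=\ker\phi(x_0)$ and $\im(\id_{E^1}+\kappa(x_0))=X$, which together say that $(\id_{E^1}+\kappa(x_0))|_X:X\to X$ is an \emph{isomorphism}, not the identity. In general $\kappa(x_0)|_X\neq 0$, so you cannot apply \cref{Proposition_Invertibility_perturbation_by_strongly_smoothing_family} with base family $\id_X$.

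The fix, which is what the paper actually does, is to write
\[
\Psi\circ\Phi \;=\; \bigl(\id_X + \bar\kappa(x_0)\bigr) \;+\; \bigl[(\bar\kappa - \bar\kappa(x_0)) - \tilde\kappa\bigr]\text{,}
\]
where $\bar\kappa\definedas\pr^{E^1}_X\circ\kappa\circ\iota^X_{E^1}$ and $\tilde\kappa$ is your second correction term. The constant family $\id_X+\bar\kappa(x_0)$ is invertible (being a single isomorphism, by \cref{Proposition_sc_Fredholm_morphism} applied to $\id_{E^1}+\kappa(x_0)$ with the splitting $E^1=\ker\phi(x_0)\oplus X$), and the bracketed perturbation is strongly smoothing and vanishes at $x_0$. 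Now \cref{Proposition_Invertibility_perturbation_by_strongly_smoothing_family} applies correctly. The symmetric argument for $\Phi\circ\Psi$ needs the same adjustment using $\bar\kappa'(x_0)$.
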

\begin{proof}
The direction \labelcref{Theorem_scbar_Fredholm_1} $\Rightarrow$ \labelcref{Theorem_scbar_Fredholm_2} will show that \labelcref{Theorem_scbar_Fredholm_2}~\labelcref{Theorem_scbar_Fredholm_2b} holds for \emph{any} pair of splittings of $E^1 = X \oplus \ker \phi(x_0)$ and $E^2 = \im \phi(x_0)\oplus C$ (which exist by \cref{Example_Finite_dimensional_split_sc_subspace} and \cref{Proposition_sc_Fredholm_morphism}) and the direction \labelcref{Theorem_scbar_Fredholm_2} $\Rightarrow$ \labelcref{Theorem_scbar_Fredholm_1} will show that if there exists \emph{one} such pair of splittings \st \labelcref{Theorem_scbar_Fredholm_2}~\labelcref{Theorem_scbar_Fredholm_2b} holds, then $\phi$ is Fredholm.
\begin{enumerate}[label=\arabic*.,ref=\arabic*.]
  \item[\labelcref{Theorem_scbar_Fredholm_1} $\Rightarrow$ \labelcref{Theorem_scbar_Fredholm_2}]
Given $x_0\in U$, let $V \subseteq U$, $\psi : V\times E^2 \to E^1$, $\kappa : V\times E^1 \to E^1$ and $\kappa' : V\times E^2 \to E^2$ be as in \cref{Lemma_Fredholm_family_standard_form}.
Defining
\begin{align*}
\bar{\phi} : V\times X &\to \im\phi(x_0) \\
\bar{\phi}(x) &\definedas \pr^{E^2}_{\im \phi(x_0)}\circ \phi(x)\circ \iota^X_{E^1}\text{,} \\
\bar{\psi} : V\times \im\phi(x_0) &\to \im\psi(x_0) = X \\
\bar{\psi}(x) &\definedas \pr^{E^1}_{X}\circ \psi(x)\circ \iota^{\im\phi(x_0)}_{E^2}\text{,} \\
\bar{\kappa} : V\times X &\to X \\
\bar{\kappa}(x) &\definedas \pr^{E^1}_{X}\circ \kappa(x)\circ \iota^{X}_{E^1}
\intertext{and}
\bar{\kappa}' : V\times \im\phi(x_0) &\to \im\phi(x_0) \\
\bar{\kappa}'(x) &\definedas \pr^{E^2}_{\im\phi(x_0)}\circ \kappa'(x)\circ \iota^{\im\phi(x_0)}_{E^2}\text{,}
\end{align*}
it has to be shown that, after possibly shrinking $V$, $\bar{\phi}$ is invertible.
Then by \cref{Proposition_sc_Fredholm_morphism}, \labelcref{Proposition_sc_Fredholm_morphism_2}~\labelcref{Proposition_sc_Fredholm_morphism_2b}, $\id_{\im(\psi(x_0))} + \bar{\kappa}(x_0)$ and $\id_{\im(\phi(x_0))} + \bar{\kappa}'(x_0)$ are isomorphisms, so by \cref{Proposition_Invertibility_perturbation_by_strongly_smoothing_family}, after shrinking $V$, one can assume that $\id_{\im(\psi(x_0))} + \bar{\kappa} = \id_{\im(\psi(x_0))} + \bar{\kappa}(x_0) + (\bar{\kappa} - \bar{\kappa}(x_0))$ and $\id_{\im(\phi(x_0))} + \bar{\kappa}' = \id_{\im(\phi(x_0))} + \bar{\kappa}'(x_0) + (\bar{\kappa}' - \bar{\kappa}'(x_0))$ are invertible.
Then
\begin{align*}
\bar{\phi}\circ \bar{\psi} &= \pr^{E^2}_{\im \phi(x_0)}\circ \phi\circ \iota^X_{E^1}\circ \pr^{E^1}_X\circ \psi\circ \iota^{\im\phi(x_0)}_{E^2} \\
&= \pr^{E^2}_{\im\phi(x_0)}\circ \phi\circ \psi\circ \iota^{\im\phi(x_0)}_{E^2} \;+ \\
&\quad\; +\; \pr^{E^2}_{\im\phi(x_0)}\circ \phi\circ (\iota^X_{E^1}\circ\pr^{E^1}_X - \id_{E^1})\circ \psi\circ \iota^{\im\phi(x_0)}_{E^2} \\
&= \id_{\im\phi(x_0)} + \bar{\kappa}' + \pr^{E^2}_{\im\phi(x_0)}\circ \phi\circ (\iota^X_{E^1}\circ\pr^{E^1}_X - \underbrace{\id_{E^1}}_{\mathclap{=\; \iota^X_{E^1}\circ \pr^{E^1}_X + \iota^{\ker\phi(x_0)}_{E^1}\circ \pr^{E^1}_{\ker\phi(x_0)}}})\circ \psi\circ \iota^{\im\phi(x_0)}_{E^2} \\
&= \id_{\im\phi(x_0)} + \bar{\kappa}' - \underbrace{\pr^{E^2}_{\im\phi(x_0)}\circ \phi\circ\iota^{\ker\phi(x_0)}_{E^1}\circ \pr^{E^1}_{\ker\phi(x_0)}\circ \psi\circ \iota^{\im\phi(x_0)}_{E^2}}_{\defines\; \tilde{\kappa}'}
\end{align*}
$\tilde{\kappa}' : V\times \im\phi(x_0) \to \im\phi(x_0)$ is strongly smoothing by \cref{Example_Strongly_smoothing_family}, \labelcref{Example_Strongly_smoothing_family_2}, and $\tilde{\kappa}'(x_0) = 0$ by choice of $\psi$.
Again by \cref{Proposition_Invertibility_perturbation_by_strongly_smoothing_family}, after possibly shrinking $V$, $\bar{\phi}\circ\bar{\psi}$ is invertible, so $\bar{\phi}$ has a right inverse.
By a completely analogous argument, involving $\bar{\kappa}$ instead of $\bar{\kappa}'$, one shows that, after possibly shrinking $V$ again, $\bar{\psi}\circ \bar{\phi}$ is invertible, hence $\bar{\phi}$ has a left inverse.
So $\bar{\phi}$ is invertible.
  \item[\labelcref{Theorem_scbar_Fredholm_2} $\Rightarrow$ \labelcref{Theorem_scbar_Fredholm_1}] Formally the same proof as that of the corresponding statements in \cref{Proposition_Fredholm_I}, \cref{Proposition_Fredholm_operator_between_sc_chains} or \cref{Proposition_sc_Fredholm_morphism}.
\end{enumerate}
\end{proof}

\begin{corollary}\label{Corollary_Fredholm_index_constant}
Let $E$, $E^i$, $i=1,2$, be $\overline{\text{sc}}$-Fr{\'e}chet spaces, let $U \subseteq E$ be an open subset and let $\phi : U\times E^1 \to E^2$ be a family of Fredholm morphisms.
Then the following hold:
\begin{enumerate}[label=\arabic*.,ref=\arabic*.]
  \item\label{Corollary_Fredholm_index_constant_1} The maps
\begin{align*}
\dim \ker \phi : U &\to \N_0 \\
x &\mapsto \dim\ker(\phi(x))
\intertext{and}
\corank\phi : U &\to \N_0 \\
x &\mapsto \dim\coker(\phi(x))
\end{align*}
are upper semicontinuous.
  \item\label{Corollary_Fredholm_index_constant_2} The Fredholm index $\ind\phi = \dim\ker\phi - \corank\phi : U \to \Z$ of $\phi$ is continuous.
\end{enumerate}
\end{corollary}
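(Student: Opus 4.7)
The plan is to reduce both statements locally around an arbitrary point $x_0 \in U$ to a purely finite-dimensional statement via a Schur complement computation, exploiting \cref{Theorem_scbar_Fredholm}.

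Fix $x_0 \in U$. By \cref{Proposition_sc_Fredholm_morphism} applied to the Fredholm morphism $\phi(x_0) : E^1 \to E^2$, we can choose splittings of $\overline{\text{sc}}$-Fr{\'e}chet spaces
\[
E^1 = X \oplus \ker\phi(x_0) \qquad\text{and}\qquad E^2 = \im\phi(x_0) \oplus C,
\]
with $\ker\phi(x_0)$ and $C$ finite dimensional. By \cref{Theorem_scbar_Fredholm} there is then a neighbourhood $V\subseteq U$ of $x_0$ on which
\[
A \definedas \pr^{E^2}_{\im\phi(x_0)}\circ \phi\circ \iota^X_{E^1} : V\times X \to \im\phi(x_0)
\]
is an invertible family of morphisms. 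Writing the restriction of $\phi$ to $V$ in block form with respect to the splittings above gives
\[
\phi(x) = \begin{pmatrix} A(x) & B(x) \\ C(x) & D(x) \end{pmatrix},
\]
with $B : V\times \ker\phi(x_0) \to \im\phi(x_0)$, $C : V\times X \to C$, $D : V\times \ker\phi(x_0) \to C$ all $\overline{\text{sc}}^0$ families of morphisms, and with $B(x_0) = 0$, $C(x_0) = 0$ and $D(x_0) = 0$ by the defining properties of the splittings.

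Next I would perform the standard Schur complement manipulation, which here is meaningful because all building blocks are $\overline{\text{sc}}^0$ families of morphisms and composition, addition and inversion of such families again yield such families (the last by \cref{Proposition_Invertibility_perturbation_by_strongly_smoothing_family} for $A$, though here no smoothing perturbation argument is needed since $A$ is genuinely invertible). Explicitly,
\[
\begin{pmatrix} \id & 0 \\ -C(x)A(x)\inv & \id \end{pmatrix}
\phi(x)
\begin{pmatrix} \id & -A(x)\inv B(x) \\ 0 & \id \end{pmatrix}
= \begin{pmatrix} A(x) & 0 \\ 0 & S(x) \end{pmatrix},
\]
where $S(x) \definedas D(x) - C(x)A(x)\inv B(x) : \ker\phi(x_0) \to C$ is an $\overline{\text{sc}}^0$ family of morphisms between two fixed finite dimensional $\overline{\text{sc}}$-Fr{\'e}chet spaces (\cf \cref{Example_Finite_dimensional_sc_Frechet_space}), with $S(x_0) = 0$. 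Since the two outer matrices are invertible families of morphisms (they are block-triangular with identity diagonal), they induce for every $x \in V$ isomorphisms of kernels and cokernels, so
\[
\ker\phi(x) \cong \ker S(x) \qquad\text{and}\qquad \coker\phi(x) \cong \coker S(x).
\]

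The problem is now finite dimensional. The family $S : V \to L(\ker\phi(x_0), C)$ is continuous (as a family of operators between finite dimensional spaces, $\overline{\text{sc}}$-continuity reduces to ordinary continuity, \cf \cref{Example_Strongly_smoothing_operators_on_finite_dimensional_vs}), and for continuous families of linear maps between finite dimensional vector spaces both $x \mapsto \dim\ker S(x)$ and $x \mapsto \dim\coker S(x) = \dim C - \rank S(x)$ are upper semicontinuous, while their difference $\dim\ker S(x) - \dim\coker S(x) = \dim\ker\phi(x_0) - \dim C$ is constant. By the isomorphisms above this gives upper semicontinuity of $\dim\ker\phi$ and $\corank\phi$ at $x_0$, proving~\labelcref{Corollary_Fredholm_index_constant_1}. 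Their difference $\ind\phi$ is therefore locally constant at $x_0$, hence continuous as $\Z$-valued map, which is~\labelcref{Corollary_Fredholm_index_constant_2}. The only mild obstacle is the bookkeeping that all appearing families of morphisms really are $\overline{\text{sc}}^0$ (and tame, in the tame context), but this is immediate from the corresponding statements in \cref{Proposition_Composition_sc_continuous,Proposition_Standard_properties_tame,Lemma_Basic_properties_strongly_smoothing_families,Proposition_Invertibility_perturbation_by_strongly_smoothing_family}.
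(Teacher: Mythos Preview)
Your proof is correct and follows essentially the same approach as the paper: both fix $x_0$, use \cref{Theorem_scbar_Fredholm} to obtain the block decomposition with invertible corner $A$, and then perform the Schur complement conjugation to reduce to the finite-dimensional family $S(x) = D(x) - C(x)A(x)^{-1}B(x)$ (the paper calls this $D'(x)$), from which upper semicontinuity of the kernel/cokernel dimensions and local constancy of the index follow. The only cosmetic difference is that the paper cites Mrowka's lecture notes for the standard Schur complement argument, while you spell it out directly.
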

\begin{proof}
By \cref{Theorem_scbar_Fredholm}, one can follow the standard proof found \eg in the Lecture notes \cite{Mrowka_Geometry_of_Manifolds} by T. Mrowka, Chapter 16, esp.~Lemmas 16.18--16.20: \\
So let $x_0, x \in V \subseteq U$, $X \subseteq E^1$ and $C \subseteq E^2$ be as in \cref{Theorem_scbar_Fredholm}.
W.\,r.\,t.~the decomposition
\[
E^1 = X \oplus \ker \phi(x_0) \qquad\text{and}\qquad E^2 = \im \phi(x_0) \oplus C
\]
one can write $\phi(x)$ in matrix form as
\[
\phi(x) = \begin{pmatrix} A(x) & B(x) \\ C(x) & D(x) \end{pmatrix}\text{,}
\]
where
\begin{align*}
A(x) &= \pr^{E^2}_{\im\phi(x_0)}\circ \phi(x)\circ \iota^X_{E^1} & C(x) &= \pr^{E^2}_{C}\circ \phi(x)\circ \iota^X_{E^1} \\
B(x) &= \pr^{E^2}_{\im\phi(x_0)}\circ \phi(x)\circ \iota^{\ker\phi(x_0)}_{E^1} & D(x) &= \pr^{E^2}_{C}\circ \phi(x)\circ \iota^{\ker \phi(x_0)}_{E^1}
\end{align*}
$B(x_0)$, $C(x_0)$ and $D(x_0)$ all vanish and $A(x)$ is invertible for all $x \in V$, so one can define
\begin{align*}
G(x) &\definedas \begin{pmatrix} \id_X & -A(x)\inv\circ B(x) \\ 0 & \id_{\ker \phi(x_0)} \end{pmatrix} &\text{and } && H(x) &\definedas \begin{pmatrix} \id_{\im \phi(x_0)} & 0 \\ -C(x)\circ A(x)\inv & \id_{C} \end{pmatrix}
\intertext{with}
G(x)\inv &= \begin{pmatrix} \id_X & A(x)\inv\circ B(x) \\ 0 & \id_{\ker \phi(x_0)} \end{pmatrix} &\text{and } && H(x)\inv &= \begin{pmatrix} \id_{\im \phi(x_0)} & 0 \\ C(x)\circ A(x)\inv & \id_{C} \end{pmatrix}
\end{align*}
Then
\[
H(x)\circ \phi(x)\circ G(x) = \begin{pmatrix} A(x) & 0 \\ 0 & D'(x) \end{pmatrix}\text{,}
\]
where
\[
D'(x) \definedas D(x) - C(x)\circ A(x)\inv\circ B(x) : \ker \phi(x_0) \to C\text{.}
\]
It follows that $(\ind \phi)(x) = \ind (\phi(x)) = \ind (D'(x)) = \dim \ker \phi(x_0) - \dim C = \dim \ker \phi(x_0) - \dim \coker \phi(x_0) = \ind (\phi(x_0)) = (\ind \phi)(x_0)$ by the dimension formula for linear maps between finite dimensional vector spaces.
This shows \labelcref{Corollary_Fredholm_index_constant_2}. \\
For \labelcref{Corollary_Fredholm_index_constant_1}, first observe that $\dim\ker\phi(x) = \dim \ker (D'(x))$.
But $D' : V\times \ker\phi(x_0)\to C$ is a continuous family of linear maps between finite dimensional vector spaces, for which the result is known.
The result about $\corank \phi$ then follows from the results for $\ind\phi$ and $\dim\ker\phi$.
\end{proof}

\begin{example}\label{Example_Strongly_continuous_sc_Fredholm_family}
Let $E$ and $E^i$, $i = 1,2$, be $\overline{\text{sc}}$-Fr{\'e}chet spaces and let $U \subseteq E$ be an open subset and let $\phi : U\times E^1 \to E^2$ be a family of morphisms. \\
Assume that for every $x_0 \in U$, there exists a neighbourhood $V \subseteq U$ of $x_0$ \st $\phi|_V$ has an envelope $((\mathbb{E}\oplus \mathbb{E}^1, \rho\oplus\rho^1), (\mathbb{E}^2,\rho^2), \mathcal{F} : \mathcal{V}\oplus \mathbb{E}^1\to \mathbb{E}^2)$ with the following properties (for simplicity the inclusions of the sc-chains $\mathbb{E}$ and $\mathbb{E}^i$, $i=1,2$, will be dropped from the notation):
\begin{enumerate}[label=\arabic*.,ref=\arabic*.]
  \item\label{Example_Strongly_continuous_sc_Fredholm_family_1} $\mathcal{F} = (\phi_k : V_k \times E^1_k\to E^2_k)$ is strict.
  \item\label{Example_Strongly_continuous_sc_Fredholm_family_2} For all $k \in \N_0$, $\phi_k : V_k \times E^1_k\to E^2_k$ is strongly continuous, \ie
\begin{align*}
\tilde{\phi}_k : V_k &\to L_{\mathrm{c}}(E^1_k, E^2_k) \\
x &\mapsto \phi_k(x)
\end{align*}
is continuous.
  \item\label{Example_Strongly_continuous_sc_Fredholm_family_3} For all $x \in V_0$, $\phi_0(x) : E^1_0 \to E^2_0$ is Fredholm.
  \item\label{Example_Strongly_continuous_sc_Fredholm_family_4} $\mathcal{F}$ is \emph{regularising} in the following sense: For all $k \in \N_0\cup \{\infty\}$, if $x \in V_k$ and $u \in E^1_0$ with $\phi_0(x)u \in E^2_j$ for some $0 \leq j \leq k$, then $u \in E^1_j$.
\end{enumerate}
Then $\phi$ is Fredholm.

Assume furthermore that $E$, $E^i$, $i=1,2$, are pre-tame $\overline{\text{sc}}$-Fr{\'e}chet spaces, that $\phi$ is tame, and that the following \emph{elliptic inequalities} hold:
For all $k \in \N_0$ there exist constants $c_k, d_k \in [0,\infty)$ \st if $x \in V_k$ and $u \in E^1_k$, then
\[
\|u\|^1_k \leq c_k\|x\|_0\|\phi_k(x)u\|^2_k + d_k\|x\|_k\|\phi_k(x)u\|^2_0\text{.}
\]
Then $\phi$ is a tame Fredholm family of morphisms.
\begin{proof}
I will check \cref{Theorem_scbar_Fredholm}, \labelcref{Theorem_scbar_Fredholm_2}. \\
First, the proof of Lemma 3.5 in \cite{1209.4040} shows that for all $k \in \N_0\cup\{\infty\}$ and $0 \leq j \leq k$, if $x \in V_k$ then $\ker \phi_j(x) = \ker \phi_0(x) \subseteq E^1_k$, $\im \phi_j(x) = \im \phi_0(x) \cap E^2_j$, and $\coker \phi_j(x) \cong \coker \phi_0(x)$ via the inclusion $(\iota^2)^j_0 : E^2_j \hookrightarrow E^2_0$. \\
So in particular $\phi_j(x) : E^1_j \to E^2_j$ is Fredholm with $\ind \phi_j(x) = \ind \phi_0(x)$.
By \labelcref{Example_Strongly_continuous_sc_Fredholm_family_3,Example_Strongly_continuous_sc_Fredholm_family_4}, together with \cref{Proposition_Fredholm_operator_between_sc_chains}, $\mathcal{F}(x) : \mathbb{E}^1 \to \mathbb{E}^2$ is a Fredholm operator between sc-chains for all $x \in V_\infty$.
This shows \cref{Theorem_scbar_Fredholm}, \labelcref{Theorem_scbar_Fredholm_2}~\labelcref{Theorem_scbar_Fredholm_2a}. \\
Let $x_0 \in V_\infty$.
By the above there exist splittings $\mathbb{E}^1 = \ker \phi_\infty(x_0) \oplus \mathbb{X}$ and $\mathbb{E}^2 = \im \mathcal{F}(x) \oplus C$, where $C \subseteq E^2_\infty$ is finite dimensional.
In particular $E^1_0 = \ker \phi_0(x_0)\oplus X_0$ and $E^2_0 = \im \phi_0(x_0)\oplus C$, and $\pr^{E^2_0}_{\im\phi_0(x_0)}\circ \phi_0(x_0)\circ \iota_{E^1_0}^{X_0} : X_0 \to \im \phi_0(x_0)$ is an isomorphism.
Define, for $k \in \N_0\cup\{\infty\}$,
\begin{align*}
\gamma_k : V_k\times X_k &\to \im \phi_k(x_0) \\
x &\mapsto \pr^{E^2_k}_{\im\phi_k(x_0)}\circ \phi_k(x)\circ \iota_{E^1_k}^{X_k}
\intertext{and}
\tilde{\gamma}_k : V_k &\to L_{\mathrm{c}}(X_k, \im\phi_k(x_0)) \\
x &\mapsto \pr^{E^2_k}_{\im\phi_k(x_0)}\circ \phi_k(x)\circ \iota_{E^1_k}^{X_k}\text{.}
\end{align*}
By \labelcref{Example_Strongly_continuous_sc_Fredholm_family_2}, the maps $\tilde{\gamma}_k$ are continuous, hence there exists a neighbourhood $V'_0 \subseteq V_0$ of $x_0$ \st $\tilde{\gamma}_0(x)$ is invertible for all $x \in V'_0$ and the map
\begin{align*}
V'_0 &\to L_{\mathrm{c}}(\im\phi_0(x_0), X_0) \\
x &\mapsto \tilde{\gamma}_0(x)\inv
\end{align*}
is continuous.
Define $\psi_0 : V'_0 \times \im\phi_0(x_0) \to X_0$ by $\psi_0(x,u) \definedas \tilde{\gamma}_0(x)\inv u$ and, for $k\in\N_0\cup\{\infty\}$, set $V'_k \definedas (\iota^k_0)\inv(V'_0)$. \\
Then for every $k\in\N_0$ and $x \in V'_k$, $\tilde{\gamma}_k(x)$ is invertible:
For given $u \in X_k \subseteq X_0$, $\tilde{\gamma}_k(x)u = 0$ means $\phi_k(x)u \in C$, but $\phi_k(x)u = \phi_0(x)u$, so $\tilde{\gamma}_0(x)u = 0$, hence $u = 0$ because $\tilde{\gamma}_0(x)u$ is injective, and $\tilde{\gamma}_k(x)$ is injective.
And given $v \in \im \phi_k(x_0) = \im \phi_0(x_0) \cap E^2_k$, because $\tilde{\gamma}_0(x)$ is surjective there exist $\tilde{v} \in C$ and $u \in X_0$ with $\phi_0(x)u = v - \tilde{v} \in E^2_k$.
By \labelcref{Example_Strongly_continuous_sc_Fredholm_family_4}~hence $u \in X_0 \cap E^1_k = X_k$ and hence $\phi_k(x)u = \phi_0(x)u = v - \tilde{v}$, so $\tilde{\gamma}_k(x)$ is surjective.
It follows from the open mapping theorem that $\tilde{\gamma}_k(x) : X_k \to \im \phi_k(x_0)$ is an isomorphism.
Denoting, for Banach spaces $A,B$, by $\operatorname{Iso}(A,B) \subseteq L_{\mathrm{c}}(A,B)$ the open subset of invertible operators, it follows that $\tilde{\gamma}_k : V'_k \to \operatorname{Iso}(X_k, \im \phi_k(x_0))$ is well defined.
Since the inversion $\operatorname{Iso}(A,B) \to \operatorname{Iso}(B,A)$, $\lambda \mapsto \lambda\inv$, is continuous, it follows that
\begin{align*}
\psi_k : V'_k \times \im \phi_k(x_0) &\to X_k \\
(x,u) &\mapsto \tilde{\gamma}_k(x)\inv u
\end{align*}
is continuous.
The maps $\psi_k$ define an envelope for a (thus well defined) family of morphisms $\psi : V' \times \im \phi(x_0) \to X$, $V' \definedas \rho(V'_\infty)$, $X \definedas \rho^1(X_\infty)$, that is an inverse to $\pr^{E^2}_{\im\phi(x_0)}\circ \phi \circ \iota^X_{E^1}|_{V\times X} : V\times X \to \im \phi(x_0)$.
This shows  \cref{Theorem_scbar_Fredholm}, \labelcref{Theorem_scbar_Fredholm_2}~\labelcref{Theorem_scbar_Fredholm_2b}, so $\phi$ is Fredholm.

Now assume that the elliptic inequalities hold.
The goal is to show that then the family of morphisms $\psi$ just constructed is tame. \\
So let $x \in V'_k$ and let $v \in \im \phi_k(x_0)$.
Since one can write
\[
\phi_k(x) = \iota^{\im \phi_k(x_0)}_{E^2_k}\circ \pr^{E^2_k}_{\im \phi_k(x_0)}\circ \phi_k(x) + \iota^C_{E^2_k}\circ \pr^{E^2_k}_C\circ \phi_k(x)\circ \iota^{X_k}_{E^1_k}\text{,}
\]
the elliptic inequalities give
\begin{align*}
\|\psi_k(x)v\|_{X_k} &= \|\iota^{X_k}_{E^1_k}\psi_k(x)v\|^1_k \\
&\leq c_k\|x\|_0\|\phi_k(x)\iota^{X_k}_{E^1_k}\psi_k(x)\|_k + d_k\|x\|_k\|\phi_k(x)\iota^{X_k}_{E^1_k}\psi_k(x)v\|^2_0 \\
&\leq c_k\|x\|_0\Bigl(\Bigl\|\iota^{\im \phi_k(x_0)}_{E^2_k}\underbrace{\pr^{E^2_k}_{\im \phi_k(x_0)} \phi_k(x)\iota^{X_k}_{E^1_k}\psi_k(x)}_{=\; \id_{\im \phi_k(x_0)}}v\Bigr\|^2_k \;+ \\
&\qquad\quad +\; \Bigl\|\iota^C_{E^2_k}\pr^{E^2_k}_C\phi_k(x)\iota^{X_k}_{E^1_k}\psi_k(x)v\Bigr\|^2_k\Bigr) + d_k\|x\|_k\|\phi_0(x)\iota^{X_0}_{E^1_0}\psi_0(x)v\|^2_0 \\
&\leq c_k\|x\|_0\Bigl(\|\iota^{\im\phi_k(x_0)}_{E^2_k}v\|^2_k + c'_k\Bigl\|\iota^C_{E^2_0}\pr^{E^2_0}_C\phi_0(x)\iota^{X_0}_{E^1_0}\psi_0(x)v\Bigr\|^2_0\Bigr) \;+ \\
&\quad\; +\; d_k\|x\|_k\|\phi_0(x)\iota^{X_0}_{E^1_0}\psi_0(x)v\|^2_0\text{,}
\intertext{where the compatibility conditions of the $\phi_k$ and $\psi_k$ under the inclusions in $\mathbb{E}^1$ and $\mathbb{E}^2$ have been used ($C$ defines an sc-scale where all the inclusions are the identity) as well as the fact that on the finite dimensional vector space $C$ all the induced norms from the $\|\cdot\|^2_k$ are equivalent. Hence}
\|\psi_k(x)v\|_{X_k} &\leq \Bigl(c_k\|x\|_0 + \bigl(c_kc'_k\|x\|_0\left\|\iota^C_{E^2_0}\pr^{E^2_0}_C\phi_0(x)\iota^{X_0}_{E^1_0}\psi_0(x)\right\|_{L_{\mathrm{c}}(\im \phi_0(x_0), C)} \;+ \\
&\qquad +\; d_k\left\|\phi_0(x)\iota^{X_0}_{E^1_0}\psi_0(x)\right\|_{L_{\mathrm{c}}(\im \phi_0(x_0), E^2_0)}\bigr)\|v\|_{\im \phi_0(x_0)}\Bigr)\,\cdot \\
&\quad\; \cdot\, (1 + \|x\|_k + \|v\|_{\im \phi_k(x_0)})\text{.}
\end{align*}
Now by choosing $V'_0$ a small enough bounded subset of $E_0$ one can assume (by strong continuity of $\phi_0$ and $\psi_0$) that the operator norms of $\phi_0(x)$ and $\psi_0(x)$ are bounded by a constant independent of $x \in V'_0$.
Then for every bounded subset $B_0 \subseteq \im\phi_0(x_0)$ there exists a constant $C > 0$ \st
\[
\|\psi_k(x, v)\|_{X_k} \leq C (1 + \|x\|_k + \|v\|_{\im\phi_k(x_0)})
\]
for all $(x,v) \in V'_k\times B_k$ where $B_k \definedas B_0 \cap E^2_k$.
So by definition the map $\psi$ is tame.
\end{proof}
\end{example}

\Needspace{25\baselineskip}
\subsection{Fredholm maps}

\begin{definition}\label{Definition_scbar_Fredholm}
Let $E$ and $E'$ be $\overline{\text{sc}}$-Fr{\'e}chet spaces, let $U\subseteq E$ be an open subset and let $f : U \to E'$ be $\overline{\text{sc}}^1$.
\begin{enumerate}[label=\arabic*.,ref=\arabic*.]
  \item\label{Definition_scbar_Fredholm_1} $f$ is called \emph{Fredholm} \iff $Df : U\times E \to E'$ is a Fredholm family of morphisms. \\
If $E$ and $E'$ are pre-tame $\overline{\text{sc}}$-Fr{\'e}chet spaces, then $f$ is called \emph{tame Fredholm} \iff $f$ is tame and $Df : U\times E \to E'$ is a tame Fredholm family of morphisms.
  \item\label{Definition_scbar_Fredholm_2} If $f$ is Fredholm, then the map
\begin{align*}
\ind f : U &\to \Z \\
x &\mapsto \ind Df(x)
\end{align*}
is called the \emph{index} of $f$.
  \item\label{Definition_scbar_Fredholm_3} If $f$ is Fredholm, then the map
\begin{align*}
\corank f : U &\to \N_0 \\
x &\mapsto \dim\coker(Df(x))
\end{align*}
is called the \emph{corank of $f$}. \\
$f$ is said to have \emph{constant rank} if $\corank f : U \to \N_0$ is constant.
\end{enumerate}
\end{definition}

\begin{example}\label{Example_Diffeomorphisms_are_Fredholm}
Every $\overline{\text{sc}}$-diffeomorphism $f$ ($\overline{\text{sc}}^\infty$ map with an $\overline{\text{sc}}^\infty$ inverse) is Fredholm of constant rank with $\corank f \equiv 0$ and $\ind f \equiv 0$.
\end{example}

\begin{example}\label{Example_Linear_Fredholm_implies_nonlinear_Fredholm}
Every (linear) Fredholm morphism defines a (nonlinear) Fredholm map, as does every affine map with linear part a Fredholm morphism.
\end{example}

\begin{example}\label{Example_Strongly_continuous_sc_Fredholm_map}
This example covers, in abstract form, applications such as nonlinear Cauchy-Riemann operators, for which the elliptic inequalities are very well studied (\cf \cite{MR2045629}, Appendix B).
See also \cite{MR656198}, Sections II.2.2 and II.3.3. \\
Let $E$ and $E'$, be $\overline{\text{sc}}$-Fr{\'e}chet spaces, let $U \subseteq E$ be an open subset and let $f : U \to E'$ be a map. \\
Assume that for every $x_0 \in U$, there exists a neighbourhood $V \subseteq U$ of $x_0$ \st $f|_V$ has an envelope $((\mathbb{E}, \rho), (\mathbb{E}',\rho'), \mathcal{F} : \mathcal{V}\to \mathbb{E}')$ with the following properties:
\begin{enumerate}[label=\arabic*.,ref=\arabic*.]
  \item\label{Example_Strongly_continuous_sc_Fredholm_map_1} $\mathcal{F} = (f_k : V_k\to E'_k)$ is strict.
  \item\label{Example_Strongly_continuous_sc_Fredholm_map_2} For all $k \in \N_0$, $f_k : V_k\to E'_k$ is strongly continuously weakly Fr{\'e}chet differentiable.
  \item\label{Example_Strongly_continuous_sc_Fredholm_map_3} The envelope $((\mathbb{E}\oplus \mathbb{E}, \rho\oplus \rho), (\mathbb{E}', \rho'), \mathcal{DF} : \mathcal{V}\times \mathbb{E} \to \mathbb{E}')$ of $Df|_V : V\times E \to E'$ satisfies the conditions from \cref{Example_Strongly_continuous_sc_Fredholm_family}.
\end{enumerate}
Then $f$ is Fredholm.
\end{example}

\begin{proposition}\label{Lemma_Fredholm_index_nonlinear_map}
Let $E$, $E'$ and $E''$ be $\overline{\text{sc}}$-Fr{\'e}chet spaces, let $U \subseteq E$ and $U' \subseteq E'$ be open subsets and let $f : U \to E'$ and $f' : U' \to E''$ be $\overline{\text{sc}}^1$ maps with $f(U) \subseteq U'$.
\begin{enumerate}[label=\arabic*.,ref=\arabic*.]
  \item\label{Lemma_Fredholm_index_nonlinear_map_1} If $f$ is Fredholm, then $\ind f : U \to \Z$ is continuous and $\corank f : U \to \N_0$ is upper semicontinuous.
  \item\label{Lemma_Fredholm_index_nonlinear_map_2} If $f$ and $f'$ are Fredholm, then so is $f'\circ f : U \to E''$ and
\[
\ind (f'\circ f) = \ind f + f^\ast\ind f'\text{.}
\]
\end{enumerate}
The same holds in the tame context.
\end{proposition}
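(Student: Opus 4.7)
The plan is to derive both parts directly from the already-established analogues for families of morphisms (Corollary \ref{Corollary_Fredholm_index_constant} and Proposition \ref{Lemma_Basic_properties_families_of_Fredholm_morphisms}), using the chain rule (Theorem \ref{Theorem_Chain_rule_sc_differentiable}) to transfer compositional data about $f'\circ f$ into compositional data about $Df$ and $Df'$.

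Part \labelcref{Lemma_Fredholm_index_nonlinear_map_1} is immediate from the definition: since $f$ Fredholm means $Df : U\times E \to E'$ is a Fredholm family of morphisms, and since by Definition \ref{Definition_scbar_Fredholm} $\ind f = \ind Df$ and $\corank f = \corank Df$ as maps $U \to \Z$ and $U \to \N_0$, respectively, the continuity of $\ind f$ and upper semicontinuity of $\corank f$ are precisely the corresponding statements in Corollary \ref{Corollary_Fredholm_index_constant}.

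For part \labelcref{Lemma_Fredholm_index_nonlinear_map_2}, Theorem \ref{Theorem_Chain_rule_sc_differentiable} first gives that $f'\circ f : U \to E''$ is $\overline{\text{sc}}^1$ with differential $D(f'\circ f)(x) = Df'(f(x))\circ Df(x)$. Viewed as families of morphisms over $U$, this factorisation reads
\[
D(f'\circ f) = (f^\ast Df')\circ Df,
\]
where $f^\ast Df' : U\times E' \to E''$ denotes the pullback of $Df' : U'\times E' \to E''$ along the $\overline{\text{sc}}^0$ map $f : U \to U'$ (which lands in $U'$ by hypothesis). Applying Proposition \ref{Lemma_Basic_properties_families_of_Fredholm_morphisms}, \labelcref{Lemma_Basic_properties_families_of_Fredholm_morphisms_1}, to the Fredholm family $Df'$ shows that $f^\ast Df'$ is a Fredholm family of morphisms with $\ind(f^\ast Df') = f^\ast \ind Df' = (\ind f')\circ f$. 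Then Proposition \ref{Lemma_Basic_properties_families_of_Fredholm_morphisms}, \labelcref{Lemma_Basic_properties_families_of_Fredholm_morphisms_2}, applied to the two Fredholm families $Df$ and $f^\ast Df'$, gives that their composition $(f^\ast Df')\circ Df = D(f'\circ f)$ is Fredholm with
\[
\ind D(f'\circ f) = \ind(f^\ast Df') + \ind Df = (\ind f')\circ f + \ind f = f^\ast \ind f' + \ind f\text{,}
\]
which is the claimed index formula.

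In the tame context nothing new happens. The tame version of the chain rule (Theorem \ref{Theorem_Chain_rule_sc_differentiable} combined with Proposition \ref{Proposition_Standard_properties_tame}) ensures that $f'\circ f$ is tame $\overline{\text{sc}}^1$, and the tame analogues of parts \labelcref{Lemma_Basic_properties_families_of_Fredholm_morphisms_1} and \labelcref{Lemma_Basic_properties_families_of_Fredholm_morphisms_2} of Proposition \ref{Lemma_Basic_properties_families_of_Fredholm_morphisms} propagate the tame Fredholm property through the pullback and composition; tameness of the pullback $f^\ast Df'$ is a direct application of the closure of tame maps under composition (Proposition \ref{Proposition_Standard_properties_tame}). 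There is no genuine obstacle here — the proof is essentially bookkeeping, and the only conceptual step is the identification of $D(f'\circ f)$ with $(f^\ast Df')\circ Df$ so that the linear-family results apply without further work.
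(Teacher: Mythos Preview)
Your proof is correct and follows essentially the same approach as the paper: part~\labelcref{Lemma_Fredholm_index_nonlinear_map_1} is reduced to Corollary~\ref{Corollary_Fredholm_index_constant}, and part~\labelcref{Lemma_Fredholm_index_nonlinear_map_2} is obtained from the chain rule identity $D(f'\circ f) = (f^\ast Df')\circ Df$ together with Proposition~\ref{Lemma_Basic_properties_families_of_Fredholm_morphisms}. Your write-up is more explicit (in particular spelling out the tame case), but the underlying argument is identical.
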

\begin{proof}
\labelcref{Lemma_Fredholm_index_nonlinear_map_1}~follows from \cref{Corollary_Fredholm_index_constant} and \labelcref{Lemma_Fredholm_index_nonlinear_map_2}~follows from $D(f'\circ f) = (f^\ast Df')\circ Df : U\times E \to E''$ (\cf \cref{Theorem_Chain_rule_sc_differentiable}) and \cref{Lemma_Basic_properties_families_of_Fredholm_morphisms}.
\end{proof}

\Needspace{25\baselineskip}
\subsection{Applications of the Nash-Moser inverse function theorem}

\Needspace{15\baselineskip}
\subsubsection{The constant rank theorem}

\begin{theorem}[Constant rank theorem]\label{Theorem_constant_rank_theorem}
Let $E$ and $E'$ be weakly tame $\overline{\text{sc}}$-Fr{\'e}chet spaces, let $U\subseteq E$ be an open subset and let $f : U \to E'$ be $\overline{\text{sc}}^k$ and tame up to order $k$ for some $k \in \N_0\cup\{\infty\}$ with $k \geq 2$. \\
Given $x_0 \in U$, if there exists a neighbourhood $V\subseteq U$ of $x_0$ \st $f|_V$ is tame Fredholm and has constant rank, then there exist open neighbourhoods $W\subseteq U$ of $x_0$ and $W' \subseteq E'$ of $f(x_0)$ with $f(W) \subseteq W'$, together with $\underline{\text{sc}}^k$ diffeomorphisms, tame up to order $k$,
\begin{align*}
\Phi : W &\to \tilde{W} \subseteq \im Df(x_0)\oplus \ker Df(x_0) \\
\Psi : W' &\to \tilde{W}' \subseteq \im Df(x_0)\oplus \coker Df(x_0)
\end{align*}
\st
\[
\Psi\circ f|_W\circ \Phi\inv = (\id_{\im Df(x_0)}\oplus 0)|_{\tilde{W}}\text{.}
\]
\end{theorem}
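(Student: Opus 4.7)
The plan is to follow the classical proof of the constant rank theorem, using the Nash--Moser theorem in place of the ordinary inverse function theorem. By \cref{Proposition_sc_Fredholm_morphism} applied to the Fredholm morphism $Df(x_0)$ I obtain split $\overline{\text{sc}}$-decompositions
\[
E \;=\; X \oplus K, \qquad E' \;=\; I \oplus C,
\]
with $K \definedas \ker Df(x_0)$ finite dimensional, $I \definedas \im Df(x_0)$, and $C$ a finite-dimensional complement of $I$ that is canonically isomorphic to $\coker Df(x_0)$; moreover $Df(x_0)|_X : X \to I$ is an isomorphism. By \cref{Theorem_scbar_Fredholm} in the tame context, after shrinking $V$ I may assume that $\pi_I \circ Df \circ \iota_X : V \times X \to I$ is an invertible tame family of morphisms, where $\pi_I,\pi_K$ denote the projections in the chosen splittings. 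Both $I$ and $K$ are weakly tame (by \cref{Proposition_Tame_subspaces_and_direct_sums} and finite-dimensionality of $K$), so $I \oplus K$ is a weakly tame $\overline{\text{sc}}$-Fr{\'e}chet space.

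First I straighten the domain. Define
\[
\Phi : V \longrightarrow I \oplus K, \qquad \Phi(x) \definedas \bigl(\pi_I f(x) - \pi_I f(x_0),\; \pi_K(x - x_0)\bigr).
\]
As a composition of $f$ with tame morphisms, $\Phi$ is $\overline{\text{sc}}^k$ and tame up to order $k$. Its differential is $D\Phi(x)(u) = (\pi_I Df(x) u,\; \pi_K u)$, and the invertibility of $\pi_I \circ Df \circ \iota_X$ yields the explicit formula
\[
D\Phi(x)^{-1}(\nu, \zeta) \;=\; \bigl(\pi_I Df(x)|_X\bigr)^{-1}\bigl(\nu - \pi_I Df(x)\iota_K \zeta\bigr) \,+\, \iota_K \zeta,
\]
defining an $\underline{\text{sc}}^0$, tame family of morphisms inverse to $D\Phi$. \cref{Theorem_Nash_Moser} then furnishes open neighbourhoods $W \subseteq V$ of $x_0$ and $\tilde{W} \subseteq I \oplus K$ of $0$, together with a map $\Phi\inv : \tilde{W} \to W$ that is $\underline{\text{sc}}^k$ and tame up to order $k$.

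Next I analyse $\tilde{f} \definedas f \circ \Phi\inv : \tilde{W} \to E'$ and decompose $\tilde{f} = (\tilde f_I, \tilde f_C)$ along $E' = I \oplus C$. By construction $\tilde f_I(\nu, z) = \nu + \pi_I f(x_0)$. The crux is that $\tilde f_C$ depends only on $\nu$: with $L(\nu, z) \definedas D\tilde f_C(\nu, z)(\cdot, 0) : I \to C$ and $M(\nu, z) \definedas D\tilde f_C(\nu, z)(0, \cdot) : K \to C$, one has
\[
D\tilde f(\nu, z)(u_\nu, u_z) \;=\; \bigl(u_\nu,\; L(\nu, z) u_\nu + M(\nu, z) u_z\bigr),
\]
from which a direct computation gives $\dim\coker D\tilde f(\nu, z) = \dim C - \dim\im M(\nu, z)$. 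Since $\Phi\inv$ is a diffeomorphism it preserves coranks, so the constant rank hypothesis forces $M \equiv 0$. Because $K$ and $C$ are finite-dimensional, the fundamental theorem of calculus applied to the $C^1$ restriction $z \mapsto \tilde f_C(\nu, z)$ gives $\tilde f_C(\nu, z) = \tilde f_C(\nu, 0) \defines g(\nu)$, and $g$ is $\underline{\text{sc}}^k$ and tame up to order $k$.

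Finally I straighten the target by setting
\[
\Psi : E' \longrightarrow E', \qquad \Psi(u, w) \definedas \bigl(u - \pi_I f(x_0),\; w - g(u - \pi_I f(x_0))\bigr),
\]
with inverse $(\nu, c) \mapsto (\nu + \pi_I f(x_0),\; c + g(\nu))$; both are $\underline{\text{sc}}^k$ and tame up to order $k$ since $C$ is finite dimensional. A direct substitution gives $\Psi \circ \tilde f(\nu, z) = (\nu, 0)$, which is the required normal form after choosing $W'$ to be a suitable neighbourhood of $f(x_0)$ containing $f(W)$ on which $\Psi$ is defined, and setting $\tilde W' \definedas \Psi(W')$. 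The main technical obstacle is the verification that $D\Phi$ defines an invertible family of morphisms with tame $\underline{\text{sc}}^0$ inverse so that \cref{Theorem_Nash_Moser} is applicable; this is supplied by the tame formulation of \cref{Theorem_scbar_Fredholm} together with the closed-form expression for $D\Phi\inv$ above.
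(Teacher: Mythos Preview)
Your proof is correct and follows essentially the same route as the paper: split $E$ and $E'$ via the Fredholm structure of $Df(x_0)$, set $\Phi = (\pi_I f,\pi_K)$ (up to shifts), invert it with Nash--Moser using the explicit block-triangular formula for $D\Phi^{-1}$ coming from the tame invertibility of $\pi_I\circ Df\circ\iota_X$, then use the constant rank hypothesis to kill the $(C,K)$-block $M$ of $D(f\circ\Phi^{-1})$ and finally shear off the remaining $C$-component with $\Psi$. The paper carries out the same computation in matrix notation (your $M$ is its $D - CA^{-1}B$) after first normalising $x_0=0$, $f(x_0)=0$.
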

\begin{proof}
First, one can w.\,l.\,o.\,g.~assume that $V = U$, $x_0 = 0$ and $f(x_0) = 0$.
Second, because $f$ is Fredholm, one can pick splittings
\[
E = X\oplus \ker Df(x_0) \quad\text{and}\quad E' = \im Df(x_0)\oplus C'\text{,}
\]
where $C' \subseteq E'$ is identified with $\coker Df(x_0)$ via the quotient projection $E' \to \coker Df(x_0)$.
By \cref{Theorem_scbar_Fredholm}, after shrinking $U$, one can assume that
\begin{align*}
A : U\times X &\to \im Df(x_0) \\
A(x) &\definedas \pr^{E'}_{\im Df(x_0)}\circ Df(x)\circ \iota^X_E
\end{align*}
has a tame inverse $A\inv : U\times \im Df(x_0) \to X$.
Now define
\begin{align*}
\Phi' : U &\to \im Df(x_0)\oplus \ker Df(x_0) \\
x &\mapsto (\pr^{E'}_{\im Df(x_0)}(f(x)), \pr^E_{\ker Df(x_0)}(x))\text{.}
\end{align*}
One can then write in matrix form
\begin{align*}
Df(x) &= \begin{blockarray}{ccc}
X & \ker Df(x_0) & \\
\begin{block}{(cc)c}
A(x) & B(x) & \im Df(x_0) \\
C(x) & D(x) & C' \\
\end{block}
\end{blockarray} \\
D\Phi'(x) &= \begin{blockarray}{ccc}
X & \ker Df(x_0) & \\
\begin{block}{(cc)c}
A(x) & B(x) & \im Df(x_0) \\
0 & \id_{\ker Df(x_0)} & \ker Df(x_0) \\
\end{block}
\end{blockarray} \\
D\Phi'(x)\inv &= \begin{blockarray}{ccc}
\im Df(x_0) & \ker Df(x_0) & \\
\begin{block}{(cc)c}
A\inv(x) & -A\inv(x)\circ B(x) & X \\
0 & \id_{\ker Df(x_0)} & \ker Df(x_0) \\
\end{block}
\end{blockarray}
\end{align*}
so $D\Phi' : U\times E \to \im Df(x_0)\oplus \ker Df(x_0)$ has a tame inverse.
By \cref{Theorem_Nash_Moser}, there exist neighbourhoods $W \subseteq U$ and $\tilde{W} \subseteq \im Df(x_0) \oplus \ker Df(x_0)$ \st
\[
\Phi \definedas \Phi'|_W : W \to \tilde{W}
\]
is an $\underline{\text{sc}}^k$ diffeomorphism.
Furthermore, for all $x \in W$,
\begin{align*}
D(f\circ \Phi\inv)(\Phi(x)) &= Df(x)\circ D\Phi'(x)\inv \\
&= \begin{blockarray}{ccc}
\im Df(x_0) & \ker Df(x_0) & \\
\begin{block}{(cc)c}
\id_{\im Df(x_0)} & 0 & \im Df(x_0) \\
(C\circ A\inv)(x) & (D - C\circ A\inv\circ B)(x) & C' \\
\end{block}
\end{blockarray}
\end{align*}
Since $\Phi$ is an $\underline{\text{sc}}^k$ diffeomorphism, it follows that $\corank f|_W = (\corank f\circ \Phi\inv)\circ \Phi$, so $(\corank f)(x) = \corank((D - C\circ A\inv\circ B)(x))$ for all $x \in W$.
Now by construction $(D - C\circ A\inv\circ B)(x_0) = 0$, so $\corank f|_W$ is constant \iff $(D - C\circ A\inv\circ B)(x) = 0$ for all $x \in W$.
Thus, for all $\tilde{x} \in \tilde{W}$,
\begin{align*}
D(f\circ \Phi\inv)(\tilde{x}) &= \begin{blockarray}{ccc}
\im Df(x_0) & \ker Df(x_0) & \\
\begin{block}{(cc)c}
\id_{\im Df(x_0)} & 0 & \im Df(x_0) \\
(C\circ A\inv)(\Phi\inv(\tilde{x})) & 0 & C' \\
\end{block}
\end{blockarray}
\end{align*}
So after possibly shrinking $\tilde{W}$, one can assume that $\tilde{W} = \tilde{W}_1\times \tilde{W}_2$ for some neighbourhoods $\tilde{W}_1$, $\tilde{W}_2$ of $0$ in $\im Df(x_0)$ and $\ker Df(x_0)$, respectively, and
\begin{align*}
f\circ \Phi\inv : \im Df(x_0)\oplus \ker Df(x_0) \supseteq \tilde{W}_1\times \tilde{W}_2 &\to \im Df(x_0) \oplus C' \\
(u,z) &\mapsto (u, \varphi(u))
\end{align*}
where
\begin{align*}
\varphi : \tilde{W}_1 &\to C' \\
\varphi &\definedas \pr^{E'}_{C'}\circ f\circ \Phi\inv \circ \iota^{\tilde{W}_1}_{\im Df(x_0)\oplus \ker Df(x_0)}\text{.}
\end{align*}
Now define $W' \definedas \tilde{W}' \definedas \tilde{W}_1\times C'$ and
\begin{align*}
\Psi : \tilde{W}_1\times C' &\to \tilde{W}_1\times C' \\
(u,y) &\mapsto (u, y - \varphi(u))\text{.}
\end{align*}
\end{proof}

\begin{corollary}
Let $E$ and $E'$ be weakly tame $\overline{\text{sc}}$-Fr{\'e}chet spaces, let $U\subseteq E$ be an open subset and let $f : U \to E'$ be $\overline{\text{sc}}^\infty$ and tame up to arbitrary order. \\
If $f$ is tame Fredholm and both $\ind f$ and $\corank f$ are constant, then for any $y \in E'$, $f\inv(y) \subseteq E$ canonically carries a smooth structure. \\
In particular, if $f\inv(y)$ is $2^{\text{nd}}$-countable, then it is a smooth manifold of dimension $\dim f\inv(y) = \ind f + \corank f$.
\end{corollary}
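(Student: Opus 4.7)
The plan is to use the constant rank theorem (Theorem \ref{Theorem_constant_rank_theorem}) to construct smooth charts on $f^{-1}(y)$ modelled locally on the finite dimensional vector space $\ker Df(x_0)$ at each point $x_0 \in f^{-1}(y)$. Given such an $x_0$, I would apply Theorem \ref{Theorem_constant_rank_theorem} with $k = \infty$ to obtain an $\underline{\text{sc}}^\infty$ diffeomorphism $\Phi : W \to \tilde W_1 \times \tilde W_2 \subseteq \im Df(x_0) \oplus \ker Df(x_0)$ and a corresponding $\Psi : W' \to \tilde W'$ which straightens $f$ to the projection $(u,z) \mapsto (u,0)$. Writing $\Psi(y) = (u_y, c_y)$, solutions of $f(x) = y$ in $W$ exist iff $c_y = 0$, in which case $f^{-1}(y) \cap W = \Phi^{-1}(\{u_y\} \times \tilde W_2)$, so $\Phi$ restricts to a homeomorphism of $f^{-1}(y) \cap W$ onto an open subset of $\ker Df(x_0)$. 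Constancy of $\corank f$ and $\ind f$ then forces $\dim \ker Df(x_0) = \ind Df(x_0) + \dim \coker Df(x_0) = \ind f + \corank f$ at every point.

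Next I would declare these restrictions of $\Phi$, postcomposed with arbitrarily chosen linear isomorphisms $\ker Df(x_0) \cong \field^{\ind f + \corank f}$, to be the charts of a smooth atlas on $f^{-1}(y)$. Given two overlapping charts coming from $x_0, x_1 \in f^{-1}(y)$ with associated diffeomorphisms $\Phi_0, \Phi_1$, the transition map is the composition obtained by restricting the $\underline{\text{sc}}^\infty$ diffeomorphism $\Phi_1 \circ \Phi_0^{-1}$ to the finite dimensional affine slice $\{u_y^{(0)}\} \times \tilde W_2^{(0)}$ in its domain and projecting to $\ker Df(x_1)$. Since the finite dimensional $\overline{\text{sc}}$-Fr\'echet space structure on $\ker Df(x_i)$ (Example \ref{Example_Finite_dimensional_sc_Frechet_space}) is the canonical one and $\underline{\text{sc}}^\infty$-differentiability reduces on finite dimensional domain and codomain to ordinary $C^\infty$-differentiability, the transition is classically smooth by the chain rule (Theorem \ref{Theorem_Chain_rule_sc_differentiable}). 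Independence of the resulting smooth structure from the linear identifications and the specific splittings entering $\Phi$ is then a routine verification.

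Finally, $f^{-1}(y)$ inherits the Hausdorff property from $E$, and under the hypothesis of second countability one can shrink the atlas above to a countable subatlas, so that $f^{-1}(y)$ becomes a smooth manifold of dimension $\ind f + \corank f$ in the standard sense. The only delicate point I anticipate is ensuring that the local diffeomorphism $\Phi$ is simultaneously $\underline{\text{sc}}^k$ for all $k$, rather than merely for each fixed $k$ separately; but this is exactly what the hypothesis ``$\overline{\text{sc}}^\infty$ and tame up to arbitrary order'' provides, since the Nash-Moser theorem (Theorem \ref{Theorem_Nash_Moser}) then produces local inverses that are themselves $\underline{\text{sc}}^\infty$ and tame up to arbitrary order, so that Theorem \ref{Theorem_constant_rank_theorem} applies directly with $k = \infty$.
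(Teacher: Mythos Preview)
Your proposal is correct and follows the standard route that the paper implicitly has in mind (the corollary is stated without proof, as an immediate consequence of the constant rank theorem). One small remark: your phrasing ``solutions of $f(x)=y$ in $W$ exist iff $c_y=0$'' is slightly awkward, since $x_0\in W$ is already a solution and hence $c_y=0$ holds automatically; but the logic that $f^{-1}(y)\cap W = \Phi^{-1}(\{u_y\}\times \tilde W_2)$ is exactly right, and your verification that the transition maps are classically $C^\infty$ via the chain rule and the finite-dimensional reduction of $\underline{\text{sc}}^\infty$ is the correct argument.
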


\Needspace{15\baselineskip}
\subsubsection{Finite dimensional reduction and the Sard-Smale theorem}

In this section it will be shown that the results for Fredholm maps between Banach spaces from \cite{MR2045629}, Sections A.4 and A.5, carry over almost ad verbatim.
For brevities' sake I will make certain simplifications like restricting to smooth maps.
Note that for maps between tame $\overline{\text{sc}}$-Fr{\'e}chet spaces that are tame up to arbitrary order, $\overline{\text{sc}}^\infty$ and $\underline{\text{sc}}^\infty$ coincide by \cref{Proposition_Underline_sc_implies_overline_sc}.

\begin{theorem}[Finite dimensional reduction, \cf \cite{MR2045629}, Theorem A.4.3]\label{Theorem_Finite_dimensional_reduction}
Let $E$ and $E'$ be weakly tame $\overline{\text{sc}}$-Fr{\'e}chet spaces, let $U\subseteq E$ be an open subset and let $f : U \to E'$ be a tame Fredholm map that is $\overline{\text{sc}}^\infty$ and tame up to arbitrary order. \\
Given $x_0 \in U$, there exist the following:
\begin{enumerate}[label=\arabic*.,ref=\arabic*.]
  \item A splitting $E' = \im Df(x_0) \oplus C$,
  \item open neighbourhoods $V, W \subseteq U$ of $x_0$,
  \item an $\overline{\text{sc}}^\infty$ diffeomorphism $g : V \to W$, tame up to arbitrary order, with
\[
g(x_0) = x_0 \quad\text{and}\quad Dg(x_0) = \id_{E}\text{,}
\]
  \item an $\overline{\text{sc}}^\infty$ map $\tilde{k} : V \to C$, tame up to arbitrary order, with
\[
\tilde{k}(x_0) = 0 \quad\text{and}\quad D\tilde{k}(x_0) = 0\text{.}
\]
\end{enumerate}
Furthermore, setting $k \definedas \iota^C_{E'}\circ \tilde{k} : V \to E'$,
\[
f\circ g(x) = f(x_0) + Df(x_0)(x-x_0) + k(x) \quad \forall\, x \in V\text{.}
\]
\end{theorem}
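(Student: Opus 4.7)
The strategy is to reduce the statement to the Nash-Moser inverse function theorem (\cref{Theorem_Nash_Moser}). Since $f$ is tame Fredholm, I split $E' = \im Df(x_0) \oplus C$ with $C$ finite-dimensional, and $E = X \oplus \ker Df(x_0)$ with $X$ a tame $\overline{\text{sc}}$-subspace, using \cref{Proposition_sc_Fredholm_morphism} and \cref{Example_Finite_dimensional_split_sc_subspace}. By \cref{Theorem_scbar_Fredholm} applied to the tame Fredholm family $Df$, after shrinking $U$ the family of morphisms
\[
A(x) \definedas \pr^{E'}_{\im Df(x_0)} \circ Df(x) \circ \iota^X_E : X \to \im Df(x_0)
\]
is invertible with tame inverse $A\inv : U\times \im Df(x_0) \to X$. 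In particular $L_X \definedas A(x_0) = Df(x_0)|_X$ is an isomorphism with tame inverse $L_X\inv$.

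The candidate diffeomorphism will be the local inverse of
\[
h(x) \definedas x_0 + L_X\inv \bigl(\pr^{E'}_{\im Df(x_0)}(f(x) - f(x_0))\bigr) + \pr^E_{\ker Df(x_0)}(x - x_0) \in E.
\]
As a sum of compositions of tame morphisms with $f$, the map $h$ is $\overline{\text{sc}}^\infty$ and tame up to arbitrary order by \cref{Theorem_Chain_rule_sc_differentiable} and \cref{Proposition_Standard_properties_tame}. A direct computation yields $h(x_0) = x_0$ and, in block form with respect to $E = X \oplus \ker Df(x_0)$,
\[
Dh(x) = \begin{pmatrix} L_X\inv \circ A(x) & L_X\inv \circ B(x) \\ 0 & \id_{\ker Df(x_0)} \end{pmatrix},
\]
where $B(x) \definedas \pr^{E'}_{\im Df(x_0)} \circ Df(x) \circ \iota^{\ker Df(x_0)}_E$. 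This is upper triangular with diagonal blocks that are invertible families of morphisms with tame inverses, so $Dh : U\times E \to E$ is an invertible family of morphisms whose inverse is a tame $\overline{\text{sc}}^0$ family given by the explicit upper-triangular formula; in particular $Dh(x_0) = \id_E$.

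The hypotheses of \cref{Theorem_Nash_Moser} are now in force (using that $E$ is weakly tame and that $\overline{\text{sc}}^\infty$ implies $\underline{\text{sc}}^\infty$ by \cref{Lemma_sc_diffble_implies_weakly_sc_diffble}). The theorem produces open neighbourhoods $V \subseteq E$ of $x_0$ and $W \subseteq U$ of $x_0$ together with a local inverse $g : V \to W$ of $h$ that is $\underline{\text{sc}}^\infty$ and tame up to arbitrary order; \cref{Proposition_Underline_sc_implies_overline_sc} upgrades this to $\overline{\text{sc}}^\infty$ and tame up to arbitrary order. I then set
\[
\tilde{k}(x) \definedas \pr^{E'}_C\bigl(f(g(x)) - f(x_0)\bigr) : V \to C,
\]
which is again $\overline{\text{sc}}^\infty$ and tame up to arbitrary order. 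The identity $h(g(x)) = x$, after applying $L_X$ to the $X$-component and using that $Df(x_0)$ vanishes on $\ker Df(x_0)$, gives $\pr^{E'}_{\im Df(x_0)}(f(g(x)) - f(x_0)) = Df(x_0)(x - x_0)$; combining this with the $C$-component yields exactly $f \circ g(x) = f(x_0) + Df(x_0)(x - x_0) + k(x)$ with $k = \iota^C_{E'} \circ \tilde{k}$. The four pointwise identities $g(x_0) = x_0$, $Dg(x_0) = \id_E$, $\tilde{k}(x_0) = 0$, and $D\tilde{k}(x_0) = \pr^{E'}_C \circ Df(x_0) = 0$ follow immediately from the constructions and the chain rule.

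The main obstacle is not the structural logic, which mirrors the Banach-space argument in \cite{MR2045629}, but the bookkeeping of tameness: each building block must respect tameness so that \cref{Theorem_Nash_Moser} is applicable. This hinges on the assumption that $f$ is \emph{tame} Fredholm, which is precisely what makes \cref{Theorem_scbar_Fredholm} deliver a \emph{tame} inverse of $A$ (and hence of $L_X$) and what propagates through to give a tame inverse of the family $Dh$. Without this, $h$ could still be pointwise inverted but the Nash-Moser step would fail.
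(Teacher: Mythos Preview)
Your proof is correct and follows essentially the same approach as the paper's. Your map $h$ coincides (up to the shift $x_0\mapsto 0$, $f(x_0)\mapsto 0$) with the paper's $\psi(x)=\pr^E_{\ker D}x+Q\circ f(x)$, and the only cosmetic difference is that you invoke \cref{Theorem_scbar_Fredholm} on $Df$ to obtain a tame inverse for $A$ and then write down the block-triangular inverse of $Dh$ explicitly, whereas the paper applies \cref{Theorem_scbar_Fredholm} directly to $D\psi$ (using $D\psi(x_0)=\id_E$) to conclude invertibility with tame inverse in one stroke.
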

\begin{proof}
First, by shifting, one can assume w.\,l.\,o.\,g.~that $x_0 = 0$ and $f(x_0) = 0$.
Second, let $D \definedas Df(x_0)$ and pick splittings $E = \ker D \oplus X$ and $E' = \im D \oplus C$ as in \cref{Proposition_sc_Fredholm_morphism}.
Then $\tilde{D} \definedas \pr^{E'}_{\im D}\circ D\circ \iota^X_E : X \to \im D$ is an isomorphism and one can define $Q \definedas \iota^X_E\circ \tilde{D}\inv\circ \pr^{E'}_{\im D} : E' \to E$ and
\begin{align*}
\psi : U &\to E \\
x &\mapsto \pr^E_{\ker D} x + Q\circ f(x)\text{.}
\end{align*}
$\psi$ is a tame Fredholm map, $\overline{\text{sc}}^\infty$ and tame up to arbitrary order, satisfying $\psi(0) = 0$ and $D\psi(0)u = \pr^E_{\ker D}u + QDu = \pr^E_{\ker D}u + \pr^E_Xu = u$ for all $u \in E$, \ie $D\psi(0) = \id_E$.
So by \cref{Theorem_scbar_Fredholm} there exists a neighbourhood $V \subseteq U$ of $0$ \st $D\psi|_{V\times E} : V\times E \to E$ is invertible with tame inverse.
After possibly shrinking $V$ and setting $W \definedas \psi(V)$, by the Nash-Moser inverse function theorem, \cref{Theorem_Nash_Moser}, together with \cref{Proposition_Underline_sc_implies_overline_sc}, there is a well-defined $\overline{\text{sc}}^\infty$ diffeomorphism, tame up to arbitrary order,
\begin{align*}
g : V &\to W \\
x &\mapsto (\psi|_V)\inv(x)\text{.}
\end{align*}
Define $\tilde{k} \definedas \pr^{E'}_C\circ f\circ g$.
One computes $D\circ \psi(x) = D\circ \pr^E_{\ker D} x + D\circ Q\circ f(x) = D\circ Q\circ f(x)$, so $D = D\circ Q\circ f\circ g$ and hence
\begin{align*}
f\circ g &= f\circ g - D + D \\
&= f\circ g - D\circ Q\circ f\circ g + D \\
&= (\id_{E'} - D\circ Q)\circ f\circ g + D \\
&= \iota^C_{E'}\circ \tilde{k} + D\text{.}
\end{align*}
\end{proof}

\begin{definition}\label{Definition_Strongly_smoothing_map}
Let $E$ and $E'$ be $\overline{\text{sc}}$-Fr{\'e}chet spaces, let $U\subseteq E$ be an open subset and let $k : U \to E'$ be $\overline{\text{sc}}^1$. \\
$k$ is called \emph{strongly smoothing} \iff $Dk : U \times E \to E'$ is a strongly smoothing family of morphisms. \\
And analogously in the tame context.
\end{definition}

\begin{proposition}\label{Lemma_Perturbations_of_Fredholm_maps}
Let $E$, $E'$ and $E''$ be $\overline{\text{sc}}$-Fr{\'e}chet spaces, let $U \subseteq E$, $V \subseteq E'$ and $W \subseteq E''$ be open subsets, and let $f : U \to V$, $g : V \to W$ and $k : U \to E'$ be $\overline{\text{sc}}^1$.
\begin{enumerate}[label=\arabic*.,ref=\arabic*.]
  \item If at least one of $f$ and $g$ is strongly smoothing, then so is $g\circ f$.
  \item If $f$ is Fredholm and $k$ is strongly smoothing, then $f + k : U \to E'$ is Fredholm as well with $\ind(f+k) = \ind(f)$.
\end{enumerate}
And analogously in the tame context.
\end{proposition}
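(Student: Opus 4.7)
The plan is to reduce both statements immediately to the analogous closure statements at the level of differentials, which have already been established for families of morphisms in \cref{Lemma_Basic_properties_strongly_smoothing_families,Lemma_Basic_properties_families_of_Fredholm_morphisms}. By \cref{Definition_Strongly_smoothing_map}, a map is strongly smoothing precisely when its differential is a strongly smoothing family of morphisms, and by \cref{Definition_scbar_Fredholm} a map is Fredholm precisely when its differential is a Fredholm family of morphisms, with the same index. Both claims will therefore follow by applying the chain rule from \cref{Theorem_Chain_rule_sc_differentiable} and reading off the differential of the composition, respectively the perturbation.

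For part~1, the chain rule gives
\[
D(g\circ f)(x) = Dg(f(x))\circ Df(x) = (f^\ast Dg)(x)\circ Df(x)
\]
for all $x \in U$. Interpreting this as an identity of $\overline{\text{sc}}^0$ families of morphisms over $U$, I would argue as follows. If $g$ is strongly smoothing, then $Dg$ is a strongly smoothing family over $V$; pulling back along the $\overline{\text{sc}}^0$ map $f$ keeps this property by \cref{Lemma_Basic_properties_strongly_smoothing_families}, so $f^\ast Dg$ is strongly smoothing, and composition with the family $Df$ on the right again preserves strong smoothing by the same lemma. Dually, if $f$ is strongly smoothing then $Df$ is strongly smoothing and composition with $f^\ast Dg$ on the left preserves this. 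In either case $D(g\circ f)$ is a strongly smoothing family, hence $g\circ f$ is a strongly smoothing map.

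For part~2, linearity of the differential yields $D(f+k)(x) = Df(x) + Dk(x)$, which at the level of families over $U$ is the sum of the Fredholm family $Df$ and the strongly smoothing family $Dk$. By \cref{Lemma_Basic_properties_families_of_Fredholm_morphisms}, the sum is again a Fredholm family with index equal to $\ind(Df)$ pointwise, so by \cref{Definition_scbar_Fredholm} the map $f+k$ is Fredholm with $\ind(f+k) = \ind(f)$.

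The tame versions of both parts are proved by the identical argument, invoking the tame versions of \cref{Theorem_Chain_rule_sc_differentiable,Lemma_Basic_properties_strongly_smoothing_families,Lemma_Basic_properties_families_of_Fredholm_morphisms}, all of which have already been recorded. I do not expect any genuine obstacle: the only small point to verify is that $D(g\circ f)$ and $D(f+k)$ really are $\overline{\text{sc}}^0$ families in the sense of \cref{Definition_Family_of_morphisms}, but this is automatic since $g\circ f$ and $f+k$ are themselves $\overline{\text{sc}}^1$ by the chain rule and by the linearity of $\overline{\text{sc}}^1$ in the argument, respectively, so their differentials are $\overline{\text{sc}}^0$ by \cref{Definition_sc1}.
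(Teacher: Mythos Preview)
Your proposal is correct and follows exactly the same approach as the paper: the paper's proof is the one-liner ``Clear from the definitions and \cref{Lemma_Basic_properties_strongly_smoothing_families,Lemma_Basic_properties_families_of_Fredholm_morphisms}'', and you have simply unpacked this by writing out the chain rule $D(g\circ f) = (f^\ast Dg)\circ Df$ and the linearity $D(f+k) = Df + Dk$ explicitly before invoking those lemmas.
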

\begin{proof}
Clear from the definitions and \cref{Lemma_Basic_properties_strongly_smoothing_families,Lemma_Basic_properties_families_of_Fredholm_morphisms}.
\end{proof}

\begin{corollary}\label{Corollary_Standard_form_of_Fredholm_maps}
Let $E$ and $E'$ be weakly tame $\overline{\text{sc}}$-Fr{\'e}chet spaces, let $U\subseteq E$ be an open subset and let $f : U \to E'$ be $\overline{\text{sc}}^1$. \\
Then the following are equivalent:
\begin{enumerate}[label=\arabic*.,ref=\arabic*.]
  \item $f$ is tame Fredholm, $\overline{\text{sc}}^\infty$, and tame up to arbitrary order
  \item for every $x_0 \in U$, $Df(x_0) : E \to E'$ is Fredholm and there exist
\begin{enumerate}[label=(\alph*),ref=(\alph*)]
  \item open neighbourhoods $V, W \subseteq U$ of $x_0$,
  \item an $\overline{\text{sc}}^\infty$ diffeomorphism $g : V \to W$, tame up to arbitrary order, with
\[
g(x_0) = x_0 \quad\text{and}\quad Dg(x_0) = \id_{E}\text{,}
\]
  \item a strongly smoothing $\overline{\text{sc}}^\infty$ map $k : V \to E'$, tame up to arbitrary order, with
\[
k(x_0) = 0 \quad\text{and}\quad Dk(x_0) = 0\text{,}
\]
\end{enumerate}
\st
\[
f\circ g(x) = f(x_0) + Df(x_0)(x-x_0) + k(x) \quad\forall\, x\in V\text{.}
\]
\end{enumerate}
\end{corollary}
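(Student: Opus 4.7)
The plan is to derive both directions from results already in place: the forward direction is essentially a repackaging of \cref{Theorem_Finite_dimensional_reduction} once one observes that the reduction's remainder term is automatically strongly smoothing, and the reverse direction combines the stability of Fredholmness under strongly smoothing perturbations with the chain rule and invariance under diffeomorphisms.

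For (1)$\Rightarrow$(2), I would fix $x_0 \in U$ and feed $f$ directly into \cref{Theorem_Finite_dimensional_reduction} to obtain the splitting $E' = \im Df(x_0)\oplus C$, the neighbourhoods $V,W$, the $\overline{\text{sc}}^\infty$ diffeomorphism $g$ tame up to arbitrary order satisfying $g(x_0)=x_0$, $Dg(x_0)=\id_E$, and the $\overline{\text{sc}}^\infty$ map $\tilde k:V\to C$ tame up to arbitrary order with $\tilde k(x_0)=0$, $D\tilde k(x_0)=0$, such that $f\circ g(x)=f(x_0)+Df(x_0)(x-x_0)+\iota^C_{E'}\circ\tilde k(x)$ on $V$. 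Setting $k\definedas \iota^C_{E'}\circ \tilde k$, the only thing left to check is that $k$ is strongly smoothing in the sense of \cref{Definition_Strongly_smoothing_map}, i.e.\ that $Dk:V\times E\to E'$ is a strongly smoothing family of morphisms. But by the chain rule $Dk=\iota^C_{E'}\circ D\tilde k$ factors through the finite dimensional $\overline{\text{sc}}$-Fr{\'e}chet space $C$, so \cref{Example_Strongly_smoothing_family}, \labelcref{Example_Strongly_smoothing_family_2}, applies and yields the claim. That $Df(x_0)$ is Fredholm at the single point $x_0$ is part of the definition of $f$ being Fredholm.

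For (2)$\Rightarrow$(1), I would show around each fixed $x_0\in U$ that $f$ has the required properties on the neighbourhood $W=g(V)$, which suffices since the conditions in (1) are local. The affine part $x\mapsto f(x_0)+Df(x_0)(x-x_0)$ is tame Fredholm, $\overline{\text{sc}}^\infty$ and tame up to arbitrary order by \cref{Example_Linear_Fredholm_implies_nonlinear_Fredholm} together with the hypothesis that $Df(x_0)$ is Fredholm. Adding the strongly smoothing perturbation $k$, \cref{Lemma_Perturbations_of_Fredholm_maps} (in the tame version) shows $f\circ g$ is tame Fredholm of the same index as $Df(x_0)$, and it is $\overline{\text{sc}}^\infty$ and tame up to arbitrary order as a sum of such maps. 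Since $g$ is an $\overline{\text{sc}}^\infty$ diffeomorphism tame up to arbitrary order, it is tame Fredholm of index $0$ by \cref{Example_Diffeomorphisms_are_Fredholm}, and the chain rule \cref{Theorem_Chain_rule_sc_differentiable} together with \cref{Lemma_Fredholm_index_nonlinear_map}, \labelcref{Lemma_Fredholm_index_nonlinear_map_2}, transports these properties from $f\circ g$ to $f=(f\circ g)\circ g\inv$ on $W$.

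The only real point requiring attention will be keeping careful track of the tame context throughout: specifically, that all the pieces invoked (\cref{Theorem_Finite_dimensional_reduction}, \cref{Example_Strongly_smoothing_family}, \cref{Lemma_Perturbations_of_Fredholm_maps}, \cref{Lemma_Fredholm_index_nonlinear_map}, and the chain rule) are used in their tame versions, and that tameness up to arbitrary order of $k$ is inherited from that of $\tilde k$ via the fixed tame inclusion $\iota^C_{E'}$. No genuinely new analysis is needed beyond \cref{Theorem_Finite_dimensional_reduction}, so the proof is essentially assembly.
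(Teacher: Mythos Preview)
Your proposal is correct and follows essentially the same approach as the paper's own proof, which simply cites \cref{Theorem_Finite_dimensional_reduction} and \cref{Example_Strongly_smoothing_family} for (1)$\Rightarrow$(2), and \cref{Example_Linear_Fredholm_implies_nonlinear_Fredholm}, \cref{Lemma_Perturbations_of_Fredholm_maps}, \cref{Example_Diffeomorphisms_are_Fredholm} and \cref{Lemma_Fredholm_index_nonlinear_map} together with locality for (2)$\Rightarrow$(1). You have spelled out the assembly in more detail than the paper does, but the ingredients and logic are identical.
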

\begin{proof}
One direction is provided by \cref{Theorem_Finite_dimensional_reduction} and \cref{Example_Strongly_smoothing_family}, the other direction follows from \cref{Example_Linear_Fredholm_implies_nonlinear_Fredholm}, \cref{Lemma_Perturbations_of_Fredholm_maps}, \cref{Example_Diffeomorphisms_are_Fredholm} and \cref{Lemma_Fredholm_index_nonlinear_map}, using that being Fredholm is a local property.
\end{proof}

\begin{definition}
Let $E$, $E^i$, $i=1,2$, be $\overline{\text{sc}}$-Fr{\'e}chet spaces, let $U \subseteq E$ be an open subset and let $\phi : U\times E^1 \to E^2$ be a Fredholm family of morphisms. \\
A point $x \in U$ is called a \emph{regular point} of $\phi$ \iff $\corank\phi(x) = 0$.
\end{definition}

\begin{lemma}
Let $E$, $E^i$, $i=1,2$, be $\overline{\text{sc}}$-Fr{\'e}chet spaces, let $U \subseteq E$ be an open subset and let $\phi : U\times E^1 \to E^2$ be a Fredholm family of morphisms. \\
Then the set of regular points of $\phi$ is an open subset of $U$.
\end{lemma}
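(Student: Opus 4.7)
The plan is to deduce this immediately from the upper semicontinuity of the corank that was established in \cref{Corollary_Fredholm_index_constant}, \labelcref{Corollary_Fredholm_index_constant_1}. By definition, the set of regular points is
\[
R \definedas \{x \in U \;|\; \corank \phi(x) = 0\}\text{.}
\]
Since $\corank \phi$ takes values in $\N_0$, this set coincides with $\{x \in U \;|\; \corank \phi(x) < 1\}$, which is precisely the preimage of the open subset $(-\infty, 1) \subseteq \R$ under $\corank \phi$. Upper semicontinuity then gives that $R$ is open in $U$.

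Concretely, given $x_0 \in R$, upper semicontinuity at $x_0$ provides a neighbourhood $V \subseteq U$ of $x_0$ such that $\corank \phi(x) < \corank \phi(x_0) + 1 = 1$ for all $x \in V$, so $\corank \phi(x) = 0$ and $V \subseteq R$. I do not expect any obstacle here, as \cref{Corollary_Fredholm_index_constant} is precisely the nontrivial input and the remaining argument is purely formal; the entire proof fits in two lines.
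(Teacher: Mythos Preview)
Your proof is correct and takes exactly the same approach as the paper: the paper's proof consists of the single sentence ``This follows from \cref{Corollary_Fredholm_index_constant}, \labelcref{Corollary_Fredholm_index_constant_1},'' and you have simply spelled out the (formal) deduction from upper semicontinuity of $\corank \phi$.
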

\begin{proof}
This follows from \cref{Corollary_Fredholm_index_constant}, \labelcref{Corollary_Fredholm_index_constant_1}.
\end{proof}

\begin{definition}\label{Definition_Regular_point_and_value}
Let $E$ and $E'$ be $\overline{\text{sc}}$-Fr{\'e}chet spaces, let $U\subseteq E$ be an open subset and let $f : U \to E'$ be a Fredholm map.
\begin{enumerate}[label=\arabic*.,ref=\arabic*.]
  \item A point $x \in U$ is called a \emph{regular point} of $f$ \iff $x$ is a regular point of $Df : U\times E \to E'$.
  \item A point $y \in E'$ is called a \emph{regular value} of $f$ \iff $x \in U$ is a regular point of $f$ for every $x \in f\inv(y)$.
\end{enumerate}
\end{definition}

\begin{theorem}[Sard-Smale, \cf \cite{MR2045629}, Theorem A.5.1]\label{Theorem_Sard_Smale}
Let $E$ and $E'$ be weakly tame $\overline{\text{sc}}$-Fr{\'e}chet spaces with $E$ $2^\text{nd}$-countable, let $U\subseteq E$ be an open subset and let $f : U \to E'$ be a tame Fredholm map that is $\overline{\text{sc}}^\infty$ and tame up to arbitrary order. \\
Then the set of regular values of $f$ is a generic subset of $E'$ (\ie a countable intersection of open and dense subsets).
\end{theorem}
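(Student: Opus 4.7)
The plan is to reduce the problem locally, via the finite-dimensional reduction in \cref{Theorem_Finite_dimensional_reduction}, to the classical Sard theorem in finite dimensions, and then patch the local statements together using the $2^\text{nd}$-countability of $E$. Since $E'$ is a Fr{\'e}chet space, hence a Baire space, it suffices to show that the set $\Sigma \definedas f(\operatorname{crit} f)$ of critical values is meager in $E'$; its complement then contains a dense $G_\delta$, which is generic in the sense of the theorem and equals the set of regular values by \cref{Definition_Regular_point_and_value}.

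For each $x_0 \in U$, \cref{Theorem_Finite_dimensional_reduction} provides a neighbourhood $W \subseteq U$, an $\overline{\text{sc}}^\infty$-diffeomorphism $g : V \to W$ tame to all orders, a finite-dimensional complement $E' = \operatorname{im} D \oplus C$ (with $D \definedas Df(x_0)$), and an $\overline{\text{sc}}^\infty$ map $\tilde k : V \to C$ tame to all orders with $\tilde k(x_0) = 0$ and $D\tilde k(x_0) = 0$ such that
\[
f \circ g(x) = f(x_0) + D(x - x_0) + \iota^C_{E'} \tilde k(x).
\]
Using the $2^\text{nd}$-countability of $E$ (hence of $U$), cover $U$ by a countable family $\{W_i\}_{i \in \N}$ of such neighbourhoods. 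A countable union of meager sets is meager, so it suffices to prove $f(\operatorname{crit} f \cap W_i)$ meager in $E'$ for each $i$.

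Fix one chart and drop the index. By \cref{Proposition_sc_Fredholm_morphism} choose a splitting $E = \ker D \oplus X$ with $D|_X : X \to \operatorname{im} D$ an isomorphism, and parametrise $V$ by $x = x_0 + u + v$ with $u \in \ker D$, $v \in X$. Since $Du = 0$ and $\tilde k$ takes values in $C$, differentiation gives
\[
D(f \circ g)(x_0 + u + v)(u', v') = D v' + D\tilde k(u, v)(u', v'),
\]
with summands in $\operatorname{im} D$ and $C$. As $v'$ is uniquely determined by the $\operatorname{im} D$-component of any target, $D(f\circ g)(x_0 + u + v)$ is surjective onto $E'$ if and only if the restriction $D\tilde k(u,v)(\cdot, 0) : \ker D \to C$ is. Hence, setting $\phi_v(u) \definedas \tilde k(u, v)$ --- a smooth map between the \emph{finite-dimensional} spaces $\ker D$ and $C$ --- the point $x_0 + u + v$ is critical for $f \circ g$ iff $u$ is critical for $\phi_v$, and
\[
(f \circ g)(\operatorname{crit}(f \circ g)) = \{ f(x_0) + Dv + \phi_v(u) : v \in X, \; u \in \operatorname{crit}(\phi_v) \} \subseteq \operatorname{im} D \oplus C.
\]

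The classical Sard theorem applied to each $\phi_v$ yields $\phi_v(\operatorname{crit}(\phi_v)) \subseteq C$ of Lebesgue measure zero, hence meager. Thus the slice of $(f\circ g)(\operatorname{crit}(f\circ g))$ over any $y_1 \in \operatorname{im} D$ is meager in the fibre $\{y_1\} \oplus C \cong C$. The main technical obstacle is to upgrade this fibrewise meagerness to meagerness of the total set in the infinite-dimensional product $\operatorname{im} D \oplus C$, and I would dispose of it via the Kuratowski-Ulam theorem: $\operatorname{crit}(f\circ g) \subseteq V$ is closed (by upper semicontinuity of corank, \cref{Corollary_Fredholm_index_constant}) and $V$ is Polish (separable metrisable, since $E$ is $2^\text{nd}$-countable), so $(f \circ g)(\operatorname{crit}(f \circ g))$ is an analytic subset of the separable Fr{\'e}chet subspace $\overline{(f\circ g)(V)} \subseteq E'$; it therefore has the Baire property, and since $C$ is finite-dimensional (hence $2^\text{nd}$-countable), Kuratowski-Ulam converts the fibrewise meagerness into meagerness of the whole set in $\operatorname{im} D \oplus C$. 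A countable union over $i$ then completes the proof.
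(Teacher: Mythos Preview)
Your approach is genuinely different from the paper's, and the difference is instructive.

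The paper follows McDuff--Salamon (Theorem~A.5.1) and does \emph{not} use Kuratowski--Ulam. Instead, for each $x_0$ it produces a neighbourhood $V$ (with $\pr^E_{\ker D}(V)$ bounded in the finite-dimensional $\ker D$) and proves directly that for every subset $A\subseteq V$ closed in $E$, the set
\[
\mathcal V_{\mathrm{reg}}(f;A)\;=\;E'\setminus f(\operatorname{crit} f\cap A)
\]
is both \emph{open} and \emph{dense}. Density comes from finite-dimensional Sard exactly as in your $\phi_v$ argument. Openness comes from a short properness argument: if $y_n\to y$ are critical values with witnesses $x_n\in A$, write $x_n=x_n^0+x_n^1\in\ker D\oplus X$; then $x_n^1=\tilde D^{-1}\pr^{E'}_{\im D}y_n$ converges, and $x_n^0$ lies in a bounded set of the finite-dimensional $\ker D$, so a subsequence of $(x_n)$ converges in the closed set $A$, and upper semicontinuity of $\corank f$ finishes. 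Covering $U$ by countably many such $V_i$ and each $V_i$ by countably many closed $A_i^j$ then exhibits the regular values explicitly as $\bigcap_{i,j}\mathcal V_{\mathrm{reg}}(f;A_i^j)$, a countable intersection of open dense sets.

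Your route trades this properness step for Kuratowski--Ulam plus descriptive set theory. The argument is essentially sound --- the asymmetric form of Kuratowski--Ulam needs only the fibre factor $C$ to have a countable $\pi$-base, and the Souslin operation preserves the Baire property --- but two points deserve care. First, your opening sentence conflates ``comeager'' with ``is a dense $G_\delta$'': showing $f(\operatorname{crit} f)$ meager gives that the regular-value set \emph{contains} a dense $G_\delta$, not that it \emph{is} one, which is what the theorem literally asserts; the paper's explicit openness argument is what delivers the $G_\delta$ structure. Second, the analyticity~$\Rightarrow$~BP step wants a Polish ambient space, but only $E$ is assumed second countable; when $E'$ is non-separable the image $f(W_i)$ is already nowhere dense and the whole question is vacuous, while when $E'$ is separable the standard Kuratowski--Ulam applies --- you should make this dichotomy explicit. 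The paper's approach sidesteps both issues and is more elementary.
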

\begin{proof}
For an arbitrary subset $A \subseteq U$, denote
\[
\mathcal{V}_{\mathrm{reg}}(f; A) \definedas \{y \in E' \;|\; \corank f(x) = 0 \;\;\forall\, x \in f\inv(y)\cap A\}\text{.}
\]
I will follow the proof of Theorem A.5.1 in \cite{MR2045629} and show the following:
\begin{claim}
For every point $x_0 \in U$ there exists a neighbourhood $V \subseteq U$ of $x_0$ \st for every subset $A \subseteq V$ \st $A$ is closed in $E$, $\mathcal{V}_{\mathrm{reg}}(f; A)\subseteq E'$ is an open and dense subset of $E'$.
\end{claim}
\begin{proof}
First, let $D \definedas Df(x_0)$ and pick splittings $E = \ker D \oplus X$ and $E' = \im D \oplus C$ as in \cref{Proposition_sc_Fredholm_morphism}.
Shifting and using \cref{Theorem_Finite_dimensional_reduction}, one can assume w.\,l.\,o.\,g.~that $x_0 = 0$ and $f(x_0) = 0$, and that there exists a neighbourhood $V \subseteq U$ of $0$ \st
\[
f(x) = Dx + k(x)\quad\forall\, x \in V\text{,}
\]
where $k : V \to E'$ can be written as $k = \iota^C_{E'}\circ \tilde{k}$ for some map $\tilde{k} : V \to C$ with $\tilde{k}(0) = 0$ and $D\tilde{k}(0) = 0$.
For simplicities sake I will drop the inclusion $\iota^C_{E'}$ from the notation and write $k = \tilde{k}$.
Furthermore after possibly shrinking $V$, because $\ker D$ is finite dimensional, hence locally bounded, one can assume that $\pr^E_{\ker D}(V) \subseteq \ker D$ is bounded. \\
It needs to be shown that $\mathcal{V}_{\mathrm{reg}}(f; A) \subseteq E'$ is open and dense for every subset $A \subseteq V$ that is closed in $E$. \\
So let $A \subseteq V$ be such a subset, let $y \in E'$ and let $(y_n)_{n\in\N} \subseteq E' \setminus \mathcal{V}_{\mathrm{reg}}(f; A)$ be a sequence with $y_n \to y$.
Write $y_n = y^0_n + y^1_n$ for $y^0_n \in \im D$ and $y^1_n \in C$.
By assumption there exists a sequence $(x_n)_{n\in \N} \subseteq A$ with $f(x_n) = y_n$ and $\corank f(x_n) \geq 1$, and one can write $x_n = x^0_n + x^1_n$ for $x^0_n \in \ker D$ and $x^1_n \in X$.
Then
\begin{align*}
y_n &= y^0_n + y^1_n \\
&= f(x_n) \\
&= D(x_n) + k(x_n) \\
&= D(x^1_n) + k(x_n) \\
&\in \im D \oplus C\text{,}
\end{align*}
so $D(x^1_n) = y^0_n$.
By \cref{Proposition_sc_Fredholm_morphism}, $\tilde{D} \definedas \pr^{E'}_{\im D}\circ D \circ \iota^X_E : X \to \im D$ is an isomorphism, hence $x^1_n = \tilde{D}\inv(y^0_n) = \tilde{D}\inv\circ \pr^{E'}_{\im D}(y_n) \in X$ and $(x^1_n)_{n\in\N}$ converges.
Since $(x^0_n)_{n\in\N} \subseteq \pr^E_{\ker D}(V)$ is bounded by assumption, after passing to a subsequence one can assume that $(x^0_n)_{n\in \N}$ is convergent as well.
Since $A$ is closed, $x \definedas \lim_{n\to\infty} x_n \in A$ and $f(x) = y$.
Since $\corank f(x_n) \geq 1$, by \cref{Lemma_Fredholm_index_nonlinear_map} also $\corank f(x) \geq 1$ and hence $y \in E' \setminus \mathcal{V}_{\mathrm{reg}}(f; A)$.
So $\mathcal{V}_{\mathrm{reg}}(f; A)$ is open. \\
To show that $\mathcal{V}_{\mathrm{reg}}(f; A)$ is also dense in $E'$, it will a fortiori be shown that $\mathcal{V}_{\mathrm{reg}}(f; V)$ is dense in $E'$.
To do so, let $y \in E'$ be arbitrary.
The goal is to define a sequence $(y_n)_{n\in\N} \subseteq \mathcal{V}_{\mathrm{reg}}(f; V)$ with $y_n \to y$.
As before, write $y = y^0 + y^1$ with $y^0 \in \im D$ and $y^1 \in C$.
Fixing $y^0$, the sequence we will construct will be of the form $y_n = y^0 + y^1_n$ for some sequence $(y^1_n)_{n\in\N} \subseteq C$ with $y^1_n \to y^1$.
For $x \in V$ write $x = x^0 + x^1$ for $x^0 \in \ker D$ and $x^1 \in X$.
Then as above, $f(x) = y_n$ implies $y^0 = \tilde{D}(x^1)$ and $y^1_n = k(x^0 + x^1)$, or $y^1_n = k(x^0 + \tilde{D}\inv(y^0))$.
So $y_n = y^0 + y^1_n$ is a regular value of $f|_V$ \iff $y^1_n$ is a regular value of the map
\begin{align*}
\ker D \supseteq (V - \tilde{D}\inv(y^0))\cap \ker D &\to C \\
x &\mapsto k(x + \tilde{D}\inv(y_0))\text{.}
\end{align*}
Since $\ker D$ and $C$ are finite dimensional, by Sard's theorem the set of regular values of this map is dense in $C$.
Hence one can find the required sequence $(y^1_n)_{n\in\N} \subseteq C$ with the above properties.
\end{proof}
Using the above claim, the remainder of the proof follows the proof of Theorem A.5.1 in \cite{MR2045629} ad verbatim:
Since $E$ is $2^\text{nd}$-countable and metrisable, cover $U$ by countably many open subsets $V_i$, $i \in \N$, as in the claim, and each $V_i$ in turn by countably many subsets $A_i^j \subseteq V_i$, $j\in\N$, each of which is closed in $X$ (taken for example to be small enough closed balls \wrt some chosen compatible metric on $E$).
Then the set of regular values of $f$ is
\[
\mathcal{V}_{\mathrm{reg}}(f) = \bigcap_{i,j\in\N} \mathcal{V}_{\mathrm{reg}}(f; A_i^j)\text{.}
\]
\end{proof}

\clearpage
\section{Summary and a convenient category of Fr{\'e}chet spaces for nonlinear Fredholm theory}\label{Section_Summary}

The purpose of this last part is to collect the main definitions, theorems and examples of this article, packaged neatly into an application friendly framework.
This section may actually serve as either a starting point to reading this article or a way to avoid reading most of this article.

\begin{definition}[\cref{Definition_sc_Frechet_space,Definition_Pretame_sc_Frechet_space,Definition_Weakly_tame_sc_Frechet_space}]
\leavevmode
\begin{enumerate}[label=\arabic*.,ref=\arabic*.]
  \item A \emph{tame scale space}, or \emph{tsc-space} is a weakly tame $\overline{\text{sc}}$-Fr{\'e}chet space.
  \item A \emph{morphism} between tsc-spaces is a continuous linear map that defines a tame morphism between pre-tame $\overline{\text{sc}}$-Fr{\'e}chet spaces.
  \item A morphism $F : E \to E'$ between tsc-spaces $E$ and $E'$ is called an \emph{isomorphism} if there exists a morphism $G : E' \to E$ \st $F\circ G = \id_{E'}$ and $G\circ F = \id_E$.
\end{enumerate}
\end{definition}

\begin{remark}
In particular, every tsc-space is a pre-tame $\overline{\text{sc}}$-Fr{\'e}chet space and has an underlying $\overline{\text{sc}}$-Fr{\'e}chet space, which in turn has an underlying Fr{\'e}chet space and every (iso-)morphism of tsc-spaces defines an (iso-)morphism between the underlying ($\overline{\text{sc}}$-)Fr{\'e}chet spaces.
\end{remark}

\begin{example}[\cref{Example_Finite_dimensional_sc_Frechet_space}]
\leavevmode\\
Every finite dimensional vector space (over $\mathds{k} = \R,\C$) uniquely defines a tsc-space.
\end{example}

\begin{example}[\cref{Example_Sobolev_chain_I,Example_Sobolev_chain_II,Example_Sobolev_chain_III,Example_Sobolev_chain_IV,Example_Ck_chain_I,Example_Ck_chain_II,Example_Sobolev_and_Ck_chain,Example_Sobolev_and_Ck_sc_space,Example_PDO_I,Example_PDO_II,Example_PDO_III}]
\leavevmode\\
Let $\Sigma$ be a closed manifold and let $\pi : F \to \Sigma$ be a (real or complex) vector bundle.
Then $\Gamma(F)$, the vector space of smooth sections of $F$ with the $\mathcal{C}^\infty$-topology, canonically carries the structure of a tsc-space.
If $\pi' : F' \to \Sigma$ is another vector bundle and $P : \Gamma(F) \to \Gamma(F')$ is a (pseudo)differential operator, then $P$ canonically defines a morphism of tsc-spaces.
\end{example}

\begin{proposition}[\cref{Lemma_Morphisms_of_sc_Frechet_spaces,Remark_Subspaces_sums_pretame_context}]
\leavevmode\\
Let $E$, $E'$ and $E''$ be tsc-spaces, let $S, T : E \to E'$ and $S' : E' \to E''$ be morphisms between tsc-spaces, and let $\lambda, \mu \in \mathds{k}$.
Then
\[
\lambda S + \mu T : E \to E' \quad\text{and}\quad S' \circ S : E \to E''
\]
are morphisms between tsc-spaces as well.
\end{proposition}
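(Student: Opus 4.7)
The plan is to reduce both assertions to corresponding statements about tame weak morphisms between sc-chains, so that the argument mirrors the untamed version already worked out in \cref{Lemma_Morphisms_of_sc_Frechet_spaces}, with the sole additional burden of verifying that tameness is preserved under linear combinations and composition of weak morphisms. The weakly tame hypothesis on the spaces themselves plays no role here; only tameness of the morphisms is at stake.

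First I would observe that by \cref{Lemma_Morphisms_of_sc_Frechet_spaces}, parts \labelcref{Lemma_Morphisms_of_sc_Frechet_spaces_5,Lemma_Morphisms_of_sc_Frechet_spaces_6}, both $\lambda S + \mu T$ and $S'\circ S$ already define morphisms of the underlying $\overline{\text{sc}}$-Fr{\'e}chet spaces. Hence the only thing to check is that these induced morphisms are tame in the sense of \cref{Definition_Pretame_sc_Frechet_space}. Unravelling definitions, this reduces to showing that if $E$, $E'$, $E''$ carry tamely compatible $\overline{\text{sc}}$-structures $(\mathbb{E}, \phi)$, $(\mathbb{E}', \phi')$, $(\mathbb{E}'', \phi'')$ and the weak morphisms $S_\infty, T_\infty : E_\infty \to E'_\infty$ and $S'_\infty : E'_\infty \to E''_\infty$ corresponding to $S$, $T$, $S'$ are tame, then so are $\lambda S_\infty + \mu T_\infty$ and $S'_\infty\circ S_\infty$.

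For composition, by tameness pick extensions $\mathbb{S} : \mathbb{E}^{\mathbf{k}} \to \mathbb{E}'$ and $\mathbb{S}' : \mathbb{E}'^{\mathbf{l}} \to \mathbb{E}''$ of $S_\infty$ and $S'_\infty$ with $\mathbf{k}$ and $\mathbf{l}$ tame (using \cref{Lemma_Rescaling_and_weak_morphisms}, \labelcref{Lemma_Rescaling_and_weak_morphisms_3}, in its tame incarnation). Then, following the construction in the proof of \cref{Lemma_Rescaling_and_weak_morphisms}, \labelcref{Lemma_Rescaling_and_weak_morphisms_7}, the operator $\mathbb{S}' \circ \mathbb{S}^{\mathbf{l}} : \mathbb{E}^{\mathbf{k}\circ \mathbf{l}} \to \mathbb{E}''$ extends $S'_\infty\circ S_\infty$, and by \cref{Lemma_Tame_sequence} the composite rescaling $\mathbf{k}\circ \mathbf{l}$ is again tame, so $S'\circ S$ is a tame morphism. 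For the linear combination, pick tame extensions $\mathbb{S} : \mathbb{E}^{\mathbf{k}} \to \mathbb{E}'$ and $\mathbb{T} : \mathbb{E}^{\mathbf{l}} \to \mathbb{E}'$ and set $\mathbf{m} \definedas (\max\{k_j, l_j\})_{j\in\N_0}$; then $\lambda \mathbb{S}\circ \mathbb{I}^{\mathbf{m}}_{\mathbf{k}} + \mu \mathbb{T}\circ \mathbb{I}^{\mathbf{m}}_{\mathbf{l}} : \mathbb{E}^{\mathbf{m}} \to \mathbb{E}'$ extends $\lambda S_\infty + \mu T_\infty$.

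The only real point to verify, and hence the ostensible obstacle, is that the class of tame rescalings is closed both under composition and under taking coordinatewise maxima. Both are immediate from the equivalent characterisation in \cref{Lemma_Tame_sequence}: a strictly monotone increasing sequence $\mathbf{k}$ is tame \iff $k_j \leq j + c$ for some $c \in \N_0$ and all $j$. Composability follows since $k_{l_j} \leq l_j + c \leq j + c + c'$, and closure under $\max$ follows by taking the larger of the two constants. This completes the plan.
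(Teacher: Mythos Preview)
Your proposal is correct and follows essentially the same approach as the paper. The paper's proof is simply a citation to \cref{Lemma_Morphisms_of_sc_Frechet_spaces} together with \cref{Remark_Subspaces_sums_pretame_context}, the latter of which asserts without detail that the arguments go through ``almost word by word, just inserting the word `(pre-)tame' ''; you have done exactly that, correctly tracing through the constructions in \cref{Lemma_Rescaling_and_weak_morphisms} and verifying that tameness of the rescalings is preserved under both composition (which is the paper's second \cref{Lemma_Tame_sequence}) and coordinatewise maximum (which the paper does not state separately but is, as you note, immediate from the characterisation $k_j \leq j + c$).
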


\begin{definition}[\cref{Definition_sc_subspace,Definition_Sum_of_scFrechet_spaces,Remark_Subspaces_sums_pretame_context}]
\leavevmode\\
Let $E$, $E^i$, $i=1,\dots, k$ for some $k \in \N_0$, be tsc-spaces
\begin{enumerate}[label=\arabic*.,ref=\arabic*.]
  \item The \emph{direct sum} of the $E^i$, $i=1,\dots,k$, is the tsc-space $E^1\oplus \cdots\oplus E^k$ defined by the direct sum of the $E^i$, $i=1,\dots,k$, as pre-tame $\overline{\text{sc}}$-Fr{\'e}chet spaces.
  \item A \emph{subspace} $E'$ of a tsc-space $E$ is a $\overline{\text{sc}}$-subspace $E' \subseteq E$ of pre-tame $\overline{\text{sc}}$-Fr{\'e}chet spaces \st there exists a tame morphism $P : E \to E'$ with $P\circ J = \id_{E'}$, where $J : E' \hookrightarrow E$ is the canonical inclusion.
  \item A subspace $E'$ of $E$ is called \emph{split} if there exists another subspace $E''$ of $E$ \st the canonical morphism
\begin{align*}
J : E' \oplus E'' &\to E \\
(e,e') &\mapsto e + e'
\end{align*}
is an isomorphism. \\
$E''$ is then called a \emph{complement} to $E'$ and vice versa.
\end{enumerate}
\end{definition}

\begin{remark}
That subspaces and direct sums of tsc-spaces are well defined tsc-spaces follows from \cref{Proposition_Tame_subspaces_and_direct_sums}.
\end{remark}

\begin{example}[\cref{Example_Finite_dimensional_split_subchain,Example_Finite_dimensional_split_sc_subspace}]
\leavevmode\\
If $E$ is a tsc-space and $C \subseteq E$ is a finite dimensional subspace (as vector spaces), then $C$ canonically defines a split subspace of $E$.
\end{example}

\begin{example}[\cref{Example_Splitting_along_submanifold_I,Example_Splitting_along_submanifold_II}]
\leavevmode\\
Let $\Sigma$ be a closed manifold, let $C \subseteq \Sigma$ be a closed submanifold and let $\pi : F \to \Sigma$ be a real (or complex) vector bundle. \\
There is a well-defined morphism
\begin{align*}
R : \Gamma(F) &\to \Gamma(F|_C) \\
u &\mapsto u|_C
\end{align*}
between tsc-spaces and also a well-defined subspace
\[
\Gamma(F;C) \definedas \ker R = \{u \in \Gamma(F) \;|\; u|_C = 0\}\text{.}
\]
$\Gamma(F;C)$ is a split subspace of $\Gamma(F)$ with a complement isomorphic to $\Gamma(F|_C)$, \ie
\[
\Gamma(F) \cong \Gamma(F;C)\oplus \Gamma(F|_C)\text{.}
\]
\end{example}

\begin{definition}[\cref{Definition_Strongly_smoothing_morphism,Remark_Subspaces_sums_pretame_context}]
\leavevmode\\
Let $K : E \to E'$ be a morphism between tsc-spaces.
$K$ is called \emph{strongly smoothing} if it is strongly smoothing as a tame morphism between pre-tame $\overline{\text{sc}}$-Fr{\'e}chet spaces.
\end{definition}

\begin{remark}
Note that a morphism $K : E \to E'$ between tsc-spaces is strongly smoothing \iff the underlying morphism between $\overline{\text{sc}}$-Fr{\'e}chet spaces is strongly smoothing in the sense of \cref{Definition_Strongly_smoothing_morphism}.
This follows from \cref{Lemma_Characterisation_strongly_smoothing}.
\end{remark}

\begin{example}[\cref{Example_Strongly_smoothing_morphisms}]
\leavevmode\\
If $K : E \to E'$ is a morphism of tsc-spaces \st there exists a finite dimensional vector space $C$ and morphisms $L_1 : E \to C$ and $L_2 : C \to E'$ with $K = L_2\circ L_1$, then $K$ is strongly smoothing.
\end{example}

\begin{proposition}[\cref{Corollary_Basic_properties_strongly_smoothing_morphisms,Remark_Strongly_smoothing_implies_compact_scFrechet}]
\leavevmode\\
Let $E$, $E'$ and $E''$ be tsc-spaces, let $K, L : E \to E'$, $S : E'' \to E$ and $T : E' \to E''$ be morphisms and let $\lambda, \mu \in \mathds{k}$.
\begin{enumerate}[label=\arabic*.,ref=\arabic*.]
  \item If $K$ is strongly smoothing, then its underlying morphism between Fr{\'e}chet spaces is compact.
  \item If $K$ and $L$ are strongly smoothing, then so are
\[
\lambda K + \mu L : E \to E'\text{,}\qquad K\circ S : E'' \to E \quad\text{and}\quad
T\circ K : E \to E''\text{.}
\]
\end{enumerate}
\end{proposition}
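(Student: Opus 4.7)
The plan is essentially to observe that this proposition is a direct repackaging, in the tsc-space setting, of results already established in the $\overline{\text{sc}}$-Fr{\'e}chet space context. Every tsc-space has an underlying $\overline{\text{sc}}$-Fr{\'e}chet space, every morphism of tsc-spaces defines a morphism of the underlying $\overline{\text{sc}}$-Fr{\'e}chet spaces, and by the remark preceding the definition of strongly smoothing morphism in the summary section, the notion of being strongly smoothing for a morphism of tsc-spaces coincides with being strongly smoothing for the underlying $\overline{\text{sc}}$-Fr{\'e}chet space morphism.

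For part (1), I would simply invoke \cref{Remark_Strongly_smoothing_implies_compact_scFrechet}: since the underlying $\overline{\text{sc}}$-Fr{\'e}chet space morphism is strongly smoothing, it is a compact operator between the underlying Fr{\'e}chet spaces, and the Fr{\'e}chet space underlying the tsc-space is by definition the same as the one underlying the $\overline{\text{sc}}$-Fr{\'e}chet space.

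For part (2), I would invoke \cref{Corollary_Basic_properties_strongly_smoothing_morphisms}, which gives precisely the three closure properties: linear combinations of strongly smoothing morphisms are strongly smoothing, and pre/post-composition of a strongly smoothing morphism with any morphism is again strongly smoothing. The only thing left to check is that the composition and linear combination operations, when performed in the category of tsc-spaces, agree with those in the category of $\overline{\text{sc}}$-Fr{\'e}chet spaces, which is immediate from the definition of a morphism of tsc-spaces as a continuous linear map satisfying the additional tameness condition.

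The only mildly non-trivial point, which isn't really an obstacle so much as a bookkeeping remark, is that the resulting linear combinations and compositions need to be verified to remain \emph{tame} morphisms and not merely morphisms of $\overline{\text{sc}}$-Fr{\'e}chet spaces; but this is already covered by \cref{Lemma_Morphisms_of_sc_Frechet_spaces} together with \cref{Lemma_Rescaling_and_weak_morphisms} in the tame context, which guarantee that tame morphisms form a preadditive category closed under the relevant operations. No new analytic content is needed.
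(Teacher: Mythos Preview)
Your proposal is correct and matches the paper's approach: the proposition is stated in the summary section without a separate proof, simply citing \cref{Corollary_Basic_properties_strongly_smoothing_morphisms} and \cref{Remark_Strongly_smoothing_implies_compact_scFrechet}, and you have correctly identified the bridging remark (that strongly smoothing for tsc-morphisms coincides with strongly smoothing for the underlying $\overline{\text{sc}}$-Fr{\'e}chet morphisms via \cref{Lemma_Characterisation_strongly_smoothing}) that makes the transfer legitimate. Your additional bookkeeping remark about tameness of compositions and linear combinations is accurate and handled in the paper by \cref{Remark_Subspaces_sums_pretame_context}.
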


\begin{definition}[\cref{Definition_sc_Fredholm}]
\leavevmode\\
Let $E$ and $E'$ be tsc-spaces.
\begin{enumerate}[label=\arabic*.,ref=\arabic*.]
  \item A morphism $F : E \to E'$ is called \emph{Fredholm} \iff $F$ is invertible modulo strongly smoothing morphisms \ie \iff there exists a morphism $F' : E' \to E$ and strongly smoothing morphisms $K : E \to E$ and $K' : E' \to E'$ \st
\begin{align*}
F' \circ F &= \id_E + K \\
F \circ F' &= \id_{E'} + K'\text{.}
\end{align*}
Such a morphism $F' : E' \to E$ is then called a \emph{Fredholm inverse} of $F$.
  \item The \emph{Fredholm index} $\ind F$ of a Fredholm morphism $F : E \to E'$ is defined as the Fredholm index of $F$ as a Fredholm operator between the underlying Fr{\'e}chet spaces.
\end{enumerate}
\end{definition}

\begin{example}[\cref{Example_Elliptic_operator_I,Example_Elliptic_operator_II}]
\leavevmode\\
Let $\Sigma$ be a closed manifold, and let $\pi : F \to \Sigma$ and $\pi' : F' \to \Sigma$ be vector bundles.
If $P : \Gamma(F) \to \Gamma(F')$ is an elliptic (pseudo)differential operator, then $P$ defines a Fredholm morphism of tsc-spaces.
\end{example}

\begin{proposition}[\cref{Lemma_Basic_properties_sc_Fredholm_operators}]
\leavevmode\\
Let $E$, $E'$ and $E''$ be tsc-spaces, let $F : E \to E'$ and $F' : E' \to E''$ be Fredholm morphisms and let $K : E \to E'$ be a strongly smoothing morphism.
Then:
\begin{enumerate}[label=\arabic*.,ref=\arabic*.]
  \item $F'\circ F : E \to E''$ is Fredholm with $\ind(F'\circ F) = \ind F' + \ind F$.
  \item $F + K : E \to E'$ is Fredholm with $\ind(F + K) = \ind F$.
\end{enumerate}
\end{proposition}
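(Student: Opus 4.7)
The plan is to reduce the proposition to the formally identical statement established earlier in the linear theory section, namely \cref{Lemma_Basic_properties_sc_Fredholm_operators}, now interpreted in the setting of tsc-spaces. Since a tsc-space is by definition a weakly tame pre-tame $\overline{\text{sc}}$-Fr{\'e}chet space, and morphisms of tsc-spaces are just tame morphisms, the notions of ``strongly smoothing morphism'' and ``Fredholm morphism'' in the tsc-setting coincide with those for the underlying $\overline{\text{sc}}$-Fr{\'e}chet spaces. Thus all I really need to do is check that the constructions of Fredholm inverses performed at the $\overline{\text{sc}}$-level produce tame morphisms, and that the index computations carry over.

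For part~1, I would take Fredholm inverses $G : E' \to E$ of $F$ and $G' : E'' \to E'$ of $F'$, so that
\[
G\circ F = \id_E + K_1,\quad F\circ G = \id_{E'} + K_2,\quad G'\circ F' = \id_{E'} + K_3,\quad F'\circ G' = \id_{E''} + K_4
\]
with $K_1,\dots,K_4$ strongly smoothing. Then I would verify that $G\circ G'$ is a Fredholm inverse to $F'\circ F$ by direct calculation:
\begin{align*}
(G\circ G')\circ (F'\circ F) &= G\circ (\id_{E'} + K_3)\circ F = \id_E + K_1 + G\circ K_3\circ F,\\
(F'\circ F)\circ (G\circ G') &= F'\circ(\id_{E'} + K_2)\circ G' = \id_{E''} + K_4 + F'\circ K_2\circ G'.
\end{align*}
By \cref{Corollary_Basic_properties_strongly_smoothing_morphisms} (in the tame context, which has the same statement by \cref{Remark_Subspaces_sums_pretame_context}), the error terms are strongly smoothing since each factors through a strongly smoothing morphism. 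Hence $F'\circ F$ is Fredholm.

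For part~2, I would take $G$ a Fredholm inverse to $F$ with $G\circ F = \id_E + K_1$ and $F\circ G = \id_{E'} + K_2$, and compute
\[
G\circ(F+K) = \id_E + K_1 + G\circ K,\qquad (F+K)\circ G = \id_{E'} + K_2 + K\circ G,
\]
where again $G\circ K$ and $K\circ G$ are strongly smoothing by \cref{Corollary_Basic_properties_strongly_smoothing_morphisms}. So $G$ serves as a Fredholm inverse of $F+K$.

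The index formulas then follow from \cref{Remark_Strongly_smoothing_implies_compact_scFrechet}, which guarantees that every strongly smoothing morphism is compact as a map between the underlying Fr{\'e}chet spaces, together with the already-established results for Fredholm operators between Fr{\'e}chet spaces (additivity of the index under composition and invariance under compact perturbation, \cref{Lemma_Composition_of_Fredholm_operators}). Since $\ind$ for a Fredholm morphism of tsc-spaces is by definition the Fredholm index of the underlying operator on Fr{\'e}chet spaces, the identities $\ind(F'\circ F) = \ind F' + \ind F$ and $\ind(F+K) = \ind F$ transfer immediately. There is no real obstacle here; the only thing to be a bit careful about is that one is genuinely free to form compositions, sums, and scalar multiples of tame morphisms without leaving the category of tsc-spaces, which is exactly the content of the preceding proposition on morphisms of tsc-spaces.
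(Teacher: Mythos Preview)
Your proof is correct and follows essentially the same approach as the paper. The paper's own proof is a single line (``Immediate from the definition using \cref{Corollary_Basic_properties_strongly_smoothing_morphisms} and \cref{Remark_Strongly_smoothing_implies_compact_scFrechet}''), and you have simply written out the computation this line is pointing to: composing the Fredholm inverses, checking via \cref{Corollary_Basic_properties_strongly_smoothing_morphisms} that the error terms remain strongly smoothing, and then invoking compactness of strongly smoothing morphisms to inherit the index identities from \cref{Lemma_Composition_of_Fredholm_operators}.
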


\begin{theorem}[\cref{Proposition_sc_Fredholm_morphism}]
\leavevmode\\
Let $E$ and $E'$ be tsc-spaces and let $F : E \to E'$ be a morphism.
Then the following are equivalent:
\begin{enumerate}[label=\arabic*.,ref=\arabic*.]
  \item $F$ is Fredholm.
  \item \begin{enumerate}[label=(\alph*),ref=(\alph*)]
  \item $F$ is a Fredholm operator between the Fr{\'e}chet spaces underlying $E$ and $E'$.
  \item There exist splittings
\[
E = \ker F \oplus X \quad\text{and}\quad E' = \im F \oplus C\text{.}
\]
In particular, $\im F$ defines a split subspace of $E'$ and the quotient projection $E' \to \coker F = E'/_{\im F}$ is a well defined morphism between tsc-spaces.
  \item For any splitting as above,
\[
\pr^{E'}_{\im F}\circ F\circ \iota^X_E : X \to \im F
\]
is an isomorphism of tsc-spaces.
\end{enumerate}
\end{enumerate}
\end{theorem}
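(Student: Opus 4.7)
The strategy is to reduce the statement to its pre-tame analogue \cref{Proposition_sc_Fredholm_morphism}, already established via \cref{Theorem_sc_Fredholm_morphism}, and then to verify that each ingredient behaves well under the tame refinement. Concretely, the equivalence of (1) and the three-part condition (2) is already known at the level of $\overline{\text{sc}}$-Fr{\'e}chet spaces, so the only genuinely new content is that every construction can be performed inside the category of tsc-spaces (weakly tame pre-tame $\overline{\text{sc}}$-Fr{\'e}chet spaces, with tame morphisms).

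For the implication (1) $\Rightarrow$ (2), start from a Fredholm inverse $F' : E' \to E$ together with strongly smoothing $K, K'$ as in the definition; by \cref{Remark_Strongly_smoothing_implies_compact_scFrechet} $F$ is Fredholm between the underlying Fr{\'e}chet spaces, so (a) holds. For (b) apply \cref{Theorem_sc_Fredholm_morphism} to obtain splittings $E = \ker F \oplus X$ and $E' = \im F \oplus C$ of the underlying pre-tame $\overline{\text{sc}}$-Fr{\'e}chet spaces; the subspaces $\ker F$ and $C$ are finite dimensional, hence by \cref{Example_Finite_dimensional_split_sc_subspace} (together with its tame counterpart in \cref{Remark_Subspaces_sums_pretame_context}) they are tame and split inside the tame world, with tame projections. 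Since the corresponding complements $X = \im F''$ and $\im F = \im H'$ appearing in \cref{Theorem_sc_Fredholm_morphism} arise as images of tame Fredholm inverses composed with tame projections, they are themselves tame subspaces of $E$ and $E'$, respectively, so the splittings exist in the tsc-sense. For (c), the isomorphism $\pr^{E'}_{\im F}\circ F\circ \iota^{X}_E : X \to \im F$ supplied by \cref{Theorem_sc_Fredholm_morphism} has an inverse given by the restriction of $F''$ (or equivalently by the composition of $F$ with the auxiliary operators $H$, $H'$), which by construction is a tame morphism, so the map is an isomorphism of tsc-spaces.

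For the implication (2) $\Rightarrow$ (1), the argument is the formal analogue of the proof of \labelcref{Proposition_Fredholm_I_2}~$\Rightarrow$~\labelcref{Proposition_Fredholm_I_1}~in \cref{Proposition_Fredholm_I}: given splittings $E = \ker F \oplus X$ and $E' = \im F \oplus C$ together with the isomorphism $\bar{F} \definedas \pr^{E'}_{\im F}\circ F\circ \iota^X_E : X \to \im F$ in the tame setting, define
\[
F' \definedas \iota^X_E\circ \bar{F}^{-1}\circ \pr^{E'}_{\im F} : E' \to E\text{,}
\]
which is a tame morphism as a composition of tame morphisms. A direct computation yields $F'\circ F = \id_E - \pr^E_{\ker F}$ and $F\circ F' = \id_{E'} - \pr^{E'}_C$, and the projections onto the finite dimensional subspaces $\ker F$ and $C$ are strongly smoothing by \cref{Example_Strongly_smoothing_morphisms}. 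Hence $K \definedas -\pr^E_{\ker F}$ and $K' \definedas -\pr^{E'}_C$ witness that $F$ is invertible modulo strongly smoothing tame morphisms, so $F$ is Fredholm in the tsc-sense.

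The step I expect to require the most care is the verification that the splittings produced by \cref{Theorem_sc_Fredholm_morphism} are tame and not merely pre-tame; this is where one must invoke (the tame version of) \cref{Example_Finite_dimensional_split_sc_subspace} to handle the finite dimensional pieces, and then use \cref{Proposition_Tame_subspaces_and_direct_sums} to conclude that the direct sum decompositions respect the tame and weakly tame structures simultaneously, so that the complements $X$ and $\im F$ inherit the tsc-structure from $E$ and $E'$.
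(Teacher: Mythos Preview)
Your proposal is correct and follows essentially the same approach as the paper. The paper's proof of the underlying \cref{Proposition_sc_Fredholm_morphism} invokes \cref{Theorem_sc_Fredholm_morphism} for the direction (1)~$\Rightarrow$~(2) and the formal argument from \cref{Proposition_Fredholm_I} for (2)~$\Rightarrow$~(1), exactly as you do; the tame upgrade is handled in the paper by the blanket \cref{Remark_Subspaces_sums_pretame_context}, which asserts that all of \crefrange{Subsection_Subspaces_direct_sums}{Subsection_Sc_Fredholm_operators} carries over verbatim with the word ``tame'' inserted, so your explicit verification via \cref{Example_Finite_dimensional_split_sc_subspace} and \cref{Proposition_Tame_subspaces_and_direct_sums} is just a more careful unpacking of what the paper leaves implicit.
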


\begin{definition}[\cref{Definition_sc_continuous,Definition_sc1,Definition_sck,Definition_Tame_nonlinear_map,Definition_Tame_nonlinear_map_scFrechet_spaces}]
\leavevmode\\
Let $E$ and $E'$ be tsc-spaces, let $U \subseteq E$ and $V \subseteq E'$ be open subsets, and let $f : U \to V \subseteq E'$ be a map.
\begin{enumerate}[label=\arabic*.,ref=\arabic*.]
  \item $f$ is called \emph{$\text{tsc}^\infty$} if $f$ is $\overline{\text{sc}}^\infty$ and tame up to arbitrary order as a map between open subsets of pre-tame $\overline{\text{sc}}$-Fr{\'e}chet spaces.
  \item $f$ is called a \emph{diffeomorphism} if $f$ is $\text{tsc}^\infty$ and if there exists a $\text{tsc}^\infty$-map $g : V \to U$ \st $g\circ f = \id_U$ and $f\circ g = \id_V$.
  \item $f$ is called a \emph{local diffeomorphism} if $f$ is $\text{tsc}^\infty$ and if for every $x \in U$ there exist open neighbourhoods $U' \subseteq U$ and $V' \subseteq V$ of $x$ and $f(x)$, respectively, \st $f(U') = V'$ and $f|_{U'} : U' \to V'$ is a diffeomorphism.
\end{enumerate}
\end{definition}

\begin{proposition}[\cref{Proposition_Composition_sc_continuous,Theorem_Chain_rule_sc_differentiable,Proposition_Standard_properties_tame}]
\leavevmode\\
Let $E$, $E'$ and $E''$ be tsc-spaces, let $U \subseteq E$, $V \subseteq E'$ and $W \subseteq E''$ be open subsets, and let $f : U \to V$ and $f' : V \to W$ be maps.
If $f$ and $f'$ are $\text{tsc}^\infty$, then so is $f'\circ f$.
\end{proposition}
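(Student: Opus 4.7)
The statement decomposes into two independent assertions: that $f'\circ f$ is $\overline{\text{sc}}^\infty$ and that $f'\circ f$ is tame up to arbitrary order. The first assertion follows directly from \cref{Theorem_Chain_rule_sc_differentiable}, applied for $k = \infty$: since $f$ and $f'$ are $\overline{\text{sc}}^\infty$, the composition $f'\circ f$ is $\overline{\text{sc}}^\infty$ as well, and moreover the usual chain rule $D(f'\circ f)(x) = Df'(f(x))\circ Df(x)$ holds. So I only need to give a proof of the tameness claim.

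The plan for tameness is to induct on $k$ and show, for every $k \in \N_0$, that $D^k(f'\circ f) : U \times E^k \to E''$ is tame as an $\overline{\text{sc}}^0$-map. For $k = 0$ this is immediate from the definition of ``tame up to order $0$'' for $f$ and $f'$ combined with the composition part of \cref{Proposition_Standard_properties_tame}, \labelcref{Proposition_Composition_sc_continuous_3}-style composition already built into $\text{tsc}^\infty$. For the inductive step, I would use the Fa{\`a} di Bruno-type expansion of $D^k(f'\circ f)(x)(u_1,\dots,u_k)$ into a finite sum over partitions $\pi$ of $\{1,\dots,k\}$ of terms of the form
\[
D^{|\pi|}f'(f(x))\bigl(D^{|\pi_1|}f(x)(u_{\pi_1}),\dots,D^{|\pi_{|\pi|}|}f(x)(u_{\pi_{|\pi|}})\bigr),
\]
which one derives by inductively differentiating the chain rule expression for $D(f'\circ f)$. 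Each such summand is the value at $(x,u_1,\dots,u_k)$ of a map that can be written as a composition of (a) the diagonal embedding of $U$ into a finite product, (b) the map $(x,u_{i_1},\dots,u_{i_j})\mapsto D^j f(x)(u_{i_1},\dots,u_{i_j})$ for various $j\leq k$, (c) the map $(x,v_1,\dots,v_{|\pi|})\mapsto D^{|\pi|}f'(f(x))(v_1,\dots,v_{|\pi|})$, and (d) tuple-forming maps (which are linear and tame). By hypothesis (i.e.~$f$ and $f'$ tame up to order $k$) and by the induction hypothesis, all these building blocks are tame $\overline{\text{sc}}^0$-maps, and by \cref{Proposition_Standard_properties_tame}, \labelcref{Proposition_Composition_sc_continuous_3} (i.e.~the fact that compositions of tame $\overline{\text{sc}}^0$-maps are tame) each summand is tame, hence so is their finite sum.

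The main obstacle, and the only step requiring some care, is verifying that the Fa{\`a} di Bruno expansion actually produces an identity of $\overline{\text{sc}}^0$-maps on the whole product $U\times E^k$ (and not only a pointwise identity on the underlying Fr{\'e}chet spaces), so that the tameness transfers from the summands to $D^k(f'\circ f)$. This is settled by inductively differentiating the identity of the previous step in the sense of $\overline{\text{sc}}^{k-j}$-maps via \cref{Theorem_Chain_rule_sc_differentiable}, which guarantees both well-definedness as an $\overline{\text{sc}}^0$-map and the validity of the chain rule at the level of the differentials. Once the expansion is established in this strong sense, combining \cref{Theorem_Chain_rule_sc_differentiable} (for $\overline{\text{sc}}^\infty$) with the inductive tameness argument above yields that $f'\circ f$ is $\overline{\text{sc}}^\infty$ and tame up to arbitrary order, i.e.~$\text{tsc}^\infty$.
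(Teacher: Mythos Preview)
Your proposal is correct and follows essentially the same route as the paper: the paper's proof is nothing more than the citation of \cref{Proposition_Composition_sc_continuous}, \cref{Theorem_Chain_rule_sc_differentiable}, and \cref{Proposition_Standard_properties_tame}, and your argument spells out precisely how these combine --- the chain rule theorem handles the $\overline{\text{sc}}^\infty$ part, and the Fa{\`a} di Bruno expansion together with closure of tame $\overline{\text{sc}}^0$-maps under composition and finite sums handles tameness of each $D^k(f'\circ f)$. One small labeling slip: when you write ``\cref{Proposition_Standard_properties_tame}, \labelcref{Proposition_Composition_sc_continuous_3}'' you are mixing references --- the composition-stability of tame maps is part~2 of \cref{Proposition_Standard_properties_tame}, not an item of \cref{Proposition_Composition_sc_continuous}.
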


\begin{remark}
A map is $\text{tsc}^\infty$ \iff it is $\underline{\text{sc}}^\infty$ and tame up to arbitrary order as a map between open subsets of pre-tame $\overline{\text{sc}}$-Fr{\'e}chet spaces.
This follows from \cref{Proposition_Tame_maps,Proposition_Underline_sc_implies_overline_sc}.
\end{remark}

\begin{remark}
In particular, every $\text{tsc}^\infty$-map defines an arbitrarily often weakly continuously weakly Fr{\'e}chet differentiable map between open subsets of the underlying Fr{\'e}chet spaces and hence has a well defined differential.
\end{remark}

\begin{definition}[\cref{Definition_Family_of_morphisms}]
\leavevmode\\
Let $E$, $E'$ and $E^i$, $i=1,2,3$, be tsc-spaces, and let $U \subseteq E$ and $U' \subseteq E'$ be open subsets.
\begin{enumerate}[label=\arabic*.,ref=\arabic*.]
  \item A \emph{$\text{tsc}^\infty$-family (over $U$) of morphisms (from $E^1$ to $E^2$)} is a $\text{tsc}^\infty$-map
\begin{align*}
\phi : U\times E^1 &\to E^2 \\
(x,u) &\mapsto \phi(x)u
\end{align*}
that is linear in the second factor.
  \item Given $\text{tsc}^\infty$-families of morphisms $\phi : U\times E^1 \to E^2$ and $\psi : U\times E^2 \to E^3$, their \emph{composition} is the $\text{tsc}^\infty$-family of morphisms
\begin{align*}
\psi\circ \phi : U\times E^1 &\to E^3 \\
(x,u) &\mapsto \psi(x)\phi(x)u\text{.}
\end{align*}
  \item A $\text{tsc}^\infty$-family of morphisms $\phi : U\times E^1 \to E^2$ is called \emph{invertible} if there exists a $\text{tsc}^\infty$-family of morphisms $\psi : U\times E^2 \to E^1$ with $\psi\circ \phi = \id_{E^1}$ and $\phi\circ \psi = \id_{E^2}$.
$\psi$ will then be called the \emph{inverse} to $\phi$ and denoted by $\phi\inv \definedas \psi$.
  \item Given a $\text{tsc}^\infty$-family of morphisms $\phi : U\times E^1 \to E^2$ and a $\text{tsc}^\infty$-map $f : U' \to U$, the \emph{pullback} of $\phi$ by $f$ is the $\text{tsc}^\infty$-family of morphisms $f^\ast \phi \definedas \phi\circ (f\times \id_{E^1})$, \ie
\begin{align*}
f^\ast \phi : U'\times E^1 &\to E^2 \\
(x,u) &\mapsto \phi(f(x))u\text{.}
\end{align*}
\end{enumerate}
\end{definition}

\begin{remark}
If $\phi : U\times E^1 \to E^2$ is a $\text{tsc}^\infty$-family of morphisms, then $\phi(x) : E^1 \to E^2$ is a morphism of tsc-spaces for all $x \in U$.
This follows directly from the definitions and \cref{Proposition_Standard_properties_tame}.
\end{remark}

\begin{proposition}
Let $E$, $E'$ and $E''$ be tsc-spaces, let $U \subseteq E$, $V \subseteq E'$ and $W \subseteq E''$ be open subsets, and let $f : U \to V$ and $g : V \to W$ be $\text{tsc}^\infty$-maps.
Then:
\begin{enumerate}
  \item $Df : U\times E \to E'$ is a $\text{tsc}^\infty$-family of morphisms.
  \item $g\circ f : U\to W$ is a $\text{tsc}^\infty$-map with
\[
D(g\circ f) = (f^\ast Dg)\circ Df\text{.}
\]
\end{enumerate}
\end{proposition}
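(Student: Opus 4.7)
The plan is to prove both parts by unpacking the definition of $\text{tsc}^\infty$ as ``$\overline{\text{sc}}^\infty$ plus tame up to arbitrary order'' and reducing each half to results already in the excerpt: the iterated chain rule \cref{Theorem_Chain_rule_sc_differentiable} for the smoothness component, and \cref{Proposition_Standard_properties_tame} together with \cref{Lemma_Tame_maps} for the tameness component.

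For part 1, that $Df : U \times E \to E'$ is linear in its second argument is built into the definition of the differential, so $Df$ is a family of morphisms in the sense of \cref{Definition_Family_of_morphisms}. Since $f$ is $\overline{\text{sc}}^\infty$, iterating clause (i) of \cref{Definition_sck} shows that $Df$ is $\overline{\text{sc}}^\infty$ as well. For tameness up to arbitrary order I would compute, using the linearity of $Df(x)$ in $u$ together with the usual rescaling argument for higher differentials, that $D^k(Df)$ is a finite linear combination of terms of the form $D^jf(x)(\text{multilinear expression in } u, e_1, v_1, \dots, e_k, v_k)$ for $1 \leq j \leq k+1$. Since $f$ is tame up to arbitrary order, every $D^jf$ is tame as an $\overline{\text{sc}}^0$-map, and tameness is preserved under such finite linear combinations of restrictions to direct summands; hence $Df$ is tame up to arbitrary order.

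For part 2, the smoothness half is handled by \cref{Theorem_Chain_rule_sc_differentiable}: since $f$ and $g$ are $\overline{\text{sc}}^\infty$, so is $g \circ f$, and the chain rule gives $D(g\circ f)(x) = Dg(f(x)) \circ Df(x)$, which evaluated at $(x,u)$ is precisely $((f^\ast Dg)\circ Df)(x,u)$. For tameness of $g \circ f$ up to arbitrary order I would proceed by induction on $k$, establishing a Fa\`a di Bruno--type formula expressing $D^k(g\circ f)$ as a finite sum of compositions of the form $(f^\ast D^jg) \circ (D^{i_1}f, \dots, D^{i_j}f)$ with $i_1 + \cdots + i_j = k$, where the bracketed factor is interpreted via the appropriate diagonal embedding and insertion into the multilinear slots of $D^jg$. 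Each $f^\ast D^jg$ is tame by \cref{Proposition_Standard_properties_tame}, each $D^if$ is tame by assumption, and one further application of \cref{Proposition_Standard_properties_tame} shows that the composition, and hence the finite sum, is tame.

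The main obstacle is the combinatorial bookkeeping in part 2: one must present $D^k(g\circ f)$ not merely as a symbolic polynomial but as an honest composition of $\text{tsc}^\infty$-maps between open subsets of direct sums, so that \cref{Proposition_Standard_properties_tame} genuinely applies at each inductive step. In particular, the multilinearity of $D^jg$ in its non-basepoint arguments is essential so that inserting tuples of $D^if$'s into the linear slots produces a map that is again $\text{tsc}^\infty$ and tame. Once this composition-of-maps presentation is laid out carefully, the induction runs mechanically and both assertions of the proposition drop out.
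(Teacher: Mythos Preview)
Your approach is correct. The paper states this proposition in the summary section without proof and without even a cross-reference header, treating it as immediate from the definitions together with the preceding summary proposition (which already asserts that compositions of $\text{tsc}^\infty$-maps are $\text{tsc}^\infty$, citing \cref{Proposition_Composition_sc_continuous,Theorem_Chain_rule_sc_differentiable,Proposition_Standard_properties_tame}) and the remark after \cref{Definition_sck} that $D(Df)$ can be expressed in terms of $D^1f$ and $D^2f$. Your argument spells out exactly the missing details: for part~1 the observation that $D^k(Df)$ is a finite sum of $D^jf$ with $1\leq j\leq k+1$ composed with projections, and for part~2 the Fa\`a di Bruno bookkeeping that reduces tameness of $D^k(g\circ f)$ to repeated applications of \cref{Proposition_Standard_properties_tame}. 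This is precisely the implicit argument the paper is relying on, so your proposal and the paper's treatment coincide in substance.
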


\begin{example}[\cref{Subsection_Reparametrisation_action,Example_Reparametrisation_Action_I,Example_Reparametrisation_Action_II,Example_Reparametrisation_Action_III}]
\leavevmode\\
Let $\Sigma$ be a closed $n$-dimensional manifold and let $\pi : F \to \Sigma$, $\pi_1 : F_1 \to \Sigma$ and $\pi_2 : F_2 \to \Sigma$ be vector bundles.
Let furthermore $B \subseteq \R^r$ (for some $r\in\N_0$) be an open subset and let
\begin{align*}
\phi : B \times \Sigma &\to B\times \Sigma \\
(b, z) &\mapsto (b, \phi_b(z))
\intertext{and}
\Phi : B \times F_1 &\to B\times F_2 \\
(b, \xi) &\mapsto (b, \Phi_b(\xi))
\end{align*}
be smooth families of maps, where $\phi$ is a family of diffeomorphisms and $\Phi$ covers $\phi\inv$.
I.\,e.~$\phi$ and $\Phi$ are smooth, $\phi_b \in \operatorname{Diff}(\Sigma)$ and $\Phi_b \in \mathcal{C}^\infty(F_1, F_2)$ for all $b \in B$, and
\[
\xymatrix{
B\times F_2 \ar@{}[rd]|-{\circlearrowleft} \ar[d]_-{\id_B\times \pi} & B\times F_1 \ar[d]^-{\id_B\times \pi} \ar[l]_-{\Phi} \\
B\times \Sigma \ar[r]^-{\phi} & B\times \Sigma
}
\]
commutes.
Furthermore, assume that $\Phi$ is linear in the fibres of $F_1$ and $F_2$, \ie for every $b\in B$, $\Phi_b : F_1 \to F_2$ defines a vector bundle morphism covering $\phi\inv_b : \Sigma \to \Sigma$. \\
Define
\begin{align*}
\Gamma_B(F) &\definedas B\times \Gamma(F)\text{,} \\
\rho &\definedas \pr_1 : \Gamma_B(F) \to B \\
\intertext{and set for $b\in B$}
\Gamma_b(F) &\definedas \{b\}\times \Gamma(F) = \rho\inv(b)\text{.}
\end{align*}
Also, there is a $\text{tsc}^\infty$-family morphisms
\begin{align*}
\psi : B\times \Gamma(F_1) &\to \Gamma(F_2) \\
(b,u) &\mapsto \Phi_b^\ast u\text{,}
\intertext{where}
\Phi_b^\ast u &\definedas \Phi_b\circ u\circ \phi_b\text{.}
\end{align*}
$\Gamma_B(F)$ is an open subset of the tsc-space $\R^r\oplus \Gamma(F)$ and there is a $\text{tsc}^\infty$-map
\begin{align*}
\Psi : \Gamma_B(F_1) &\to \Gamma_B(F_2) \\
(b,u) &\mapsto (b, \psi(b,u))\text{.}
\end{align*}
\end{example}

\begin{definition}[\cref{Definition_Strongly_smoothing_family}]
\leavevmode\\
Let $E$, $E^i$, $i=1,2$ be tsc-spaces, let $U\subseteq E$ be an open subset and let $\kappa : U\times E^1 \to E^2$ be a $\text{tsc}^\infty$-family of morphisms.
$\kappa$ is called \emph{strongly smoothing} \iff it is strongly smoothing as a family of morphisms between pre-tame $\overline{\text{sc}}$-Fr{\'e}chet spaces.
\end{definition}

\begin{proposition}[\cref{Lemma_Basic_properties_strongly_smoothing_families}]
\leavevmode\\
Let $E$, $E'$ and $E^i$, $i=0,\dots,3$, be tsc-spaces, let $U \subseteq E$ and $U' \subseteq E'$ be open subsets together with a $\text{tsc}^\infty$-map $f : U' \to U$ and let
\begin{align*}
\kappa : U\times E^1 &\to E^2 & \phi : U \times E^0 &\to E^1 & \psi : U\times E^2 &\to E^3
\end{align*}
be $\text{tsc}^\infty$-families of morphisms.
If $\kappa$ is strongly smoothing then so are
\[
f^\ast \kappa\text{,}\qquad\kappa\circ \phi \quad\text{and}\quad\psi\circ \kappa\text{.}
\]
\end{proposition}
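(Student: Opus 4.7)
The strategy is to reduce the statement, which is the tsc-space analogue of the earlier Proposition~\ref{Lemma_Basic_properties_strongly_smoothing_families}, to the envelope-level calculation already carried out there, so no new analytic work is needed; the main task is packaging things correctly in the tame context.

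First I would rewrite the three compositions in the convenient form
\[
f^\ast \kappa = \kappa\circ (f\times \id_{E^1}),\qquad \kappa\circ\phi = \kappa\circ (\id_U, \phi),\qquad \psi\circ\kappa = \psi\circ (\id_U, \kappa),
\]
where $(\id_U,\phi): U\times E^0\to U\times E^1$, $(x,u)\mapsto (x,\phi(x,u))$, and similarly for $(\id_U,\kappa)$. Each of these is manifestly $\text{tsc}^\infty$ by the chain rule and the fact that $\text{tsc}^\infty$ is closed under composition (\cref{Theorem_Chain_rule_sc_differentiable,Proposition_Standard_properties_tame}), and linear in the last factor, so each is a $\text{tsc}^\infty$-family of morphisms.

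Next I would prove strong smoothing locally. Fix a point $x_0$ in the domain. Using \cref{Proposition_Strict_envelopes} together with \cref{Lemma_Basic_properties_strongly_smoothing_envelopes}, I can shrink to a neighbourhood on which all of $\kappa$, $\phi$, $\psi$ and $f$ admit strict tame envelopes
\[
\mathcal{K}:\mathcal{U}\oplus \mathbb{E}^1\to \mathbb{E}^2,\quad \mathcal{G}:\mathcal{U}\oplus \mathbb{E}^0\to \mathbb{E}^1,\quad \mathcal{H}:\mathcal{U}\oplus \mathbb{E}^2\to \mathbb{E}^3,\quad \mathcal{F}:\mathcal{U}'\to\mathbb{E},
\]
with $\mathcal{K}$ strongly smoothing in the sense of \cref{Definition_Strongly_smoothing_family}, \ie for every strictly monotone increasing sequence $\mathbf{k}$ there exists an envelope $\mathcal{K}_{\mathbf{k}}:\mathcal{U}^{\mathbf{k}}\oplus \mathbb{E}^1\to(\mathbb{E}^2)^{\mathbf{k}}$ satisfying $\mathcal{K}\circ(\mathbb{I}^{\mathbf{k}}\times\id_{\mathbb{E}^1})=(\mathbb{I}^2)^{\mathbf{k}}\circ \mathcal{K}_{\mathbf{k}}$. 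After a rescaling (harmless by \cref{Lemma_Tame_sequence,Lemma_Basic_properties_strongly_smoothing_envelopes}) I may assume that all four envelopes share a common base chain $\mathbb{E}$ on $E$.

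The key construction is then to define, for each $\mathbf{k}$, candidate factored envelopes for the three composite families:
\[
(\mathcal{F}^\ast\mathcal{K})_{\mathbf{k}}\definedas \mathcal{K}_{\mathbf{k}}\circ(\mathcal{F}^{\mathbf{k}}\times\id_{\mathbb{E}^1}),\;\; (\mathcal{K}\circ(\id_{\mathcal{U}},\mathcal{G}))_{\mathbf{k}}\definedas \mathcal{K}_{\mathbf{k}}\circ(\id_{\mathcal{U}^{\mathbf{k}}},\mathcal{G}\circ(\mathbb{I}^{\mathbf{k}}\times\id_{\mathbb{E}^0})),\;\; (\mathcal{H}\circ(\id_{\mathcal{U}},\mathcal{K}))_{\mathbf{k}}\definedas \mathcal{H}^{\mathbf{k}}\circ(\id_{\mathcal{U}^{\mathbf{k}}},\mathcal{K}_{\mathbf{k}}).
\]
A direct verification (using only that $\mathcal{K}\circ(\mathbb{I}^{\mathbf{k}}\times\id_{\mathbb{E}^1})=(\mathbb{I}^2)^{\mathbf{k}}\circ \mathcal{K}_{\mathbf{k}}$ and naturality of the inclusions $\mathbb{I}^{\mathbf{k}}$) then shows that each of these together with the evident envelope on the composite satisfies the strong-smoothing factorisation of \cref{Definition_Strongly_smoothing_family}, so each of $f^\ast\kappa$, $\kappa\circ\phi$, $\psi\circ\kappa$ is strongly smoothing.

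Finally I would address the tame context: the envelopes $\mathcal{F}$, $\mathcal{G}$, $\mathcal{H}$ are tame by hypothesis and $\mathcal{K}$ and the $\mathcal{K}_{\mathbf{k}}$ are tame since $\kappa$ is a tsc-morphism family, so each building block in the formulas above preserves tame estimates under composition (\cref{Lemma_Tame_maps,Proposition_Standard_properties_tame}), hence the constructed envelopes are tame and the resulting families are strongly smoothing in the tame context. The only mild obstacle is bookkeeping: making sure the rescalings chosen when aligning the four envelopes to a common chain $\mathbb{E}$ remain shifts (so remain tame), which is handled by \cref{Lemma_Tame_sequence} and \cref{Lemma_Basic_properties_strongly_smoothing_envelopes}. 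There is no substantive new analytic content; the proposition is the translation of the already proved envelope statement through the dictionary between tame pre-$\overline{\text{sc}}$-Fr\'echet spaces and tsc-spaces.
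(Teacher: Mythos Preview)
Your proposal is correct and follows essentially the same approach as the paper: you rewrite the three composites as $\kappa\circ(f\times\id)$, $\kappa\circ(\id_U,\phi)$, $\psi\circ(\id_U,\kappa)$, pass to strict envelopes locally, and then produce exactly the same factored envelopes $(\mathcal{F}^\ast\mathcal{K})_{\mathbf{k}}$, $(\mathcal{K}\circ(\id_{\mathcal{U}},\mathcal{G}))_{\mathbf{k}}$, $(\mathcal{H}\circ(\id_{\mathcal{U}},\mathcal{K}))_{\mathbf{k}}$ that the paper writes down. Your additional explicit remarks on tameness and on keeping rescalings as shifts are a faithful unpacking of the paper's ``and analogously in the tame context.''
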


\begin{remark}
If $\kappa : U\times E^1 \to E^2$ is a strongly smoothing $\text{tsc}^\infty$-family of morphisms, then for every $x \in U$, $\kappa(x) : E^1 \to E^2$ is a strongly smoothing morphism of tsc-spaces.
\end{remark}

\begin{example}[\cref{Example_Strongly_smoothing_family}]
\leavevmode\\
Let $E$, $E^i$, $i=1,2$, be tsc-spaces and let $U \subseteq E$ be an open subset.
\begin{enumerate}[label=\arabic*.,ref=\arabic*.]
  \item Given a strongly smoothing morphism $K : E^1 \to E^2$, the constant family
\begin{align*}
U\times E^1 &\to E^2 \\
(x,u) &\mapsto K(u)
\end{align*}
is strongly smoothing.
  \item Given a family of morphisms $\kappa : U \times E^1 \to E^2$, if there exists a finite dimensional vector space $C$ and families of morphisms $\kappa_1 : U\times E^1 \to C$, $\kappa_2 : U\times C \to E^2$ \st $\kappa = \kappa_2\circ \kappa_1$, then $\kappa$ is strongly smoothing.
\end{enumerate}
\end{example}

\begin{definition}[\cref{Definition_Family_of_Fredholm_morphisms}]
\leavevmode\\
Let $E$, $E^i$, $i=1,2$, be tsc-spaces, let $U \subseteq E$ be an open subset and let $\phi : U\times E^1 \to E^2$ be a $\text{tsc}^\infty$-family of morphisms.
\begin{enumerate}[label=\arabic*.,ref=\arabic*.]
  \item $\phi$ is called a \emph{Fredholm} \iff it is locally invertible modulo strongly smoothing $\text{tsc}^\infty$-families of morphisms, \ie \iff for every $x_0 \in U$ there exist a neighbourhood $V \subseteq U$ of $x_0$ and $\text{tsc}^\infty$-families of morphisms
\begin{align*}
\psi : V\times E^2 &\to E^1 & \kappa : V\times E^1 &\to E^1 & \kappa' : V\times E^2 &\to E^2
\end{align*}
with $\kappa$ and $\kappa'$ strongly smoothing \st
\begin{align*}
\psi \circ \phi|_{V\times E^1} = \id_{E^1} + \kappa
\intertext{and}
\phi|_{V\times E^1} \circ \psi = \id_{E^2} + \kappa'\text{.}
\end{align*}
$\psi$ is then called a \emph{local Fredholm inverse} to $\phi$.
  \item The map
\begin{align*}
\ind : U &\to \Z \\
x &\mapsto \ind(\phi(x))\text{,}
\end{align*}
where $\ind(\phi(x))$ denotes the Fredholm index of the Fredholm morphism $\phi(x) : E^1 \to E^2$, is called the \emph{(Fredholm) index of $\phi$}.
  \item The map
\begin{align*}
\corank\phi : U &\to \N_0 \\
x &\mapsto \dim\coker(\phi(x))\text{,}
\end{align*}
where $\coker(\phi(x))$ denotes the (finite dimensional) cokernel of the Fredholm morphism $\phi(x) : E^1 \to E^2$, is called the \emph{corank of $\phi$}. \\
$\phi$ is said to have \emph{constant rank} if $\corank\phi : U \to \N_0$ is constant.
\end{enumerate}
\end{definition}

\begin{theorem}[\cref{Theorem_scbar_Fredholm}]
\leavevmode\\
Let $E$, $E^i$, $i=1,2$, be tsc-spaces, let $U \subseteq E$ be an open subset and let $\phi : U\times E^1 \to E^2$ be a $\text{tsc}^\infty$-family of morphisms.
Then the following are equivalent:
\begin{enumerate}[label=\arabic*.,ref=\arabic*.]
  \item $\phi$ is Fredholm.
  \item
\begin{enumerate}[label=(\alph*),ref=(\alph*)]
  \item For every $x_0 \in U$,
\[
\phi(x_0) : E^1 \to E^2
\]
is a Fredholm morphism of tsc-spaces.
  \item For one/any pair of splittings
\[
E^1 = X \oplus \ker \phi(x_0) \qquad\text{and}\qquad E^2 = \im \phi(x_0) \oplus C
\]
there exists a neighbourhood $V \subseteq U$ of $x_0$ \st
\[
\pr^{E^2}_{\im \phi(x_0)}\circ \phi\circ \iota^X_{E^1}|_{V\times X} : V\times X \to \im \phi(x_0)
\]
is invertible.
\end{enumerate}
\end{enumerate}
\end{theorem}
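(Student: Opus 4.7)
The plan is to transport the proof of Theorem \ref{Theorem_scbar_Fredholm} verbatim into the tsc-context, using that all the ingredients (strongly smoothing families, perturbation of invertibility, and finite-dimensional splitting) have already been shown to respect the tameness hypothesis. Since the statement of Theorem \ref{Theorem_scbar_Fredholm} explicitly includes the clause ``and analogously in the tame context,'' the present corollary is essentially a notational repackaging, but let me lay out the forward and reverse directions so the logical skeleton is visible.

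For the direction $(1)\Rightarrow(2)$, fix $x_0 \in U$ and a local Fredholm inverse $\psi : V \times E^2 \to E^1$ with strongly smoothing defects $\kappa,\kappa'$. First, $\phi(x_0)$ is then a Fredholm morphism of tsc-spaces because $\psi(x_0)$ is a Fredholm inverse modulo the strongly smoothing morphisms $\kappa(x_0),\kappa'(x_0)$; by Proposition \ref{Proposition_sc_Fredholm_morphism} this yields the required splittings $E^1 = \ker\phi(x_0) \oplus X$ and $E^2 = \im\phi(x_0) \oplus C$. Then I would invoke Lemma \ref{Lemma_Fredholm_family_standard_form} (which holds in the tame context) to adjust $\psi$, $\kappa$, $\kappa'$ so that they realise these splittings via \labelcref{Equation_Fredholm_family_standard_form}. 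Defining
\[
\bar{\phi}(x) \definedas \pr^{E^2}_{\im \phi(x_0)} \circ \phi(x) \circ \iota^X_{E^1}, \qquad \bar{\psi}(x) \definedas \pr^{E^1}_{X} \circ \psi(x) \circ \iota^{\im\phi(x_0)}_{E^2},
\]
a direct matrix computation shows $\bar{\phi}\circ \bar{\psi} = \id_{\im\phi(x_0)} + \bar{\kappa}' + \tilde{\kappa}'$ and $\bar{\psi}\circ \bar{\phi} = \id_X + \bar{\kappa} + \tilde{\kappa}$, where $\tilde{\kappa},\tilde{\kappa}'$ are strongly smoothing tsc-families vanishing at $x_0$ (here one uses Example \ref{Example_Strongly_smoothing_family}(2) applied to factorisation through the finite dimensional complements $\ker\phi(x_0)$ and $C$). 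Proposition \ref{Proposition_Invertibility_perturbation_by_strongly_smoothing_family}, applied in the tame context, then produces a possibly smaller neighbourhood $V' \subseteq V$ on which both $\bar{\phi}\circ \bar{\psi}$ and $\bar{\psi}\circ \bar{\phi}$ are invertible, and hence $\bar{\phi}|_{V'}$ itself is invertible.

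For the direction $(2)\Rightarrow(1)$, suppose splittings are given and that $\bar{\phi} \definedas \pr^{E^2}_{\im \phi(x_0)} \circ \phi \circ \iota^X_{E^1}$ is invertible on $V \times X$ with $\text{tsc}^\infty$-inverse $\bar{\phi}{}^{-1}$. Set
\[
\psi \definedas \iota^X_{E^1} \circ \bar{\phi}{}^{-1} \circ \pr^{E^2}_{\im \phi(x_0)} : V \times E^2 \to E^1.
\]
A straightforward matrix calculation in the bases $E^1 = \ker\phi(x_0) \oplus X$, $E^2 = \im\phi(x_0) \oplus C$ yields
\[
\psi \circ \phi = \id_{E^1} - \iota^{\ker\phi(x_0)}_{E^1} \circ \pr^{E^1}_{\ker\phi(x_0)}, \qquad \phi \circ \psi = \id_{E^2} - \iota^{C}_{E^2} \circ \pr^{E^2}_{C},
\]
both of whose right-hand corrections are strongly smoothing families by Example \ref{Example_Strongly_smoothing_family}(2), since $\ker\phi(x_0)$ and $C$ are finite dimensional. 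This exhibits $\phi|_{V \times E^1}$ as invertible modulo strongly smoothing families, so $\phi$ is Fredholm.

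The only genuine obstacle is administrative rather than mathematical: one must check that every construction (projections onto finite-dimensional subspaces, the perturbation argument from Proposition \ref{Proposition_Invertibility_perturbation_by_strongly_smoothing_family}, and the composition/factorisation of strongly smoothing families) preserves tameness. But each of these has been explicitly handled in the ``analogously in the tame context'' clauses of Proposition \ref{Proposition_Invertibility_perturbation_by_strongly_smoothing_family}, Lemma \ref{Lemma_Basic_properties_strongly_smoothing_families}, and Lemma \ref{Lemma_Fredholm_family_standard_form}, so no new work is required beyond verifying that the independence-of-splitting statement in (2)(b) holds: this reduces, as in the proof of Theorem \ref{Theorem_scbar_Fredholm}, to observing that any two splittings are related by a tame isomorphism which modifies $\bar{\phi}$ only by composition with invertible tame morphisms between finite dimensional factors and a strongly smoothing correction.
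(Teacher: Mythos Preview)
Your approach is the same as the paper's, but there are two small slips worth noting.

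In the direction $(1)\Rightarrow(2)$, you write $\bar{\phi}\circ\bar{\psi} = \id + \bar{\kappa}' + \tilde{\kappa}'$ with $\tilde{\kappa}'(x_0)=0$ and then invoke Proposition~\ref{Proposition_Invertibility_perturbation_by_strongly_smoothing_family}. But that proposition requires a decomposition into an \emph{invertible} family plus a strongly smoothing family vanishing at $x_0$, and $\bar{\kappa}'(x_0)$ need not vanish. The paper inserts one extra step: from \cref{Equation_Fredholm_family_standard_form} and \cref{Proposition_sc_Fredholm_morphism} one sees that $\id_{\im\phi(x_0)} + \bar{\kappa}'(x_0)$ is an isomorphism, so writing $\id + \bar{\kappa}' = (\id + \bar{\kappa}'(x_0)) + (\bar{\kappa}' - \bar{\kappa}'(x_0))$ and applying the perturbation proposition once gives that $\id + \bar{\kappa}'$ is invertible on a smaller $V$; only then does a second application (with $\phi = \id + \bar{\kappa}'$ and $\kappa = -\tilde{\kappa}'$) yield invertibility of $\bar{\phi}\circ\bar{\psi}$.

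In the direction $(2)\Rightarrow(1)$, your displayed formulas for $\psi\circ\phi$ and $\phi\circ\psi$ are only correct at $x = x_0$. For general $x$ a matrix computation gives the additional cross terms $\iota^X_{E^1}\circ A(x)^{-1}B(x)\circ\pr^{E^1}_{\ker\phi(x_0)}$ and $\iota^C_{E^2}\circ C(x)A(x)^{-1}\circ\pr^{E^2}_{\im\phi(x_0)}$ respectively. This does not affect your conclusion, since these extra terms still factor through the finite-dimensional spaces $\ker\phi(x_0)$ and $C$ and are therefore strongly smoothing by \cref{Example_Strongly_smoothing_family}; but the equalities as written are false.
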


\begin{proposition}[\cref{Lemma_Basic_properties_families_of_Fredholm_morphisms,Corollary_Fredholm_index_constant}]
\leavevmode\\
Let $E$, $E'$ and $E^i$, $i = 1, \dots, 3$, be tsc-spaces, let $U \subseteq E$ and $U' \subseteq E'$ be open subsets together with a $\text{tsc}^\infty$-map $f : U' \to U$ and let
\begin{align*}
\phi : U\times E^1 &\to E^2\text{,} \\
\psi : U\times E^2 &\to E^3
\intertext{and}
\kappa : U\times E^1 &\to E^2
\end{align*}
be $\text{tsc}^\infty$-families of morphisms with $\phi$ and $\psi$ Fredholm and $\kappa$ strongly smoothing.
Then the following hold:
\begin{enumerate}[label=\arabic*.,ref=\arabic*.]
  \item $f^\ast \phi : U' \times E^1 \to E^2$ is Fredholm with $\ind(f^\ast \phi) = f^\ast \ind (\phi)$.
  \item $\psi\circ \phi : U\times E^1 \to E^3$ is Fredholm with $\ind (\psi\circ \phi) = \ind \psi + \ind \phi$.
  \item $\phi + \kappa : U\times E^1 \to E^2$ is Fredholm with $\ind (\phi + \kappa) = \ind \phi$.
  \item The maps
\begin{align*}
\dim \ker \phi : U &\to \N_0 \\
x &\mapsto \dim\ker(\phi(x))
\intertext{and}
\corank\phi : U &\to \N_0 \\
x &\mapsto \dim\coker(\phi(x))
\end{align*}
are upper semicontinuous.
  \item The Fredholm index $\ind\phi = \dim \ker\phi - \corank\phi : U \to \Z$ of $\phi$ is continuous.
\end{enumerate}
\end{proposition}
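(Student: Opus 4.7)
The plan is to observe that this proposition is essentially the tsc-space packaging of the earlier \cref{Lemma_Basic_properties_families_of_Fredholm_morphisms} and \cref{Corollary_Fredholm_index_constant}, so that the proof consists largely of translating between the tsc-space language and the underlying tame pre-tame $\overline{\text{sc}}$-Fr\'echet setting, and then invoking the already established results. Concretely, a $\text{tsc}^\infty$-family of morphisms is, by definition and by \cref{Proposition_Underline_sc_implies_overline_sc}, the same data as a tame $\overline{\text{sc}}^\infty$-family of morphisms between pre-tame $\overline{\text{sc}}$-Fr\'echet spaces that is tame up to arbitrary order; likewise the notions of strongly smoothing family and of Fredholm family transfer verbatim.

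For parts 1--3 I would proceed as follows. For (1), given a local Fredholm inverse $\psi$ of $\phi$ over some $V\subseteq U$ with $\psi\circ\phi|_{V\times E^1}=\id_{E^1}+\kappa$, $\phi|_{V\times E^1}\circ\psi=\id_{E^2}+\kappa'$, I pull back by $f$ to get $(f^\ast\psi)\circ(f^\ast\phi)|_{f^{-1}(V)\times E^1}=\id_{E^1}+f^\ast\kappa$ and the analogous equation on the other side. By \cref{Lemma_Basic_properties_strongly_smoothing_families} (tame version) the families $f^\ast\kappa$ and $f^\ast\kappa'$ are strongly smoothing, so $f^\ast\phi$ is Fredholm with Fredholm inverse $f^\ast\psi$. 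The index identity $\ind(f^\ast\phi)=f^\ast\ind\phi$ is just the pointwise statement $\ind(f^\ast\phi)(x)=\ind\phi(f(x))$. For (2), local Fredholm inverses $\psi$ of $\phi$ and $\tilde\psi$ of $\psi$ compose to give $\psi\circ\tilde\psi$, and a short calculation yields $(\psi\circ\tilde\psi)\circ(\psi\circ\phi)=\id+\text{strongly smoothing}$, again using \cref{Lemma_Basic_properties_strongly_smoothing_families}; the index identity is then the pointwise statement from \cref{Lemma_Composition_of_Fredholm_operators} applied to each $\phi(x)$, $\psi(x)$. For (3), using a local Fredholm inverse $\psi$ of $\phi$, one has $\psi\circ(\phi+\kappa)=\id_{E^1}+(\psi\circ\phi-\id_{E^1})+\psi\circ\kappa$ and both summands on the right are strongly smoothing, so $\psi$ is still a local Fredholm inverse; the index identity is the pointwise version from \cref{Lemma_Composition_of_Fredholm_operators}.

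For parts 4 and 5 I would follow the argument of \cref{Corollary_Fredholm_index_constant} which, once \cref{Theorem_scbar_Fredholm} is in place, works identically in the tsc-space context: given $x_0\in U$, use the theorem to pick splittings $E^1=X\oplus\ker\phi(x_0)$ and $E^2=\im\phi(x_0)\oplus C$ and a neighbourhood $V\subseteq U$ on which $\pr_{\im\phi(x_0)}\circ\phi\circ\iota_X$ is an invertible family. Writing $\phi(x)$ in block form with respect to these splittings, the upper left block $A(x)$ is invertible on $V$ and one can conjugate by $G(x)$, $H(x)$ as in the proof of \cref{Corollary_Fredholm_index_constant} to reduce $\phi(x)$ to a block diagonal form with blocks $A(x)$ and a continuous family $D'(x):\ker\phi(x_0)\to C$ of linear maps between finite dimensional vector spaces. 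Then $\ker\phi(x)\cong\ker D'(x)$ and $\coker\phi(x)\cong\coker D'(x)$, so upper semicontinuity of $\dim\ker\phi$ and $\corank\phi$ and continuity of $\ind\phi=\dim\ker D'-\dim\coker D'=\dim\ker\phi(x_0)-\dim C$ reduce to the standard finite dimensional facts.

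The main conceptual obstacle --- really the only nonroutine point --- is to check that $G(x)$, $H(x)$ and $A(x)^{-1}$ assemble into $\text{tsc}^\infty$-families and that the conjugation is valid in the tsc-category; but this is exactly the content of \cref{Theorem_scbar_Fredholm} (invertibility of $A$ is in the tsc-category) and of \cref{Proposition_Invertibility_perturbation_by_strongly_smoothing_family}, which guarantees that, shrinking $V$ if necessary, $A^{-1}$ exists as a tame $\text{tsc}^\infty$-family and hence so do $G$, $H$ and $G^{-1}$, $H^{-1}$. Everything else is bookkeeping identical to the pre-tame $\overline{\text{sc}}$-Fr\'echet case treated earlier, so no new analytic input beyond the preceding section is needed.
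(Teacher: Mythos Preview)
Your proposal is correct and follows essentially the same route as the paper: parts 1--3 are handled by the paper in one line (``Using \cref{Lemma_Basic_properties_strongly_smoothing_families}, the proofs are completely straightforward''), and your explicit unpacking of the Fredholm-inverse manipulations is exactly what that line means; parts 4--5 follow the paper's block-diagonalisation argument from \cref{Corollary_Fredholm_index_constant} verbatim. One small notational slip: in your treatment of part (2) you reuse the symbol $\psi$ for the local Fredholm inverse of $\phi$, clashing with the $\psi:U\times E^2\to E^3$ already in the statement --- rename one of them.
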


\begin{definition}[\cref{Definition_scbar_Fredholm,Definition_Strongly_smoothing_map}]
\leavevmode\\
Let $E$ and $E'$ be tsc-spaces, let $U\subseteq E$ be an open subset and let $f : U \to E'$ be $\text{tsc}^\infty$.
\begin{enumerate}[label=\arabic*.,ref=\arabic*.]
  \item $f$ is called \emph{Fredholm} \iff $Df : U\times E \to E'$ is Fredholm.
  \item If $f$ is Fredholm, then the map
\begin{align*}
\ind f : U &\to \Z \\
x &\mapsto \ind Df(x)
\end{align*}
is called the \emph{(Fredholm) index} of $f$.
  \item If $f$ is Fredholm, then the map
\begin{align*}
\corank f : U &\to \N_0 \\
x &\mapsto \dim\coker(Df(x))
\end{align*}
is called the \emph{corank of $f$}. \\
$f$ is said to have \emph{constant rank} if $\corank f : U \to \N_0$ is constant.
  \item $f$ is called \emph{strongly smoothing} \iff $Df : U\times E \to E'$ is strongly smoothing.
\end{enumerate}
\end{definition}

\begin{proposition}[\cref{Lemma_Fredholm_index_nonlinear_map,Lemma_Perturbations_of_Fredholm_maps}]
\leavevmode\\
Let $E$, $E'$ and $E''$ be tsc-spaces, let $U \subseteq E$, $V \subseteq E'$ and $W \subseteq E''$ be open subsets, and let $f : U \to V$, $g : V \to W$ and $k : U \to E'$ be $\text{tsc}^\infty$.
\begin{enumerate}[label=\arabic*.,ref=\arabic*.]
  \item If $f$ is a diffeomorphism, then $f$ is Fredholm.
  \item If both $f$ and $g$ are Fredholm, then so is $g\circ f$.
Furthermore,
\[
\ind(g\circ f) = \ind f + f^\ast \ind g\text{.}
\]
  \item If $f$ is Fredholm, then $\ind f$ is continuous and $\corank f$ is upper semicontinuous.
  \item If at least one of $f$ and $g$ is strongly smoothing, then so is $g\circ f$.
  \item If $f$ is Fredholm and $k$ is strongly smoothing, then $f + k : U \to E'$ is Fredholm as well with $\ind(f+k) = \ind(f)$.
\end{enumerate}
\end{proposition}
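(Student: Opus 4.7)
The plan is to deduce all five statements from the already-established properties of $\text{tsc}^\infty$-families of morphisms by applying them to the differentials, using the chain rule $D(g\circ f) = (f^\ast Dg)\circ Df$ and the linearity of differentiation.

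For item (1), I would argue that if $f$ is a diffeomorphism with $\text{tsc}^\infty$-inverse $h : V \to U$, then the chain rule applied to $h\circ f = \id_U$ and $f\circ h = \id_V$ yields
\[
(f^\ast Dh)\circ Df = \id_E \quad\text{and}\quad Df\circ (f^\ast Dh) = \id_{E'}
\]
as $\text{tsc}^\infty$-families of morphisms over $U$. Thus $Df$ is invertible as a $\text{tsc}^\infty$-family of morphisms, and in particular Fredholm (with $\kappa = \kappa' = 0$), so $f$ is Fredholm. For item (2), the chain rule gives $D(g\circ f) = (f^\ast Dg)\circ Df$; by the pullback statement of the proposition on Fredholm families, $f^\ast Dg$ is Fredholm with $\ind(f^\ast Dg) = f^\ast \ind g$, and by the composition statement $(f^\ast Dg)\circ Df$ is Fredholm with $\ind\bigl((f^\ast Dg)\circ Df\bigr) = \ind Df + \ind(f^\ast Dg) = \ind f + f^\ast \ind g$, which is exactly the claimed index formula.

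Item (3) is immediate: by definition $\ind f = \ind Df$ and $\corank f = \corank Df$, and the corresponding continuity and upper-semicontinuity statements have just been asserted for Fredholm $\text{tsc}^\infty$-families of morphisms. For item (4), again invoking $D(g\circ f) = (f^\ast Dg)\circ Df$: if $Df$ is strongly smoothing then so is $(f^\ast Dg)\circ Df$ by the composition rule for strongly smoothing families (with $f^\ast Dg$ arbitrary), while if $Dg$ is strongly smoothing then $f^\ast Dg$ is strongly smoothing by the pullback rule, and composing with the arbitrary family $Df$ keeps it strongly smoothing. Finally for item (5), by linearity of differentiation $D(f+k) = Df + Dk$, and since $Df$ is Fredholm and $Dk$ is strongly smoothing, the proposition on Fredholm families gives that $D(f+k)$ is Fredholm with $\ind D(f+k) = \ind Df$, i.e.\ $f + k$ is Fredholm with $\ind(f+k) = \ind f$.

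None of these steps presents a real obstacle; the content of the proposition is that the package of closure properties enjoyed by Fredholm $\text{tsc}^\infty$-families of morphisms transfers verbatim to Fredholm $\text{tsc}^\infty$-maps via the differential, once one has the chain rule at hand. The only point worth double-checking is the precise identification $(f^\ast Dg)\circ Df = D(g\circ f)$ of families (not just pointwise), which is immediate from the definition of the composition of families, $(\psi\circ \phi)(x)u = \psi(x)\phi(x)u$, combined with the pointwise chain rule $D(g\circ f)(x)u = Dg(f(x))Df(x)u$.
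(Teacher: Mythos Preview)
Your proof is correct and follows essentially the same approach as the paper: each item is reduced, via the chain rule $D(g\circ f) = (f^\ast Dg)\circ Df$ and the linearity $D(f+k) = Df + Dk$, to the corresponding closure property already established for Fredholm and strongly smoothing $\text{tsc}^\infty$-families of morphisms (\cref{Lemma_Basic_properties_families_of_Fredholm_morphisms,Lemma_Basic_properties_strongly_smoothing_families,Corollary_Fredholm_index_constant}). The paper's own proofs of \cref{Lemma_Fredholm_index_nonlinear_map,Lemma_Perturbations_of_Fredholm_maps} are one-line references to exactly these facts, and item (1) is just \cref{Example_Diffeomorphisms_are_Fredholm}.
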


\begin{theorem}[Nash-Moser inverse function theorem, \cref{Theorem_Nash_Moser}]
\leavevmode\\
Let $E$ and $E'$ be tsc-spaces, let $U \subseteq E$ be an open subset and let $f : U\to E'$ be $\text{tsc}^\infty$. \\
If $Df : U\times E \to E'$ is invertible, then $f$ is a local diffeomorphism. \\
More explicitely, if there exists a $\text{tsc}^\infty$-family of morphisms $\phi : U\times E' \to E$ \st $Df\circ \phi = \id_{E'}$ and $\phi\circ Df = \id_E$, then for each $x \in U$ there exists an open neighbourhood $V \subseteq U$ of $x$ \st $f(V) \subseteq E'$ is an open neighbourhood of $f(x)$ and there exists a $\text{tsc}^\infty$-map $g : f(V) \to V$ that satisfies $g\circ f|_V = \id_V$ and $f|_V\circ g = \id_{f(V)}$.
Furthermore, $Dg = g^\ast\phi$.
\end{theorem}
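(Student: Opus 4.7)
The plan is to deduce this statement as a corollary of the earlier Theorem \ref{Theorem_Nash_Moser}, the bulk of the work having already been carried out in Section \ref{Section_Nash_Moser}. The task reduces to a careful bookkeeping exercise: translate the hypotheses from the tsc-language into the pre-tame $\overline{\text{sc}}$-Fr{\'e}chet language where Theorem \ref{Theorem_Nash_Moser} lives, apply it, and then translate the conclusion back.

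First I would unpack the hypotheses. By definition, a tsc-space is a weakly tame pre-tame $\overline{\text{sc}}$-Fr{\'e}chet space, so both $E$ and $E'$ are automatically pre-tame $\overline{\text{sc}}$-Fr{\'e}chet spaces and $E$ is weakly tame, as required by Theorem \ref{Theorem_Nash_Moser}. Since $f$ is $\text{tsc}^\infty$, it is $\overline{\text{sc}}^\infty$ and tame up to arbitrary order; in particular it is $\underline{\text{sc}}^k$ and tame up to order $k$ for every $k \geq 2$ (using Proposition \ref{Lemma_sc_diffble_implies_weakly_sc_diffble} to pass from $\overline{\text{sc}}^k$ to $\underline{\text{sc}}^k$). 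The hypothesis that $Df : U \times E \to E'$ is invertible as a $\text{tsc}^\infty$-family of morphisms provides $\phi : U\times E' \to E$ which is in particular $\underline{\text{sc}}^0$ and tame (up to order $0$), so the invertibility hypothesis of Theorem \ref{Theorem_Nash_Moser} holds with $\psi \definedas \phi$.

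Second, I would invoke Theorem \ref{Theorem_Nash_Moser} at every finite $k\geq 2$. This produces, for each $x \in U$, an open neighbourhood $V_k \subseteq U$ of $x$, an open subset $f(V_k) \subseteq E'$ which is a neighbourhood of $f(x)$, and a map $g_k : f(V_k) \to V_k$ which is $\underline{\text{sc}}^k$ and tame up to order $k$, satisfying $g_k \circ f|_{V_k} = \id_{V_k}$, $f|_{V_k}\circ g_k = \id_{f(V_k)}$, and $Dg_k = g_k^\ast \phi$. By the injectivity estimate established in the proof of Theorem \ref{Theorem_Nash_Moser} (see Corollary \ref{Corollary_Nash_Moser_injectivity} and Proposition \ref{Proposition_Nash_Moser_injectivity}), shrinking $V_k$ only affects the domain of definition, not the map itself where both are defined, so the $g_k$ agree on overlaps. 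Taking $V \definedas \bigcap_{k\geq 2} V_k$ and $g \definedas g_k|_{f(V)}$ (independent of $k$) yields a single map $g : f(V) \to V$ which is $\underline{\text{sc}}^k$ and tame up to order $k$ for every $k$, hence $\underline{\text{sc}}^\infty$ and tame up to arbitrary order. Applying Proposition \ref{Proposition_Underline_sc_implies_overline_sc} at every finite order upgrades this to $\overline{\text{sc}}^\infty$ and tame up to arbitrary order, i.e.\ $g$ is $\text{tsc}^\infty$, and $Dg = g^\ast \phi$ follows from the corresponding statement in Theorem \ref{Theorem_Nash_Moser}.

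There is essentially no technical obstacle here, since the hard analytic core -- the modified Newton iteration carried out in Propositions \ref{Proposition_Nash_Moser_injectivity} and \ref{Proposition_Nash_Moser_surjectivity} -- has already been established. The only point requiring minor care is the consistency of the local inverses $g_k$ across different values of $k$ (so that the smoothness upgrade really gives a single $\text{tsc}^\infty$-map rather than a family of compatible maps with shrinking domains), but this follows at once from local injectivity of $f$ near $x$, which is built into the proof of Theorem \ref{Theorem_Nash_Moser}.
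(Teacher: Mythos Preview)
Your overall strategy---reduce the tsc-statement to Theorem~\ref{Theorem_Nash_Moser} by unpacking the definitions and then upgrade regularity via Proposition~\ref{Proposition_Underline_sc_implies_overline_sc}---is exactly what the paper intends: the Section~\ref{Section_Summary} theorem is simply a restatement of Theorem~\ref{Theorem_Nash_Moser} in the tsc-language, with no separate proof given.

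However, your execution contains a genuine gap. You invoke Theorem~\ref{Theorem_Nash_Moser} separately for each finite $k \geq 2$, obtaining neighbourhoods $V_k$, and then set $V \definedas \bigcap_{k \geq 2} V_k$. But a countable intersection of open neighbourhoods in an infinite-dimensional Fr{\'e}chet space need not be open---indeed it may collapse to $\{x\}$---so this $V$ is not guaranteed to be a neighbourhood of $x$ at all. Your remark that local injectivity handles the ``consistency of the $g_k$'' addresses only the agreement of the maps on overlaps, not the shrinking of domains.

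The fix is simple and is built into the paper's formulation: Theorem~\ref{Theorem_Nash_Moser} is stated for $k \in \N_0 \cup \{\infty\}$, so you should apply it \emph{once} with $k = \infty$. Since $f$ is $\text{tsc}^\infty$, it is $\overline{\text{sc}}^\infty$ and hence (by Proposition~\ref{Lemma_sc_diffble_implies_weakly_sc_diffble}) $\underline{\text{sc}}^\infty$ and tame up to arbitrary order; the inverse family $\phi$ is likewise $\underline{\text{sc}}^0$ and tame. Theorem~\ref{Theorem_Nash_Moser} then yields a single neighbourhood $V$ and a local inverse $g$ that is $\underline{\text{sc}}^\infty$ and tame up to arbitrary order, with $Dg = g^\ast\phi$. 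Proposition~\ref{Proposition_Underline_sc_implies_overline_sc} then upgrades $g$ to $\overline{\text{sc}}^\infty$, i.e.\ $\text{tsc}^\infty$. The mechanism behind the $k = \infty$ case in the paper's proof is the bootstrap $Dg = g^\ast\phi$ together with the chain rule and Lemma~\ref{Lemma_Invertible_families}, which raises regularity \emph{without} shrinking the domain---this is precisely what your intersection argument fails to capture.
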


\begin{theorem}[Constant rank theorem, \cref{Theorem_constant_rank_theorem}]
\leavevmode\\
Let $E$ and $E'$ be tsc-spaces, let $U\subseteq E$ be an open subset and let $f : U\to E'$ be $\text{tsc}^\infty$.
Given $x_0 \in U$, if there exists a neighbourhood $V \subseteq U$ of $x_0$ \st $f|_V$ is Fredholm and has constant rank, then there exist open neighbourhoods $W \subseteq U$ of $x_0$ and $W' \subseteq E'$ of $f(x_0)$ with $f(W) \subseteq W'$ together with $\text{tsc}^\infty$-diffeomorphisms
\begin{align*}
\Phi : W &\to \tilde{W} \subseteq \im Df(x_0) \oplus \ker Df(x_0) \\
\Psi : W' &\to \tilde{W}' \subseteq \im Df(x_0) \oplus \coker Df(x_0)
\end{align*}
\st
\[
\Psi\circ f|_W\circ \Phi\inv = (\id_{\im Df(x_0)}\oplus 0)|_{\tilde{W}}\text{.}
\]
\end{theorem}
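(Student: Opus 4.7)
The plan is to follow the strategy of \cref{Theorem_constant_rank_theorem}, now phrased entirely inside the tsc-category using the Nash-Moser inverse function theorem and the structure theorem for Fredholm families (\cref{Theorem_scbar_Fredholm}) as stated in this summary section. After a translation we may assume $x_0 = 0$ and $f(x_0) = 0$. Set $D \definedas Df(0)$. Since $D : E \to E'$ is a Fredholm morphism of tsc-spaces, we pick splittings
\[
E = X \oplus \ker D, \qquad E' = \im D \oplus C',
\]
where $C'$ is finite dimensional and $\tilde D \definedas \pr^{E'}_{\im D}\circ D\circ \iota^X_E : X \to \im D$ is an isomorphism.

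First I would construct the chart on the domain. Define the $\text{tsc}^\infty$-map
\[
\Phi' : U \to \im D \oplus \ker D, \qquad \Phi'(x) \definedas \bigl(\pr^{E'}_{\im D}\,f(x),\; \pr^{E}_{\ker D}\,x\bigr).
\]
Its differential is a $\text{tsc}^\infty$-family of morphisms which, in the block form w.r.t.\ $E = X\oplus \ker D$ and $\im D \oplus \ker D$, reads
\[
D\Phi'(x) = \begin{pmatrix} A(x) & B(x) \\ 0 & \id_{\ker D} \end{pmatrix}, \qquad A(x) \definedas \pr^{E'}_{\im D}\circ Df(x)\circ \iota^X_E.
\]
By \cref{Theorem_scbar_Fredholm} applied to the Fredholm family $Df$, after shrinking $U$ we may assume $A$ is invertible as a $\text{tsc}^\infty$-family of morphisms; hence $D\Phi'$ is invertible as a $\text{tsc}^\infty$-family of morphisms, its inverse being the obvious block formula in $A\inv$. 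The Nash-Moser inverse function theorem then produces open neighbourhoods $W \subseteq U$ of $0$ and $\tilde W \subseteq \im D \oplus \ker D$ of $0$ such that $\Phi \definedas \Phi'|_W : W \to \tilde W$ is a tsc-diffeomorphism.

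Next I would analyse $h \definedas f\circ \Phi\inv : \tilde W \to E'$. Writing elements of $\tilde W$ as $(u,z)$ with $u\in \im D$, $z\in \ker D$, the definition of $\Phi$ forces $\pr^{E'}_{\im D} \circ h(u,z) = u$, so
\[
h(u,z) = \bigl(u,\, \varphi(u,z)\bigr), \qquad \varphi \definedas \pr^{E'}_{C'}\circ h,
\]
with $\varphi$ a $\text{tsc}^\infty$-map into the finite dimensional space $C'$. Its differential has block form
\[
Dh(u,z) = \begin{pmatrix} \id_{\im D} & 0 \\ \partial_u\varphi(u,z) & \partial_z\varphi(u,z) \end{pmatrix}.
\]
Since $\Phi$ is a tsc-diffeomorphism, $\corank h \equiv \corank f|_W$ is constant. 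The corank of the above block matrix equals $\dim C' - \operatorname{rk}\partial_z\varphi(u,z)$; at $(0,0)$ this is $\dim C' = \dim \coker D$, which is also the value at every point. Hence $\partial_z\varphi \equiv 0$ on $\tilde W$, and after possibly shrinking $\tilde W$ to a product neighbourhood $\tilde W_1 \times \tilde W_2 \subseteq \im D \oplus \ker D$, we conclude $\varphi(u,z) = \varphi(u)$ depends only on $u$.

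Finally I would build the chart on the target. Set $W' \definedas \tilde W' \definedas \tilde W_1 \times C' \subseteq \im D \oplus C' = E'$ (after shrinking $W$ if necessary to ensure $f(W) \subseteq W'$), and define
\[
\Psi : \tilde W_1 \times C' \to \tilde W_1 \times C', \qquad \Psi(u,y) \definedas \bigl(u,\, y - \varphi(u)\bigr).
\]
This is visibly a $\text{tsc}^\infty$-diffeomorphism (its inverse is $(u,y) \mapsto (u, y + \varphi(u))$, again $\text{tsc}^\infty$ by the chain rule and stability of tameness under composition with $\text{tsc}^\infty$ maps into finite dimensional spaces). A direct computation yields $\Psi\circ h|_{\tilde W_1 \times \tilde W_2}(u,z) = (u,0)$, which is exactly the claim, with the identification $C' \cong \coker D$. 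The main technical points to watch are (i) that the block formula for $(D\Phi')\inv$ genuinely defines a $\text{tsc}^\infty$-family of morphisms, which uses \cref{Theorem_scbar_Fredholm} plus the finite dimensional nature of $B$, and (ii) the passage from constant corank to $\partial_z\varphi \equiv 0$, which is a pointwise linear-algebra fact but requires upper semicontinuity of the rank (a consequence of the Fredholm family constancy statement above) to be applied along the entire neighbourhood rather than at a single point.
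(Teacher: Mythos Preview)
Your proposal is correct and follows essentially the same approach as the paper's proof: the same domain chart $\Phi'(x) = (\pr^{E'}_{\im D} f(x), \pr^E_{\ker D} x)$, the same use of \cref{Theorem_scbar_Fredholm} to invert the upper-left block $A$, the same application of Nash--Moser, and the same target chart $\Psi(u,y) = (u, y - \varphi(u))$. The only cosmetic difference is that the paper derives $\partial_z\varphi \equiv 0$ by explicitly computing $Df \circ (D\Phi')^{-1}$ and identifying the lower-right block as $D - C\circ A^{-1}\circ B$, whereas you write $Dh$ directly in block form with $\partial_z\varphi$; these are the same computation.
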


\begin{corollary}
Let $E$ and $E'$ be $2^{\text{nd}}$-countable tsc-spaces, let $U\subseteq E$ be an open subset and let $f : U \to E'$ be $\text{tsc}^\infty$. \\
If $f$ is Fredholm, has constant rank, and $\ind f$ is globally constant, then for any $y \in E'$, $f\inv(y) \subseteq E$ is a smooth manifold of dimension
\[
\dim f\inv(y) = \ind f + \corank f\text{.}
\]
\end{corollary}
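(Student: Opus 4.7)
The plan is to apply the constant rank theorem at each preimage point to produce smooth charts on $f^{-1}(y)$, and then check the standard manifold axioms.

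Fix $y \in E'$. If $f^{-1}(y) = \emptyset$, there is nothing to prove. Otherwise, for each $x_0 \in f^{-1}(y)$ the constant rank theorem provides open neighborhoods $W \ni x_0$, $W' \ni y$, and $\text{tsc}^\infty$-diffeomorphisms
\[
\Phi : W \to \tilde{W} \subseteq \im Df(x_0) \oplus \ker Df(x_0), \qquad \Psi : W' \to \tilde{W}' \subseteq \im Df(x_0) \oplus \coker Df(x_0)
\]
with $\Psi \circ f|_W \circ \Phi^{-1} = (\id_{\im Df(x_0)} \oplus 0)|_{\tilde{W}}$. Writing $\Psi(y) = (a, c)$ with $a \in \im Df(x_0)$ and $c \in \coker Df(x_0)$, a point $x \in W$ lies in $f^{-1}(y)$ iff $(\pr_1 \Phi(x), 0) = (a, c)$. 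In particular $c = 0$ (since $y \in f(W)$), and so
\[
\Phi\bigl(f^{-1}(y) \cap W\bigr) = \bigl(\{a\} \times \ker Df(x_0)\bigr) \cap \tilde{W},
\]
which is an open subset of the affine subspace $\{a\} \times \ker Df(x_0)$. Since $\ker Df(x_0)$ is finite dimensional of dimension $\dim \ker Df(x_0) = \ind Df(x_0) + \dim \coker Df(x_0) = \ind f + \corank f$ by the index formula and the standing hypotheses, this produces a chart for $f^{-1}(y) \cap W$ onto an open subset of $\R^{\ind f + \corank f}$.

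Next I would check the compatibility of two such charts at points $x_0, x_1 \in f^{-1}(y)$ with intersecting chart domains $W_0, W_1$. The transition map is the restriction of $\Phi_1 \circ \Phi_0^{-1}$ (which is a $\text{tsc}^\infty$-diffeomorphism between open subsets of $\im Df(x_0) \oplus \ker Df(x_0)$ and $\im Df(x_1) \oplus \ker Df(x_1)$) to the slices $\{a_0\} \times \ker Df(x_0)$ and $\{a_1\} \times \ker Df(x_1)$. Since both slices are finite dimensional subspaces (in fact split $\overline{\text{sc}}$-subspaces, being finite dimensional) of tsc-spaces, and on finite dimensional tsc-spaces the $\text{tsc}^\infty$-structure coincides with the usual $\mathcal{C}^\infty$-structure (via the unique tsc-structure on a finite dimensional vector space together with the fact that all the notions of differentiability collapse to ordinary Fréchet differentiability by \cref{Theorem_Relations_between_notions_of_differentiability}), the transition maps are smooth diffeomorphisms in the classical sense.

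Finally, I would verify the point-set axioms: $f^{-1}(y)$ is Hausdorff as a subspace of the Fréchet space underlying $E$, and $2^{\text{nd}}$-countable since $E$ is. The main obstacle is really just the transition map verification, i.e., confirming that restricting a $\text{tsc}^\infty$-diffeomorphism to a finite dimensional split subspace on the domain produces a map that is smooth in the classical sense. Once this is settled, the collection of charts $\{(\Phi|_{f^{-1}(y)\cap W_{x_0}}, f^{-1}(y) \cap W_{x_0})\}_{x_0 \in f^{-1}(y)}$ endows $f^{-1}(y)$ with the structure of a smooth manifold of the asserted dimension $\ind f + \corank f$.
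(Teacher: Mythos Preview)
Your argument is correct and is exactly the standard deduction from the constant rank theorem that the paper has in mind; the paper states this corollary without proof, so there is nothing to compare beyond noting that your chart construction, dimension count via $\dim\ker Df(x_0) = \ind f + \corank f$, and transition-map verification are precisely the details the paper leaves implicit. The one point worth tightening is the transition-map smoothness: it follows because the inclusion of a finite dimensional (hence split) subspace into a tsc-space and the projection onto it are tame morphisms, so the restricted composite $\Phi_1\circ\Phi_0^{-1}$ is $\text{tsc}^\infty$ between open subsets of finite dimensional spaces, which by \cref{Example_Finite_dimensional_sc_Frechet_space} and \cref{Theorem_Relations_between_notions_of_differentiability} is ordinary $\mathcal{C}^\infty$.
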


\begin{theorem}[Finite dimensional reduction, \cref{Corollary_Standard_form_of_Fredholm_maps}]
\leavevmode\\
Let $E$ and $E'$ be tsc-spaces, let $U\subseteq E$ be an open subset and let $f : U \to E'$ be $\text{tsc}^\infty$.
Then the following are equivalent:
\begin{enumerate}[label=\arabic*.,ref=\arabic*.]
  \item $f$ is Fredholm.
  \item For every $x_0 \in U$, $Df(x_0) : E \to E'$ is Fredholm and there exist
\begin{enumerate}[label=(\alph*),ref=(\alph*)]
  \item open neighbourhoods $V, W \subseteq U$ of $x_0$,
  \item a $\text{tsc}^\infty$-diffeomorphism $g : V \to W$ with
\[
g(x_0) = x_0 \quad\text{and}\quad Dg(x_0) = \id_E\text{,}
\]
  \item a strongly smoothing $\text{tsc}^\infty$-map $k : V \to E'$ with
\[
k(x_0) = 0 \quad\text{and}\quad Dk(x_0) = 0\text{,}
\]
\end{enumerate}
\st
\[
f\circ g(x) = f(x_0) + Df(x_0)(x-x_0) + k(x) \quad\forall\, x \in V\text{.}
\]
\end{enumerate}
\end{theorem}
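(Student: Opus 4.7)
The plan is to establish the two implications separately, with the forward direction being a direct application of the Finite dimensional reduction theorem (\cref{Theorem_Finite_dimensional_reduction}) together with a verification of the strong smoothing condition for $k$, and the backward direction being a localisation argument using that Fredholmness is stable under composition with diffeomorphisms and under strongly smoothing perturbations.

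For the forward direction, I would fix $x_0 \in U$ and invoke \cref{Theorem_Finite_dimensional_reduction} to produce neighbourhoods $V, W$ of $x_0$, a $\text{tsc}^\infty$-diffeomorphism $g : V \to W$ with $g(x_0) = x_0$ and $Dg(x_0) = \id_E$, a splitting $E' = \im Df(x_0) \oplus C$ (so $C$ is finite dimensional since $Df(x_0)$ is Fredholm), and a $\text{tsc}^\infty$-map $k = \iota^C_{E'}\circ \tilde{k} : V \to E'$ with $\tilde{k} : V \to C$, $k(x_0) = 0$, $Dk(x_0) = 0$, satisfying the displayed formula. It remains only to check that $k$ is strongly smoothing in the sense of \cref{Definition_Strongly_smoothing_map}, i.e.\ that $Dk : V\times E \to E'$ is a strongly smoothing $\text{tsc}^\infty$-family of morphisms. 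Since
\[
Dk(x)u = \iota^C_{E'}\circ D\tilde{k}(x)u,
\]
the family $Dk$ factors as $Dk = \kappa_2\circ \kappa_1$ with $\kappa_1 \definedas D\tilde{k} : V\times E \to C$ and $\kappa_2 \definedas \iota^C_{E'}$ (viewed as the constant family), and $C$ is a finite dimensional tsc-space, so the second item of \cref{Example_Strongly_smoothing_family} applies directly.

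For the converse, I would argue locally around an arbitrary $x_0 \in U$. Writing $L(x) \definedas f(x_0) + Df(x_0)(x - x_0)$, the hypothesis $f\circ g = L + k$ on $V$ exhibits $f\circ g$ as an affine map with linear Fredholm part perturbed by a strongly smoothing term. By \cref{Example_Linear_Fredholm_implies_nonlinear_Fredholm} the affine map $L$ is Fredholm, and by \cref{Lemma_Perturbations_of_Fredholm_maps} the perturbation $L + k$ is Fredholm on $V$. Since $g : V \to W$ is a $\text{tsc}^\infty$-diffeomorphism it is Fredholm (\cref{Example_Diffeomorphisms_are_Fredholm}), hence so is $g^{-1}$, and \cref{Lemma_Fredholm_index_nonlinear_map} yields that $f = (f\circ g)\circ g^{-1}$ is Fredholm on the neighbourhood $W$ of $x_0$. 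Since Fredholmness of $f$ is defined via Fredholmness of the family $Df$, which is itself a purely local condition on $U$, covering $U$ by such neighbourhoods concludes the argument.

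I expect no genuine obstacle in this proof; the substantive analytical content is already packaged into \cref{Theorem_Finite_dimensional_reduction} (which rests on the Nash--Moser theorem and the standard form for Fredholm families from \cref{Lemma_Fredholm_family_standard_form}) and into \cref{Lemma_Perturbations_of_Fredholm_maps}. The only point requiring a moment of care is the aforementioned verification that the explicit $k$ produced by the reduction theorem is strongly smoothing as a \emph{map}, rather than merely having finite dimensional image; this is immediate once one recognises that it is the differential $Dk$, not $k$ itself, that needs to factor through a finite dimensional tsc-space.
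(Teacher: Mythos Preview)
Your proposal is correct and follows essentially the same approach as the paper's own proof: the forward direction via \cref{Theorem_Finite_dimensional_reduction} together with \cref{Example_Strongly_smoothing_family} to verify that $k$ is strongly smoothing, and the converse via \cref{Example_Linear_Fredholm_implies_nonlinear_Fredholm}, \cref{Lemma_Perturbations_of_Fredholm_maps}, \cref{Example_Diffeomorphisms_are_Fredholm}, and \cref{Lemma_Fredholm_index_nonlinear_map}, using that Fredholmness is local. Your explicit identification of the factorisation $Dk = \iota^C_{E'}\circ D\tilde{k}$ through the finite-dimensional $C$ is exactly the step the paper has in mind when citing \cref{Example_Strongly_smoothing_family}.
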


\begin{definition}[\cref{Definition_Regular_point_and_value}]
\leavevmode\\
Let $E$ and $E'$ be tsc-spaces, let $U \subseteq E$ and $V \subseteq E'$ be open subsets and let $f : U \to V$ be $\text{tsc}^\infty$ and Fredholm.
\begin{enumerate}[label=\arabic*.,ref=\arabic*.]
  \item A point $x \in U$ is called a \emph{regular point} of $f$ \iff $\corank f(x) = 0$.
  \item A point $y \in V$ is called a \emph{regular value} of $f$ \iff $x \in U$ is a regular point of $f$ for every $x \in f\inv(y)$.
\end{enumerate}
\end{definition}

\begin{theorem}[Sard-Smale,\cref{Theorem_Sard_Smale}]
\leavevmode\\
Let $E$ and $E'$ be $2^{\text{nd}}$-countable tsc-spaces, let $U \subseteq E$ be an open subset and let $f : U \to E'$ be $\text{tsc}^\infty$ and Fredholm. \\
Then the set of regular values of $f$ is a generic subset of $E'$.
\end{theorem}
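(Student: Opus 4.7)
The proof is essentially a restatement of \cref{Theorem_Sard_Smale}, since every tsc-space is a weakly tame $\overline{\text{sc}}$-Fr{\'e}chet space, being $\text{tsc}^\infty$ is equivalent to being $\overline{\text{sc}}^\infty$ and tame up to arbitrary order, and Fredholm in the tsc-setting is the same as tame Fredholm. I nevertheless sketch the underlying strategy used there.

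Fix $x_0 \in U$. The plan is first to apply finite dimensional reduction (\cref{Theorem_Finite_dimensional_reduction}, equivalently \cref{Corollary_Standard_form_of_Fredholm_maps}) to write $f$ locally as $f\circ g(x) = f(x_0) + Df(x_0)(x-x_0) + k(x)$ on a neighbourhood $V$ of $x_0$, where $g : V \to W$ is a $\text{tsc}^\infty$-diffeomorphism with $Dg(x_0) = \id_E$ and $k : V \to C$ takes values in a finite-dimensional complement $C$ of $\im Df(x_0) \subseteq E'$, satisfying $k(x_0) = 0$ and $Dk(x_0) = 0$. Since diffeomorphisms preserve regular values, it suffices to treat $f\circ g$. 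After shrinking $V$ so that $\pr^E_{\ker D}(V)$ is bounded in the finite-dimensional space $\ker D \definedas \ker Df(x_0)$, the core local claim is that for every subset $A \subseteq V$ closed in $E$, the set
\[
\mathcal{V}_{\mathrm{reg}}(f;A) \definedas \{y\in E' \mid \corank f(x) = 0 \;\forall\, x \in f\inv(y)\cap A\}
\]
is open and dense in $E'$.

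For openness, given $y_n \to y$ with $y_n \notin \mathcal{V}_{\mathrm{reg}}(f;A)$, choose $x_n = x^0_n + x^1_n \in A$ (along $E = \ker D \oplus X$) with $f(x_n) = y_n$ and $\corank f(x_n) \geq 1$. The decomposition $y_n = y^0_n + y^1_n \in \im D \oplus C$ together with $D(x^1_n) = y^0_n$ forces $x^1_n = \tilde{D}\inv(y^0_n)$ with $\tilde{D} \definedas \pr^{E'}_{\im D}\circ D\circ \iota^X_E$, hence $x^1_n$ converges; the $x^0_n$ lie in the bounded subset $\pr^E_{\ker D}(V)$ of the finite-dimensional space $\ker D$, so a subsequence converges. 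Closedness of $A$ and upper semicontinuity of $\corank f$ then produce a limit point $x \in A$ with $f(x) = y$ and $\corank f(x) \geq 1$. For density, the decomposition $f\circ g(x^0,x^1) = Dx^1 + k(x^0 + x^1)$ shows that $y = y^0 + y^1 \in \im D \oplus C$ is a regular value of $f|_V$ if and only if $y^1$ is a regular value of the finite-dimensional map $x^0 \mapsto k\bigl(x^0 + \tilde{D}\inv(y^0)\bigr)$ on an open subset of $\ker D$, and classical Sard in finite dimensions supplies density.

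Finally, $2^{\text{nd}}$-countability and metrizability of $E$ allow covering $U$ by countably many such neighbourhoods $V_i$, each in turn covered by countably many subsets $A_i^j$ closed in $E$ (e.g.\ small enough closed balls). Then $\mathcal{V}_{\mathrm{reg}}(f) = \bigcap_{i,j} \mathcal{V}_{\mathrm{reg}}(f; A_i^j)$ is a countable intersection of open dense subsets, hence generic. The principal obstacle is the finite dimensional reduction itself, which rests on the Nash-Moser inverse function theorem (\cref{Theorem_Nash_Moser}) applied to the auxiliary map $\psi(x) = \pr^E_{\ker D}x + Q\circ f(x)$; once that reduction is available, the remainder of the argument mirrors the Banach-space Sard-Smale theorem almost verbatim (compare \cite{MR2045629}, Theorem A.5.1).
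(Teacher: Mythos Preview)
Your proposal is correct and follows essentially the same approach as the paper: reduce via \cref{Theorem_Finite_dimensional_reduction} to the normal form $Dx + k(x)$ with $k$ valued in a finite-dimensional complement $C$, prove the local claim that $\mathcal{V}_{\mathrm{reg}}(f;A)$ is open (via the compactness/sequence argument using boundedness of $\pr^E_{\ker D}(V)$ and upper semicontinuity of $\corank f$) and dense (via classical Sard applied to the finite-dimensional map $x^0 \mapsto k(x^0 + \tilde{D}\inv(y^0))$), and then assemble globally using $2^{\text{nd}}$-countability. The only cosmetic difference is that the paper absorbs the diffeomorphism $g$ into $f$ at the outset rather than carrying $f\circ g$ through the argument.
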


\clearpage

\bibliographystyle{alpha}
\addcontentsline{toc}{section}{Bibliography}
\bibliography{Bibliography}

\begin{thebibliography}{HWZ14}

\bibitem[DGV15]{DodsonGalanisVassiliou}
C.~T.~J. Dodson, George Galanis, and Efstathios Vassiliou.
\newblock {\em Geometry in a {F}r{\'e}chet Context: A Projective Limit
  Approach}, volume 428 of {\em London Mathematical Society Lecture Note
  Series}.
\newblock Cambridge University Press, Cambridge, 2015.

\bibitem[Ham82]{MR656198}
Richard~S. Hamilton.
\newblock The inverse function theorem of {N}ash and {M}oser.
\newblock {\em Bull. Amer. Math. Soc. (N.S.)}, 7(1):65--222, 1982.

\bibitem[HWZ07]{MR2341834}
H.~Hofer, K.~Wysocki, and E.~Zehnder.
\newblock A general {F}redholm theory. {I}. {A} splicing-based differential
  geometry.
\newblock {\em J. Eur. Math. Soc. (JEMS)}, 9(4):841--876, 2007.

\bibitem[HWZ10]{MR2644764}
Helmut Hofer, Kris Wysocki, and Eduard Zehnder.
\newblock sc-smoothness, retractions and new models for smooth spaces.
\newblock {\em Discrete Contin. Dyn. Syst.}, 28(2):665--788, 2010.

\bibitem[HWZ14]{1407.3185}
Helmut~H. Hofer, Kris Wysocki, and Eduard Zehnder.
\newblock Polyfold and fredholm theory i: Basic theory in m-polyfolds, 2014.

\bibitem[Kel74]{MR0440592}
Hans~Heinrich Keller.
\newblock {\em Differential calculus in locally convex spaces}.
\newblock Lecture Notes in Mathematics, Vol. 417. Springer-Verlag, Berlin-New
  York, 1974.

\bibitem[KM97]{MR1471480}
Andreas Kriegl and Peter~W. Michor.
\newblock {\em The convenient setting of global analysis}, volume~53 of {\em
  Mathematical Surveys and Monographs}.
\newblock American Mathematical Society, Providence, RI, 1997.

\bibitem[{\L}Z79]{MR546504}
S.~{\L}ojasiewicz, Jr. and E.~Zehnder.
\newblock An inverse function theorem in {F}r\'echet-spaces.
\newblock {\em J. Funct. Anal.}, 33(2):165--174, 1979.

\bibitem[Mro04]{Mrowka_Geometry_of_Manifolds}
Tomasz Mrowka.
\newblock Lecture notes in geometry of manifolds, 2004.

\bibitem[MS04]{MR2045629}
Dusa McDuff and Dietmar Salamon.
\newblock {\em {$J$}-holomorphic curves and symplectic topology}, volume~52 of
  {\em American Mathematical Society Colloquium Publications}.
\newblock American Mathematical Society, Providence, RI, 2004.

\bibitem[Mun75]{MR0464128}
James~R. Munkres.
\newblock {\em Topology: a first course}.
\newblock Prentice-Hall, Inc., Englewood Cliffs, N.J., 1975.

\bibitem[Omo97]{MR1421572}
Hideki Omori.
\newblock {\em Infinite-dimensional {L}ie groups}, volume 158 of {\em
  Translations of Mathematical Monographs}.
\newblock American Mathematical Society, Providence, RI, 1997.
\newblock Translated from the 1979 Japanese original and revised by the author.

\bibitem[Osb14]{MR3154940}
M.~Scott Osborne.
\newblock {\em Locally convex spaces}, volume 269 of {\em Graduate Texts in
  Mathematics}.
\newblock Springer, Cham, 2014.

\bibitem[Weh12]{1209.4040}
Katrin Wehrheim.
\newblock Fredholm notions in scale calculus and hamiltonian floer theory,
  2012.

\bibitem[Yam74]{MR0488118}
Sadayuki Yamamuro.
\newblock {\em Differential calculus in topological linear spaces}.
\newblock Lecture Notes in Mathematics, Vol. 374. Springer-Verlag, Berlin-New
  York., 1974.

\end{thebibliography}

\end{document}